\newcommand{\nc}{\newcommand}
\nc{\on}{\operatorname}
\newcommand{\spa}{\vspace{0.3ex}\noindent}
\newlength{\my}
\nc{\noi}{\noindent}
\renewcommand{\Re}{\operatorname{Re}}
\newtheorem{theorem}{Theorem}[subsection]
\newtheorem{proposition}[theorem]{Proposition}
\newtheorem{lemma}[theorem]{Lemma}
\newtheorem{corollary}[theorem]{Corollary}
\theoremstyle{definition}
\newtheorem{definition}[theorem]{Definition}
\newtheorem{notation}[theorem]{Notation}
\newtheorem{example}[theorem]{Example}
\newtheorem{remark}[theorem]{Remark}
\newtheorem{conjecture}[theorem]{Conjecture}
\nc{\Rem}{\begin{remark}}
\nc{\enrem}{\end{remark}}
\nc{\Conj}{\begin{conjecture}}
\nc{\enconj}{\end{conjecture}}
\nc{\Th}{\begin{theorem}}
\nc{\enth}{\end{theorem}}
\nc{\Lemma}{\begin{lemma}}
\nc{\enlemma}{\end{lemma}}
\nc{\Cor}{\begin{corollary}}
\nc{\encor}{\end{corollary}}
\nc{\Def}{\begin{definition}}
\nc{\edf}{\end{definition}}
\nc{\RR}{\mathrm{R}}
\nc{\LL}{\mathrm{L}}
\newcommand{\BBA}{{\mathbb{A}}}
\newcommand{\C}{{\mathbb{C}}}
\newcommand{\R}{{\mathbb{R}}}
\newcommand{\Z}{{\mathbb{Z}}}
\newcommand{\BBP}{{\mathbb P}}
\newcommand{\BBD}{{\mathbb D}}
\newcommand{\BBH}{{\mathbb H}}
\newcommand{\BBHd}{{\mathbb H}^*}
\newcommand{\PR}{{\BBP^1(\R)}}
\newcommand{\cor}{{\mathbf{k}}}
\newcommand{\icor}{\mathrm{I}\mspace{1mu}\cor}
\newcommand{\JA}{\mathrm{I}{\A}}
\newcommand{\JAop}{{\mathrm{I}{\A^\op}}}
\newcommand{\JD}{{\mathrm{I}}\mspace{1mu}{\D}}
\newcommand{\JfD}{\mathrm{I}{\opb{f}\D}}
\newcommand{\JR}{{\mathrm{I}}{\shr}}
\newcommand{\JfR}{{\mathrm{I}}{\opb{f}\shr}}
\newcommand{\JgR}{\mathrm{I}{\opb{g}\shr}}
\newcommand{\iC}{\mathrm{I}\mspace{.5mu}{\C}}
\newcommand{\iD}{\mathrm{I}{\D}}
\nc{\tA}{\mathcal{A}}
\nc{\rA}{\mathscr{A}}
\newcommand{\iDrm}{\mathrm{I}{\Drm}}
\def\D{\mathscr{D}}
\def\DA{\D^\tA}
\def\sha{\mathscr{A}}
\def\shb{\mathscr{B}}
\def\shc{\mathscr{C}}
\def\shd{\mathscr{D}}
\def\she{\mathscr{E}}
\def\shf{\mathscr{F}}
\def\shi{\mathscr{I}}
\def\shk{\mathscr{K}}
\def\shl{\mathscr{L}}
\def\shm{\mathscr{M}}
\def\shn{\mathscr{N}}
\def\sho{\mathscr{O}}
\def\shr{\mathscr{R}}
\def\shs{\mathcal{S}}
\def\sht{\mathscr{T}}
\def\shv{\mathscr{V}}
\def\shw{\mathscr{W}}
\newcommand{\rmpt}{\mathrm{pt}}
\newcommand{\into}{\hookrightarrow}
\renewcommand{\to}[1][]{\xrightarrow[]{#1}}
\newcommand{\from}[1][]{\xleftarrow[]{#1}}
\newcommand{\isoto}[1][]{\xrightarrow[#1]%
{{\raisebox{-.6ex}[0ex][-.6ex]{$\mspace{1mu}\sim\mspace{2mu}$}}}}
\newcommand{\isofrom}[1][]{\xleftarrow[#1]%
{{\raisebox{-.6ex}[0ex][-.6ex]{$\mspace{1mu}\sim\mspace{2mu}$}}}}
\newcommand{\To}[1][\rule{1ex}{0pt}]{\xrightarrow{\hs{.6ex}#1\hs{.6ex}}}
\newcommand{\muHom}[1][]{\mathrm{Hom}^\mu_{\raise1.5ex\hbox to.1em{}#1}}
\newcommand{\Hom}[1][]{\mathrm{Hom}_{\raise1.5ex\hbox to.1em{}#1}}
\newcommand{\RHom}[1][]{\RR\mathrm{Hom}_{\raise1.5ex\hbox to.1em{}#1}}
\newcommand{\Ext}[2][]{\mathrm{Ext}_{\raise1.5ex\hbox to.1em{}#1}^{#2}}
\renewcommand{\hom}[1][]{{\mathscr{H}\mspace{-4mu}om}_{\raise1.5ex\hbox to.1em{}#1}}
\newcommand{\rhom}[1][]{{\RR\mathscr{H}\mspace{-3mu}om}_{\raise1.5ex\hbox to.1em{}#1}}
\newcommand{\rhomc}[1][]
{{\mathscr{H}\mspace{-3mu}om}^*_{\raise1.5ex\hbox to.1em{}#1}}
\newcommand{\cihom}[1][]
{{\mathscr{I}\mspace{-3mu}}{hom}^+_{\raise1.5ex\hbox to.1em{}#1}}
\nc{\ihom}[1][]{{\shi\mspace{-3mu}hom}_{\raise1.5ex\hbox to.1em{}#1}}
\nc{\rihom}[1][]{{\mspace{2mu}\mathrm{R}\shi\mspace{-3mu}hom}_{\raise1.5ex\hbox to.1em{}#1}}
\nc{\fihom}[1][]{{\shi\mspace{-3mu}hom}^{\mathrm{E}}_{\raise1.5ex\hbox to.1em{}#1}}
\nc{\FHom}[1][]{{\mathrm{RHom}^{\mathrm{E}}_{\raise1.5ex\hbox to.1em{}#1}}}
\nc{\fhom}[1][]{{\mathscr{H}%
\mspace{-3mu}om}^{\mathrm{E}}_{\raise1.5ex\hbox to.1em{}#1}}
\nc{\Endom}[1][]{{\she\mspace{-3mu}nd}_{\raise1.5ex\hbox to.1em{}#1}}
\newcommand{\eeim}[1]{{#1}_{!!}}
\newcommand{\ssubset}{\subset\mspace{-3mu}\subset}
\newcommand{\A}{\mathcal{A}}
\newcommand{\B}{\mathcal{B}}
\nc{\Tam}{{\mathrm{E}}}
\newcommand{\ext}[2][]{{\mathscr{E}xt}_{\raise1.5ex\hbox to.1em{}#1}^{#2}}
\newcommand{\Tor}[2][]{\mathrm{Tor}^{\raise1.5ex\hbox to.1em{}#1}_{#2}}
\newcommand{\tens}[1][]{\mathbin{\otimes_{\raise1.5ex\hbox to-.1em{}{#1}}}}
\newcommand{\ltens}[1][]{\mathbin{\overset{\mathrm{L}}\tens}_{#1}}
\newcommand{\etens}{\mathbin{\boxtimes}}
\newcommand{\Endo}[1][]{\mathrm{End}_{\raise1.5ex\hbox to.1em{}#1}}
\newcommand{\Aut}[1][]{\mathrm{Aut}_{\raise1.5ex\hbox to.1em{}#1}}
\newcommand{\sect}{\Gamma}
\newcommand{\rsect}{\mathrm{R}\Gamma}
\newcommand{\conv}[1][]{\mathop{\circ}\limits_{#1}}
\newcommand{\econv}[1][]{\mathop{\circ}\limits^{\mathrm{E}}\limits_{#1}}
\newcommand{\ctens}[1][]{\mathbin{\overset{+}\tens}_{#1}}
\newcommand{\cconv}[1][]{\mathop{\circ}\limits^{#1}}
\newcommand{\Dconv}{\cconv[{\mathrm D}]}
\newcommand{\cetens}[1][]{\mathop{\etens}\limits^{+}\limits_{#1}}
\newcommand{\VV}{{\mathsf{V}}}
\newcommand{\VVd}{{\mathsf{V}^*}}
\newcommand{\WW}{{\mathbb{V}}}
\newcommand{\WWd}{{\mathbb{V}^*}}
\newcommand{\PP}{\mathsf{P}}
\newcommand{\Drm}{\mathsf{D}}
\newcommand{\oim}[1]{{#1}_*}
\newcommand{\eim}[1]{{#1}_!}
\newcommand{\roim}[1]{\RR{#1}_*}
\newcommand{\reim}[1]{\RR{#1}_!}
\newcommand{\reeim}[1]{\RR{#1}_{\mspace{1mu}!!}}
\newcommand{\opb}[1]{#1^{-1}}
\newcommand{\epb}[1]{#1^{\,!}\,}
\newcommand{\Dtens}[1][]{\overset{\mathrm{D}}\otimes_{\raise1.5ex\hbox to-.1em{}#1}}
\newcommand{\Detens}[1][]{\overset{\mathrm{D}}\etens_{\raise1.5ex\hbox to-.1em{}#1}}
\newcommand{\Ddual}{{\BBD}}
\newcommand{\Deim}[1]{{\mathrm{D}}{\mspace{-1mu}#1_{\mspace{1mu}!}}}
\newcommand{\Dopb}[1]{{\mathrm{D}}{#1}^{*}}
\newcommand{\Doim}[1]{{\mathrm{D}}{#1}_{*}}
\newcommand{\good}{\mathrm{good}}
\newcommand{\qgood}{\mathrm{q\text-good}}
\newcommand{\ghol}{\mathrm{g\text-hol}}
\nc{\rE}{\mathrm{E}}
\nc{\enh}{\mathsf{E}}
\newcommand{\dual}{\mathrm{D}}
\newcommand{\Edual}{\dual^\rE}
\newcommand{\Toim}[1]{\enh{#1}_*}
\newcommand{\Teeim}[1]{\enh{#1}_{\mspace{1mu}!!}}
\newcommand{\Topb}[1]{\enh\mspace{2mu}#1^{-1}}
\newcommand{\Tepb}[1]{\enh\mspace{2mu}#1^{\,!}}
\nc{\EF}[1][]{{}^{\enh}\mspace{-3mu}\shf_{#1}}
\nc{\EFa}[1][]{{}^{\enh}{\mspace{-3mu}\shf^a_{#1}}}
\nc{\FS}[1][]{{}^{\mathrm{S}}{\mspace{-3mu}\shf_{#1}}}
\nc{\FSa}[1][]{{}^{\mathrm{S}}{\mspace{-3mu}\shf^a_{#1}}}
\nc{\Leg}[1][]{{\mathrm{Conv}}{(#1)}}
\nc{\dom}{\mathrm{dom}}
\nc{\domo}{\dom^\circ}
\nc{\hol}{\mathrm{hol}}
\nc{\Mod}{\mathrm{Mod}}
\nc{\rh}{\mathrm{rh}}
\newcommand{\Perv}{{\mathrm{Perv}}}
\nc{\sHH}{\mathscr{H}\mspace{-4mu}\mathscr{H}}
\nc{\sMH}{\mathscr{M}\mspace{-4mu}\mathscr{H}}
\newcommand{\eqdot}{\mathbin{:=}}
\newcommand{\seteq}{\mathbin{:=}}
\newcommand{\cl}{\colon}
\newcommand{\scbul}{{\,\raise.4ex\hbox{$\scriptscriptstyle\bullet$}\,}}
\newcommand{\tw}[1]{\widetilde{#1}}
\newcommand{\twX}{{\widetilde{X}}}
\newcommand{\ol}{\overline}
\newcommand{\bl}{\bigl(}
\newcommand{\br}{\bigr)}
\newcommand{\lp}{{\rm(}}
\newcommand{\rp}{{\rm)}}
\newcommand{\Cc}{{\C\text{-c}}}
\newcommand{\Rc}{{\R\text{-c}}}
\newcommand{\Irc}{{\mathrm{I}\mspace{1mu}\R\text{-c}}}
\newcommand{\Sol}{\mspace{1mu}{\shs\mspace{-2.5mu}\mathit{ol}}}
\newcommand{\ba}{\begin{array}}
\newcommand{\ea}{\end{array}}
\nc{\be}{\begin{enumerate}}
\nc{\ee}{\end{enumerate}}
\newcommand{\bnum}{\begin{enumerate}[{\rm(i)}]}
\newcommand{\enum}{\end{enumerate}}
\newcommand{\banum}{\begin{enumerate}[{\rm(a)}]}
\newcommand{\eanum}{\end{enumerate}}
\newcommand{\bna}{\be[{\rm(a)}]}
\newcommand{\eq}{\begin{eqnarray}}
\newcommand{\eneq}{\end{eqnarray}}
\newcommand{\eqn}{\begin{eqnarray*}}
\newcommand{\eneqn}{\end{eqnarray*}}
\newcommand{\set}[2]{\left\{#1 \mathbin{;} #2 \right\}}
\nc{\Proof}{\begin{proof}}
\nc{\QED}{\end{proof}}
\nc{\Prop}{\begin{proposition}}
\nc{\enprop}{\end{proposition}}
\nc{\rop}{{\mathrm{op}}}
\nc{\op}{\rop}
\nc{\tot}{\mathrm{tot}}
\nc{\Op}{{\mathrm{Op}}}
\nc{\dist}{{\mathrm{dist}}}
\nc{\LocSyst}{{\mathrm{LocSyst}}}
\nc{\eu}{\mathrm{eu}}
\nc{\hh}{\mathrm{hh}}
\nc{\mueu}{{\mu\eu}}
\DeclareMathOperator{\id}{id}
\DeclareMathOperator{\ori}{{or\mspace{2mu}}}
\DeclareMathOperator{\chv}{char}
\newcommand{\Supp}{\on{Supp}}
\newcommand{\Der}[1][]{\mathsf{D}^{#1}}
\newcommand{\Derb}{\Der[\mathrm{b}]}
\newcommand{\SSupp}{\on{SingSupp}}
\newcommand{\RD}{\mathrm{D}}
\newcommand{\Derp}{\Der[+]}
\newcommand{\Derm}{\Der[-]}
\newcommand{\Int}{{\on{Int}}}
\newcommand{\coh}{\mathrm{coh}}
\newcommand{\gr}{{\mathrm{gr}}}
\newcommand{\dT}{{\dot{T}}}
\newcommand{\indcc}{\mathrm{Ind}(\shc)}
\newcommand{\indc}{\mathrm{IC}}
\newcommand{\BDC}{\Derb}
\newcommand{\TDC}{{\mathrm{E}}^{\mathrm{b}}}
\newcommand{\TDCpm}{{\mathrm{E^b_\pm}}}
\newcommand{\TDCmp}{{\mathrm{E^b_\mp}}}
\newcommand{\TDCp}{{\mathrm{E^b_+}}}
\newcommand{\TDCm}{{\mathrm{E^b_-}}}
\newcommand{\TDCC}{\mathrm{E}}
\newcommand{\mop}{\mathrm{r}}
\newcommand{\Tmp}{\mathsf{T}}
\newcommand{\OEn}{\sho^{\mspace{2mu}\enh}}
\newcommand{\DbT}{\Db^\Tmp}
\newcommand{\OvE}{\Omega^\enh}
\newcommand{\drE}{\mathcal{DR}^\enh}
\newcommand{\solE}{\mspace{1mu}\mathcal{S}ol^{\mspace{1mu}\enh}}
\nc{\wc}[1]{\overset{\mbox{$\scriptscriptstyle\vee$}}{#1}}
\nc{\field}{\cor}
\nc{\bM}{\widehat M}
\nc{\bN}{\widehat N}
\nc{\bX}{{\widehat X}}
\nc{\bS}{\widehat S}
\nc{\bY}{\widehat Y}
\nc{\bL}{\widehat L}
\nc{\bR}{{\ol\R}}
\nc{\bV}{{\ol \VV}}
\nc{\bW}{{\ol \WW}}
\nc{\bVd}{{\ol \VVd}}
\nc{\bWd}{{{\ol \WW}^*}}
\newcommand{\fM}{{M_\infty}}
\newcommand{\fR}{{\R_\infty}}
\newcommand{\fV}{{\VV_\infty}}
\newcommand{\fVd}{{\VV^*_\infty}}
\newcommand{\fW}{{\WW_\infty}}
\newcommand{\fWd}{{\WW^*_\infty}}
\nc{\oM}{{\ol M}}
\nc{\oN}{\ol N}
\nc{\oX}{{\ol X}}
\nc{\oS}{\ol S}
\nc{\oY}{\ol Y}
\nc{\oL}{\ol L}
\nc{\oR}{{\ol\R}}
\nc{\Tl}{\mathrm{L^E}}
\nc{\Tr}{\mathrm{R^E}}
\nc{\sa}{\mathrm{sa}}
\newcommand{\Msa}{{M_{\sa}}}
\newcommand{\Db}{{\mathcal D} b}
\newcommand{\Dbt}{{\mathcal D} b^{\mspace{2mu}\mathrm t}}
\newcommand{\Dbtv}{{\mathcal D} b^{\mathrm t\vee}}
\newcommand{\Cinft}[1][M]{\mathcal{C}^{\infty,\mathrm t}_{#1}}
\newcommand{\Cinfw}[1][M]{\mathcal{C}^{\infty,\mathrm w}_{#1}}
\newcommand{\Cinf}[1][M]{\mathcal{C}^{\infty}_{#1}}
\newcommand{\Cinfom}[1][M]{\mathcal{C}^{\mspace{1mu}\omega}_{#1}}
\newcommand{\thom}{Thom}
\newcommand{\wtens}{\mathbin{\overset{\mathrm{w}}\tens}}
\newcommand{\Ot}[1][X]{\sho^{\mspace{2.5mu}{\mathrm t}}_{#1}}
\newcommand{\OO}[1][X]{\sho_{#1}}
\newcommand{\Ow}[1][X]{\sho^{\mspace{2mu}\mathrm w}_{#1}}
\newcommand{\Oww}[1][X]{\sho^{\mspace{2mu}\omega}_{#1}}
\nc{\BRC}{{\R}\hbox{-}{\mathrm{Cons}}}
\nc{\Brc}{{\R}\hbox{-}{\mathrm{C}}}
\newcommand{\Ovt}{\Omega^{\mspace{1.5mu}{\mathrm t}}}
\newcommand{\dr}{\mathcal{DR}}
\newcommand{\drt}{\mathcal{DR}^{\mathrm t}}
\newcommand{\sol}{\mathcal Sol}
\newcommand{\solt}{\mathcal Sol^{\mspace{2.5mu}\mathrm t}}
\newcommand{\reghol}{\rh}
\newcommand{\Sp}{\mathrm{Sp}}
\newcommand{\indlim}[1][]{\mathop{\varinjlim}\limits_{#1}}
\newcommand{\sindlim}[1][]{\smash{\mathop{\varinjlim}\limits_{#1}}\,}
\newcommand{\Ind}{\mathrm{Ind}}
\newcommand{\prolim}[1][]{\mathop{\varprojlim}\limits_{#1}}
\newcommand{\sprolim}[1][]{\smash{\mathop{\varprojlim}\limits_{#1}}\,}
\newcommand{\inddlim}[1][]{\mathop{\text{\rm``{$\varinjlim$}''}}\limits_{#1}}
\newcommand{\sinddlim}[1][]{\smash{\mathop{\text{\rm``{$\varinjlim$}''}}\limits_{#1}}\,}
\newcommand{\dsum}[1][]{\mathbin{\oplus_{#1}}}
\nc{\eps}{\varepsilon}
\nc{\hs}{\hspace*}
\nc{\nn}{\nonumber}
\nc{\tM}{\widetilde{M}}
\nc{\h}{\mathbf{h}}
\nc{\tf}{\tilde{f}}
\nc{\trf}{{{}^{\mathrm{t}}\mspace{-3mu}f}}
\nc{\codim}{\on{codim}}
\nc{\lh}{\mathscr{H}}
\nc{\bwr}{\scalebox{1.1}{$\wr$}}
\nc{\dTi}{\dT^{*,\mathrm{in}}}
\nc{\Cd}{\mathrm{C}}
\nc{\tK}{\widetilde{K}}
\nc{\aMM}{a_{M\times M}}
\nc{\e}{\mspace{1mu}\mathrm{e}\mspace{1mu}}
\nc{\lan}{\langle}
\nc{\ran}{\rangle}
\nc{\la}{\lambda}
\newcommand{\cross}[1][]{\mathbin{\times_{#1}}}
\newcommand{\Yes}{$\circ$}
\newcommand{\No}{$\cross$}
\newcommand{\Union}{\bigcup\limits}
\nc{\vphi}{\varphi}
\nc{\vep}{\varepsilon}
\nc{\At}{\tA_\twX}
\nc{\bu}{\boldsymbol{u}}
\nc{\bA}{\boldsymbol{A}}
\nc{\hbu}{\widehat{\boldsymbol{u}}}
\nc{\ex}{\mathrm{e}}
\nc{\vpi}{\varpi}
\newcommand{\soplus}{\mathop{\mbox{\small $\bigoplus$}}\limits}
\nc{\one}{\mathbf{1}}
\nc{\setp}[1]{\{#1\}}
\nc{\GL}{\mathrm{GL}}
\nc{\cI}{\mathrm{I}}
\newcommand{\TDCrc}{{\mathrm{E^b_{\Rc}}}}
\nc{\vs}{\vspace*}
\nc{\wb}[1]{\mbox{$\rule[-1.1ex]{0ex}{2ex}#1$}}
\nc{\wwb}[1]{\mbox{$\rule[-1.8ex]{0ex}{3ex}#1$}}
\nc{\bpi}{\ol{\pi}}
\nc{\Tsupp}{\Supp^{\mathrm E}}
\nc{\abu}{\Vec{\bu}}
\nc{\av}{\Vec{v}}
\nc{\au}{\Vec{u}}
\newcommand{\OOO}{\mathsf{O}}
\nc{\FN}{\mathrm{FN}}
\nc{\DFN}{\mathrm{DFN}}
\nc{\ake}{\hs{.25ex}}
\nc{\va}{\Vec{a}}
\nc{\bal}{\begin{align}}
\nc{\aal}{\end{align}}
\nc{\baln}{\begin{align*}}
\nc{\ealn}{\end{align*}}
\numberwithin{equation}{subsection}
\begin{document}

\title{Regular and  irregular holonomic D-modules}
\author{Masaki Kashiwara and Pierre Schapira}

\date{July 1, 2015}

\maketitle
\begin{abstract}
This is a survey paper. In a first part,  we recall  the main results on the tempered holomorphic solutions of D-modules in the language of indsheaves and, as an application, the Riemann-Hilbert correspondence for regular holonomic modules. In a second part, 
we present the enhanced version of the first part, treating along the same lines the irregular holonomic case. 
\end{abstract}

\tableofcontents
\section*{Introduction}
\addcontentsline{toc}{section}{\numberline{}Introduction}
These Notes are an expanded version of a series of lectures 
given at the IHES in February/March 2015 (see~\cite{KS15}),
based on \cite{DK13} and \cite{KS14}. 

Here, we assume the reader familiar with the language of sheaves and D-modules, in the derived sense. 

Let $X$ be a complex manifold. Denote by $\md[\shd_X]$ the abelian category of left $\shd_X$-modules, by 
$\mdhol[\shd_X]$ the full subcategory of holonomic $\shd_X$-modules  and by $\Perv(\C_X)$ the abelian category of perverse sheaves with coefficients in $\C$. Consider the functor constructed in~\cite{Ka75}
\eqn
&&\Sol\cl \mdhol[\shd_X]^\rop\to\Perv(\C_X),\\
&&\shm\mapsto \rhom[\shd](\shm,\OO).
\eneqn
(Note that at this time the notion of perverse sheaves was not explicit, but in his paper the author proved that 
$\rhom[\shd](\shm,\OO)$ is $\C$-constructible and satisfies the properties which are now called perversity.)

It is well-known that this functor is not faithful. For example, if $X=\BBA^1(\C)$, the complex line with coordinate $t$, $P=t^2\partial_t-1$ and $Q=t^2\partial_t+t$, then the two $\shd_X$-modules 
$\shd_X/\shd_X P$ and $\shd_X/\shd_X Q$  have 
the same sheaves of solutions.

A natural idea to overcome this difficulty is to replace the sheaf  $\OO$ with presheaves of holomorphic functions with various growths such as 
for example the presheaf $\Ot$ of holomorphic functions with tempered growth. This presheaf is not  a sheaf for the usual topology, but it becomes  a sheaf for a suitable Grothendieck topology, the subanalytic topology,  and here we shall  embed the category of subanalytic sheaves in that of indsheaves. 

As we shall see, the indsheaf $\Ot$ is not sufficient  to obtain a Riemann-Hilbert correspondence, but it is a first step to this direction. To obtain a final result, it is necessary to add an extra variable and to work with an ``enhanced'' version of $\Ot$ in order to describe ``various growths'' in a rigorous way.

In a first part, we shall recall the main results of the theory of indsheaves and subanalytic sheaves and we shall 
explain with some details the operations on D-modules and their tempered holomorphic solutions. As an application, we obtain the Riemann-Hilbert correspondence for regular holonomic D-modules as well as the fact that the de Rham functor commutes with integral transforms.

In a second part, we do the same for the sheaf of enhanced tempered solutions of (no more necessarily regular) holonomic D-modules. For that purpose, we first recall the main results of the theory of indsheaves on bordered spaces and its enhanced version,
a generalization to indsheaves of a construction of Tamarkin~\cite{Ta08}. 

Let us describe with some details the contents of these Notes.

\medskip\noi
{\bf Section~\ref{section:review}} is a brief review on the theory of sheaves and D-modules. Its aim is essentially to fix the notations and to recall the main formulas of constant use. 

\medskip\noi
In {\bf Section~\ref{section:indshv}}, extracted from~\cite{KS96,KS01}, 
we briefly describe the category of indsheaves on a locally compact space and the six operations on indsheaves. A method for constructing indsheaves on a subanalytic space is the use of  the subanalytic 
Grothendieck topology, a topology  for which the open sets are the open relatively compact subanalytic subsets and the coverings are the finite coverings. On a real analytic manifold $M$,  this allows us to construct the indsheaves of Whitney functions, tempered 
$\mathrm{C}^\infty$-functions and  tempered distributions. On a complex manifold $X$, by taking the Dolbeault complexes with such coefficients, we obtain the indsheaf (in the derived sense) $\Ow$ of Whitney holomorphic functions and the indsheaf $\Ot$  of tempered holomorphic functions.

\medskip\noi
Then, in {\bf Section~\ref{section:tempered}}, also extracted from~\cite{KS96,KS01}, we study the tempered de Rham and Sol (Sol for solutions) functors, that is, we study these functors with values in the sheaf of tempered holomorphic functions. We prove two main results  which will be the main tools to treat the regular Riemann-Hilbert correspondence later. The first one is Theorem~\ref{thm:ifunct0} which calculates the inverse image of the tempered de Rham complex. It is a reformulation of a theorem of~\cite{Ka84}, a vast generalization of the famous Grothendieck theorem on the de Rham cohomology of algebraic varieties. The second result, Theorem~\ref{th:tGrauert}, is a tempered version of  the  Grauert direct image theorem. 

\medskip\noi
In {\bf Section~\ref{section:reghol}} we give a  proof  of the main theorem of~\cite{Ka80,Ka84} on the  Riemann-Hilbert correspondence for regular holonomic D-modules (see Corollary~\ref{cor:RegRH3}). Our proof is based on Lemma~\ref{le:normaDR} which essentially claims that to prove that regular holonomic D-modules have a certain property, it is enough to check that this property 
is stable by  projective direct images and is satisfied by modules of ``regular normal forms'', that is, 
modules associated with equations of the type $z_i\partial_{z_i}-\la_i$ or $\partial_{z_j}$. The 
Riemann-Hilbert correspondence as formulated in loc.\ cit.\ is not enough to treat integral transform and we have to prove a ``tempered'' version of it (Theorem~\ref{thm:ifunct4}). 
We then  collect all results on the  tempered solutions of D-modules in a single formula which, roughly speaking, asserts that the tempered de Rham functor commutes with integral transforms whose kernel is regular holonomic (Theorem~\ref{th:7412}). We end this section with a detailed study of the irregular holonomic D-module 
$\shd_X\exp(1/z)$ on $\BBA^1(\C)$, following~\cite{KS03}. This case shows that the solution functor with values in the indsheaf $\Ot$ gives many informations on the holonomic D-modules, but not enough: it is not fully faithful.
As seen in the next sections, in order to treat irregular case, we need
the enhanced version of the setting discussed in this section.

\medskip\noi
{\bf Section~\ref{section:bordered}}, extracted from~\cite{DK13}, treats indsheaves on bordered spaces. A bordered space is a pair  $(M,\bM)$ of good topological spaces with $M\subset \bM$ an open embedding. The derived category of indsheaves on $(M,\bM)$ is the quotient of the category of indsheaves on 
$\bM$ by that of indsheaves on $\bM\setminus M$. Indeed, contrarily to the case of usual sheaves, this quotient is not equivalent to the derived category of indsheaves on $M$. 

The main idea to treat the irregular Riemann-Hilbert correspondence is to replace the indsheaf $\Ot$ with an enhanced version, the object $\OEn_X$. Roughly speaking, this object (which is no more an indsheaf) is obtained as the image 
of  the complex of solutions of the operator $\partial_t-1$ acting on $\Ot[X\times\C]$, in a suitable category, namely that of enhanced indsheaves.

 \medskip\noi
{\bf Section~\ref{section:enhanced}}, also extracted from~\cite{DK13}, defines and studies the triangulated category 
$\TDC(\icor_M)$  of enhanced indsheaves on $M$, adapting to indsheaves a construction of Tamarkin~\cite{Ta08}. Denoting by
$\R_\infty$ the bordered space  $(\R,\overline\R)$ in which $\ol\R$ is the two points compactification of $\R$, 
the category $\TDC(\icor_M)$ is the quotient of the category of indsheaves on $M\times\R_\infty$ by the subcategory 
of indsheaves which are 
isomorphic to the inverse image of indsheaves on $M$. 

\medskip\noi
{\bf Section~\ref{section:hol}}, mainly extracted from~\cite{DK13}, treats the irregular Riemann-Hilbert correspondence. Similarly as in the regular case, an essential tool is  Lemma~\ref{lem:redux} which asserts that to prove that  holonomic D-modules have a certain property, it is enough to check that this property 
is stable by  projective direct images and is satisfied by modules of ``normal forms'', that is, D-modules of the type 
$\shd_X\exp\vphi$ where $\vphi$ is a meromorphic function. This lemma follows directly from the  
fundamental results of Mochizuki~\cite{Mo09,Mo11} (in the algebraic setting) and later Kedlaya~\cite{Ke10,Ke11} 
in  the analytic case, after  preliminary results  by Sabbah~\cite{Sa00}.
The proof of the irregular Riemann-Hilbert correspondence is rather intricate and uses enhanced constructible sheaves 
and a duality result between
the enhanced solution functor and the enhanced de Rham functor. 
However, this theorem formulated in~\cite{DK13} (Corollary~\ref{cor:irregRH2}) is not 
enough to treat irregular  integral transform and we have to prove an ``enhanced'' version of it (Theorem~\ref{th:irrRH1}, extracted from~\cite{KS14}). 

\medskip\noi
In {\bf Section~\ref{section:IT}}, extracted from~\cite{KS14}, we apply the preceding results. 
The main formula~\eqref{eq:7413} asserts, roughly speaking, that the enhanced de Rham functor commutes with integral transforms with irregular kernels. 
In a previous paper~\cite{KS97} we had already proved (without the machinery of enhanced indsheaves) that given a complex vector space $\WW$, the Laplace transform induces an isomorphism of the Fourier-Sato transform of the conic sheaf associated with $\Ot[\WW]$ with the similar sheaf on $\WW^*$ (up to a shift). 
We obtain here a similar result in a non-conic setting, replacing $\Ot[\WW]$ 
with its enhanced version  $\OEn_\WW$. For that purpose, we extend first the Tamarkin non conic Fourier-Sato transform to the enhanced setting.

\medskip\noindent
{\bf Bibliographical and historical comments.}
A first important step in a modern  treatment of the Riemann-Hilbert correspondence is the 
book of Deligne~\cite{De70}. 
A second important step is the constructibility theorem~\cite{Ka75}
and a precise formulation of this correspondence in 1977 by the same author (see \cite[p.~287]{Ra78}).
Then a detailed sketch of proof of the theorem establishing this correspondence (in the regular case) appeared  in~\cite{Ka80} where the functor $\thom$ of tempered cohomology was introduced, and a detailed proof appeared in~\cite{Ka84}.  
 A different proof to this correspondence appeared in~\cite{Me84}. 
The functorial operations on the functor $\thom$, 
as well as its dual  notion, the Whitney tensor product $\wtens$,  are systematically studied in~\cite{KS96}. 
These two functors are in fact better understood
by the language of $\Ot$ and $\Ow$, the  indsheaves  
of tempered holomorphic functions and Whitney holomorphic functions
introduced in~\cite{KS01}.

In the early 2000, it became clear that the indsheaf $\Ot[X]$ of tempered holomorphic functions is an essential tool for  the study of irregular holonomic modules and a toy model was studied  in~\cite{KS03}. However, on 
$X=\BBA^1(\C)$, the two  holonomic $\shd_X$-modules 
$\shd_X\exp(1/t)$ and  $\shd_X\exp(2/t)$ have the same tempered holomorphic solutions,  which shows that $\Ot[X]$ is not precise enough to treat irregular holonomic D-modules. 
This difficulty is overcome in~\cite{DK13} by adding an extra variable 
in order to capture the growth at singular points.   This is done, first by adapting to indsheaves a construction of Tamarkin~\cite{Ta08}, leading to the notion of ``enhanced indsheaves'', then by defining the 
``enhanced indsheaf of tempered holomorphic functions''. 
Using fundamental results of Mochizuki~\cite{Mo09,Mo11} (see also Sabbah~\cite{Sa00} for preliminary results  and see
Kedlaya~\cite{Ke10,Ke11} for the analytic case),
this leads to the solution of  the Riemann-Hilbert correspondence for 
 (not necessarily regular)  holonomic D-modules.

As already mentioned, most of the results  discussed  here are already known. 
 We sometimes don't give proofs, or only give a sketch of the proof. 
However, 
Theorems~\ref{th:constdualDmod}, \ref{th:newconstruct} and Corollaries~\ref{cor:KSduality}, \ref{cor:newconstruct} are new. 

\section{A brief review on sheaves and D-modules}\label{section:review}

As already mentioned in the introduction, we assume the 
 reader familiar with the language of sheaves and D-modules, in the derived sense.
Hence, the aim of this section is mainly to fix some notations.

\subsection{Sheaves}\label{subsec:shv}
We refer to~\cite{KS90} for all notions of sheaf theory used here.
For simplicity, we denote by $\cor$ a field, although most of the results would remain true when $\cor$ is a
commutative ring of finite global dimension.  

A topological space is {\em good} 
\index{good!topological space}%
if it is Hausdorff, locally compact, countable at infinity and has finite flabby dimension. 
Let $M$ be such a space. For a subset $A\subset M$, we denote by $\ol A$ its closure and $\Int(A)$ its interior.

One denotes by $\md[\cor_M]$  
\glossary{$\md[\cor_M]$}%
the abelian category of sheaves of $\cor$-modules on $M$ and by 
$\Derb(\cor_M)$ 
\glossary{$\Derb(\cor_M)$}%
its bounded derived category.
Note that $\md[\cor_M]$ has a finite homological dimension. 

For a locally closed subset $A$ of $M$, one denotes by $\cor_A$ the constant sheaf on $A$ with stalk $\cor$ extended by $0$ on $X\setminus A$. For $F\in\Derb(\cor_M)$, one sets $F_A\eqdot F\tens\cor_A$. 
One denotes by $\Supp(F)$ the support of $F$. 
\glossary{$\Supp(F)$}%

We shall make use of the dualizing complex 
\index{dualizing complex}%
on $M$, denoted by $\omega_M\glossary{$\omega_M$}$,
and the duality functors
\eq\label{eq:dualfcts}
&&\RD'_M\eqdot\rhom(\scbul,\cor_M),\quad \RD_M\eqdot\rhom(\scbul,\omega_M).
\eneq
\glossary{$\RD'_M$}%
\glossary{$\RD_M$}%
Recall that, when $M$ is a real manifold, $\omega_M$ is isomorphic to the orientation sheaf shifted by the dimension.

We have the two internal operations of internal hom and tensor product:
\glossary{$\rhom$}\glossary{$\tens$}%
\eqn
\rhom(\scbul,\scbul)&\cl& \Derb(\cor_M)^\rop\times \Derb(\cor_M)\to \Derb(\cor_M),\\
\scbul\ltens\scbul&\cl&\Derb(\cor_M)\times \Derb(\cor_M)\to \Derb(\cor_M).
\eneqn
Hence,  $\Derb(\cor_M)$ has a structure of commutative tensor category with $\cor_M$ as unit object
and $\rhom$ is the inner hom of this tensor category. 

Now let $f\cl M\to N$ be a morphism of good topological spaces. One has the functors
\glossary{$\opb{f}$}\glossary{$\epb{f}$}\glossary{$\roim{f}$}\glossary{$\reim{f}$}%
\eqn
&&\opb{f}\cl\Derb(\cor_N)\to\Derb(\cor_M)\mbox{ inverse image},\\
&&\epb{f}\cl\Derb(\cor_N)\to\Derb(\cor_M)\mbox{ extraordinary  inverse image},\\
&&\roim{f}\cl\Derb(\cor_M)\to\Derb(\cor_N)\mbox{ direct image},\\
&&\reim{f}\cl\Derb(\cor_M)\to\Derb(\cor_N)\mbox{ proper direct image}.
\eneqn
We get the pairs of adjoint functors $(\opb{f},\roim{f})$ and $(\reim{f},\epb{f})$.

The operations associated with the functors $\tens,\rhom, \opb{f},\epb{f},\roim{f},\reim{f}$ are called  
\index{Grothendieck's six operations!for sheaves}%
Grothendieck's six operations. 

For two topological spaces $M$ and $N$, one defines  the functor of 
\index{external tensor product!for sheaves}%
external tensor product 
\glossary{$\etens$}%
\eqn
&&\scbul\etens\scbul\cl \Derb(\cor_M)\times\Derb(\cor_N)\to\Derb(\cor_{M\times N})
\eneqn
 by setting $F\etens G\eqdot\opb{q_1}F\tens\opb{q_2}G$, where $q_1$ and $q_2$ are the projections from $M\times N$ to $M$ and $N$, respectively.

Denote by $\rmpt$ the topological space with a single element  and by $a_M\cl M\to\rmpt$
\glossary{$a_M\cl M\to\rmpt$}% 
the unique morphism.
One has the isomorphism 
\eqn
&&\cor_M\simeq\opb{a_M}\cor_\rmpt,\quad \omega_M\simeq\epb{a_M}\cor_\rmpt.
\eneqn
There are many important formulas relying the six operations. In particular we have the formulas 
below in which $F,F_1,F_2\in\Derb(\cor_M)$, $G,G_1,G_2\in \Derb(\cor_N)$:
\eqn
&&\rhom(F\tens F_1,F_2)\simeq\rhom\bl F,\rhom(F_1,F_2)\br,\\
&&\roim{f}\rhom(\opb{f}G,F)\simeq\rhom(G, \roim{f}F),\\
&&\reim{f}(F\tens\opb{f}G)\simeq F\tens\reim{f}G\quad\text{
(projection formula),\index{projection formula!for sheaves}}\\
&&\epb{f}\rhom(G_1,G_2)\simeq\rhom(\opb{f}G_1,\epb{f}G_2),
\eneqn
and for a Cartesian square of good topological spaces, 
\eq\label{eq:CartSq}
\ba{c}\xymatrix@C=8ex{
M' \ar[r]^{f'} \ar[d]^{g'} & N' \ar[d]^{g} \\
M \ar[r]^{f}\ar@{}[ur]|-\square & N
}\ea
\eneq
we have the {\em base change formula!for sheaves}
\index{base change formula!for sheaves}%
\eqn
&& \opb{g}\reim{f}\simeq \reim f'\opb{g'}.
\eneqn

In these Notes, we shall also encounter 
{\em $\R$-constructible} sheaves. References are made to~\cite[Ch.~VIII]{KS90}.
Let $M$ be a real analytic manifold. On $M$ there is the family of subanalytic sets due to Hironaka and Gabrielov
(see~\cite{BM88,VD98} for an exposition). 
\index{subanalytic!subset}%
This family is 
stable by all usual operations (finite intersection and  locally finite union, complement, closure, interior) and contains the family of semi-analytic sets (those locally defined by analytic inequalities). If $f\cl  M\to N$ is a morphism of real analytic manifolds, then the inverse image of a subanalytic set is subanalytic. If $Z$ is subanalytic in $M$ and $f$ is proper on the closure of $Z$, then $f(Z)$ is subanalytic in $N$. 

A sheaf $F$ is $\R$-\emph{constructible}
\index{R@$\R$-constructible!sheaves}%
if there exists a subanalytic stratification 
$M=\bigsqcup_{j\in J} M_j$ such that for each $j\in J$, the sheaf $F\vert_{M_j}$ is locally constant of finite rank. 
One defines the category 
\glossary{$\Derb_\Rc(\cor_M)$}%
$\Derb_\Rc(\cor_M)$ as the full subcategory of $\Derb(\cor_M)$ consisting of objects $F$ such that $H^i(F)$ is $\R$-constructible for all  $i\in\Z$  and one proves that this category is triangulated.

The category $\Derb_\Rc(\cor_M)$ is stable by the usual internal operations (tensor product, internal hom) and the duality functors in~\eqref{eq:dualfcts} induce anti-equivalences on this category. 

If $f\cl M\to N$ is a morphism of real analytic manifolds, then $\opb{f}$ and $\epb{f}$ send $\R$-constructible objects to $\R$-constructible objects. If $F\in\Derb_\Rc(\cor_M)$ and $f$ is proper on $\Supp(F)$, then 
$\reim{f}F\in\Derb_\Rc(\cor_N)$. 

\subsection{D-modules}\label{subsec:Dmod}
References for 
\index{D-module}%
D-module theory are made to~\cite{Ka03}. See also \cite{Ka70,Bj93,HTT08}. 

Here, we shall briefly recall some basic constructions in  the theory of D-modules that we shall use. Note that 
many classical functors that shall appear in this section will be extended to indsheaves in 
Section~\ref{section:tempered} and the subsequent sections.

In this subsection, the base field is the complex number field $\C$.

Let $(X,\OO)$ be a {\em complex} manifold. 
We denote as usual by 
\begin{itemize}
\item
$d_X$ the complex dimension of $X$,
\glossary{$d_X$}%
\item
$\Omega_X$ the invertible sheaf of differential forms of top degree,
\glossary{$\Omega_X$}%
\item
$\Omega_{X/Y}$ the invertible $\OO$-module $\Omega_X\tens[{\opb{f}\OO[Y]}]\glossary{$\Omega_{X/Y}$}%
\opb{f}(\Omega_Y^{\otimes-1})$ for a morphism $f\cl X\to Y$ of complex manifolds,
\item
$\Theta_X$ the sheaf of holomorphic vector fields,
\glossary{$\Theta_X$}%
\item 
 $\D_X$ the sheaf of algebras  of finite-order differential operators.
\glossary{$\D_X$}%
\end{itemize}

Denote by $\md[\D_X]$ 
\glossary{$\md[\D_X]$}%
\glossary{$\md[\D^\rop_X]$}%
the abelian category of left $\D_X$-modules and by  $\md[\D^\rop_X]$ that of  right $\D_X$-modules. There is an equivalence 
 \eq
 &&\mop\colon \md[\D_X] \isoto \md[\D_X^\op],\quad \shm\mapsto \shm^\mop\eqdot\Omega_X\tens[\OO]\shm. 
\eneq
By this equivalence, it is enough to study left $\D_X$-modules. 

The ring $\D_X$ is coherent and one denotes by $\mdc[\D_X]$ 
\glossary{$\mdc[\D_X]$}%
the thick abelian subcategory of $\md[\D_X]$ consisting of coherent modules. 
 
To a coherent $\D_X$-module $\shm$ one associates its characteristic variety $\chv(\shm)$, 
\glossary{$\chv(\shm)$}%
a closed $\C^\times$-conic 
{\em co-isotropic} (one also says {\em involutive}) $\C$-analytic subset of the cotangent bundle $T^*X$. 
The involutivity property is 
a central theorem of the theory and is due to~\cite{SKK73}. A purely algebraic proof was obtained later in~\cite{Ga81}.

If $\chv(\shm)$ is Lagrangian, $\shm$ is called {\em  holonomic}.
\index{D-module!holonomic}%
\index{holonomic D-module}%
 It is immediately checked that the full subcategory $\mdhol[\D_X]$ of $\mdc[\D_X]$ consisting of holonomic $\D$-modules is a thick abelian subcategory.
 
A $\D_X$-module $\shm$ is \emph{quasi-good} 
\index{D-module!quasi-good}%
\index{quasi-good D-module}%
if, for any relatively compact open subset 
$U\subset X$, $\shm\vert_U$ is a sum of coherent $(\OO\vert_U)$-submodules. A $\D_X$-module $\shm$ is  \emph{good} 
\index{D-module!good}%
\index{good!D-module}%
if it is quasi-good and coherent. The subcategories of $\md[\D_X]$ consisting of quasi-good (resp.\ good) $\D_X$-modules are abelian and thick. Therefore, one has  the triangulated categories
\begin{itemize}
\item
$\BDC_\coh(\D_X)=\set{\shm\in\Derb(\D_X)}%
{\text{$H^j(\shm)$ is coherent for all $j\in\Z$}}$,
\glossary{$\BDC_\coh(\D_X)$}%
\item
$\BDC_\hol(\D_X)=\set{\shm\in\Derb(\D_X)}%
{\text{$H^j(\shm)$ is holonomic for all $j\in\Z$}}$,
\glossary{$\BDC_\hol(\D_X)$}%
\item 
$\BDC_\qgood(\D_X)=\set{\shm\in\Derb(\D_X)}{\text{$H^j(\shm)$ is quasi-good  for all $j\in\Z$}}$,
\glossary{$\BDC_\qgood(\D_X)$}%
\item
$\BDC_\good(\D_X)=\set{\shm\in\Derb(\D_X)}{\text{$H^j(\shm)$ is good  for all $j\in\Z$}}$.
\glossary{$\BDC_\good(\D_X)$}%
\end{itemize}
 One may also consider the unbounded derived categories $\RD(\D_X)$, 
$\RD^-(\D_X)$   and $\RD^+(\D_X)$ and the full triangulated subcategories consisting of coherent, holonomic, quasi-good and good modules. 

We have the functors
\eqn
\rhom[\shd_X](\scbul,\scbul)&\cl& \Derb(\D_X)^\rop\times \Derb(\D_X)\to \Derp(\C_X),\\
\scbul\ltens[\D_X]\scbul&\cl&\Derb(\D_X^\rop)\times \Derb(\D_X)\to \Derm(\C_X).
\eneqn
We also have the functors
\eqn
\scbul\Dtens\scbul&\cl&\Derm(\D_X)\times\Derm(\D_X) \to\Derm(\D_X),\\
\scbul\Dtens\scbul&\cl&\Derm(\D^\rop_X)\times\Derm(\D_X) \to\Derm(\D^\rop_X),
\eneqn
\glossary{$\Dtens$}%
constructed as follows. The 
$(\D_X, \D_X\tens \D_X)$-bimodule structure on $\D_X\tens[{\OO}]\D_X$
gives 
\eqn
&&\shm\tens[{\OO}]\shn\simeq (\D_X\tens[{\OO}]\D_X)\tens[\D_X\tens\D_X](\shm\tens\shn)
\eneqn
the structure of a $\D_X$-module for $\shm$ and $\shn$ two $\D_X$-modules, 
and similarly for $\shn$ a right $\D_X$-module.

There are  similar constructions  with right $\D_X$-modules.

One defines the 
\index{duality!for D-modules}%
duality functor for D-modules by setting
\glossary{$\Ddual$}%
\eqn
&&\hs{-2ex}\Ddual_X\shm= \rhom[\D_X](\shm,\D_X\tens[\OO]\Omega_X^{\otimes-1})[d_X]
\in\Derb(\D_X)\quad \text{for $\shm\in\Derb(\D_X)$,}\\
&&\hs{-2ex}\Ddual_X\shn= \rhom[\D_X^\rop](\shn,\Omega_X\tens[\OO]\D_X)[d_X]
\in \Derb(\D^\rop_X)\quad\mbox{for $\shn\in\Derb(\D^\rop_X)$.}
\eneqn

Let $X$ and $Y$ be two complex manifolds. 
One defines the functor of 
\index{external tensor product!for D-modules}%
external tensor product for D-modules
\glossary{$\Detens$}%
\eqn
&&\scbul\Detens\scbul\cl \Derb(\D_X)\times\Derb(\D_Y)\to\Derb(\D_{X\times Y})
\eneqn
by setting $\shm\Detens\shn= \D_{X\times Y}\tens[\shd_X\etens\shd_Y](\shm\etens\shn)$.

\vspace{1ex}

Now, let $f\cl X\to Y$ be a morphism of complex manifolds. 
The {\em  transfer  bimodule} 
\index{transfer bimodule}%
\glossary{$\D_{X\to Y}$}%
$\D_{X\to Y}$ is a $(\D_X,\opb{f}\D_Y)$ bimodule defined as follows. As an 
 $(\sho_X,\opb{f}\D_Y)$-bimodule, 
$\D_{X\to Y}=\sho_X\tens[\opb{f}\sho_Y] \opb{f}\D_Y$. 
The left $\D_X$-module structure of 
 $\D_{X\to Y}$ is deduced from the action of $\Theta_X$. For
 $v\in\Theta_X$, denoting by $\sum_ia_i\tens w_i$ its image in 
$\sho_X\tens[\opb{f}\sho_Y]\opb{f}\Theta_Y$, the action  of $v$ on $\D_{X\to Y}$ is given by 
\eqn
&&v(a\tens P)=v(a)\tens P+\sum_i aa_i\tens w_iP.   
\eneqn
\glossary{$\D_{X\from Y}$}%
One also uses the opposite transfer  bimodule $\D_{Y\from X}=\opb{f}\D_Y\tens[{\opb{f}\OO[Y]}]\Omega_{X/Y}$, an $(\opb{f}\D_Y,\D_X)$-bimodule. 

Note that for another morphism of complex manifolds $g\cl Y\to Z$, one has the natural isomorphisms
\eqn
&&\D_{X\to Y}\ltens[\opb{f}\D_Y]\opb{f}\D_{Y\to Z}\simeq\D_{X\to Z},\\
&&\opb{f}\D_{Z\from Y}\ltens[\opb{f}\D_Y]\D_{Y\from X}\simeq\D_{Z\from X}.
\eneqn
One can now define the external operations on  D-modules by setting:
\glossary{$\Dopb{f}$}%
\glossary{$\Deim{f}$}%
\glossary{$\Doim{f}$}%
\eqn
&&\Dopb f\shn\eqdot \D_{X\to Y}\ltens[\opb{f}\D_Y]\opb{f}\shn,\mbox{ for }\shn\in\Derb(\D_Y),\\
&&\Deim f\shm\eqdot\reim{f}(\shm\ltens[\D_X]\D_{X\to Y})\mbox{ for }\shm\in\Derb(\D_X^\rop),
\eneqn
and one defines $\Doim{f}\shm$ by replacing $\reim{f}$ with $\roim{f}$ 
in the  above formula. By using the opposite transfer bimodule $\D_{Y\from X}$ one defines similarly the inverse image of a right $\D_Y$-module or the direct image of a left $\D_X$-module. 

One calls respectively $\Dopb{f}$, $\Doim{f}$ and $\Deim{f}$ the inverse image,  direct image and proper direct image functors in the category of D-modules. 

Note that
\eqn
&&\Dopb f\OO[Y]\simeq\OO,\quad \Dopb f\Omega_Y\simeq\Omega_X.
\eneqn
Also note that the properties of being quasi-good are stable by inverse image and tensor product, as well as by direct image by maps proper on the support of the module. The property of being good is stable by duality. 

\vspace{1ex}

Let $f\cl X\to Y$ be a morphism of complex manifolds. One associates the maps
\eqn\label{diag:tgmor}
\xymatrix@C=8ex{
T^*X\ar[rd]_-{\pi_X}&X\times_Y\ar[d]
\ar[l]_-{f_d}\ar[r]^-{f_\pi}T^*Y
                                      & T^*Y\ar[d]^-{\pi_Y}\\
&X\ar[r]^-f&Y.
}\eneqn
One says that $f$ is 
\index{non-characteristic}%
non-characteristic for $\shn\in\Derb_\coh(\D_Y)$ if the map $f_d$  is proper (hence, finite) on $\opb{f_\pi}\bl\chv(\shn)\br$.

The classical de Rham and 
\index{de Rham functor!}%
\index{solution functor!}%
solution functors are defined by
\glossary{$\dr_X$}%
\glossary{$\sol_X$}%
\begin{align*}
\dr_X &\cl \Derb(\D_X) \to \Derb(\C_X), &\shm &\mapsto \Omega_X \ltens[\D_X] \shm, \\
\sol_X &\colon \Derb(\shd_X)^\op \to \Derb(\C_X), &\shm &\mapsto \rhom[\D_X] (\shm,\OO).
\end{align*}
For $\shm\in\Derb_\coh(\D_X)$, one has
\eq\label{eq:dualdrsolnt}
\sol_X(\shm) \simeq \dr_X(\Ddual_X\shm)[-d_X].
\eneq

\begin{theorem}[{\rm Projection formulas~\cite[Theorems~4.2.8,~4.40]{Ka03}}]\label{th:Dprojform}
\index{projection formula!for D-modules}%
Let  $f\cl X\to Y$ be a morphism of complex manifolds. Let $\shm\in\Derb(\D_X)$ and 
$\shl\in\Derb(\D_Y^\rop)$.
There are natural isomorphisms{\rm:}
\eq
\Deim{f}(\Dopb{f}\shl\Dtens\shm)&\simeq&\shl\Dtens\Deim{f}\shm,\label{eq:DDprojform}\\
\reim{f}(\Dopb{f}\shl\ltens[\shd_X]\shm)&\simeq&\shl\ltens[\shd_Y]\Deim{f}\shm.\label{eq:Dprojform}
\eneq
In particular, there is an isomorphism \lp commutation of the de Rham functor and direct images\rp
\eq\label{eq:oimdr}
\reim{f}(\dr_X(\shm))\simeq\dr_Y(\Deim{f}\shm).
\eneq
\end{theorem}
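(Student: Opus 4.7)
My strategy is to reduce both isomorphisms to the classical sheaf-theoretic projection formula after unfolding the definitions in terms of the transfer bimodules $\D_{X\to Y}$ and $\D_{Y\from X}$, and then to obtain~\eqref{eq:oimdr} as a direct corollary by specialization.

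I would begin with~\eqref{eq:Dprojform}. Since $\shl$ is a right $\D_Y$-module, its inverse image is $\Dopb{f}\shl\simeq \opb{f}\shl\ltens[{\opb{f}\D_Y}]\D_{Y\from X}$, and since $\shm$ is a left $\D_X$-module, its direct image is $\Deim{f}\shm\simeq \reim{f}\bigl(\D_{Y\from X}\ltens[\D_X]\shm\bigr)$. Associativity of the tensor product then gives
\begin{align*}
\reim{f}\bigl(\Dopb{f}\shl\ltens[\D_X]\shm\bigr)
 &\simeq \reim{f}\bigl(\opb{f}\shl\ltens[{\opb{f}\D_Y}]\bigl(\D_{Y\from X}\ltens[\D_X]\shm\bigr)\bigr).
\end{align*}
I would then invoke the $\opb{f}\D_Y$-linear version of the projection formula, namely $\reim{f}\bigl(\opb{f}\shl\ltens[{\opb{f}\D_Y}]\shg\bigr)\simeq \shl\ltens[\D_Y]\reim{f}\shg$ for any complex $\shg$ of $\opb{f}\D_Y$-modules, with $\shg=\D_{Y\from X}\ltens[\D_X]\shm$, yielding~\eqref{eq:Dprojform}.

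For~\eqref{eq:DDprojform} the same computation applies, but one must keep track of the extra $\D_X$-action carried by $\Dtens$ through the coproduct $\D_X\to\D_X\tens[\OO]\D_X$. The right $\D_Y$-structure on each side comes from the right $\D_X$-action on $\D_{Y\from X}$, propagated through $\reim{f}$, so the chain of isomorphisms obtained above upgrades naturally from $\Derb(\C_Y)$ to $\Derb(\D_Y^\rop)$. Finally,~\eqref{eq:oimdr} follows by specializing~\eqref{eq:Dprojform} to $\shl=\Omega_Y$: a direct computation gives
\[
\Dopb{f}\Omega_Y\simeq \opb{f}\Omega_Y\ltens[{\opb{f}\D_Y}]\D_{Y\from X}\simeq \opb{f}\Omega_Y\tens[{\opb{f}\OO[Y]}]\Omega_{X/Y}\simeq \Omega_X,
\]
by the very definition $\Omega_{X/Y}=\Omega_X\tens[{\opb{f}\OO[Y]}]\opb{f}(\Omega_Y^{\otimes-1})$. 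Substituting yields $\reim{f}\bigl(\Omega_X\ltens[\D_X]\shm\bigr)\simeq \Omega_Y\ltens[\D_Y]\Deim{f}\shm$, which is exactly the asserted commutation of $\dr$ with $\reim{f}$.

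The main technical obstacle is the justification of the $\opb{f}\D_Y$-linear projection formula invoked in the first step; the cleanest route is to resolve $\shl$ by complexes of locally free $\D_Y$-modules of the form $\D_Y\tens[{\OO[Y]}]\shf$ and to reduce, by $\opb{f}\D_Y\tens[{\opb{f}\OO[Y]}](\scbul)$--freeness of $\D_{Y\from X}$ over $\opb{f}\D_Y$, to the ordinary projection formula for $\cor$-modules on topological spaces. The rest is bookkeeping of D-module actions on transfer bimodules, for which the identities $\D_{Y\from X}\ltens[\D_X]\D_{X\to Y}\simeq \D_{Y\from X}$ and its analogues are the formal inputs.
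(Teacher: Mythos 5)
The paper gives no proof of this theorem; it simply cites \cite[Theorems~4.2.8, 4.40]{Ka03}. Your reduction of~\eqref{eq:Dprojform} through the transfer bimodule $\D_{Y\from X}$ to the $\opb{f}\D_Y$-linear projection formula, and your derivation of~\eqref{eq:oimdr} by specializing $\shl=\Omega_Y$ together with $\Dopb{f}\Omega_Y\simeq\Omega_X$, correctly reproduce the standard argument in that reference.

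Your treatment of~\eqref{eq:DDprojform}, however, is too cursory to be convincing as written. You cannot ``upgrade'' the isomorphism~\eqref{eq:Dprojform} from $\Derb(\C_Y)$ to $\Derb(\D^\rop_Y)$: the operation $\ltens[\D_X]$ erases the $\D_X$-action, while $\Dtens$ retains it diagonally, so the two left-hand sides are genuinely different objects---that of~\eqref{eq:DDprojform} unwinds to $\reim{f}\bigl((\Dopb{f}\shl\Dtens\shm)\ltens[\D_X]\D_{X\to Y}\bigr)$, which does not appear anywhere in your chain of isomorphisms for~\eqref{eq:Dprojform}. The missing ingredient is a ``twisted associativity'' isomorphism
\[
(\shn\Dtens\shm)\ltens[\D_X]\shk\;\simeq\;\shn\ltens[\D_X](\shm\Dtens\shk)
\]
for a right $\D_X$-module $\shn$ and left $\D_X$-modules $\shm,\shk$, which follows from the fact that $\D_X$ is a Hopf algebroid over $\OO$. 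Applying it with $\shn=\Dopb{f}\shl$ and $\shk=\D_{X\to Y}$ brings the left-hand side of~\eqref{eq:DDprojform} into the form $\reim{f}\bigl(\Dopb{f}\shl\ltens[\D_X](\shm\Dtens\D_{X\to Y})\bigr)$, to which your $\opb{f}\D_Y$-linear projection formula then applies; the residual right $\D_Y$-structure is carried by the $\D_{X\to Y}$ factor (not, as you suggest, simply propagated through $\D_{Y\from X}$ and $\reim{f}$), and one must still check that the resulting object identifies with $\shl\Dtens\Deim{f}\shm$, which requires one further manipulation with the same Hopf-algebroid identity.
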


\begin{theorem}[{\rm Commutativity with duality~\cite{Ka03,Sc86}}]\label{th:oimopbDdual}
Let  $f\cl X\to Y$ be a morphism of complex manifolds.
\bnum
\item
Let  $\shm\in\Derb_\good(\D_X)$ and 
assume that $f$ is proper on $\Supp(\shm)$. Then $\Deim{f}\shm\in\Derb_\good(\D_Y)$ and 
$\Ddual_Y(\Deim{f}\shm)\simeq\Deim{f}\Ddual_X\shm$.
\item
Let $\shn\in\Derb_\qgood(\D_Y)$. Then 
$\Dopb{f}\shn\in\Derb_\qgood(\D_X)$. Moreover,  if  $\shn\in\Derb_\coh(\D_Y)$ and $f$ is non-characteristic for 
$\shn$, then 
 $\Dopb{f}\shn\in\Derb_\coh(\D_X)$ and
$\Ddual_X(\Dopb{f}\shn)\simeq\Dopb{f}\Ddual_Y\shn$.
\ee
\end{theorem}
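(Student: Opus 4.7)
The plan is to factorize $f$ through its graph $X \xrightarrow{i_f} X\times Y \xrightarrow{p_Y} Y$, where $i_f(x) = (x,f(x))$ is a closed embedding and $p_Y$ is a smooth projection. Since both the direct image and the inverse image are functorial under composition of morphisms (this follows from the transfer bimodule isomorphisms stated just before the theorem) and since both statements are compatible with composition, it is enough to treat each factor separately. For a closed embedding $i\colon X\hookrightarrow Z$, Kashiwara's equivalence identifies $\Derb_{\good}(\D_X)$ with the full subcategory of $\Derb_{\good}(\D_Z)$ of modules supported on $i(X)$, and on this subcategory $\Deim{i}=\Doim{i}$. The commutation $\Ddual_Z \Deim{i} \simeq \Deim{i}\Ddual_X$ in this case is a direct computation using the explicit form of the transfer bimodule $\D_{X\to Z}$ and the self-duality of the Koszul complex resolving $\D_{X\to Z}$ as a $\D_Z$-module.

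For the smooth projection $p_Y\colon X\times Y\to Y$, proper on the support of $\shm$, I would use a quasi-good resolution by modules locally of the form $\D_{X\times Y}\tens[\sho_{X\times Y}]\shf$ with $\shf$ a coherent $\sho_{X\times Y}$-module, compute $\Deim{p_Y}$ via the relative de Rham (Spencer) resolution, and conclude coherence from Grauert's theorem applied to these $\sho$-coherent pieces. The duality isomorphism $\Ddual_Y\Deim{p_Y}\shm \simeq \Deim{p_Y}\Ddual_X\shm$ in this case reduces, after unwinding the definitions of $\Ddual$ in terms of $\rhom[\D]({\scbul},\D\tens\Omega^{\otimes -1})$, to the relative Serre--Grothendieck duality for the proper smooth morphism $p_Y$ at the level of $\sho$-modules, combined with the compatibility of the transfer bimodule with the relative dualizing sheaf $\Omega_{X\times Y/Y}$. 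I expect this to be the main technical step.

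For part (ii), the preservation of quasi-goodness is local and follows from the definition $\Dopb{f}\shn = \sho_X \ltens[\opb{f}\sho_Y]\opb{f}\shn$ (viewed with its induced $\D_X$-structure): if $\shn$ is locally a sum of coherent $\sho_Y$-submodules then $\Dopb{f}\shn$ inherits the analogous property, since inverse image of coherent $\sho$-modules is coherent. For the coherent, non-characteristic case, the estimate $\chv(\Dopb{f}\shn)\subset f_d\opb{f_\pi}\chv(\shn)$ combined with properness of $f_d$ on $\opb{f_\pi}\chv(\shn)$ yields coherence by the standard criterion (the characteristic cycle argument).

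The duality commutation $\Ddual_X\Dopb{f}\shn\simeq\Dopb{f}\Ddual_Y\shn$ under the non-characteristic hypothesis is the most delicate part of (ii). I would prove it by factoring $f$ through its graph as above: for the closed embedding $i_f$, non-characteristicity becomes transversality of the conormal bundle to $\chv(\shn)$, and the commutation reduces to a direct Koszul computation; for the smooth projection $p_Y$, the inverse image is essentially $\sho_X \tens[\opb{p_Y}\sho_Y]\opb{p_Y}(\scbul)$ shifted, and duality commutes because relative differentials are locally free. The main obstacle throughout is keeping track of the relative orientation/dualizing sheaves and the shifts by $d_X$, $d_Y$ and the relative dimension $d_{X/Y}$, which is where the Serre--Grothendieck duality at the $\sho$-level and the transfer bimodule formulas must be glued together consistently.
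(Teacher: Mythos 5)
The paper does not prove this theorem; it is quoted from Kashiwara's book and from Schneiders' duality paper, which are cited in the bracket. Your sketch reproduces the standard strategy of those sources: factor $f$ through its graph as a closed embedding $i_f$ followed by the projection $p_Y$, treat closed embeddings via Kashiwara's equivalence and a Koszul computation, treat projections via the relative Spencer resolution together with Grauert's direct image theorem for coherence and relative Serre--Grothendieck duality at the $\sho$-level for the duality isomorphism, and keep track of the twists by $\Omega_{X/Y}$ and the shifts by relative dimension. This is essentially the proof one finds in the references, and the overall plan is sound.

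One point is stated backwards. In part (ii) you write that the inclusion $\chv(\Dopb{f}\shn)\subset f_d\opb{f_\pi}\chv(\shn)$ combined with the properness of $f_d$ over $\opb{f_\pi}\chv(\shn)$ ``yields coherence by the standard criterion (the characteristic cycle argument).'' There is no such criterion: an inclusion of the characteristic variety into a nice set does not by itself force a $\D_X$-module to be coherent, and the statement about $\chv$ is in fact a \emph{consequence} of coherence, not a tool to establish it. The coherence (and, for $\shn$ a single coherent module, the concentration of $\Dopb{f}\shn$ in degree $0$) is proved directly: one reduces by the graph factorization to a closed embedding $i\cl X\hookrightarrow Z$, and non-characteristicity is exactly what guarantees that the local equations of $X$ in $Z$ act as a ``$\gr$-regular'' sequence on a good filtration of $\shn$ near $X$, so that the Koszul complex computing $\sho_X\ltens[\opb{i}\sho_Z]\opb{i}\shn$ has coherent cohomology, with the characteristic variety estimate read off afterwards from the filtered Koszul complex. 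Similarly, the stability of quasi-goodness under $\Dopb{f}$ requires a short argument showing that a filtered colimit of coherent $\sho$-modules is, on any relatively compact open, a union of coherent submodules; your one-line justification glosses over this, but it is a known lemma and not a gap in the strategy.
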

\begin{corollary}\label{cor:Dadj2}
Let  $f\cl X\to Y$ be a morphism of complex manifolds. 
\bnum
\item
Let  $\shm\in\Derb_\good(\D_X)$ and  assume that $f$ is proper on $\Supp(\shm)$. Then
we have the isomorphism for $\shn\in \RD(\D_Y)$:
\eq\label{eq:Doim}
&&\hs{2ex}\roim{f}\rhom[\shd_X](\shm,\Dopb{f}\shn)\,[d_X]\simeq\rhom[\shd_Y](\Doim{f}\shm,\shn)\,[d_Y].
\eneq
In particular, with the same hypotheses, we have the  isomorphism \lp commutation of the Sol functor and direct images\rp
\eq\label{eq:Doimsol}
&&\roim{f}\rhom[\shd_X](\shm,\OO)\,[d_X]\simeq\rhom[\shd_Y](\Doim{f}\shm,\OO[Y])\,[d_Y].
\eneq
\item
 Let $\shn\in \Derb_\coh(\D_Y)$ and assume that $f$ is non-characteristic for $\shn$. Then
we have the isomorphism for $\shm\in \RD(\D_X)${\rm:}
\eq\label{eq:Dopbsol}
&&\hs{2ex}\roim f\rhom[\D_X](\Dopb f\shn,\shm)[d_X] \simeq
\rhom[\D_Y](\shn,\Doim f\shm)[d_Y].
\eneq
\ee
\end{corollary}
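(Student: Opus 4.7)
The plan is to reduce both isomorphisms to combinations of the projection formula (Theorem~\ref{th:Dprojform}) and the commutativity of duality with direct and inverse images (Theorem~\ref{th:oimopbDdual}), after converting every $\rhom[\shd]$ into a tensor product via the standard identification
\[
\rhom[\shd_X](\shm, \shp)\,[d_X] \simeq (\Omega_X \tens[\OO] \Ddual_X \shm) \ltens[\shd_X] \shp,
\]
valid for $\shm \in \Derb_\coh(\shd_X)$ and $\shp \in \Derb(\shd_X)$. This follows from the definition $\Ddual_X \shm = \rhom[\shd_X](\shm, \shd_X \tens[\OO] \Omega_X^{\otimes-1})[d_X]$ by resolving $\shm$ locally by finite rank free $\shd_X$-modules; the shift $[d_X]$ records the one built into $\Ddual_X$.

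For part (i), I substitute $\shp = \Dopb{f}\shn$ in the identification, apply $\roim{f}$, and invoke the projection formula \eqref{eq:Dprojform}. The goodness of $\shm$ and the properness of $f$ on $\Supp(\shm)$ ensure $\roim{f} = \reim{f}$ on the relevant tensor product and that $\Deim{f}\shm \simeq \Doim{f}\shm$. Theorem~\ref{th:oimopbDdual}(i) then gives $\Deim{f}\Ddual_X \shm \simeq \Ddual_Y \Doim{f}\shm$, and reapplying the key identification on $Y$ converts the result into $\rhom[\shd_Y](\Doim{f}\shm, \shn)[d_Y]$. The Sol formula \eqref{eq:Doimsol} follows by specializing $\shn = \OO[Y]$ and using $\Dopb{f}\OO[Y] \simeq \OO$.

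For part (ii), the argument is symmetric. The non-characteristic hypothesis ensures coherence of $\Dopb{f}\shn$ (Theorem~\ref{th:oimopbDdual}(ii)), so the identification transforms the left-hand side into $\roim{f}((\Omega_X \tens[\OO] \Ddual_X \Dopb{f}\shn) \ltens[\shd_X] \shm)$. I then apply Theorem~\ref{th:oimopbDdual}(ii), which gives $\Ddual_X \Dopb{f}\shn \simeq \Dopb{f}\Ddual_Y \shn$, combined with the compatibility of $\Dopb{f}$ with the left-right conversion via $\Omega$ (itself a consequence of $\Dopb{f}\Omega_Y \simeq \Omega_X$), to rewrite the integrand as $\Dopb{f}(\Omega_Y \tens[{\OO[Y]}] \Ddual_Y \shn) \ltens[\shd_X] \shm$. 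A variant of the projection formula adapted to $\roim{f}$ and $\Doim{f}$ then produces $(\Omega_Y \tens[{\OO[Y]}] \Ddual_Y \shn) \ltens[\shd_Y] \Doim{f}\shm$, which by the key identification on $Y$ equals $\rhom[\shd_Y](\shn, \Doim{f}\shm)[d_Y]$.

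The main obstacle I expect is the bookkeeping of left-versus-right D-module conversions (encoded by $\Omega_X \tens[\OO] (\cdot)$) together with the cohomological shifts by $d_X$ and $d_Y$. In particular, one must verify that the projection formula \eqref{eq:Dprojform} applies with $\Omega_X \tens \Ddual_X \shm$ in the right-D-module slot in part (i), and that the $\reim{f}$--$\Deim{f}$ version of Theorem~\ref{th:Dprojform} can be upgraded to a $\roim{f}$--$\Doim{f}$ version under the coherence hypothesis of part (ii); the latter is where the non-characteristic condition really pays off, because it allows one to replace all the derived functors by ordinary ones after a finite resolution.
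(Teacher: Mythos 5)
Your argument is correct. The paper states Corollary~\ref{cor:Dadj2} without proof, as a direct consequence of Theorem~\ref{th:Dprojform} and Theorem~\ref{th:oimopbDdual}, and your reduction through the identification
$\rhom[\shd_X](\shm,\shp)[d_X]\simeq(\Omega_X\tens[\OO]\Ddual_X\shm)\ltens[\shd_X]\shp$,
valid for $\shm$ coherent by a local finite free resolution, is exactly the expected route. In part~(i) the goodness of $\shm$ (hence coherence) makes the identification applicable, and properness of $f$ on $\Supp(\shm)$ gives $\roim{f}\simeq\reim{f}$ on the tensor product and $\Deim{f}\simeq\Doim{f}$, so that Theorem~\ref{th:oimopbDdual}~(i) closes the loop. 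In part~(ii) the non-characteristic hypothesis is what makes $\Dopb{f}\shn$ coherent (so the identification is usable on $X$) and what gives $\Ddual_X\Dopb{f}\shn\simeq\Dopb{f}\Ddual_Y\shn$.

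One small attribution slip in your closing paragraph: the upgrade of the projection formula from the $\reim{f}$--$\Deim{f}$ form of \eqref{eq:Dprojform} to the $\roim{f}$--$\Doim{f}$ form used in part~(ii) is \emph{not} where the non-characteristic condition pays off. That upgrade only needs coherence of $\shn$, hence of $\Omega_Y\tens[{\OO[Y]}]\Ddual_Y\shn$: one reduces locally to the case of a finite free $\shd_Y$-module, where both sides visibly coincide (with $\shd_Y$ one gets $\Doim{f}\shm$ on each side), and then patches by the usual d\'evissage in the derived category. The non-characteristicity is consumed entirely by the two earlier steps (coherence of the pullback and commutation of $\Ddual$ with $\Dopb{f}$). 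This is a minor bookkeeping remark and does not affect the correctness of your proof.
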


A 
\index{transversal Cartesian diagram}%
\emph{transversal Cartesian diagram} is a commutative diagram
\begin{equation}\label{eq:transCart}
\ba{c}\xymatrix@C=8ex{
X' \ar[r]^{f'} \ar[d]^{g'} & Y' \ar[d]^{g} \\
X \ar[r]^{f}\ar@{}[ur]|-\square & Y
}\ea
\end{equation}
with $X'\simeq X\times_Y Y'$ and such that the map of tangent spaces
\[
T_{g'(x)}X \dsum T_{f'(x)}Y' \to T_{f(g'(x))}Y
\]
is surjective for any $x\in X'$.

\begin{proposition}[{Base change formula}] \label{pro:transCart}
\index{base change formula!for D-modules}%
Consider the transversal Cartesian diagram \eqref{eq:transCart}.
Then, for any $\shm\in\BDC_\good(\D_X)$ such that $\Supp(\shm)$ is proper over $Y$,
we have the isomorphism
\[
\Dopb g\, \Doim f \shm \simeq \Doim{f'}\,\Dopb{g'}\shm.
\]
\end{proposition}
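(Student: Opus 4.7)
The plan is to unwind both sides using the transfer bimodule descriptions of $\Dopb{g}$ and $\Doim{f}$ and then reduce the statement to a local identity between transfer bimodules on $X'$ that encodes the transversality hypothesis. First I write
\begin{align*}
\Dopb{g}\Doim{f}\shm &\simeq \shd_{Y'\to Y}\ltens[\opb{g}\shd_Y]\opb{g}\roim{f}\bigl(\shd_{Y\from X}\ltens[\shd_X]\shm\bigr), \\
\Doim{f'}\Dopb{g'}\shm &\simeq \roim{f'}\bigl(\shd_{Y'\from X'}\ltens[\shd_{X'}]\shd_{X'\to X}\ltens[\opb{g'}\shd_X]\opb{g'}\shm\bigr).
\end{align*}

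Since $\Supp(\shm)$ is proper over $Y$, the support of $\shd_{Y\from X}\ltens[\shd_X]\shm$ is also proper over $Y$, so $\roim{f}$ agrees with $\reim{f}$ on this object. The sheaf-theoretic base change formula of Section~\ref{subsec:shv} then gives
\[
\opb{g}\roim{f}\bigl(\shd_{Y\from X}\ltens[\shd_X]\shm\bigr)\simeq \roim{f'}\bigl(\opb{g'}\shd_{Y\from X}\ltens[\opb{g'}\shd_X]\opb{g'}\shm\bigr),
\]
and the projection formula for sheaves, combined with $\opb{f'}\opb{g}\simeq \opb{g'}\opb{f}$, allows me to push the tensor with $\shd_{Y'\to Y}$ inside $\roim{f'}$, yielding
\[
\Dopb{g}\Doim{f}\shm\simeq \roim{f'}\bigl(\opb{f'}\shd_{Y'\to Y}\ltens[\opb{g'}\opb{f}\shd_Y]\opb{g'}\shd_{Y\from X}\ltens[\opb{g'}\shd_X]\opb{g'}\shm\bigr).
\]
Comparing with the formula above for $\Doim{f'}\Dopb{g'}\shm$, the statement reduces to an isomorphism of $(\opb{f'}\shd_{Y'},\opb{g'}\shd_X)$-bimodules on $X'$:
\[
\opb{f'}\shd_{Y'\to Y}\ltens[\opb{g'}\opb{f}\shd_Y]\opb{g'}\shd_{Y\from X}\simeq \shd_{Y'\from X'}\ltens[\shd_{X'}]\shd_{X'\to X}.
\]

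This final bimodule identity is the technical heart of the argument, and this is where transversality enters: in local coordinates adapted to the square it amounts to a Tor-vanishing statement that follows from the fact that transversality makes $\sho_{X'}\to \opb{g'}\sho_X\ltens[\opb{f'}\opb{g}\sho_Y]\opb{f'}\sho_{Y'}$ an isomorphism. A cleaner alternative is to factor $f$ through its graph as $X\to X\times Y\to Y$; both induced squares remain transversal Cartesian, so the proof reduces to the case of a closed embedding (handled via Kashiwara's equivalence and the explicit description of the transfer bimodule) and the case of a smooth submersion (where $\Dopb{}$ is essentially a tensor with the structure sheaf of the target). The main obstacle is handling this transfer-bimodule identity by either route, since the properness assumption on $\Supp(\shm)$ plays only the auxiliary role of legitimizing the sheaf-level base change used at the outset, while the genuinely $\shd$-theoretic content lies entirely in this bimodule computation.
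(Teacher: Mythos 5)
Your structural reduction is correct and is the standard approach: unwinding both sides via the transfer bimodules, using properness of $\Supp(\shm)$ over $Y$ to replace $\roim{f}$ by $\reim{f}$, applying sheaf base change $\opb{g}\reim{f}\simeq\reim{f'}\opb{g'}$, and then pushing $\shd_{Y'\to Y}$ inside $\roim{f'}$ by the projection formula over $\opb{g}\shd_Y$ all go through, and you correctly isolate
\[
\opb{f'}\shd_{Y'\to Y}\ltens[\opb{g'}\opb{f}\shd_Y]\opb{g'}\shd_{Y\from X}\ \simeq\ \shd_{Y'\from X'}\ltens[\shd_{X'}]\shd_{X'\to X}
\]
as the crux. (The paper states the proposition without proof, so there is no paper argument to compare.) The genuine gap is that this bimodule isomorphism is asserted, not proved, and the remark that it ``amounts to a Tor-vanishing statement'' undersells what is required. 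Transversality does give concentration of the derived fiber product, namely that the canonical arrow $\opb{g'}\sho_X\ltens[\opb{g'}\opb{f}\sho_Y]\opb{f'}\sho_{Y'}\to\sho_{X'}$ is an isomorphism (note the direction is opposite to what you wrote), but upgrading this $\sho$-linear fact to an identification of $(\opb{f'}\shd_{Y'},\opb{g'}\shd_X)$-bimodules requires matching the two $\shd$-actions on each side, which is not a purely commutative-algebra step.

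Your graph factorization is the standard way to close this, but it is also left undone. One must verify that both induced squares remain transversal Cartesian (automatic for the projection $X\times Y\to Y$, since any base change of a submersion is transversal; for the graph embedding, the fiber product along $\id_X\times g$ is again $X'$ and the new transversality condition unwinds to the original one), and then actually establish the two special cases: the closed-embedding case using the explicit form of the transfer bimodule together with the Koszul resolution that transversality furnishes, and the submersion case using the relative Spencer resolution of $\shd_{Y\from X}$ by locally free $\shd_X$-modules. As written, the proposal correctly diagnoses where the $\shd$-theoretic content lives, but stops exactly there.
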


\section{Indsheaves}\label{section:indshv}

\subsection{Ind-objects}
References are made to~\cite{SGA4} or to~\cite{KS06} for an exposition
on ind-objects. 

Let $\shc$ be a category (in a given universe). One denotes by $\shc^{\wedge}$ the big category of
functors  from $\shc^\rop$ to ${\bf Set}$. 
By the fully faithful functor $h^\wedge\cl \shc \to \shc^\wedge$,
we regard $\shc$ as a full subcategory of $\shc^{\wedge}$. 

\glossary{$\shc^\wedge$}%
An ind-object in $\shc$ is an object 
\index{ind-object}%
$A\in \shc^\wedge$ which is isomorphic
\glossary{$\sinddlim$}%
to $\inddlim[i\in I]X_i$ where $X_i\in\shc$ and   $I$
filtrant and small. Here, $\sinddlim$ is the inductive limit in $\shc^\wedge$. One denotes by $\indcc$
\glossary{$\indcc$}%
 the full   subcategory of 
$\shc^{\wedge}$ consisting of ind-objects. 

\begin{theorem}
Let $\shc$ be an abelian category.
\bnum
\item 
The category $\indcc$ is abelian.
\item 
The natural functors  $\iota\cl \shc\to \indcc$ 
and $\indcc\to \shc^{\wedge}$ are fully faithful. 
\item 
The category $\indcc$ admits exact small filtrant inductive limits, also denoted by $\sinddlim$ 
and the functor $\indcc\to \shc^{\wedge}$ commutes with such limits.
\item 
Assume that $\shc$ admits small projective limits. 
Then the category $\indcc$ admits small 
projective limits, and the functor $\shc\to \indcc$ commutes with such limits.
\item 
Assume that $\shc$ admits small inductive limits, denoted by $\sindlim$. Then the functor $\iota$ admits a left adjoint 
$\alpha$. For $X=\inddlim[i]X_i$ with $X_i\in\shc$ and $I$ small and filtrant,  $\alpha(X)\simeq\indlim[i]X_i$. 
\enum
\end{theorem}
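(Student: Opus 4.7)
I would prove the five assertions in the order (ii), (v), (iv), (iii), (i), with (i) as the main technical step. The workhorse throughout is the standard description of morphisms between ind-objects:
\begin{equation*}
\Hom[\shc^\wedge]\bl\inddlim[i\in I]X_i,\inddlim[j\in J]Y_j\br
\isoto\prolim[i]\indlim[j]\Hom[\shc]\bl X_i,Y_j\br
\end{equation*}
for small filtrant $I$ and $J$, which I would establish first as a preliminary lemma directly from the Yoneda description of $\shc^\wedge$. Its immediate corollary, used repeatedly below, is a \emph{representability principle}: any morphism $u\cl A\to B$ in $\indcc$ can be represented, after a cofinal refinement of the indexing categories, by a morphism of ind-systems $\{f_i\cl X_i\to Y_{\varphi(i)}\}_{i\in I}$ over a common small filtrant $I$. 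With this in hand, (ii) is immediate: the Yoneda embedding $\shc\hookrightarrow\shc^\wedge$ is fully faithful and factors through $\indcc$ by definition, while $\indcc\hookrightarrow\shc^\wedge$ is tautologically full.

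For (v), when $\shc$ admits small inductive limits I set $\alpha(\inddlim[i]X_i)\eqdot\indlim[i]X_i$ in $\shc$; the adjunction $\Hom[\shc]\bl\alpha A,Y\br\simeq\Hom[\indcc]\bl A,\iota Y\br$ then drops out of the Hom formula applied with singleton target. For (iv), projective limits in $\shc^\wedge$ are computed pointwise in $\mathbf{Set}$, and it suffices to show that finite products and kernels of pairs preserve ind-representability when $\shc$ has small projective limits; the product case uses the identification $\prod_k A_k\simeq\inddlim[(i_k)\in\prod_k I_k]\prod_k X_{k,i_k}$, exploiting that a product of filtrant categories is filtrant, and the kernel case is a consequence of (i) below. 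For (iii), given a filtrant system $\{A_k\}_{k\in K}$ with $A_k\simeq\inddlim[i\in I_k]X_{k,i}$, I would form the Grothendieck-style total category $L$ of pairs $(k,i)$ with $i\in I_k$, verify that $L$ is small filtrant, and define $\inddlim[k]A_k\eqdot\inddlim[(k,i)\in L]X_{k,i}$; exactness then reduces, via Yoneda, to the exactness of filtrant colimits in $\mathbf{Set}$.

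Finally, for (i), additivity is easy: the zero object is $\iota(0)$, and the product constructed in (iv) is also a coproduct, since this holds levelwise in the abelian category $\shc$. For the abelian structure, I represent a morphism $u\cl A\to B$ as $\{f_i\cl X_i\to Y_{\varphi(i)}\}_{i\in I}$ via the representability principle and set
\begin{equation*}
\ker(u)\eqdot\inddlim[i\in I]\ker(f_i),\qquad\coker(u)\eqdot\inddlim[i\in I]\coker(f_i),
\end{equation*}
the universal properties then following from the Hom formula. The main obstacle I anticipate is the verification that the canonical morphism $\coim(u)\to\im(u)$ is an isomorphism in $\indcc$: this holds levelwise because $\shc$ is abelian, but one must check that the $\inddlim$ functor commutes with the short exact sequences computing image and coimage, which reduces, via Yoneda, to the exactness of filtrant colimits of abelian groups. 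A secondary technical point---standard but requiring a careful cofinality argument---is to verify that the constructions of $\ker(u)$ and $\coker(u)$ are well-defined up to canonical isomorphism, independent of the chosen representative of $u$.
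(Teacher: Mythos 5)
Your plan is correct and coincides with the standard argument in \cite[\S\S6.1, 8.6]{KS06}, which is precisely the reference the paper cites for this theorem. The ingredients you identify are the right ones: the Hom formula, the lemma that a morphism of ind-objects can be represented by a morphism of inductive systems over a common small filtrant index after cofinal refinement, levelwise construction of kernels and cokernels, and commutation of filtrant colimits with finite limits to verify the abelian axiom. Two presentational remarks. First, your stated order (ii), (v), (iv), (iii), (i) contains forward references that should be untangled in a write-up: the equalizer case of (iv) invokes (i), and the exactness assertion of (iii) only makes sense once $\indcc$ is known to be abelian; the clean order is Hom formula, (ii), representability lemma, additivity, existence part of (iii), (i), exactness part of (iii), (iv), (v). Second, for the total category $L$ in (iii), you should spell out that a morphism $(k,i)\to(k',i')$ is a morphism $k\to k'$ in $K$ together with a factorization $X_{k,i}\to X_{k',i'}$ of the composite $X_{k,i}\to A_k\to A_{k'}$ in $\shc^\wedge$; such factorizations exist because representables are compact objects of $\shc^\wedge$, and the same compactness is what makes $L$ filtrant. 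Neither remark is a gap; the substance of your proposal is sound.
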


Note that for $X=\sinddlim[i]X_i$ and $Y=\sinddlim[j]Y_j\in\indcc$ 
with $X_i,Y_j\in\shc$, one has
\eqn
&&\Hom[\indcc](X,Y)\simeq\prolim[i]\indlim[j]\Hom[\shc](X_i,Y_j).
\eneqn

\begin{example}
Let $\cor$ be a field.  Denote by $\md[\cor]$ the category of $\cor$-vector spaces and by $\mdf[\cor]$
 its  full subcategory consisting of finite-dimensional vector spaces. 
Denote for short by  $\II[\cor]$
\glossary{$\md[\cor]$}%
\glossary{$\mdf[\cor]$}%
\glossary{$\II[\cor]$}%
the category of ind-objects in $\md[\cor]$.
The functor $\alpha\cl \II[\cor]\to\md[\cor]$ admits a left adjoint $\beta\colon\md[\cor]\to\II[\cor]$ defined as follows.
For $V\in\md[\cor]$, set $\beta(V)=\sinddlim W$, where $W$ ranges over the
family of finite-dimensional vector subspaces of $V$. 
In other words, $\beta(V)$ is the functor 
\eqn
&& \md[\cor]^\rop \to \md[\Z],\\
&&M\mapsto \indlim[W\subset V]\Hom[k](M,W),\quad\mbox{$W$ finite-dimensional}.
\eneqn
Note that  $\beta(V)(M)\simeq \Hom[\cor](M,\cor)\tens V$.

If $V$ is infinite-dimensional, $\beta(V)$ is not representable in $\md[\cor]$.  Moreover, 
$\Hom[{\II[\cor]}](\cor,V/\beta(V))\simeq 0$.

Now, denote by  $\IIf[\cor]$ the
 category of ind-objects  in  $\mdf[\cor]$. There is an equivalence of categories
\eqn
&&\alpha\cl \IIf[\cor]\isoto\md[\cor],\quad \sinddlim[i]V_i\mapsto\sindlim[i]V_i.
\eneqn
We get the {\em non} commutative diagram of categories
\eq\label{diag:NC1}
&&\hs{-10ex}\ba{l}\xymatrix{
\ar@{}[rd]|(.66){NC}&\IIf[\cor]\ar[d]^-{\tw\iota}\ar[ld]_\sim\\
\md[\cor]\ar[r]_-{\iota}&\II[\cor].
}
\ea
\eneq
Moreover, the functor $\tw\iota$ commutes with small inductive limits but the functor $\iota$ does not.
\end{example}
It is proved in~\cite[Prop.~15.1.2]{KS06} that the category $\II[\cor]$ does not have enough injectives.

\begin{definition}
An object $A\in\indcc$ is quasi-injective if the functor $\Hom[\indcc](\scbul,A)$ is exact on the category $\shc$. 
\end{definition}
It is proved in loc.\ cit.\  that if $\shc$ has enough injectives, then $\indcc$ has enough quasi-injectives.

\subsection{Indsheaves}
Let $M$ be a good topological space and let $\cor$ be a field
as in subsection~\ref{subsec:shv}.

One denotes  by $\mdcp[\cor_M]$ 
\glossary{$\mdcp[\cor_M]$}%
the full subcategory of $\md[\cor_M]$ consisting of sheaves with compact support. 
We set for short:
\eqn
&&\II[\cor_M]:= {\rm Ind}(\mdcp[\cor_M])
\eneqn
\glossary{$\II[\cor_M]$}%
and calls an object of this category 
\index{indsheaves}%
an  {\em indsheaf}  on $M$. 

When there is no risk of confusion, we shall simply write $\icor_M$ instead of $\II[\cor_M]$. 
\begin{theorem}\label{th:stI}
The prestack 
\glossary{$\SI[\cor_M]$}%
$\SI[\cor_M]\cl U\mapsto\II[\cor_U]$, $U$ open in $M$, is a stack.
\end{theorem}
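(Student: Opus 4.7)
\medskip\noindent\textbf{Proof plan.} My approach splits into three steps: clarifying the restriction functor (which is subtle because the naive restriction of a compactly supported sheaf need not remain compactly supported), verifying descent for morphisms, and finally verifying effective descent for objects.

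For an open inclusion $j\cl V\hookrightarrow U$ and a generator $F\in\mdcp[\cor_U]$, I would set
\eqn
&&\opb{j}F\eqdot\sinddlim[K](F\vert_V)_K,
\eneqn
where $K$ ranges over the filtered family of compact subsets of $V$, and extend to all of $\II[\cor_U]$ by the universal property of ind-completion. This assignment is functorial in $V$ and equips $\SI[\cor_M]$ with its prestack structure.

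For the descent of morphisms, fix an open cover $U=\bigcup_i U_i$ and $A,B\in\II[\cor_U]$ with presentations $A=\sinddlim[\alpha]A_\alpha$ and $B=\sinddlim[\beta]B_\beta$ by objects of $\mdcp[\cor_U]$. Since each $A_\alpha$ is a small object in $\II[\cor_U]$, one obtains
\eqn
&&\Hom[{\II[\cor_V]}](A\vert_V,B\vert_V)\simeq\prolim[\alpha]\indlim[\beta]\Hom[{\md[\cor_V]}](A_\alpha\vert_V,B_\beta\vert_V).
\eneqn
Because $\md[\cor_M]$ is itself a stack, the inner $\Hom$ defines a sheaf in $V$, and sheafness is preserved under filtered colimits and arbitrary limits, so the left-hand side is a sheaf in $V$ as well.

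The main obstacle is effective descent. Given $A_i\in\II[\cor_{U_i}]$ with cocycle gluing isomorphisms $\phi_{ij}\cl A_i\vert_{U_{ij}}\isoto A_j\vert_{U_{ij}}$, I plan to use the description of $\II[\cor_U]$ as left exact functors $\mdcp[\cor_U]^\rop\to\md[\cor]$ (valid because $\mdcp[\cor_U]$ admits finite colimits, compact support being preserved by kernels, cokernels, and finite direct sums), and define
\eqn
&&A(F)\eqdot\ker\Bigl(\prod_i A_i(F\vert_{U_i})\rightrightarrows\prod_{i,j}A_j(F\vert_{U_{ij}})\Bigr),
\eneqn
the two arrows being built from restriction together with the $\phi_{ij}$. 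Left exactness of $A$ is automatic, but the real work is to verify that $A$ is an honest ind-object (not merely a left exact functor) and that $\opb{j_i}A\simeq A_i$ compatibly with the $\phi_{ij}$. The strategy is to refine the presentations $A_i=\sinddlim[\alpha]A_{i,\alpha}$ along a common filtered index set so that the $\phi_{ij}$ are realized by compatible morphisms of the compactly supported sheaves $A_{i,\alpha}\vert_{U_{ij}}\to A_{j,\alpha}\vert_{U_{ij}}$; the stack property of ordinary sheaves then glues these into compactly supported sheaves on $U$ whose filtered colimit is $A$. Note that for any test object $F\in\mdcp[\cor_U]$, its compact support meets only finitely many $U_i$, which is what ensures that the glued sheaves have compact support in $U$ after evaluation. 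The restriction isomorphism $\opb{j_i}A\simeq A_i$ is then verified on generators using the sheaf-of-morphisms property from the previous step.
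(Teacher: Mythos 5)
In the Hom-descent step, the formula $\Hom[{\II[\cor_V]}](A|_V,B|_V)\simeq\prolim[\alpha]\indlim[\beta]\Hom[{\md[\cor_V]}](A_\alpha|_V,B_\beta|_V)$ is not correct as stated: since $A_\alpha|_V$ need not have compact support in $V$, one has $\iota_V(A_\alpha|_V)\simeq\inddlim[K](A_\alpha|_V)_K$ in $\II[\cor_V]$, so the Hom on the left is really $\prolim[\alpha]\prolim[K]\indlim[\beta]\Hom[{\md[\cor_V]}]((A_\alpha|_V)_K,B_\beta|_V)$, and the $\prolim[K]$ does not commute with $\indlim[\beta]$ in general. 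Likewise, ``sheafness is preserved under filtered colimits'' is not a general fact, and $\indlim[\beta]$ does not commute with the infinite products $\prod_i$, $\prod_{i,j}$ appearing in the sheaf axiom. Both problems disappear only after reducing to \emph{finite} covers, a reduction available on the good (hence paracompact) space $M$ because any compactly supported test object is \emph{covered} by finitely many $U_i$ --- not, as you write, that its support ``meets only finitely many $U_i$'', which is false for a general cover. This reduction is where the real work lives, and you need to make it explicit.

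For effective descent, your worry that the glued left-exact functor $A$ ``might not be an honest ind-object'' is unfounded: for a small abelian category $\shc$, every left-exact functor $\shc^\rop\to\md[\cor]$ is an ind-object, so left-exactness already gives $A\in\II[\cor_U]$. Your proposed repair --- refining the presentations $A_i=\sinddlim[\alpha]A_{i,\alpha}$ along a common filtered index so that the $\phi_{ij}$ become honest sheaf maps --- thus addresses a non-problem, and would in any case be delicate since a morphism in $\II[\cor_{U_{ij}}]$ is only a pro-ind system of maps between representing objects. Meanwhile, the step that actually carries the weight of effective descent --- verifying $\opb{j_i}A\simeq A_i$ compatibly with the $\phi_{ij}$, which again hinges on the finite-cover reduction to commute the relevant limits and colimits --- is dismissed in a single sentence. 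Finally, note that the survey states this theorem without proof; you should consult \cite{KS01} for the intended argument.
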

For $F=\inddlim[i]F_i\in{\II[\cor_M]}$ and $G=\inddlim[j]G_j\in{\II[k_M]}$ 
with $F_i$, $G_j\in\mdcp[\cor_M]$,  we set:
 \eqn
F\tens G  &=&\inddlim[i,j](F_i\tens G_j),\\
\ihom(F,G)&=&\prolim[i]\inddlim[j]\hom(F_i,G_j).
\eneqn
\glossary{$\ihom$}%
\glossary{$\tens$}%
Note that for $F\in\md[\cor_M]$ and $\{G_j\}_{j\in J}$ a small filtrant inductive system in $\II[k_M]$, we have
\eqn
&&\ihom(F,\inddlim[j]G_j)\simeq\inddlim[j]\ihom(F,G_j). 
\eneqn

\Lemma \label{lem:indtens}
The category $\II[\cor_M]$ is a tensor category
with $\tens$ as a tensor product and $\cor_M$ as a unit object.
\enlemma
Note that $\ihom$ is the inner hom of the tensor category $\II[\cor_M]$,
i.e., we have
\eqn
&&\Hom[{\II[\cor_M]}](K_1\tens K_2,K_3)
\simeq\Hom[{\II[\cor_M]}]\bl K_1, \ihom(K_2,K_3)\br.
\eneqn
\glossary{$\alpha_M$}%
\glossary{$\beta_M$}%
We have two pairs $(\alpha_M,\iota_M)$ and $(\beta_M,\alpha_M)$  of adjoint functors 
\eqn
&& \xymatrix@C=6ex{
{\md[\cor_M]\;}\ar@<-1.5ex>[rr]_-{\beta_M}\ar@<1.5ex>[rr]^-{\iota_M}&&{\;\II[\cor_M]}\ar[ll]|-{\;\alpha_M\;}.
}
\eneqn
The functor $\iota_M$ 
\glossary{$\iota_M$}%
is given by 
\eqn
&&\iota_MF=\inddlim[U\ssubset M]F_U,\mbox{ $U$ open relatively compact in $M$.}
\eneqn
The functor $\alpha_M$
is defined by
\eqn
&&\alpha_M\cl \inddlim[{ i\in I}] F_i\mapsto \indlim[{i\in I }] F_i \quad\mbox{($I$ small and filtrant).}
\eneqn
For $F\in\md[\cor_M]$, $\beta_M(F)$ is the functor
\eqn
&&\beta_M(F)\cl G\mapsto\sect(M;H^0(\RD'_M G)\tens F),\quad (G\in\mdcp[\cor_M]).
\eneqn
(This last formula is no more true if $\cor$ is not a field.)
\begin{itemize}
\item
$\iota_M$ is exact, fully faithful, and commutes with $\sprolim$, 
\item $\alpha_M$ is exact and commutes with $\sprolim$ and $\sindlim$,
\item $\beta_M$ is exact, fully faithful and commutes with $\sindlim$,
\item $\alpha_M$ is left  adjoint to $\iota_M$,
\item $\alpha_M$ is right adjoint to $\beta_M$,
\item $\alpha_M\circ\iota_M\simeq \id_{\md[\cor_M]}$ 
and $\alpha_M\circ\beta_M\simeq \id_{\md[\cor_M]}$.
\end{itemize}
Denote as usual by 
\eqn
&&\hom[\icor_M]\cl \II[\cor_M]^\rop\times\II[\cor_M]\to\md[\cor_M]
\eneqn 
the $hom$ functor  of the stack $\SI[\cor_M]$. Then 
\eqn
&&\hom[\icor_M]\simeq\alpha_M\circ\ihom,
\eneqn
and
$$\Hom[{\II[\cor_M]}](K_1,K_2)\simeq\sect\bl M;\hom[\icor_M](K_1, K_2)\br\quad
\text{for $K_1,K_2\in\II[\cor_M]$.}
$$

\begin{notation}
As far as there is no risk of confusion, we shall not write the functor $\iota_M$. Hence, we identify a sheaf $F$ on $M$ and its image by $\iota_M$.
\end{notation}

\begin{example}
Let $U\subset M$ be an open subset, $S\subset M$ a closed subset. Then
\eqn
&&\beta_M(\cor_U)\simeq\inddlim[V]\cor_V,\, V\mbox{ open }, V\ssubset U,\\
&&\beta_M(\cor_S)\simeq\inddlim[V]\cor_{\ol V},\, V\mbox{ open }, S\subset V.
\eneqn
Let $a\in M$ and consider the skyscraper sheaf $\cor_{\{a\}}$. Then $\beta_M(\cor_{\{a\}})\to\cor_{\{a\}}$ is an epimorphism in $\II[\cor_M]$ and defining $N_a$ by the exact sequence:
\eqn
0\to N_a\to \beta_M(\cor_{\{a\}})\to\cor_{\{a\}}\to0,
\eneqn
we get that $\Hom[{\icor_M}](\cor_U,N_a)\simeq 0$ for all open neighborhood $U$ of $a$. 
\end{example}

 Let $f\cl M\to N$ be a continuous map.
\index{Grothendieck's six operations!for indsheaves}%
Let $G=\inddlim[i]G_i\in{\II[k_N]}$ with $G_i\in\mdcp[\cor_N]$. One defines $\opb{f}G\in{\II[\cor_M]}$ by the formula
\eqn
&&\opb{f}G=\inddlim[i]\opb{f}G_i.
\eneqn

Let $F=\inddlim[i]F_i\in{\II[\cor_M]}$
with $F_i\in\mdcp[k_M]$. One defines $\oim{f}F\in{\II[\cor_N]}$ by
the formula:
\eqn
\oim{f}(\inddlim[i]F_i)&=&\prolim[K]\inddlim[i]\oim{f}(F_{iK})\mbox{ ($K$ compact in $M$)}.
\eneqn
The two functors $\oim{f}$ and $\opb{f}$ commute with both the functors 
$\iota$ and $\alpha$  and that is the reason why 
we keep the same notations as for usual sheaves.
Recall that for a usual sheaf $F$, its proper direct image is defined by 
\eqn
&&\eim{f}F=\indlim[U\ssubset M]\oim{f}F_U.
\eneqn
 Hence, one defines the proper direct image of $F=\sinddlim[i]F_i\in{\II[\cor_M]}$ with $F_i\in\mdcp[k_M]$ by
\eqn
\eeim{f}(\inddlim[i]F_i)&=&\inddlim[i]\oim{f}(F_{i}).
\eneqn
However, 
$\eeim{f}\circ\iota_M\neq\iota_N\circ\eim{f}$ in general. That is why we have used a different notation.

The category $\II[\cor_M]$ does not have enough injectives,
even for $M=\rmpt$ as already mentioned.
In particular, it is not a Grothendieck category. One can however 
construct the derived functors and  the six operations  for indsheaves. 
The functor $\opb{f}$  has a right adjoint
 $\roim{f}$. The functor $\reeim{f}$ admits a right adjoint, denoted by $\epb{f}$.  
\glossary{$\roim{f}$}%
\glossary{$\reeim{f}$}%
\glossary{$\epb{f}$}%
\glossary{$\opb{f}$}%
\glossary{$\rihom$}%
\glossary{$\tens$}%
Hence we have functors
\eqn
\iota_M&\cl&\Derb(\cor_M)\to\Derb(\icor_M),\\
\alpha_M&\cl&\Derb(\icor_M)\to\Derb(\cor_M),\\
\beta_M&\cl&\Derb(\cor_M)\to\Derb(\icor_M),\\
\tens&\cl& \Derb(\icor_M)\times\Derb(\icor_M)\to\Derb(\icor_M),\\
\rihom&\cl& \Derb(\icor_M)^\rop\times\Derb(\icor_M)\to\RD^+(\icor_M),\\
\rhom[\icor_M]&\cl& \Derb(\icor_M)^\rop\times\Derb(\icor_M)\to\RD^+(\cor_M),\\
\roim{f}&\cl& \Derb(\icor_M)\to\Derb(\icor_N),\\
\opb{f}&\cl& \Derb(\icor_N)\to\Derb(\icor_M),\\
\reeim{f}&\cl& \Derb(\icor_M)\to\Derb(\icor_N),\\
\epb{f}&\cl& \Derb(\icor_N)\to\Derb(\icor_M).
\eneqn

We may summarize the commutativity of the 
various functors we have introduced in the table below. Here, ``$\circ$''
means that the functors commute, and ``$\cross$'' they do not. Moreover, $\sindlim$ are taken over small filtrant categories. 
\eq\label{tabular}
\begin{tabular}{c|c|c|c|c|c|c|c|c}
& $\tens$ & $\opb{f}$ & $\oim{f}$ & $\eeim{f}$ & $\epb{f}$&$\indlim$&$\prolim$& \\
\cline{1-8}
$\iota$  & \Yes & \Yes & \Yes & \No & \Yes&\No&\Yes& \\
\cline{1-8}
$\alpha$ & \Yes & \Yes & \Yes & \Yes & \No&\Yes&\Yes& \\
\cline{1-8}
$\beta$ & \Yes& \Yes & \No  & \No  & \No&\Yes&\No& \\
\cline{1-8}
\end{tabular}
\eneq
 Note that the pairs $(\opb{f},\roim{f})$ and $(\reeim{f},\epb{f})$ are pairs of adjoint functors.
Finally, note that the functor $\epb{f}$  
commutes with filtrant inductive limits (after taking the cohomology).

\subsection{Ring action}\label{section:Iring}

We do not recall here the notion of a ring object $\B$ or a $\B$-module in 
a tensor category $\shs$ (see \cite[\S\,5.4]{KS01}). 
(In the sequel,  we shall consider the tensor category
$\II[\cor_M]$, see Lemma~\ref{lem:indtens}.)
For such a ring object $\B$ in $\shs$, 
we denote by $\md[\B]$ the abelian category of $\B$-modules in $\shs$ 
and by $\Derb(\B)$ its derived category.

We shall encounter the following situation. Let  $\A$ be a sheaf of 
$\cor$-algebras on $M$. 
Consider an object 
$\shm$ of $\II[\cor_M]$ together with a morphism of sheaves  of $\cor$-algebras
\eqn
&&\A\to\Endom[{\SI[\cor_M]}](\shm).
\eneqn
In this case one says that $\shm$ is an $\A$-module in $\II[\cor_M]$. One denotes by 
\begin{itemize}
\item
$\II[\A]$ the abelian category of $\A$-modules in $\II[\cor_M]$,
\item
$\Derb(\JA)\eqdot \Derb(\II[\A])$ its bounded derived category. We use similar notations with $\Derb$ replaced with   $\Der[+]$, $\Der[-]$ and $\Der$. 
\end{itemize}
\glossary{$\II[\A]$}%
One shall not confuse the category $\II[\A]$ with the category $\Ind(\mdcp[\A])$ of ind-objects  in the category of 
sheaves of $\A$-modules with compact support, and  we shall not confuse 
their derived categories.

If $\A$ is a sheaf of  $\cor$-algebras as above, then $\beta_M\A$ is a ring-object in the tensor category $\II[\cor_M]$.
Since
\eqn
&&\Hom[\cor_M](\A,\hom[\icor_M](\shm,\shm))
\simeq\Hom[{\II[\cor_M]}](\beta_M\A\tens \shm,\shm),
\eneqn 
we get equivalences of categories
\eqn
&&\md[\beta_M\A]\simeq \II[\A],\quad \Derb(\beta_M\A)\simeq\Derb(\JA).
\eneqn

\begin{remark}
Our notations differ from  those  of~\cite[\S\,5.4, \S\,5.5]{KS01}. 

\noi\setlength{\my}{\textwidth}\addtolength{\my}{-2.5ex}
$\bullet$\ \parbox[t]{\my}{For a ring object $\shb$ in $\II[\cor_M]$,  $\md[\shb]$ in our notation was denoted by $\II[\shb]$ in~\cite{KS01}.}

\noi
$\bullet$\ \parbox[t]{\my}%
{For a sheaf of rings  $\A$,  $\II[\A]$  in our notation was denoted by $\II[\beta\A]$ and 
$\Ind(\mdcp[\A])$ in our notation was denoted by $\II[\A]$ in~\cite{KS01}.}

\smallskip\noi
See~\cite[Exe.~3.4, Def.~4.1.2, Def.~5.4.4, Exe.~5.3]{KS01}. 
\end{remark}

We have the quasi-commutative diagram
\eq\label{diag:mdAIA}
&&\ba{c}\xymatrix@C=8ex{
\md[\A]\ar[d]\ar@<+.7ex>[r]^-{\beta_M}&\II[\A]\ar@<+.7ex>[l]^-{\alpha_M}\ar[d]\\
\md[\cor_M]\ar@<+.7ex>[r]^-{\beta_M}&\II[\cor_M]\ar@<+.7ex>[l]^-{\alpha_M}.
}\ea\eneq

For  $\shm\in\Derb(\A)$, $\shn\in\Derb(\A^\op)$ and $K\in\Derb(\JA)$ one gets the objects, functorially in $\shm$, $\shn$, $K$:
\eqn
&&\rhom[{\A}](\shm,K)\in\RD^+(\icor_M),\quad \shn\ltens[\A]K\in\RD^-(\icor_M).
\eneqn
They are characterized by
\eqn
&&\Hom[{\Der(\icor_M)}]\bl L,\rhom[{\A}](\shm,K)\br
\simeq\Hom[{\Der(\A)}]\bl\shm,\rhom[{\icor_M}](L,K)\br,\\
&&\Hom[{\Der(\icor_M)}]\bl\shn\ltens[\A]K, L\br
\simeq \Hom[{\Der(\A)}]\bl\shn,\rhom[{\icor_M}](K,L)\br
\eneqn
for any $L\in\Der(\icor_M)$.

\begin{proposition}\label{pro:nobeta}
Let $\shm\in\Derb(\A)$, $\shn\in\Derb(\A^\op)$ and $\shk\in\Derb(\JA)$.
There are natural isomorphisms:
\eqn
\rhom[{\A}](\shm,\shk)&\simeq&\rihom[\beta_M\A](\beta_M\shm,\shk) 
\quad\text{in $\RD^+(\icor_M)$,}\\
\shn\ltens[\A]\shk&\simeq&\beta_M\shn\ltens[\beta_M\A]\shk
\quad\text{in $\RD^-(\icor_M)$.}
\eneqn
\end{proposition}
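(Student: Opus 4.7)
My strategy is Yoneda: I would show that the two sides of each isomorphism represent the same functor $\Hom[{\Der(\icor_M)}](L, \scbul)$ on the variable $L \in \Der(\icor_M)$, by chaining together the adjunctions already available from the preceding material.

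For the first isomorphism, I would fix $L$ and transform $\Hom[{\Der(\icor_M)}](L, \rihom[{\beta_M\A}](\beta_M\shm, \shk))$ in four steps. (i)~The defining property of the inner hom in the tensor category of $\beta_M\A$-modules in $\II[\cor_M]$ produces $\Hom[{\JA}](L \tens \beta_M\shm, \shk)$. (ii)~The tensor-hom adjunction in $\II[\cor_M]$, which transports the $\beta_M\A$-action from the first factor of the tensor product to the second argument of the inner hom, yields $\Hom[{\JA}](\beta_M\shm, \rihom[{\icor_M}](L, \shk))$, with $\rihom[{\icor_M}](L, \shk)$ regarded as a $\beta_M\A$-module via $\shk$. (iii)~The $(\beta_M,\alpha_M)$-adjunction for $\A$-modules (compatibility with diagram~\eqref{diag:mdAIA}) gives $\Hom[{\A}](\shm, \alpha_M\rihom[{\icor_M}](L, \shk))$. (iv)~The identification $\rhom[{\icor_M}] \simeq \alpha_M \circ \rihom[{\icor_M}]$, which follows from $\hom[{\icor_M}] = \alpha_M \circ \ihom$ together with the exactness of $\alpha_M$ (see table~\eqref{tabular}), converts this into $\Hom[{\A}](\shm, \rhom[{\icor_M}](L, \shk))$. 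By the characterization of $\rhom[{\A}]$ recalled just before the proposition, this last expression is exactly $\Hom[{\Der(\icor_M)}](L, \rhom[{\A}](\shm, \shk))$, and Yoneda delivers the isomorphism. The second isomorphism is handled by an entirely parallel chain, starting from the universal property of $\ltens[{\beta_M\A}]$ (the right-module analogue of the characterization of $\ltens[{\A}]$ recalled above) and terminating at that characterization.

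The main obstacle is bookkeeping: one has to verify at each step that the adjunctions are genuinely $\A$- or $\beta_M\A$-linear (and not merely indsheaf morphisms), with particular attention paid to left/right module structures. To make the derived version of each step valid, I would replace $\shk$ by a quasi-injective resolution in~(1), and $\shn$ by a suitable K-flat resolution in~(2); since both $\alpha_M$ and $\beta_M$ are exact, the derived adjunctions follow formally from the underived ones on these resolutions.
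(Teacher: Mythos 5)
Your proposal is correct and takes essentially the same route as the paper: the paper's proof is also a Yoneda argument that chains together the characterization of $\rhom[\A]$, the $(\beta_M,\alpha_M)$-adjunction on $\A$-modules, and the inner-hom adjunction for $\rihom[\beta_M\A]$. The only cosmetic difference is that you traverse the chain in the opposite direction and split the inner-hom step into two pieces (your (i) and (ii)), which the paper compresses into a single isomorphism $\Hom[{\RD(\beta_M\A)}](\beta_M\shm,\rihom(L,\shk))\simeq\Hom[{\RD(\icor_M)}](L,\rihom[\beta_M\A](\beta_M\shm,\shk))$.
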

\begin{proof}
Let $L\in\Derp(\icor_M)$. We have the sequence of isomorphisms 
\begin{align*}
\Hom[\RD(\icor_M)](L, \rhom[\A](\shm,\shk))
&\simeq\Hom[\RD(\A)](\shm,\rhom[\icor_M](L,\shk))\\
&\simeq\Hom[\RD(\beta_M\A)](\beta_M\shm,\rihom(L,\shk))\\
&\simeq\Hom[\RD(\icor_M)](L,\rihom[\beta_M\A](\beta_M\shm,\shk)).
\end{align*}
The second formula is proved similarly. 
\end{proof}

\begin{notation}\label{not:ring}
For $\shm\in\Derb(\JA)$, $\shn\in\Derb(\JA^\op)$ and $\shk\in\Derb(\JA)$, we shall use the notations 
$\rihom[\beta\A](\shm,\shk)$ and $\shn\ltens[\beta\A]\shk$, 
objects of $\RD(\icor_M)$. 
\end{notation}

Let us briefly recall a few basic formulas.

We consider the following situation: $f\cl M\to N$ is a continuous map of good topological spaces and  
 $\shr$ is a  sheaf of $\cor$-algebras on $N$.

In the sequel,  $\Der[\dagger]$ is $\Der$, $\Derb$, $\Derp$ or $\Derm$.

\begin{theorem}\label{th:adjIB}
\banum
\item
The functor $\opb{f}\cl \II[\cor_N]\to \II[\cor_M]$ 
induces a  functor 
$\opb{f}:\Der[\dagger](\JR)\to \Der[\dagger](\JfR)$.
\item
The functor 
$\oim{f}\cl\II[\cor_M]\to  \II[\cor_N]$ 
induces a functor 
$\roim{f}\cl \Der[\dagger](\JfR)\to \Der[\dagger](\JR)$.
\item
The functor 
$\eeim{f}\cl \II[\cor_M]\to  \II[\cor_N]$ 
induces a functor 
$\reeim{f}\colon \Der[\dagger](\JfR)\to \Der[\dagger](\JR)$.
\item
the functor $\reeim{f}\cl \Derp(\JfR)\to \Derp(\JR)$ admits a 
 a right adjoint, denoted by $\epb{f}$.
 \eanum
\end{theorem}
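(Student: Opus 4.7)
The plan is to systematically lift the properties of the six operations on plain indsheaves (recalled immediately before the theorem) to the module setting, exploiting the equivalence $\Derb(\JR) \simeq \Derb(\beta_N\shr)$ and the fact that all the operations are already known to exist on $\Derb(\icor_N)$ and $\Derb(\icor_M)$. Throughout, I work with the ring objects $\beta_N\shr$ and $\beta_M\opb{f}\shr$ in the tensor categories $\II[\cor_N]$ and $\II[\cor_M]$ respectively.

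For part (a), the functor $\opb{f}\cl\II[\cor_N]\to\II[\cor_M]$ is exact and commutes with $\tens$, by the commutation table~\eqref{tabular}. Hence it sends a $\beta_N\shr$-module object $M$ (structure map $\beta_N\shr\tens M\to M$) to $\opb{f}\beta_N\shr\tens\opb{f}M\to\opb{f}M$. To obtain an $\opb{f}\shr$-module structure, I transfer scalars along the canonical morphism $\beta_M\opb{f}\shr\to\opb{f}\beta_N\shr$, which exists by adjunction $(\beta_M,\alpha_M)$ since $\alpha_M\opb{f}\beta_N\shr\simeq\opb{f}\alpha_N\beta_N\shr\simeq\opb{f}\shr$. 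Exactness of $\opb{f}$ then yields the derived functor on $\Der[\dagger](\JR)$ for any boundedness decoration without further work.

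For parts (b) and (c), the strategy is to construct appropriate resolutions in $\II[\opb{f}\shr]$. Since $\II[\cor_M]$ has enough quasi-injectives, and the extension-of-scalars functor from $\II[\cor_M]$ to $\II[\opb{f}\shr]$ is well-behaved, the category $\II[\opb{f}\shr]$ also has enough quasi-injectives; I would further refine to resolutions whose underlying indsheaves are acyclic both for $\oim{f}$ and $\eeim{f}$ (a class closed under the module action). Applying the plain $\roim{f}$ and $\reeim{f}$ termwise to such a resolution produces objects of $\Der[\dagger](\JR)$, since the module structure is preserved at each step; functoriality and well-definedness up to quasi-isomorphism follow from a standard double complex argument. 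The main technical obstacle lies precisely here: verifying that the class of resolutions carrying an $\opb{f}\shr$-module structure \emph{and} being acyclic for the underlying geometric functor is cofinal, which requires checking that softness-type properties are preserved under the $\beta_M\opb{f}\shr$-action.

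For part (d), the right adjoint $\epb{f}$ to $\reeim{f}$ is constructed by invoking the adjoint functor theorem (Brown representability in the dual form). The derived functor $\reeim{f}\cl\Derp(\JfR)\to\Derp(\JR)$ commutes with small filtrant inductive limits, inheriting this property from $\eeim{f}$ on $\II[\cor_M]$ (see table~\eqref{tabular}), and the target category is presentable and cocomplete, so a right adjoint exists. Alternatively, one can exhibit $\epb{f}$ on objects by combining the plain $\epb{f}\cl\Derp(\icor_N)\to\Derp(\icor_M)$ with the transfer-bimodule construction that endows $\epb{f}F$ with an $\opb{f}\shr$-action for any $\shr$-module $F$; the adjunction
\[
\Hom[{\Derp(\JR)}](\reeim{f}G,F)\simeq\Hom[{\Derp(\JfR)}](G,\epb{f}F)
\]
then reduces to the already-established non-equivariant adjunction modulo tracking compatibility with the module structure.
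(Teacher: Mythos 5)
The paper states this theorem without proof: it is a survey, and the content of Theorem~\ref{th:adjIB} is taken from~\cite[\S\,5.5]{KS01}. So there is no in-paper proof to compare against; I assess your proposal on its own terms and against what the reference actually does.

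Your overall architecture is the right one — transfer $\shr$-module data along the commutations of $\beta$, $\alpha$ with the geometric operations, and derive via adapted resolutions in the module category — and it matches the strategy of~\cite{KS01}. Part (a) is essentially correct, though your adjunction argument to produce a morphism $\beta_M\opb{f}\shr\to\opb{f}\beta_N\shr$ is an unnecessary detour: by the commutation table~\eqref{tabular} the functor $\beta$ commutes with $\opb{f}$, so this map is in fact an isomorphism and no restriction of scalars is needed. In (b) and (c) the plan of resolving by quasi-injectives whose underlying indsheaves are simultaneously acyclic for $\oim f$ and $\eeim f$ is the correct shape, and you are candid that the cofinality of such resolutions is the technical core; but note a slip of language: what transfers quasi-injectives to $\II[\opb f\shr]$ is the right adjoint to the forgetful functor (coinduction $\ihom(\beta_M\opb f\shr,\scbul)$), not the \emph{extension-of-scalars} functor (the left adjoint), which would transfer projective-type objects. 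Also missing is the finite-flabby-dimension argument needed to guarantee that $\roim f$ and $\reeim f$ have finite amplitude, which is what allows the decoration $\dagger$ to range over $\mathrm{b},+,-$ and the blank; without it you only get $\Derp\to\Derp$.

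The genuine gap is in your first argument for (d). Brown representability (or the adjoint functor theorem in the form you cite) does not apply to $\Derp(\JfR)$: this category is not locally presentable, and it does not admit arbitrary small coproducts (coproducts of objects bounded below with unbounded lower bounds leave $\Derp$). The observation that $\reeim f$ commutes with filtrant colimits at the level of cohomology does not substitute for preservation of coproducts in the triangulated sense. The correct route is your second one — construct $\epb f$ directly — but the phrase ``transfer-bimodule construction'' is a misnomer here. What one actually does is equip $\epb f F$, for $F$ an $\shr$-module, with the $\opb f\shr$-module structure obtained by adjunction from the composite
$$
\reeim f\bigl(\opb f\beta_N\shr\tens\epb f F\bigr)\;\simeq\;\beta_N\shr\tens\reeim f\epb f F\;\To\;\beta_N\shr\tens F\;\To\;F,
$$
using the projection formula and the counit of $(\reeim f,\epb f)$; then one verifies that the adjunction isomorphism of the non-equivariant theory restricts to $\shr$-linear maps. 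Your proposal gestures at this but does not carry it out, and the Brown representability shortcut would not rescue it.
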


\begin{theorem}\label{th:projIB}
\banum
\item
For $G\in \Derm(\JR)$ and $F\in \Derp(\JfR)$,
one has the isomorphism
\eqn
\rihom[\beta_N\shr](G,\roim{f}F)
    &\simeq&\roim{f}\rihom[\opb{f}\beta_N\shr](\opb{f}G,F).
\eneqn
\item
For $G\in \Derp(\JR)$ and $F\in \Derm(\JfR)$,
one has the isomorphism
\eqn
\rihom[\beta_N\shr](\reeim{f}F,G)
    &\simeq&\roim{f}\rihom[\opb{f}\beta_N\shr](F,\epb{f}G).
\eneqn
\item \lp{\em Projection formula.}\rp\ 
\index{projection formula!for indsheaves}%
For $F\in \Derm(\JfR)$ and $G\in \Derm(\JR^\rop)$,
one has the isomorphism 
\eqn
&&G\ltens[\beta_N\shr] \reeim{f}F
\simeq \reeim{f}(\opb{f}G\ltens[\opb{f}\beta_N\shr]F).
\eneqn
\item\lp{\em Base change formula.}\rp \ 
\index{base change formula!for indsheaves}%
Consider the Cartesian square  of good topological spaces
\eq
&&\xymatrix{
M'\ar[r]^{f'}\ar[d]_{g'}\ar@{}[rd]|{\square}  &N'\ar[d]^g  \\
M\ar[r]^f                &N.
}\eneq
There are  natural isomorphisms of functors   from $\Der[\dagger](\JfR)$ to  
$\Der[\dagger](\JgR)$
\eq
&&\reeim{f'}\opb{g'}\simeq\opb{g}\reeim{f},\label{eq:basech1}\\
&&\roim{f'}\epb{g'}\simeq\epb{g}\roim{f}.\label{eq:basech2}
\eneq
\eanum
\end{theorem}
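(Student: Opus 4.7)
The plan is to reduce each of the four assertions to the already-established analogue for plain indsheaves (no $\shr$-action) and then check that the resulting isomorphisms respect the $\beta_N\shr$-action. Throughout we shall freely use Notation~\ref{not:ring} together with Proposition~\ref{pro:nobeta}, which identify $\rihom[\beta\shr]$ and $\ltens[\beta\shr]$ with the usual functors on $\icor$ suitably enriched.

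For (a) and (b), I would argue by adjunction. The functors $(\opb{f},\roim{f})$ and $(\reeim{f},\epb{f})$ form adjoint pairs on indsheaves by Theorem~\ref{th:adjIB}, and $\opb{f}$ moreover commutes with $\tens$. Using the characterization
\[
\Hom[{\Der(\icor_M)}]\bl L,\rihom[\beta\shr](\scbul,\scbul)\br
\simeq\Hom[{\Der(\beta\shr)}]\bl L\tens\scbul,\scbul\br,
\]
one verifies both isomorphisms by checking that their two sides represent the same functor on $\Der(\opb{f}\beta_N\shr)$ (respectively on $\Der(\beta_N\shr)$). The proof for (a) runs: $\Hom(L,\roim{f}\rihom[\opb{f}\beta\shr](\opb{f}G,F)) \simeq \Hom(\opb{f}L\tens\opb{f}G,F) \simeq \Hom(\opb{f}(L\tens G),F) \simeq \Hom(L\tens G,\roim{f}F) \simeq \Hom(L,\rihom[\beta\shr](G,\roim{f}F))$, and (b) is dual.

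For (c), the projection formula, the plan is first to establish the non-derived version on objects of the form $\iota_M F$ with $F$ a usual sheaf — there it follows from the classical projection formula for sheaves recalled in Section~\ref{subsec:shv} — then to extend to arbitrary objects of $\Derm(\JfR)$ by passing to filtrant $\sindlim$, using that $\reeim{f}$ and $\tens$ and $\opb{f}$ all commute with $\sindlim$ (see the table~\eqref{tabular}). The $\beta_N\shr$-equivariance is automatic since all the functors in play are $\cor$-linear and both sides carry a canonical $\beta_N\shr$-module structure that is exchanged by the isomorphism.

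For (d), the base change~\eqref{eq:basech1} is deduced as follows: construct the natural morphism $\reeim{f'}\opb{g'}\to\opb{g}\reeim{f}$ from the adjunction $(\opb{g'},\roim{g'})$ applied to the unit $\id\to\roim{g'}\opb{g'}$ (or equivalently from $\opb{g}\reeim{f}\to\reeim{f}$ composed appropriately), and check it is an isomorphism. This is checked after applying $\alpha$ on sheaves of the form $\iota_M F$, where it reduces to the classical sheaf base change, and then extended by commutation with $\sindlim$ as in (c). The isomorphism~\eqref{eq:basech2} follows by adjunction from~\eqref{eq:basech1}, taking right adjoints of both sides: the right adjoint of $\reeim{f'}\opb{g'}$ is $\roim{g'}\epb{f'}$ and the right adjoint of $\opb{g}\reeim{f}$ is $\epb{f}\roim{g}$, and passing to adjoints interchanges $M'\leftrightarrow N$ in the obvious way so one obtains $\roim{f'}\epb{g'}\simeq\epb{g}\roim{f}$. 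The main obstacle will be the $\sindlim$-reduction step in~(c) and~(d): one must check that the natural morphisms one writes down are isomorphisms on each term $F_i$ of a presentation $F\simeq\sinddlim F_i$ with $F_i\in\mdcp$, which amounts to the sheaf-level statement together with compatibility with proper base change — the table~\eqref{tabular} is precisely what makes this routine once one is careful to work in $\Derm$ (where $\ltens$ is well-behaved).
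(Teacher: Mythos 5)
The paper states Theorem~\ref{th:projIB} without proof, so the proposal must be evaluated on its own terms. The overall strategy---Yoneda arguments for (a)--(b), generator-reduction and filtrant limits for (c)--(d)---is reasonable, and your Yoneda chain for (a) is essentially correct. However, (b) is \emph{not} dual to (a) in the way you suggest: running the Yoneda argument for (b), one arrives at $\Hom(L\tens\reeim{f}F,G)$ on the left and $\Hom(\reeim{f}(\opb{f}L\tens F),G)$ on the right, and identifying these requires the projection formula $L\tens\reeim{f}F\simeq\reeim{f}(\opb{f}L\tens F)$, i.e., the $\shr=\cor_N$ instance of (c). Assertion (a), by contrast, needs only the adjunction $(\opb{f},\roim{f})$ on $\shr$-modules plus the fact that $\opb{f}$ commutes with $\tens$. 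So the correct logical order is (c) (or at least its $\cor$-linear version) before (b); presenting (a) and (b) as twin applications of adjunction conceals a genuine dependency.

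There are also two imprecisions worth correcting. In (c), the reduction to usual sheaves must be performed on sheaves with \emph{compact} support: it is on objects of $\mdcp[\cor_M]$ that $\eeim{f}$ coincides with $\oim{f}$, and the paper explicitly warns that $\eeim{f}\circ\iota_M\neq\iota_N\circ\eim{f}$ in general, so the classical projection formula does not transfer directly through $\iota_M$ on an arbitrary sheaf. Moreover, the table~\eqref{tabular} records how $\iota,\alpha,\beta$ interact with the six operations, not that $\tens$, $\opb{f}$, $\eeim{f}$ commute with filtrant $\sindlim$---that is a separate (true) fact for which you cite the wrong reference. Finally, in (d), taking right adjoints of~\eqref{eq:basech1} yields $\roim{g'}\epb{f'}\simeq\epb{f}\roim{g}$, which is~\eqref{eq:basech2} for the \emph{transposed} Cartesian square, not the one stated. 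To get~\eqref{eq:basech2} as written, apply~\eqref{eq:basech1} to the transposed square (still Cartesian, since the condition is symmetric in $f$ and $g$) and then take right adjoints; your phrase ``passing to adjoints interchanges $M'\leftrightarrow N$'' gestures at this but should be replaced by an explicit appeal to the transposition symmetry of Cartesian squares.
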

Note that  Theorem~\ref{th:betaaihom} below has no counterpart in classical sheaf theory.
\begin{theorem}\label{th:betaaihom}
Let $\sha$ be a sheaf of $\cor_M$-algebras, let $F\in \Derb(\cor_M)$, let 
$\shk\in \Derb(\JAop)$ and let 
$\shl\in \Derb(\A)$.
 Then one has the isomorphism:
 \eq\label{eq:ihombeta}
 &&\rihom(F,\shk)\ltens[\A] \shl\isoto \rihom(F,\shk\ltens[\A]\shl).
 \eneq
\end{theorem}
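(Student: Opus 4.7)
The first paragraph of the plan is to construct the natural comparison morphism. The evaluation pairing $\rihom(F,\shk)\tens F\to \shk$ comes from the fact that $\rihom$ is the inner hom of the tensor category $\II[\cor_M]$. Since $F\in\Derb(\cor_M)$ carries only a $\cor_M$-action (no $\A$-action), the $\A^\rop$-module structure on $\shk$ transfers to $\rihom(F,\shk)$, and tensoring the evaluation on the right with $\shl$ over $\A$ produces
\[
(\rihom(F,\shk)\ltens[\A]\shl)\tens F \to \shk\ltens[\A]\shl.
\]
Adjunction then yields the desired morphism $\alpha\cl\rihom(F,\shk)\ltens[\A]\shl \to \rihom(F,\shk\ltens[\A]\shl)$. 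The remaining task is to check that $\alpha$ is an isomorphism.

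The plan for showing $\alpha$ is an isomorphism is dévissage in $F$. The question is local on $M$, and both sides are cohomological triangulated functors of $F\in\Derb(\cor_M)^\rop$. Hence it suffices to treat $F=\cor_U$ for $U\subset M$ open relatively compact, since such sheaves generate $\Derb(\cor_M)$ up to triangles and local isomorphisms. For $F=\cor_U$, we have $\rihom(\cor_U,\scbul)\simeq \roim{j_U}\opb{j_U}$ with $j_U\cl U\hookrightarrow M$ the inclusion, so the claim reduces to a projection-formula identity
\[
(\roim{j_U}\opb{j_U}\shk)\ltens[\A]\shl \isoto \roim{j_U}\bl\opb{j_U}\shk\ltens[\opb{j_U}\A]\opb{j_U}\shl\br.
\]
Using Proposition~\ref{pro:nobeta} to rewrite $\ltens[\A]$ as $\ltens[\beta_M\A]$, this becomes a statement purely in the indsheaf tensor category with a ring action, where one can invoke Theorem~\ref{th:projIB}(c) after treating $\roim{j_U}$ as a filtrant colimit of compactly supported direct images.

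The main obstacle will be the $\roim{j_U}$ versus $\reeim{j_U}$ distinction: the projection formula in Theorem~\ref{th:projIB}(c) is stated for $\reeim{f}$, whereas $\rihom(\cor_U,\scbul)$ produces $\roim{j_U}$. In the indsheaf world, however, this gap is far more manageable than in classical sheaf theory, because one can work locally and use that both sides are computed objectwise on ind-presentations. An alternative route, which I think is cleaner and which I would try first, is to avoid dévissage entirely and argue by Yoneda: for arbitrary $L\in\Derp(\icor_M)$ compute $\Hom[\icor_M](L,\scbul)$ of both sides by successively applying the $(\tens,\rihom)$ adjunction in $\II[\cor_M]$ and the $(\ltens[\A],\rihom[\A])$ adjunction, and verify that both results are canonically $\Hom[\icor_M](L\tens F,\shk\ltens[\A]\shl)$ with its $\A$-module structure. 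The only subtle point in this Yoneda approach is the careful bookkeeping of left versus right $\A$-actions on $\rihom(F,\shk)$ and the commutation of $\tens$ with $F$ through the $\A$-tensor product, which is precisely what holds because $F$ is an object of $\Derb(\cor_M)$ with no $\A$-structure.
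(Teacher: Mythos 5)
The comparison morphism $\alpha$ is correctly assembled, but neither of your two proposed routes for proving it is an isomorphism can be carried out, and neither engages the phenomenon that makes the theorem true --- recall the text's warning that this statement ``has no counterpart in classical sheaf theory.'' The Yoneda route cannot get started: $\scbul\ltens[\A]\shl$, equivalently $\scbul\ltens[\beta_M\A]\beta_M\shl$ by Proposition~\ref{pro:nobeta}, is a \emph{left} adjoint, characterized by its $\Hom$-out via $\Hom(\shn\ltens[\A]K,L)\simeq\Hom[\A](\shn,\rhom[\icor_M](K,L))$. No adjunction unwinds $\Hom[\icor_M](L,\rihom(F,\shk)\ltens[\A]\shl)$, so ``compute $\Hom(L,\scbul)$ of both sides by adjunction'' cannot even be written down for the left-hand side. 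The d\'evissage route fares no better: $\Derb(\cor_M)$ is not generated by the $\cor_U$'s under shifts and finite cones (already for $M=\rmpt$ the object $\cor^{(\N)}$ is not), and the infinite presentations one would otherwise invoke are destroyed by the functors at hand, since $\rihom(\scbul,\shk)$ turns direct sums in the first variable into products while $\scbul\ltens[\A]\shl$ does not commute with products.

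The deeper problem is that your argument never touches $\beta_M$ or filtrant inductive limits, which is where the theorem actually lives. The naive version --- asking whether $\rihom(F,\shk)\tens\shl\to\rihom(F,\shk\tens\shl)$ holds with $\shl$ a usual sheaf rather than $\beta_M\shl$ --- is false: with $M=\rmpt$, $\A=\cor$, $F=\shl=\cor^{(\N)}$, $\shk=\cor$, it becomes the canonical map $F^*\tens\shl\to\Hom[\cor](F,\shl)$, whose image consists of the finite-rank maps and misses $\id_{\cor^{(\N)}}$. What rescues the theorem is precisely that $\shk\ltens[\A]\shl$ secretly tensors against $\beta_M\shl$ (Proposition~\ref{pro:nobeta}), that $\beta_M\shl$ is a small filtrant inductive limit of suitably small compactly supported objects, and that $\ihom(F,\scbul)$ commutes with small filtrant inductive limits when $F\in\md[\cor_M]$ --- a fact recorded just after the definition of $\ihom$. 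A correct proof must pass through a d\'evissage on $\shl$, a free presentation of $\beta_M\shl$ over $\beta_M\A$ reducing to $\A=\cor_M$, and then the filtrant commutation; none of this appears in your proposal. (For completeness: the paper states Theorem~\ref{th:betaaihom} without giving a proof, so there is no argument in the source to compare against line by line.)
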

Thanks to Proposition~\ref{pro:nobeta}, isomorphism~\eqref{eq:ihombeta} may also be formulated as
 \eq\label{eq:ihombetabis}
 &&\rihom(F,\shk)\ltens[\beta_M\A] \beta_M\shl\isoto \rihom(F,\shk\ltens[\beta_M\A]\beta_M\shl).
 \eneq
Also note that~\eqref{eq:ihombeta} is no more true if we relax the hypothesis that  $F\in \Derb(\cor_M)$.

\subsection{Sheaves on the subanalytic site}

Recall first that,
for real analytic manifolds $M$, $N$ and a closed subanalytic subset $S$ 
of $M$, we say that
a map $f\cl S\to N$ is subanalytic if its graph is subanalytic in 
$M\times N$. 
One denotes by $\sha^\R_S$ the sheaf of continuous $\R$-valued subanalytic maps on $S$.
A {\em subanalytic space}
\index{subanalytic!space}%
$(M,\sha^\R_M)$, or simply $M$ for short,  is an $\R$-ringed space locally isomorphic to $(S,\sha^\R_S)$ 
for a closed subanalytic subset $S$ of a real analytic manifold.
A morphism of subanalytic spaces is a morphism of $\R$-ringed spaces. 
Then we obtain the category of subanalytic spaces. 

We can define the notion of subanalytic subsets of a subanalytic space,
as well as $\R$-constructible sheaves on a subanalytic space. 

\begin{definition}
Let $M$ be a subanalytic space, $\Op_M$ 
\glossary{$\Op_M$}%
\glossary{$\Op_\Msa$}%
the category of its open subsets, the morphisms being the inclusion. One 
denotes by $\Op_\Msa$ the full subcategory of $\Op_M$ consisting of 
 subanalytic  relatively compact open subsets. 
 The site $\Msa$ is obtained by  deciding that a family  $\{U_i\}_{i\in I}$
of subobjects of $U\in\Op_{\Msa}$ is a covering of $U$ if
there exists a finite subset $J\subset I$ such that  $\bigcup_{j\in J}U_j=U$. 
\index{subanalytic!site}%
\glossary{$\Msa$}%
One calls $\Msa$ the subanalytic site associated to $M$.
\end{definition}

Note that 
\eq\label{eq:MVsa1}
&&\hs{2ex}\left\{\hs{.5ex}\parbox{65ex}{
a presheaf $F$ on $\Msa$ is a sheaf if and only if $F(\emptyset)=0$ and for any pair $(U_1,U_2)$ in $\Op_\Msa$, the sequence below  is exact:\\
\hs{5ex}$0\to F(U_1\cup U_2)\to F(U_1)\oplus F(U_2)\to F(U_1\cap U_2)$.
}\right.
\eneq

Let us denote by \glossary{$\rho_M$}%
\eq
&&\rho_M\cl M\to\Msa
\eneq
the natural morphism of sites and, as usual, by $\md[\cor_\Msa]$ the Grothendieck category of sheaves of $\cor$-modules on 
$\Msa$. Hence, $(\opb{\rho_M},\oim{\rho_M})$ is a pair of adjoint functors. 

The functor $\opb{\rho_M}$ also admits a left adjoint, denoted by $\eim{\rho_M}$. For $F\in \md[\cor_M]$, $\eim{\rho_M}F$ is the sheaf associated to the presheaf $U\mapsto F(\overline U)$, $U\in \Op_{\Msa}$. 
Hence we have the two pairs of adjoint
functors $(\opb{\rho_M},\oim{\rho_M})$ and $(\eim{\rho_M},\opb{\rho_M})$
\eqn
&& \xymatrix{
{\md[\cor_M]}\ar@<-1.5ex>[rr]_-{\eim{\rho_M}}\ar@<1.5ex>[rr]^-{\oim{\rho_M}}&&{\md[\cor_\Msa]}\ar[ll]|-{\;\opb{\rho_M}\;}.
}
\eneqn
The functor $\oim{\rho_M}$ is fully faithful.

One denotes by $\sinddlim$ the inductive limit in the category $\md[\cor_{\Msa}]$. Inductive limits do not commute with the functor $\oim{\rho_M}$. 

\begin{remark}
It would be possible to develop the theory of subanalytic sheaves,
 that is sheaves on the subanalytic site, 
and in particular the six operations (see~\cite{Pr08}). 
However, in these Notes, we prefer to embed the category of subanalytic sheaves into that of indsheaves, as we shall do now. 
\end{remark}
Denote by 
\glossary{$\rc[\cor_M]$}%
\glossary{$\rcc[\cor_M]$}%
$\rc[\cor_M]$ the small abelian category of $\R$-constructible sheaves (see~\cite{KS90} for an exposition) and denote by 
$\rcc[\cor_M]$ the full subcategory consisting of sheaves with compact support.  Recall that $\Derb(\rc[\cor_M])\simeq\Derb_\Rc(\cor_M)$. 
Set
\glossary{$\IIrc[\cor_M]$}%
\eqn
&&\IIrc[\cor_M]={\rm Ind}(\rcc[\cor_M]).
\eneqn
The fully faithful functor $\rcc[\cor_M]\to \mdcp[\cor_M]$
induces a fully faithful functor
$\IIrc[\cor_M]\to \II[\cor_M]$, by which we regard
$\IIrc[\cor_M]$ as a full subcategory of $\II[\cor_M]$. 

We say that an indsheaf on $M$ is a {\em subanalytic} indsheaf 
\index{subanalytic!indsheaf}%
if it is isomorphic to an object of $\IIrc[\cor_M]$.

We have a quasi-commutative diagram of categories in which all arrows are exact and fully faithful:
\eq\label{dia00}&&\ba{c}
\xymatrix@C=7ex{
\rc[\cor_M]\ar[d]\ar[r]^{\iota_M^{rc}}&\IIrc[\cor_M]\ar[d]\\
\md[\cor_M]\ar[r]^-{\iota_M}&\II[\cor_M].
}\ea
\eneq

\begin{proposition}
The restriction of the functor $\oim{\rho_M}$ to the subcategory $\rc[\cor_M]$
is exact and fully faithful.
\end{proposition}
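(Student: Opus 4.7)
The fully faithfulness part is immediate from the text: it is stated just above that $\oim{\rho_M}\cl\md[\cor_M]\to\md[\cor_\Msa]$ is fully faithful, so its restriction to the full subcategory $\rc[\cor_M]$ is automatically fully faithful. Only the exactness claim requires work.

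Since $\oim{\rho_M}$ is right adjoint to $\opb{\rho_M}$, it is automatically left exact, so the question reduces to right exactness on $\rc[\cor_M]$. Given an epimorphism $F\epito F''$ in $\rc[\cor_M]$, I want to show that the induced morphism $\oim{\rho_M}F\to\oim{\rho_M}F''$ is an epimorphism in $\md[\cor_\Msa]$. Using~\eqref{eq:MVsa1} together with the fact that coverings on $\Msa$ are, by definition, finite subanalytic open coverings, this unwinds into the following local lifting condition: for every $U\in\Op_\Msa$ and every section $s\in F''(U)=\oim{\rho_M}F''(U)$, there must exist a \emph{finite} subanalytic open covering $\{U_i\}_{i\in I}$ of $U$ together with sections $t_i\in F(U_i)$ lifting $s\vert_{U_i}$.

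To produce such a covering, I would fix a subanalytic stratification $M=\bigsqcup_{j\in J}M_j$ on which both $F$ and $F''$ are locally constant; such a stratification exists because $F$ and $F''$ are $\R$-constructible. Since $U\in\Op_\Msa$ is subanalytic and relatively compact, $\ol U$ meets only finitely many strata. At each point $x\in U$ the stalk-level surjection $F_x\epito F''_x$ lifts $s_x$ to a germ in $F_x$, and this germ extends to a genuine section over some subanalytic open neighborhood of $x$ adapted to the stratification. Invoking the finiteness properties of subanalytic partitions on $\ol U$ one then extracts a finite subanalytic subcovering of $U$ from the resulting family, furnishing the data $\{U_i,t_i\}$ required by~\eqref{eq:MVsa1}.

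The main obstacle is precisely this extraction of a \emph{finite} subanalytic lifting covering: ordinary sheaf theory only provides local lifts on possibly infinite covers, and the passage to a finite subanalytic refinement depends essentially on the finiteness theorems of subanalytic geometry (triangulation, Whitney stratification, and the fact that any locally finite subanalytic partition has only finitely many pieces meeting a relatively compact subanalytic set). Once this finiteness ingredient is in place, right exactness follows formally from~\eqref{eq:MVsa1}.
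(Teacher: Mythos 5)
The paper does not supply a proof of this proposition (it is imported from \cite{KS01}), so the only question is whether your argument is sound. The full-faithfulness part is correct and immediate, as you say, and left exactness from the adjunction $(\opb{\rho_M},\oim{\rho_M})$ is also correct. Your reduction of right exactness to the existence, for each $s\in F''(U)$ with $U\in\Op_\Msa$, of a \emph{finite} subanalytic cover $\{U_i\}$ of $U$ over which $s$ lifts, is exactly the right formulation via \eqref{eq:MVsa1}.

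The gap is in the step you yourself flag as the main obstacle: the extraction of the finite cover. You build an open cover of $U$ indexed by $x\in U$, but $U$ is not compact — only $\ol U$ is — so compactness does not by itself let you pass to a finite subcover. Moreover, for boundary points $x\in\partial U$ the stalk $F_x$ is not obviously relevant to sections of $F$ over opens contained in $U$. Saying that the neighborhoods are ``adapted to the stratification'' and appealing generically to finiteness of subanalytic partitions does not close this: a stratification gives finitely many pieces, but one still must show that lifting works over a single subanalytic open per piece that actually covers $U$. The standard way to make the argument precise is to take a subanalytic triangulation of $\ol U$ compatible with $\partial U$ and with a common stratification for $F$ and $F''$; by compactness of $\ol U$ there are finitely many simplices. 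One then lifts $s$ over the intersections with $U$ of the open stars of the vertices, using that for a sheaf constructible with respect to a simplicial decomposition, its sections over an open star are computed by the stalk at the vertex, so a stalk-level epimorphism gives a section-level lift. This simultaneously produces the finiteness and the local lifts, replacing your pointwise construction plus (unjustified) finite refinement. The idea you gesture at is the right one; it just needs the triangulation and the open-star lemma (cf.\ \cite[Ch.~VIII]{KS90}) to become an actual proof.
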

We have a natural functor 
\eq\label{eq:2sinddlim}
&&\lambda_{M}\cl\IIrc[\cor_{M}]\to \md[\cor_{\Msa}],\quad\inddlim[i]F_i\mapsto\inddlim[i]\oim{\rho_M} F_i,
\eneq
where the first $\sinddlim$ is taken in the category $\IIrc[\cor_{M}]$ and the second one is taken in the category 
$\md[\cor_\Msa]$. 

\begin{theorem}
The functor $\lambda_M$ in~\eqref{eq:2sinddlim} is an equivalence.
\end{theorem}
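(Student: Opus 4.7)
The plan is to establish full faithfulness and essential surjectivity of $\lambda_M$ separately. The central ingredient, which I expect to be the hardest step, is to show that for every $F\in\rcc[\cor_M]$ the sheaf $\oim{\rho_M}F$ is \emph{compact} in $\md[\cor_\Msa]$, in the sense that $\Hom[{\md[\cor_\Msa]}](\oim{\rho_M}F,\scbul)$ commutes with small filtrant colimits. The starting case is $F=\cor_U$ for $U\in\Op_\Msa$: by~\eqref{eq:MVsa1} together with the fact that coverings in $\Msa$ are finite by definition of the subanalytic site, the functor $\Gamma(U,\scbul)\simeq\Hom[{\md[\cor_\Msa]}](\cor_U^\Msa,\scbul)$ commutes with filtrant colimits on $\md[\cor_\Msa]$. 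A general $F\in\rcc[\cor_M]$ is built from such $\cor_U$'s by finitely many extensions and quotients, using a finite subanalytic stratification of $\Supp(F)$, and a five-lemma argument then propagates compactness through each step of the d\'evissage.

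Full faithfulness will then follow formally: for $F=\inddlim_i F_i$ and $H=\inddlim_j H_j$ with $F_i,H_j\in\rcc[\cor_M]$, one has
\baln
\Hom[{\md[\cor_\Msa]}](\lambda_M F,\lambda_M H)
&\simeq \prolim_i \Hom[{\md[\cor_\Msa]}]\bl\oim{\rho_M}F_i,\sindlim_j\oim{\rho_M}H_j\br\\
&\simeq \prolim_i\indlim_j \Hom[{\md[\cor_\Msa]}](\oim{\rho_M}F_i,\oim{\rho_M}H_j)\\
&\simeq \prolim_i\indlim_j \Hom[{\rc[\cor_M]}](F_i,H_j)\simeq \Hom[{\IIrc[\cor_M]}](F,H),
\ealn
where the second isomorphism invokes the compactness just established, and the third invokes the full faithfulness of $\oim{\rho_M}$ restricted to $\rc[\cor_M]$ from the preceding proposition.

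For essential surjectivity, given $G\in\md[\cor_\Msa]$ I would form the slice category $\shj_G$ whose objects are pairs $(F,s)$ with $F\in\rcc[\cor_M]$ and $s\cl\oim{\rho_M}F\to G$. One checks that $\shj_G$ is filtrant by using closure of $\rcc[\cor_M]$ under finite direct sums and cokernels together with exactness of $\oim{\rho_M}$ on $\rc[\cor_M]$. Setting $F_G\eqdot\inddlim_{(F,s)\in\shj_G}F\in\IIrc[\cor_M]$, the canonical morphism $\lambda_M F_G=\sindlim_{(F,s)}\oim{\rho_M}F\to G$ is an epimorphism because any local section $t\in G(U)$ with $U\in\Op_\Msa$ produces a pair $(\cor_U,t)\in\shj_G$. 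It is a monomorphism because any local section of $\lambda_M F_G$ mapped to zero in $G$ is carried by some $\oim{\rho_M}F_0$ and already dies in $\oim{\rho_M}(F_0/H)$, where $H\subseteq F_0$ denotes the $\rcc[\cor_M]$-subobject generated by that section; since $s_0|_H=0$, the quotient $(F_0/H,\bar s_0)$ does lie in $\shj_G$ and provides the required target. This identifies $\lambda_M F_G$ with $G$.
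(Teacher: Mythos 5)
Your overall plan --- compactness of $\oim{\rho_M}F$ for $F\in\rcc[\cor_M]$, full faithfulness by the resulting Hom computation, essential surjectivity via the comma-category colimit --- is the standard one and matches the proof in~\cite{KS01}. The full faithfulness chain, the check that the comma category is filtrant, and the monomorphism/epimorphism argument at the end are all correct as written.

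There is, however, a gap in the d\'evissage you use to pass from $F=\cor_U$ to a general $F\in\rcc[\cor_M]$. You propose to filter $F$ along a subanalytic stratification and then propagate compactness via the five lemma, and this does not work. First, the graded pieces of a stratification filtration have the form $L_Z$ for $Z$ a locally closed stratum and $L$ a local system on $Z$, not constant sheaves on \emph{open} subanalytic subsets; to produce the latter one needs a finite subanalytic triangulation adapted to $F$ together with a resolution by open stars of simplices as in~\cite[\S\,8.1]{KS90}, which is a different gadget. Second, and more fundamentally, the five lemma cannot push compactness through an extension $0\to F'\to F\to F''\to 0$: applying $\Hom[{\md[\cor_\Msa]}](\scbul,\scbul)$ yields only a three-term left exact sequence, and continuing it would require knowing that $\mathrm{Ext}^1(\oim{\rho_M}F'',\scbul)$ commutes with filtrant colimits, which you cannot expect since $\oim{\rho_M}\cor_U$ is not projective in $\md[\cor_\Msa]$. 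The correct repair is a finite \emph{presentation}: take $P_1\to P_0\to F\to 0$ with $P_0,P_1$ finite direct sums of $\cor_{U_i}$'s, apply the exact functor $\oim{\rho_M}$, and write
\[
\Hom[{\md[\cor_\Msa]}](\oim{\rho_M}F,\scbul)\simeq\ker\bl\Hom[{\md[\cor_\Msa]}](\oim{\rho_M}P_0,\scbul)\to\Hom[{\md[\cor_\Msa]}](\oim{\rho_M}P_1,\scbul)\br;
\]
compactness then follows because kernels commute with small filtrant colimits. Finally, you should record explicitly the identification of $\oim{\rho_M}\cor_U$ with the sheaf on $\Msa$ representing $\sect(U,\scbul)$, which your base case uses tacitly.
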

In other words, subanalytic indsheaves are usual sheaves
on the subanalytic site.
By this result, the embedding $\IIrc[\cor_M]\into\II[\cor_M]$ gives an exact and  fully faithful functor
\eq
&&\tw\iota_M\cl \md[\cor_{\Msa}]\to\II[\cor_M].\label{def:tw}
\eneq
Note that for $G\in\md[\cor_\Msa]$, one has
\eqn
&&\tw\iota_MG\simeq\inddlim[{\oim{\rho_M}F\to G}]F, \mbox{ where }F\in\rc[\cor_M].
\eneqn
Also note that
\eqn
&&\tw\iota_M\hom(F,G)\simeq\ihom(F,\tw\iota_MG)\mbox{ for } F\in\rc[\cor_M],\, G\in\md[\cor_\Msa].
\eneqn
We have the following diagrams, where the one in the left is {\em non} commutative and the one in the right is commutative
 (see Diagram~\ref{diag:NC1} for the case $M=\rmpt$):
\eq\label{diag:NC2}
&&\ba{l}\xymatrix{
\md[\cor_{\Msa}]\ar[rd]^-{\tw\iota_M}&\\
\md[\cor_M]\ar[u]^-{\oim{\rho_M}}\ar[r]_-{\iota_M}\ar@{}[ru]|(.33){NC}&\II[\cor_M],
}\quad
\xymatrix{
\md[\cor_{\Msa}]\ar[rd]_-{\tw\iota_M}\ar[r]^\sim&\IIrc[\cor_M]\ar[d]\\
&\II[\cor_M].
}\ea
\eneq
The functors $\iota_M$ and $\tw\iota_M$ are exact but $\oim{\rho_M}$ is not right exact in general. 

\begin{lemma}
The two diagrams below commute:
\eq\label{diag:NC3}
&&\ba{l}\xymatrix{
\md[\cor_\Msa]\ar[rd]^-{\tw\iota_M}\ar[d]_-{\opb{\rho_M}}&\\
\md[\cor_M]&\II[\cor_M]\ar[l]^-{\alpha_M},
}\quad
\xymatrix{
\md[\cor_\Msa]\ar[rd]^-{\tw\iota_M}&\\
\md[\cor_M]\ar[u]^-{\eim{\rho_M}}\ar[r]_-{\beta_M}&\II[\cor_M].
}\ea
\eneq
\end{lemma}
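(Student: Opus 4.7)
My plan is to verify the two isomorphisms $\alpha_M\circ\tw\iota_M\simeq\opb{\rho_M}$ (left square) and $\beta_M\simeq\tw\iota_M\circ\eim{\rho_M}$ (right square) separately, exploiting the equivalence $\lambda_M\cl\IIrc[\cor_M]\isoto\md[\cor_\Msa]$ together with the adjunctions $(\beta_M,\alpha_M)$, $(\alpha_M,\iota_M)$, $(\opb{\rho_M},\oim{\rho_M})$, $(\eim{\rho_M},\opb{\rho_M})$ already recorded.

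For the left square, the strategy is to reduce both sides to a common filtered inductive limit. Given $G\in\md[\cor_\Msa]$, I will use $\lambda_M$ to write $G\simeq\sindlim[i]\oim{\rho_M}F_i$ for some filtered system $(F_i)$ in $\rcc[\cor_M]$; by construction $\tw\iota_MG$ is then the ind-object $\sinddlim[i]F_i$ in $\II[\cor_M]$. Applying $\alpha_M$ sends this to $\sindlim[i]F_i$ in $\md[\cor_M]$. On the other hand, $\opb{\rho_M}$ is a left adjoint and therefore commutes with filtered colimits, while the full faithfulness of $\oim{\rho_M}$ on $\rc[\cor_M]$ gives $\opb{\rho_M}\oim{\rho_M}F_i\simeq F_i$; hence $\opb{\rho_M}G\simeq\sindlim[i]F_i$ as well, agreeing with $\alpha_M\tw\iota_MG$.

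For the right square, I will assume the left square and then argue via the universal property of $\eim{\rho_M}$. First I must verify that $\beta_MF\in\IIrc[\cor_M]$ for every $F\in\md[\cor_M]$; I plan to bootstrap this from the example $\beta_M\cor_U\simeq\sinddlim[V]\cor_V$ (restricting to subanalytic $V\ssubset U$ by cofinality) and then propagate to arbitrary $F$ by using the exactness of $\beta_M$, its compatibility with $\sindlim$, a presentation of $F$ as a cokernel of a morphism between direct sums of sheaves of the form $\cor_U$, and the stability of $\IIrc[\cor_M]=\Ind(\rcc[\cor_M])$ under the relevant colimits and cokernels in $\II[\cor_M]$. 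Granting this, let $H\in\md[\cor_\Msa]$ be the subanalytic sheaf corresponding to $\beta_MF$ under $\lambda_M\circ\tw\iota_M^{-1}$. For any $G\in\md[\cor_\Msa]$, full faithfulness of $\tw\iota_M$, the adjunction $(\beta_M,\alpha_M)$, and the left square already established yield
\[
\Hom[\cor_\Msa](H,G)\simeq\Hom[\icor_M](\beta_MF,\tw\iota_MG)\simeq\Hom[\cor_M](F,\alpha_M\tw\iota_MG)\simeq\Hom[\cor_M](F,\opb{\rho_M}G),
\]
naturally in $G$. This means $H$ represents the same functor on $\md[\cor_\Msa]$ as $\eim{\rho_M}F$, so $H\simeq\eim{\rho_M}F$; applying $\tw\iota_M$ gives $\beta_MF\simeq\tw\iota_M\eim{\rho_M}F$.

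The main obstacle I anticipate is the stability claim $\beta_MF\in\IIrc[\cor_M]$: without it the passage to the subanalytic side collapses, since $\{\iota_MF':F'\in\rcc[\cor_M]\}$ generates $\IIrc[\cor_M]$ but not the ambient $\II[\cor_M]$, and one cannot recover $\beta_MF$ from pairing against such $F'$ alone. It should nevertheless amount to a routine verification combining the definition $\IIrc[\cor_M]=\Ind(\rcc[\cor_M])$ with the listed structural properties of $\beta_M$ (exactness and compatibility with $\sindlim$).
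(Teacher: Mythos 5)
Your proposal is correct and follows essentially the same route as the paper: for the left square both arguments reduce via the equivalence $\lambda_M$ and commutation with filtered colimits to checking on $\rcc[\cor_M]$, and for the right square both first verify $\beta_MF\in\IIrc[\cor_M]$ (starting from $\beta_M\cor_U$ and propagating through sums and cokernels) and then identify the resulting functor $\md[\cor_M]\to\md[\cor_\Msa]$ with $\eim{\rho_M}$ by the same Yoneda/adjunction chain. The "main obstacle" you flag is handled in the paper exactly as you plan to handle it, so there is no gap.
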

\begin{proof}
(i)  Let us prove the commutation of the diagram on the left. Since all functors in the diagram commute with  inductive limits, we are reduced to prove the isomorphism $\opb{\rho_M}\oim{\rho_M} F
\simeq\alpha_M\tw\iota_M\oim{\rho_M} F$ 
for $F\in\rcc[\cor_M]$ and the result is clear in this case.

\spa
(ii) Let us prove the commutation of the diagram on the right. 
Again all functors in the diagram commute with  inductive limits. 
We shall first prove that 
\eq\label{eq:beta=iotal}
&&\parbox{60ex}{
the functor $\beta_M$ factors as $\beta_M=\tw\iota_M\circ\lambda_M$ for a functor $\lambda_M\cl \md[\cor_M]\to \md[\cor_\Msa]$. 
}\eneq
First consider the case of  $F=\cor_U$ for $U$ open and relatively compact  in $M$.  In this case, 
\eqn
&&\beta_M\cor_U\simeq\inddlim[V\ssubset U]\cor_V,\mbox{ $V$ open in $M$}
\eneqn
and we may assume that $V$ is subanalytic. Hence $\beta_M\cor_U$ is a subanalytic indsheaf.
Since any $F\in\md[\cor_M]$ is obtained by taking direct sums and cokernels of sheaves of the type $\cor_U$ and the subcategory of subanalytic indsheaves is stable by these operations, 
$\beta_MF$ is a subanalytic indsheaf for any $F\in \md[\cor_M]$ and we get~\eqref{eq:beta=iotal}.
It remains to prove that  $\lambda_M\simeq\eim{\rho_M}$. 
Let $F\in\md[\cor_M]$ and $G\in\md[\cor_\Msa]$. Using (i) and the fact that $\tw\iota_M$ is fully faithful, we have
\eqn
\Hom(\eim{\rho_M}F,G)
&\simeq&\Hom(F,\opb{\rho_M}G)
\simeq\Hom(F,\alpha_M\tw\iota_MG)\\
&\simeq&\Hom(\beta_MF,\tw\iota_MG)
\simeq\Hom(\tw\iota_M\lambda_M  F,\tw\iota_MG)\\
&\simeq&\Hom(\lambda_M F,G).
\eneqn
\end{proof}
We denote by $\Derb_\Irc(\icor_M)$ 
\glossary{$\Derb_\Irc(\icor_M)$}%
the full subcategory of $\Derb(\icor_M)$ consisting of objects with subanalytic indsheaves as cohomologies. By~\cite[Th~7.1]{KS01}, we have:
\begin{theorem}
The functor $\tw\iota_M$ induces an equivalence of triangulated categories
\eq
&&\Derb(\cor_\Msa)\isoto\Derb_\Irc(\icor_M).
\eneq
\end{theorem}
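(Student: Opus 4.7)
The strategy is to first reduce the statement using the previously established equivalence $\lambda_M\cl \md[\cor_{\Msa}]\isoto \IIrc[\cor_M]$ of abelian categories, and then to verify that the inclusion $\IIrc[\cor_M]\hookrightarrow \II[\cor_M]$ extends to an equivalence of the appropriate bounded derived categories.

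More precisely, since $\tw\iota_M$ factors as the composition of $\lambda_M^{-1}$ with the exact fully faithful inclusion $\IIrc[\cor_M]\hookrightarrow\II[\cor_M]$ (by the commutativity of the right diagram in~\eqref{diag:NC2}), and since $\lambda_M$ automatically induces an equivalence of triangulated categories $\Derb(\cor_{\Msa})\simeq \Derb(\IIrc[\cor_M])$, the problem reduces to showing that the natural triangulated functor
\[
\Derb(\IIrc[\cor_M])\to \Derb_\Irc(\icor_M)
\]
is an equivalence.

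For essential surjectivity, I would argue by induction on the cohomological amplitude of $K\in\Derb_\Irc(\icor_M)$. Since all $H^n(K)$ lie in $\IIrc[\cor_M]$, the truncation triangles $\tau_{\leq n-1}K \to \tau_{\leq n}K \to H^n(K)[-n]\to[1]$ allow one to reconstruct $K$ from its cohomologies, provided the gluing extension classes lift from the target category to the source; this lifting is guaranteed by full faithfulness. For full faithfulness, the case $n=0$ of the required isomorphism $\Ext^n_{\IIrc}(F,G)\isoto \Ext^n_{\II}(F,G)$ is immediate from the fully faithful inclusion at the abelian level. For $n\geq 1$, I would use that $\md[\cor_{\Msa}]$ is a Grothendieck category (sheaves on a site), hence has enough injectives; take an injective resolution $G\to I^\bullet$ in $\IIrc[\cor_M]$, and verify that each $I^k$ is $\Hom_{\II}(F,\scbul)$-acyclic for $F\in \IIrc[\cor_M]$, so that the resolution computes $\Ext^n_{\II}(F,G)$ as well as $\Ext^n_{\IIrc}(F,G)$.

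The main obstacle is this last acyclicity verification: one must show that any injective object $I$ of $\IIrc[\cor_M]$, viewed inside $\II[\cor_M]$, has no higher $\Ext$-groups against objects of $\IIrc[\cor_M]$ when tested in the larger category. The key point is that any morphism from a compactly-supported sheaf into an indsheaf factors through its terms in a controlled way, combined with the fact that $\R$-constructible sheaves with compact support form a cofinal system of the subobject structure in $\mdcp[\cor_M]$ via refinement of subanalytic stratifications. This cofinality, together with the finite-cover property of the subanalytic site~\eqref{eq:MVsa1}, forces the resolution by injectives in $\md[\cor_{\Msa}]$ to compute derived functors equally well in $\II[\cor_M]$, which completes the argument.
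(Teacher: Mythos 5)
The overall architecture of your argument is sound: you correctly reduce the statement via the established equivalence $\lambda_M$ to the assertion that the exact fully-faithful inclusion $\IIrc[\cor_M]\hookrightarrow\II[\cor_M]$ induces an equivalence $\Derb(\IIrc[\cor_M])\isoto\Derb_\Irc(\icor_M)$, and you are right that essential surjectivity follows from full faithfulness by induction on cohomological amplitude (the truncation triangle argument works because the exact inclusion preserves the $t$-structure and cones are determined up to isomorphism). You are also right that full faithfulness reduces, by d\'evissage, to the comparison $\Ext^n_{\IIrc[\cor_M]}(F,G)\isoto\Ext^n_{\II[\cor_M]}(F,G)$ for $F,G$ in the abelian category, and that since $\md[\cor_\Msa]$ is a Grothendieck category one may hope to compute both sides via an injective resolution of $G$ on the subanalytic site.

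However, the step you yourself flag as ``the main obstacle'' is precisely where the proposal stops being a proof. You must show that an injective object $I$ of $\md[\cor_\Msa]$, once transported into $\II[\cor_M]$ by $\tw\iota_M$, is acyclic for $\Hom_{\II[\cor_M]}(F,\scbul)$ --- or, equivalently, is quasi-injective in the sense used in the paper, which is the device by which derived functors are computed in $\II[\cor_M]$ given that this category lacks enough injectives. The final paragraph does not establish this: ``morphisms factor through the terms of an ind-system'' is a statement about $\Hom$, not about the vanishing of higher derived functors, and the appeal to cofinality of $\R$-constructible compactly-supported objects together with the finite-cover characterization~\eqref{eq:MVsa1} names the ingredients without exhibiting the argument that would combine them. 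One would need, for instance, an explicit Mayer--Vietoris or \v{C}ech-theoretic computation of $\Ext^n_{\II[\cor_M]}(\cor_U,\tw\iota_M G)$ for subanalytic open $U$, matched against $H^n(U;G)$ on the subanalytic site; this is nontrivial and is the actual content of the cited result \cite[Th.~7.1]{KS01}. As written, the proposal identifies the difficulty but asserts rather than proves its resolution.
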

\begin{proposition}\label{proopsa1}
Let $M$ be a subanalytic space. 
\bnum
\item
Let $K,L\in\Derb_\Irc(\icor_M)$. Then $K\tens  L\in\Derb_\Irc(\icor_M)$.
\item
 Let $K\in\Derb_\Irc(\icor_M)$ and let $F\in\Derb_\Rc(\cor_M)$. 
 Then $\rihom(F,K)\in\Derb_\Irc(\icor_M)$.
 \enum
\end{proposition}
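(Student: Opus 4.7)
The plan is to reduce both statements to the level of the small abelian category $\rcc[\cor_M]$, exploiting the ind-structure $\IIrc[\cor_M]=\mathrm{Ind}(\rcc[\cor_M])$ and the closure of $\Derb_\Irc(\icor_M)$ under extensions (which holds because $\IIrc[\cor_M]$ is a thick abelian subcategory of $\II[\cor_M]$, so its derived category embeds as a triangulated subcategory stable by cones of triangles whose other vertices lie in it).

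For (i), I would first perform dévissage on the cohomological amplitudes of $K$ and $L$ via the truncation triangles: using the closure of $\Derb_\Irc(\icor_M)$ under extensions, induction on amplitude reduces to the case where both $K$ and $L$ lie in the heart $\IIrc[\cor_M]$. Since $\cor$ is a field, every sheaf of $\cor$-modules is flat and $\tens$ is exact in each argument, so on the heart the derived tensor product agrees with the termwise one. Writing $K\simeq\sinddlim[i]F_i$ and $L\simeq\sinddlim[j]G_j$ with $F_i,G_j\in\rcc[\cor_M]$, the compatibility of $\tens$ with filtrant inductive limits in each variable yields
\[
K\tens L\simeq\sinddlim[(i,j)](F_i\tens G_j).
\]
A common subanalytic stratification of $M$ adapted to both $F_i$ and $G_j$ shows $F_i\tens G_j$ is $\R$-constructible, and $\Supp(F_i\tens G_j)\subset\Supp(F_i)\cap\Supp(G_j)$ is compact, so $F_i\tens G_j\in\rcc[\cor_M]$ and hence $K\tens L\in\IIrc[\cor_M]\subset\Derb_\Irc(\icor_M)$.

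For (ii), I would dévissage on $F$. Let
\[
\shc\eqdot\{F\in\Derb_\Rc(\cor_M) : \rihom(F,K)\in\Derb_\Irc(\icor_M)\text{ for all }K\in\Derb_\Irc(\icor_M)\}.
\]
Applying $\rihom(-,K)$ to a distinguished triangle in $\Derb_\Rc$ gives a distinguished triangle in $\Derb(\icor_M)$, and combined with the closure of $\Derb_\Irc$ under extensions this makes $\shc$ a triangulated subcategory of $\Derb_\Rc(\cor_M)$. It therefore suffices to exhibit a generating family inside $\shc$. By a standard stratification argument, $\Derb_\Rc(\cor_M)$ is generated as a triangulated category by sheaves $\cor_U$ with $U$ open subanalytic. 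For such $F=\cor_U$ with open embedding $j_U\colon U\hookrightarrow M$, the adjunction isomorphism gives $\rihom(\cor_U,K)\simeq\roim{j_U}\opb{j_U}K$. The functor $\opb{j_U}$ preserves $\Derb_\Irc$, so the problem reduces to the preservation of $\Derb_\Irc$ under $\roim{j_U}$ for such open subanalytic embeddings.

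The main obstacle is this last preservation under $\roim{j_U}$: for a general open embedding $\roim{}$ does not preserve constructibility, so the subanalytic structure on $U$ is essential. The cleanest verification goes through the equivalence $\tw\iota_M\colon\Derb(\cor_\Msa)\isoto\Derb_\Irc(\icor_M)$: passing to the subanalytic site, $\roim{j_U}$ translates into the direct image along a morphism of subanalytic sites, which manifestly preserves sheaves on $\Msa$. The compatibility of the indsheaf operations with this equivalence is the technical input drawn from the subanalytic site theory developed earlier in the section, and together with the dévissage above it completes the proof.
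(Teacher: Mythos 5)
Part (i) is fine: by truncation (or directly by K\"unneth, since $\cor$ is a field) one reduces to $K,L$ concentrated in degree $0$, and then $K\simeq\sinddlim_i F_i$, $L\simeq\sinddlim_j G_j$ with $F_i,G_j$ $\R$-constructible with compact support gives $K\tens L\simeq\sinddlim_{(i,j)}(F_i\tens G_j)$, each term being again $\R$-constructible with compact support. Your d\'evissage set-up for (ii) (the triangulated subcategory $\shc$, generation of $\Derb_\Rc(\cor_M)$ by the $\cor_U$) is also sound.

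The gap in (ii) is the identity $\rihom(\cor_U,K)\simeq\roim{j_U}\opb{j_U}K$, which is \emph{false} for indsheaves. Via the adjunction $(\reeim{j_U},\epb{j_U})$ and the projection formula, that identity is equivalent to $\iota_M\cor_U\simeq\reeim{j_U}(\iota_U\cor_U)$ in $\Derb(\icor_M)$, i.e.\ to the ind-systems $\{\cor_V\}_{V\ssubset U}$ and $\{\cor_{U\cap W}\}_{W\ssubset M}$ being equivalent. They are not: when $U\cap W$ is connected and meets $\partial U$ (so that $U\cap W\not\subset V$ for every $V\ssubset U$), one has $\Hom[{\icor_M}]\bl\cor_{U\cap W},\reeim{j_U}\iota_U\cor_U\br=\sindlim_{V\ssubset U}\Hom(\cor_{U\cap W},\cor_V)=0$, whereas $\cor_{U\cap W}\to\iota_M\cor_U$ is not zero. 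Concretely, for $K=\Dbt_M$ the object $\rihom(\cor_U,\Dbt_M)$ encodes temperedness near $\partial U$ measured in $M$, while $\roim{j_U}\opb{j_U}\Dbt_M$ forgets that boundary information --- the very distinction the subanalytic theory is designed to capture. Relatedly, $\roim{f}$ is conspicuously absent from the list in Proposition~\ref{proopsa2} of operations preserving $\Derb_\Irc$, so the ``preservation under $\roim{j_U}$'' you intend to verify is not available either. You are right that the equivalence $\tw\iota_M$ with $\Derb(\cor_\Msa)$ is the correct tool, but it should be invoked directly and without any d\'evissage: writing $K\simeq\tw\iota_M\tilde K$, the derived form of the paper's compatibility $\tw\iota_M\hom(F,G)\simeq\ihom(F,\tw\iota_M G)$ yields $\rihom(F,K)\simeq\tw\iota_M\rhom(\oim{\rho_M}F,\tilde K)$ for every $F\in\Derb_\Rc(\cor_M)$, and the right-hand side is visibly in the essential image $\Derb_\Irc(\icor_M)$ of $\tw\iota_M$.
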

 \begin{proposition}\label{proopsa2}
Let $f\cl M\to N$ be a morphism of subanalytic spaces. 
\bnum
\item
For $L\in \Derb_\Irc(\icor_N)$,
we have $\opb{f}L\in\Derb_\Irc(\icor_M)$ and 
$\epb{f}L\in\Derb_\Irc(\icor_M)$.
\item
For $K\in\Derb_\Irc(\icor_M)$, we have $\reeim{f}K\in\Derb_\Irc(\icor_N)$.

\enum
\end{proposition}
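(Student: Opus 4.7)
\medskip\noi
\textbf{Plan.} The strategy is to reduce both statements to the classical preservation of $\R$-constructibility under Grothendieck's six operations, by exploiting the description of $\Derb_\Irc(\icor_\bullet)$ via its cohomology objects together with the commutation of each functor with filtrant inductive limits. Concretely, an object $K$ of $\Derb_\Irc(\icor_X)$ is characterized by the property that each cohomology $H^j(K)$ lies in $\IIrc[\cor_X]=\Ind(\rcc[\cor_X])$, hence is of the form $\sinddlim[i\in I_j] F^j_i$ with $F^j_i\in\rcc[\cor_X]$. We shall combine this with two facts from the excerpt: the embedding $\IIrc[\cor_X]\into\II[\cor_X]$ is stable under filtrant colimits, and the three functors $\opb{f}$, $\reeim{f}$, $\epb{f}$ each commute with small filtrant colimits (in the cases of $\opb{f}$ and $\reeim{f}$ this is automatic as they are left adjoints; for $\epb{f}$ this was noted to hold after passing to cohomology).

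\medskip\noi
\textbf{Proof of (ii).} Let $K\in\Derb_\Irc(\icor_M)$. By a standard truncation/spectral sequence argument, it suffices to treat the case of a single cohomology, so assume $K=\sinddlim[i]K_i$ with $K_i\in\rcc[\cor_M]$. Since $\reeim{f}$ has $\epb{f}$ as right adjoint, it commutes with $\sinddlim$, giving $\reeim{f}K\simeq\sinddlim[i]\reeim{f}K_i$. For each $i$, the object $K_i$ is a compactly supported $\R$-constructible sheaf; since $\supp(K_i)$ is compact and $f$ is a subanalytic morphism, $f$ is proper on $\supp(K_i)$ and $\reeim{f}K_i\simeq\roim{f}K_i$ belongs to $\rcc[\cor_N]$ by the classical preservation theorem for $\R$-constructible sheaves. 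Hence $\reeim{f}K\in\IIrc[\cor_N]$.

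\medskip\noi
\textbf{Proof of (i).} For $\opb{f}$, again reduce to $L=\sinddlim[i]L_i$ with $L_i\in\rcc[\cor_N]$. Since $\opb{f}$ commutes with $\sinddlim$ and with $\iota$, we obtain $\opb{f}L\simeq\sinddlim[i]\iota_M(\opb{f}L_i)$, where $\opb{f}L_i$ is $\R$-constructible on $M$ by the classical theorem but generally not compactly supported. We conclude by noting that for any $G\in\rc[\cor_M]$ the object $\iota_M G\simeq\sinddlim[U]G_U$, with $U$ ranging over relatively compact subanalytic open subsets of $M$, lies in $\IIrc[\cor_M]$ because each $G_U\in\rcc[\cor_M]$. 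Hence $\opb{f}L\in\IIrc[\cor_M]$ up to cohomology, so $\opb{f}L\in\Derb_\Irc(\icor_M)$. For $\epb{f}$ the same argument applies, the only additional point being that the commutation $H^j(\epb{f}L)\simeq\sinddlim[i]H^j(\epb{f}L_i)$ is guaranteed only cohomologically; combined with the classical preservation of $\R$-constructibility by $\epb{f}$ and the decomposition of an $\R$-constructible sheaf into compactly supported pieces as above, this gives $H^j(\epb{f}L)\in\IIrc[\cor_M]$ for every $j$.

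\medskip\noi
The main technical point is the treatment of $\epb{f}$: one must be careful that the reduction to $L_i\in\rcc[\cor_N]$ is performed inside the derived category (where $\epb{f}L_i$ is only a complex, not a single sheaf) and then combined with the classical fact that $\epb{f}$ preserves $\R$-constructibility. Everything else is routine bookkeeping once the commutation with $\sinddlim$ and the stability of $\IIrc$ under filtrant colimits are in hand.
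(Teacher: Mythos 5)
The paper states Proposition \ref{proopsa2} without proof, so there is no written proof to compare against; what follows is an assessment of your argument on its own terms. Your overall strategy is the natural one and is correct: reduce to a single cohomology degree, write the cohomology as a filtrant colimit of objects of $\rcc[\cor_\bullet]$, commute the operation past the colimit, and invoke the classical preservation theorems for $\R$-constructible sheaves. The key observation that for $\opb{f}$ and $\epb{f}$ the output is $\R$-constructible but no longer compactly supported, and that one therefore has to further decompose $\iota_M G \simeq \sinddlim_U G_U$ over relatively compact \emph{subanalytic} open $U$ (so that each $G_U \in \rcc[\cor_M]$), is exactly the right point to spell out.

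Two places deserve slightly more care. First, the classical preservation theorems quoted in the paper are stated for morphisms of real analytic manifolds, whereas the proposition concerns morphisms of subanalytic spaces; one should say a word about why these extend (e.g.\ by embedding into ambient manifolds, or by citing the fact that constructibility and the six-operations machinery carry over to the subanalytic-space setting). Second, the commutation of $\reeim f$ with filtrant inductive limits is asserted as a consequence of adjunction, but that argument works for colimits in a category, not for the abelian-category filtrant colimits used here inside a derived category; what one really uses is that $\eeim f$ is defined term-by-term on ind-objects and that filtrant colimits are exact in $\II[\cor_M]$, so that the derived functor commutes with $\sinddlim$ after taking cohomology. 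You noted this explicitly for $\epb f$ (where the paper itself only guarantees commutation at the level of cohomology) but the same caveat applies to $\reeim f$. Finally, when you restrict the colimit over $U$ to subanalytic open sets you are implicitly invoking that these are cofinal among all relatively compact open subsets of a subanalytic space; this is true and routine, but worth a sentence. None of these are genuine gaps; they are places where the argument should be tightened.
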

The next result will be of a constant use. 
\begin{proposition}\label{pro:eqvIrcDbsa}
A morphism $u\cl K\to L$ in $\Derb_\Irc(\icor_M)$ is an isomorphism if and only if, for any relatively compact subanalytic open subset $U$ of $M$ and any $n\in\Z$, $u$ induces an isomorphism
$\Hom[\Derb(\icor_M)](\cor_U[n],K)
\isoto\Hom[\Derb(\icor_M)](\cor_U[n],L)$.
\end{proposition}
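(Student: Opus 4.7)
The ``only if'' direction is immediate. For the converse, I would first use the equivalence $\tw\iota_M\cl\Derb(\cor_\Msa)\isoto\Derb_\Irc(\icor_M)$ from the preceding theorem to transport the problem to the subanalytic site. Under $\tw\iota_M$, for $U\in\Op_\Msa$ the indsheaf $\cor_U$ corresponds to the representable sheaf $\cor_U$ on $\Msa$ (as follows from diagram~\eqref{dia00}), and so for $K=\tw\iota_MK'$ with $K'\in\Derb(\cor_\Msa)$,
\[
\Hom[\Derb(\icor_M)](\cor_U[n],K)\simeq\Hom[\Derb(\cor_\Msa)](\cor_U[n],K')\simeq H^{-n}(U;K').
\]
Letting $C\in\Derb(\cor_\Msa)$ denote the cone of the morphism corresponding to $u$, the hypothesis translates exactly into the vanishing $H^j(U;C)=0$ for every $U\in\Op_\Msa$ and every $j\in\Z$. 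The claim reduces to showing $C\simeq 0$.

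For this I would argue by induction on the (finite) amplitude of $C$. Assume $C\not\simeq0$ and let $a$ be the smallest integer with $H^a(C)\neq 0$, so that $\tau^{\leq a}C\simeq H^a(C)[-a]$. Applying $\RR\sect(U;\scbul)$ to the distinguished triangle $\tau^{\leq a}C\to C\to\tau^{>a}C\to[+1]$ and reading off the relevant fragment of the long exact sequence yields
\[
H^{a-1}\RR\sect(U;\tau^{>a}C)\to\sect(U;H^a(C))\to H^a(U;C).
\]
The leftmost term vanishes because $\tau^{>a}C$ is concentrated in degrees $\geq a+1$ and $\RR\sect$ has nonnegative cohomological amplitude; the rightmost vanishes by hypothesis. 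Hence $\sect(U;H^a(C))=0$ for every $U\in\Op_\Msa$. Since the category $\Op_\Msa$ is by definition the underlying category of the site $\Msa$, a sheaf on $\Msa$ whose sections vanish on every $U\in\Op_\Msa$ must itself be zero (this is a direct consequence of~\eqref{eq:MVsa1}), giving $H^a(C)=0$ and contradicting the choice of~$a$.

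The step requiring the most care is the initial translation: one must verify that under $\tw\iota_M$ the test objects $\cor_U[n]$ with $U\in\Op_\Msa$ correspond precisely to shifts of representable sheaves on $\Msa$, so that the Hom groups in the statement genuinely compute hypercohomology on the subanalytic site. Once this identification is made, the remainder of the argument is essentially formal, turning on the familiar fact that a sheaf on a site is detected by its values on the underlying category of objects.
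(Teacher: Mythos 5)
The paper states this proposition without proof (the reader is implicitly referred to the references for its standard proof), so there is no in-text argument to compare against. Your proof is correct and takes the natural route: transport via $\tw\iota_M$ to $\Derb(\cor_\Msa)$, reinterpret the hypothesis as the vanishing of $\rsect(U;C)$ for the cone $C$ and all $U\in\Op_\Msa$, and then kill the lowest nonvanishing cohomology sheaf of $C$ using the truncation triangle together with left-exactness of $\sect(U;\scbul)$.

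Two small inaccuracies, neither fatal. The final vanishing is not a consequence of~\eqref{eq:MVsa1}, which is merely a Mayer--Vietoris criterion for the sheaf property on $\Msa$ and says nothing about detecting the zero object; the fact you actually use --- a sheaf on $\Msa$ whose sections vanish on every $U\in\Op_\Msa$ is zero --- is the tautology that a presheaf on $\Op_\Msa$ is zero iff it vanishes on every object, together with sheaves forming a full subcategory of presheaves. Also, identifying the test objects requires slightly more than diagram~\eqref{dia00}: since $\cor_U$ is $\R$-constructible with compact support, one combines~\eqref{dia00}, the formula $\lambda_M\bl\inddlim[i]F_i\br=\inddlim[i]\oim{\rho_M}F_i$, and the right-hand square of~\eqref{diag:NC2} to obtain $\tw\iota_M(\oim{\rho_M}\cor_U)\simeq\iota_M\cor_U$; you rightly flag this identification as the step needing the most care, and it is the one point where the write-up stops short of a complete verification.
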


\subsection{Some classical sheaves on the subanalytic  site}
In this subsection, we take $\C$ as the base field $\cor$. 
\begin{notation}
Let $X$ be a complex manifold and let $\D_X$ be the sheaf of differential operators, as in \S~\ref{subsec:Dmod}.
According to Proposition~\ref{pro:nobeta}, for $\shm\in\Derb(\D_X)$, 
we get the functors
\eqn
\rhom[\shd_X](\shm,\scbul)&\cl& \Derb(\JD_X)\to \Derp(\iC_X),\\
\scbul\ltens[\D_X]\shm&\cl&\Derb(\JD_X^\rop)\to \Derm(\iC_X),\\
\scbul\Dtens\scbul&\cl&\Derm(\JD_X)\times\Derm(\JD_X) \to\Derm(\JD_X).
\eneqn
There are  similar constructions  with right $\D_X$-modules.

If $M$ is a real analytic manifold, we denote by $\D_M$ the sheaf of finite-order differential operators
 with real analytic coefficients. Denoting by $X$ a complexification of $M$, 
 we have 
 $\D_M\simeq\D_X\vert_M$ and the notations above apply with $\D_X$ replaced by  $\D_M$.
\end{notation}

\subsubsection{Tempered and Whitney functions and distributions}
In this subsection and the next ones,  
$M$ denotes a real analytic manifold.

As usual, we denote by 
\glossary{$\Cinf$}%
\glossary{$\Cinfom$}%
\glossary{$\Db_M$}%
\glossary{$\shb_M$}%
$\Cinf$ (resp.\ $\Cinfom$) the sheaf of  $\C$-valued  functions
of class $\mathrm{C}^\infty$ (resp.\ real analytic) and  by $\Db_M$ 
(resp.\ $\shb_M$) 
the sheaf of Schwartz's distributions (resp.\ Sato's hyperfunctions).
We also use the notation $\rA_M=\Cinfom$.

\begin{definition}
Let $U$ be an open subset of $M$  and $f\in \Cinf(U)$. 
One says that $f$ has
\index{polynomial growth}%
{\it  polynomial growth} at $p\in M$
if  $f$ satisfies the following condition:
for a local coordinate system
$(x_1,\dots,x_n)$ around $p$, there exist
a sufficiently small compact neighborhood $K$ of $p$
and a positive integer $N$
such that
\eq
&&\sup\limits_{x\in K\cap U}\big(\dist(x,K\setminus U)\big)^N\vert f(x)\vert
<\infty\,.\label{eq:moderate}
\eneq
Here, $\dist(x,K\setminus U)\seteq\inf\set{|y-x|}{y\in K\setminus U}$,
and 
we understand that the left-hand side of \eqref{eq:moderate} is $0$
if $ K\cap U=\emptyset$ or $K\setminus U=\emptyset$. 
Hence $f$ has polynomial growth at any point of $U$. 
We say that $f$ is  
\index{tempered!$\mathrm{C}^\infty$-functions}%
tempered at $p$ if all its derivatives
have polynomial growth at $p$. We say that $f$ is {\em tempered} 
if it is tempered at any point of $M$.
\end{definition}
An important property of subanalytic subsets is given by the lemma
below. (See Lojasiewicz~\cite{Lo59} and also~\cite{Ma66} 
for a detailed study of its consequences.)

\begin{lemma}\label{th:Loj1}
Let $U$ and $V$ be two relatively compact open subanalytic subsets of $\R^n$.
There exist a positive integer $N$ and $C>0$ such that
\eqn
&&\dist\big(x,\R^n\setminus (U\cup V)\big)^N
\leq C \big(\dist(x,\R^n\setminus U)+\dist(x,\R^n\setminus V)\big).
\eneqn
\end{lemma}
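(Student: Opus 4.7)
The plan is to apply the classical Lojasiewicz inequality for continuous subanalytic functions whose zero loci coincide. I set
\[f(x)\seteq\dist(x,\R^n\setminus(U\cup V))\quad\text{and}\quad g(x)\seteq\dist(x,\R^n\setminus U)+\dist(x,\R^n\setminus V),\]
two continuous nonnegative functions on $\R^n$. First I would check that their zero loci agree: since $\R^n\setminus U$ and $\R^n\setminus V$ are closed and $U\cup V$ is open, $g(x)=0$ iff $x\in(\R^n\setminus U)\cap(\R^n\setminus V)=\R^n\setminus(U\cup V)$, and $f(x)=0$ iff $x\in\R^n\setminus(U\cup V)$.

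Next, to apply the Lojasiewicz inequality I need to realize $f$ and $g$ as subanalytic functions on a compact subanalytic ambient set. Set $K\seteq\overline{U\cup V}$, which is compact since $U$ and $V$ are relatively compact. Choose a closed ball $B$ so large that the distance from $K$ to $\R^n\setminus B$ exceeds the diameter of $K$; then for $x\in K$ the nearest points of $\R^n\setminus U$, $\R^n\setminus V$, and $\R^n\setminus(U\cup V)$ all lie in the compact subanalytic sets $B\setminus U$, $B\setminus V$, and $B\setminus(U\cup V)$, respectively. Hence $f|_K$ and $g|_K$ are continuous subanalytic functions, as distance functions to compact subanalytic subsets of $\R^n$ are subanalytic.

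The Lojasiewicz inequality (\cite{Lo59}, see also \cite{Ma66}) applied to $f|_K$ and $g|_K$, whose zero loci on $K$ coincide, then yields constants $N\geq 1$ and $C>0$ such that $f(x)^N\leq C\,g(x)$ for all $x\in K$. For $x\notin K$, in particular $x\notin U\cup V$, so $f(x)=0$ and the inequality holds trivially; combining the two cases gives the claim.

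The substance of the argument is the Lojasiewicz inequality itself; the only delicate point is ensuring subanalyticity of the distance functions, which is precisely why I first localize to the compact subanalytic ball $B$.
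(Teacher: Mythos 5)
Your proof is correct and is precisely the argument the paper implicitly intends: the paper gives no proof, only a citation to Lojasiewicz~\cite{Lo59} and Malgrange~\cite{Ma66}, and your reduction to the classical Lojasiewicz inequality for continuous subanalytic functions with matching zero loci, after localizing to a compact ball to ensure the distance functions are subanalytic, is the standard way to extract the stated inequality from that reference.
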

For an open subanalytic subset  $U$ in $M$,
denote by $\Cinft[M](U)$ the subspace
of $\Cinf(U)$ consisting of tempered $\mathrm{C}^\infty$-functions. 

Denote by $\Dbt_M(U)$ the image 
of the restriction map $\sect(M;\Db_M)\to\sect(U;\Db_M)$,
and call it the space of {\em tempered distributions} on $U$. 
\index{tempered!distributions}%
Using Lemma~\ref{th:Loj1} and~\eqref{eq:MVsa1} one proves:
\begin{itemize}
\item
the presheaf  $U\mapsto \Cinft[M](U)$ is a sheaf on  $\Msa$,
\item
the presheaf  $U\mapsto \Dbt_M(U)$ is a sheaf on $\Msa$. 
\end{itemize}
\glossary{$\Cinft[\Msa]$}%
\glossary{$\Dbt_{\Msa}$}%
One denotes them by $\Cinft[\Msa]$ and $\Dbt_{\Msa}$. 

\medskip

For a closed subanalytic subset $S$ in $M$, 
denote by $\cI^\infty_{M,S}$ the space
of $\mathrm{C}^\infty$-functions  defined on $M$ which vanish up to 
infinite order on $S$. 
In~\cite{KS96}, one  introduced  the sheaf:
\eqn
\C_U\wtens \Cinf&:=&V\longmapsto \cI^\infty_{V,V\setminus U}
\eneqn
and showed  that it  uniquely extends  to an exact functor 
\eqn
&&\scbul\wtens \shc^{\infty}_M\cl \mdrc[\C_M]\to \md[\C_M].
\eneqn
One denotes by $\Cinfw[\Msa]$ the sheaf on $\Msa$ given by 
\eqn
&&\Cinfw[\Msa](U)=\sect\bl M;H^0(\RD'_M\cor_U)\wtens \shc_M^{\infty}\br, 
\ U\in \Op_{\Msa}.
\eneqn
If $\RD'_M\C_U\simeq\C_{\ol U}$, then $\Cinfw[\Msa](U)\simeq \Cinf(M)/
\cI^\infty_{M,\ol U}$ is the space of Whitney functions on
$\ol{U}$. 
It is thus natural to call 
\glossary{$\Cinfw[\Msa]$}%
$\Cinfw[\Msa]$ the sheaf of 
\index{Whitney functions}%
Whitney $\mathrm{C}^\infty$-functions on $\Msa$. 

Note that the sheaf $\oim{\rho_M}\shd_{M}$ does not operate on
the sheaves  $\Cinft[\Msa]$,
$\Dbt_\Msa$, $\Cinfw[\Msa]$ but  $\eim{\rho_M}\shd_{M}$ does.

\begin{notation}
Recall the exact and fully faithful functor $\tw\iota_M\cl\md[\C_\Msa]
\to\md[\iC_M]$  in \eqref{def:tw}. 
 We denote by $\Cinfw[M]$, $\Cinft[M]$ and $\Dbt_M$  the indsheaves 
 \glossary{$\Cinfw[M]$}%
 \glossary{$\Cinft[M]$}%
 \glossary{$\Dbt_M$}%
 $\tw\iota_M\Cinfw[\Msa]$, $\tw\iota_M\Cinft[\Msa]$ 
and $\tw\iota_M\Dbt_\Msa$ and calls them 
\index{indsheaves!of Whitney functions}%
\index{indsheaves!of tempered $\mathrm{C}^\infty$-functions}%
\index{indsheaves!of tempered distributions}%
the {\em
indsheaves of Whitney functions, tempered $\mathrm{C}^\infty$-functions 
and  tempered distributions}, respectively. 
\end{notation}

We have  monomorphisms  of indsheaves 
\eqn
&&\xymatrix@R=4ex{
\wwb{\Cinfw[M]\ }\ar@{>->}[r]&\wwb{\Cinft[M]\ }\ar@{>->}[r]\ar@{>->}[d]&
\wwb{\Cinf[M]}\ar@{>->}[d]\\
&\Dbt_M\ \ar@{>->}[r]&\Db_M.
}
\eneqn

Let $F\in\Derb_\Rc(\C_M)$. One has the isomorphisms in $\Derb(\C_M)$:
\eq\label{eq:thom}
&&\ba{rcl}\opb{\rho_M}\rhom(\roim{\rho_M}F,\Dbt_{\Msa})&\simeq&\rhom[\iC_M](F,\Dbt_M)\\[1ex]
&\simeq&\thom(F,\Db_M),\ea
\eneq
where the functor 
\eqn
&&\thom(\scbul,\Db_M)\cl\Derb_\Rc(\C_M)^\rop\to\Derb(\C_M)
\eneqn
 was defined in~\cite{Ka80,Ka84} as the main tool for the proof of the Riemann-Hilbert correspondence for regular holonomic D-modules. 

We also have
\eqn
&&\rhom[\iC_M](F,\Cinfw[M])\simeq \RD'_M F\wtens\Cinf[M].
\eneqn

We shall see in Subsection~\ref{subsection:dualWhitTemp} that there is a kind of duality between the indsheaves $\Cinfw[M]$ and $\Dbt_M$.

\subsubsection{Operations on tempered distributions}
Let us describe without detailed  proofs the behaviour of the indsheaf of tempered distributions with respect to direct  and  inverse images
(see \cite{KS01}).
 In~\cite{KS96} these operations are treated in the language of the functor $\thom$ 
\glossary{$\thom$}%
 introduced in~\cite{Ka84},      but we prefer to use  
the essentially equivalent language of indsheaves.

For a real analytic manifold $M$
and for a  morphism of real analytic manifolds $f\cl M\to N$,  we denote by
\begin{itemize}
\item
$\dim M$ \  the dimension of $M$, 
\glossary{$\dim M$}%
\item
 $\rA^{(\dim M)}_M$ \ the sheaf of real analytic forms of top degree,  
\glossary{$\rA$}%
 \item
 $\Theta_M$ \ the sheaf of real analytic vector fields, 
\glossary{$\Theta_M$}%
 \item
 $\ori_M$ the orientation sheaf,  
\glossary{$\ori_M$}%
\item
 $\shv_M\eqdot \rA^{(\dim M)}_M \tens\ori_M$ \ the sheaf of real analytic densities on $M$, 
\glossary{$\shv_M$}%
 \item
 $\Dbtv_M\eqdot \shv_M\ltens[\rA_M]\Dbt_M$ \ the indsheaf of tempered distributions densities, 
\glossary{$\Dbtv_M$}%
 \item
 $\D_{M\to N}=\rA_M\tens[\opb{f}\rA_N]\opb{f}\D_N$ \ the transfer bimodule. 
\glossary{$\D_{M\to N}$}%
  \end{itemize}

\begin{proposition}\label{pro:etensDbt}
Let $M$ and $N$ be two real analytic manifolds.
There exists a natural morphism
\eq\label{eq:etensDbt}
&&\Dbt_M\etens \Dbt_N\to \Dbt_{M\times N}\mbox{ in }\Derb({\rm I}(\D_M\etens \D_N)).
\eneq
\end{proposition}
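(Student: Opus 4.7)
The plan is to construct the morphism by building an external-product pairing on sections over subanalytic opens, sheafifying it over $(M\times N)_\sa$, transferring to indsheaves via $\tw\iota_{M\times N}$, and verifying $\D_M\etens\D_N$-equivariance.

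For $U\in \Op_\Msa$ and $V\in \Op_{N_\sa}$, I would define a bilinear map
\[
\Dbt_M(U)\otimes \Dbt_N(V) \to \Dbt_{M\times N}(U\times V),\qquad u\otimes v\mapsto u\boxtimes v,
\]
as follows. By the very definition of tempered distributions, $u$ is the restriction to $U$ of some global $\wt u\in \sect(M;\Db_M)$, and similarly $v$ extends to $\wt v\in \sect(N;\Db_N)$. The classical external product of Schwartz distributions produces $\wt u\boxtimes \wt v\in \sect(M\times N;\Db_{M\times N})$, and I set $u\boxtimes v\seteq (\wt u\boxtimes \wt v)\vert_{U\times V}$. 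Independence of the chosen extensions is immediate: if $\wt u\vert_U=0$, then $\wt u\boxtimes \wt v$ vanishes on $U\times N\supset U\times V$. By construction $u\boxtimes v$ is the restriction of a global distribution, hence lies in $\Dbt_{M\times N}(U\times V)$.

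This pairing is functorial under restriction to smaller subanalytic opens $U'\subset U$, $V'\subset V$, so after sheafification it yields a morphism in $\md[\C_{(M\times N)_\sa}]$. Transferring to indsheaves via $\tw\iota_{M\times N}$, and using that $\tw\iota$ commutes with $\opb{q_i}$ and $\otimes$ (Table~\eqref{tabular}, diagrams~\eqref{diag:NC2}--\eqref{diag:NC3}, Propositions~\ref{proopsa1}--\ref{proopsa2}), I obtain the desired morphism $\Dbt_M\etens \Dbt_N\to \Dbt_{M\times N}$. The $\D_M\etens\D_N$-linearity then follows from the classical identity $(P\boxtimes Q)(u\boxtimes v)=(Pu)\boxtimes(Qv)$ for $P\in \D_M$ and $Q\in \D_N$, applied to chosen extensions of $u$ and $v$.

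The main difficulty is the bookkeeping required to identify the sheafified external product, which is naturally defined on products $U\times V$ inside the subanalytic site, with the indsheaf-level operation $\etens=\opb{q_1}(\scbul)\otimes\opb{q_2}(\scbul)$. This reduces, via the exactness and commutation of $\tw\iota$ with inverse images and tensor products, to the elementary equality $\cor_U\etens \cor_V=\cor_{U\times V}$; once this identification is set up, no further analytic input is required beyond the classical existence of external products of Schwartz distributions.
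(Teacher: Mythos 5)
The paper states this proposition without proof (it is quoted from~\cite{KS96,KS01}), so there is no in-text argument to compare against. Your construction is essentially the standard one and is correct: since $\Dbt_M(U)$ is by definition the image of $\sect(M;\Db_M)\to\sect(U;\Db_M)$, the external product of any global extensions lands in $\Dbt_{M\times N}(U\times V)$, and the independence-of-extension check (together with the symmetric statement for the second factor) makes the bilinear pairing well-defined; the $\D_M\etens\D_N$-equivariance is indeed the classical identity $(P\boxtimes Q)(u\boxtimes v)=(Pu)\boxtimes(Qv)$ at the level of sections. Two small points worth tightening: (i) what is needed for the transfer step is precisely that $\tw\iota$ is compatible with $\opb{q_i}$ and $\otimes$ under the identification $\md[\C_{\Msa}]\simeq\IIrc[\C_M]\subset\II[\C_M]$ — this is the actual content, and the final reduction you mention to $\cor_U\etens\cor_V\simeq\cor_{U\times V}$ is not really where the work lies; (ii) the $\D$-module structure on $\Dbt_\Msa$ is over $\eim{\rho_M}\D_M$ (not $\oim{\rho_M}\D_M$), corresponding under $\tw\iota_M$ to $\beta_M\D_M$ by the diagram~\eqref{diag:NC3}, and this is what makes your section-level equivariance computation upgrade correctly to a morphism in $\Derb(\II[\D_M\etens\D_N])$. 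Neither point is a gap in substance, only in bookkeeping, which you already flagged.
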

The next result is a feormulation of a theorem of~\cite{Ka84}.
\begin{theorem}\label{th:eimDbt}
Let $f\cl M\to N$ be a morphism of  real analytic manifolds. There exists  a natural isomorphism
\eq\label{eq:opbDbt}
&&\Dbtv_M\ltens[\D_M]\D_{M\to N}\isoto\epb{f}\Dbtv_N\mbox{ in }\Derb(\JfD^\op_N).
\eneq
\end{theorem}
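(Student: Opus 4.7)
The plan is to construct a natural morphism and then verify it is an isomorphism by reducing to elementary cases via the graph factorization $f = p \circ i_f$, where $i_f \colon M \to M \times N$ is the graph embedding and $p \colon M \times N \to N$ is the second projection.

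First I would build the natural morphism. By the adjunction $(\reeim{f}, \epb{f})$ of Theorem~\ref{th:adjIB}, giving a morphism $\Dbtv_M \ltens[\D_M] \D_{M \to N} \to \epb{f}\Dbtv_N$ in $\Derb(\JfD^\op_N)$ is equivalent to giving a morphism $\reeim{f}(\Dbtv_M \ltens[\D_M] \D_{M \to N}) \to \Dbtv_N$ in $\Derb(\JD^\op_N)$. On the level of sections over a relatively compact subanalytic open $V \subset N$ on which $f$ is proper, this comes from the classical integration-along-fibers pairing, sending a tempered density on $\opb{f}V$ and a test function on $V$ to the integral of the density against the pulled-back test function; the general case is assembled from such pieces using Theorem~\ref{th:stI}.

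Next I would reduce to elementary cases. Both sides are compatible with composition: the left-hand side via the transitivity isomorphism $\D_{M \to N} \ltens[{\opb{f}\D_N}] \opb{f}\D_{N \to Z} \simeq \D_{M \to Z}$ recalled in Section~\ref{subsec:Dmod}, and the right-hand side via $\epb{(g \circ f)} \simeq \epb{f} \circ \epb{g}$ for indsheaves. Combined with naturality of the morphism constructed above, this reduces the theorem to the case where $f$ is either a closed embedding or a projection from a product.

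For a closed embedding $i \colon M \hookrightarrow N$, the question is local on $N$, so I may assume $N = M \times \R^k$ with $i(x) = (x, 0)$. In this case $\D_{M \to N}$ is free over $\D_M$ with basis given by the normal derivatives $\partial^\alpha_y$ ($\alpha \in \N^k$), while $\epb{i}\Dbtv_N$ identifies, up to the shift by $k$, with the indsheaf of tempered distributional densities supported in $M \times \{0\}$. The claimed isomorphism then amounts to the tempered refinement of L.~Schwartz's classical structure theorem: every tempered distribution density on $M \times \R^k$ supported in $M \times \{0\}$ is locally a finite sum $\sum_\alpha (\partial^\alpha_y \delta(y)) \otimes u_\alpha$ with $u_\alpha$ tempered on $M$. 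This is a direct Taylor-expansion argument in the $y$-variables, the needed uniform growth control being furnished by Lemma~\ref{th:Loj1}.

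For a projection $p \colon X = Z \times N \to N$, the transfer module simplifies to $\D_{X \to N} \simeq \opb{p}\D_N$, and the isomorphism becomes, after twisting by the appropriate densities, the assertion that tempered distributional top-degree forms along the $Z$-direction integrate fiberwise to tempered distributions on $N$. This is the main obstacle: one must control temperedness under integration over non-compact subanalytic fibers with singular boundaries. Here the argument follows~\cite{Ka84} and uses Hironaka's resolution of singularities to locally linearize the distance function to the boundary of the fibers, reducing the required estimate to a Lojasiewicz-type inequality as in Lemma~\ref{th:Loj1}. Once the isomorphism is verified on sections over every relatively compact subanalytic open of $N$, Proposition~\ref{pro:eqvIrcDbsa} promotes it to the desired isomorphism in $\Derb(\JfD^\op_N)$.
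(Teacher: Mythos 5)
Your overall architecture matches the paper's: build the morphism by adjunction, factor $f$ through its graph to reduce to a closed embedding and a projection, and use Proposition~\ref{pro:eqvIrcDbsa} to check the isomorphism on sections. The closed-embedding step is also essentially equivalent to the paper's (the paper invokes $\rihom(\oim{f}\C_M,\Dbtv_N)\simeq \Dbtv_M\tens[\D_M]\D_{M\to N}$, which is exactly the tempered structure theorem you spell out).

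The gap is in the projection case. You phrase the required statement as ``tempered distributional top-degree forms along the $Z$-direction integrate fiberwise to tempered distributions,'' and then concentrate on controlling temperedness of the fiber integral. But the isomorphism $\Dbtv_M\ltens[\D_M]\D_{M\to N}\isoto\epb{f}\Dbtv_N$ is a derived-category statement: the left side is the relative tempered de Rham complex (computed, e.g., via the Spencer resolution of $\D_{M\to N}$), and what must be shown is that this complex has cohomology concentrated in a single degree, where it equals $\opb{f}\Dbtv_N$. Surjectivity of fiberwise integration is only the top-degree part; you also need the vanishing of the lower cohomology, i.e.\ a tempered Poincar\'e lemma along the fibers together with exactness at the intermediate terms. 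Your sketch does not address this, and invoking Hironaka's resolution to linearize the distance function to the fiber boundaries is both heavier than necessary and does not by itself close the hole. The paper avoids all this: it first factors the submersion so that $M=N\times\R$ (fibers of dimension one), then applies $\roim{f}\rhom(F,\scbul)$ and, by Proposition~\ref{pro:eqvIrcDbsa}, reduces to $F=\C_Z$ for $Z$ closed subanalytic proper over $N$; using the structure of subanalytic sets it further reduces to the case where each fiber meets $Z$ in a closed interval, and there it proves in one stroke the exactness of
\[
0\To \eim{f}\sect_Z\Db_M\To[\partial_t]\eim{f}\sect_Z\Db_M\To[{\int_\R(\cdot)\,dt}]\sect_{f(Z)}\Db_N\to 0,
\]
which simultaneously settles injectivity of $\partial_t$, exactness at the middle term (the tempered Poincar\'e lemma), and surjectivity of integration. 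If you want to keep your argument, you should at least (a) reduce to one-dimensional fibers by transitivity before touching the analysis, and (b) explicitly formulate and prove the acyclicity of the relative complex in the non-top degrees, not merely the surjectivity of the fiber integral.
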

\begin{proof}[Sketch of proof]
(i) First, we construct the morphism in~\eqref{eq:opbDbt}.
By adjunction it is enough to construct a morphism
\eq
&&\reeim{f}(\Dbtv_M\ltens[\D_M]\D_{M\to N})\to\Dbtv_N.
\label{mor:Dbf}
\eneq

Denote by 
$\Sp_\scbul(\shm)$ the Spencer complex of a coherent $\D_M$-module $\shm$. There is a quasi-isomorphism 
$\Sp_\scbul(\shm)\to\shm$, where
$\Sp_k(\shm)$ is the $\D_M$-module 
$\D_M\tens[\rA_M]\bigwedge^k\Theta_M\tens[\rA_M]\shm$.
Then $\Sp_\scbul(\D_{M\to N})$ gives a resolution
of $\D_{M\to N}$ as a $(\D_M,\opb{f}\D_N)$-bimodule
locally free over $\D_M$.
Note that $\Dbtv_M\tens[\D_M] \Sp_k(\D_{M\to N})$ is acyclic 
with respect to the functor $\eeim{f}$ for any $k$. Hence,  in order to construct morphism \eqref{mor:Dbf},
it is enough to construct a morphism of complexes in $\II[\D_X]$ 
\eq\label{eq:opbDbt2}
&&\eeim{f}\bl\Dbtv_M\tens[\D_M] \Sp_\scbul(\D_{M\to N})\br\to\Dbtv_N.
\eneq
Set for short 
$$\shk_\scbul=\Dbtv_M\tens[\D_M] \Sp_\scbul(\D_{M\to N})
\simeq \Dbtv_M\tens[\rA_M]\bigwedge^\scbul\Theta_M\tens[f^{-1}\rA_N]f^{-1}\D_N.$$ 
Then we have
$\eeim{f}(\shk_0)=\eeim{f}(\Dbtv_M)\tens[\rA_N]\D_N$.
The integration of distributions gives a morphism
\eq\label{eq:opbDbt3}
&&\int_f : \eeim{f}(\Dbtv_M)\to \Dbtv_N.
\eneq
Since $\Dbtv_N$ is a right $\D_N$-module, we obtain
the morphism $u\cl \eeim{f}(\shk_0)\to \Dbtv_N$.
By an explicit calculation, one checks that the composition
\eqn
&&\eeim{f}(\shk_1)\To[d_1]\eeim{f}\shk_0\To[u]\Dbtv_N
\eneqn
vanishes. This defines morphism \eqref{mor:Dbf}
and hence  the morphism in~\eqref{eq:opbDbt}.

\medskip\noi
(ii)\  
One can treat separately the case of a closed embedding and a submersion.

\smallskip\noi
(a)\ If $f\cl M\to N$ is a closed embedding, 
the result follows from the isomorphism
\eqn
&& \rihom(\oim{f}\C_M,\Dbtv_N)\simeq \Dbtv_M\tens[\D_M]\D_{M\to N}.
\eneqn

\medskip\noi
(b)\ When $f$ is a submersion, one reduces to the case where $M=N\times\R$ and $f$ is the projection. 
Let $F\in\Derb_\Rc(\cor_M)$ such that $f$ is proper on $\Supp(F)$ and let us apply the functor 
$\roim{f}\rhom(F,\scbul)$
to the morphism~\eqref{eq:opbDbt}. Using $\rhom(F,\scbul)\simeq \alpha_M\circ\rihom(F,\scbul)$,
we get the morphism
\eq\label{eq:opbDbt4}
&&\hs{3ex}
\ba{rcl}\roim{f}\bl\rhom(F,\Dbtv_M)\ltens[\D_M]\D_{M\to N}\br&\to&\roim{f} \rhom(F,\epb{f}\Dbtv_N)\\[1ex]
&\simeq&\rhom(\reim{f}F,\Dbtv_N).\ea
\eneq
By Proposition~\ref{pro:eqvIrcDbsa}, it remains to prove that~\eqref{eq:opbDbt4} is an isomorphism.

One then reduces to the case where $F=\C_Z$ for a closed subanalytic subset 
$Z$ of $N\times\R$ proper over $N$. Then, by using the structure of subanalytic sets, one reduces to the case 
where $\opb{f}(x)\cap Z$ is a closed interval for each $x\in f(Z)$.
Finally, one proves that the sequence below is exact.
\eqn
&&0\To \eim{f}\sect_Z\Db_M\To[\partial_t] \eim{f}\sect_Z\Db_M\To
[{\int}_\R\,(\cdot)\,dt]\sect_{f(Z)}\Db_N\to 0.
\eneqn
\end{proof}

One often needs to compactify real analytic  manifolds. In order to check that the construction does not depend on the 
 choice of compactifications, the next lemma is useful.

\begin{lemma}\label{le:sacompactif}
Consider a morphism $f\cl M\to N$ of real analytic manifolds and let $V\subset N$ be a subanalytic open subset. Set $U=\opb{f}V$ and assume that $f$ induces an isomorphism of real analytic manifolds $U\isoto V$. Then
\eq
&&\rihom(\C_U,\Dbt_M)\simeq \epb{f}\rihom(\C_V,\Dbt_N).
\eneq
\end{lemma}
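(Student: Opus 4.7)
The plan is to reduce the assertion to Theorem~\ref{th:eimDbt} by restricting everything to $U$. Let $j\cl U\into M$ and $i\cl V\into N$ denote the open embeddings and set $f'\eqdot f\vert_U\cl U\isoto V$. Because $j$ and $i$ are open, $\epb{j}=\opb{j}$ and $\epb{i}=\opb{i}$, and the $(\opb{k},\roim{k})$-adjunction (applied via $\C_W\tens(-) = k_!\opb{k}(-)$ for the appropriate open $k$) gives
\[
\rihom(\C_U,\Dbt_M)\simeq\roim{j}\opb{j}\Dbt_M, \qquad \rihom(\C_V,\Dbt_N)\simeq\roim{i}\opb{i}\Dbt_N.
\]

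First I would simplify $\epb{f}\rihom(\C_V,\Dbt_N)$. The Cartesian square with vertices $U,V,M,N$ and edges $j,f',i,f$ satisfies $i\circ f'=f\circ j$; base change~\eqref{eq:basech2} yields $\epb{f}\roim{i}\simeq\roim{j}\epb{f'}$; since $f'$ is an isomorphism, $\epb{f'}=\opb{f'}$; and $\opb{j}\opb{f}=\opb{(f\circ j)}=\opb{(i\circ f')}=\opb{f'}\opb{i}$. Combining, $\epb{f}\rihom(\C_V,\Dbt_N)\simeq\roim{j}\opb{f'}\opb{i}\Dbt_N$, so the lemma reduces to the identification of indsheaves on $U$
\[
\opb{j}\Dbt_M\simeq\opb{f'}\opb{i}\Dbt_N.
\]

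Next, I would extract this from Theorem~\ref{th:eimDbt}, whose density version supplies $\epb{f}\Dbtv_N\simeq\Dbtv_M\ltens[\D_M]\D_{M\to N}$. Applying $\opb{j}$: the transfer bimodule $\opb{j}\D_{M\to N}=\rA_U\tens[\opb{f'}\rA_V]\opb{f'}\D_V$ collapses to $\D_U$ precisely because $f'$ is a real analytic isomorphism (giving canonical identifications $\opb{f'}\rA_V\simeq\rA_U$ and $\opb{f'}\D_V\simeq\D_U$), so the right-hand side becomes $\opb{j}\Dbtv_M\ltens[\D_U]\D_U\simeq\opb{j}\Dbtv_M$. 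The left-hand side equals $\opb{f'}\opb{i}\Dbtv_N$ by the factorisation $\opb{j}\epb{f}=\opb{f'}\opb{i}$ used in the previous step. Hence the density analogue of the desired identity holds: $\opb{j}\Dbtv_M\simeq\opb{f'}\opb{i}\Dbtv_N$.

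Finally, I would descend from densities to distributions. Writing $\Dbtv=\shv\tens[\rA]\Dbt$ and using the canonical isomorphism $\opb{f'}\shv_V\simeq\shv_U$ afforded by the diffeomorphism $f'$, both members of the density isomorphism take the form $\shv_U\tens[\rA_U]H$ for $H=\opb{j}\Dbt_M$ and $H=\opb{f'}\opb{i}\Dbt_N$ respectively, and the isomorphism is $\rA_U$-linear. Since $\shv_U$ is an invertible $\rA_U$-module, tensoring with $\shv_U^{\otimes-1}$ cancels the density factor and yields $\opb{j}\Dbt_M\simeq\opb{f'}\opb{i}\Dbt_N$; applying $\roim{j}$ closes the argument. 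The main obstacle is precisely this last descent: the clean cancellation of $\shv$-factors hinges on the hypothesis that $f'$ is a diffeomorphism onto $V$, giving $\opb{f'}\shv_V\simeq\shv_U$ canonically---without it, the relative density twist inherent in $\epb{f}$ would block a direct reduction from the density form of Theorem~\ref{th:eimDbt}.
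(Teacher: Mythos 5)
Your argument is correct and is essentially the same as the paper's: both hinge on Theorem~\ref{th:eimDbt} and the observation that $\D_{M\to N}$ restricts to $\D_U$ over $U$ (equivalently, $\D_M\to\D_{M\to N}$ is an isomorphism on $U$). Your base-change reformulation $\epb{f}\roim{i}\simeq\roim{j}\epb{f'}$ is just another packaging of the paper's use of $\epb{f}\rihom(G,H)\simeq\rihom(\opb{f}G,\epb{f}H)$, and your explicit cancellation of the density factor $\shv_U$ via $\opb{f'}\shv_V\simeq\shv_U$ makes precise a twist that the paper's two-line proof passes over silently.
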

\Proof
By Theorem~\ref{th:eimDbt}, we have
\begin{align*}
\epb{f}\rihom(\C_V,\Dbt_N)
&\simeq\rihom(f^{-1}\C_V,\epb{f}\Dbt_N)\\
&\simeq\rihom(\C_U,\Dbtv_M\ltens[\D_M]\D_{M\to N}).
\end{align*}
Since the morphism of $\D_M$-modules
$\D_M\to\D_{M\to N}$
is an isomorphism on $U$,
it induces an isomorphism 
$$\rihom(\C_U,\Dbtv_M\ltens[\D_M]\D_M)\isoto\rihom(\C_U,\Dbtv_M\ltens[\D_M]\D_{M\to N}).$$
\QED

\begin{remark}
By choosing $N=\rmpt$ and $F=\C_U$ for $U$  open  subanalytic, we obtain that 
$\RHom(\reim{f}\C_U,\C)\simeq\rsect(U;\omega_M)$ is isomorphic to the de Rham complex 
with coefficients in $\Dbt_M(U)$. This is a vast generalization of a well-known theorem 
of Grothendieck~\cite{Gr66} 
which asserts that the cohomology of the complementary of an algebraic  hypersurface $S$ may be calculated as the de Rham complex 
with coefficients  in  the sheaf of meromorphic functions with poles on $S$. 
This  result has been  generalized to the semi-analytic setting by 
Poly~\cite{Po74}. 
\end{remark}

\subsubsection{Whitney and tempered holomorphic functions}
Let $X$ be a complex manifold. We denote  by $X^c$ the complex conjugate manifold to $X$ and by $X_\R$ the  underlying real analytic  manifold.

We define the following  indsheaves 
\glossary{$\Oww[X]$}%
\glossary{$\Ow[X]$}%
\glossary{$\Ot[X]$}%
\glossary{$\Ovt_X$}%
\eqn\label{eq:defOt1}
\Oww[X]&\eqdot&\beta_X\OO,\\
\Ow[X]&\eqdot&\rhom[\shd_{X^c}](\sho_{X^c},\Cinfw[X_\R])\simeq\Omega_{X^c}\ltens[\shd_{X^c}]\Cinfw[X_\R]\,[-d_X],\\
\Ot[X]&\eqdot&\rhom[\shd_{X^c}](\sho_{X^c},\Dbt_{X_\R})\simeq\Omega_{X^c}\ltens[\shd_{X^c}]\Dbt_{X_\R}\,[-d_X],\\
\Ovt_X&\eqdot&\Omega_X\tens[\sho_X]\Ot[X].
\eneqn
The first three
are objects of  $\Derb_\Irc(\JD_X)$ while the last one
is an object of $\Derb_\Irc(\JD_X^\rop)$. 
Hence $\Ot$ is isomorphic to the Dolbeault complex
with coefficients in $\Dbt_{X_\R}$:
$$0\To\Dbt_{X_\R}\To[\ol\partial]\Db_{X_\R}^{\mathrm{t}\;(0,1)}\To[\ol\partial]\cdots
\To[\ol\partial]\Db_{X_\R}^{\mathrm{t}\;(0,d_X)}\To0,$$
where  $\Db_{X_\R}^{\mathrm{t}\;(0,p)}\seteq\Omega_{X^c}^p\tens[{\OO[X^c]}]\Dbt_{X_\R}$
is situated in degree $p$. 

One calls $\Ow[X]$  and $\Ot[X]$  the 
\index{indsheaves!of Whitney holomorphic functions}%
\index{indsheaves!of tempered holomorphic functions}%
\index{tempered!holomorphic functions}%
{\em indsheaves of Whitney and tempered holomorphic functions}, respectively. 
We have the morphisms in the category  $\Derb(\JD_X)$:
\eqn
&&\Oww\to\Ow[X]\to\Ot[X]\to\OO.
\eneqn
One proves the isomorphism
\eq\label{eq:defOt2}
\Ot[X]&\simeq&\rhom[\shd_{X^c}](\sho_{X^c},\Cinft[{X_\R}])\mbox{ in }\Derb(\JD_X).
\eneq

Note that the object $\Ot[X]$ is not
concentrated in degree zero if $d_X>1$. Indeed, with the subanalytic topology, only finite coverings 
are allowed. If one considers for example the open subset $U\subset\C^n$, 
the difference of an open ball of radius $R$ and a closed ball of 
radius $r$ with $0<r<R$, 
then the Dolbeault complex will not be exact after any finite covering.

\begin{example}\label{exa:OtOwCDb}
(i) Let $Z$ be a closed complex analytic subset of the complex manifold $X$. We have the isomorphisms in $\Derb(\D_X)$:
\eqn
&&
\ba{ll}\rhom[{\iC_X}](\RD_X'\C_Z,\Oww)\simeq(\OO)_Z&\mbox{(restriction)},\\[1ex]
\rhom[{\iC_X}](\RD_X'\C_Z,\Ow)\simeq\OO\widehat{\vert}_Z&
\mbox{(formal completion)},\\[1ex]
\rhom[{\iC_X}](\C_Z,\Ot)\simeq\rsect_{[Z]}(\OO)&\mbox{(algebraic cohomology)},
\\[1ex]
\rhom[{\iC_X}](\C_Z,\OO)\simeq\rsect_{Z}(\OO)&\mbox{(local cohomology)}.\ea
\eneqn
(ii) Let  $M$ be a real analytic manifold such that $X$ is a complexification of $M$. We have the isomorphisms  in $\Derb(\D_M)$:
\eqn
&&\ba{ll}
\rhom[{\iC_X}](\RD_X'\C_M,\Oww)\vert_M\simeq \rA_M &\mbox{(real analytic functions)},
\\[1ex]
\rhom[{\iC_X}](\RD_X'\C_M,\Ow)\vert_M\simeq \Cinf[M]&\mbox{($\mathrm{C}^\infty$-functions)},
\\[1ex]
\rhom[{\iC_X}](\RD_X'\C_M,\Ot)\vert_M\simeq\Db_M&\mbox{(distributions)},\\[1ex]
\rhom[{\iC_X}](\RD_X'\C_M,\OO)\vert_M\simeq\shb_M&\mbox{(hyperfunctions)}.
\ea
\eneqn
\end{example}

\subsubsection{Duality between Whitney and tempered functions }\label{subsection:dualWhitTemp}

We shall use the theory of topological $\C$-vector spaces of type 
$\FN$ (Fr\'echet nuclear spaces) or $\DFN$ 
(dual of  Fr\'echet nuclear  spaces). 
The categories of $\FN$ spaces and $\DFN$ spaces are quasi-abelian and 
the topological duality functor induces a contravariant equivalence between
the  category of $\FN$ spaces and $\DFN$ spaces.
It induces therefore an equivalence of triangulated categories
$$\Derb(\FN)^\rop\simeq \Derb(\DFN).$$

\begin{proposition}[{\cite[Prop.~2.2]{KS96}}]\label{pro:dualWhitTemp1}
Let $M$ be a real analytic manifold and let $F\in\rc[\C_M]$. 
Then, there exist natural topologies of type $\FN$ 
\glossary{$\FN$}%
 on 
$\sect(M;F\wtens\Cinf[M])$ and of type $\DFN$ 
\glossary{$\DFN$}%
on $\sect_c(M;\hom[\iC_M](F,\Dbtv_M))$, 
and they are dual to each other. 
\end{proposition}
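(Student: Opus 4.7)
The approach is by devissage on the R-constructible sheaf $F$. Every $F\in\rc[\C_M]$ admits a finite filtration whose successive subquotients are sheaves of the form $\C_U$ for $U$ a relatively compact open subanalytic subset (combined with closed pieces $\C_Z$, which fit into short exact sequences $0\to\C_U\to\C_V\to\C_Z\to 0$). It therefore suffices to define the topologies and the duality on these building blocks and to propagate them through such extensions.

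For $F=\C_U$ with $U$ a relatively compact open subanalytic subset, the very definition of the Whitney tensor product identifies $\sect(M;\C_U\wtens\Cinf[M])$ with the space of $\Cinf$-functions on $M$ vanishing to infinite order on $M\setminus U$; since such functions are supported in the compact set $\ol U$, this space sits as a closed subspace of $\Cinf(V)$ for any relatively compact open subanalytic $V\supset\ol U$, inheriting an $\FN$ topology. Dually, $\sect_c(M;\hom[\iC_M](\C_U,\Dbtv_M))$ consists of compactly supported tempered distribution densities on $U$, which carries a $\DFN$ topology either as a strict inductive limit along an exhaustion by compact subsets of $\ol U$, or directly as the strong dual of the $\FN$ space above. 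The duality pairing is given by integration, and the Whitney extension theorem together with the Hahn--Banach-type description of Schwartz distributions yield reflexivity on basic pieces. Lemma~\ref{th:Loj1} (the Łojasiewicz comparison between distances to subanalytic complements) ensures that these constructions do not depend on the auxiliary choice of $V$ and behave well under restriction.

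To extend the topologies and the duality to all $F\in\rc[\C_M]$, one needs that a short exact sequence $0\to F'\to F\to F''\to 0$ in $\rc[\C_M]$ induces \emph{strict} short exact sequences of $\FN$ (resp.\ $\DFN$) spaces after applying the two functors, for then the exact contravariant equivalence between the categories of $\FN$ and $\DFN$ spaces given by topological duality transports the pairing functorially. The main obstacle, and the technical heart of the argument, is precisely this strictness: one must show that Whitney jets on a subanalytic union can be controlled by Whitney jets on the pieces, and dually that a compactly supported tempered distribution density extends across subanalytic boundaries with controlled growth. Both statements reduce to Łojasiewicz-type estimates of the form in Lemma~\ref{th:Loj1}, which is where the subanalyticity hypothesis is essential; once strictness is secured, the duality on basic pieces propagates automatically through the filtration and gives the claimed $\FN$/$\DFN$ duality for arbitrary $F\in\rc[\C_M]$.
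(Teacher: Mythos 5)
The paper does not actually prove this proposition: it is quoted as \cite[Prop.~2.2]{KS96} and used as a black box. So there is no in-paper proof to compare against. Nevertheless, your approach (devissage to basic pieces $\C_U$, construction of the topologies on those, then propagation through short exact sequences, with strictness controlled by the \L ojasiewicz inequality of Lemma~\ref{th:Loj1}) does capture the essential mechanism of the original argument in \cite{KS96}, and your identification of the basic FN and DFN spaces and of strictness as the technical heart is accurate.

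There is, however, a genuine gap in the devissage step as you have written it. The assertion that \emph{every} $F\in\rc[\C_M]$ admits a \emph{finite} filtration with subquotients of the form $\C_U$ is false, for two reasons. First, the proposition does not assume $F$ compactly supported; for instance $F=\C_M$ with $M$ noncompact admits no such finite filtration. One must instead exhaust $M$ by relatively compact subanalytic open sets and observe that $\sect(M;F\wtens\Cinf[M])$ is a countable projective limit of FN spaces (hence FN) while $\sect_c(M;\hom[\iC_M](F,\Dbtv_M))$ is the corresponding countable inductive limit (which one must check is DFN); neither of these limits is captured by a finite filtration. Second, even on a compact subanalytic subset, the subquotients of the filtration are not rank-one sheaves $\C_U$ but local systems on strata; this is repaired by choosing a subanalytic triangulation adapted to $F$ and noting that a local system on an open simplex is trivial, so that the subquotients are of the form $\C_\sigma^r$. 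Neither fix is deep, but both are needed before the reduction to basic pieces is legitimate, and the second in particular means that the building blocks should be finite direct sums $\C_\sigma^r$ rather than single $\C_U$'s. With those corrections, and with the strictness argument you indicate (which is indeed where the subanalyticity enters via Lemma~\ref{th:Loj1}), the propagation through the filtration works as you describe.
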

Here, as usual, $\sect_c(M;\scbul)$ is the 
functor of global sections with compact support. 

Hence for any open subset $U$ of $M$, we have
\eqn
&&\sect(U;F\wtens\Cinf[M])\To\Hom[\C]\bl\sect_c(U;\hom[\iC_M](F,\Dbtv_M)),\C\br\\
&&\hs{30ex}\simeq\sect\bl U;\RD_M\hom[\iC_M](F,\Dbtv_M)\br,\eneqn
which induces a morphism of sheaves
$F\wtens\Cinf[M]\to \RD_M\hom[\iC_M](F,\Dbtv_M)$ and then a pairing
\eq\label{eq:dualWTpairO}
&&\bl F\wtens\Cinf[M]\br\tens\hom[\iC_M](F,\Dbtv_M)\to\omega_M.
\eneq

Let $X$ be a complex manifold, let $\shm\in\Derb_{\coh}(\shd_X)$ 
and let $F,G\in\Derb_{\Rc}(\C_X)$. Set for short 
\eqn
\shw(\shm,F)&\eqdot& \rhom[\shd_X](\shm,F\wtens\sho_X),\\
\sht(F,\shm) &\eqdot& \rhom[\iC_X](F,\Ovt_X\,[d_X])\ltens[\shd_X]\shm\\
W(G,\shm,F)&\eqdot&\RHom\bl G,\rhom[\shd_X](\shm,F\wtens\sho_X)\br,\\
T_c(F,\shm,G) &\eqdot& \rsect_c\bl X;\rhom[\iC_X](F,\Ovt_X\,[d_X])\ltens[\shd_X]\shm\tens G\br.
\eneqn

Then \eqref{eq:dualWTpairO}
induces a pairing
\eq\label{eq:dualWTpair}
&&\shw(\shm,F)\tens\sht(F,\shm)\to\omega_X.
\eneq
and a pairing 
\eq\label{eq:dualWTpairT}
&&W(G,\shm,F)\tens T_c(F,\shm,G)\to\C,
\eneq

\begin{theorem}[{\cite[Theorem 6.1]{KS96}}]\label{thm:KSduality}
The two objects $W(G,\shm,F)$ and $T_c(F,\shm,G)$ are well-defined in the categories 
$\Derb(\FN)$ and $\Derb(\DFN)$, respectively, and are dual to each other through the pairing~\eqref{eq:dualWTpairT}. 
\end{theorem}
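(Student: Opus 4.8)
The plan is to reduce the global statement to the local topological duality of Proposition~\ref{pro:dualWhitTemp1} by a triple d\'evissage --- on $G$, on $F$ and on $\shm$ --- while checking at each stage that the operations used are compatible with the topological duality anti-equivalence $\Derb(\FN)^\rop\simeq\Derb(\DFN)$.

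First I would observe that $(G,\shm,F)\mapsto W(G,\shm,F)$ and $(F,\shm,G)\mapsto T_c(F,\shm,G)$ are triangulated in each of the three arguments $G\in\Derb_{\Rc}(\C_X)$, $F\in\Derb_{\Rc}(\C_X)$, $\shm\in\Derb_{\coh}(\shd_X)$, and that the pairing~\eqref{eq:dualWTpairT} is functorial. The assertion to be proved --- that the two complexes lift to $\Derb(\FN)$, resp.\ $\Derb(\DFN)$, and are dual through~\eqref{eq:dualWTpairT} --- is stable by shift, by mapping cone and by direct summand, so it is enough to treat generators. I would d\'evisser $\shm$ using that, on a neighbourhood of the compact set $\Supp(G)$, a coherent $\shd_X$-module admits a finite resolution by free modules $\shd_X^{\oplus n}$, gluing the local computations by a Mayer--Vietoris argument over a finite subanalytic cover (the indicator sheaves $\C_U$ of the cover being again $\R$-constructible, so the previous reduction applies). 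For $\shm=\shd_X^{\oplus n}$ one has $\rhom[\shd_X](\shd_X^{\oplus n},F\wtens\OO)\simeq(F\wtens\OO)^{\oplus n}$ and $\rhom[\iC_X](F,\Ovt_X\,[d_X])\ltens[\shd_X]\shd_X^{\oplus n}\simeq\rhom[\iC_X](F,\Ovt_X\,[d_X])^{\oplus n}$, so one is reduced to $F\wtens\OO$ on the Whitney side and to $\rhom[\iC_X](F,\Ovt_X\,[d_X])$ on the tempered side.

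Next I would pass to the Dolbeault realizations of $\Ow[X]$ and $\Ot[X]$ with coefficients in $\Cinfw[X_\R]$ and $\Dbt_{X_\R}$ (see~\eqref{eq:defOt1}), turning these objects into finite complexes whose terms are $F\wtens\Cinfw[X_\R]$, resp.\ $\rhom[\iC_X](F,\Dbtv_{X_\R})$, twisted by a finite-rank real-analytic bundle of $(0,q)$-forms. Then I would d\'evisser $F$ (and afterwards $G$) to the sheaves $\C_U$, with $U$ relatively compact subanalytic open, using a subanalytic triangulation; applying $\RHom(G,\scbul)$ on the Whitney side and $\rsect_c(X;\scbul\tens G)$ on the tempered side, the global pairing~\eqref{eq:dualWTpairT} is then assembled from the sheaf-level pairing~\eqref{eq:dualWTpairO}: on each local piece Proposition~\ref{pro:dualWhitTemp1} supplies the $\FN$, resp.\ $\DFN$, topology and the perfect duality, and Poincar\'e--Verdier duality for $\R$-constructible sheaves (which is what converts $\rsect_c$ of a constructible--times--$\DFN$ sheaf into $\RHom$ of the dual constructible--times--$\FN$ sheaf, converts $\tens$ into $\rhom$, and furnishes the trace $\rsect_c(X;\omega_X)\to\C$) globalizes it. Finiteness of the $\R$-constructible data keeps all limits and colimits finite, so one never leaves $\Derb(\FN)$ or $\Derb(\DFN)$.

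The hard part will not be this homological bookkeeping but the \emph{strictness}, in the topological sense, of every morphism that intervenes: the $\ol\partial$ of the Dolbeault complexes, the restriction and \v{C}ech/Mayer--Vietoris maps, and the pairing maps themselves. In the quasi-abelian categories $\FN$ and $\DFN$ the topological duality is only an anti-equivalence of quasi-abelian categories and commutes with the formation of cohomology only when the differentials have closed range; likewise $\RHom(G,\scbul)$ and $\rsect_c(X;\scbul)$ lift to $\Derb(\FN)$, resp.\ $\Derb(\DFN)$, only under such strictness. This is exactly where the \L ojasiewicz inequality (Lemma~\ref{th:Loj1}) and the fine analysis of $\Cinft[\Msa]$, $\Dbt_\Msa$ and $\Cinfw[\Msa]$ are used, and it is the substance of Proposition~\ref{pro:dualWhitTemp1}; the real work is to propagate that strictness through the three d\'evissages above so that the derived statement, and with it the duality, survives the six-operations formalism.
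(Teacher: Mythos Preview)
The paper does not give a proof of this theorem: it is quoted verbatim from \cite[Theorem~6.1]{KS96} and used as a black box (the subsequent lemma and corollaries apply it rather than prove it). So there is no ``paper's own proof'' to compare against here.

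That said, your plan is a faithful sketch of the argument in \cite{KS96}: triple d\'evissage on $\shm$, $F$, $G$ down to free $\shd_X$-modules and constant sheaves $\C_U$, passage through the Dolbeault resolution to replace $\Ow$ and $\Ot$ by $\Cinfw$ and $\Dbt$, and reduction to the local $\FN$/$\DFN$ duality of Proposition~\ref{pro:dualWhitTemp1}. You are also right that the genuine analytic content is the strictness (closed range) of the maps involved, so that the passage to cohomology commutes with topological duality; this is where the nuclearity and the \L ojasiewicz-type estimates enter, and it is treated carefully in \cite{KS96}. One small point: your d\'evissage on $\shm$ assumes you can work on a neighbourhood of a compact set, which implicitly uses that the computation of $W$ and $T_c$ is local over $X$ once $G$ has been d\'eviss\'e to sheaves of the form $\C_U$ with $U$ relatively compact; you should make that reduction on $G$ first, not last, so that the local free resolution of $\shm$ is legitimately available.
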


Now we assume that 
$\shm\in\Derb_\hol(\shd_X)$ and we consider the following assertions.
\setlength{\my}{\textwidth}
\addtolength{\my}{-13ex}
\eq\label{cond:const}
&&\hs{3ex}\left\{\parbox{\my}{
\bna
\item
the object $\shw(\shm,F)\seteq\rhom[\shd_X](\shm,F\wtens\sho_X)$ is $\R$-constructible,
\item
the object $\sht(F,\shm)\seteq\rhom[\iC_X](F,\Ovt_X\,[d_X])\ltens[\shd_X]\shm$ is $\R$-constructible,
\item conditions (a) and (b) are satisfied,  and 
the two complexes in (a) and (b) are dual to each other in the category $\Derb_\Rc(\C_X)$, that is, 
$\shw(\shm,F)\simeq\RD_X\sht(F,\shm)$.
\ee}
\right.
\eneq
\begin{lemma}
The assertions  {\rm(a)} and {\rm(b)}  are equivalent and imply {\rm(c)} . 
\end{lemma}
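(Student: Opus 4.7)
The strategy relies on Theorem~\ref{thm:KSduality} together with the biduality formalism for $\R$-constructible sheaves. The pairing~\eqref{eq:dualWTpair} induces canonical adjoint morphisms in $\Derb(\C_X)$
\[
u\colon \sht(F,\shm) \to \RD_X\shw(\shm,F), \qquad v\colon \shw(\shm,F) \to \RD_X\sht(F,\shm).
\]
Since $\RD_X$ preserves $\Derb_\Rc(\C_X)$ and the biduality isomorphism $\RD_X\RD_X\simeq\id$ holds on that category, the equivalence {\rm(a)}$\Leftrightarrow${\rm(b)} and the implication to {\rm(c)} reduce to showing: \emph{if {\rm(a)} holds then $u$ is an isomorphism} (the symmetric statement starting from {\rm(b)} is proved in the same way using $v$). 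Indeed, once $u$ is an isomorphism, $\sht(F,\shm)$ is $\R$-constructible, hence {\rm(b)}, and applying $\RD_X$ and biduality gives the second half of {\rm(c)}.

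To prove $u$ is an isomorphism under {\rm(a)}, my plan is to test it by applying $\rsect_c(X;\scbul\tens G)$ for an arbitrary $G\in\Derb_\Rc(\C_X)$ with compact support. On the left one obtains $T_c(F,\shm,G)$ by definition. On the right, using the canonical identity $\rhom(G,\RD_X\shw(\shm,F))\simeq\RD_X(\shw(\shm,F)\tens G)$ and Poincar\'e--Verdier duality for $a_X\colon X\to\rmpt$, one identifies $\rsect_c(X;\RD_X\shw(\shm,F)\tens G)$ with $\RHom_\C(W(G,\shm,F),\C)$; the finite-dimensionality needed for biduality is furnished by hypothesis {\rm(a)}, since $W(G,\shm,F)=\RHom(G,\shw(\shm,F))$ then has finite-dimensional cohomology. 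On the other hand, Theorem~\ref{thm:KSduality} provides a topological duality between $W(G,\shm,F)\in\Derb(\FN)$ and $T_c(F,\shm,G)\in\Derb(\DFN)$; under the present finite-dimensionality this topological dual coincides with the algebraic $\C$-linear dual. A routine diagram chase, tracing the construction of the two pairings~\eqref{eq:dualWTpair} and~\eqref{eq:dualWTpairT}, identifies the two descriptions of $T_c(F,\shm,G)$; hence $u$ becomes an isomorphism after applying $\rsect_c(X;\scbul\tens G)$ for every such $G$.

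The chief technical obstacle is the last step: upgrading this pointwise isomorphism after $\rsect_c(X;\scbul\tens G)$ to an honest isomorphism of the morphism $u$ in $\Derb(\C_X)$. Since the target $\RD_X\shw(\shm,F)$ is $\R$-constructible, this reduces to checking $u$ is a quasi-isomorphism on stalks at each $x\in X$. For an $\R$-constructible sheaf the stalk at $x$ is computed by $\rsect(U;\scbul)$ over a cofinal system of small relatively compact subanalytic open neighborhoods $U$ of $x$; Poincar\'e--Verdier duality on $a_U$ transforms these sections into $\RHom_\C$ of $\rsect_c(U;\RD_X(\scbul))$, at which stage the finite-dimensional duality established above applies and forces $u_x$ to be an isomorphism. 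Once $u$ is shown to be an isomorphism, assertion {\rm(b)} follows, and applying $\RD_X$ together with biduality for $\R$-constructible sheaves yields the remaining isomorphism $\shw(\shm,F)\simeq\RD_X\sht(F,\shm)$ required by {\rm(c)}.
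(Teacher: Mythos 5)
Your proof is essentially the paper's argument: construct the morphism induced by the pairing~\eqref{eq:dualWTpair}, test it on sections over relatively compact subanalytic open subsets $U$, rewrite $\rsect(U;\RD_X(\scbul))$ via Verdier duality as the $\C$-linear dual of $\rsect_c(U;\scbul)$, and then invoke Theorem~\ref{thm:KSduality} together with the finite-dimensionality coming from the constructibility hypothesis to identify the topological $\FN/\DFN$ dual with the algebraic dual. The only differences are cosmetic: you assume (a) and work with $u\colon\sht(F,\shm)\to\RD_X\shw(\shm,F)$, whereas the paper assumes (b) and works with the adjoint morphism $\shw(\shm,F)\to\RD_X\sht(F,\shm)$ (the two directions are exchanged by the $\FN/\DFN$ symmetry and by biduality, so the choice is immaterial). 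Also, your initial test functor $\rsect_c(X;\scbul\tens G)$ for $G$ ranging over compactly supported constructibles is a detour: as you yourself recognize in the final paragraph, it does not by itself produce an isomorphism in $\Derb(\C_X)$, and you end up replacing it with the local test $\rsect(U;\scbul)$ over small subanalytic opens, which is precisely what the paper does from the start. One small point worth checking in your direction: with $G=\C_U$, Theorem~\ref{thm:KSduality} directly pairs $\rsect(U;\shw(\shm,F))$ with $\rsect_c(U;\sht(F,\shm))$ (this is what the paper uses when testing $\shw\to\RD_X\sht$); to test $u\colon\sht\to\RD_X\shw$ on $\rsect(U;\scbul)$ as you propose, you need to observe in addition that the $\FN/\DFN$ duality carries over to finite-dimensional duality, so that $\rsect_c(U;\sht)$ is also finite-dimensional once $\rsect(U;\shw)$ is — after which it is cleaner to simply switch back to the paper's morphism $\shw\to\RD_X\sht$ and conclude.
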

\begin{proof}
Assume for example that (b) is true. 
The pairing \eqref{eq:dualWTpair}
induces a morphism
\eq\label{mor:shwsht}
\shw(\shm,F)\to \RD_X(\sht(F,\shm)).
\eneq
For any relatively compact open subanalytic subset $U$, 
$\rsect_c(U;\sht(F,\shm))$ has finite-dimensional cohomologies by (b), and 
the morphism induced by \eqref{mor:shwsht}
$$\rsect(U;\shw(\shm,F))\to \rsect\bl U;\RD_X(\sht(F,\shm))\br\simeq
\Hom\bl\rsect_c(U;\sht(F,\shm)),\C\br$$
is an isomorphism by Theorem~\ref{thm:KSduality}.
Hence \eqref{mor:shwsht} is an isomorphism,
which implies (a) and (c). 
\end{proof}

\begin{theorem}\label{th:constdualDmod}
Let $\shm\in\Derb_\hol(\D_X)$ and $F\in\Derb_\Rc(\C_X)$. 
Then assertions {\rm (a), (b), (c)} in~\eqref{cond:const} hold true.
\end{theorem}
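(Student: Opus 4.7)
The plan is to prove assertion (b) in~\eqref{cond:const}; the preceding lemma then automatically delivers (a) and (c). Thus the task is to show that $\sht(F,\shm)=\rhom[\iC_X](F,\Ovt_X\,[d_X])\ltens[\shd_X]\shm$ lies in $\Derb_\Rc(\C_X)$ for every $\shm\in\Derb_\hol(\shd_X)$ and every $F\in\Derb_\Rc(\C_X)$. The overall strategy is a devissage on $\shm$ via Lemma~\ref{lem:redux}, reducing the verification to two independent checks: (i) the class of holonomic modules for which the desired $\R$-constructibility holds (for all $F$) is stable under projective direct images, and (ii) the property is satisfied by the normal-form modules $\shd_X\exp\vphi$, $\vphi$ a meromorphic function along a normal crossings divisor.

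For step (i), take a projective morphism $f\colon X\to Y$ and a good holonomic $\shm$ with support proper over $Y$. I would combine three ingredients: the D-module projection formula of Theorem~\ref{th:Dprojform}, the integration morphism for tempered distributions of Theorem~\ref{th:eimDbt} (passed to the Dolbeault complex to obtain the analogous statement for $\Ovt_X$), and the stability of $\Derb_\Rc$ under $\reim{f}$. The expected outcome is an isomorphism of the form $\sht(G,\Doim{f}\shm)\simeq\reim{f}\,\sht(\opb{f}G,\shm)$ (up to suitable shifts by relative dimension), which transfers the question from $Y$ back to $X$ and to the inverse image $\opb{f}G\in\Derb_\Rc(\C_X)$, where the inductive hypothesis applies.

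The hard step is (ii), the normal-form case. The ordinary indsheaf $\Ot[X]$ cannot distinguish, e.g., $\shd_X\exp(1/t)$ from $\shd_X\exp(2/t)$ (as recalled in the introduction), so a purely classical computation is hopeless. Instead, I would invoke the enhanced irregular Riemann-Hilbert correspondence (Theorem~\ref{th:irrRH1}), which computes the tempered solutions of $\shd_X\exp\vphi$ in terms of the enhanced indsheaf $\OEn_X$; the associated enhanced sheaf encodes the subanalytic region on which $\mathrm{Re}\,\vphi$ stays bounded above. Coupled with the $\R$-constructibility of $F$, one deduces the $\R$-constructibility of $\sht(F,\shd_X\exp\vphi)$ after passing back from the enhanced setting to an object on $X$. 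The main obstacle is precisely this last descent: one must show that, after tensoring enhanced tempered data with the $\R$-constructible $F$ and integrating out the extra real variable, the result remains $\R$-constructible, uniformly along a subanalytic stratification simultaneously adapted to $F$ and to the singular locus of $\vphi$. Once (i) and (ii) are in place, Lemma~\ref{lem:redux} concludes that (b), hence also (a) and (c), hold for every $\shm\in\Derb_\hol(\shd_X)$.
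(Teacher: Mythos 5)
Your broad structure is correct---a devissage on $\shm$ via Lemma~\ref{lem:redux}, using the enhanced machinery to handle the normal-form case---but the proposal has a genuine gap precisely where you acknowledge ``the main obstacle.'' What you call ``the last descent'' (showing that tensoring enhanced data with an $\R$-constructible $F$ and integrating out the real variable preserves $\R$-constructibility) is not a technical step that follows from general principles; it is the content of Theorem~\ref{th:newconstruct}, which the paper explicitly flags as a \emph{new} result. Your proposal describes what would have to be shown, even sketching the shape of a stratification argument, but never supplies the mechanism. The actual proof in the paper reduces this to a concrete formula (Corollary~\ref{cor:fhom}) showing that $\fhom(\cor_{\{t\ge0\}},\cor^\Tam_M\ctens F)$ is a proper direct image of an $\R$-constructible sheaf on $M\times\ol\R$---and proving that formula requires a nontrivial commutation of the indsheaf $\cor_{\{t>*\}}$ with the right derived functors involved. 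Without that, your step (ii) does not close.

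Beyond the gap, your factorization is also structurally less efficient than the paper's. You propose running Lemma~\ref{lem:redux} with the $F$-dependent predicate ``$\sht(F,\shm)$ is $\R$-constructible for all $F$,'' so the enhanced descent has to be re-addressed at each normal-form check. The paper instead cleanly separates the two sources of difficulty: Theorem~\ref{th:newconstruct2} uses the devissage, with no $F$ in sight, to prove $\solE_X(\shm)\in\TDCrc(\iC_X)$ (the normal-form input being Proposition~\ref{pro:Solphi2} on the real blow up); Theorem~\ref{th:newconstruct} is a one-time, purely enhanced-sheaf-theoretic statement that $\fhom(F,G)$ is $\R$-constructible when both $F,G\in\TDCrc$; and Corollary~\ref{cor:newconstruct} glues them via $\fihom(\C_X^\enh,\OEn_X)\simeq\Ot$, then one invokes the lemma preceding Theorem~\ref{th:constdualDmod} to pass from (b) to (a) and (c). Your step (i) (projective direct images), modulo careful shifts, is fine and rests on Corollary~\ref{cor:ifunct2} together with Theorem~\ref{th:betaaihom}, but step (ii) as written leaves the hard new theorem unproved.
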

This result will be proved in Corollary~\ref{cor:newconstruct} below. Note that it solves a conjecture 
in~\cite[Conjecture\;6.2]{KS03}.

Applying this result in the situation of Example~\ref{exa:OtOwCDb}~(ii), we get:
\begin{corollary}\label{cor:KSduality0} 
Let $M$ be a real analytic manifold, $X$ a complexification of $M$ 
and let $\shm\in\Derb_\hol(\D_X)$.
Then the two objects $\rhom[\D_X](\shm,\Cinf[M])$ and $\Dbtv_M\ltens[\D_X]\shm$ belong to 
$\Derb_\Rc(\C_M)$ and are dual to each other. 
Namely, we have
 $\RD_M\rhom[\D_X](\shm,\Cinf[M])\simeq\Dbtv_M\ltens[\D_X]\shm$. 
\end{corollary}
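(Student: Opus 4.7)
The plan is to deduce the corollary from Theorem~\ref{th:constdualDmod} by specializing $F$ to $\C_M \in \Derb_\Rc(\C_X)$, which is legitimate because $M$, being a real form of $X$, is a closed real analytic submanifold of $X_\R$, hence subanalytic. The theorem then gives immediately that $\shw(\shm,\C_M)$ and $\sht(\C_M,\shm)$ are both $\R$-constructible objects on $X$, with a duality isomorphism $\shw(\shm,\C_M) \simeq \RD_X\,\sht(\C_M,\shm)$.

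The next step is to identify the restrictions to $M$ of these two objects. On the Whitney side, the relation $\rhom[\iC_M](F,\Cinfw[M]) \simeq \RD'_M F \wtens \Cinf[M]$ combined with biduality for $\R$-constructible sheaves forces the convention $\C_M \wtens \sho_X \simeq \rhom[\iC_X](\RD'_X\C_M,\Ow[X])$, and then Example~\ref{exa:OtOwCDb}(ii) yields $(\C_M \wtens \sho_X)\vert_M \simeq \Cinf[M]$. Consequently
\[
\shw(\shm,\C_M)\vert_M \simeq \rhom[\D_X](\shm,\Cinf[M]).
\]
For the tempered side, since $M$ is closed in $X_\R$ one has $\rhom[\iC_X](\C_M,\Ovt_X[d_X]) \simeq \rsect_M(\Ovt_X)[d_X]$. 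Unfolding $\Ovt_X = \Omega_X \tens[\sho_X] \Ot[X]$ and using the real-form structure of $M \subset X$, I would identify $\rsect_M(\Ot[X])[d_X]$ with $\Dbt_M$ (the $[d_X]$ shift absorbing the real codimension of $M$ in $X_\R$) and the factor $\Omega_X\vert_M$ with the density sheaf $\shv_M$ (after complexification), obtaining
\[
\sht(\C_M,\shm)\vert_M \simeq \Dbtv_M \ltens[\D_X] \shm.
\]

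The $\R$-constructibility of $\rhom[\D_X](\shm,\Cinf[M])$ and of $\Dbtv_M\ltens[\D_X]\shm$ on $M$ then follows from the $\R$-constructibility on $X$ established by Theorem~\ref{th:constdualDmod}. The duality isomorphism $\RD_M \rhom[\D_X](\shm,\Cinf[M]) \simeq \Dbtv_M\ltens[\D_X]\shm$ follows from the duality $\shw(\shm,\C_M) \simeq \RD_X\,\sht(\C_M,\shm)$ on $X$ by restriction to $M$, using that for an object supported on $M$ the functor $\RD_X$ restricts to $\RD_M$ (the requisite shifts and orientation twists having been absorbed in the identifications above). The main obstacle is the careful tempered-side identification $\rsect_M(\Ovt_X)[d_X] \simeq \Dbtv_M$: one must track how the Dolbeault factor $\Omega_X \tens[\sho_X]$ and the real-codimension-$d_X$ embedding $M \subset X_\R$ conspire to produce the density twist $\shv_M$ and the correct degree shift, so that tempered holomorphic distributions on $X$ supported on the real form $M$ give precisely the tempered distribution densities on $M$.
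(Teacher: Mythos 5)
Your proposal takes exactly the paper's route: specialize Theorem~\ref{th:constdualDmod} to $F=\C_M$ and identify the restrictions to $M$ of $\shw(\shm,\C_M)$ and $\sht(\C_M,\shm)$ via Example~\ref{exa:OtOwCDb}(ii), which is precisely what the text does in a single sentence. One small warning on the bookkeeping you flag as ``the main obstacle'': the two intermediate identifications $\rsect_M(\Ot[X])[d_X]\simeq\Dbt_M$ and $\Omega_X\vert_M\simeq\shv_M$ each miss a factor $\ori_M$ (one has $\RD'_X\C_M\simeq\ori_M[-d_X]$, and $\Omega_X\vert_M$ is the complexification of $\rA_M^{(\dim M)}$, not of $\shv_M=\rA_M^{(\dim M)}\tens\ori_M$), but these two $\ori_M$'s cancel, so the composite identification $\rsect_M(\Ovt_X)[d_X]\simeq\Dbtv_M$ that you actually use is the correct one.
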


\begin{corollary}\label{cor:KSduality} 
Assume that $\shm\in\Derb_\hol(\D_X)$,
$F\in\Derb_{\Rc}(\C_X)$ and $\Supp(F)$ is compact.
Then the complexes 
$$\text{$\rsect(X; \rhom[\shd_X](\shm,F\wtens\sho_X))$ and 
$\rsect(X;\rhom[\iC_X](F,\Ovt_X\,[d_X])\ltens[\shd_X]\shm)$}$$
 have finite-dimensional cohomologies
 and~\eqref{eq:dualWTpair} induces a perfect pairing for all $i\in\Z$
\[
H^{-i}\rsect(X;\shw(\shm,F)) \tens H^i\rsect(X;\sht(F,\shm))\to\C.
\]
\end{corollary}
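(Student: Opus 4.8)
The plan is to deduce the statement from Theorem~\ref{th:constdualDmod} together with Poincar\'e--Verdier duality on the manifold $X$. First, recall from Theorem~\ref{th:constdualDmod} that $\shw(\shm,F)$ and $\sht(F,\shm)$ belong to $\Derb_\Rc(\C_X)$ and that the pairing~\eqref{eq:dualWTpair} induces an isomorphism $\shw(\shm,F)\isoto\RD_X\sht(F,\shm)$ (this is exactly the morphism~\eqref{mor:shwsht}, which is an isomorphism once assertion (c) of~\eqref{cond:const} is granted). Next I would observe that both complexes are supported on $\Supp(F)$: indeed $F\wtens\sho_X$ is supported on $\Supp(F)$ since $\wtens$ commutes with restriction to open subsets, hence so is $\shw(\shm,F)=\rhom[\shd_X](\shm,F\wtens\sho_X)$; and $\sht(F,\shm)=\rhom[\iC_X](F,\Ovt_X\,[d_X])\ltens[\shd_X]\shm$ is supported on $\Supp(F)$ as well, because $\rhom[\iC_X](F,\scbul)$ is. As $\Supp(F)$ is compact we get $\rsect(X;\shw(\shm,F))\simeq\rsect_c(X;\shw(\shm,F))$ and $\rsect(X;\sht(F,\shm))\simeq\rsect_c(X;\sht(F,\shm))$, and an $\R$-constructible complex with compact support has finite-dimensional cohomology (see~\cite{KS90}). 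This settles the first assertion.

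For the perfect pairing, I would combine the isomorphism $\shw(\shm,F)\simeq\RD_X\sht(F,\shm)$ with the Poincar\'e--Verdier duality isomorphism $\rsect(X;\RD_X G)\simeq\RHom(\rsect_c(X;G),\C)$ (\cite{KS90}) applied to $G=\sht(F,\shm)$. Since $\sht(F,\shm)$ has compact support, $\rsect_c(X;\sht(F,\shm))\simeq\rsect(X;\sht(F,\shm))$, so we obtain
\[
\rsect(X;\shw(\shm,F))\simeq\RHom(\rsect(X;\sht(F,\shm)),\C).
\]
Because $\C$ is a field and the right-hand complex has finite-dimensional cohomology, taking $H^{-i}$ yields isomorphisms $H^{-i}\rsect(X;\shw(\shm,F))\isoto\Hom[\C](H^i\rsect(X;\sht(F,\shm)),\C)$, i.e.\ the announced perfect pairings.

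Finally, one must check that the pairing obtained this way is the one induced by~\eqref{eq:dualWTpair}. This is a compatibility statement: the isomorphism above is, by construction, $\rsect(X;\scbul)$ applied to~\eqref{mor:shwsht} followed by the adjunction isomorphism $\rsect(X;\RD_X\sht(F,\shm))\simeq\RHom(\rsect_c(X;\sht(F,\shm)),\C)$, and unwinding the latter shows that the induced pairing on cohomology is cup product via~\eqref{eq:dualWTpair} followed by the integration (trace) map $H^0\rsect_c(X;\omega_X)\to\C$, which is precisely the pairing in the statement. I expect this last bookkeeping --- tracing that the abstract Verdier pairing agrees with the explicit one coming from~\eqref{eq:dualWTpair} --- to be the only delicate point; everything else is formal once Theorem~\ref{th:constdualDmod} is available. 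Alternatively, one could read off the finite-dimensionality and the perfect pairing directly from the $\FN$/$\DFN$ duality of Theorem~\ref{thm:KSduality} specialized to $G=\C_X$, at the cost of invoking arguments about topological vector spaces.
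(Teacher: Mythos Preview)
Your proof is correct. The paper does not give an explicit proof of this corollary, presenting it as an immediate consequence of Theorem~\ref{th:constdualDmod}; your argument via $\R$-constructibility, compact support, and Poincar\'e--Verdier duality is exactly the intended deduction, and your alternative remark (specializing Theorem~\ref{thm:KSduality} to $G=\C_X$) is equally valid and arguably closer to how the paper sets things up.
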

\begin{remark}
It follows immediately from~\cite{Ka78,Ka84} that (b), hence (a) and (c), are true when  
$F\in\Derb_{\Cc}(\C_X)$.
\end{remark}

In~\cite{BE04}, S.~Bloch and H.~Esnault  proved  directly a similar result on an algebraic curve $X$ when assuming that $\shm$ is a meromorphic connection with poles on a divisor $D$ and $F=\C_X$. 
They interpret the duality pairing by considering sections of the type 
$\gamma\tens\epsilon$, where $\gamma$ is a  cycle  with boundary on $D$ and $\epsilon$ is a horizontal section of the connection 
on $\gamma$ with exponential decay on $D$. Their work has been extended to  higher dimension  by M.~Hien~\cite{Hi09}.

\section[Tempered solutions]{Tempered solutions of D-modules}\label{section:tempered}

\subsection{Tempered de Rham and Sol functors}
\index{de Rham functor!tempered}%
\index{solution functor!tempered}%
Setting $\Ovt_X\eqdot\Omega_X\tens[\sho_X]\Ot$,  we define 
the tempered de Rham and solution functors  by
\glossary{$\drt_X$}%
\glossary{$\solt_X$}%
\begin{align*}
\drt_X &\cl \Derb(\D_X) \to \Derm(\iC_X),&\shm &\mapsto \Ovt_X \ltens[\D_X] \shm, \\
\solt_X &\cl \Derb(\D_X)^\op \to \Derp({\iC_X}), &\shm& \mapsto \rhom[\D_X] (\shm,\Ot).
\end{align*}
One has
\[
\sol_X \simeq \alpha_X\solt_X,
\quad
\dr_X \simeq \alpha_X\drt_X.
\]
For $\shm\in\Derb_\coh(\D_X)$, one has
\eq\label{eq:dualdrsol}
\solt_X(\shm) \simeq \drt_X(\Ddual_X\shm)[-d_X].
\eneq

The next result is a reformulation of a theorem of~\cite{Ka84} (see also~\cite[Th.~7.4.1]{KS01})

\begin{theorem}\label{thm:ifunct0}
Let $f\cl X\to Y$ be a morphism of complex manifolds. 
There is an isomorphism in $\Derb(\JfD^\rop_Y)$:
\eq\label{eq:funct1a}
&&\Ovt_X\ltens[\shd_X]\shd_{X\to Y}\,[d_X]\isoto \epb f \Ovt_Y\,[d_Y].
\eneq
\end{theorem}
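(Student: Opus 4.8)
The statement is the holomorphic counterpart of Theorem~\ref{th:eimDbt}, and the plan is to deduce it from that theorem by passing to the underlying real analytic manifolds and taking antiholomorphic Dolbeault complexes. Write $X_\R$, $Y_\R$ for the real analytic manifolds underlying $X$, $Y$ and $f_\R\cl X_\R\to Y_\R$ for the underlying morphism, so that $\dim X_\R=2d_X$ and $\dim Y_\R=2d_Y$. Recall from the construction that $\Ot[X]\simeq\rhom[\shd_{X^c}](\sho_{X^c},\Dbt_{X_\R})$, whence $\Ovt_X=\Omega_X\tens[\sho_X]\Ot[X]$ is obtained from the indsheaf $\Dbtv_{X_\R}$ of tempered distribution densities by a ``holomorphic solutions in the antiholomorphic variable'' operation: since on a complex manifold the orientation sheaf is trivial and $\shv_{X_\R}\simeq\Omega_X\tens[\sho_X]\rA_{X_\R}\tens[\sho_{X^c}]\Omega_{X^c}$, there is a natural isomorphism relating $\Ovt_X$ and $\rhom[\shd_{X^c}]\bigl(\sho_{X^c},\Omega_{X^c}^{\otimes-1}\tens[\sho_{X^c}]\Dbtv_{X_\R}\bigr)$ up to a shift, and likewise on $Y$. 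The idea is then to apply the corresponding functor to both sides of the isomorphism
$$\Dbtv_{X_\R}\ltens[\D_{X_\R}]\D_{X_\R\to Y_\R}\isoto\epb{f_\R}\Dbtv_{Y_\R}$$
provided by Theorem~\ref{th:eimDbt}.

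Concretely I would proceed as follows. \emph{First}, factor the transfer bimodule: the complexification of $\D_{X_\R\to Y_\R}=\rA_{X_\R}\tens[\opb{f_\R}\rA_{Y_\R}]\opb{f_\R}\D_{Y_\R}$ along the diagonal $X_\R\into X\times X^c$ is $\shd_{X\to Y}\etens\shd_{X^c\to Y^c}$, and $\D_{X_\R}\simeq\shd_X\etens\shd_{X^c}$, so that
$$\Dbtv_{X_\R}\ltens[\D_{X_\R}]\D_{X_\R\to Y_\R}\simeq\bigl(\Dbtv_{X_\R}\ltens[\shd_X]\shd_{X\to Y}\bigr)\ltens[\shd_{X^c}]\shd_{X^c\to Y^c},$$
the two actions commuting. \emph{Second}, a commutation lemma: $\rhom[\shd_{X^c}](\sho_{X^c},\scbul)$ commutes with $\scbul\ltens[\shd_X]\shd_{X\to Y}$. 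This holds because $\sho_{X^c}$ admits the finite Spencer resolution $\Sp_\scbul(\sho_{X^c})$ by $\shd_{X^c}$-modules locally free of finite rank over $\shd_{X^c}$, so $\rhom[\shd_{X^c}](\sho_{X^c},\scbul)$ is represented by a finite complex of finite direct sums, while the sheaves of rings $\shd_{X^c}$ and $\shd_X$ commute. \emph{Third}, applying $\rhom[\shd_{X^c}](\sho_{X^c},\scbul)$ to the $\shd_{X^c}$-factor and using $\shd_{X^c\to Y^c}\ltens[\opb{f}\shd_{Y^c}]\opb{f}\sho_{Y^c}\simeq\sho_{X^c}$ (that is, $\Dopb{f^c}\sho_{Y^c}\simeq\sho_{X^c}$), the antiholomorphic transfer bimodule disappears, leaving $\Ovt_X\ltens[\shd_X]\shd_{X\to Y}$ on the left-hand side. \emph{Fourth}, on the right-hand side one commutes $\rhom[\shd_{X^c}](\sho_{X^c},\scbul)$ past $\epb{f_\R}$, using the indsheaf analogue of $\epb{f}\rhom(G_1,G_2)\simeq\rhom(\opb{f}G_1,\epb{f}G_2)$ together with the adjunction and base-change formalism of Theorem~\ref{th:projIB}, and identifies the outcome with $\epb{f}\Ovt_Y$. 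Assembling the four steps gives~\eqref{eq:funct1a}; the shifts $[d_X]$ and $[d_Y]$ are exactly those needed to reconcile the shifts carried by the Dolbeault descriptions of $\Ot[X]$ and $\Ot[Y]$ (each $\Ot$ being the antiholomorphic Dolbeault complex of $\Dbt$) with the shift by the real relative dimension $2(d_X-d_Y)$ built into $\epb{f_\R}$.

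The main difficulty here is bookkeeping rather than conceptual content: one must track all the twists (orientation and density sheaves, the transfer bimodules $\D_{M\to N}$ versus $\D_{N\from M}$, the side-changing functor $\Omega_X\tens[\sho_X]\scbul$) and all the degree shifts so that the real and complex relative dimensions are matched by precisely $[d_X]$ and $[d_Y]$; a secondary delicate point is the rigorous justification of the commutations of $\rhom[\shd_{X^c}](\sho_{X^c},\scbul)$ with the six operations used in steps two and four. One could alternatively avoid invoking Theorem~\ref{th:eimDbt} and argue directly by factoring $f$ as the composite of the graph embedding $X\to X\times Y$ and the projection $X\times Y\to Y$, treating a closed embedding and a submersion separately as in the proof of that theorem; but this amounts to reproving Theorem~\ref{th:eimDbt} in the holomorphic setting.
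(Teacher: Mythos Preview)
Your overall strategy is exactly the paper's: deduce the holomorphic statement from the real-analytic Theorem~\ref{th:eimDbt} by eliminating the antiholomorphic direction. The gap is in \emph{which} functor you use to do the eliminating. Both sides of
\[
\Dbtv_{X_\R}\ltens[\D_{X_\R}]\D_{X_\R\to Y_\R}\isoto\epb{f}\Dbtv_{Y_\R}
\]
are right $\opb{f}\D_{Y_\R}$-modules; neither one carries a $\D_{X^c}$-module structure, because on the left it has been absorbed by the transfer bimodule and on the right it was never there. So your functor $\rhom[\shd_{X^c}](\sho_{X^c},\scbul)$ has no well-typed input on either side, and step~4 (``commute $\rhom[\shd_{X^c}](\sho_{X^c},\scbul)$ past $\epb{f}$'') does not parse: $\epb{f}\Dbtv_{Y_\R}$ is not a $\D_{X^c}$-module. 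The identity $\shd_{X^c\to Y^c}\ltens[\opb{f}\shd_{Y^c}]\opb{f}\sho_{Y^c}\simeq\sho_{X^c}$ you invoke in step~3 is correct, but it cannot be fed into a $\D_{X^c}$-hom that is not defined.

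The paper fixes this by working on the $Y^c$-side rather than the $X^c$-side: one applies $\scbul\ltens[\opb{f}\D_{Y^c}]\opb{f}\OO[Y^c]$ to both sides, which is well-typed since both are right $\opb{f}\D_{Y^c}$-modules. On the right-hand side this commutes with $\epb{f}$ (finite Spencer resolution of $\OO[Y^c]$) and yields $\epb{f}\Ovt_Y$ up to shift. On the left-hand side one rewrites $\OO[Y^c]\simeq\D_{Y^c\to\rmpt}$ and repeatedly uses the composition law for transfer bimodules,
\[
\D_{X\times X^c\to Y\times Y^c}\ltens[\opb{f}\D_{Y\times Y^c}]\opb{f}\D_{Y\times Y^c\to Y}\simeq\D_{X\times X^c\to Y}\simeq\D_{X\to Y}\ltens[\D_{X^c}]\OO[X^c],
\]
to arrive at $\Dbtv_{X_\R}\ltens[\D_{X^c}]\OO[X^c]\ltens[\D_X]\D_{X\to Y}\simeq\Ovt_X\ltens[\D_X]\D_{X\to Y}$ up to shift. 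This single chain replaces your steps~2--4 and sidesteps all the side-change and twist bookkeeping you correctly identified as the main difficulty.
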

\begin{proof}
Consider isomorphism~\eqref{eq:opbDbt} with $M=X_\R$ and $N=Y_\R$ and apply 
 $\scbul\ltens[\D_{Y^c}]\OO[Y^c]$. We get the result since 
 \eqn
&& \scbul\ltens[\D_{X\times X^c}]\D_{X\times X^c\to Y\times Y^c}\ltens[\D_{Y^c}]\OO[Y^c]\\
 &&\hspace{20ex}\simeq
 \scbul\ltens[\D_{X\times X^c}]\D_{X\times X^c\to Y\times Y^c}\ltens[\D_{Y^c}]\D_{Y^c\to\rmpt}\\
 &&\hspace{20ex}\simeq
 \scbul\ltens[\D_{X\times X^c}]\D_{X\times X^c\to Y\times Y^c}\ltens[\D_{Y\times Y^c}]\D_{Y\times Y^c\to Y}\\
&&\hspace{20ex}\simeq
 \scbul\ltens[\D_{X\times X^c}]\D_{X\times X^c\to Y}\\
&&\hspace{20ex}\simeq
 \scbul\ltens[\D_X]\D_{X\to Y}\ltens[\D_{X^c}]\OO[X^c].
 \eneqn
  \end{proof}
  Note that this isomorphism \eqref{eq:funct1a} 
is equivalent to the isomorphism
\eq\label{eq:funct1abis}
&&\shd_{Y\from X}\ltens[\shd_X]\Ot[X]\,[d_X]\isoto \epb f \Ot[Y]\,[d_Y]
\quad\text{ in $\Derb(\JfD_Y)$.}
\eneq
\begin{corollary}\label{cor:ifunct1}
Let $f\cl X\to Y$ be a morphism of complex manifolds and let
$\shn\in\Derb(\D_Y)$. Then \eqref{eq:funct1a} induces  the isomorphism
\eq
&&\drt_X(\Dopb f\shn)\, [d_X] \simeq \epb f \drt_Y(\shn) \,[d_Y]
\quad\text{in $\Derb(\iC_X)$.}\label{eq:funct1}
\eneq
\end{corollary}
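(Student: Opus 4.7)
The plan is to unwind the definitions and apply Theorem~\ref{thm:ifunct0}, with a small compatibility argument at the end to handle the action of $\shn$.

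First, I would expand both sides using the definitions. By the definition of the D-module inverse image, $\Dopb{f}\shn = \shd_{X\to Y}\ltens[\opb{f}\shd_Y]\opb{f}\shn$, so
\[
\drt_X(\Dopb{f}\shn)[d_X]
 = \Ovt_X\ltens[\shd_X]\bl \shd_{X\to Y}\ltens[\opb{f}\shd_Y]\opb{f}\shn\br [d_X].
\]
By associativity of the derived tensor product this becomes
\[
\bl \Ovt_X\ltens[\shd_X]\shd_{X\to Y}\br [d_X]\ltens[\opb{f}\shd_Y]\opb{f}\shn.
\]
Now Theorem~\ref{thm:ifunct0} identifies the left-hand factor as $\epb{f}\Ovt_Y[d_Y]$ in $\Derb(\JfD_Y^\rop)$, so we obtain
\[
\drt_X(\Dopb{f}\shn)[d_X]\simeq \epb{f}\Ovt_Y[d_Y]\ltens[\opb{f}\shd_Y]\opb{f}\shn.
\]

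The remaining step is to produce an isomorphism
\[
\epb{f}\Ovt_Y[d_Y]\ltens[\opb{f}\shd_Y]\opb{f}\shn
 \simeq \epb{f}\bl\Ovt_Y\ltens[\shd_Y]\shn\br [d_Y]
 = \epb{f}\drt_Y(\shn)[d_Y],
\]
i.e.\ that $\epb{f}$ is compatible with tensoring by a pulled-back $\shd_Y$-module. I expect this to be the main (but still routine) obstacle: it is a general compatibility in the six-functor formalism for indsheaves over a sheaf of rings, and it can be deduced from the projection formula of Theorem~\ref{th:projIB}(iii) by adjunction, writing $\shn$ locally as a colimit of free $\shd_Y$-modules and checking functoriality. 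For $\shn=\shd_Y$ both sides equal $\epb{f}\Ovt_Y[d_Y]$ trivially, and both functors of $\shn$ are triangulated and commute with small filtrant inductive limits, so the general case follows by local resolutions. Concatenating the three isomorphisms yields \eqref{eq:funct1}.
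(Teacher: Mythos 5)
Your plan coincides with the paper's (one-line) proof: apply $\scbul\ltens[\opb{f}\D_Y]\opb{f}\shn$ to the isomorphism~\eqref{eq:funct1a}, at which point the only remaining issue is precisely the compatibility
\[
\epb{f}\Ovt_Y\ltens[\opb{f}\D_Y]\opb{f}\shn \;\simeq\; \epb{f}\bl\Ovt_Y\ltens[\D_Y]\shn\br,
\]
which you rightly flag as the crux (the paper itself does not comment on it). Your proposed justification of that step is shaky, however. A general $\shn\in\Derb(\D_Y)$ need not be coherent, and a $\D_Y$-module is not locally a filtered colimit of \emph{free} $\D_Y$-modules — only flat modules are (Lazard). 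To salvage the reduction you would need to replace $\shn$ by a bounded flat resolution (using finite flat dimension of $\D_Y$), then run a Lazard-type colimit argument, all while being careful that the commutation of $\epb f$ with filtrant inductive limits holds only ``after taking cohomology'' as noted in the text; this is doable but more delicate than your sketch suggests.

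The cleaner route, and the one the paper implicitly relies on, is Theorem~\ref{th:betaaihom}. Factor $f$ as a closed embedding $h$ followed by a submersion $g$. In the submersive case $\epb g\simeq\opb g\tens\omega_{X/Y}$ and $\opb g$ commutes with $\ltens$, so the compatibility is immediate. For the closed embedding one has $\epb h L\simeq\opb h\rihom(\cor_X,L)$, so that after pulling out $\opb h$ the compatibility is exactly the statement of Theorem~\ref{th:betaaihom} with $\A$ the relevant pulled-back $\D$-ring, $F=\cor_X$, $\shk=$ the right $\A$-module coming from $\Ovt_Y$, and $\shl=\shn$. Note that the hypothesis $F\in\Derb(\cor_M)$ of that theorem is essential (and is satisfied here since $\cor_X$ is an ordinary sheaf); the paper even emphasizes that Theorem~\ref{th:betaaihom} has no counterpart in classical sheaf theory, which is a hint that this is the nonformal ingredient your argument should name explicitly.
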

\begin{proof}
Apply $\scbul\ltens[\opb{f}\D_Y]\opb{f}\shn$ to isomorphism~\eqref{eq:funct1a}.
\end{proof}

\Cor
For any complex manifold $X$, we have
$$\drt_X(\OO)\simeq\C_X[d_X].$$
\encor

\begin{corollary}\label{cor:ifunct5}
Let $f\cl X\to Y$ be a morphism of complex manifolds. There is a natural morphism  
\eq\label{eq:funct2e}
&&\opb{f}\Ovt_Y\ltens[\opb{f}\shd_Y]\shd_{Y\from X}\To\Ovt_X \mbox{ in }
\Derb(\JD_X^{\;\rop}).
\eneq
\end{corollary}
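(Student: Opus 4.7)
The plan is to construct the morphism by reducing, via a tensor-product simplification, to the existence of a natural pullback map on tempered holomorphic functions.

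First, I would unwind the left-hand side. Using the definition $\shd_{Y\from X}\simeq\opb{f}\shd_Y\tens[{\opb{f}\OO[Y]}]\Omega_{X/Y}$ and the canonical isomorphism $\opb{f}\Omega_Y\tens[{\opb{f}\OO[Y]}]\Omega_{X/Y}\simeq\Omega_X$, together with $\Ovt_Y=\Omega_Y\tens[{\OO[Y]}]\Ot[Y]$, one obtains a natural isomorphism
\[
\opb{f}\Ovt_Y\ltens[\opb{f}\shd_Y]\shd_{Y\from X}\;\simeq\;\opb{f}\Ot[Y]\ltens[\opb{f}\OO[Y]]\Omega_X.
\]
Hence constructing the morphism of \eqref{eq:funct2e} reduces to exhibiting a natural morphism of left $\shd_X$-module indsheaves
\[
\opb{f}\Ot[Y]\ltens[\opb{f}\OO[Y]]\OO[X]\To\Ot[X]
\]
and tensoring with $\Omega_X$ over $\OO[X]$.

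Second, I would construct this pullback at the level of tempered $\mathrm{C}^\infty$-functions. Using the identifications $\Ot\simeq\rhom[\shd_{X^c}](\OO[X^c],\Cinft[X_\R])$ of \eqref{eq:defOt2}, it suffices to produce a natural morphism of subanalytic sheaves $\opb{f_\R}\Cinft[Y_\R]\To\Cinft[X_\R]$, where $f_\R\cl X_\R\to Y_\R$ is the underlying real analytic map. Such a morphism exists because the composition of a real analytic map with a tempered $\mathrm{C}^\infty$-function on a relatively compact subanalytic open set remains tempered: the key control is provided by the Lojasiewicz-type inequality of Lemma~\ref{th:Loj1}, which relates the distances to the boundaries of subanalytic opens under subanalytic maps. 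Taking $\rhom[\shd_{X^c}](\OO[X^c],\cdot)$ of this pullback (and resolving $\OO[X^c]$ suitably so as to reach $\shd_X$-module structures) yields the desired pullback on tempered holomorphic functions.

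An alternative route is to derive the morphism directly from Theorem~\ref{thm:ifunct0}: combine the isomorphism $\Ovt_X\ltens[\shd_X]\shd_{X\to Y}\,[d_X]\isoto\epb{f}\Ovt_Y\,[d_Y]$ with the canonical natural transformation $\opb{f}G\tens\omega_{X/Y}\To\epb{f}G$ (which exists for any $G\in\Derb(\iC_Y)$, arising from the general map $\opb{f}A\tens\epb{f}B\to\epb{f}(A\tens B)$ applied to $B=\cor_Y$). After tracking shifts by $\omega_{X/Y}$ and reorganizing the transfer bimodules, this produces the same morphism.

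The principal obstacle is in Step~2: one must check that the pullback of tempered $\mathrm{C}^\infty$-functions is compatible with the Grothendieck topology defining the indsheaf (i.e., behaves well on subanalytic covers), and that the induced morphism intertwines the Dolbeault differentials so as to give a well-defined morphism of $\shd_X$-module complexes rather than just of underlying indsheaves. Verifying that the two constructions sketched above coincide up to the canonical isomorphism of Step~1 is then a routine (but notationally heavy) diagram chase using the compatibility of $\epb{f}$, $\opb{f}$, and the transfer bimodules.
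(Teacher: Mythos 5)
Your main approach (Steps~1 and~2) is genuinely different from the paper's, and it is essentially sound.  Step~1 is the explicit content of the remark that follows the Corollary in the paper: the morphism~\eqref{eq:funct2e} is equivalent to a morphism $\shd_{X\to Y}\ltens[\opb{f}\shd_Y]\opb{f}\Ot[Y]\to\Ot[X]$, i.e.\ $\Dopb{f}\Ot[Y]\to\Ot[X]$, and your tensor-unwinding (valid since $\opb{f}\shd_Y$ is flat over $\opb{f}\OO[Y]$, so the derived tensor over $\opb{f}\shd_Y$ collapses to the derived tensor over $\opb{f}\OO[Y]$) reaches the same point.  From there the paper does \emph{not} build a direct pullback; it factors an arbitrary $f$ through a closed embedding and a submersion and in each case extracts the map from the already-proved isomorphism $\Ovt_X\ltens[\shd_X]\shd_{X\to Y}[d_X]\isoto\epb{f}\Ovt_Y[d_Y]$ of Theorem~\ref{thm:ifunct0} — which itself rests on push-forward (integration) of tempered distributions.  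Your Step~2 instead constructs the pullback directly on the model $\Ot\simeq\rhom[\shd_{X^c}](\OO[X^c],\Cinft[X_\R])$ of~\eqref{eq:defOt2}, using preservation of temperedness under pullback by a subanalytic map.  This is a ``dual'' route (pullback of $\mathrm C^\infty$-functions rather than push-forward of distributions), and it has the advantage of being uniform in $f$, avoiding the case split and the compatibility check under composition that the paper leaves implicit.  One small inaccuracy: Lemma~\ref{th:Loj1} as stated compares distances to the complements of a union of two opens; the estimate you actually need is the Lojasiewicz \emph{regular separation} inequality controlling $\dist(f(x), K'\setminus V)$ from below by a power of $\dist(x, K\setminus\opb{f}V)$ for subanalytic data — same circle of ideas, different statement.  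You also correctly identify the remaining work (compatibility with the subanalytic site and with the Dolbeault differentials, i.e.\ the $\shd_{X^c}$-, $\opb{f}\shd_{Y^c}$- and $\shd_X$-module structures); these are genuine, if routine, checks.

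Your ``alternative route'' in the final paragraph, however, does not work as stated.  Combining $\opb{f}\Ovt_Y\tens\omega_{X/Y}\to\epb{f}\Ovt_Y$ with Theorem~\ref{thm:ifunct0} gives a morphism into $\Ovt_X\ltens[\shd_X]\shd_{X\to Y}[d_X-d_Y]$, not into $\Ovt_X$.  Tensoring on the right with $\shd_{Y\from X}$ over $\opb{f}\shd_Y$ would produce a morphism into $\Ovt_X\ltens[\shd_X]\bl\shd_{X\to Y}\ltens[\opb{f}\shd_Y]\shd_{Y\from X}\br$, and for a general $f$ the bimodule $\shd_{X\to Y}\ltens[\opb{f}\shd_Y]\shd_{Y\from X}$ is \emph{not} isomorphic to $\shd_X$ (up to shift); that isomorphism holds only for a closed embedding, which is exactly case~(i) of the paper's proof.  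For a submersion the paper uses a different mechanism altogether (it computes $\rhom[\shd_X^\rop](\shd_{Y\from X},\Ovt_X)\simeq\opb{f}\Ovt_Y$ and then applies the evaluation map), and there is no single ``reorganization of transfer bimodules'' that covers both.  So if you want a proof not going through the $\Cinft$ pullback, you must reproduce the paper's closed-embedding/submersion dichotomy rather than compress it into one line.
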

\begin{proof}
(i) Assume that $f$ is a closed embedding. We have
 \eqn
 \opb{f}\Ovt_Y\ltens[\opb{f}\shd_Y]\shd_{Y\from X}&\simeq&\epb{f}\reeim{f}(\opb{f}\Ovt_Y\ltens[\opb{f}\shd_Y]\shd_{Y\from X})\\
 &\simeq&\epb{f}(\Ovt_Y\ltens[\shd_Y]\reim{f}\shd_{Y\from X})\\
 &\simeq&\epb{f}\Ovt_Y\ltens[\opb{f}\shd_Y]\opb{f}\shd_{Y\from X}\\
 &\simeq&\Ovt_X\ltens[\shd_X]\shd_{X\to Y}\ltens[\opb{f}\shd_Y]\shd_{Y\from X}\,[d_X-d_Y]\\
 &\simeq&\Ovt_X.
 \eneqn
(ii) Assume that $f$  is submersive. We have
\eqn
\rhom[\shd_X^\rop](\shd_{Y\from X},\Ovt_X)&\simeq&\Ovt_X\ltens[\D_X]
\rhom[\shd_X^\rop](\shd_{Y\from X},\D_X)\\
&\simeq&\Ovt_X\ltens[\D_X]\D_{X\to Y}\,[d_Y-d_X]\\
&\simeq&\epb{f}\Ovt_Y\,[2d_Y-2d_X]\simeq\opb{f}\Ovt_Y.
\eneqn
Then use
\eqn
&&\rhom[\shd_X^\rop](\shd_{Y\from X},\Ovt_X)\ltens[\opb{f}\shd_Y]\shd_{Y\from X}
\to\Ovt_X.
\eneqn
\end{proof}

Note that morphism \eqref{eq:funct2e}
is equivalent to the morphism in $\Derb(\JD_X)$
$$\shd_{X\to Y}\ltens[\opb{f}\shd_Y]\opb{f}\Ot[Y]\To\Ot.$$

The next result is a kind of Grauert direct image theorem
for tempered holomorphic functions. It will be generalised to D-modules in Corollary~\ref{cor:ifunct2}. Its proof uses difficult results of functional analysis.

\begin{theorem}[{{\rm Tempered Grauert theorem}\;{\cite[Th.~7.3]{KS96}}}]\label{th:tGrauert}
\index{tempered Grauert theorem}%
Let $f\cl X\to Y$ be a morphism of complex manifolds, let $\shf\in\Derb_\coh(\OO)$ and assume that $f$ is proper on $\Supp(\shf)$. Then there is a natural isomorphism
\eq\label{eq:tGrauert1}
&&\reeim{f}(\Ot\ltens[\OO]\shf)\simeq\Ot[Y]\ltens[{\OO[Y]}]\reim{f}\shf.
\eneq
\end{theorem}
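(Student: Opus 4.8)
The plan is to reduce the statement to the analytic case of Grauert's theorem as upgraded to the tempered setting, using that both sides of \eqref{eq:tGrauert1} are well-behaved with respect to finite covers and that $\shf$ can be resolved by finite free $\sho_X$-modules. First I would recall that $\Ot[X]$ is an object of $\Derb_\Irc(\JD_X)$, so that $\Ot\ltens[\OO]\shf$ and $\Ot[Y]\ltens[{\OO[Y]}]\reim{f}\shf$ lie in $\Derb_\Irc(\icor_X)$ and $\Derb_\Irc(\icor_Y)$ respectively (coherence of $\shf$ being used here, together with Proposition~\ref{proopsa1}); hence by Proposition~\ref{proopsa2} both sides of the desired isomorphism are objects of $\Derb_\Irc(\icor_Y)$, and by Proposition~\ref{pro:eqvIrcDbsa} it suffices to test the isomorphism on $\rsect(V;\scbul)$ for $V$ a relatively compact subanalytic open subset of $Y$, i.e.\ to prove the corresponding isomorphism at the level of $\RHom[{\icor_Y}](\C_V,\scbul)$.

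Next I would construct the natural morphism in \eqref{eq:tGrauert1}: starting from the projection formula for indsheaves (Theorem~\ref{th:projIB}(iii)) applied to $\reeim{f}$, together with the $\OO$-linearity of the functors, one obtains a canonical arrow $\reeim{f}(\Ot\ltens[\OO]\shf)\to \Ot[Y]\ltens[{\OO[Y]}]\reim{f}\shf$; more precisely, one uses that $\Ot\simeq\opb{f}\Ot[Y]\ltens[\opb f\OO[Y]]\OO$ away from a controlled locus and the morphism \eqref{eq:funct2e} (equivalently its holomorphic form $\shd_{X\to Y}\ltens[\opb f\shd_Y]\opb f\Ot[Y]\to\Ot$ restricted to the $\sho$-module structure), then applies $\scbul\ltens[\OO]\shf$ and $\reeim f$. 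Having the morphism, I would check it is an isomorphism by dévissage on $\shf$: the question being local on $Y$ and the functors being triangulated, one takes a bounded complex of finite free $\sho_X$-modules quasi-isomorphic to $\shf$ over a neighborhood of the (proper) support, reducing to $\shf=\OO^{\oplus N}$, and then to $\shf=\OO$.

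For $\shf=\OO$ the statement becomes $\reeim f\Ot\simeq\Ot[Y]\ltens[{\OO[Y]}]\reim f\OO$, which after applying $\RHom[{\icor_Y}](\C_V,\scbul)$ and using the adjunction $(\reeim f,\epb f)$ together with the base change/$\epb{}$ compatibilities of Theorem~\ref{th:projIB} becomes a statement about the tempered Dolbeault complexes $\Dbt_{X_\R}$ and $\Dbt_{Y_\R}$ — precisely the content extractable from the tempered direct image theorem \cite[Th.~7.3]{KS96}, which is the analytic heart of the matter; alternatively one invokes Theorem~\ref{th:eimDbt} after writing $\Ot$ as $\rhom[\shd_{X^c}](\sho_{X^c},\Dbt_{X_\R})$ and pushing forward. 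The main obstacle is exactly this analytic input: controlling the growth of $\ol\partial$-solutions along a proper map requires the hard functional-analytic estimates (of $\FN$/$\DFN$ type, cf.\ Proposition~\ref{pro:dualWhitTemp1} and its circle of ideas) proving that $\reeim f$ applied to the tempered Dolbeault complex computes the tempered Dolbeault complex downstairs; once that is granted, the reduction steps above are formal. I would therefore present the dévissage and the reduction to $\shf=\OO$ in detail and then cite the tempered Grauert estimate of \cite{KS96} for the remaining point.
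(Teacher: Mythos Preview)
Your opening reduction --- both sides live in $\Derb_\Irc(\icor_Y)$, so by Proposition~\ref{pro:eqvIrcDbsa} it suffices to test against $\R$-constructible sheaves on $Y$ --- matches the paper's approach exactly. The divergence, and the gap, is in your d\'evissage on $\shf$.

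You propose to resolve $\shf$ by a bounded complex of finite free $\OO$-modules ``over a neighborhood of the (proper) support'' and thereby reduce to $\shf=\OO$. This fails for two reasons. First, coherent $\OO$-modules on a complex manifold need not admit global finite free resolutions, even over a neighborhood of a compact set; the fibers of $f$ are compact analytic spaces, and there is no syzygy theorem available. Second, even if such a resolution existed, $\OO^{\oplus N}$ has support all of $X$, so $f$ is no longer proper on its support unless $f$ itself is proper --- the hypothesis of the theorem is lost. The statement for $\shf=\OO$ (with $f$ proper) is in any case not simpler than the general statement: $\reim{f}\OO$ is already an arbitrary coherent complex on $Y$.

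The paper avoids the d\'evissage entirely. After reducing to the test $\rhom(\opb{f}G,\Ot\ltens[\OO]\shf)\simeq\rhom(G,\Ot[Y]\ltens[{\OO[Y]}]\reim{f}\shf)$ for $G\in\rc[\C_Y]$, it uses the coherence of $\shf$ and of $\reim{f}\shf$ (Grauert's classical theorem) to commute the tensor product past $\rhom$, obtaining
\[
\roim{f}\bl\rhom(\opb{f}G,\Ot)\ltens[\OO]\shf\br\simeq\rhom(G,\Ot[Y])\ltens[{\OO[Y]}]\reim{f}\shf,
\]
and this formula, with $\shf$ still present, is precisely what \cite[Th.~7.3]{KS96} proves. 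The hard functional analysis there is done once for general coherent $\shf$; there is no reduction to the free case.
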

\begin{proof}[An indication on the proof]
 It is enough to prove that for any $G\in\rc[\C_Y]$, we have
\eq\label{eq:tGrauert2}
&&\rhom(\opb{f}G,\Ot\ltens[\OO]\shf)\simeq\rhom(G,\Ot[Y]\ltens[{\OO[Y]}]\reim{f}\shf).
\eneq
Since $\shf$ and $\reim{f}\shf$ are coherent, \eqref{eq:tGrauert2} is equivalent to
\eq\label{eq:tGrauert3}
&&\rhom(\opb{f}G,\Ot)\ltens[\OO]\shf\simeq\rhom(G,\Ot[Y])\ltens[{\OO[Y]}]\reim{f}\shf.
\eneq
Such a formula is proved in~\cite[Th.~7.3]{KS96}.
\end{proof}

\begin{corollary}[{\cite[Th.~7.4.6]{KS01}}]\label{cor:ifunct2}
Let $f\cl X\to Y$ be a morphism of complex manifolds.
Let $\shm\in\Derb_\qgood(\D_X)$ and assume that $f$  is proper on $\Supp(\shm)$. 
Then there is an  isomorphism in $\Derb(\iC_Y)$
\eq\label{eq:ifunct4}
\drt_Y(\Doim f\shm) &\isoto& \roim f\drt_X(\shm).\label{eq:funct2}
\eneq
\end{corollary}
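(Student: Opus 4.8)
The plan is to deduce this from the Tempered Grauert theorem (Theorem~\ref{th:tGrauert}), the inverse‑image morphism~\eqref{eq:funct2e} of Corollary~\ref{cor:ifunct5}, and the projection formula for indsheaves (Theorem~\ref{th:projIB}), following the pattern of the classical isomorphism~\eqref{eq:oimdr}. First I would construct a natural morphism. Since $f$ is proper on $\Supp(\shm)$, hence on the support of $\shd_{Y\from X}\ltens[\shd_X]\shm$, one has $\roim{f}\simeq\reeim{f}$ on the objects at hand, so Theorem~\ref{th:projIB}\,(iii), read for the $\beta_Y\shd_Y$-module structures (cf.\ Notation~\ref{not:ring}, Proposition~\ref{pro:nobeta}), gives
\[
\drt_Y(\Doim{f}\shm)=\Ovt_Y\ltens[\shd_Y]\roim{f}\bl\shd_{Y\from X}\ltens[\shd_X]\shm\br\simeq\reeim{f}\bl\opb{f}\Ovt_Y\ltens[\opb{f}\shd_Y]\shd_{Y\from X}\ltens[\shd_X]\shm\br .
\]
Applying $\reeim{f}\bl\scbul\ltens[\shd_X]\shm\br$ to the morphism~\eqref{eq:funct2e} and composing with these isomorphisms (and with $\reeim{f}\simeq\roim{f}$ on the target) yields a natural morphism $\drt_Y(\Doim{f}\shm)\To\roim{f}\,\drt_X(\shm)$; it remains to prove it is an isomorphism.

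Both members are triangulated functors of $\shm\in\Derb(\shd_X)$ that commute with small filtrant inductive limits — for the right‑hand side because on objects with support proper over $Y$ one has $\roim{f}\simeq\reeim{f}$, and both $\reeim{f}$ and $\ltens$ commute with such limits. Using the Spencer resolution $\Sp_\scbul(\shm)$, whose terms $\Sp_k(\shm)=\shd_X\tens[\OO]\bigwedge^k\Theta_X\tens[\OO]\shm$ are induced from the quasi‑good $\OO$-modules $\bigwedge^k\Theta_X\tens[\OO]\shm$, a d\'evissage reduces the assertion to the case $\shm=\shd_X\tens[\OO]\shf$ with $\shf$ a quasi‑good $\OO$-module and $f$ proper on $\Supp(\shf)$; and since a quasi‑good $\OO$-module is a small filtrant inductive limit of coherent $\OO$-modules, a further passage to the limit reduces it to $\shf\in\Derb_\coh(\OO)$.

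For $\shm=\shd_X\tens[\OO]\shf$ with $\shf$ coherent one has $\drt_X(\shm)\simeq\Ovt_X\ltens[\OO]\shf$, and by the ordinary projection formula for sheaves
\[
\Doim{f}\shm\simeq\shd_Y\tens[{\OO[Y]}]\reim{f}\bl\Omega_{X/Y}\ltens[\OO]\shf\br ,\qquad \drt_Y(\Doim{f}\shm)\simeq\Omega_Y\tens[{\OO[Y]}]\bl\Ot[Y]\ltens[{\OO[Y]}]\reim{f}(\Omega_{X/Y}\ltens[\OO]\shf)\br .
\]
Theorem~\ref{th:tGrauert}, applied to the coherent $\OO$-module $\Omega_{X/Y}\ltens[\OO]\shf$, rewrites the inner bracket as $\reeim{f}\bl\Ot[X]\ltens[\OO]\Omega_{X/Y}\ltens[\OO]\shf\br$; the projection formula then moves $\Omega_Y\tens[{\OO[Y]}](\scbul)$ inside $\reeim{f}$, and the identification $\opb{f}\Omega_Y\tens[{\opb{f}\OO[Y]}]\Omega_{X/Y}\simeq\Omega_X$ (immediate from the definition of $\Omega_{X/Y}$) turns the outcome into $\reeim{f}\bl\Ovt_X\ltens[\OO]\shf\br\simeq\reeim{f}\,\drt_X(\shm)\simeq\roim{f}\,\drt_X(\shm)$. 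One checks that on these generating objects the resulting isomorphism agrees with the morphism constructed above, so that the latter is an isomorphism in general.

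The genuinely hard input — growth estimates for tempered holomorphic functions under a proper map — is entirely packaged inside the Tempered Grauert theorem, which we are allowed to invoke. The remaining delicate points are the commutation‑with‑filtrant‑limits statements (notably for $\roim{f}$ on objects with support proper over $Y$), the d\'evissage together with the reduction of quasi‑good coefficients to coherent ones, and the naturality check at the end; the bookkeeping of the $\Omega_{X/Y}$-twist, the degree shifts, and the passage between left and right $\shd$-modules is routine but must be carried out with care.
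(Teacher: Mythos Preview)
Your proof is correct and follows essentially the same route as the paper: construct the morphism by applying $\reeim{f}(\scbul\ltens[\shd_X]\shm)$ to~\eqref{eq:funct2e}, reduce to the induced case $\shm=\shd_X\tens[\OO]\shf$ with $\shf$ coherent, and invoke the Tempered Grauert theorem. The paper's proof is terser, but your added detail (Spencer resolution for the d\'evissage, passage from quasi-good to coherent coefficients via filtrant limits, and the explicit $\Omega_{X/Y}$ bookkeeping) fills in exactly the steps the paper leaves implicit.
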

\begin{proof}
Applying the functor $\reeim{f}(\scbul\ltens[\D_X]\shm)$ to the morphism~\eqref{eq:funct2e} 
we obtain the morphism in~\eqref{eq:funct2}. To check it is an isomorphism, we reduce to the case where $\shm=\D_X\tens[\OO]\shf$ 
with  a coherent $\OO$-module $\shf$ such that $f$ is proper on $\Supp(\shf)$. Then we apply Theorem~\ref{th:tGrauert}.
\end{proof}

\begin{corollary}\label{cor:ifunct2b}
Let $f$ and $\shm$ be as in Corollary~\ref{cor:ifunct2}. Then we have the isomorphism 
\eq\label{eq:ifunct44}
&&\Doim{f}(\Ot\Dtens\shm)\simeq \Ot[Y]\Dtens\Doim{f}\shm\mbox{ in }\Derb(\JD_Y).
\eneq
\end{corollary}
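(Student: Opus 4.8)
The plan is to follow the proof of Corollary~\ref{cor:ifunct2} closely. First I would construct, functorially in $\shm$ and compatibly with distinguished triangles and small filtrant inductive limits, a canonical morphism between the two sides of~\eqref{eq:ifunct44}; the natural choice is
\[
\Ot[Y]\Dtens\Doim{f}\shm\To\Doim{f}(\Ot\Dtens\shm)\quad\text{in }\Derb(\JD_Y),
\]
obtained by composing the projection morphism $\Ot[Y]\Dtens\Doim{f}\shm\to\Doim{f}(\Dopb{f}\Ot[Y]\Dtens\shm)$ (the counterpart for indsheaf $\shd$-modules of~\eqref{eq:DDprojform}) with $\Doim{f}$ applied to the arrow $\Dopb{f}\Ot[Y]\to\Ot$, i.e.\ to the reformulation $\shd_{X\to Y}\ltens[\opb{f}\shd_Y]\opb{f}\Ot[Y]\To\Ot$ of~\eqref{eq:funct2e} given in Corollary~\ref{cor:ifunct5}. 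It then remains to prove that this morphism is an isomorphism. Since both members of~\eqref{eq:ifunct44}, regarded as functors of $\shm\in\Derb_\qgood(\shd_X)$, are triangulated and commute with small filtrant inductive limits, and since, exactly as in Corollary~\ref{cor:ifunct2}, a quasi-good $\shd_X$-module whose support is proper over $Y$ is, locally on relatively compact subanalytic open subsets, built from the modules $\shd_X\tens[\OO]\shf$ with $\shf$ a coherent $\OO$-module of support proper over $Y$, I would reduce to the case $\shm=\shd_X\tens[\OO]\shf$.

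For such a $\shm$ the computational input is the standard ``triviality'' isomorphism of left $\shd_X$-modules $\shn\Dtens\shd_X\simeq\shd_X\tens[\OO]\shn$, valid for any $\shd_X$-module $\shn$ (the right-hand side being freely generated over $\shd_X$ by the underlying $\OO$-module of $\shn$). Applied with $\shn=\Ot$ it gives $\Ot\Dtens(\shd_X\tens[\OO]\shf)\simeq\shd_X\tens[\OO](\Ot\ltens[\OO]\shf)$, an induced object; and, since the direct image of an induced module is computed by $\Doim{f}(\shd_X\tens[\OO]\shg)\simeq\shd_Y\tens[{\OO[Y]}]\reim{f}(\Omega_{X/Y}\tens[\OO]\shg)$ (projection formula for $\reim{f}$, using that $f$ is proper on the supports involved so that $\reeim{f}$, $\roim{f}$ and $\reim{f}$ agree there), one is reduced to computing $\reim{f}\bigl(\Ot\ltens[\OO](\Omega_{X/Y}\tens[\OO]\shf)\bigr)$. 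As $\Omega_{X/Y}\tens[\OO]\shf$ is a coherent $\OO$-module with support proper over $Y$, the Tempered Grauert Theorem~\ref{th:tGrauert} identifies this with $\Ot[Y]\ltens[{\OO[Y]}]\reim{f}(\Omega_{X/Y}\tens[\OO]\shf)$, whence
\[
\Doim{f}\bigl(\Ot\Dtens(\shd_X\tens[\OO]\shf)\bigr)\simeq\shd_Y\tens[{\OO[Y]}]\bigl(\Ot[Y]\ltens[{\OO[Y]}]\reim{f}(\Omega_{X/Y}\tens[\OO]\shf)\bigr).
\]
Running the same two steps on the other side, namely $\Doim{f}(\shd_X\tens[\OO]\shf)\simeq\shd_Y\tens[{\OO[Y]}]\reim{f}(\Omega_{X/Y}\tens[\OO]\shf)$ followed by the triviality isomorphism over $Y$, identifies $\Ot[Y]\Dtens\Doim{f}(\shd_X\tens[\OO]\shf)$ with the same object.

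The hard part is not this computation but the bookkeeping around it: I would need to check that the chain of isomorphisms just described genuinely computes the canonical morphism of the first step (and not merely some isomorphism), and that this compatibility survives the d\'evissage through triangles and filtrant limits. A recurrent secondary point is to justify replacing $\reeim{f}$ by $\roim{f}$ and conversely, which is legitimate here because every indsheaf object occurring is supported over a set on which $f$ is proper, together with the usual derived-functor care since $\Ot$ is not concentrated in degree $0$ and $\shf$, $\reim{f}(\Omega_{X/Y}\tens[\OO]\shf)$ lie in $\Derb_\coh$.
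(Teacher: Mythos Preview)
Your argument is correct, but it takes a different route from the paper. You essentially rerun the proof of Corollary~\ref{cor:ifunct2}: construct the canonical morphism, d\'evisse down to $\shm=\shd_X\tens[\OO]\shf$, and invoke the Tempered Grauert theorem directly. The paper instead \emph{deduces} the statement formally from Corollary~\ref{cor:ifunct2} already established: one rewrites $\Ovt_Y\Dtens\Deim{f}\shm\simeq\drt_Y\bigl(\Deim{f}(\D_{X\to Y}\Dtens\shm)\bigr)$ using the projection formula~\eqref{eq:DDprojform}, applies the isomorphism~\eqref{eq:ifunct4} to the quasi-good module $\D_{X\to Y}\Dtens\shm$, unwinds via $\Ovt_X\ltens[\D_X](\D_{X\to Y}\Dtens\shm)\simeq(\Ovt_X\Dtens\shm)\ltens[\D_X]\D_{X\to Y}$, and finally passes back to left modules through $\shm\mapsto\shm^\mop$. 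This avoids repeating the d\'evissage and the second appeal to Theorem~\ref{th:tGrauert}, and it also dissolves the bookkeeping worry you raise at the end (that your chain of isomorphisms really computes the canonical arrow), since one is merely transporting an isomorphism already in hand. Your approach has the compensating virtue of being self-contained and of making the dependence on Tempered Grauert transparent; it is the same strategy one would use if Corollary~\ref{cor:ifunct2} were not available.
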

\begin{proof}
We have 
\eqn
\Ovt_Y\Dtens\Deim{f}\shm&\simeq&\Ot[Y]\ltens[\D_Y](\D_Y\Dtens\Deim{f}\shm)\\
&\simeq&\Ot[Y]\ltens[\D_Y]\Deim{f}(\Dopb{f}\D_Y\Dtens\shm)\\
&\simeq&\drt_Y(\Deim{f}(\D_{X\to Y}\Dtens\shm)),
\eneqn
where the second isomorphism follows from the projection formula~\eqref{eq:DDprojform}. Applying Corollary~\ref{cor:ifunct2},
we obtain
\eqn
\Ovt_Y\Dtens\Deim{f}\shm&\simeq&\roim{f}(\Ovt_X\ltens[\D_X](\D_{X\to Y}\Dtens\shm)).
\eneqn
On the other-hand, we have 
\eqn
\Ovt_X\ltens[\D_X](\D_{X\to Y}\Dtens\shm)&\simeq&(\Ovt_X\Dtens\shm)\ltens[\D_X]\D_{X\to Y}.
\eneqn
Therefore,
\eqn
\Ovt_Y\Dtens\Deim{f}\shm&\simeq&\Doim{f}(\Ovt_X\Dtens\shm).
\eneqn
To conclude, use the equivalence of categories $\Derb(\D_Y^\rop)\simeq\Derb(\D_Y)$ given by
$\shm^\mop = \Omega_X\ltens[\OO]\shm$. 
\end{proof}

\begin{remark}\label{rem:isofunct1}
If one replaces~\eqref{eq:funct1a}  with its non-tempered version, then 
the formula is no more true, contrarily to 
 isomorphism~\eqref{eq:funct2} which remains true by Theorem~\ref{th:Dprojform}.
\end{remark}

\subsection{Localization along a hypersurface}
In order to prove Theorem~\ref{thm:ifunct4} below,
a generalized form of the Riemann-Hilbert correspondence
for regular holonomic D-modules,  we need some lemmas. 

If $S\subset X$ is a closed hypersurface, denote by $\OO(*S)$
\glossary{$\OO(*S)$}%
 the sheaf of meromorphic functions with poles at $S$. It is a regular holonomic $\D_X$-module (see Definition~\ref{def:reghol} below) and it is a flat $\OO$-module. For $\shm\in\Derb(\JD_X)$, set
\eqn
&&\shm(*S) = \shm \Dtens \OO(*S).
\eneqn

\begin{lemma}\label{le:GrotDR1}
Let $S$ be a closed complex hypersurface in $X$. There are isomorphisms
\eq\ba{rcl}
\Ot(*S)&\simeq&\rihom(\C_{X\setminus S},\Ot)\mbox{ in }\Derb(\JD_X),\\[1ex]
\OO(*S)&\simeq&\rhom[\iC_X](\C_{X\setminus S},\Ot)\mbox{ in }\Derb(\D_X).
\ea\eneq
\end{lemma}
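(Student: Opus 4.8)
The plan is to deduce the second isomorphism from the first by applying $\alpha_X$, and to prove the first by exhibiting an explicit $\D_X$-linear morphism and checking it is an isomorphism after testing against constant sheaves on relatively compact subanalytic open subsets; this ultimately reduces to an analytic statement about tempered functions near a hypersurface.

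For the reduction, I would apply the functor $\alpha_X\cl\Derb(\JD_X)\to\Derb(\D_X)$ to the first isomorphism. On the right $\alpha_X\circ\rihom\simeq\rhom[\iC_X]$; on the left $\alpha_X$ is exact, commutes with $\Dtens$, and $\alpha_X\Ot\simeq\OO$, so $\alpha_X\bigl(\Ot(*S)\bigr)\simeq\OO\Dtens\OO(*S)\simeq\OO(*S)$, and the first isomorphism yields the second. (Alternatively one can get the second isomorphism directly from Example~\ref{exa:OtOwCDb}(i) with $Z=S$ and the exact triangle $\rsect_{[S]}(\OO)\to\OO\to\OO(*S)\to[+1]$, comparing it with the triangle obtained by applying $\rhom[\iC_X](\scbul,\Ot)$ to $\C_{X\setminus S}\to\C_X\to\C_S\to[+1]$.)

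Now for the first isomorphism. Writing $j\cl X\setminus S\hookrightarrow X$ for the open embedding, one has $\rihom(\C_{X\setminus S},\Ot)\simeq\roim{j}\opb{j}\Ot$ since $j$ is open, and $\opb{j}\bigl(\Ot(*S)\bigr)\simeq\opb{j}\Ot$ since $\OO(*S)\vert_{X\setminus S}\simeq\OO[X\setminus S]$; the unit of the adjunction $(\opb{j},\roim{j})$ then produces a natural $\D_X$-linear morphism
\[
u\cl\Ot(*S)\to\roim{j}\opb{j}\bigl(\Ot(*S)\bigr)\simeq\rihom(\C_{X\setminus S},\Ot).
\]
Both sides lie in $\Derb_\Irc(\iC_X)$: the target by Proposition~\ref{proopsa1}(ii) since $\C_{X\setminus S}\in\Derb_\Rc(\C_X)$, and the source because $\Ot(*S)=\Ot\ltens[\OO]\OO(*S)$ is a small filtrant inductive limit of objects locally isomorphic to $\Ot$. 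Hence, by Proposition~\ref{pro:eqvIrcDbsa}, it suffices to check that $u$ induces an isomorphism on $\Hom[\Derb(\iC_X)](\C_V[n],\scbul)$ for every relatively compact subanalytic open $V\subset X$ and every $n\in\Z$. The target side becomes $H^n\RHom(\C_{V\setminus S},\Ot)$; for the source side, Theorem~\ref{th:betaaihom} (applied with $\A=\OO$) gives $\rihom(\C_V,\Ot(*S))\simeq\rihom(\C_V,\Ot)\ltens[\OO]\OO(*S)$, and since everything is supported in the fixed compact $\ol V$ one may commute $\rsect(X;\scbul)$ past the filtrant limit $\OO(*S)=\indlim[k]\OO(kS)$, so the source side is $H^n$ of $\indlim[k]\RHom(\C_V,\Ot\ltens[\OO]\OO(kS))$.

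Thus the heart of the matter is to show that the canonical map $\indlim[k]\RHom(\C_V,\Ot\ltens[\OO]\OO(kS))\to\RHom(\C_{V\setminus S},\Ot)$ is an isomorphism. Using the tempered Dolbeault resolution of $\Ot$ by forms with coefficients in $\Dbt_{X_\R}$ and the fact that tempered distributions constitute a sheaf on the subanalytic site, I would reduce this, in a chart where $S=\{f=0\}$ for a holomorphic $f$, to the statement that every tempered distribution (resp.\ tempered holomorphic function) on $V\setminus S$ becomes, after multiplication by $f^k$ for $k\gg0$, the restriction of a tempered distribution (resp.\ tempered holomorphic function) on $V$ — the converse being immediate. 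The forward direction rests on the Łojasiewicz inequality (Lemma~\ref{th:Loj1}), which gives $\lvert f(x)\rvert\ge c\,\dist(x,S)^{N_0}$ near $S$ and so converts a finite-order pole along $S$ into a tempered bound, together with Riemann's removable singularity theorem to extend $f^k u$ holomorphically across $S$ in the holomorphic case. The main obstacle I anticipate is carrying this out in the derived sense, i.e.\ controlling the entire tempered Dolbeault complex on $V\setminus S$ — equivalently, solving $\ol\partial$ with tempered estimates there — by the functional-analytic mechanism already used in the proof of Theorem~\ref{th:eimDbt}; the remaining ingredients (the adjunction, membership in $\Derb_\Irc$, the reductions via Propositions~\ref{proopsa1}, \ref{pro:eqvIrcDbsa} and Theorem~\ref{th:betaaihom}) are formal.
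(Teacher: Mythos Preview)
Your approach is correct in substance and shares the key analytic input (the Lojasiewicz-type estimate) with the paper, but the paper's route is shorter and sidesteps the obstacle you anticipate. The paper performs the Dolbeault reduction \emph{before} testing on subanalytic opens: since both functors $\scbul\Dtens\OO(*S)$ and $\rihom(\C_{X\setminus S},\scbul)$ commute with the Dolbeault complex representing $\Ot$, the first isomorphism reduces immediately to the degree-zero statement
\[
\Dbt_M[1/f]\ \simeq\ \rihom(\C_{M\setminus S},\Dbt_M)
\]
with $M=X_\R$ and $f$ a local equation of $S$. Both sides are now honest subanalytic indsheaves (not complexes), so one compares sections over relatively compact subanalytic $U$, and the claim becomes $\Dbt_\Msa(U)[1/f]\simeq\Dbt_\Msa(U\setminus S)$, i.e.\ multiplication by $f$ is bijective on $\Dbt_\Msa(U\setminus S)$. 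That is the whole proof; no $\ol\partial$-estimates appear.

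Your concern about ``carrying this out in the derived sense'' is thus unnecessary --- and in fact would not arise even in your order of operations. Once you replace $\Ot$ by its Dolbeault resolution, the map you want to be a quasi-isomorphism is a \emph{termwise} isomorphism (each term is an instance of the $\Dbt$-statement above), so there is nothing to solve. The detour through Theorem~\ref{th:betaaihom} and the explicit computation of $H^n\RHom(\C_{V\setminus S},\Ot)$ can be dropped; what you called ``the main obstacle'' is an artefact of postponing the Dolbeault step.
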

\begin{proof}
(i) The second isomorphism follows from the first one by applying the functor $\alpha_X$. 

\spa
(ii) By taking  the Dolbeault resolution of $\Ot$ we are reduced to prove a similar result with $\Dbt_{X_\R}$ instead of $\Ot$. More precisely, consider a real analytic manifold $M$, a real analytic map $f\cl M\to\C$. Set  $S=\{f=0\}$  and denote by 
$j\cl (M\setminus S)\into M$ the open embedding.  
Define the sheaf $\rA_M[1/f]$ as the inductive limit of the sequence of embeddings $\rA_M\To[f]\rA_M\To[f]\cdots$. 
Equivalently, $\rA_M[1/f]$ is the subsheaf of $\oim{j}\opb{j}\rA_M$ consisting of sections $u$ such that there locally exists an integer $m$ with  $f^m\cdot u\in \rA_M$. Set
\eqn
&& \Dbt_M[1/f]\eqdot\Dbt_M\tens[\rA_M](\rA_M[1/f]).
\eneqn
 (Note that $\Dbt_M[1/f]$ is isomorphic to the inductive limit of the sequence of morphisms  $\Dbt_M\To[f]\Dbt_M\To[f]\cdots$.)
It is enough to prove the isomorphism
\eq\label{eq:loja}
&&\Dbt_M[1/f]\simeq\rihom(\C_{M\setminus S},\Dbt_M),
\eneq
or, equivalently, the isomorphism for any open relatively compact subanalytic subset $U$ of $M$
\eq\label{eq:loja2}
\sect(U;\Dbt_\Msa[1/f])&\simeq&\sect(U\setminus S;\Dbt_\Msa).
\eneq
This follows from the fact that
$f\cl \sect(U\setminus S;\Dbt_\Msa)\to\sect(U\setminus S;\Dbt_\Msa)$
is bijective. (See also Lojasiewicz~\cite{Lo59}.)
\end{proof}
In the sequel, we set for  a closed complex analytic hypersurface $S$
\eq\label{eq:Ot*D}
&&\Ot(*S)\eqdot\Ot\Dtens\OO(*S)\simeq\rihom(\C_{M\setminus S},\Ot).
\eneq
\begin{lemma}\label{le:GrotDR2}
Let $S$ be a closed complex hypersurface in $X$. There are isomorphisms
\eq
&&\hs{2ex}\Omega_X\ltens[\D_X]\Ot(*S)\isoto\Omega_X\ltens[\D_X]\OO(*S)\simeq\rhom(\C_{X\setminus S},\C_X)\,[d_X].
\eneq
\end{lemma}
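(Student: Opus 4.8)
The plan is to deduce the statement from Lemma~\ref{le:GrotDR1}, the identity $\drt_X(\OO)\simeq\C_X[d_X]$, and the non-classical interchange isomorphism of Theorem~\ref{th:betaaihom}. By \eqref{eq:Ot*D} and Lemma~\ref{le:GrotDR1} we have $\Ot(*S)\simeq\rihom(\C_{X\setminus S},\Ot)$ in $\Derb(\JD_X)$ and, applying $\alpha_X$, $\OO(*S)\simeq\rhom[\iC_X](\C_{X\setminus S},\Ot)\simeq\alpha_X\rihom(\C_{X\setminus S},\Ot)$ in $\Derb(\D_X)$; here $U\eqdot X\setminus S$ is open and $\C_{X\setminus S}=\C_U$ is the extension by zero of the constant sheaf. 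The heart of the argument is to move $\rihom(\C_{X\setminus S},\scbul)$ through the tempered de Rham complex that computes $\Omega_X\ltens[\D_X](\scbul)$.

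First I would note that the de Rham complex $\Omega_X\ltens[\D_X]\shk$ of a left $\D_X$-module object $\shk$ is built termwise from the operations $\Omega^p_X\tens[\OO](\scbul)$ with the locally free $\OO_X$-modules $\Omega^p_X$ of finite rank, the differentials being determined by the $\D_X$-action on $\shk$. Theorem~\ref{th:betaaihom} (with $\sha=\OO_X$, $\shl=\Omega^p_X$ and $F=\C_{X\setminus S}\in\Derb(\C_X)$) gives canonical isomorphisms $\rihom(\C_{X\setminus S},\shk)\tens[\OO]\Omega^p_X\isoto\rihom(\C_{X\setminus S},\Omega^p_X\tens[\OO]\shk)$; taking $\shk=\Ot$, whose induced $\D_X$-module structure on $\rihom(\C_{X\setminus S},\Ot)$ is exactly the one used in Lemma~\ref{le:GrotDR1}, these term-by-term isomorphisms are compatible with the de Rham differentials, so
\[
\Omega_X\ltens[\D_X]\Ot(*S)\;\simeq\;\Omega_X\ltens[\D_X]\rihom(\C_{X\setminus S},\Ot)\;\simeq\;\rihom\bl\C_{X\setminus S},\,\Omega_X\ltens[\D_X]\Ot\br\;\simeq\;\rihom(\C_{X\setminus S},\C_X[d_X]),
\]
where the last step uses $\Omega_X\ltens[\D_X]\Ot\simeq\drt_X(\OO)\simeq\C_X[d_X]$ (the corollary of Corollary~\ref{cor:ifunct1}). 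Since $\C_{X\setminus S}=\C_U$ with $j\cl U\into X$ open, $\rihom(\C_U,\iota_X\C_X[d_X])\simeq\roim{j}\opb{j}\iota_X\C_X[d_X]\simeq\iota_X\roim{j}\opb{j}\C_X[d_X]\simeq\rhom(\C_{X\setminus S},\C_X)[d_X]$, because $\opb{j}$ (and $\epb{j}=\opb{j}$) and $\roim{j}$ commute with $\iota_X$. This gives the second isomorphism of the lemma.

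For the first isomorphism I would apply $\alpha_X$ throughout. The functor $\alpha_X$ is exact and monoidal for $\tens$, hence commutes with the operation $\Omega_X\ltens[\D_X](\scbul)$, and $\alpha_X\rihom(\C_{X\setminus S},\Ot)=\rhom[\iC_X](\C_{X\setminus S},\Ot)\simeq\OO(*S)$ by the second isomorphism of Lemma~\ref{le:GrotDR1}; hence $\Omega_X\ltens[\D_X]\OO(*S)\simeq\alpha_X\iota_X\bl\rhom(\C_{X\setminus S},\C_X)[d_X]\br\simeq\rhom(\C_{X\setminus S},\C_X)[d_X]$. The comparison morphism $\Omega_X\ltens[\D_X]\Ot(*S)\to\Omega_X\ltens[\D_X]\OO(*S)$ induced by $\Ot\to\OO$ then corresponds, under these identifications, to the unit of the adjunction $(\alpha_X,\iota_X)$ on $\rhom(\C_{X\setminus S},\C_X)[d_X]$, which is an isomorphism by the triangle identity $\alpha_X\circ\iota_X\simeq\id$.

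The step I expect to be the real obstacle is the interchange of $\rihom(\C_{X\setminus S},\scbul)$ with the tempered de Rham complex. The $\OO_X$-linear part is precisely Theorem~\ref{th:betaaihom}, which (as the text stresses) has no classical analogue and fails once the boundedness hypothesis on $F$ is dropped; what remains to be checked is that the resulting term-by-term isomorphisms are compatible with the connection-type differentials, equivalently that the $\D_X$-module structure on $\rihom(\C_{X\setminus S},\Ot)$ of Lemma~\ref{le:GrotDR1} is the one that makes the interchange functorial. This is routine but is where all the care is needed; everything else is formal manipulation with the six operations on indsheaves together with Lemma~\ref{le:GrotDR1} and $\drt_X(\OO)\simeq\C_X[d_X]$.
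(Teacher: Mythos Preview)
Your proof is correct and follows the same route as the paper: identify $\Ot(*S)\simeq\rihom(\C_{X\setminus S},\Ot)$ via Lemma~\ref{le:GrotDR1}, interchange $\rihom$ with $\Omega_X\ltens[\D_X](\scbul)$, and then use $\Omega_X\ltens[\D_X]\Ot\simeq\Omega_X\ltens[\D_X]\OO\simeq\C_X[d_X]$.

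The only difference is efficiency in the interchange step. You apply Theorem~\ref{th:betaaihom} termwise with $\sha=\OO_X$ and $\shl=\Omega^p_X$, and then you (rightly) flag the need to check compatibility with the de Rham differentials. The paper sidesteps this entirely: Theorem~\ref{th:betaaihom} applies directly with $\sha=\D_X$, $\shl=\Omega_X\in\Derb(\D_X^\rop)$, $\shk=\Ot\in\Derb(\JD_X)$ and $F=\C_{X\setminus S}$, giving
\[
\Omega_X\ltens[\D_X]\rihom(\C_{X\setminus S},\Ot)\;\simeq\;\rihom\bl\C_{X\setminus S},\,\Omega_X\ltens[\D_X]\Ot\br
\]
in one stroke as a derived-category isomorphism, so no differential compatibility needs to be checked. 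Once you have this, the passage from $\rihom(\C_{X\setminus S},\C_X[d_X])$ to $\rhom(\C_{X\setminus S},\C_X)[d_X]$ is immediate since $\C_X[d_X]$ is already a usual sheaf. Your $\alpha_X$ argument for the $\OO(*S)$ side is fine, but again the paper just substitutes $\Omega_X\ltens[\D_X]\Ot\simeq\Omega_X\ltens[\D_X]\OO$ inside the $\rihom$ and is done.
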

\begin{proof}
It follows from Lemma~\ref{le:GrotDR1} that 
\eqn
\Omega_X\ltens[\D_X]\Ot(*S)
&\simeq&\rihom(\C_{X\setminus S},\Omega_X\ltens[\D_X]\Ot).
\eneqn
Then the result follows from the isomorphisms 
\eqn
\Omega_X\ltens[\D_X]\Ot&\simeq&\Omega_X\ltens[\D_X]\OO\simeq\C_X\,[d_X].
\eneqn
\end{proof}

\section{Regular holonomic D-modules}\label{section:reghol}

\subsection{Regular normal form for holonomic modules}
For the notion of 
\index{D-module!regular holonomic}%
\index{regular holonomic D-module}%
regular holonomic D-modules, refer e.g.\ to \cite[\S5.2]{Ka03} and~\cite{KK81}.
\begin{definition}\label{def:reghol}
Let $\shm$ be a holonomic $\D_X$-module,
 $\Lambda$ its characteristic variety in $T^*X$ and
$\shi_\Lambda$ the ideal of $\gr(\D_X)$ of functions vanishing on $\Lambda$.
We say that $\shm$ is {\em regular} if
there exists locally a good filtration on 
$\shm$ such that $\shi_\Lambda\cdot\gr(\shm)=0$. 
\end{definition}
One can prove that the full subcategory $\mdrh[\D_X]$ of $\mdc[\D_X]$ consisting of regular holonomic $\D_X$-modules is 
\glossary{$\mdrh[\D_X]$}%
\glossary{$\Derb_{\reghol}(\D_X)$}%
a thick abelian subcategory, stable by duality. 
Denote by  $\Derb_{\reghol}(\D_X)$ the full 
subcategory of $\BDC(\D_X)$ whose objects have regular
holonomic cohomologies. Then $\Derb_{\reghol}(\D_X)$ is triangulated.

For a coherent $\D_X$-module $\shm$, denote by $\SSupp(\shm)$ 
\glossary{$\SSupp(\shm)$}%
the set of $x\in X$ such that $\shm$ is not 
a coherent $\OO$-module on a neighborhood of $x$.

\begin{definition}\label{def:regnormal}
Let $X$ be a complex manifold and $D\subset X$ a normal crossing divisor. 
We say that a holonomic $\D_X$-module $\shm$ has 
\index{normal form!regular}%
\emph{regular  normal form} along $D$ if locally on $D$, 
for a local coordinate system $(z_1,\dots,z_n)$ on $X$  such that $D=\{z_1\cdots z_r=0\}$,
$\shm\simeq\shd_X/\shi_\lambda$ for $\lambda=(\lambda_1,\dots,\lambda_r)\in
(\C\setminus\Z_{\geq0})^r$.
Here, $\shi_\lambda$ is the left ideal generated by the operators 
$(z_i\partial_i-\lambda_i)$ and $\partial_j$ for  $i\in \{1,\dots,r\}$, $j\in\{r+1,\dots,n\}$.
\end{definition}
One shall be aware that the property of being of normal form is not stable by duality. 
Note that, for $\lambda=(\lambda_1,\dots,\lambda_r)\in\C^m$,
$\shd_X/\shi_\lambda\isoto (\shd_X/\shi_\lambda)(*D)$ if and only if $\lambda_i\in\C\setminus\Z_{\geq0}$ for any $i\in \{1,\dots,r\}$.

Of course, if a holonomic $\D_X$-module has regular normal form, then it is regular holonomic. 

\begin{lemma}\label{le:normaDR}
Let $\shl$ be a holonomic module with regular normal form along $D$.   Then
we have the natural isomorphism $\sol_X(\shl)\tens\C_{X\setminus D}\isoto\sol_X(\shl)$. 
\end{lemma}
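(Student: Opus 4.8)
The plan is to reduce the statement to the vanishing of the stalks of $\sol_X(\shl)$ along $D$, then to a local normal-form computation, and finally to a one-variable calculation.

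\emph{Reduction to stalk vanishing.} Since $\C_X$ is the unit for $\tens$, I would apply the functor $\sol_X(\shl)\tens(\scbul)$ to the distinguished triangle $\C_{X\setminus D}\to\C_X\to\C_D\to[+1]$; this presents the natural morphism $\sol_X(\shl)\tens\C_{X\setminus D}\to\sol_X(\shl)$ as the first arrow of a distinguished triangle whose third term is $\sol_X(\shl)\tens\C_D$. The stalk of $\sol_X(\shl)\tens\C_D$ at a point $x$ is $\sol_X(\shl)_x$ for $x\in D$ and $0$ otherwise, so the morphism in question is an isomorphism if and only if $\sol_X(\shl)_x\simeq0$ for every $x\in D$. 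This being local on $X$, I may assume that, in suitable coordinates $(z_1,\dots,z_n)$, one has $D=\{z_1\cdots z_r=0\}$ and $\shl\simeq\shd_X/\shi_\lambda$ with $\lambda_i\in\C\setminus\Z_{\geq0}$.

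\emph{Reduction to one variable.} Viewing this chart as a product of $n$ copies of $\C$, the ideal $\shi_\lambda$ is generated by the pairwise commuting, single-variable operators $z_i\partial_i-\lambda_i$ (for $i\le r$) and $\partial_i$ (for $i>r$); hence $\shl$ is an external product of $\shd$-modules,
\[
\shl\simeq\shl_1\Detens\cdots\Detens\shl_n,\qquad \shl_i=\shd_\C/\shd_\C(z_i\partial_i-\lambda_i)\ \ (i\le r),\quad \shl_i=\shd_\C/\shd_\C\partial_i\ \ (i>r),
\]
with $\shl_i$ supported on the $i$-th factor. The solution functor turns external products of coherent $\shd$-modules into external products of complexes, so $\sol_X(\shl)\simeq\sol(\shl_1)\etens\cdots\etens\sol(\shl_n)$, and the stalk of this at $(x_1,\dots,x_n)$ is $\sol(\shl_1)_{x_1}\ltens[\cor]\cdots\ltens[\cor]\sol(\shl_n)_{x_n}$, which is acyclic as soon as one factor is acyclic. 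Since a point of $D$ has $x_{i_0}=0$ for some $i_0\le r$, it then remains only to show that $\sol(\shl_{i_0})$ has vanishing stalk at $0\in\C$.

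\emph{The one-variable computation.} Let $\lambda\in\C\setminus\Z_{\geq0}$, put $P=z\partial_z-\lambda$ and $\shl_\lambda\eqdot\shd_\C/\shd_\C P$. As $P$ is a nonzero element of the domain $\shd_\C$, there is a free resolution $0\to\shd_\C\to[\cdot P]\shd_\C\to\shl_\lambda\to0$, so the stalk $\sol(\shl_\lambda)_0$ is represented by the two-term complex $[\,\C\{z\}\to[P]\C\{z\}\,]$ with $P$ acting on $\C\{z\}$. On monomials $P(z^k)=(k-\lambda)z^k$; since $\lambda\notin\Z_{\geq0}$ one has $k-\lambda\ne0$ for all $k\ge0$ and $|k-\lambda|\to\infty$, so $P$ is injective on $\C\{z\}$, and surjective because $\sum_k g_kz^k\mapsto\sum_k(k-\lambda)^{-1}g_kz^k$ does not decrease the radius of convergence. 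Thus $P$ is bijective, $\sol(\shl_\lambda)_0\simeq0$, and combining this with the two reduction steps proves the lemma.

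\emph{Main obstacle and an alternative.} The only delicate point is the reduction: it requires the decomposition of $\shl$ as an external product and the compatibility of $\sol$ with $\Detens$ (both standard, cf.\ \cite{Ka03}), together with the identity $(F\etens G)_{(x,y)}\simeq F_x\ltens[\cor]G_y$ over the base field, so that a single acyclic stalk annihilates the product. If one prefers to avoid external products, the same conclusion follows by writing the Koszul resolution of $\shd_X/\shi_\lambda$ attached to the commuting generators: it represents $\sol_X(\shl)_p$ by the Koszul complex of those operators acting on $\sho_{X,p}$, and, realizing this Koszul complex as an iterated mapping cone and using that $z_{i_0}\partial_{i_0}-\lambda_{i_0}$ acts bijectively on $\sho_{X,p}$ whenever $z_{i_0}(p)=0$ and $\lambda_{i_0}\notin\Z_{\geq0}$, one sees it is acyclic for $p\in D$.
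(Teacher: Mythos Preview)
Your argument is correct. The paper's proof is your ``alternative'' route: it reduces to $\sol_X(\shl)\vert_D\simeq0$, observes that on each $Z_i=\{z_i=0\}$ the operator $P_i=z_i\partial_i-\lambda_i$ (one of the commuting generators of $\shi_\lambda$) acts as an isomorphism on $\OO\vert_{Z_i}$, and concludes via the implicit Koszul/mapping-cone argument you sketch at the end. Your primary route through the external-product decomposition $\shl\simeq\shl_1\Detens\cdots\Detens\shl_n$ and the compatibility of $\sol$ with $\Detens$ is a somewhat more structured repackaging that isolates the same one-variable fact (bijectivity of $z\partial_z-\lambda$ on $\C\{z\}$ for $\lambda\notin\Z_{\ge0}$); it has the virtue of making the reduction explicit, at the cost of invoking the external-product machinery, whereas the paper's version stays entirely within the single chart and needs only the observation that a Koszul complex with one bijective operator is acyclic.
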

\begin{proof}
It is enough to prove that $\sol_X(\shl)\vert_D\simeq0$. In a local coordinate system $(z_1,\dots,z_n)$ as in Definition~\ref{def:regnormal}, set $Z_i=\{z_i=0\}$. 
Setting $P_i=z_i\partial_i-\lambda_i$ with $\lambda_i\in\C\setminus\Z_{\geq0}$,  it is enough to check that $P_i$ induces an isomorphism $P_i\cl\OO\vert_{Z_i}\isoto\OO\vert_{Z_i}$, which is clear. 
\end{proof}

\begin{lemma}\label{lem:reduxreg}
Let $P_X(\shm)$ be a statement concerning a complex manifold $X$ and a regular holonomic object $\shm\in\BDC_\reghol(\D_X)$. Consider the following conditions.
\bna
\item
Let $X=\Union\nolimits_{i\in I}U_i$ be an open covering. Then $P_X(\shm)$ is true if and only if $P_{U_i}(\shm|_{U_i})$ is true for any $i\in I$.
\item
If $P_X(\shm)$ is true, then $P_X(\shm[n])$ is true for any $n\in\Z$.
\item
Let $\shm'\to\shm\to\shm''\to[+1]$ be a distinguished triangle in $\BDC_\reghol(\D_X)$. If $P_X(\shm')$ and $P_X(\shm'')$ are true, then $P_X(\shm)$ is true.
\item
Let $\shm$ and $\shm'$ be regular holonomic $\D_X$-modules. 
If $P_X(\shm\dsum\shm')$ is true, then $P_X(\shm)$ is true.
\item Let $f\colon X\to Y$ be a projective morphism and let $\shm$be a good regular holonomic $\D_X$-module. If $P_X(\shm)$ is true, then $P_Y(\Doim f\shm)$ is true.
\item If $\shm$ is a regular holonomic $\D_X$-module with a regular normal form along a normal crossing divisor of $X$, then $P_X(\shm)$ is true.
\ee
If conditions {\rm (a)--(f)} are satisfied, 
then $P_X(\shm)$ is true for any complex manifold $X$ and any $\shm\in\BDC_\reghol(\D_X)$.
\end{lemma}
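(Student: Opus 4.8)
The plan is to combine a dévissage reducing to a single module with a Noetherian induction on the dimension of the support, the inductive step being carried out by localizing along a hypersurface, resolving singularities, and invoking Deligne's structure theory of regular meromorphic connections along a normal crossing divisor. First, applying (c) to the truncation triangles $\tau^{\leq n}\shm\to\shm\to\tau^{>n}\shm\to[+1]$ and inducting on the amplitude, one reduces to $\shm=H^n(\shm)[-n]$, and then (b) allows one to take $n=0$; here $P_X(0)$ holds because $P_X(\OO)$ holds by (f) with $D=\emptyset$, whence $P_X(\OO[-1])$ by (b), and then (c) applied to the distinguished triangle $\OO[-1]\to0\to\OO\to[+1]$ gives $P_X(0)$. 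So from now on $\shm$ is a regular holonomic $\D_X$-module, and I argue by induction on $d=\dim\Supp(\shm)$, the case $d<0$ being settled.

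For the inductive step, the statement being local by (a), I may shrink $X$ and choose a closed hypersurface $S\subset X$ containing $\SSupp(\shm)$ and the singular locus of $\Supp(\shm)$ but no irreducible component of $\Supp(\shm)$, so that $\dim(S\cap\Supp(\shm))<d$. In the distinguished triangle $\rsect_{[S]}(\shm)\to\shm\to\shm(*S)\to[+1]$ (recall $\OO(*S)$ is regular holonomic and $\OO$-flat), the object $\rsect_{[S]}(\shm)$ has regular holonomic cohomologies supported in $S\cap\Supp(\shm)$, so $P_X(\rsect_{[S]}(\shm))$ holds by the first reduction and the induction hypothesis; by (b) and (c) it then remains to prove $P_X(\shm(*S))$. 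Over $X\setminus S$ one has $\shm(*S)=\shm$, which is there $\OO$-coherent, hence, by Kashiwara's equivalence, the direct image of a regular connection on the smooth manifold $\Supp(\shm)\setminus S$. By Hironaka I obtain a projective morphism $f\cl X'\to X$, an isomorphism over $X\setminus S$, such that the strict transform $\wt Y$ of $\Supp(\shm)$ is a smooth closed submanifold of $X'$ and, with $D'=\opb f(S)$, the pair $(D',\wt Y)$ is in normal crossing position; in particular $\wt D:=\wt Y\cap D'$ is a normal crossing divisor of $\wt Y$.

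The structure theory of regular holonomic modules (Deligne~\cite{De70}, combined with the resolution above and with the fact that $\Doim f\OO_{X'}\simeq\OO$ since $X$ is smooth, whence $\Doim f\Dopb f\simeq\id$ on $\Derb_\reghol(\D_X)$) provides a good regular holonomic module $\shn$ on $X'$, supported on $\wt Y$, equal to $\Doim{i'}\shn_0$ in a neighborhood of $\wt Y$ (with $i'\cl\wt Y\into X'$ the inclusion and $\shn_0$ the Deligne extension along $\wt D$ of the connection above), such that $\shm(*S)$ is a direct summand of $H^0(\Doim f\shn)$ and the cohomologies of $\Doim f\shn$ in nonzero degree are supported in $S\cap\Supp(\shm)$. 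Now $\shn_0$ is a regular meromorphic connection along the normal crossing divisor $\wt D$ of the \emph{smooth} $\wt Y$, hence, by Deligne, locally on $\wt D$ a successive extension of modules of regular normal form (Definition~\ref{def:regnormal}); thus $P_{\wt Y}(\shn_0)$ holds by (a), (b), (c), (f). Since $i'$ is a projective closed embedding, (e) gives $P_{X'}(\Doim{i'}\shn_0)$, and as $\shn$ vanishes off $\wt Y$, (a) gives $P_{X'}(\shn)$. As $\shn$ is good and $f$ is proper on $\Supp(\shn)$, (e) yields $P_X(\Doim f\shn)$; a dévissage of $\Doim f\shn$ (its nonzero-degree cohomology handled by the induction hypothesis) together with (c) gives $P_X(H^0(\Doim f\shn))$, and (d) then extracts $P_X(\shm(*S))$. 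Combined with $P_X(\rsect_{[S]}(\shm))$, (c) gives $P_X(\shm)$, closing the induction.

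The main obstacle is precisely the structure-theoretic input on $X'$: one must establish that, after a suitable resolution, the localized module $\shm(*S)$ is, up to a direct summand and up to cohomology supported over $S$ in dimension $<d$, the direct image from a smooth subvariety of a regular meromorphic connection along a normal crossing divisor. This packages Deligne's description of regular meromorphic connections near a normal crossing divisor as iterated extensions of the rank-one models $\shd_X/\shi_\lambda$, Hironaka's embedded resolution, and the behaviour of $\Doim f\Dopb f$ for a resolution; one must also check that the residue eigenvalues can be normalized into $\C\setminus\Z_{\geq0}$, so that one genuinely lands in the class of Definition~\ref{def:regnormal}. The rest is routine bookkeeping with conditions (a)--(f).
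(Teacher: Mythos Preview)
Your strategy is the same as the paper's: reduce to a single module, induct on the dimension of the support, resolve singularities to land on a smooth variety with a normal crossing divisor, and invoke the fact that a regular meromorphic connection along such a divisor is locally an iterated extension of the rank-one models $\shd/\shi_\lambda$. You correctly identify this last structural input as the crux.

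The paper organizes the inductive step more economically. Rather than resolving the ambient $X$ and passing to the strict transform, it directly resolves the support $Z=\Supp(\shm)$ to obtain a smooth $W$ of dimension $m$ with a projective map $g\colon W\to X$ and a normal crossing divisor $D=g^{-1}(S)$ on $W$; the module on $W$ is taken to be the pullback $\bl H^{m-d_X}\Dopb g\shm\br(*D)$ rather than a Deligne extension. This already satisfies $\SSupp\subset D$ and $(\,\cdot\,)\simeq(\,\cdot\,)(*D)$, so the filtration by regular normal forms applies and $P_W$ holds. A single distinguished triangle
\[
\shm\To \Doim g\bl(\Dopb g\shm)(*D)\br[m-d_X]\To\shn\To[+1]
\]
with $\Supp(\shn)\subset S$ then finishes the induction via (b), (c), (e). In particular the paper never needs the direct-summand condition (d); as remarked right after the proof, (d) is kept only by analogy with the irregular case.

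Your route through the $\rsect_{[S]}$-triangle, the embedded resolution $X'\to X$, the Deligne extension on $\tw Y$, and the extraction of $\shm(*S)$ as a summand via (d) is correct but adds bookkeeping. The claim $\Doim f\Dopb f\simeq\id$ is valid (projection formula plus $\Doim f\OO_{X'}\simeq\OO_X$ for a projective birational $f$ between smooth manifolds) but is not what actually drives your argument; the comparison of $\shm(*S)$ with $H^0(\Doim f\shn)$ really rests on both being regular, localized along $S$, and agreeing over $X\setminus S$, and you should be careful not to invoke Riemann--Hilbert here since this lemma is a tool for proving it. The paper's pullback formulation sidesteps that delicacy entirely.
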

\begin{proof}[Sketch of proof]
(i) If $D$ is a normal crossing hypersurface of $X$ and $\shm$ is a regular holonomic $\shd_X$-module satisfying\\
$\scbul$ $\shm\simeq \shm(*D)$,\\
$\scbul$  $\SSupp(\shm)\subset D$,\\
 then, locally on $X$, there exists a filtration 
 \eqn
 &&\shm=\shm_0\supset\shm_1\supset\cdots\supset\shm_l\supset\shm_{l+1}=0
 \eneqn
 such that $\shm_j/\shm_{j+1}$ has regular normal form.
 It follows that in this case, $P_X(\shm)$ is true.

\spa
(ii) Let us take a closed complex analytic subset $Z$ of $X$
such that the support of $\shm$ is contained in $Z$.
We argue by induction on the dimension  $m$ of $Z$.
There exists a morphism $f\cl  W\to Z$
such that\\[.5ex]
(1) $W$ is non singular with dimension $m$,\\
(2) $f$ is projective,\\
(3) there exists a closed complex analytic subset $S$
of $Z$ with dimension $<m$ such that\\
\hs{4ex}\textbullet\ $f^{-1}(Z\setminus S)\to Z\setminus S$ is an isomorphism,\\
\hs{4ex}\textbullet\  $D\seteq\opb{f}S$ is a normal crossing hypersurface of $W$,\\
\hs{4ex}\textbullet\ 
{$\SSupp(H^{m-d_X}\Dopb{g}\shm)\subset D$, 
where $g$ is the composition $W\To[f]Z\hookrightarrow X$.}

\medskip

We have
 \eqn
 &&\bl\Dopb{g}\shm\br(*D)\simeq  \bl \bl H^{m-d_X}\Dopb{g}\shm\br(*D)\br[d_X-m].
 \eneqn
Then by step (i), $P_{W}\bl(\Dopb{g}\shm)(*D)\br$ is true.
Hence $ P_X\bl\Doim{g}\bl(\Dopb{g}\shm)(*D)\br\br$ is true.
Let us consider a distinguished triangle
\eqn
&&\shm\To \Doim{g}\bl(\Dopb{g}\shm)(*D)\br[m-d_X]\To \shn\To[1].
\eneqn
Since $\Supp(\shn)\subset S$, $P_X(\shn)$ is true by the induction hypothesis.
Hence $P_X(\shm)$ is true.
\end{proof}
\begin{remark}
In fact, we could remove condition (d) in the regular case. We keep it by analogy with the irregular case (Lemma~\ref{lem:reduxreg}). 
\end{remark}

\subsection{Real blow up}\label{subsection:realblowup}
A classical tool in the study of differential equations is the
\index{real blow up}%
 real blow up, and we shall use this construction in the proof of 
Theorems~\ref{thm:ifunct4}, \ref{th:irrRH1}
and in the definition of normal form given in \S\,\ref{subsection:normalform}.

Recall that $\C^\times$ denotes $\C\setminus\{0\}$ and $\R_{>0}$ the multiplicative group of positive real numbers. Consider the 
action of $\R_{>0}$ on $\C^\times\times\R$:
\eq\label{eq:muaction}
&&\R_{>0}\times(\C^\times\times\R)\to\C^\times\times\R,\quad (a,(z,t))\mapsto (az,\opb{a}t)
\eneq
and set
\eqn
&&\tw\C^\tot=(\C^\times\times\R)/ \R_{>0},\,
\tw\C^{\geq0}=(\C^\times\times\R_{\geq0})/ \R_{>0},
\tw\C^{>0}=(\C^\times\times\R_{>0})/ \R_{>0}.
\eneqn
One denotes by  $\varpi^\tot$ the map:
\eq\label{eq:maptwom}
&&\varpi^\tot\cl \tw\C^\tot\to \C, \quad (z,t)\mapsto tz.
\eneq
Then we have
$$\tw\C^\tot\supset \tw\C^{\geq0}\supset \tw\C^{>0}\isoto\C^\times.$$

Let $X=\C^n\simeq\C^r\times\C^{n-r}$ and let $D$ be the divisor $\{z_1\cdots z_r=0\}$. Set 
\eqn
&&\twX^\tot=(\tw\C^\tot)^r\times\C^{n-r},\, 
\twX^{>0}=(\tw\C^{>0})^r\times\C^{n-r},\,
 \twX=(\tw\C^{\geq0})^r\times\C^{n-r}.
\eneqn
Then $\twX$ is the closure of $\twX^{>0}$ in $\twX^\tot$.
\glossary{$\twX^\tot$}%
\glossary{$\twX^{>0}$}%
\glossary{$\twX$}%
\glossary{$\varpi$}%
The map $\varpi^\tot$ in~\eqref{eq:maptwom} defines the map 
\eqn
&&\varpi\cl \twX\to X.
\eneqn
The map $\varpi$ is proper and induces an isomorphism
\eqn
&&\varpi\vert_{\twX^{>0}}\cl \twX^{>0}=\vpi^{-1}(X\setminus D)\isoto X\setminus D.
\eneqn

We call $\twX$ the {\em real blow up} along $D$.

\begin{remark}
The real manifold $\twX$ (with boundary)
as well as the map $\varpi\cl \twX\to X$ may be intrinsically defined for a complex manifold $X$ and a normal crossing divisor $D$, but $\twX^\tot$ is only intrinsically defined as a germ of a manifold in a neighborhood of  $\tw X$. 
\end{remark}
We set
\eq\ba{rcl}\label{eq:sheafDbtwX}
\Dbt_\twX&\eqdot&\ihom(\C_{\twX^{>0}},\Dbt_{\twX^\tot})\vert_{\twX}\\
&\simeq&\epb{\varpi}\ihom(\C_{X\setminus D},\Dbt_{X_\R}),
\ea\eneq
where the last isomorphism follows from Lemma~\ref{le:sacompactif}.
Note that
$\Dbt_\twX$ is an object of $\II[\opb{\varpi}\D_{X}\tens\opb{\varpi}\D_{X^c}]$.

Now we set
\glossary{$\Ot[\twX]$}%
\glossary{$\At$}%
\glossary{$\DA_\twX$}%
\eq\ba{rcl}\label{eq:sheafOtwX}
\Ot[\twX]&\eqdot&\rhom[\opb{\varpi}\D_{X^c}](\opb{\varpi}\OO[X^c],\Dbt_\twX),\\
\At&\eqdot&\alpha_\twX\Ot[\twX],\\
\DA_\twX&\eqdot&\At\ltens[\opb{\varpi}\OO]\opb{\varpi}\D_X.
\ea\eneq
Then $\At$ and $\DA_\twX$ are concentrated in degree $0$,  and hence
they are sheaves of $\C$-algebras on $\twX$. 
Indeed, $\At$ is the subsheaf of
$\oim{j}\opb{j}\opb{\vpi}\OO$ consisting of holomorphic functions tempered
at any point of $\twX\setminus\twX^{>0}=\opb{\vpi}(D)$.
Here, $j\cl\twX^{>0}\into \twX$ is the inclusion.
Clearly, $\Dbt_\twX$ is an object of $\II[\DA_\twX\tens\opb{\varpi}\D_{X^c}]$,
and hence 
\eq
&&\Ot[\twX]  \mbox{ is an object of } \Derb(\JD^\At).
\eneq

By using~\eqref{eq:sheafDbtwX}, we get the isomorphism 
\eq\label{eq:sheafOttwX}
&&\Ot[\twX]\simeq\epb{\varpi}\Ot(*D)\mbox{ in }\Derb({\rm I}\opb{\varpi}\D_X).
\eneq

Recall that the map $\varpi$ is proper, and hence $\reeim{\varpi}\simeq\roim{\varpi}$.

\begin{lemma}\label{le:eimebpvarpi}
Let $\shf\in\Derb(\iC_M)$. If $\shf\isoto\rihom(\C_{X\setminus D},\shf)$, then we have
$\reeim{\varpi}\epb{\varpi}\shf\isoto\shf$.
\end{lemma}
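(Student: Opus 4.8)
The plan is to use that $\varpi$ is proper and that it restricts to an isomorphism $\twX^{>0}\isoto X\setminus D$, so $\varpi$ differs from an isomorphism only over the divisor $D$, while the hypothesis on $\shf$ says exactly that $\shf$ carries no information concentrated on $D$. Writing $U=X\setminus D$ and $j\cl U\into X$ for the open embedding, I would first record that the hypothesis $\shf\isoto\rihom(\C_{X\setminus D},\shf)$ is, via the standard identity $\rihom(\C_U,\scbul)\simeq\roim{j}\opb{j}(\scbul)$ for an open embedding, the same as $\shf\isoto\roim{j}\opb{j}\shf$ (equivalently $\rihom(\C_D,\shf)\simeq 0$). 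The whole argument is then a short chase through the six operations for indsheaves; no input from D-module theory or real analytic geometry is needed.

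Concretely, I would proceed as follows. Consider the Cartesian square whose bottom row is $j\cl U\into X$, whose right column is $\varpi\cl\twX\to X$, whose pullback is $\twX\times_X U=\opb{\varpi}(U)=\twX^{>0}$, whose top row is the open embedding $\tw j\cl\twX^{>0}\into\twX$ and whose left column is $\varpi'\cl\twX^{>0}\isoto U$, the restriction of $\varpi$, which is an isomorphism. Applying $\epb{\varpi}$ to $\shf\isoto\roim{j}\opb{j}\shf$ and using the base change isomorphism $\epb{\varpi}\roim{j}\simeq\roim{\tw j}\,\epb{\varpi'}$ of \eqref{eq:basech2} for this square gives $\epb{\varpi}\shf\simeq\roim{\tw j}\,\epb{\varpi'}\,\opb{j}\shf$. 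Then I apply $\reeim{\varpi}$, which coincides with $\roim{\varpi}$ since $\varpi$ is proper, and simplify using functoriality of $\roim{\scbul}$, the commutativity $\varpi\circ\tw j=j\circ\varpi'$ of the square, and $\roim{\varpi'}\,\epb{\varpi'}\simeq\id$ (valid because $\varpi'$ is an isomorphism):
\[
\reeim{\varpi}\epb{\varpi}\shf\;\simeq\;\roim{\varpi}\roim{\tw j}\,\epb{\varpi'}\,\opb{j}\shf\;\simeq\;\roim{j}\roim{\varpi'}\,\epb{\varpi'}\,\opb{j}\shf\;\simeq\;\roim{j}\opb{j}\shf\;\simeq\;\shf,
\]
the last step invoking the hypothesis once more.

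The only point requiring care — and the one I would single out as the potential obstacle — is to verify that the composite isomorphism produced above coincides with the canonical morphism of the statement, i.e.\ the counit $\reeim{\varpi}\epb{\varpi}\shf\to\shf$ of the adjunction $(\reeim{\varpi},\epb{\varpi})$; this amounts to the routine-but-tedious compatibility of the base change isomorphism \eqref{eq:basech2} with the units and counits of the adjunctions involved, together with naturality of the counit. As a sanity check one notes that the counit is obviously an isomorphism after applying $\opb{j}$ (over $U$ the map $\varpi$ is an isomorphism, using base change \eqref{eq:basech1}), so that the substance of the lemma is the vanishing of its cone on $\opb{\varpi}(D)$, which is precisely what $\rihom(\C_D,\shf)\simeq0$ delivers through the base change above. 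If only the existence of a natural isomorphism is needed — which is all that the subsequent applications (e.g.\ $\reeim{\varpi}\Ot[\twX]\simeq\Ot(*D)$) require — then the displayed chain already finishes the proof.
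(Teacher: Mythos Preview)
Your argument is correct and is essentially the same as the paper's, which also reduces to a short six-functor computation exploiting properness of $\varpi$ and the isomorphism over $X\setminus D$. The paper's version is a bit more compact: it stays with $\rihom$ throughout, using $\epb{\varpi}\rihom(\C_{X\setminus D},\shf)\simeq\rihom(\opb{\varpi}\C_{X\setminus D},\epb{\varpi}\shf)$ and the adjunction $\roim{\varpi}\rihom(F,\epb{\varpi}\shf)\simeq\rihom(\reeim{\varpi}F,\shf)$ together with $\reeim{\varpi}\opb{\varpi}\C_{X\setminus D}\simeq\C_{X\setminus D}$, rather than passing through $\roim{j}\opb{j}$ and base change.
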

\begin{proof}
One has
\eqn
\reeim{\varpi}\epb{\varpi}\shf&\simeq&\roim{\varpi}\epb{\varpi}\rihom(\C_{X\setminus D},\shf)\\
&\simeq&\roim{\varpi}\rihom(\opb{\varpi}\C_{X\setminus D},\epb{\varpi}\shf)\\
&\simeq&\rihom(\reeim{\varpi}\opb{\varpi}\C_{X\setminus D},\shf)\simeq\shf.
\eneqn
\end{proof}
As a corollary, we obtain the isomorphism
\eq
&&\roim{\varpi}\Ot[\twX]\simeq\Ot(*D)\mbox{ in }\Derb(\JD_X).
\label{eq:Otimage}
\eneq

For $\shn\in\Derb(\D^\At)$, we set
\eq
&&\drt_\twX(\shn)=\Ovt_\twX\ltens[\DA_\twX]\shn,\\
&&\solt_\twX(\shn)=\rhom[\DA_\twX](\shn,\Ot[\twX]).
\eneq
\glossary{$\Ovt_\twX$}%
Here $\Ovt_\twX\seteq\vpi^{-1}\Omega_X\tens[\vpi^{-1}\OO]\Ot[\twX]$,
an object of $\Derb\bl\II[(\D^\At)^\rop]\br$.

\smallskip
For  $\shm\in \Derb(\D_X)$ we set:
\glossary{$\shm^\tA$}%
\eq\label{eq:MA}
&&\shm^\tA\eqdot\DA_\twX\ltens[\opb{\varpi}\D_X]\opb{\varpi}\shm\in \Derb(\D^\At).
\eneq

\Lemma\label{lem:twXX}
 For  $\shm\in \Derb(\D_X)$, we have
\eq
&&\epb{\varpi}\drt_X(\shm(*D))\simeq \drt_\twX(\shm^\tA),\\
&&\roim{\varpi}\drt_\twX(\shm^\tA)\simeq\drt_X(\shm(*D)).
\eneq
\enlemma
\Proof
By \eqref{eq:sheafOttwX}, we have
\begin{align*}
\epb{\varpi}\drt_X(\shm(*D))
&\simeq
\epb{\varpi}\bl\Ovt_X\ltens[{\D_X}] \shm(*D)\br\\
&\simeq
\epb{\varpi}\bl\Ovt_X(*D)\ltens[{\D_X}] \shm\br\\
&\simeq
(\epb{\varpi}\Ovt_X(*D))\ltens[{\vpi^{-1}\D_X}]\vpi^{-1} \shm\\
&\simeq
\Ovt_\twX\ltens[\D^\At]\D^\At\ltens[{\vpi^{-1}\D_X}]\vpi^{-1} \shm\\
&\simeq
\Ovt_\twX\ltens[\D^\At]\shm^\tA\simeq\drt_\twX(\shm^\tA).
\end{align*}
Hence we obtain the first isomorphism.

Since $$\drt_X(\shm(*D))\isoto\rihom\bl\C_{X\setminus D},\drt_X(\shm(*D))\br,$$
the second isomorphism follows from Lemma~\ref{le:eimebpvarpi}.
\QED

\begin{proposition}\label{pro:normalfLA}
Let $\shl$ be a holonomic $\D_X$-module with regular normal form along $D$. Then, locally on $\twX$, 
\eqn
&&\shl^\tA\simeq \At \simeq\OO^\tA\mbox{ in }\Derb(\D^\At).
\eneqn
\end{proposition}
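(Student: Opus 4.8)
The plan is to reduce everything, locally on $\twX$, to a change of the cyclic generator by the (a priori multivalued) function $z_1^{\la_1}\cdots z_r^{\la_r}$; the one essential point is that this function becomes a well‑defined invertible \emph{tempered} — hence $\At$‑valued — holomorphic function once we pass to the real blow up.

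First I would dispose of $\OO^\tA\simeq\At$, which is purely formal. By the very definition $\DA_\twX=\At\ltens[\opb{\vpi}\OO]\opb{\vpi}\D_X$ and associativity of the derived tensor product,
\[
\OO^\tA=\DA_\twX\ltens[\opb{\vpi}\D_X]\opb{\vpi}\OO
\simeq\bl\At\ltens[\opb{\vpi}\OO]\opb{\vpi}\D_X\br\ltens[\opb{\vpi}\D_X]\opb{\vpi}\OO
\simeq\At\ltens[\opb{\vpi}\OO]\opb{\vpi}\OO\simeq\At .
\]
This is global on $\twX$ and uses nothing about $\shl$. The same computation shows that for \emph{any} $\shm\in\Derb(\D_X)$ one has $\shm^\tA\simeq\At\ltens[\opb{\vpi}\OO]\opb{\vpi}\shm$, where on the right $\shm$ is regarded merely as an object of $\Derb(\OO)$.

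Next I would work locally on $\twX$ near a point $p$, fixing a coordinate system $(z_1,\dots,z_n)$ on $X$ near $\vpi(p)$ as in Definition~\ref{def:regnormal}, so that $\shl\simeq\shd_X/\shi_\la$ is the rank‑one meromorphic connection generated by a section $u$ with $\partial_iu=(\la_i/z_i)u$ for $i\le r$ and $\partial_ju=0$ for $j>r$. Since $\la_i\in\C\setminus\Z_{\ge0}$, we have $\shl\isoto\shl(*D)$, and then $\shl$ is, as an $\OO_X$‑module, isomorphic to $\OO_X(*D)$ (it is $\OO_X(*D)$‑generated by $u$, free because generically of rank one), in particular $\OO_X$‑flat. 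By the formal remark above, $\shl^\tA\simeq\At\ltens[\opb{\vpi}\OO]\opb{\vpi}\shl$ is therefore concentrated in degree $0$, and since $\opb{\vpi}(z_1\cdots z_r)$ is invertible in $\At$ (sections of $\At$ are allowed poles of polynomial order along $\opb{\vpi}(D)$), it equals the free rank‑one $\At$‑module $\At\,u$ carrying the connection $\partial_iu=(\la_i/z_i)u$, $\partial_ju=0$.

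Finally I would trivialize this connection. After shrinking the neighbourhood of $p$ to a simply connected open subset of $\twX$, the branch $\phi\seteq z_1^{\la_1}\cdots z_r^{\la_r}=\exp\bl\sum_{i\le r}\la_i\log z_i\br$ is single‑valued and holomorphic on the interior, and both $\phi$ and $\phi^{-1}$ are tempered — together with all their derivatives — along $\opb{\vpi}(D)$, by the elementary estimate $|z_i^{\pm\la_i}|\le C\,|z_i|^{\pm\Re\la_i}$ near the face over $\{z_i=0\}$ (and boundedness near the remaining faces). Hence $\phi$ is an invertible element of $\At$, and $u'\seteq\phi^{-1}u$ satisfies $\partial_ku'=0$ for all $k$ while still generating $\shl^\tA$ over $\DA_\twX$; so $u'$ defines a $\DA_\twX$‑linear morphism $\OO^\tA\to\shl^\tA$ which on the underlying $\At$‑modules is multiplication by the unit $\phi^{-1}$, hence an isomorphism. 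Combined with $\OO^\tA\simeq\At$ this gives $\shl^\tA\simeq\At\simeq\OO^\tA$ locally on $\twX$. The only genuinely non‑formal step is this last one — the temperedness of $z^{\pm\la}$ along the exceptional divisor, i.e. $\phi,\phi^{-1}\in\At$; everything else is bookkeeping with the already recorded properties of $\At$ and $\DA_\twX$ and of $\OO_X(*D)$.
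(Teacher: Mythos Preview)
Your proof is correct and follows essentially the same approach as the paper's: the key point in both is that $z^\lambda=\prod_i z_i^{\lambda_i}$ is a locally invertible section of $\At$, and the identity $(z_i\partial_i-\lambda_i)z^\lambda=z^\lambda z_i\partial_i$ (equivalently, your computation that $u'=z^{-\lambda}u$ is annihilated by all $\partial_k$) trivializes the connection. The paper compresses this into two lines, while you spell out the formal identification $\OO^\tA\simeq\At$, the concentration in degree~$0$, and the temperedness estimate for $z^{\pm\lambda}$ explicitly.
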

\begin{proof}
Let us keep the notations of  Definition~\ref{def:regnormal}.
We may assume that $\shl=\D_X/\shi_\lambda$.
Since $z^\lambda\eqdot\prod\limits_{i=1}^r z_i^{\lambda_i}$ is a locally invertible section of  $\At$, 
the result follows from
\eqn
&&(z_i\partial_i-\lambda_i)z^{\lambda}=z^{\lambda}z_i\partial_i.
\eneqn
\end{proof}

\subsection{Regular Riemann-Hilbert correspondence}

We shall first prove the regularity theorem for
regular holonomic D-modules,  namely, any solution of such a D-module
is tempered.
\begin{theorem}\label{thm:regularity}
Let $\shm\in\Derb_\reghol(\D_X)$. 
Then there are isomorphisms:
\eq
&&\drt_X(\shm) \isoto \dr_X(\shm)  \mbox{ in }\Derb(\iC_X),\label{eq:RHiso2}\\
&&\solt_X(\shm) \isoto \sol_X(\shm)\mbox{ in }\Derb(\iC_X).\label{eq:RHiso2sol}
\eneq
\enth

\Proof
(i) Note that, thanks to~\eqref{eq:dualdrsol}, the  isomorphism in~\eqref{eq:RHiso2sol}  is equivalent to the isomorphism in~\eqref{eq:RHiso2}
 for $\Ddual_X\shm$.
We shall only prove~\eqref{eq:RHiso2}.

\spa
(ii) We shall apply Lemma~\ref{lem:reduxreg}. Denote by $P_X(\shm)$ the statement which asserts that the morphism in \eqref{eq:RHiso2} is an isomorphism. 

\vs{1ex}\noi
(a)--(d)  of this lemma are clearly satisfied. 

\vs{1ex}\noi
(e) follows from isomorphism~\eqref{eq:funct2}
in Corollary~\ref{cor:ifunct2} and its non-tempered version, isomorphism~\eqref{eq:oimdr} in Theorem~\ref{th:Dprojform}.

\medskip\noi
(f) Let us check property (f).
Let $\shm$ be a holonomic $\D_X$-module with regular normal form along 
a normal crossing divisor $D$.

We want to prove the isomorphism $\drt_X(\shm)\isoto\alpha_X\drt_X(\shm)$.
Since $\roim{\varpi}\drt_\twX(\shm^\tA)\isoto\drt_X(\shm)$ by 
Lemma~\ref{lem:twXX}
and 
since  $\roim{\varpi}$ commutes with $\alpha$, we are reduced to prove the isomorphism 
\eqn
&&\drt_\twX(\shm^\tA)\isoto\alpha_\twX\drt_\twX(\shm^\tA).
\eneqn
This is a local problem on $\twX$ and we may apply Proposition~\ref{pro:normalfLA}. 
Hence it is enough to show
\eqn
&&\drt_\twX(\OO^\tA)\isoto\alpha_\twX\drt_\twX(\OO^\tA),
\eneqn
which follows from
$$
\drt_\twX(\OO^\tA)\simeq\C_\twX[d_X].$$

This completes the  proof of property (f).
\QED

The following theorem is a generalized form of the Riemann-Hilbert correspondence
for regular holonomic D-modules (see Remark~\ref{rem:Bj}).
\index{Riemann-Hilbert correspondence!generalized regular}%
\begin{theorem}[{Generalized regular Riemann-Hilbert correspondence}]\label{thm:ifunct4}
Let $\shm\in\Derb_\reghol(\D_X)$. 
There is an isomorphism functorial in $\shm$
\eq
&&\Ot \Dtens \shm \isoto \rihom(\solt_X(\shm),\Ot)\mbox{ in }\Derb(\JD_X).\label{eq:RHiso1}
\eneq
\end{theorem}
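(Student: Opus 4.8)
The plan is to apply the reduction principle of Lemma~\ref{lem:reduxreg} to the statement $P_X(\shm)$ asserting that~\eqref{eq:RHiso1} is an isomorphism. The local, shift, triangle and direct-summand conditions (a)--(d) are immediate, since both sides of~\eqref{eq:RHiso1} are triangulated functors of $\shm$ that are local on $X$. The projective direct image condition (e) should follow by combining the commutation of $\drt$ (equivalently $\solt$) with proper direct images (Corollary~\ref{cor:ifunct2} and its $\solt$-version via~\eqref{eq:dualdrsol}) with the adjunction formulas of Theorem~\ref{th:projIB}, together with Corollary~\ref{cor:ifunct2b} which handles $\Ot\Dtens\scbul$ under $\Doim{f}$; here one also invokes the commutation with duality (Theorem~\ref{th:oimopbDdual}) so that $\solt_X(\shm)$ behaves well. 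Thus the heart of the proof is verifying condition (f): the isomorphism~\eqref{eq:RHiso1} when $\shm$ has regular normal form along a normal crossing divisor $D$.

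For condition (f), the key observation is that a module $\shl$ with regular normal form along $D$ satisfies $\shl\simeq\shl(*D)$, and by Lemma~\ref{le:normaDR} its solution complex satisfies $\solt_X(\shl)\simeq\sol_X(\shl)\simeq\sol_X(\shl)\tens\C_{X\setminus D}$ (using Theorem~\ref{thm:regularity} for the temperedness). So the right-hand side of~\eqref{eq:RHiso1} becomes $\rihom(\solt_X(\shl),\Ot)$ with a solution sheaf supported, in the relevant sense, away from $D$. The natural strategy is to pass to the real blow-up $\vpi\cl\twX\to X$ along $D$ and use Proposition~\ref{pro:normalfLA}, which gives $\shl^\tA\simeq\OO^\tA$ locally on $\twX$, reducing everything to the case $\shl=\OO(*D)$. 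One then checks~\eqref{eq:RHiso1} for $\OO(*D)$ directly: by Lemma~\ref{le:GrotDR1} the left side is $\Ot(*D)\simeq\rihom(\C_{X\setminus D},\Ot)$, and the solution complex $\solt_X(\OO(*D))\simeq\sol_X(\OO(*D))$ is identified via the classical regular Riemann--Hilbert statement with $\C_{X\setminus D}[0]$ (up to the appropriate normalization coming from $\OO(*D)$ being the localization of $\OO$); then $\rihom(\C_{X\setminus D},\Ot)\simeq\Ot(*D)$ again by Lemma~\ref{le:GrotDR1}, closing the loop. More precisely I would build the natural morphism in~\eqref{eq:RHiso1} first (from the evaluation pairing $\shm\tens\solt_X(\shm)\to\Ot$ and tensoring with $\Ot$), reduce its being an isomorphism to a statement on $\twX$ using $\roim{\vpi}$ and Lemma~\ref{lem:twXX}, and verify it there using Proposition~\ref{pro:normalfLA} plus the computation $\drt_\twX(\OO^\tA)\simeq\C_\twX[d_X]$.

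The step I expect to be the main obstacle is condition (e), the stability under projective direct images: one must juggle $\Ot\Dtens\scbul$, the enhanced/tempered projection formula, and $\rihom(\solt_X(\scbul),\Ot)$ simultaneously, and the commutation of $\rihom(\scbul,\Ot)$ with $\roim{f}$ is delicate since $\rihom$ does not in general commute with direct images. The resolution should use that $\solt_X(\shm)$ is $\R$-constructible (a consequence of the constructibility theorem together with Theorem~\ref{thm:regularity}), so one works inside $\Derb_\Irc(\icor)$ where Proposition~\ref{pro:eqvIrcDbsa} lets us test isomorphisms against $\C_U$ for $U$ relatively compact subanalytic open, converting the problem into the already-established commutation statements for $\drt$ and $\Doim{f}$. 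A secondary subtlety is bookkeeping of the shifts by $d_X$, $d_Y$ and the duality identity~\eqref{eq:dualdrsol} throughout; these are routine but must be tracked carefully so that the final formula~\eqref{eq:RHiso1} comes out unshifted as stated.
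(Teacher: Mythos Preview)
Your overall architecture matches the paper's proof exactly: construct the morphism~\eqref{eq:RHiso1} from the evaluation pairing, then run Lemma~\ref{lem:reduxreg}; conditions (a)--(d) are trivial, and your treatment of (f) via the real blow-up and Proposition~\ref{pro:normalfLA} is essentially the paper's argument (the paper phrases the endgame on $\twX$ by reducing~\eqref{eq:RHiso1} to the analogous morphism $\Ot[\twX]\ltens[\At]\shm^\tA\to\rihom(\sol_\twX(\shm^\tA),\Ot[\twX])$, which is obvious once $\shm^\tA\simeq\tA_\twX$ locally and $\sol_\twX(\shm^\tA)\simeq\C_\twX$).

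Where you diverge from the paper is condition (e), and your worry there is misplaced. The commutation of $\rihom(\scbul,\Ot)$ with $\roim f$ is \emph{not} the obstacle, and the detour through $\R$-constructibility and Proposition~\ref{pro:eqvIrcDbsa} is unnecessary (and it is not clear what ``already-established'' statement you would reduce to after applying $\Hom(\C_U,\scbul)$). The paper's argument is a direct chain of isomorphisms: from $\solt_Y(\Doim f\shm)\simeq\reeim f\solt_X(\shm)[d_X-d_Y]$ (via~\eqref{eq:dualdrsol}, Corollary~\ref{cor:ifunct2}, Theorem~\ref{th:oimopbDdual}~(i)) one gets by adjunction
\[
\rihom(\solt_Y(\Doim f\shm),\Ot[Y])\simeq\roim f\rihom(\solt_X(\shm),\epb f\Ot[Y])[d_Y-d_X],
\]
and the key step you are missing is to invoke~\eqref{eq:funct1abis}, i.e.\ $\epb f\Ot[Y]\simeq\shd_{Y\from X}\ltens[\shd_X]\Ot[X][d_X-d_Y]$. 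Pulling $\shd_{Y\from X}\ltens[\shd_X](\scbul)$ across the $\rihom$ (the first argument is a sheaf, so this is legitimate by Theorem~\ref{th:betaaihom}) gives $\Doim f\rihom(\solt_X(\shm),\Ot)$, which by $P_X(\shm)$ and Corollary~\ref{cor:ifunct2b} matches the left-hand side. No testing against $\C_U$ is needed; the shifts cancel automatically.
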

\Proof
(i) The morphism in~\eqref{eq:RHiso1} is obtained by adjunction from 
the composition of the morphisms
\eq
\Ot[X] \Dtens \shm \ltens\rihom[\shd_X](\shm,\Ot)
&\to&\Ot\ltens[\beta\OO]\Ot\to\Ot.
\label{eq:RHmor}
\eneq

\spa
(ii) We shall apply Lemma~\ref{lem:reduxreg}. Denote by $P_X(\shm)$ the statement which asserts that the morphism in~\eqref{eq:RHiso1} is an isomorphism. 

\spa
Properties (a)--(d) of this lemma are clearly satisfied. 

\spa
(e)  By Corollary~\ref{cor:ifunct2b}, we have 
\eq
&& \Ot[Y]\Dtens\Doim{f}\shm\simeq \Doim{f}(\Ot\Dtens\shm).\label{eq:OtXY}
\eneq

On the other hand we have
$$\solt_Y(\Doim{f}\shm)\simeq  \reeim{f}\,\solt_X(\shm)[d_X-d_Y]$$
by \eqref{eq:dualdrsol},
\eqref{eq:ifunct4} and Theorem~\ref{th:oimopbDdual} (i).
Hence we have
\eqn
&&\rihom(\solt_Y(\Doim{f}\shm), \Ot[Y])\\
&&\hs{10ex}\simeq\rihom\bl\reeim{f}\,\solt_X(\shm)[d_X-d_Y], \Ot[Y]\br\\
&&\hs{10ex}\simeq\roim{f}\rihom\bl\solt_X(\shm)[d_X-d_Y], \epb f\Ot[Y]\br.
\eneqn
By \eqref{eq:funct1abis}, we have
$$\epb f \Ot[Y]\simeq 
\shd_{Y\from X}\ltens[\shd_X]\Ot[X]\,[d_X-d_Y].$$
Hence we have
\eqn
&&\rihom(\solt_Y(\Doim{f}\shm), \Ot[Y])\\
&&\hs{10ex}\simeq\roim{f}\rihom(\solt_X(\shm),\shd_{Y\from X}\ltens[\shd_X]\Ot)\\
&&\hs{10ex}\simeq\roim{f}\bl\shd_{Y\from X}\ltens[\shd_X]\rihom(\solt_X(\shm),\Ot)\br\\
&&\hs{10ex}\simeq\Doim{f}\rihom(\solt_X(\shm),\Ot).
\eneqn
Combining with \eqref{eq:OtXY}, we finally  obtain
\eqn
\Ot[Y]\Dtens\Doim{f}\shm&\simeq&\Doim{f}(\Ot\Dtens\shm)\\
&\simeq &\Doim{f}\rihom(\solt_X(\shm),\Ot)\\
&\simeq&\rihom(\solt_Y(\Doim{f}\shm), \Ot[Y]).
\eneqn
Here the second isomorphism follows from $P_X(\shm)$.

\vs{2ex}\noindent
(f) Let us check property (f) for~\eqref{eq:RHiso1}.
Hence, we assume that $\shm$ has regular normal form along $D$. 

 By Lemmas~\ref{le:normaDR} and~\ref{le:GrotDR1} we have
\eqn
\rihom\bl\sol_X(\shm),\Ot\br
&\simeq&\rihom\bl\sol_X(\shm)\tens\C_{X\setminus D},\Ot\br\\
&\simeq&\rihom\bl\sol_X(\shm),\rihom(\C_{X\setminus D},\Ot)\br\\
&\simeq&\rihom\bl\sol_X(\shm),\roim{\vpi}\Ot[\twX]\br\\
&\simeq&\roim{\vpi}\rihom\bl\vpi^{-1}\sol_X(\shm),\Ot[\twX]\br\\
&\simeq&\roim{\vpi}\rihom\bl\sol_\twX(\shm^\tA),\Ot[\twX]\br.
\eneqn
Here the last isomorphism follows from
$$\C_{\twX^{>0}}\ltens \vpi^{-1}\sol_X(\shm)
\simeq \C_{\twX^{>0}}\ltens \sol_\twX(\shm^\tA).$$

On the other-hand, we have
\eqn
\Ot\Dtens\shm\simeq\Ot(*D)\Dtens\shm&\simeq&
(\roim{\vpi}\Ot[\twX])\Dtens \shm\\
&\simeq&\roim{\vpi}(\Ot[\twX]\ltens[\vpi^{-1}\OO]\vpi^{-1}\shm)\\
&\simeq&\roim{\vpi}(\Ot[\twX]\ltens[\At]\shm^\tA).
\eneqn
Hence it is enough to show that

\eq
&&\Ot[\twX]\ltens[\At]\shm^\tA\to \rihom(\sol_\twX(\shm^\tA),\Ot[\twX])
\label{eq:twXB}
\eneq
is an isomorphism. Note that this morphism is obtained 
from a similar morphism  to \eqref{eq:RHmor} by adjunction.
By Proposition~\ref{pro:normalfLA}, $\shm^\tA$ is locally isomorphic to 
$\tA_\twX$.
Then $\sol_\twX(\shm^\tA)\simeq \C_\twX$,
and it is obvious that \eqref{eq:twXB} is an isomorphism. 

\QED
\begin{remark}\label{rem:Bj}
Isomorphism~\eqref{eq:RHiso2} already appeared in~\cite{Ka84}. Isomorphism~\eqref{eq:RHiso1} (with a different formulation) is essentially due to Bj\"ork~\cite[Th.~7.9.11]{Bj93}.
  \end{remark}

Applying the functor $\alpha_X$ to isomorphism~\eqref{eq:RHiso1}, we get the Riemann-Hilbert correspondence for regular holonomic D-modules:

\begin{corollary}[{Regular Riemann-Hilbert correspondence \cite{Ka80}}]\label{cor:RegRH3}
Let $\shm\in\Derb_\reghol(\D_X)$. There is an isomorphism in $\Derb(\D_X):$
\eq\label{eq:RHiso3}
&& \shm \simeq\rhom[\iC_X](\sol_X(\shm),\Ot).
\eneq
\end{corollary}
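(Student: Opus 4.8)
The plan is to obtain Corollary~\ref{cor:RegRH3} directly by applying the functor $\alpha_X$ to the isomorphism~\eqref{eq:RHiso1} of Theorem~\ref{thm:ifunct4}. Recall that for $\shm\in\Derb_\reghol(\D_X)$ we have, by~\eqref{eq:RHiso1}, a functorial isomorphism
\[
\Ot[X]\Dtens\shm\isoto\rihom(\solt_X(\shm),\Ot[X])\quad\text{in }\Derb(\JD_X).
\]
First I would apply $\alpha_X$ to the left-hand side. Since $\Dopb{(a_X)}$ sends $\OO[X]$ to $\OO[X]$ and $\alpha_X$ is a tensor functor with $\alpha_X\Ot[X]\simeq\OO[X]$ (coming from the morphisms $\Oww[X]\to\Ow[X]\to\Ot[X]\to\OO[X]$ and $\alpha_X\circ\beta_X\simeq\id$), we get $\alpha_X(\Ot[X]\Dtens\shm)\simeq\OO[X]\Dtens\shm\simeq\shm$, using that $\OO[X]$ is the unit for $\Dtens$ on $\D_X$-modules.

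Next I would identify the right-hand side. By the compatibility $\rhom[\iC_X]\simeq\alpha_X\circ\rihom$ recorded after Theorem~\ref{th:stI} (i.e.\ $\hom[\icor_M]\simeq\alpha_M\circ\ihom$ in the non-derived setting, and its derived version), applying $\alpha_X$ to $\rihom(\solt_X(\shm),\Ot[X])$ yields $\rhom[\iC_X](\solt_X(\shm),\Ot[X])$. Finally, by Theorem~\ref{thm:regularity} we have $\solt_X(\shm)\isoto\sol_X(\shm)$ for regular holonomic $\shm$, so $\rhom[\iC_X](\solt_X(\shm),\Ot[X])\simeq\rhom[\iC_X](\sol_X(\shm),\Ot[X])$. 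Combining these three identifications with the image of~\eqref{eq:RHiso1} under $\alpha_X$ gives the desired isomorphism $\shm\simeq\rhom[\iC_X](\sol_X(\shm),\Ot[X])$ in $\Derb(\D_X)$, functorially in $\shm$.

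The only point requiring a little care — and the closest thing to an obstacle — is checking that $\alpha_X$ genuinely commutes with the functors in play on the two sides: that $\alpha_X$ is compatible with $\Dtens$ and with $\rhom[\iC_X]$ versus $\rihom$, and that it carries $\Ot[X]$ to $\OO[X]$ as a $\D_X$-module, not merely as a sheaf. All of this is contained in the formalism recalled in Section~\ref{section:indshv} (the properties of $\alpha_M$, the factorization $\rhom[\iC_M]\simeq\alpha_M\circ\rihom$, and the definition of $\Ot[X]$ together with $\alpha_X\Ot[X]\simeq\OO[X]$), so the proof is essentially a one-line application of $\alpha_X$ to Theorem~\ref{thm:ifunct4} once these compatibilities are invoked. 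I would write the proof as exactly that: ``Apply $\alpha_X$ to the isomorphism~\eqref{eq:RHiso1} and use Theorem~\ref{thm:regularity} together with $\alpha_X\Ot[X]\simeq\OO[X]$ and $\rhom[\iC_X]\simeq\alpha_X\circ\rihom$.''
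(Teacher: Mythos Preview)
Your proposal is correct and follows exactly the paper's approach: the paper's proof is the single sentence ``Apply the functor $\alpha_X$ to isomorphism~\eqref{eq:RHiso1}'', and you have simply spelled out the compatibilities ($\alpha_X\Ot\simeq\OO$, $\rhom[\iC_X]\simeq\alpha_X\circ\rihom$, and $\solt_X(\shm)\isoto\sol_X(\shm)$ from Theorem~\ref{thm:regularity}) that make this one line work. The aside about $\Dopb{(a_X)}$ is irrelevant and should be dropped, but otherwise there is nothing to add.
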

\index{Riemann-Hilbert correspondence!regular}%
\begin{corollary}\label{cor:ifunct4}
Let $\shm\in\Derb_\reghol(\D_X)$ and let $\shl\in\Derb(\D_X)$. 
Then isomorphism~\eqref{eq:RHiso1} induces the isomorphism 
\eq
&&\drt(\shl\Dtens\shm)\simeq \rihom(\solt_X(\shm),\drt(\shl)).\label{eq:RHiso1cor}
\eneq
\end{corollary}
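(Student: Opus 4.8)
The plan is to deduce Corollary~\ref{cor:ifunct4} from Theorem~\ref{thm:ifunct4} by tensoring the isomorphism~\eqref{eq:RHiso1} with $\shl$ over $\shd_X$ and using the interchange between the functor $\rihom(\scbul,\scbul)$ and the operation $\ltens[\shd_X]\shl$ provided by Theorem~\ref{th:betaaihom}. Concretely, start from the isomorphism in $\Derb(\JD_X)$
\[
\Ot[X]\Dtens\shm\isoto\rihom(\solt_X(\shm),\Ot[X])
\]
of Theorem~\ref{thm:ifunct4}, apply $\Omega_X\ltens[\shd_X](\scbul\Dtens\shl)$ — equivalently apply $\drt_X(\scbul\Dtens\shl)$ — and commute the operations on the two sides.

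First I would treat the left-hand side. We have $\drt_X(\shl\Dtens\shm)=\Ovt_X\ltens[\shd_X](\shl\Dtens\shm)$, and using the definition of $\Dtens$ together with the fact that $\Ovt_X\ltens[\shd_X](\scbul\Dtens\scbul)$ unwinds (as in the proof of Corollary~\ref{cor:ifunct2b}) to $(\Ovt_X\Dtens\shm)\ltens[\shd_X]\shl$-type expressions, one rewrites
\[
\drt_X(\shl\Dtens\shm)\simeq(\Ot[X]\Dtens\shm)\ltens[\beta\OO]\drt_X(\shl),
\]
or more directly $\drt_X(\shl\Dtens\shm)\simeq\drt_X(\shl)\ltens[\beta\OO](\Ot[X]\Dtens\shm)$ by symmetry of $\Dtens$ and associativity of the relevant tensor products; the point is simply that $\drt_X$ applied to a $\Dtens$-product factors through tensoring the tempered de Rham complexes, exactly the manipulation already performed in Corollary~\ref{cor:ifunct2b}. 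Then I would substitute the isomorphism~\eqref{eq:RHiso1} for $\Ot[X]\Dtens\shm$ to obtain
\[
\drt_X(\shl\Dtens\shm)\simeq\drt_X(\shl)\ltens[\beta\OO]\rihom(\solt_X(\shm),\Ot[X]).
\]

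For the right-hand side I would invoke Theorem~\ref{th:betaaihom} (in the form~\eqref{eq:ihombetabis}, with the role of $\A$ played by $\beta_X\OO$ and $F=\solt_X(\shm)$): since $\solt_X(\shm)$ is $\C$-constructible — indeed $\solt_X(\shm)$ lies in $\Derb_\Rc(\C_X)$ because $\shm$ is regular holonomic, so it belongs to $\Derb(\C_X)$ as required by the hypothesis of that theorem — one gets
\[
\rihom(\solt_X(\shm),\Ot[X])\ltens[\beta\OO]\drt_X(\shl)\isoto\rihom\bl\solt_X(\shm),\Ot[X]\ltens[\beta\OO]\drt_X(\shl)\br,
\]
and $\Ot[X]\ltens[\beta\OO]\drt_X(\shl)\simeq\drt_X(\shl)$ since $\Ovt_X\ltens[\shd_X]\shl\simeq\Ot[X]\ltens[\beta\OO]\Ovt_X\ltens[\shd_X]\shl$ with $\Ot[X]$ acting as a unit over $\beta_X\OO$. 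Combining the two computations yields~\eqref{eq:RHiso1cor}, and functoriality in $\shl$ and $\shm$ is automatic since every step is an application of a functor or a natural isomorphism. The main obstacle is bookkeeping: making sure the various $\ltens[\beta\OO]$, $\ltens[\shd_X]$ and $\Dtens$ are associated correctly so that Theorem~\ref{th:betaaihom} applies on the nose (in particular that the constructibility hypothesis $\solt_X(\shm)\in\Derb(\C_X)$ is genuinely available, which it is for regular holonomic $\shm$ by Theorem~\ref{thm:regularity} identifying $\solt_X(\shm)$ with the perverse sheaf $\sol_X(\shm)$), and keeping track that the morphism produced is the one induced by~\eqref{eq:RHiso1} rather than some a priori different map — this last point is handled by noting, exactly as in the proof of Theorem~\ref{thm:ifunct4}(i), that the natural candidate morphism arises by adjunction from the pairing~\eqref{eq:RHmor} tensored with $\shl$, and is an isomorphism because each of its constituents is.
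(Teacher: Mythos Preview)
Your overall plan is exactly the paper's: apply the isomorphism~\eqref{eq:RHiso1}, then use Theorem~\ref{th:betaaihom} with the crucial input that $\solt_X(\shm)\isoto\sol_X(\shm)$ lies in $\Derb(\C_X)$ by Theorem~\ref{thm:regularity}. You also correctly flag that last point as the real content.

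However, the bookkeeping goes wrong in your step~3 and step~6. The factorisation
\[
\drt_X(\shl\Dtens\shm)\simeq\drt_X(\shl)\ltens[\beta\OO](\Ot\Dtens\shm)
\]
is not correct: the right-hand side carries two copies of $\Ot$ (one inside $\drt_X(\shl)=\Ovt_X\ltens[\D_X]\shl$, one in $\Ot\Dtens\shm$), whereas the left-hand side has only one. This is why you are then forced into the claim that ``$\Ot$ acts as a unit over $\beta_X\OO$'', i.e.\ $\Ot\ltens[\beta\OO]\drt_X(\shl)\simeq\drt_X(\shl)$; but this is false --- the unit over $\beta_X\OO$ is $\beta_X\OO\simeq\Oww$, not $\Ot$, and $\Ot\ltens[\beta\OO]\Ot\not\simeq\Ot$ in general.

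The fix is to tensor over $\shd_X$, not over $\beta\OO$. The paper writes
\[
\drt_X(\shl\Dtens\shm)=\Ovt_X\ltens[\D_X](\shl\Dtens\shm)\simeq(\Ovt_X\Dtens\shm)\ltens[\D_X]\shl,
\]
then applies~\eqref{eq:RHiso1} in its right-module form $\Ovt_X\Dtens\shm\simeq\rihom(\solt_X(\shm),\Ovt_X)$, and finally invokes Theorem~\ref{th:betaaihom} with $\A=\shd_X$ (not $\A=\OO$), $\shk=\Ovt_X$ (a right $\shd_X$-module) and $\shl$ the given left $\shd_X$-module, to pull $\ltens[\D_X]\shl$ inside the $\rihom$. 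No extra $\Ot$ appears and no cancellation is needed.
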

\begin{proof} 
We have
\eqn
\drt(\shl\Dtens\shm)&=&\Ovt_X\ltens[\D_X](\shl\Dtens\shm)\\
&\simeq&(\Ovt_X\Dtens\shm)\ltens[\D_X]\shl\\
&\simeq&\rihom(\solt_X(\shm),\Ovt_X)\ltens[\D_X]\shl\\
&\simeq&\rihom(\solt_X(\shm),\Ovt_X\ltens[\D_X]\shl).
\eneqn
Here, the last isomorphism follows from Theorem~\ref{th:betaaihom}, using the fact that 
$\solt(\shm)\isoto\sol(\shm)$. 
\end{proof}
As an application of isomorphism~\eqref{eq:RHiso2sol}, we get:
\begin{corollary}
Let $\shm\in\Derb_\reghol(\D_X)$ and let $F\in\Derb_\Rc(\C_X)$. Then we have the  natural isomorphism
\eqn
&&\rhom[\shd_X](\shm,\rhom[\iC_X](F,\Ot))\isoto\rhom[\shd_X](\shm,\rhom(F,\OO)).
\eneqn
\end{corollary}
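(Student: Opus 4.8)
The plan is to deduce this corollary from the regularity isomorphism $\solt_X(\shm)\isoto\sol_X(\shm)$ of Theorem~\ref{thm:regularity}, the remaining work being the commutation of internal $\rhom$-functors. First I would pin down the natural morphism to be inverted: the last arrow $\Ot\to\OO$ of the chain $\Oww\to\Ow[X]\to\Ot\to\OO$ in $\Derb(\JD_X)$ induces $\rhom[\iC_X](F,\Ot)\to\rhom[\iC_X](F,\OO)$, and, composing with the canonical identification $\rhom[\iC_X](F,\iota_X\OO)\simeq\rhom(F,\OO)$ (a standard compatibility of $\iota_X$ with internal hom, $F$ carrying no $\shd_X$-action), this yields a morphism $\rhom[\iC_X](F,\Ot)\to\rhom(F,\OO)$ in $\Derb(\D_X)$, hence, after applying $\rhom[\shd_X](\shm,\scbul)$, the morphism in question.

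The formal inputs are two commutation statements, both valid because $\shm$ is coherent and so admits locally a finite resolution by free $\shd_X$-modules of finite rank, on which $\rhom[\shd_X](\shd_X^{\oplus n},\scbul)$ is an $n$-fold sum of the forgetful functor. First, $\rhom[\shd_X](\shm,\scbul)$ commutes with $\alpha_X$, that is, $\alpha_X\rhom[\shd_X](\shm,\shn)\simeq\rhom[\shd_X](\shm,\alpha_X\shn)$. Second, since $F$ is merely a sheaf of $\C_X$-modules, $\rihom(F,\scbul)$ is $\shd_X$-linear and commutes with finite limits, whence $\rhom[\shd_X](\shm,\rihom(F,\shn))\simeq\rihom(F,\rhom[\shd_X](\shm,\shn))$ for $\shn\in\Derb(\JD_X)$; the classical analogue $\rhom[\shd_X](\shm,\rhom(F,G))\simeq\rhom(F,\rhom[\shd_X](\shm,G))$ holds by the same resolution argument.

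Granting these, the computation is direct. Writing $\rhom[\iC_X]\simeq\alpha_X\circ\rihom$ and using the two commutations,
\eqn
\rhom[\shd_X](\shm,\rhom[\iC_X](F,\Ot))
&\simeq&\alpha_X\rihom(F,\rhom[\shd_X](\shm,\Ot))\\
&\simeq&\rhom[\iC_X](F,\solt_X(\shm)).
\eneqn
By Theorem~\ref{thm:regularity}, $\solt_X(\shm)\simeq\sol_X(\shm)$, and the latter is an ordinary sheaf complex, so $\rhom[\iC_X](F,\solt_X(\shm))\simeq\rhom[\iC_X](F,\sol_X(\shm))\simeq\rhom(F,\sol_X(\shm))$, using once more $\rhom[\iC_X](F,\iota_X G)\simeq\rhom(F,G)$. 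Finally $\rhom(F,\sol_X(\shm))=\rhom(F,\rhom[\shd_X](\shm,\OO))\simeq\rhom[\shd_X](\shm,\rhom(F,\OO))$ by the classical commutation recorded above. Chasing the maps through, the resulting isomorphism is the one induced by $\Ot\to\OO$, which is the claim.

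The only genuine mathematical ingredient is Theorem~\ref{thm:regularity}; the main obstacle is the bookkeeping of the second paragraph. Since the paper is deliberately careful about which of $\iota_X$, $\alpha_X$, $\iota$ commute with which operations, one must actually check — not merely assert — that coherence of $\shm$ and the absence of a $\shd_X$-module structure on $F$ legitimize all these commutations, and that the identification $\rhom[\iC_X](F,\iota_X G)\simeq\rhom(F,G)$ is the one compatible with $\Ot\to\OO$. Everything else is routine.
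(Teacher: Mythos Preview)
Your proof is correct and follows exactly the approach the paper has in mind: the paper introduces this corollary with the phrase ``As an application of isomorphism~\eqref{eq:RHiso2sol}'' and gives no further argument, so you have simply spelled out the commutation details that the paper leaves implicit. The swap $\rhom[\shd_X](\shm,\rhom[\iC_X](F,\Ot))\simeq\rhom[\iC_X](F,\solt_X(\shm))$ is indeed used elsewhere in the paper (see the proof of Corollary~\ref{cor:newconstruct}), confirming your bookkeeping is the intended one.
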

Let $M$ be a real analytic manifold and $X$ a complexification of $M$. 
Choosing for $F$ the object $\RD'_X\C_M$, 
we get the isomorphism between the complexes of 
distribution solutions and hyperfunction solutions of $\shm$:
\eqn
&&\rhom[\shd_X](\shm,\Db_M)\isoto\rhom[\shd_X](\shm,\shb_M).
\eneqn

\begin{remark}\label{rem:RegRH}
Of course, isomorphism~\eqref{eq:RHiso1}  is no more true if one replaces $\Ot$ with $\OO$. For example, 
choosing $\shm=\OO(*Y)$ for $Y$ a closed hypersurface, the left-hand is the sheaf of meromorphic functions with poles on $Y$ and the right-hand side the sheaf of holomorphic functions with possibly essential singularities on $Y$. 
\end{remark}

\subsection{Integral transforms with regular kernels}
\index{integral transforms!regular}%
Consider morphisms of complex manifolds
\eqn
&&\ba{c}\xymatrix@C=6ex@R=3ex{
&S\ar[ld]_-f\ar[rd]^-g&\\
{X}&&{Y}.
}\ea\eneqn

\begin{notation}
(i) For $\shm\in\Derb(\D_{X})$ and $\shl\in\Derb(\D_S)$ one sets
\glossary{$\Dconv$}%
\eq\label{eq:Dcconv}
&&\shm\Dconv\shl\eqdot\Doim{g}(\Dopb{f}\shm\Dtens\shl).
\eneq
\glossary{$L\conv$}%
\glossary{$\Phi_L$}%
\glossary{$\Psi_L$}%
(ii) For $L\in \Derb(\iC_S)$,  $F\in \Derb(\iC_{X})$ and $G\in\Derb(\iC_Y)$ one sets
\eqn
&&\ba{l} L\conv G\eqdot\reeim{f}(L\tens\opb{g}G),\\
\Phi_L(G)=  L\conv G,\quad \Psi_L(F)=\roim{g}\rihom(L,\epb{f}F).
\ea\eneqn
\end{notation}
Note that we have a pair of adjoint functors 
\eq
&&\xymatrix{
\Phi_L\cl \Derb(\iC_Y)\ar@<0.5ex>[r]&\Derb(\iC_X)\ar@<0.5ex>[l]\cl \Psi_L
}\eneq

\begin{theorem}\label{th:7412}
Let $\shm\in\Derb_\qgood(\D_X)$, 
let $\shl\in\Derb_\reghol(\D_{S})$ and set $L\eqdot\sol_{S}(\shl)$. 
 Assume that $\opb{f}\Supp(\shm)\cap\Supp(\shl)$ is proper over $Y$ and that $\shl$ is good.
Then there is a natural isomorphism in $\Derb(\iC_Y)${\rm:}
\eq\label{eq:7412}
&&\Psi_L\bl\drt_X(\shm)\br\,[d_X-d_S]\simeq\drt_Y(\shm\Dconv\shl).
\eneq
\end{theorem}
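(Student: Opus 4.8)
The strategy is to unfold both sides of \eqref{eq:7412} into direct and inverse images along $f$ and $g$ and then to reduce the equality to the regular Riemann-Hilbert isomorphism \eqref{eq:RHiso1} of Theorem~\ref{thm:ifunct4} together with the projection formulas for tempered de~Rham (Corollaries~\ref{cor:ifunct1}, \ref{cor:ifunct2}, \ref{cor:ifunct2b}, \ref{cor:ifunct4}). First I would expand the right-hand side. By definition $\shm\Dconv\shl=\Doim{g}(\Dopb{f}\shm\Dtens\shl)$, and since $\opb{f}\Supp(\shm)\cap\Supp(\shl)$ is proper over $Y$ and $\shl$ is good, Corollary~\ref{cor:ifunct2} applies to $g$ and the $\D_S$-module $\Dopb{f}\shm\Dtens\shl$ (which is quasi-good, being the tensor of $\Dopb{f}\shm$ with a good module), giving
\[
\drt_Y(\shm\Dconv\shl)\simeq\roim{g}\,\drt_S(\Dopb{f}\shm\Dtens\shl).
\]
Next, by Corollary~\ref{cor:ifunct4} with $\shl$ there replaced by $\Dopb{f}\shm$ and $\shm$ there replaced by the regular holonomic module $\shl$, we get
\[
\drt_S(\Dopb{f}\shm\Dtens\shl)\simeq\rihom\bl\solt_S(\shl),\drt_S(\Dopb{f}\shm)\br,
\]
and by Corollary~\ref{cor:ifunct1} (inverse image of the tempered de~Rham complex), $\drt_S(\Dopb{f}\shm)[d_S]\simeq\epb{f}\drt_X(\shm)[d_X]$. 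Since $\solt_S(\shl)\simeq\sol_S(\shl)=L$ by Theorem~\ref{thm:regularity} (regularity), combining the three displays yields
\[
\drt_Y(\shm\Dconv\shl)\simeq\roim{g}\,\rihom\bl L,\epb{f}\drt_X(\shm)\br\,[d_X-d_S],
\]
which is exactly $\Psi_L(\drt_X(\shm))[d_X-d_S]$ by the definition of $\Psi_L$. That is the desired isomorphism, up to bookkeeping of the shift.

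\textbf{Key steps, in order.} (1) Reduce the statement to the isomorphism $\drt_Y(\shm\Dconv\shl)\simeq\roim{g}\rihom(L,\epb{f}\drt_X(\shm))[d_X-d_S]$. (2) Apply Corollary~\ref{cor:ifunct2} to the morphism $g$ and the module $\Dopb{f}\shm\Dtens\shl$, after checking the properness and quasi-goodness hypotheses: properness of $g$ on $\Supp(\Dopb{f}\shm\Dtens\shl)\subset\opb{f}\Supp(\shm)\cap\Supp(\shl)$ is the stated hypothesis, and quasi-goodness follows since inverse image and tensor product preserve quasi-goodness (as recalled after the definition of the transfer bimodule) and $\shl$ is good. (3) Apply Corollary~\ref{cor:ifunct4} on $S$ with the regular holonomic factor $\shl$ and the arbitrary factor $\Dopb{f}\shm$, to turn $\drt_S(\Dopb{f}\shm\Dtens\shl)$ into $\rihom(\solt_S(\shl),\drt_S(\Dopb{f}\shm))$. (4) Apply Corollary~\ref{cor:ifunct1} to rewrite $\drt_S(\Dopb{f}\shm)$ as $\epb{f}\drt_X(\shm)$ up to the shift $[d_X-d_S]$. (5) Identify $\solt_S(\shl)$ with $L=\sol_S(\shl)$ via Theorem~\ref{thm:regularity}, and read off $\Psi_L$ from its definition. (6) Track the cohomological shifts carefully: $\drt_S(\Dopb{f}\shm)[d_S]\simeq\epb{f}\drt_X(\shm)[d_X]$ produces the shift $[d_X-d_S]$ on the nose, and $\roim{g}$ introduces no further shift, so the shifts match those in \eqref{eq:7412}.

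\textbf{Main obstacle.} The routine parts are the shift bookkeeping and the verification of the quasi-goodness/properness hypotheses needed to invoke Corollary~\ref{cor:ifunct2}; these are mechanical. The real subtlety, which must be handled with care, is that Corollary~\ref{cor:ifunct4} requires one tensor factor to be \emph{regular} holonomic: here it is $\shl$ on $S$ that is regular holonomic, while $\Dopb{f}\shm$ need only be quasi-good, so one is applying \eqref{eq:RHiso1cor} with $\shm\rightsquigarrow\shl$ and $\shl\rightsquigarrow\Dopb{f}\shm$ — one should double-check that the hypotheses of Corollary~\ref{cor:ifunct4} are indeed symmetric enough in the two arguments for this substitution to be legitimate, i.e.\ that \eqref{eq:RHiso1cor} holds for an arbitrary $\shl\in\Derb(\D_S)$ paired with a regular holonomic $\shm$. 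Granting that (which is the content of Corollary~\ref{cor:ifunct4} as stated), the proof is a clean concatenation of the projection-type isomorphisms already established. A secondary point worth a sentence is that commuting $\rihom(L,\scbul)$ past $\roim{g}$ and past $\epb{f}$ uses the adjunction $(\reeim{f},\epb{f})$ and Theorem~\ref{th:projIB}(ii) in the indsheaf setting, which is where the enhanced-indsheaf formalism of Section~\ref{section:indshv} does its work; but since $L$ is an ordinary $\R$-constructible sheaf these manipulations are standard.
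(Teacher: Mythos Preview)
Your proof is correct and follows exactly the same route as the paper: expand $\shm\Dconv\shl$, apply Corollary~\ref{cor:ifunct2} to push $\drt_Y$ through $\Doim g$, apply Corollary~\ref{cor:ifunct4} to split off $\rihom(\solt_S(\shl),\scbul)$, apply Corollary~\ref{cor:ifunct1} for the inverse image under $f$, and identify $\solt_S(\shl)$ with $L$. Your worry about the ``symmetry'' of Corollary~\ref{cor:ifunct4} is unfounded---that corollary is stated for $\shm$ regular holonomic and $\shl\in\Derb(\D_X)$ arbitrary, which is precisely what you need here (with $\shl$ playing the regular role and $\Dopb f\shm$ the arbitrary one)---and your ``secondary point'' about commuting $\rihom(L,\scbul)$ past $\roim g$ is not actually needed, since one simply applies $\roim g$ to the whole isomorphism on $S$.
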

 Note that any regular holonomic $\D$-module is good. 
\begin{proof}
Applying Corollaries~\ref{cor:ifunct1},~\ref{cor:ifunct2} and~\ref{cor:ifunct4}, we get:
\eqn
\drt_Y(\shm\Dconv\shl)&=& \drt_Y(\Doim{g}(\Dopb{f}\shm\Dtens\shl))\\
&\simeq&\roim{g} \drt_S(\Dopb{f}\shm\Dtens\shl)\\
&\simeq&\roim{g}  \rihom(\solt_S(\shl), \drt_S(\Dopb{f}\shm))\\
&\simeq&\roim{g}  \rihom(L, \epb{f}\drt_X(\shm))\,[d_X-d_S]\\
&=&\Psi_L(\drt_X(\shm))\,[d_X-d_S].
\eneqn
\end{proof}

By applying the functor $\RHom(G,\scbul)$ with $G\in\Derb(\iC_Y)$ to both sides of~\eqref{eq:7412}, one gets

\begin{corollary}[{\cite[Th.~7.4.13]{KS01}}]\label{cor:7412}
Let $\shm\in\Derb_\qgood(\D_X)$,  let $\shl\in\Derb_\reghol(\D_{S})$ and let  $L\eqdot\sol_{S}(\shl)$. 
 Assume that $\opb{f}\Supp(\shm)\cap\Supp(\shl)$ is proper over $Y$ and that $\shl$ is good.
Let $G\in\Derb(\iC_Y)$. Then one has the isomorphism
\eq\label{eq:7412b}
&&\RHom[\iC_X]\bl L\circ G,\drt_X(\shm)\br[d_X-d_S]\\
&&\hs{15ex}\simeq\RHom[\iC_Y]\bl G,\drt_Y(\shm\Dconv\shl)\br.\nn
\eneq
\end{corollary}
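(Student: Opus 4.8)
The plan is to deduce~\eqref{eq:7412b} formally from isomorphism~\eqref{eq:7412} of Theorem~\ref{th:7412}, using only the adjunction between $\Phi_L$ and $\Psi_L$ recalled just above the statement; no genuinely new input is needed.

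First I would apply the functor $\RHom[\iC_Y](G,\scbul)$ to both sides of the isomorphism $\Psi_L\bl\drt_X(\shm)\br\,[d_X-d_S]\isoto\drt_Y(\shm\Dconv\shl)$. Since $\RHom$ commutes with shifts in its second variable, this gives
\[
\RHom[\iC_Y]\bl G,\Psi_L(\drt_X(\shm))\br[d_X-d_S]\isoto\RHom[\iC_Y]\bl G,\drt_Y(\shm\Dconv\shl)\br,
\]
whose right-hand side is already the right-hand side of~\eqref{eq:7412b}. It then remains to rewrite the left-hand side. Since $(\Phi_L,\Psi_L)$ is a pair of adjoint functors and $\Phi_L(G)=L\circ G$, we have a functorial isomorphism
\[
\RHom[\iC_Y]\bl G,\Psi_L(\drt_X(\shm))\br\simeq\RHom[\iC_X]\bl L\circ G,\drt_X(\shm)\br.
\]
Unravelling $\Phi_L(G)=\reeim{f}(L\tens\opb{g}G)$ and $\Psi_L(F)=\roim{g}\rihom(L,\epb{f}F)$, this last isomorphism is the composite of the adjunction $(\reeim{f},\epb{f})$, the adjunction between $\tens$ and $\rihom$ on $S$, and the adjunction $(\opb{g},\roim{g})$, after applying $\rsect$; all of these hold in the relevant derived categories of indsheaves. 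Substituting into the previous display yields~\eqref{eq:7412b}.

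There is no real obstacle: the whole content of the corollary lies in Theorem~\ref{th:7412}, and the passage to~\eqref{eq:7412b} is a purely formal manipulation with the six operations and the $(\Phi_L,\Psi_L)$-adjunction. The only point deserving a word of care is that the adjunction is to be invoked at the level of the derived bifunctor $\RHom$ (equivalently, of $\rsect$ composed with the internal $\rihom$), not merely at the level of morphisms in the derived category; but this is immediate, since every functor occurring here is the derived functor and its adjunction is already an adjunction of functors between the derived categories.
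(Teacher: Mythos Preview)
Your proof is correct and follows exactly the approach in the paper: apply $\RHom(G,\scbul)$ to both sides of~\eqref{eq:7412} and use the $(\Phi_L,\Psi_L)$-adjunction. The paper states this in a single line, while you have spelled out the adjunction step more explicitly, but the argument is the same.
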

Note that a similar formula holds when replacing $\Ot[X]$ and $\Ot[Y]$  with their non tempered versions $\sho_X$ and $\sho_Y$ (and indsheaves with usual sheaves), but the hypotheses are different. Essentially, $\shm$ has to be coherent, $f$ non characteristic for $\shm$ 
and $\Dopb{f}\shm$ has to be transversal to the holonomic module $\shl$. On the other hand,  we do not need the regularity assumption on $\shl$. 
See~\cite{DS96} for such a non tempered formula (in a more particular setting). 

However, if one removes the hypothesis that the holonomic module $\shl$ is regular in Theorem~\ref{th:7412}, 
formula~\eqref{eq:7412} does not hold anymore  and we have to replace  $\Ot[X]$ with its enhanced version, as we shall see in the next sections.

\subsection{Irregular D-modules\,{\rm:} an example}\label{subsection:exairreg}
In this subsection we recall an example treated in~\cite{KS03} which 
emphasizes the role of the sheaf $\Ot$ 
in the study of irregular holonomic D-modules. 

Let $X=\C$ endowed with the holomorphic coordinate  $z$. Define 
\eqn
&&
U=X\setminus\{0\},\quad j\cl U\into X\mbox{ the open embedding}.
\eneqn
Consider the differential operator  $P=z^2\partial_z+1$
and the $\shd_X$-module $\shl\eqdot\shd_X\exp(1/z)\simeq\shd_X/\shd_X P$.

Notice first that $\Ot$ is concentrated in degree $0$ (since
$\dim X=1$) and it is a sub-indsheaf of $\OO$. Therefore the
morphism $H^0(\solt_X(\shl))\to H^0(\sol_X(\shl))\simeq\C_U$ is a monomorphism.
It follows that for $V\subset X\setminus\{0\}$ a connected open subset, 
$\sect(V;H^0\solt(\shm))\neq 0$ if and only if $V\subset U$ and
$\exp(1/z)\vert_V$ is tempered.

Denote by ${\ol B}_\vep$ the closed ball with center $(\vep,0)$
and radius $\vep$ and set
\eqn
&&U_\vep=X\setminus {\ol B}_\vep=\{z\in\C\setminus\{0\};\Re(1/z)<1/2\vep\}.
\eneqn
One proves  that
$\exp(1/z)$ is tempered (in a neighborhood of $0$)
 on an open subanalytic subset $V\subset X\setminus\{0\}$ if and only
if ${\rm Re}(1/z)$ is bounded on $V$, that is, if and only if
$V\subset U_\vep$ for some $\vep>0$. We get
 the isomorphism
\eq\label{eq:solt=indlim}
&&
\solt(\shl)\tens\C_{U}\simeq\inddlim[\vep>0]\C_{U_\vep}.
\eneq
Note that $\solt(\shl)\tens\C_{U}$ is concentrated in degree $0$. 

Since $\solt(\shl)\simeq\drt(\Ddual\shl)$ and $\Ddual\shl\simeq\Ddual\shl(*\{0\})$, we get that 
\eqn
&&\solt(\shl)\simeq\rihom(\C_U,\solt(\shl))
\simeq\rihom(\C_U,\solt(\shl)\tens\C_U).
\eneqn
Therefore,
\eqn
&&\solt(\shl)\simeq\rihom(\C_{U},\inddlim[\vep>0]\C_{U_\vep}),\\
&&H^0(\solt(\shl))\simeq\inddlim[\vep>0]\C_{U_\vep},\\
&& H^1(\solt(\shl))\simeq\inddlim[\vep>0]\ext{1}(\C_U,\C_{U_\vep})\simeq
\C_{\{0\}},\\
&&\sol(\shl)\simeq\alpha_X\solt(\shl)\simeq\rhom(\C_U,\C_U),\\
&&H^0(\sol(\shl))\simeq\C_U,\quad H^1(\sol(\shl))\simeq\C_{\{0\}}.
\eneqn

The functor $\solt$  is not fully faithful since 
the $\shd_X$-modules $\shd_X\exp(1/z)$ and $\shd_X\exp(2/z)$ 
have the same indsheaves of tempered holomorphic solutions 
although they are not isomorphic.

However, $\solt_X(\D_X\exp(1/z))\not\simeq\solt_X(\D_X\exp(1/z^m))$ 
for any $m>1$. 

Hence, the functor $\solt$ is sensitive enough to distinguish 
$m\in\Z_{>0}$ in 
$\D_X\exp(z^{-m})$ but it is not 
sensitive enough to distinguish $c\in\R_{>0}$ in $\D_X\exp(cz^{-1})$.

In order to capture $c$, we need to work 
in the framework of enhanced indsheaves,
which we are going to explain in the next sections.

\section{Indsheaves on bordered spaces}\label{section:bordered}

\subsection{Bordered spaces}
\index{bordered spaces!}%
\begin{definition}\label{def:bspace}
\glossary{$(M,\bM)$}%
The category of \emph{bordered spaces} is the category whose objects are pairs $(M,\bM)$ with $M\subset \bM$ an open embedding of good topological spaces.
Morphisms $f\colon (M,\bM) \to (N,\bN)$ are continuous maps $f\colon M\to N$ such that
\begin{equation}
\label{eq:Hbord}
\overline\Gamma_f \to \bM \text{ is proper}. 
\end{equation}
Here $\Gamma_f\subset M\times N$ is the graph of $f$ an $\ol\Gamma_f$ is its closure in 
$\bM\times\bN$.

The composition of $(L,\bL) \to[g] (M,\bM) \to[f] (N,\bN)$ is 
given by $f\circ g\colon L\to N$ (see Lemma~\ref{lem:bordcom} below), and the identity $\id_{(M,\bM)}$ is given by $\id_{M}$.
\end{definition}

\begin{lemma}\label{lem:bordcom}
Let $f\colon (M,\bM) \to (N,\bN)$ and $g\colon (L,\bL) \to (M,\bM)$ be morphisms of bordered spaces. 
Then the composition $f\circ g$ is a morphism of bordered spaces.
\end{lemma}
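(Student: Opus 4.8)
The statement to prove is Lemma~\ref{lem:bordcom}: if $f\colon (M,\bM)\to(N,\bN)$ and $g\colon(L,\bL)\to(M,\bM)$ are morphisms of bordered spaces, then the composition $f\circ g\colon L\to N$ is again a morphism of bordered spaces, i.e.\ the closure $\overline{\Gamma_{f\circ g}}$ of the graph of $f\circ g$ in $\bL\times\bN$ is proper over $\bL$. The plan is to reduce this to the properness hypotheses $\overline{\Gamma_g}\to\bL$ and $\overline{\Gamma_f}\to\bM$ by exhibiting $\overline{\Gamma_{f\circ g}}$ as (the image of) a closed subset of a fibre product built from $\overline{\Gamma_g}$ and $\overline{\Gamma_f}$, and then using the stability of properness under base change, composition, and passage to a closed subspace.

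\textbf{Key steps.} First I would introduce the closed subset
\[
Z \;\eqdot\; \bigl(\overline{\Gamma_g}\times\bN\bigr)\,\cap\,\bigl(\bL\times\overline{\Gamma_f}\bigr)\;\subset\;\bL\times\bM\times\bN,
\]
where in the first factor $\bN$ is tacked on trivially and in the second $\bL$ is; both sets are closed in $\bL\times\bM\times\bN$, so $Z$ is closed. Next, consider the projection $p\colon \bL\times\bM\times\bN\to\bL\times\bM$. Its restriction to $\overline{\Gamma_g}\times\bN$ is the base change of $\overline{\Gamma_g}\to\bL$ along $\bL\times\bN\to\bL$ followed by reindexing, hence proper; restricting further to the closed subset $Z$ keeps it proper, so $Z\to\overline{\Gamma_g}$ is proper, and composing with the proper map $\overline{\Gamma_g}\to\bL$ gives that $Z\to\bL$ is proper. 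Symmetrically, using the projection $\bL\times\bM\times\bN\to\bM\times\bN$ and the properness of $\overline{\Gamma_f}\to\bM$, one sees $Z$ is also controlled by the $\bM$-direction; the point of intersecting the two graphs over the common factor $\bM$ is exactly to glue the two properness statements. Finally, let $q\colon\bL\times\bM\times\bN\to\bL\times\bN$ be the projection forgetting $\bM$. On the locus $L$ (inside $\bL$) the set $Z$ maps onto $\Gamma_{f\circ g}$: a point $(\ell,m,n)\in Z$ with $\ell\in L$ forces $m=g(\ell)\in M$ (since $\Gamma_g$ is the graph and over $L$ the closure meets the fibre in the graph point) and then $n=f(m)=f(g(\ell))$. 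Taking closures, $q(Z)\supseteq\overline{\Gamma_{f\circ g}}$; and since $q(Z)$ is closed in $\bL\times\bN$ (as $Z\to\bL$ is proper, hence $Z\to\bL\times\bN$ is proper, so has closed image) and contained in the closure of $\Gamma_{f\circ g}$ over the dense open $L$, one gets $q(Z)=\overline{\Gamma_{f\circ g}}$. Then $\overline{\Gamma_{f\circ g}}\to\bL$ factors as $q(Z)\hookleftarrow Z\to\bL$ with $Z\to\bL$ proper and $Z\to q(Z)$ surjective, whence $\overline{\Gamma_{f\circ g}}\to\bL$ is proper. (Alternatively, one checks directly that $\overline{\Gamma_{f\circ g}}$ is closed in the image $q(Z)$ and uses that a closed subset of a space proper over $\bL$ is proper over $\bL$.)

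\textbf{Main obstacle.} The routine parts (closedness of $Z$, stability of properness under base change and composition, closed subspaces) are standard. The genuinely delicate point is the identification $q(Z)=\overline{\Gamma_{f\circ g}}$: one inclusion is immediate, but for the reverse I must argue that no spurious points in the fibres over $\bL\setminus L$ cause $q(Z)$ to be strictly larger than the closure of the graph — equivalently, that $q(Z)$, being closed and agreeing with $\Gamma_{f\circ g}$ over the dense open $L\subset\bL$, cannot exceed $\overline{\Gamma_{f\circ g}}$. This follows because $q(Z)$ is closed and contains $\Gamma_{f\circ g}$, so it contains $\overline{\Gamma_{f\circ g}}$; for equality one notes that any point of $q(Z)$ lying over $\bL$ is a limit of points of $Z$, and (using that $Z\to\bL$ is proper and $Z$ is cut out by the graph conditions) such limits already lie in $\overline{\Gamma_{f\circ g}}$. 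Making this last density/limit argument airtight — essentially that the proper map $Z\to\bL$ together with $Z|_L=\Gamma_{f\circ g}$ forces $q(Z)=\overline{\Gamma_{f\circ g}}$ — is where the real care is needed; everything else is formal.
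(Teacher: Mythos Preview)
The paper states this lemma without proof, so there is no ``paper's approach'' to compare against. Your overall strategy --- introduce $Z=(\overline{\Gamma_g}\times\bN)\cap(\bL\times\overline{\Gamma_f})$, show $Z\to\bL$ is proper, and deduce that $\overline{\Gamma_{f\circ g}}\to\bL$ is proper --- is correct and is the natural argument.

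However, you misidentify the main obstacle. The equality $q(Z)=\overline{\Gamma_{f\circ g}}$ that you worry about is \emph{not needed}, and in fact is \emph{false} in general. Here is a counterexample: take $L=(0,1)$, $\bL=[0,1]$, $M=(-1,0)\cup(0,1)$, $\bM=[-1,1]$, $g(x)=x$; take $N=\bN=\{a,b\}$ discrete and $f(y)=a$ for $y>0$, $f(y)=b$ for $y<0$. Then $\overline{\Gamma_f}=[0,1]\times\{a\}\cup[-1,0]\times\{b\}$, and one computes $Z=\{(x,x,a):x\in[0,1]\}\cup\{(0,0,b)\}$, so $q(Z)=[0,1]\times\{a\}\cup\{(0,b)\}$, whereas $\overline{\Gamma_{f\circ g}}=[0,1]\times\{a\}$. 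The spurious point $(0,b)$ arises exactly because the limit of $g(\ell)$ as $\ell\to 0$ lies in $\bM\setminus M$, where $\overline{\Gamma_f}$ has fibre points not reached by any sequence $f(g(\ell_k))$.

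The clean ending is the ``alternative'' you relegate to a parenthesis: since $Z$ is closed in $\overline{\Gamma_g}\times\bN$ and $\overline{\Gamma_g}\times\bN\to\bL\times\bN$ is proper (base change of $\overline{\Gamma_g}\to\bL$), the map $Z\to\bL\times\bN$ is proper; hence $q(Z)$ is closed in $\bL\times\bN$ and proper over $\bL$. Now $\overline{\Gamma_{f\circ g}}$ is closed in $\bL\times\bN$, contained in $q(Z)$, hence closed in $q(Z)$, hence proper over $\bL$. This is entirely routine --- there is no genuine obstacle anywhere in the proof. Drop the attempt to prove the equality and promote the parenthetical to the main argument.
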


One shall identify a space $M$ and the bordered space $(M,M)$.
Then, by using the identifications $M=(M,M)$ and $\bM = (\bM,\bM)$, there are natural morphisms of bordered spaces 
\[
M \to (M,\bM) \to \bM.
\]
Note however that $(M,\bM) \to M$ is 
a morphism of bordered spaces if and only if $M$ is a closed subset of $\bM$.

We can easily see that the category of bordered spaces admits products:
\eq(M,\bM)\times (N,\bN)\simeq (M\times N,\bM\times\bN).\eneq

\medskip
Let $(M,\bM)$ be a bordered space. Denote by 
$i\cl \bM\setminus M\to \bM$ the closed embedding. By identifying 
$\Derb(\cor_{\bM\setminus M})$ with 
its essential image in $\Derb(\cor_{\bM})$ by the fully faithful functor $\reim i \simeq \roim i$, 
the restriction functor $F\mapsto F|_M$ induces an equivalence
\eqn
&&\Derb(\field_{\bM}) / \Derb(\field_{\bM\setminus M})\isoto\Derb(\field_M).
\eneqn
This is no longer true for indsheaves. Therefore one sets
\glossary{$\Derb(\icor_{(M,\bM)})$}%
\eqn
&&\Derb(\icor_{(M,\bM)})\eqdot \Derb(\icor_{\bM})/\Derb(\icor_{\bM\setminus M})
\eneqn
where $\Derb(\icor_{\bM\setminus M})$ is identified with its essential image in $\Derb(\icor_{\bM})$  by $\reeim{i}\simeq\roim{i}$, as for usual sheaves. 

Recall that if $\sht$ is a triangulated category and $\shi$ a subcategory, one denotes by ${}^\bot\shi$ and $\shi^\bot$ the left and right orthogonal to $\shi$ in $\sht$, respectively:
\eqn
&&{}^\bot\shi\seteq\set{A\in\sht}{\text{$\Hom[\sht](A,B)=0$ for any $B\in\shi$}},\\
&&\shi^\bot\seteq\set{A\in\sht}{\text{$\Hom[\sht](B,A)=0$ for any $B\in\shi$}}.
\eneqn
\begin{proposition}\label{pro:bord}
Let $(M,\bM)$ be a bordered space. Then we have 
\eqn
\Derb(\icor_{\bM\setminus M}) 
&=& \{F\in\Derb(\icor_{\bM}); \cor_M \tens F \simeq 0 \}\\
&=& \{F\in\Derb(\icor_{\bM}); \rihom(\cor_M, F) \simeq 0 \},\\
{}^\bot \Derb(\icor_{\bM\setminus M}) 
&=& \{F\in\Derb(\icor_{\bM});  \cor_M \tens F\isoto F \},\\
\Derb(\icor_{\bM\setminus M})^\bot &=& \{F\in\Derb(\icor_{\bM}); F \isoto \rihom(\cor_M, F) \}.
\eneqn
Moreover, there are  equivalences
\eqn
\Derb(\icor_{(M,\bM)}) &\isoto& \Derb(\icor_{\bM\setminus M})^\bot, \quad F \mapsto \rihom(\cor_M, F),\\
\Derb(\icor_{(M,\bM)}) &\isoto& {}^\bot \Derb(\icor_{\bM\setminus M}), \quad F\mapsto \cor_M \tens F,
\eneqn
with quasi-inverse induced by the quotient functor.
\end{proposition}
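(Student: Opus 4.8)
The plan is to establish the four descriptions of the relevant subcategories first, and then deduce the equivalences by a standard Bousfield localization argument for the Verdier quotient. Throughout I write $\Derb(\icor_{\bM\setminus M})$ for the image of $\Derb(\icor_{\bM\setminus M})$ in $\Derb(\icor_{\bM})$ under $\roim i \simeq \reeim i$, where $i\cl\bM\setminus M\into\bM$ is the closed embedding; recall this functor is fully faithful and its essential image is stable under the triangulated structure, so this image is a triangulated subcategory.

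First I would identify $\Derb(\icor_{\bM\setminus M})$ with the stated kernels. For $F\in\Derb(\icor_{\bM})$ one has a distinguished triangle $\cor_M\tens F\to F\to \roim i\opb i F\to[+1]$ (the usual excision triangle associated with the open set $M$ and its closed complement, valid for indsheaves). Hence $\cor_M\tens F\simeq 0$ if and only if $F\isoto\roim i\opb i F$, i.e.\ $F$ lies in the essential image of $\roim i$, which is exactly $\Derb(\icor_{\bM\setminus M})$ by the support characterization. Dually, applying $\rihom(\cor_M,\scbul)$ to the same triangle, together with $\rihom(\cor_M,\roim i G)\simeq 0$ for $G$ supported on $\bM\setminus M$, gives that $\rihom(\cor_M,F)\simeq 0$ is again equivalent to $F\in\Derb(\icor_{\bM\setminus M})$. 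This proves the first two displayed equalities. The descriptions of ${}^\bot\Derb(\icor_{\bM\setminus M})$ and $\Derb(\icor_{\bM\setminus M})^\bot$ then follow: $F\in{}^\bot\Derb(\icor_{\bM\setminus M})$ means $\Hom(F,\roim i G)\simeq 0$ for all $G$, i.e.\ $\Hom(\opb i F,G)\simeq 0$ for all $G$ (adjunction $(\opb i,\roim i)$, noting $\opb i$ here is the restriction to $\bM\setminus M$), which forces $\opb i F\simeq 0$, equivalently (by the excision triangle) $\cor_M\tens F\isoto F$; and similarly $F\in\Derb(\icor_{\bM\setminus M})^\bot$ iff $\Hom(\roim i G,F)\simeq 0$ for all $G$, iff $\Hom(G,\epb i F)\simeq 0$ using the adjunction $(\reeim i,\epb i)=(\roim i,\epb i)$, iff $\epb i F\simeq 0$, which by the triangle $\roim i\epb i F\to F\to \rihom(\cor_M,F)\to[+1]$ is equivalent to $F\isoto\rihom(\cor_M,F)$.

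Next I would produce the equivalences with the Verdier quotient. The subcategory $\shi\eqdot\Derb(\icor_{\bM\setminus M})$ is thick (it is the essential image of a fully faithful triangulated functor with a section, hence closed under direct summands; alternatively it is the kernel of the triangulated functor $\cor_M\tens(\scbul)$). The assignment $F\mapsto \rihom(\cor_M,F)$ together with the triangle $\roim i\epb i F\to F\to\rihom(\cor_M,F)\to[+1]$ shows that every object of $\Derb(\icor_{\bM})$ fits in a triangle with first term in $\shi$ and third term in $\shi^\bot$; this is precisely the assertion that $\shi^\bot$ is a right admissible subcategory (a right Bousfield localization), and the composite $\shi^\bot\into\Derb(\icor_{\bM})\to\Derb(\icor_{\bM})/\shi=\Derb(\icor_{(M,\bM)})$ is then an equivalence, with quasi-inverse induced by $\rihom(\cor_M,\scbul)$ followed by the quotient functor. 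Symmetrically, the triangle $\cor_M\tens F\to F\to\roim i\opb i F\to[+1]$ exhibits ${}^\bot\shi$ as a left admissible subcategory, and $F\mapsto\cor_M\tens F$ gives the equivalence $\Derb(\icor_{(M,\bM)})\isoto{}^\bot\shi$. Both quasi-inverses are ``induced by the quotient functor'' in the sense that the composition of the inclusion of the orthogonal with the quotient is the asserted equivalence.

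The main obstacle, and the step that needs genuine care rather than formal nonsense, is the excision triangle $\cor_M\tens F\to F\to\roim i\opb i F\to[+1]$ for indsheaves and the vanishing statements $\cor_M\tens\roim i G\simeq 0$ and $\rihom(\cor_M,\roim i G)\simeq 0$: these rely on the fact that for indsheaves $\roim i$, $\reeim i$, $\opb i$ and the tensor/inner-hom functors behave as in the classical case, in particular $\roim i\simeq\reeim i$ for the closed embedding $i$, and that $\cor_M\tens\cor_{\bM\setminus M}\simeq 0$, $\rihom(\cor_M,\cor_{\bM\setminus M})\simeq 0$, which follow from the commutation of $\iota_{\bM}$ with $\tens$ and $\rihom$ against constant sheaves on locally closed sets. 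Once these are in hand everything else is the standard formalism of Verdier quotients and orthogonal decompositions; I would be brief on that formalism and careful about the indsheaf-theoretic input, invoking the six-operations package for indsheaves recalled in Section~\ref{section:indshv}.
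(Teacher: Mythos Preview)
Your argument is correct and is the standard one. The paper, being a survey, states this proposition without proof (it is extracted from~\cite{DK13}), so there is nothing to compare against directly; your approach via the two excision triangles
\[
\cor_M\tens F\to F\to\roim i\opb i F\to[+1],\qquad
\roim i\epb i F\to F\to\rihom(\cor_M,F)\to[+1]
\]
together with the adjunctions $(\opb i,\roim i)$ and $(\reeim i,\epb i)=(\roim i,\epb i)$ is exactly what one expects. One small point of phrasing: when you write that the quasi-inverse is ``$\rihom(\cor_M,\scbul)$ followed by the quotient functor'' you have the composition order inverted relative to the proposition; what you want (and what your final sentence correctly says) is that $\rihom(\cor_M,\scbul)$, viewed as a functor $\Derb(\icor_{\bM})\to\shi^\bot$, factors through the quotient because it annihilates $\shi$, and the resulting functor $\Derb(\icor_{(M,\bM)})\to\shi^\bot$ is inverse to the restriction of the quotient functor. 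The indsheaf-theoretic inputs you flag (the triangles, $\reeim i\simeq\roim i$ for the closed embedding, and the vanishing $\cor_M\tens\roim iG\simeq 0$, $\rihom(\cor_M,\roim iG)\simeq 0$) are indeed available from the six-operations formalism recalled in Section~\ref{section:indshv}.
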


\begin{corollary}\label{cor:bord}
For $F,G\in\Derb(\icor_{\bM})$ one has
\begin{align*}
\Hom[\Derb(\icor_{(M,\bM)})](F,G)
&\simeq \Hom[\Derb(\icor_{\bM})](\cor_M\tens F,G) \\
&\simeq \Hom[\Derb(\icor_{\bM})](F,\rihom(\cor_M,G)).
\end{align*}
\end{corollary}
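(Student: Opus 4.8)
The statement will be deduced formally from Proposition~\ref{pro:bord}. Write $\sht=\Derb(\icor_{\bM})$ and $\shi=\Derb(\icor_{\bM\setminus M})$, so that $\Derb(\icor_{(M,\bM)})=\sht/\shi$, and let $Q\colon\sht\to\sht/\shi$ be the quotient functor. Recall from Proposition~\ref{pro:bord} that $Q$ restricts to equivalences ${}^\bot\shi\isoto\sht/\shi$ and $\shi^\bot\isoto\sht/\shi$, whose quasi-inverses are induced, respectively, by the functors $F\mapsto\cor_M\tens F$ (with values in ${}^\bot\shi$) and $F\mapsto\rihom(\cor_M,F)$ (with values in $\shi^\bot$). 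The plan is to use the first of these equivalences to compute $\Hom$ in the quotient, then to remove the auxiliary $\tens\,\cor_M$ by an orthogonality argument, and finally to obtain the second isomorphism of the corollary from the tensor--hom adjunction in $\Derb(\icor_{\bM})$.

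Concretely, first I would record that for $F,G\in\sht$ one has
\[
\Hom[\sht/\shi](F,G)\simeq\Hom[{}^\bot\shi](\cor_M\tens F,\cor_M\tens G)=\Hom[\sht](\cor_M\tens F,\cor_M\tens G),
\]
the first isomorphism because $Q|_{{}^\bot\shi}$ is an equivalence with quasi-inverse $\cor_M\tens(\scbul)$, and the equality because ${}^\bot\shi$ is a full subcategory of $\sht$. Next I would tensor $G$ with the triangle $\cor_M\to\cor_{\bM}\to\cor_{\bM\setminus M}\to[+1]$ to get the distinguished triangle $\cor_M\tens G\to G\to\cor_{\bM\setminus M}\tens G\to[+1]$; since $i\colon\bM\setminus M\hookrightarrow\bM$ is a closed embedding one has $\cor_{\bM\setminus M}\tens G\simeq\roim{i}\opb{i}G$, which lies in $\shi$. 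Applying $\Hom[\sht](\cor_M\tens F,\scbul)$ to this triangle and using $\cor_M\tens F\in{}^\bot\shi$ (so that $\Hom[\sht](\cor_M\tens F,\cor_{\bM\setminus M}\tens G[n])=0$ for all $n\in\Z$), I obtain $\Hom[\sht](\cor_M\tens F,\cor_M\tens G)\isoto\Hom[\sht](\cor_M\tens F,G)$. Combining the two displays gives the first isomorphism of the corollary, and the second follows from the adjunction $\Hom[\sht](\cor_M\tens F,G)\simeq\Hom[\sht](F,\rihom(\cor_M,G))$ between $\tens$ and $\rihom$. Equivalently one can run the argument symmetrically using the equivalence $\shi^\bot\isoto\sht/\shi$ and the triangle $\rihom(\cor_{\bM\setminus M},F)\to F\to\rihom(\cor_M,F)\to[+1]$.

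I do not expect a serious obstacle: the substance is entirely contained in Proposition~\ref{pro:bord}, and what remains is routine localization bookkeeping. The one point that needs a little attention, used twice, is the identification of the cones $\cor_{\bM\setminus M}\tens G$ and $\rihom(\cor_{\bM\setminus M},F)$ with objects of $\shi$ --- i.e.\ that they are (ind)sheaves supported on the closed subset $\bM\setminus M$, hence of the form $\roim{i}\opb{i}G$ and $\roim{i}\epb{i}F$ --- together with invoking the defining vanishing of $\Hom$ out of ${}^\bot\shi$ (resp.\ into $\shi^\bot$) in the correct slot. Once these are in place, the three $\Hom$-spaces are linked by the displayed chain of natural isomorphisms, which also shows the isomorphisms are functorial in $F$ and $G$.
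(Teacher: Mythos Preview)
Your argument is correct and is precisely the standard deduction from Proposition~\ref{pro:bord}; the paper itself gives no proof, treating the statement as an immediate corollary, and your reasoning fills this in faithfully. One could shorten the argument slightly by invoking directly the general fact that for $A\in{}^\bot\shi$ the quotient functor induces $\Hom[\sht](A,B)\isoto\Hom[\sht/\shi](QA,QB)$ for all $B$, together with $Q(\cor_M\tens F)\simeq QF$, but this is exactly what your triangle-and-orthogonality step establishes.
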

\glossary{$\tens$}%
\glossary{$\rihom$}%
The functors $\tens$ and $\rihom$ in $\Derb(\icor_{\bM})$ induce well defined functors (we keep the same notations)
\eqn
\tens &\colon& \Derb(\icor_{(M,\bM)}) \times \Derb(\icor_{(M,\bM)}) \to \Derb(\icor_{(M,\bM)}), \\
\rihom &\colon& \Derb(\icor_{(M,\bM)})^\op \times \Derb(\icor_{(M,\bM)}) \to \Derb(\icor_{(M,\bM)}).
\eneqn

\subsection{Operations}
Let $f\colon (M,\bM) \to (N,\bN)$ be a morphism of bordered spaces, and recall that $\Gamma_f$ denotes the graph of the associated map $f\colon M \to N$.
Since $\Gamma_f$ is closed in $M\times N$, it is locally closed in $\bM\times \bN$. One can then consider
the sheaf $\cor_{\Gamma_f}$ on $\bM\times \bN$.
Let $q_1\colon\bM\times\bN\to\bM$ and $q_2\colon\bM\times\bN\to\bN$ be
the projections.

\begin{definition}
\label{def:fbordered}
Let $f\colon (M,\bM) \to (N,\bN)$ be a morphism of bordered spaces.
For $F\in\Derb(\icor_{\bM})$ and $G\in\Derb(\icor_{\bN})$, we set
\glossary{$\reeim{f}$}%
\glossary{$\roim{f}$}%
\glossary{$\opb{f}$}%
\glossary{$\epb{f}$}%
\eqn
\reeim{f} F &=& \reeim{q_2}(\cor_{\Gamma_f}\tens\opb{q_1}F), \\
\roim{f} F &=& \roim{q_2}\rihom(\cor_{\Gamma_f},\epb{q_1}F), \\
\opb f G &=& \reeim{q_1}(\cor_{\Gamma_f}\tens\opb{q_2}G), \\
\epb f G &=& \roim{q_1}\rihom(\cor_{\Gamma_f},\epb{q_2}G).
\eneqn
\end{definition}

\begin{remark}
Considering a continuous map $f\colon M\to N$ as a morphism of bordered spaces with $M=\bM$ and $N=\bN$, the above functors are isomorphic to the usual external operations for indsheaves.
\end{remark}

\begin{lemma}
\label{lem:coper}
The above definition induces well-defined functors
\begin{align*}
\reeim{f},\roim{f}  &\colon \Derb(\icor_{(M,\bM)}) \to \Derb(\icor_{(N,\bN)}), \\
\opb{f},\epb{f}  &\colon \Derb(\icor_{(N,\bN)}) \to \Derb(\icor_{(M,\bM)}).
\end{align*}
\end{lemma}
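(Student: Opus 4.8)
The plan is to reduce everything to Proposition~\ref{pro:bord}, which identifies the subcategory $\Derb(\icor_{\bM\setminus M})\subset\Derb(\icor_{\bM})$ with $\set{F\in\Derb(\icor_{\bM})}{\cor_M\tens F\simeq 0}$ and equivalently with $\set{F}{\rihom(\cor_M,F)\simeq 0}$; recall also that $\Derb(\icor_{(M,\bM)})$ is by definition the Verdier quotient $\Derb(\icor_{\bM})/\Derb(\icor_{\bM\setminus M})$, and similarly for $(N,\bN)$. Since $q_1$ and $q_2$ are morphisms of good topological spaces, the formulas of Definition~\ref{def:fbordered} already define functors $\reeim f,\roim f\colon\Derb(\icor_{\bM})\to\Derb(\icor_{\bN})$ and $\opb f,\epb f\colon\Derb(\icor_{\bN})\to\Derb(\icor_{\bM})$ (boundedness of the targets is routine, all the indsheaf operations involved having finite cohomological dimension). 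Hence it suffices to check that each of these four functors carries the distinguished subcategory of its source into that of its target; the induced functors on the quotient categories then follow from the universal property of the localization.

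First I would prove the stronger vanishing, namely that each of the four functors sends every object of the source's distinguished subcategory to a zero object. The only geometric input is that the graph $\Gamma_f$ is contained in $M\times N$, so that $\Gamma_f$ is disjoint from $(\bM\setminus M)\times\bN$ and from $\bM\times(\bN\setminus N)$, and $\cor_{\Gamma_f}\simeq\cor_{\Gamma_f}\tens\cor_{M\times\bN}\simeq\cor_{\Gamma_f}\tens\cor_{\bM\times N}$. For $\reeim f$: if $\cor_M\tens F\simeq 0$, the triangle $\cor_M\tens F\to F\to\cor_{\bM\setminus M}\tens F\to[+1]$ gives $F\isoto\cor_{\bM\setminus M}\tens F$, hence $\opb{q_1}F\isoto\cor_{(\bM\setminus M)\times\bN}\tens\opb{q_1}F$ and consequently $\cor_{\Gamma_f}\tens\opb{q_1}F\simeq\cor_{\Gamma_f\cap((\bM\setminus M)\times\bN)}\tens\opb{q_1}F\simeq 0$, so $\reeim f F=\reeim{q_2}(\cor_{\Gamma_f}\tens\opb{q_1}F)\simeq 0$. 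For $\roim f$: if $\rihom(\cor_M,F)\simeq 0$, then by the formula $\epb{q_1}\rihom(G_1,G_2)\simeq\rihom(\opb{q_1}G_1,\epb{q_1}G_2)$ we get $\rihom(\cor_{M\times\bN},\epb{q_1}F)\simeq\epb{q_1}\rihom(\cor_M,F)\simeq 0$, and then, using $\cor_{\Gamma_f}\simeq\cor_{\Gamma_f}\tens\cor_{M\times\bN}$ and the adjunction $\rihom(A\tens B,C)\simeq\rihom(A,\rihom(B,C))$, $\rihom(\cor_{\Gamma_f},\epb{q_1}F)\simeq\rihom\bl\cor_{\Gamma_f},\rihom(\cor_{M\times\bN},\epb{q_1}F)\br\simeq 0$, whence $\roim f F\simeq 0$. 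The cases of $\opb f$ and $\epb f$ are the mirror images, applied to $G\in\Derb(\icor_{\bN})$ with $\cor_N\tens G\simeq 0$, resp.\ $\rihom(\cor_N,G)\simeq 0$, using this time that the argument is pulled back along $q_2$ and that $\Gamma_f\subset\bM\times N$.

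Granting these four vanishings, the descent is purely formal: for instance the composite of $\reeim f$ with the quotient functor $\Derb(\icor_{\bN})\to\Derb(\icor_{(N,\bN)})$ sends every morphism whose cone lies in $\Derb(\icor_{\bM\setminus M})$ to an isomorphism (that cone goes to $\reeim f$ of an object of $\Derb(\icor_{\bM\setminus M})$, hence to $0$), so it factors uniquely through $\Derb(\icor_{(M,\bM)})$; the same reasoning applies to $\roim f$, $\opb f$ and $\epb f$. I do not anticipate a real obstacle here: the proof is a formal consequence of Proposition~\ref{pro:bord} together with standard properties of indsheaf operations (inverse image commuting with $\tens$, the tensor--hom adjunction, and the commutation of $\epb{q}$ with internal $\rihom$). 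The one thing that needs care is the bookkeeping: for the functors built with a tensor product ($\reeim f$ and $\opb f$) one uses the ``$\cor_M\tens\scbul$'' description of the null subcategory, for those built with $\rihom$ ($\roim f$ and $\epb f$) the ``$\rihom(\cor_M,\scbul)$'' description, and in each case the projection that pulls back the argument ($q_1$ for $\reeim f,\roim f$; $q_2$ for $\opb f,\epb f$) must be matched with the appropriate containment of $\Gamma_f$ (in $M\times\bN$, resp.\ in $\bM\times N$).
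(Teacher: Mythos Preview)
The paper does not supply a proof of this lemma; it is stated without argument, in keeping with the survey nature of Section~\ref{section:bordered}. Your proof is correct: using Proposition~\ref{pro:bord} to recognise the null subcategories via $\cor_M\tens(\scbul)\simeq 0$ or $\rihom(\cor_M,\scbul)\simeq 0$, and then exploiting the containment $\Gamma_f\subset M\times N$ to kill $\cor_{\Gamma_f}\tens\opb{q_1}F$ (resp.\ $\rihom(\cor_{\Gamma_f},\epb{q_1}F)$) whenever $F$ lies in the null subcategory, is exactly the natural approach; the descent to the quotients then follows from the universal property as you say.
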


\begin{lemma}\label{lem:jM}
Let $j_M\colon(M,\bM)\to \bM$ be the morphism given by the open embedding $M\subset \bM$. Then
\bnum
\item
The functors
$\opb{j_M} \simeq \epb{j_M} \colon \BDC(\icor_{\bM}) \to \BDC(\icor_{(M,\bM)})$
are isomorphic to the quotient functor.
\item
For $F\in\BDC(\icor_{\bM})$ one has the isomorphisms in $\BDC(\icor_{\bM})$
\[
\reeim {j_M}\opb{j_M} F \simeq \cor_M \tens F, \quad
\roim {j_M}\epb{j_M} F \simeq \rihom(\cor_M, F).
\]
\item
The functors $\tens$ and $\rihom$ commute with $\opb{j_M} \simeq \epb{j_M}$.
\item 
The functor $\tens$ commutes with $\reeim{j_M}$ and
the functor $\rihom$ commutes with $\roim{j_M}$.
\enum
\end{lemma}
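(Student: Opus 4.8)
The plan is to reduce the whole statement to two elementary computations: that $\opb{j_M}$ and $\epb{j_M}$ are both isomorphic to the quotient functor $Q\colon\Derb(\icor_{\bM})\to\Derb(\icor_{(M,\bM)})$, and that $\reeim{j_M}$, $\roim{j_M}$ are computed on representatives by $\cor_M\tens(\scbul)$ and $\rihom(\cor_M,\scbul)$. Once this is done, parts (iii) and (iv) become formal, using Proposition~\ref{pro:bord} and the fact that the induced functors $\tens$ and $\rihom$ on $\Derb(\icor_{(M,\bM)})$ commute with $Q$ by construction.

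First I would settle (i). The graph $\Gamma_{j_M}=\{(x,x)\mid x\in M\}$ is locally closed in $\bM\times\bM$, and under the identification $\Gamma_{j_M}\simeq M$ both projections $q_1,q_2\colon\bM\times\bM\to\bM$ restrict to the open inclusion $k\colon M\hookrightarrow\bM$. Writing $i_\Gamma$ for the embedding $\Gamma_{j_M}\hookrightarrow\bM\times\bM$ and using $\cor_{\Gamma_{j_M}}\tens\opb{q_2}G\simeq\reeim{i_\Gamma}\opb{(q_2\circ i_\Gamma)}G$ and $\rihom(\cor_{\Gamma_{j_M}},\epb{q_2}G)\simeq\roim{i_\Gamma}\epb{(q_2\circ i_\Gamma)}G$, together with the compatibility of proper direct images and of direct images with composition of morphisms, Definition~\ref{def:fbordered} yields
\[
\opb{j_M}G\simeq\reeim{k}\opb{k}G\simeq\cor_M\tens G,\qquad \epb{j_M}G\simeq\roim{k}\epb{k}G\simeq\rihom(\cor_M,G)
\]
in $\Derb(\icor_{\bM})$. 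By Proposition~\ref{pro:bord} the images of $G$, of $\cor_M\tens G$ and of $\rihom(\cor_M,G)$ in $\Derb(\icor_{(M,\bM)})$ all coincide (the relevant cones lie in $\Derb(\icor_{\bM\setminus M})$), so $\opb{j_M}\simeq\epb{j_M}\simeq Q$; this proves (i). Running the same computation with $q_1$ and $q_2$ interchanged shows that the functors $\reeim{j_M},\roim{j_M}\colon\Derb(\icor_{(M,\bM)})\to\Derb(\icor_{\bM})$ of Lemma~\ref{lem:coper} send the class of $F$ to $\cor_M\tens F$ and to $\rihom(\cor_M,F)$; combining with (i) gives the two isomorphisms of (ii).

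For (iii), I would note that $\tens$ and $\rihom$ on $\Derb(\icor_{(M,\bM)})$ are induced from those on $\Derb(\icor_{\bM})$ — here one uses the projection formula for $\reeim{i}$ and the adjunctions attached to the closed embedding $i\colon\bM\setminus M\hookrightarrow\bM$ to see that $\Derb(\icor_{\bM\setminus M})$ is stable under $\tens$ and $\rihom$ in each variable — so that $Q$ is monoidal and commutes with $\rihom$; since $\opb{j_M}\simeq\epb{j_M}\simeq Q$ by (i), the same holds for these functors. Finally (iv) follows from (ii), (iii) and associativity (and, for the ``naive'' formulation, the idempotence $\cor_M\tens\cor_M\simeq\cor_M$): writing $F\simeq QF'$, one obtains $\reeim{j_M}(F\tens\opb{j_M}G)\simeq\cor_M\tens F'\tens G\simeq\reeim{j_M}F\tens G$ and, dually, $\roim{j_M}\rihom(\opb{j_M}G,F)\simeq\rihom(\cor_M\tens G,F')\simeq\rihom(G,\roim{j_M}F)$.

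I do not expect a genuine obstacle: the statement is a formal consequence of the definitions. The only point that deserves care is the first computation in (i) — verifying that the bordered-space operations of Definition~\ref{def:fbordered}, which are a priori built out of the locally closed ``diagonal'' $\Gamma_{j_M}\subset\bM\times\bM$, really collapse to $\cor_M\tens(\scbul)$ and $\rihom(\cor_M,\scbul)$ on $\Derb(\icor_{\bM})$. This relies on the standard description of $\cor_Z\tens(\scbul)$ and $\rihom(\cor_Z,\scbul)$ for locally closed $Z$ and, crucially in the indsheaf setting, on the functoriality of $\reeim{q_1}$ (note that $q_1$ is a projection, \emph{not} a proper map) and of $\roim{q_1}$ with respect to composition of morphisms.
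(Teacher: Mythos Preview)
The paper does not give a proof of this lemma: it is stated without justification, as part of the survey of results extracted from~\cite{DK13}. Your argument is the natural and correct verification from Definition~\ref{def:fbordered} and Proposition~\ref{pro:bord}; the key step---unwinding the kernel definition for the graph of the open embedding and reducing to $\reeim{k}\opb{k}(\scbul)\simeq\cor_M\tens(\scbul)$ and $\roim{k}\epb{k}(\scbul)\simeq\rihom(\cor_M,\scbul)$ for $k\colon M\hookrightarrow\bM$---is exactly what one expects, and parts (iii) and (iv) then follow formally as you indicate.
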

The operations for indsheaves on bordered spaces satisfy similar properties as for usual spaces.

\begin{lemma}\label{lem:badj}
Let $f\colon (M,\bM) \to (N,\bN)$ and $g\colon (L,\bL) \to (M,\bM)$  be  morphisms of bordered spaces.
\bnum
\item
The functor $\reeim{f}$ is left adjoint to $\epb{f}$.
\item
The functor $\opb{f}$ is left adjoint to $\roim{f}$.
\item
 One has $\reeim{(f\circ g)} \simeq \reeim{f} \circ \reeim{g}$, 
$\roim{(f\circ g)} \simeq \roim{f} \circ \roim{g}$,
$\opb{(f\circ g)} \simeq \opb{g} \circ \opb{f}$ and 
$\epb{(f\circ g)} \simeq \epb{g} \circ \epb{f}$.
\enum
\end{lemma}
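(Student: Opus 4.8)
The plan is to unwind all four assertions to the corresponding statements for indsheaves on the ambient (non-bordered) spaces $\bM,\bN,\bL$, where adjunctions and functoriality are already known from Section~\ref{section:indshv}. First I would recall that, by Definition~\ref{def:fbordered}, each of $\reeim{f},\roim{f},\opb{f},\epb{f}$ is built from the six operations on $\bM\times\bN$ applied to $\cor_{\Gamma_f}$ together with the two projections $q_1,q_2$; hence it suffices to work at the level of $\Derb(\icor_{\bM\times\bN})$ and then pass to the quotient categories via Proposition~\ref{pro:bord} and Corollary~\ref{cor:bord}. The composition statements (iii) are really statements about the graph: given $g\colon(L,\bL)\to(M,\bM)$ and $f\colon(M,\bM)\to(N,\bN)$, the graph $\Gamma_{f\circ g}\subset L\times N$ is the image of $\Gamma_g\times_M\Gamma_f$ under the projection $L\times M\times N\to L\times N$, and the properness condition \eqref{eq:Hbord} (used already in Lemma~\ref{lem:bordcom}) guarantees that the relevant maps on closures are proper, so that $\reeim{}$ and $\roim{}$ of the outer projection coincide on the sheaves in play.

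The key steps, in order, would be: (1) For the adjunctions (i) and (ii), use Corollary~\ref{cor:bord} to rewrite $\Hom$ in $\Derb(\icor_{(N,\bN)})$ as a $\Hom$ in $\Derb(\icor_{\bN})$ after tensoring with $\cor_N$ (or applying $\rihom(\cor_N,\scbul)$), then unfold $\reeim{f}$ and $\epb{f}$ by Definition~\ref{def:fbordered} and apply the classical adjunctions $(\reeim{q_i},\epb{q_i})$, $(\opb{q_i},\roim{q_i})$ together with the tensor--hom adjunction and the projection formula for indsheaves (the functors $\reeim{}$, $\opb{}$ of Theorem in Section~\ref{section:indshv}); one checks that the extra factors $\cor_{\Gamma_f}$, $\cor_M$, $\cor_N$ combine correctly, using that $\cor_{\Gamma_f}\tens\opb{q_2}\cor_N\simeq\cor_{\Gamma_f}$ and similarly on the $\bM$ side, since $\Gamma_f\subset M\times N$. (2) For the composition isomorphisms (iii), introduce the triple product $\bL\times\bM\times\bN$ with its three partial projections, express $\reeim{f}\circ\reeim{g}$ as a double pushforward against $\cor_{\Gamma_g}\etens\cor_{\bN}$ and $\cor_{\bL}\etens\cor_{\Gamma_f}$, use the projection formula and base change (Theorem~\ref{th:projIB}, items (iii) and (iv)) to collapse the two pushforwards into one along $\bL\times\bN$, and identify the resulting kernel with $\cor_{\Gamma_{f\circ g}}$ up to the subcategory $\Derb(\icor_{\bL\times\bN\setminus L\times N})$, which is killed in the quotient; the $\roim{}$, $\opb{}$, $\epb{}$ cases are dual or dealt with by the same diagram.

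I expect the main obstacle to be step (2), specifically the identification of the convolved kernel. The naive closure $\ol{\Gamma_g}\times_{\bM}\ol{\Gamma_f}$ need not map onto $\ol{\Gamma_{f\circ g}}$, and the pushforward of $\cor$ on this fibre product will in general differ from $\cor_{\Gamma_{f\circ g}}$ away from $L\times N$; the point is that, after tensoring with $\cor_{L}\etens\cor_{N}$ (equivalently, after passing to $\Derb(\icor_{(L,\bL)\times(N,\bN)})$), these discrepancies vanish because the properness hypotheses \eqref{eq:Hbord} for $f$ and $g$ force the relevant portion of $\ol{\Gamma_g}\times_{\bM}\ol{\Gamma_f}$ lying over $L\times N$ to be exactly $\Gamma_{f\circ g}$. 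Making this precise requires a careful diagram chase with base change for indsheaves and the characterizations in Proposition~\ref{pro:bord} of the orthogonal subcategories; once the kernel identity holds in the quotient category, everything else is a formal consequence of the classical operation formulas. The adjunction statements (i)--(ii), by contrast, should be essentially mechanical given Corollary~\ref{cor:bord} and the standard adjunctions.
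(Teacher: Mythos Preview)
The paper does not actually supply a proof of this lemma; it is stated without argument (the results of Section~\ref{section:bordered} are extracted from~\cite{DK13}, and this lemma is one of the foundational facts quoted from there). So there is no ``paper's proof'' to compare against.

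That said, your plan is the correct and standard one. The adjunctions (i)--(ii) are, as you say, mechanical once one unfolds Definition~\ref{def:fbordered} and uses the classical pairs $(\reeim{q_i},\epb{q_i})$, $(\opb{q_i},\roim{q_i})$ together with tensor--hom; the extra factors $\cor_M$, $\cor_N$ from Corollary~\ref{cor:bord} are absorbed exactly because $\Gamma_f\subset M\times N$. For (iii) your kernel--convolution picture on $\bL\times\bM\times\bN$ is the right one, and the subtle point you flag is genuine but slightly simpler than you suggest: the set $Z=\Gamma_g\times_M\Gamma_f$ is already contained in $L\times M\times N$ and the map $p_{13}\vert_Z\colon Z\to\Gamma_{f\circ g}$ is a homeomorphism (its inverse $(l,n)\mapsto(l,g(l),n)$ is continuous since $g$ is). Hence $\reeim{p_{13}}\cor_Z\simeq\cor_{\Gamma_{f\circ g}}$ on the nose in $\Derb(\icor_{\bL\times\bN})$, not merely modulo the boundary; the properness hypothesis~\eqref{eq:Hbord} for $g$ is what makes the base-change step legitimate (it guarantees that the relevant pushforward from $\bL\times\bM\times\bN$ down to $\bL\times\bN$ is controlled). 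The $\roim{}$, $\opb{}$, $\epb{}$ cases then follow by the dual diagram or directly by adjunction from the $\reeim{}$ and $\opb{}$ cases. Your outline would constitute a complete proof.
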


\begin{corollary}
If $f\colon (M,\bM) \to (N,\bN)$ is an isomorphism of bordered spaces, then $\roim f \simeq \reeim f$ and $\opb f \simeq \epb f$. Moreover, $\roim f$ and $\opb f$ are quasi-inverse to each other.
\end{corollary}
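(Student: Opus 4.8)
The plan is the following. Write $g\cl (N,\bN)\to (M,\bM)$ for the inverse morphism, so $f\circ g=\id$ and $g\circ f=\id$. By Lemma~\ref{lem:badj}~(iii), each of $\opb f$, $\epb f$, $\roim f$, $\reeim f$ is then an equivalence of triangulated categories, with respective quasi-inverses $\opb g$, $\epb g$, $\roim g$, $\reeim g$. Recall that the quasi-inverse of an equivalence is simultaneously a left and a right adjoint of it; hence $\opb g$ is a right adjoint of $\opb f$, and since $\roim f$ is a right adjoint of $\opb f$ by Lemma~\ref{lem:badj}~(ii), uniqueness of adjoints yields $\roim f\simeq\opb g$ (and symmetrically $\roim g\simeq\opb f$). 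Similarly, Lemma~\ref{lem:badj}~(i) together with the fact that $\reeim g$ is a right adjoint of $\reeim f$ gives $\epb f\simeq\reeim g$ and $\epb g\simeq\reeim f$. From $\roim f\simeq\opb g$ and $\opb f\circ\opb g\simeq\id\simeq\opb g\circ\opb f$ we obtain $\roim f\circ\opb f\simeq\id$ and $\opb f\circ\roim f\simeq\id$, which is the last assertion. Moreover, these four identities show that the remaining two isomorphisms $\roim f\simeq\reeim f$ and $\opb f\simeq\epb f$ are equivalent to one another (each is equivalent to $\opb g\simeq\epb g$), so it suffices to prove, say, $\opb f\simeq\epb f$.

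For this I would pass to a geometrically transparent model of $f$. Since $f$ restricts to a homeomorphism $M\isoto N$ and $\bN$ is Hausdorff, a net argument shows that $Z\eqdot\ol\Gamma_f\subset\bM\times\bN$ satisfies $Z\cap(M\times\bN)=\Gamma_f=Z\cap(\bM\times N)$. As $\ol\Gamma_f\to\bM$ and its transpose $\ol\Gamma_f\to\bN$ are proper (which is exactly what makes $f$ and $g$ morphisms of bordered spaces), the two projections induce isomorphisms of bordered spaces $p_1\cl (\Gamma_f,Z)\to(M,\bM)$ and $p_2\cl (\Gamma_f,Z)\to(N,\bN)$ with $f=p_2\circ\opb{p_1}$. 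The gain is that $p_1$ is represented by an honest continuous proper map $\ol p_1\cl Z\to\bM$ with $\opb{\ol p_1}(M)=\Gamma_f$ and $\ol p_1|_{\Gamma_f}$ a homeomorphism, and likewise for $p_2$. By Lemma~\ref{lem:badj}~(iii), and since (as is an elementary consequence of that same statement together with uniqueness of quasi-inverses) the corollary for an isomorphism implies it for its inverse, it is enough to prove the corollary for such ``properly extending'' isomorphisms.

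So let $h\cl (X,\bX)\to(M,\bM)$ be represented by a proper continuous $\ol h\cl\bX\to\bM$ with $\opb{\ol h}(M)=X$ and $\ol h|_X$ a homeomorphism. Writing $\gamma\cl\bX\into\bX\times\bM$, $x\mapsto(x,\ol h(x))$, for the associated closed embedding, so that $\cor_{\Gamma_h}\simeq\reeim{\gamma}\cor_\bX\simeq\roim{\gamma}\cor_\bX$, one unwinds Definition~\ref{def:fbordered} using the projection formula and the identity $\rihom(\reeim{\gamma}\cor_\bX,\scbul)\simeq\roim{\gamma}\epb{\gamma}(\scbul)$ to see that the four bordered operations attached to $h$ coincide with the ordinary indsheaf operations of $\ol h$ on $\Derb(\icor_\bX)$. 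Since $\ol h$ is proper, $\reeim{\ol h}\simeq\roim{\ol h}$, whence $\reeim h\simeq\roim h$. For $\opb h\simeq\epb h$, use the equivalence $\Derb(\icor_{(M,\bM)})\isoto{}^\bot\Derb(\icor_{\bM\setminus M})$, $F\mapsto\cor_M\tens F$, of Proposition~\ref{pro:bord} and its analogue for $(X,\bX)$: it then suffices that $\cor_X\tens\opb h(\cor_M\tens F)$ and $\cor_X\tens\epb h(\cor_M\tens F)$ be isomorphic for $F\in\Derb(\icor_\bM)$. Both objects lie in ${}^\bot\Derb(\icor_{\bX\setminus X})$ and are therefore recovered from their restrictions to $X$; by base change along the open embedding $X\into\bX$, those restrictions are $\opb{(\ol h|_X)}(F|_M)$ and $\epb{(\ol h|_X)}(F|_M)$, which agree because $\ol h|_X$ is a homeomorphism.

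The main obstacle is precisely the isomorphism $\opb f\simeq\epb f$: the formal adjunction calculus of the first paragraph determines everything except this, and it genuinely encodes geometric information about $f$. Its proof via the graph factorization hinges on the two points requiring care, namely that $\ol\Gamma_f$ meets $M\times\bN$ and $\bM\times N$ only along $\Gamma_f$ (so that $p_1,p_2$ really are isomorphisms of bordered spaces, representable by proper maps) and that the bordered operations of a properly extending isomorphism collapse to the ordinary ones; the bookkeeping relating the reductions to $p_1$, to $\opb{p_1}$, and back to $f$ is then straightforward.
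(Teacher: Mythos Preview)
The paper states this corollary without proof, so there is no argument to compare against directly. Your first paragraph is correct: the adjunction calculus, together with Lemma~\ref{lem:badj}~(iii), shows that all four functors are equivalences, that $\roim f$ and $\opb f$ are quasi-inverse, and that the two remaining isomorphisms $\roim f\simeq\reeim f$ and $\opb f\simeq\epb f$ are equivalent to one another. Your graph factorization in the second paragraph and the identification of the bordered operations of a ``properly extending'' isomorphism $h$ with the ordinary indsheaf operations of the proper map $\ol h$ are also sound (modulo the harmless slip $\cor_{\Gamma_h}\simeq\reeim\gamma\cor_X$ rather than $\cor_\bX$; the difference is supported in $(\bX\setminus X)\times\bM$ and vanishes in the quotient). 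In particular, $\reeim h\simeq\roim h$ follows from properness of $\ol h$ as you say.

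There is, however, a genuine gap in your separate argument for $\opb h\simeq\epb h$. You assert that objects of ${}^\bot\Derb(\icor_{\bX\setminus X})$ are ``recovered from their restrictions to $X$'', i.e.\ that the restriction functor ${}^\bot\Derb(\icor_{\bX\setminus X})\to\Derb(\icor_X)$ is conservative. This is false, and is exactly the phenomenon that motivates bordered spaces. For instance, with $\bX=[0,1]$ and $X=(0,1)$, the objects $\iota_\bX\cor_X$ and $\beta_\bX\cor_X\simeq\inddlim_{\epsilon\to0}\cor_{(\epsilon,1-\epsilon)}$ both lie in ${}^\bot\Derb(\icor_{\{0,1\}})$ and have isomorphic restrictions to $X$, yet they are not isomorphic in $\Derb(\icor_\bX)$ since $\Hom(\cor_X,\iota_\bX\cor_X)\simeq\cor$ while $\Hom(\cor_X,\beta_\bX\cor_X)\simeq0$.

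Fortunately this flawed step is redundant. Having proved $\reeim h\simeq\roim h$, you may simply invoke your own first-paragraph equivalence, applied now to the isomorphism $h$: since $\reeim h$ is an equivalence, its right adjoint $\epb h$ is its quasi-inverse; since $\roim h\simeq\reeim h$, its left adjoint $\opb h$ is the same quasi-inverse; hence $\opb h\simeq\epb h$. (One could also bypass the graph factorization entirely by observing that an isomorphism of bordered spaces is proper in the sense of Definition~\ref{def:proper}---both projections from $\ol\Gamma_f$ are proper---and then appeal to the Proposition immediately following it, though in the paper that Proposition is placed after the Corollary.)
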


Most of the formulas for indsheaves on usual spaces extend to bordered spaces.

\begin{proposition}
\label{pro:bproj}
Let $f\colon (M,\bM) \to (N,\bN)$ be a morphism of bordered spaces.
For $F\in\Derb(\icor_{(M,\bM)})$ and $G,G_1,G_2\in\Derb(\icor_{(N,\bN)})$, one has isomorphisms
\begin{align*}
\reeim {f}(\opb {f} G \tens F) & \simeq G \tens \reeim {f} F, \\
\opb {f} (G_1\tens G_2) &\simeq \opb {f} G_1 \tens \opb {f} G_2, \\
\rihom(G,\roim {f} F) & \simeq \roim {f} \rihom(\opb {f} G,F), \\
\rihom(\reeim {f} F, G) & \simeq \roim {f} \rihom(F, \epb {f} G), \\
\epb {f} \rihom(G_1,G_2) & \simeq \rihom(\opb {f} G_1, \epb {f} G_2),
\end{align*}
and a morphism
\[
\opb f \rihom(G_1,G_2) \to \rihom(\opb f G_1,\opb f G_2).
\]
\end{proposition}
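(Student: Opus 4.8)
The plan is to reduce every identity to the corresponding formula for the six operations on indsheaves over the \emph{ordinary} good spaces $\bM$, $\bN$ and $\bM\times\bN$ — that is, to Theorem~\ref{th:projIB} (read with the trivial ring object), the tensor--hom adjunction, and the internal-hom formulas recalled in \S\ref{subsec:shv}, all of which are available for indsheaves. Throughout I write $q_1\colon\bM\times\bN\to\bM$ and $q_2\colon\bM\times\bN\to\bN$ for the projections and $\Gamma\eqdot\Gamma_f$ for the graph, a locally closed subset of $\bM\times\bN$ whose closure $\ol\Gamma$ is proper over $\bM$. All manipulations will be carried out in $\Derb(\icor_{\bM})$, $\Derb(\icor_{\bN})$, $\Derb(\icor_{\bM\times\bN})$; that the resulting morphisms are the asserted isomorphisms in the quotient categories is then automatic, because the functors involved are already well defined on the quotients by Lemma~\ref{lem:coper} and morphisms in $\Derb(\icor_{(M,\bM)})$ are computed inside $\Derb(\icor_{\bM})$ after tensoring by $\cor_M$ (Corollary~\ref{cor:bord}, Proposition~\ref{pro:bord}).

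First I would isolate the one geometric input, a base-change statement along the graph. Since $q_1\vert_\Gamma\colon\Gamma\to M$ is a homeomorphism with $q_2\vert_\Gamma=f\circ q_1\vert_\Gamma$, and $q_1\vert_{\ol\Gamma}$ is proper, I expect natural isomorphisms $\cor_\Gamma\tens\opb{q_1}\opb f G\isoto\cor_\Gamma\tens\opb{q_2}G$ and $\rihom(\cor_\Gamma,\epb{q_2}G)\isoto\rihom(\cor_\Gamma,\epb{q_1}\epb f G)$ (and likewise with $\reeim f$, $\roim f$ replacing $\opb f$, $\epb f$), to be obtained from the ordinary projection and base-change formulas of Theorem~\ref{th:projIB} applied along $q_1\vert_\Gamma$, together with $\cor_\Gamma\tens\cor_\Gamma\simeq\cor_\Gamma$ and $\rihom(\cor_\Gamma,\rihom(\cor_\Gamma,\scbul))\simeq\rihom(\cor_\Gamma,\scbul)$. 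Equivalently, one may factor $f$ in the bordered category as $(M,\bM)\to(M\times N,\bM\times\bN)\to(N,\bN)$ — a closed embedding followed by a projection — and appeal to Lemma~\ref{lem:badj}\,(iii), reducing to those two special cases.

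Granting this, the remaining steps are formal. For the projection formula I would write $\reeim f(\opb f G\tens F)=\reeim{q_2}\bl\cor_\Gamma\tens\opb{q_1}\opb f G\tens\opb{q_1}F\br\simeq\reeim{q_2}\bl\opb{q_2}G\tens(\cor_\Gamma\tens\opb{q_1}F)\br\simeq G\tens\reeim f F$, the last isomorphism being the ordinary projection formula for $q_2$. The commutation $\opb f(G_1\tens G_2)\simeq\opb f G_1\tens\opb f G_2$ follows from $\cor_\Gamma\tens\opb{q_2}(G_1\tens G_2)\simeq(\cor_\Gamma\tens\opb{q_2}G_1)\tens(\cor_\Gamma\tens\opb{q_2}G_2)$ and the fact that $\reeim{q_1}$, applied to objects supported on $\Gamma$ where $q_1$ is a homeomorphism, commutes with $\tens$. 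For the three $\rihom$-identities I would expand the left-hand sides using $\roim f F=\roim{q_2}\rihom(\cor_\Gamma,\epb{q_1}F)$, $\reeim f F=\reeim{q_2}(\cor_\Gamma\tens\opb{q_1}F)$, $\epb f G=\roim{q_1}\rihom(\cor_\Gamma,\epb{q_2}G)$, the adjunction formulas for $q_1$ and $q_2$, the tensor--hom adjunction, the identity $\epb{q_i}\rihom(\scbul,\scbul)\simeq\rihom(\opb{q_i}\scbul,\epb{q_i}\scbul)$, and the base-change isomorphisms above, then match term by term with the right-hand sides. Finally, the morphism $\opb f\rihom(G_1,G_2)\to\rihom(\opb f G_1,\opb f G_2)$ will be taken to be the one adjoint to $\opb f\rihom(G_1,G_2)\tens\opb f G_1\simeq\opb f\bl\rihom(G_1,G_2)\tens G_1\br\to\opb f G_2$, using the just-proved commutation of $\opb f$ with $\tens$ and applying $\opb f$ to the evaluation morphism.

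The part I expect to be delicate is the bookkeeping around the quotient category: one must be sure that the manipulations with $\cor_\Gamma$ in the second paragraph — where $\Gamma$ is only locally closed in $\bM\times\bN$, not closed — produce morphisms that are genuinely invertible in $\Derb(\icor_{(M,\bM)})$, not merely after a choice of representative. I would handle this via Proposition~\ref{pro:bord}, which identifies the quotient with the right (resp.\ left) orthogonal of $\Derb(\icor_{\bM\setminus M})$ in $\Derb(\icor_{\bM})$, so that all identities can be tested on the preferred representatives $\rihom(\cor_M,F)$ (resp.\ $\cor_M\tens F$); I would also keep in mind that, unlike for usual sheaves, $\reeim f$ does not commute with $\iota$, so the ordinary formulas being invoked must be the genuine indsheaf versions of Theorem~\ref{th:projIB}, including its base-change part, which do hold in full generality.
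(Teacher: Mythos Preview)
The paper does not supply a proof for this proposition: Section~\ref{section:bordered} is explicitly a summary of results extracted from~\cite{DK13}, and Proposition~\ref{pro:bproj} is stated there without argument. So there is nothing to compare against directly.

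That said, your strategy is the natural one and is essentially how such results are established. Unwinding Definition~\ref{def:fbordered} in terms of $q_1,q_2$ and $\cor_{\Gamma_f}$ and then invoking the ordinary projection, adjunction, and base-change formulas for indsheaves on $\bM\times\bN$ (Theorem~\ref{th:projIB} and its companions) is exactly the intended reduction. The alternative you mention---factoring $f$ as the graph embedding $(M,\bM)\hookrightarrow(M\times N,\bM\times\bN)$ followed by the second projection, and using Lemma~\ref{lem:badj}(iii)---is also valid and often cleaner, since for a projection the formulas are immediate from the non-bordered case, and for a closed embedding of bordered spaces the graph is the diagonal and the manipulations with $\cor_\Gamma$ become trivial.

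The delicacy you flag is genuine but correctly diagnosed: the point is that $q_1\vert_{\ol\Gamma}$ is proper while $q_2\vert_{\ol\Gamma}$ need not be, so the asymmetry between the $\reeim{}$ and $\roim{}$ sides must be handled by passing to the preferred representatives $\cor_M\tens(\scbul)$ or $\rihom(\cor_M,\scbul)$ of Proposition~\ref{pro:bord}, exactly as you propose. One small point worth tightening: when you argue that $\reeim{q_1}$ ``commutes with $\tens$'' on objects supported on $\Gamma$, make this precise by factoring through the locally closed inclusion $i\colon\Gamma\hookrightarrow\bM\times\bN$ and the homeomorphism $q_1\vert_\Gamma\colon\Gamma\isoto M$, so that the statement becomes the triviality that a topological isomorphism intertwines tensor products.
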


\begin{lemma}
\label{lem:bcart}
Consider a Cartesian diagram in the category of bordered spaces
\begin{equation*}
\xymatrix{
(M',\bM') \ar[r]^{f'} \ar[d]^{g'} & (N',\bN') \ar[d]^{g} \\
(M,\bM) \ar[r]^{f}\ar@{}[ur]|-\square & (N,\bN).
}
\end{equation*}
Then there are isomorphisms of functors $\Derb(\icor_{(M',\bM')}) \to \Derb(\icor_{(N,\bN)})$
\[
\opb g\reeim f \simeq \reeim f' g^{\prime-1}, \qquad
\epb g\roim f \simeq \roim f' g^{\prime!}.
\]
\end{lemma}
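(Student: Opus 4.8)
Lemma \ref{lem:bcart} asserts the base change isomorphisms $\opb g\reeim f \simeq \reeim{f'}g^{\prime-1}$ and $\epb g\roim f \simeq \roim{f'}g^{\prime!}$ for a Cartesian square of bordered spaces. Here is the plan.

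\textbf{Strategy.} The two isomorphisms are adjoint/dual to each other: once one has $\opb g\reeim f \simeq \reeim{f'}\opb{g'}$, the second follows by passing to right adjoints (using Lemma~\ref{lem:badj}, which gives $\reeim f \dashv \epb f$ and $\opb f \dashv \roim f$, together with $\opb g\dashv\roim g$), since $(\opb g\reeim f)$ has right adjoint $\epb f\roim g$ and $(\reeim{f'}\opb{g'})$ has right adjoint $\roim{g'}\epb{f'}$, and one checks the square of right adjoints is the one appearing in the statement after relabelling. So the core is the first isomorphism. The natural approach is to \emph{reduce to the known base change formula for indsheaves on ordinary spaces}, Theorem~\ref{th:projIB}(d) (or its sheaf-theoretic shadow used to define the six operations on bordered spaces), by unwinding Definition~\ref{def:fbordered}.

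\textbf{Key steps.} First I would recall that by definition $\reeim f F = \reeim{q_2}(\cor_{\Gamma_f}\tens\opb{q_1}F)$ on $\bM\times\bN$, and similarly for the other maps, so everything is expressed through operations on the honest spaces $\bM,\bN,\bM',\bN'$ and the graph sheaves. Next, using that the category of bordered spaces has products (stated in the text) and that $\bM' = \bM\times_{\bN}\bN'$ at the level of the enveloping spaces — more precisely, that $\Gamma_{f'}$ sits inside $\bM'\times\bN'$ compatibly with $\Gamma_f\subset\bM\times\bN$ and the maps $g,g'$ — I would set up the relevant diagram of ordinary good topological spaces and repeatedly invoke: (i) the ordinary base change formula $\opb g\reeim f\simeq\reeim{f'}\opb{g'}$ for indsheaves (Theorem~\ref{th:projIB}(d), applied to several Cartesian squares built from the projections $q_i$); (ii) the projection formula $\reeim{f}(\opb f G\tens F)\simeq G\tens\reeim f F$ (Proposition~\ref{pro:bproj} on bordered spaces, or its ordinary version) to move the graph sheaves past the proper pushforwards; and (iii) the compatibility of $\tens$ and $\opb{(\cdot)}$ with products. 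Chaining these isomorphisms through the composite expression for $\opb g\reeim f F$ and simplifying the graph sheaves — using $\opb{q_1}\cor_{\Gamma_f}$-type identities and the fact that $\Gamma_{f'} = (\text{appropriate pullback of }\Gamma_f)$ — yields $\reeim{f'}\opb{g'}F$. One must also check the isomorphism is well-defined on the quotient categories $\Derb(\icor_{(\cdot)})$, i.e. that it kills objects supported on the boundary; this follows from Lemma~\ref{lem:coper}, which already guarantees the four functors descend to the bordered categories, so the isomorphism of composites on $\Derb(\icor_{\bM})$ descends automatically. Finally, deduce the $\roim{}$/$\epb{}$ statement by the adjunction argument above.

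\textbf{Main obstacle.} The bookkeeping of graph sheaves and the verification that the enveloping spaces genuinely form Cartesian squares compatible with the $\Gamma_{(\cdot)}$'s is the delicate part: a Cartesian square of bordered spaces need \emph{not} give a Cartesian square of the chosen compactifications $\bM,\bN,\ldots$, only a square that is Cartesian ``over $M,N,\ldots$'' in a suitable sense, so one has to be careful to apply ordinary base change only to squares that really are Cartesian (those built from projections $\bM\times\bN'\to\bN'$, etc., which are genuinely Cartesian), and to handle the discrepancy by tensoring with the correct $\cor_{\Gamma}$. Concretely, the hard step is showing $\opb{g'}$ of the graph-sheaf data for $f$ matches the graph-sheaf data for $f'$ up to the operations being commuted — i.e. producing the canonical isomorphism $\cor_{\Gamma_{f'}}\simeq (\text{transform of }\cor_{\Gamma_f})$ on the relevant product space — and then tracking functoriality so that the resulting isomorphism is the canonical base change morphism rather than merely an abstract isomorphism. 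Everything else is a routine, if somewhat lengthy, diagram chase using the already-established formulas on bordered spaces.
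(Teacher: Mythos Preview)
The paper does not give a proof of this lemma; it is stated without argument, as one of the many results in \S\ref{section:bordered} extracted from~\cite{DK13} (the introduction warns that proofs are often omitted or only sketched). So there is no ``paper's own proof'' to compare against.

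Your strategy is the natural one and is essentially how the result is established in~\cite{DK13}: unwind Definition~\ref{def:fbordered} so that each of $\opb g$, $\reeim f$, $\opb{g'}$, $\reeim{f'}$ becomes a kernel transform with kernel the constant sheaf on the corresponding graph, and then the base change statement becomes a computation of a convolution of graph kernels on the ambient product $\bM\times\bN\times\bN'$ (resp.\ $\bM\times\bM'\times\bN'$), which is handled by the ordinary projection and base change formulas for indsheaves (Theorem~\ref{th:projIB}). The Cartesian hypothesis on the bordered square is exactly what makes the two convolved kernels agree: it says $M'\simeq M\times_N N'$, hence $\Gamma_{g}\circ\Gamma_{f}$ and $\Gamma_{f'}\circ\Gamma_{g'}$ coincide as subsets of $M\times N'$, so the two composite kernels are isomorphic. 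You correctly flag that the compactifications $\bM,\bN,\bM',\bN'$ need \emph{not} themselves form a Cartesian square, and that this is harmless because the operations are defined through the graphs, not through an extension of $f,g$ to the boundaries.

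One small correction to your adjunction step: taking right adjoints of $\opb g\,\reeim f \simeq \reeim{f'}\,\opb{g'}$ gives $\epb f\,\roim g \simeq \roim{g'}\,\epb{f'}$, which is the second isomorphism \emph{for the transposed square} (swap the roles of the horizontal and vertical arrows). To obtain $\epb g\,\roim f \simeq \roim{f'}\,\epb{g'}$ as stated, you must first apply the already-proven first isomorphism to the transposed Cartesian square $\opb f\,\reeim g \simeq \reeim{g'}\,\opb{f'}$ and \emph{then} pass to right adjoints. This is what you call ``relabelling'', but it is worth saying explicitly that it requires the first isomorphism for an arbitrary Cartesian square, not just the original one.
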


The notion of proper morphisms of topological spaces
is extended to the case of bordered spaces as follows. 
\begin{definition}\label{def:proper}
The morphism of bordered spaces $f\colon (M,\bM) \to (N,\bN)$ is \emph{proper} if the following two conditions hold:
\index{bordered spaces!proper morphism of}%
\index{proper morphism!of bordered spaces}%
\bna
\item $f\colon M \to N$ is proper,
\item the projection $\overline\Gamma_f \to \bN$ is proper.
\ee
\end{definition}

\begin{lemma}
\label{lem:proper}
The map $f\colon (M,\bM) \to (N,\bN)$ is proper if and only  if the following two conditions hold: 
\bna
\item  $\overline\Gamma_f \times_{\bN} N \subset \Gamma_f$.
\item the projection $\overline\Gamma_f \to \bN$ is proper.
\ee
\end{lemma}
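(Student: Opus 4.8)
The statement to prove is Lemma~\ref{lem:proper}: for a morphism of bordered spaces $f\colon(M,\bM)\to(N,\bN)$, properness (Definition~\ref{def:proper}, namely $f\colon M\to N$ proper \emph{and} $\ol\Gamma_f\to\bN$ proper) is equivalent to the pair of conditions (a) $\ol\Gamma_f\times_{\bN}N\subset\Gamma_f$ and (b) $\ol\Gamma_f\to\bN$ proper. Since condition (b) is common to both formulations, the whole content is to show, \emph{assuming} $\ol\Gamma_f\to\bN$ is proper, the equivalence
\[
f\colon M\to N \text{ is proper}\quad\Longleftrightarrow\quad \ol\Gamma_f\times_{\bN}N\subset\Gamma_f.
\]
Throughout I will use that $\Gamma_f$ is always closed in $M\times N$ (it is the graph of a continuous map to a Hausdorff space), hence locally closed in $\bM\times\bN$, and that by the definition of a morphism of bordered spaces $\ol\Gamma_f\to\bM$ is already proper; here $\ol\Gamma_f$ denotes the closure of $\Gamma_f$ in $\bM\times\bN$.

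\textbf{Step 1: the direction ($\Leftarrow$).} Assume (a) and (b). I first observe that $\ol\Gamma_f\cap(M\times N)=\Gamma_f$: indeed $\ol\Gamma_f\cap(M\times N)=\ol\Gamma_f\times_{\bN}N\cap(M\times\bN)\subset\Gamma_f\cap(M\times\bN)=\Gamma_f$ using (a), and the reverse inclusion is clear. Now to show $f\colon M\to N$ is proper, let $p_M\colon M\times N\to M$ and $p_N\colon M\times N\to N$ be the projections; $f$ is proper iff the graph map $\Gamma_f\to N$ (restriction of $p_N$) is proper, because $\Gamma_f\to M$ is a homeomorphism intertwining $f$ with $p_N|_{\Gamma_f}$. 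So it suffices to show $\Gamma_f\to N$ is proper. Consider the proper map $q\colon\ol\Gamma_f\to\bN$ from (b). Then $\Gamma_f=\ol\Gamma_f\cap(M\times N)=q^{-1}(N)\cap(M\times\bN)$. By (a), $q^{-1}(N)=\ol\Gamma_f\times_{\bN}N\subset\Gamma_f\subset M\times\bN$, so in fact $\Gamma_f=q^{-1}(N)$. Therefore $\Gamma_f\to N$ is the base change of the proper map $q$ along $N\hookrightarrow\bN$, hence proper. (Alternatively one argues directly: $\Gamma_f\to N$ is a restriction of $q$ to the closed subset $q^{-1}(N)$ of $\ol\Gamma_f$, with image contained in $N$, so it is proper as a composite of a closed embedding and a proper map.) This gives $f\colon M\to N$ proper, so both conditions of Definition~\ref{def:proper} hold.

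\textbf{Step 2: the direction ($\Rightarrow$).} Assume $f\colon M\to N$ is proper and $\ol\Gamma_f\to\bN$ is proper; I must derive (a), i.e. $\ol\Gamma_f\times_{\bN}N\subset\Gamma_f$. Again set $q\colon\ol\Gamma_f\to\bN$, proper. The subset $q^{-1}(N)=\ol\Gamma_f\times_{\bN}N$ is a subset of $\ol\Gamma_f$ lying over $N$, hence contained in the closure (in $M\times N$) of $\ol\Gamma_f\cap(M\times N)=\Gamma_f$; but I want the \emph{reverse}, that $q^{-1}(N)$ is no larger than $\Gamma_f$. The point is that $q|_{q^{-1}(N)}\colon q^{-1}(N)\to N$ is proper (base change of the proper $q$), and it contains $\Gamma_f\to N$ as a sub-map; since $f\colon M\to N$ is proper, $\Gamma_f$ is closed in $M\times N$ and $\Gamma_f\to N$ is proper, so $\Gamma_f$ is closed in $q^{-1}(N)$. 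Then $q^{-1}(N)\setminus\Gamma_f$ is open in $q^{-1}(N)$ and disjoint from $M\times N$, i.e. contained in $(\bM\setminus M)\times N$. Now comes the density argument: $\Gamma_f$ is dense in $\ol\Gamma_f$, and I claim it is in fact dense in $q^{-1}(N)$; granting that, the open set $q^{-1}(N)\setminus\Gamma_f$ must be empty, which is exactly (a). To see the density of $\Gamma_f$ in $q^{-1}(N)$: take $x\in q^{-1}(N)$, so $x\in\ol\Gamma_f$ with $q(x)\in N$; since $\Gamma_f$ is dense in $\ol\Gamma_f$ there is a net $x_\alpha\in\Gamma_f$ with $x_\alpha\to x$, hence $f$-coordinates $m_\alpha\in M$ with $(m_\alpha,f(m_\alpha))\to x=(b,n)$ where $b\in\bM$, $n=q(x)\in N$; since $f$ is proper and $f(m_\alpha)\to n$ in $N$, the net $m_\alpha$ is eventually in a compact subset of $M$ (using local compactness / countability at infinity of the good spaces, so that properness gives the usual "preimage of compact is compact" control of nets), hence has a subnet converging to some $m\in M$, forcing $b=m\in M$ and $x=(m,f(m))\in\Gamma_f$. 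Thus $q^{-1}(N)\subset\overline{\Gamma_f}^{\,q^{-1}(N)}=\Gamma_f$, establishing (a).

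\textbf{Main obstacle.} The only delicate point is the density/net argument in Step 2 — specifically, turning "$f\colon M\to N$ proper and $f(m_\alpha)\to n\in N$" into "$m_\alpha$ eventually lies in a compact set." For arbitrary topological spaces properness does not immediately give this, but here $M,N$ are \emph{good} spaces (Hausdorff, locally compact, countable at infinity), so one may replace nets by sequences and use that a proper continuous map between such spaces is closed with compact fibers, whence the preimage of any compact neighborhood of $n$ is compact; this is the standard fact and is where the good-space hypotheses are used. Everything else is formal manipulation of closures, base change of proper maps (stable under pullback), and the already-known properness of $\ol\Gamma_f\to\bM$ built into the definition of morphism of bordered spaces. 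I would write Step~1 first (it is purely formal and needs no topology beyond base change of proper maps), then Step~2, isolating the density lemma "$\Gamma_f$ is dense in $\ol\Gamma_f\times_{\bN}N$ when $f\colon M\to N$ is proper" as the single substantive claim.
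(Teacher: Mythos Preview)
The paper states this lemma without proof, so there is nothing to compare against; I will simply assess your argument.

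Your Step~1 is clean and correct: from (a) you get $q^{-1}(N)=\Gamma_f$, and then $\Gamma_f\to N$ is the base change of the proper map $q\colon\ol\Gamma_f\to\bN$ along the open inclusion $N\hookrightarrow\bN$, hence proper.

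Your Step~2 is also correct, but you have wrapped the real argument in an unnecessary ``closed $+$ dense $\Rightarrow$ equal'' shell. The net argument you give at the end already proves the inclusion $q^{-1}(N)\subset\Gamma_f$ directly: given $(b,n)\in\ol\Gamma_f$ with $n\in N$, pick a net $(m_\alpha,f(m_\alpha))\to(b,n)$; choose a compact neighborhood $K$ of $n$ in $N$ (local compactness), so that eventually $m_\alpha\in f^{-1}(K)$, which is compact since $f$ is proper; a convergent subnet gives $m_\alpha\to m\in M$, and Hausdorffness of $\bM\times\bN$ forces $(b,n)=(m,f(m))\in\Gamma_f$. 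That is the whole proof of (a); the preliminary paragraph about $\Gamma_f$ being closed in $q^{-1}(N)$ and its complement being open can be deleted. Note also that this direction uses only that $f\colon M\to N$ is proper, not condition~(b).

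One small correction: ``countable at infinity'' means $\sigma$-compact, not first countable, so it does not by itself let you replace nets by sequences. This does not matter, since the net argument above works verbatim using only that $N$ is locally compact Hausdorff.
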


\begin{proposition}
Assume that $f\colon (M,\bM) \to (N,\bN)$ is proper. Then $\reeim f \isoto\roim f$ as functors $\Derb(\icor_{(M,\bM)}) \to \Derb(\icor_{(N,\bN)})$.
\end{proposition}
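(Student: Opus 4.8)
The statement to prove is: if $f\colon (M,\bM) \to (N,\bN)$ is a proper morphism of bordered spaces, then the natural morphism $\reeim f \to \roim f$ is an isomorphism of functors $\Derb(\icor_{(M,\bM)}) \to \Derb(\icor_{(N,\bN)})$. My plan is to reduce everything to the analogous known fact for indsheaves on ordinary topological spaces, using the explicit description of $\reeim f$ and $\roim f$ from Definition~\ref{def:fbordered} together with the characterization of properness in Lemma~\ref{lem:proper}.

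First I would unwind the definitions. By Definition~\ref{def:fbordered}, for $F \in \Derb(\icor_{\bM})$ representing an object of $\Derb(\icor_{(M,\bM)})$, we have $\reeim f F = \reeim{q_2}(\cor_{\Gamma_f} \tens \opb{q_1} F)$ and $\roim f F = \roim{q_2}\rihom(\cor_{\Gamma_f}, \epb{q_1} F)$, where $q_1, q_2$ are the projections from $\bM \times \bN$. The natural transformation $\reeim{q_2} \to \roim{q_2}$ exists for indsheaves on the ordinary space $\bM \times \bN$, and composing with the canonical morphisms relating $\cor_{\Gamma_f}\tens(\scbul)$ and $\rihom(\cor_{\Gamma_f},\scbul)$ gives the comparison morphism $\reeim f F \to \roim f F$; I would first check this is the morphism under consideration (it is the one induced from the adjunction-based comparison map, well-defined on the quotient categories by Lemma~\ref{lem:coper}). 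Since we may choose the representative $F$ to satisfy $\cor_M \tens F \isoto F$ (Proposition~\ref{pro:bord}), and since $\opb{q_1}(\cor_M \tens F) \simeq \cor_{M \times \bN} \tens \opb{q_1}F$, the sheaf $\cor_{\Gamma_f} \tens \opb{q_1}F$ is already supported on $\Gamma_f \subset M \times \bN$, so we are effectively working over $\Gamma_f$.

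The key step is then to use Lemma~\ref{lem:proper}(a), namely $\overline\Gamma_f \times_{\bN} N \subset \Gamma_f$, which says that over the open set $N \subset \bN$ the closure $\overline\Gamma_f$ coincides with $\Gamma_f$, i.e.\ $\Gamma_f$ is closed in $\bM \times N$; combined with Lemma~\ref{lem:proper}(b), the projection $q_2$ restricted to $\overline\Gamma_f$ is proper. The standard fact for indsheaves on good topological spaces is that $\reeim{p} \isoto \roim{p}$ whenever $p$ is proper; I would apply this to $q_2|_{\overline\Gamma_f}\colon \overline\Gamma_f \to \bN$ (a genuinely proper map of good spaces). Because $\cor_{\Gamma_f}\tens \opb{q_1}F$ is supported in $\overline\Gamma_f$ and $q_2$ is proper there, both $\reeim{q_2}$ and $\roim{q_2}$ of it are computed via the proper map $q_2|_{\overline\Gamma_f}$, hence agree. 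One also needs that $\rihom(\cor_{\Gamma_f}, \epb{q_1}F)$ has the same support behaviour, which follows since $\cor_{\Gamma_f} \tens(\scbul)$ and $\rihom(\cor_{\Gamma_f},\scbul)$ differ only by operations that preserve the condition of being generated over $\Gamma_f$ once we have reduced to the $\cor_M\tens F\isoto F$ representative and used $\epb{q_1}F \simeq \opb{q_1}F$ up to the relevant torsion/shift data that disappears over $\Gamma_f$ — alternatively, invoke Lemma~\ref{lem:badj} so that both sides represent the same functor via adjunction with $\epb f$ and $\opb f$ on the quotient categories.

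The main obstacle, I expect, is the bookkeeping of passing between the ambient spaces $\bM \times \bN$ and the locally closed subset $\Gamma_f$ (which is closed in $\bM \times N$ but only locally closed in $\bM\times\bN$): one must be careful that the operations $\reeim{q_2}$, $\roim{q_2}$, $\opb{q_1}$, $\epb{q_1}$ interact correctly with restriction to $\Gamma_f$ and its closure, and that the comparison morphism really is an isomorphism at the level of the quotient category $\Derb(\icor_{(N,\bN)})$ and not merely after applying $\cor_N \tens(\scbul)$. Here I would lean on Proposition~\ref{pro:bord} and Corollary~\ref{cor:bord} to test the isomorphism, checking that $\cor_N \tens(\reeim f F) \isoto \cor_N \tens(\roim f F)$, which reduces precisely to the classical proper base change / proper-map statement for indsheaves over $\bM \times N$ restricted to the closed subset $\Gamma_f$, where properness of $q_2|_{\overline\Gamma_f}$ does all the work.
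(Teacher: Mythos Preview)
The paper does not give a proof of this proposition; it is simply stated (the section is extracted from \cite{DK13} and many results are recorded without argument). So there is nothing to compare against, and I can only assess your sketch on its own.

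Your overall strategy is sound and contains all the right ingredients: take a representative $F$ with $\cor_M\tens F\isoto F$, observe that both $\cor_{\Gamma_f}\tens\opb{q_1}F$ and $\rihom(\cor_{\Gamma_f},\epb{q_1}F)$ are supported on $\overline{\Gamma_f}$, use properness of $q_2|_{\overline{\Gamma_f}}$ to identify $\reeim{q_2}$ with $\roim{q_2}$ on such objects, and finally test the comparison in $\Derb(\icor_{(N,\bN)})$ by applying $\cor_N\tens(\scbul)$. This is the natural line of argument.

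Where your write-up is genuinely incomplete is the identification of the two integrands after restriction to $\bM\times N$. The phrase ``$\epb{q_1}F \simeq \opb{q_1}F$ up to the relevant torsion/shift data that disappears over $\Gamma_f$'' is not an argument. The actual mechanism is clean and you should spell it out: over $\bM\times N$ the set $\Gamma_f$ is \emph{closed} (this is exactly Lemma~\ref{lem:proper}(a)), so with $i'\colon\Gamma_f\hookrightarrow\bM\times N$ the closed embedding, the two expressions become $\roim{i'}\opb{(q_1' i')}F$ and $\roim{i'}\epb{(q_1' i')}F$ respectively; but $q_1'\circ i'\colon\Gamma_f\simeq M\hookrightarrow\bM$ is an \emph{open} embedding, whence $\opb{(q_1'i')}\simeq\epb{(q_1'i')}$ and the two coincide. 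A second point you gloss over: to commute $\cor_N\tens(\scbul)$ past $\roim{q_2}$ you must first replace $\roim{q_2}$ by $\reeim{q_2}$ (legitimate because the argument is supported on $\overline{\Gamma_f}$ where $q_2$ is proper) and only then invoke the projection formula; your sketch does not indicate this order of operations. With these two steps made explicit, the argument goes through.
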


\section{Enhanced indsheaves}\label{section:enhanced}
In this section, extracted from~\cite{DK13}, one extends some constructions of Tamarkin~\cite{Ta08} to indsheaves on bordered spaces. 
We refer to~\cite{GS12} for a detailed exposition and some complements to Tamarkin's paper. 

\subsection{Tamarkin's construction}
Let $M$ be a smooth manifold and denote by $T^*M$ its cotangent bundle.
Given $F\in\Derb(\cor_M)$, its microsupport $SS(F) \subset T^*M$ (see~\cite{KS90}) describes the codirections of non propagation for the cohomology of $F$. It is a closed conic co-isotropic subset of $T^*M$.

In order to treat co-isotropic subsets of $T^*M$ which are not necessarily conic, Tamarkin adds a real variable $t\in\R$.
Denoting by $(t,t^*)$ the symplectic coordinates of $T^*\R$, consider the full subcategory 
$\Derb_{t^*\leq 0} (\cor_{M\times\R})\subset \Derb(\cor_{M\times\R})$ whose objects $K$ satisfy $SS(K) \subset \{t^*\leq 0\}$. There are equivalences
\[
{}^\bot\Derb_{t^*\leq 0} (\cor_{M\times\R})\ \simeq \Derb(\cor_{M\times\R})/\Derb_{t^*\leq 0} (\cor_{M\times\R}) 
\simeq\Derb_{t^*\leq 0} (\cor_{M\times\R})^\bot
\]
between the quotient category and the left and right orthogonal categories.

Let us recall the description of the first equivalence.

For $K,L\in\Derb(\cor_{M\times\R})$, consider the convolution functor with respect to the $t$ variable
\[
K\ctens L\seteq \reim \mu (\opb q_1 K \tens \opb q_2 L),
\]
where $\mu,\;q_1\,;q_2\cl M\times\R\times\R$ are given by 
$\mu(x,t_1,t_2) = (x,t_1 + t_2)$, $q_1(x,t_1,t_2) = (x,t_1)$ and $q_2(x,t_1,t_2) = (x,t_2)$.

One sets
\begin{align}\label{eq:tgeq0A}
\cor_{\{t\geq 0\}} &= \cor_{\{(x,t)\in M\times\R\;;\; t\in\R,\ t \geq 0\}}, 
\end{align}
and we use similar notation for  
$\cor_{\{t= 0\}}$. These are sheaves on $M\times\R$.

Note that $\cor_{\{t=0\}} \ctens K \simeq K$. Then
\eqn
{}\Derb_{t^*\leq 0} (\cor_{M\times\R})&=& \{K\in\Derb(\cor_{M\times\R});\cor_{\{t\geq 0\}} \ctens K\simeq 0 \},\\
{}^\bot\Derb_{t^*\leq 0} (\cor_{M\times\R})&=& \{K\in\Derb(\cor_{M\times\R});\cor_{\{t\geq 0\}} \ctens K \isoto K \},
\eneqn
and one has an equivalence
\[
\Derb(\cor_{M\times\R})/\Derb_{t^*\leq 0} (\cor_{M\times\R}) \isoto {}^\bot\Derb_{t^*\leq 0} (\cor_{M\times\R}),
\quad K\mapsto \cor_{\{t\geq 0\}} \ctens K.
\]
We will adapt this construction to
the case of indsheaves and a good topological space $M$ in the sequel. 

\subsection{Convolution products} 

Consider the 2-point compactification of the real line 
\glossary{$\ol\R$}%
$\ol\R \eqdot \R
\sqcup\{+\infty,-\infty\}$. Denote by $\BBP^1(\R)=\R\sqcup\{\infty\}$ the real projective line. Then $\ol\R$ has a structure of subanalytic space such that the natural map $\ol\R\to\PR$ is a subanalytic map.

\begin{notation}\label{not:Rinfty}
We will consider the bordered space
\glossary{$\R_\infty$}%
\[
\R_\infty \eqdot (\R,\overline\R).
\]
\end{notation}

Note that $\R_\infty$ is isomorphic to $(\R,\PR)$ as a bordered space.

Consider the morphisms of bordered spaces
\begin{align}
a &\colon \R_\infty \to \R_\infty, \label{eq:muq1q2}\\
\mu, q_1,q_2 &\colon\R_\infty\times\R_\infty \to \R_\infty,\notag
\end{align}
\glossary{$\mu(t_1,t_2)$}%
where $a(t) = -t$, $\mu(t_1,t_2) = t_1+t_2$ 
and $q_1,q_2$ are the natural projections.

For a good topological space $M$, we will use the same notations for the associated morphisms
\begin{align*}
a &\colon M\times\R_\infty \to M\times\R_\infty, \\
\mu, q_1,q_2 &\colon M\times\R_\infty\times\R_\infty \to M\times\R_\infty.
\end{align*}
 We also use  the natural morphisms
\eq\ba{c}
\xymatrix@C=4ex{
M\times\R_\infty \ar[rr]^j \ar[dr]_\pi && M\times\overline\R \ar[dl]^{\overline\pi} \\
&M.
}\ea
\eneq
\begin{notation}\label{not:fR}
We sometimes write $\BDC(\cor_{M\times\fR})$ for
$\BDC(\cor_{M\times\R})$ regarded as a full subcategory $\BDC(\icor_{M\times\R_\infty})$.
\end{notation}

\begin{definition}\label{def:ctens1}
The functors
\glossary{$\ctens$}%
\glossary{$\cihom$}%
\begin{align*}
\ctens &\colon \BDC(\icor_{M\times\R_\infty}) \times \BDC(\icor_{M\times\R_\infty}) \to \BDC(\icor_{M\times\R_\infty}), \\
\cihom &\colon \Der[-](\icor_{M\times\R_\infty})^\op \times \Der[+](\icor_{M\times\R_\infty}) \to \Der[+](\icor_{M\times\R_\infty}),
\end{align*}
are defined by
\begin{align*}
K_1\ctens K_2 &= \reeim {\mu} (\opb q_1 K_1 \tens \opb q_2 K_2), \\
\cihom(K_1,K_2)&= \roim {q_1} \rihom(\opb q_2 K_1, \epb{\mu}K_2).
\end{align*}
\end{definition}
Although we work now on $M\times\ol\R$, we keep the same notations as in~\eqref{eq:tgeq0A} and one sets
\begin{align}\label{eq:tgeq0B}
\cor_{\{t\geq 0\}} &= \cor_{\{(x,t)\in M\times\overline\R\;;\; t\in\R,\ t \geq 0\}}.
\end{align}
We use similar notation for  
$\cor_{\{t= 0\}}$, $\cor_{\{t> 0\}}$, $\cor_{\{t\leq 0\}}$, $\cor_{\{t< 0\}}$,  $\cor_{\{t\not=0\}}$ and $\cor_{\{t=a\}}$, etc.  
These are sheaves on $M\times\overline\R$ whose stalk vanishes at points of $M\times(\overline\R\setminus\R)$. 
We also regard them as objects of $\BDC(\icor_{M\times\R_\infty})$. 
\begin{lemma}
For $K\in\BDC(\icor_{M\times\R_\infty})$ there are isomorphisms
\[
\cor_{\{t= 0\}} \ctens K \simeq K \simeq \cihom(\cor_{\{t= 0\}}, K).
\]
More generally, for $a\in\R$, we have
\[
\cor_{\{t= a\}} \ctens K \simeq \roim{\mu_a} K \simeq \cihom(\cor_{\{t= -a\}}, K),
\]
where $\mu_a\colon M\times\R_\infty\to M\times\R_\infty$ is the morphism induced by the translation $t\mapsto t+a$.
\end{lemma}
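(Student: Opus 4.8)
The plan is to prove the two displayed isomorphisms in the lemma directly from the definitions of $\ctens$ and $\cihom$ together with the elementary properties of the operations $\reeim{\mu}$, $\roim{q_1}$, $\opb{q_i}$, $\epb{\mu}$ for indsheaves on bordered spaces (Lemma~\ref{lem:badj}, Proposition~\ref{pro:bproj}). The key observation is that the sheaf $\cor_{\{t=a\}}$ on $M\times\ol\R$ is the (closed) direct image of the constant indsheaf $\cor_{M\times\R_\infty}$ along the section $s_a\colon M\times\R_\infty\to M\times\R_\infty\times\R_\infty$, $(x,t)\mapsto(x,t,a)$, or rather a graph-type embedding; convolving with it should amount to a base change computation that collapses one of the two $\R_\infty$-factors.

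First I would treat the general formula $\cor_{\{t=a\}}\ctens K\simeq\roim{\mu_a}K$, from which the case $a=0$ (with $\mu_0=\id$) follows at once. By definition, $\cor_{\{t=a\}}\ctens K=\reeim{\mu}(\opb{q_1}\cor_{\{t=a\}}\tens\opb{q_2}K)$. Now $\opb{q_1}\cor_{\{t=a\}}$ is the constant indsheaf on the closed sub-bordered space $Z_a=\{(x,t_1,t_2)\,;\,t_1=a\}\subset M\times\R_\infty\times\R_\infty$, so tensoring with it restricts to $Z_a$; since $Z_a\simeq M\times\R_\infty$ via $(x,t_2)\mapsto(x,a,t_2)$ and $\mu|_{Z_a}$ is the translation $\mu_a\colon(x,t_2)\mapsto(x,t_2+a)$ composed with this identification, and since $\mu|_{Z_a}$ is proper (so $\reeim{}\simeq\roim{}$ on it) and the projection $Z_a\hookrightarrow M\times\R_\infty\times\R_\infty\to M\times\R_\infty$ is the identity, the functoriality of $\reeim{}$ under composition (Lemma~\ref{lem:badj}(iii)) gives $\reeim{\mu}(\cor_{Z_a}\tens\opb{q_2}K)\simeq\reeim{\mu_a}\opb{\id}K\simeq\roim{\mu_a}K$. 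One should double-check here that the base-change/Cartesian-square reasoning is valid in the bordered setting; Lemma~\ref{lem:bcart} and the compatibility of $\cor_{\Gamma_f}$-constructions with closed embeddings cover exactly this. For the $\cihom$ side, by definition $\cihom(\cor_{\{t=-a\}},K)=\roim{q_1}\rihom(\opb{q_2}\cor_{\{t=-a\}},\epb{\mu}K)$; the indsheaf $\opb{q_2}\cor_{\{t=-a\}}$ is constant on the closed set $\{t_2=-a\}$, so $\rihom$ against it is $\roim{}$ of the restriction, and the same collapsing as above — now using $\mu|_{\{t_2=-a\}}\colon(x,t_1)\mapsto(x,t_1-a)=\mu_{-a}$ and $q_1|_{\{t_2=-a\}}=\id$ — together with Proposition~\ref{pro:bproj} (adjunction/commutation of $\rihom$ with $\roim{}$) yields $\roim{\mu_{-a}}\epb{\mu_{-a}}$ applied appropriately, and one recognizes $\roim{\mu_a}K$ after identifying $\epb{}$ with $\opb{}$ for the isomorphism of bordered spaces $\mu_a$.

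An alternative, perhaps cleaner, route for $\cihom$ is adjunction: for any $L\in\BDC(\icor_{M\times\R_\infty})$ one has $\Hom(L,\cihom(\cor_{\{t=-a\}},K))\simeq\Hom(L\ctens\cor_{\{t=-a\}},K)$ — this is the defining adjunction between $\ctens$ and $\cihom$, which should be recorded just after Definition~\ref{def:ctens1} or follows formally from Lemma~\ref{lem:badj}(i) and Proposition~\ref{pro:bproj} — and then $L\ctens\cor_{\{t=-a\}}\simeq\cor_{\{t=-a\}}\ctens L\simeq\roim{\mu_{-a}}L$ by the first part (commutativity of $\ctens$ being immediate from the definition), so $\Hom(L,\cihom(\cor_{\{t=-a\}},K))\simeq\Hom(\roim{\mu_{-a}}L,K)\simeq\Hom(L,\roim{\mu_a}K)$ using that $\roim{\mu_{-a}}$ is left adjoint to $\roim{\mu_a}$ (since $\mu_a$ is an isomorphism of bordered spaces with $\opb{\mu_a}\simeq\epb{\mu_a}\simeq\roim{\mu_{-a}}$, by the Corollary following Lemma~\ref{lem:badj}). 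Yoneda then gives the result. I would probably present the $\ctens$ statement by the direct computation and the $\cihom$ statement by this adjunction argument.

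The main obstacle I anticipate is bookkeeping with the bordered-space structure: the sheaves $\cor_{\{t=a\}}$ live on $M\times\ol\R$, the operations are defined via the graph construction of Definition~\ref{def:fbordered}, and one must be careful that ``restricting to the closed subset $\{t_1=a\}$ and pushing forward'' really is computed by an honest Cartesian square in the category of bordered spaces so that Lemma~\ref{lem:bcart} applies, and that properness of the relevant projections holds so that $\reeim{}$ and $\roim{}$ agree where needed (this is where the $2$-point compactification, rather than $\R$ alone, is essential — it is what makes $\mu_a$ an \emph{isomorphism} of bordered spaces). Once the correct diagram is set up, every step is a formal consequence of the already-established operational calculus; the only real content is identifying the right diagram and checking the properness hypotheses.
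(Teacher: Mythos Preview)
Your proof is correct. The paper states this lemma without proof (it is extracted from~\cite{DK13} and treated as elementary), so there is nothing to compare against; your direct computation for $\ctens$ via the closed embedding $i_a\colon(x,t)\mapsto(x,a,t)$ combined with the projection formula and $\mu\circ i_a=\mu_a$, and your adjunction argument for $\cihom$, are exactly the right way to fill in the details. One small clarification: the adjunction $\Hom(L\ctens K_1,K_2)\simeq\Hom(L,\cihom(K_1,K_2))$ that you invoke is indeed immediate from the definitions together with the pairs $(\opb{q_1},\roim{q_1})$, $(\reeim{\mu},\epb{\mu})$ and the $(\tens,\rihom)$ adjunction, so there is no circularity even though the paper records it in the lemma \emph{after} this one. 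Your worry about bordered-space bookkeeping is well placed but resolves cleanly: $i_a$ is a closed embedding of bordered spaces (its graph-closure is $\bM\times\ol\R\times\{a\}$, proper over $\bM\times\ol\R$), and $\mu_a$ is an isomorphism of bordered spaces because $t\mapsto t+a$ extends to a homeomorphism of $\ol\R$ fixing $\pm\infty$, whence $\reeim{\mu_a}\simeq\roim{\mu_a}$ and $\opb{\mu_a}\simeq\epb{\mu_a}\simeq\roim{\mu_{-a}}$ as you use.
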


\begin{corollary}
The category $\BDC(\icor_{M\times\R_\infty})$ has a structure of commutative tensor category with $\ctens$ as tensor product and $\cor_{\{t= 0\}}$ as unit object.
\end{corollary}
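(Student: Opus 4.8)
The plan is to establish the three structural isomorphisms (the unit, associativity and commutativity constraints) for $\ctens$ and then check the coherence axioms, using nothing beyond the formalism of Section~\ref{section:bordered}: the adjunctions and compatibility with composition of Lemma~\ref{lem:badj}, the projection formula of Proposition~\ref{pro:bproj}, and the base change isomorphisms of Lemma~\ref{lem:bcart}. Note that $\mu$, and more generally the $n$-fold sum map $M\times\R_\infty^n\to M\times\R_\infty$, is a morphism of bordered spaces because $\overline\R$ is compact, so all these operations are available.

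\textbf{Unit constraints.} For the left unit I would simply invoke the preceding lemma, which gives $\cor_{\{t=0\}}\ctens K\isoto K$, functorially in $K$; the right unit then follows by combining this with the commutativity isomorphism below. Naturality is immediate from functoriality of $\reeim{\mu}$, $\opb{q_1}$, $\opb{q_2}$ and $\tens$.

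\textbf{Commutativity.} Let $\sigma$ denote (the product with $\id_M$ of) the exchange of the two factors of $\R_\infty\times\R_\infty$. It is an isomorphism of bordered spaces, so $\reeim{\sigma}$ is quasi-inverse to $\opb{\sigma}$, and moreover $\mu\circ\sigma=\mu$, $q_1\circ\sigma=q_2$, $q_2\circ\sigma=q_1$. Using that $\opb{\sigma}$ commutes with $\tens$ (Proposition~\ref{pro:bproj}) and that $\reeim{}$ and $\opb{}$ compose (Lemma~\ref{lem:badj}), I would compute
\[
K_1\ctens K_2=\reeim{\mu}(\opb{q_1}K_1\tens\opb{q_2}K_2)\isoto\reeim{\mu}\reeim{\sigma}\opb{\sigma}(\opb{q_1}K_1\tens\opb{q_2}K_2)\isoto\reeim{\mu}(\opb{q_2}K_1\tens\opb{q_1}K_2)=K_2\ctens K_1 .
\]
The square of this isomorphism is the identity because $\sigma^2=\id$.

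\textbf{Associativity.} Write $m\colon M\times\R_\infty^3\to M\times\R_\infty$, $(x,t_1,t_2,t_3)\mapsto(x,t_1+t_2+t_3)$, and let $p_1,p_2,p_3$ be the three projections to $M\times\R_\infty$. The key claim is that both $(K_1\ctens K_2)\ctens K_3$ and $K_1\ctens(K_2\ctens K_3)$ are naturally isomorphic to $\reeim{m}(\opb{p_1}K_1\tens\opb{p_2}K_2\tens\opb{p_3}K_3)$. Starting from the definition of $(K_1\ctens K_2)\ctens K_3$, one rewrites the inner $\opb{q_1}\reeim{\mu}(\cdots)$ using base change (Lemma~\ref{lem:bcart}) for the evident Cartesian square relating the partial sum $\mu$ and the map $\mu\times\id$, then absorbs the remaining factor $\opb{q_2}K_3$ via the projection formula (Proposition~\ref{pro:bproj}), and finally composes the two proper pushforwards using Lemma~\ref{lem:badj}(iii); the mirror-image computation treats $K_1\ctens(K_2\ctens K_3)$. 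This is the step carrying the most bookkeeping, and I expect it, together with the coherence check, to be the main obstacle — though it is entirely formal, with no new idea required.

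\textbf{Coherence.} It remains to verify the pentagon, triangle and hexagon identities (the symmetry being already an involution). Since every structural isomorphism above is assembled from the canonical base-change and projection-formula isomorphisms, which are mutually compatible, each coherence diagram — after expanding all of its vertices through the appropriate $n$-fold sum map $M\times\R_\infty^n\to M\times\R_\infty$ and using associativity, unitality and commutativity of addition with neutral element $0$ — reduces to a diagram of base-change/projection isomorphisms over a fixed bordered space, which commutes. Alternatively one may cite that this is the standard Tamarkin-type verification carried out for indsheaves in \cite{DK13} (see also \cite{GS12}).
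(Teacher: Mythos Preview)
Your argument is correct and is exactly the standard verification one expects here. The paper itself gives no proof of this corollary: it is stated immediately after the lemma providing the unit isomorphism $\cor_{\{t=0\}}\ctens K\simeq K$, and the remaining structure (associativity, commutativity, coherence) is left implicit as a routine consequence of the six-functor formalism on bordered spaces developed in Section~\ref{section:bordered}. What you have written is precisely that routine verification spelled out in full, and it matches the intended argument; there is nothing to correct.
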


As seen in \eqref{eq:innerhom} below, the functor $\cihom$ is the
inner hom of the tensor category $\BDC(\icor_{M\times\R_\infty})$.
\begin{lemma}
For $K_1,K_2,K_3\in\BDC(\icor_{M\times\R_\infty})$ one has
\eq
&&\ba[t]{l}
\Hom[\BDC(\icor_{M\times\R_\infty})](K_1\ctens K_2,\;K_3)\\[1ex]
\hs{7ex}
\simeq \Hom[\BDC(\icor_{M\times\R_\infty})]\bl K_1,\;\cihom(K_2,K_3)\br,
\ea\label{eq:innerhom}\\
&&\cihom(K_1\ctens K_2,\;K_3)
\simeq \cihom\bl K_1,\;\cihom(K_2,K_3)\br,\nn\\
&&\roim\pi\rihom(K_1\ctens K_2,K_3) \simeq
\roim\pi\rihom(K_1,\cihom(K_2,K_3)).\nn
\eneq
\end{lemma}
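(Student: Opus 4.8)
The plan is to prove the three isomorphisms in display~\eqref{eq:innerhom} by deducing them formally from the adjunction $(\reeim{f},\epb{f})$, the projection formula, and the various compatibility lemmas for operations on bordered spaces established in Section~\ref{section:bordered} (Lemmas~\ref{lem:badj} and~\ref{lem:bcart}, Proposition~\ref{pro:bproj}), together with the obvious structural identities for the maps $\mu,q_1,q_2$ on $M\times\R_\infty\times\R_\infty$. The key point is that $\ctens$ and $\cihom$ are defined by the same recipe as ordinary $\tens$ and $\rihom$ for a ``relative convolution'' along $\R_\infty$, so the proof is really just bookkeeping with Cartesian squares.

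First I would prove the adjunction~\eqref{eq:innerhom}. Writing out the definitions, $K_1\ctens K_2 = \reeim{\mu}(\opb{q_1}K_1\tens\opb{q_2}K_2)$, so by the adjunction $(\reeim{\mu},\epb{\mu})$ of Lemma~\ref{lem:badj}(i) we get
\[
\Hom(K_1\ctens K_2,K_3)\simeq\Hom\bl\opb{q_1}K_1\tens\opb{q_2}K_2,\epb{\mu}K_3\br.
\]
Now use the tensor-hom adjunction on $M\times\R_\infty\times\R_\infty$ to move $\opb{q_2}K_2$ to the right, obtaining $\Hom(\opb{q_1}K_1,\rihom(\opb{q_2}K_2,\epb{\mu}K_3))$, and then the adjunction $(\opb{q_1},\roim{q_1})$ of Lemma~\ref{lem:badj}(ii) to get $\Hom(K_1,\roim{q_1}\rihom(\opb{q_2}K_2,\epb{\mu}K_3))$, which is exactly $\Hom(K_1,\cihom(K_2,K_3))$. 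This already identifies $\cihom$ as the inner hom.

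Next I would establish the internal (object-level) version $\cihom(K_1\ctens K_2,K_3)\simeq\cihom(K_1,\cihom(K_2,K_3))$. The clean way is to apply the Yoneda lemma: for every $L$, compute $\Hom(L,\cihom(K_1\ctens K_2,K_3))$ using~\eqref{eq:innerhom} twice, as $\Hom(L\ctens(K_1\ctens K_2),K_3)$, and use associativity of $\ctens$ (which follows from associativity of $\tens$ and $\opb{(\cdot)}$ together with the base-change/composition identities for $\mu$ restricted to the two coordinatewise sums $M\times\R_\infty^3\to M\times\R_\infty$), then re-associate the Homs. Concretely,
\begin{align*}
\Hom(L,\cihom(K_1\ctens K_2,K_3))
&\simeq \Hom(L\ctens K_1\ctens K_2,K_3)\\
&\simeq \Hom(L\ctens K_1,\cihom(K_2,K_3))\\
&\simeq \Hom(L,\cihom(K_1,\cihom(K_2,K_3))).
\end{align*}
For the third isomorphism $\roim{\pi}\rihom(K_1\ctens K_2,K_3)\simeq\roim{\pi}\rihom(K_1,\cihom(K_2,K_3))$, I would argue similarly but sheaf-theoretically rather than on global sections: since $\rihom(K_1\ctens K_2,K_3)=\rihom(\reeim{\mu}(\opb{q_1}K_1\tens\opb{q_2}K_2),K_3)\simeq\roim{\mu}\rihom(\opb{q_1}K_1\tens\opb{q_2}K_2,\epb{\mu}K_3)$ by Proposition~\ref{pro:bproj}, one pushes forward by $\roim{\pi}$; using $\pi\circ\mu = \pi\circ q_1$ (all compositions $M\times\R_\infty^2\to M$ agree) together with Lemma~\ref{lem:badj}(iii) one rewrites $\roim{\pi}\roim{\mu}$ as $\roim{\pi}\roim{q_1}$ and then moves $\opb{q_2}K_1$ across by the tensor-hom adjunction and $\opb{q_1}$-adjointness exactly as above, producing $\roim{\pi}\roim{q_1}\rihom(\opb{q_2}K_1,\roim{q_1'}\rihom(\dots))$ — the bookkeeping identifies the inner piece as $\cihom(K_2,K_3)$ after applying the base change Lemma~\ref{lem:bcart} to the square relating the two projections off $M\times\R_\infty^2$.

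The main obstacle will be handling the associativity of $\ctens$ carefully in the bordered setting: on $\R_\infty\times\R_\infty\times\R_\infty$ one must check that the two ways of iterating $\mu$ give the same morphism of bordered spaces and that the relevant squares are Cartesian in the bordered category, so that Lemma~\ref{lem:bcart} and Lemma~\ref{lem:badj}(iii) apply; the properness condition~\eqref{eq:Hbord} on graphs for the sum map $\R_\infty^3\to\R_\infty$ needs a small verification (the closure of the graph of addition over $\ol\R^3$ projects properly to $\ol\R^3$). Once that associativity and the compatibilities of $\pi$ with $\mu,q_1,q_2$ are in hand, all three formulas are formal consequences of adjunctions, and I would present them in the order above: adjunction first, then the two ``$\cihom$ turns $\ctens$ into composition'' statements by Yoneda, then the $\roim{\pi}\rihom$ variant by the same manipulation performed at the sheaf level.
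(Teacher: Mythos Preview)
The paper states this lemma without proof (Section~\ref{section:enhanced} is a survey extracted from~\cite{DK13}), so there is nothing to compare your argument against. Your approach is the standard one and is correct: the first isomorphism follows exactly as you wrote from the adjunctions $(\reeim{\mu},\epb{\mu})$, $(\tens,\rihom)$, and $(\opb{q_1},\roim{q_1})$; the second then follows by Yoneda and associativity of $\ctens$, which is already asserted in the corollary immediately preceding this lemma, so your worry about it being the ``main obstacle'' is unnecessary here.

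Your treatment of the third isomorphism is slightly garbled and over-complicated. You do not need Lemma~\ref{lem:bcart} (base change) at all. After writing
\[
\roim{\pi}\rihom(K_1\ctens K_2,K_3)\simeq\roim{\pi}\roim{\mu}\rihom(\opb{q_1}K_1\tens\opb{q_2}K_2,\epb{\mu}K_3)
\]
and using $\pi\circ\mu=\pi\circ q_1$ to replace $\roim{\pi}\roim{\mu}$ by $\roim{\pi}\roim{q_1}$, simply apply tensor--hom to pull out $\opb{q_1}K_1$ (not $\opb{q_2}K_1$ as you wrote), and then the formula $\roim{q_1}\rihom(\opb{q_1}K_1,\,\cdot\,)\simeq\rihom(K_1,\roim{q_1}(\,\cdot\,))$ from Proposition~\ref{pro:bproj}. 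This gives directly
\[
\roim{\pi}\rihom\bigl(K_1,\roim{q_1}\rihom(\opb{q_2}K_2,\epb{\mu}K_3)\bigr)=\roim{\pi}\rihom(K_1,\cihom(K_2,K_3)),
\]
with no auxiliary projection $q_1'$ or Cartesian square needed.
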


The following lemmas are used to define the category of enhanced indsheaves.

\begin{lemma}
\label{lem:cihomrihompi}
For $K_1,K_2\in\BDC(\icor_{M\times\R_\infty})$ and $L\in\BDC(\icor_M)$ one has
\begin{align*}
\opb\pi L \tens (K_1\ctens K_2) & \simeq (\opb\pi L \tens K_1)\ctens K_2, \\
\rihom(\opb\pi L, \cihom(K_1,K_2)) & \simeq \cihom(\opb\pi L \tens K_1,K_2) \\
& \simeq \cihom(K_1,\rihom(\opb\pi L, K_2)).
\end{align*}
\end{lemma}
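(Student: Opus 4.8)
The plan is to prove the first isomorphism by a direct computation with the projection formula, and then to deduce the two $\cihom$-isomorphisms from it by a Yoneda argument.

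\emph{Step 1: the tensor formula.} The key geometric observation is that, writing $p\colon M\times\R_\infty\times\R_\infty\to M$ for the projection, one has $\pi\circ\mu=\pi\circ q_1=\pi\circ q_2=p$, since each of $\mu,q_1,q_2$ followed by $\pi$ simply forgets both $\R$-coordinates. Hence, by Lemma~\ref{lem:badj}\,(iii), $\opb\mu\opb\pi\simeq\opb{q_1}\opb\pi\simeq\opb p$. Now, for $L\in\BDC(\icor_M)$ and $K_1,K_2\in\BDC(\icor_{M\times\R_\infty})$, I would compute, using the projection formula and the compatibility of $\opb{q_1}$ with $\tens$ (both from Proposition~\ref{pro:bproj}), together with the identity above:
\[
\opb\pi L\tens(K_1\ctens K_2)
\simeq\reeim\mu\bl\opb\mu\opb\pi L\tens\opb q_1K_1\tens\opb q_2K_2\br
\simeq\reeim\mu\bl\opb{q_1}(\opb\pi L\tens K_1)\tens\opb q_2K_2\br,
\]
and the last term is by definition $(\opb\pi L\tens K_1)\ctens K_2$. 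This proves the first isomorphism.

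\emph{Step 2: the $\cihom$-formulas.} I would show that each of $\rihom(\opb\pi L,\cihom(K_1,K_2))$, $\cihom(\opb\pi L\tens K_1,K_2)$ and $\cihom(K_1,\rihom(\opb\pi L,K_2))$ represents, functorially in $K\in\BDC(\icor_{M\times\R_\infty})$, the functor $K\mapsto\Hom(\opb\pi L\tens(K\ctens K_1),K_2)$; the claim then follows from the Yoneda lemma. For the first object, apply the $(\tens,\rihom)$-adjunction and the $(\ctens,\cihom)$-adjunction (formula~\eqref{eq:innerhom}) to reach $\Hom((K\tens\opb\pi L)\ctens K_1,K_2)$, then use Step~1 in the form $(K\tens\opb\pi L)\ctens K_1\simeq\opb\pi L\tens(K\ctens K_1)$ (invoking commutativity of $\tens$). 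For the third object, apply the $(\ctens,\cihom)$-adjunction and then the $(\tens,\rihom)$-adjunction to reach $\Hom((K\ctens K_1)\tens\opb\pi L,K_2)$, and rewrite it by commutativity of $\tens$. For the second object, apply the $(\ctens,\cihom)$-adjunction to reach $\Hom(K\ctens(\opb\pi L\tens K_1),K_2)$, then use Step~1 together with commutativity of $\ctens$ to identify $K\ctens(\opb\pi L\tens K_1)\simeq(\opb\pi L\tens K_1)\ctens K\simeq\opb\pi L\tens(K\ctens K_1)$. One should note that each step is functorial in $K$, so Yoneda applies.

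\emph{On the difficulty.} There is no serious obstacle here: the statement is a formal consequence of the projection formula for indsheaves on bordered spaces and of the adjunction making $\cihom$ the inner hom for $\ctens$. The only points that require a little care are (i) checking that the projection formula and the identity $\opb f(G_1\tens G_2)\simeq\opb fG_1\tens\opb fG_2$ are indeed available in the bordered-space setting, which they are by Proposition~\ref{pro:bproj}, and (ii) keeping track of the identity $\pi\circ\mu=\pi\circ q_1=\pi\circ q_2$, which is precisely what makes $\opb\pi L$ behave like a factor ``constant in the $\R$-directions'' under convolution.
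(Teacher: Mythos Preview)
Your proof is correct. The paper states this lemma without proof (it is one of several formal lemmas in \S\,\ref{section:enhanced} left to the reader), so there is no argument to compare against; your approach---the projection formula together with $\pi\circ\mu=\pi\circ q_1=\pi\circ q_2$ for Step~1, then Yoneda via the adjunctions $(\tens,\rihom)$ and $(\ctens,\cihom)$ for Step~2---is exactly the expected one.
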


\begin{lemma}
\label{lem:cihomKt0}
For $K\in\BDC(\icor_{M\times\R_\infty})$ and $L\in\BDC(\icor_M)$ one has
\begin{align*}
\opb\pi L \tens K & \simeq (\opb\pi L \tens \cor_{\{t=0\}})\ctens K, \\
\rihom(\opb\pi L, K) & \simeq \cihom(\opb\pi L \tens \cor_{\{t=0\}},K), \\
\opb a \rihom(K,\epb\pi L) &\simeq
\cihom(K,\cor_{\{t=0\}} \tens\opb\pi L).
\end{align*}
Here $a$ is the involution of $M\times\fR$ given by
$(x,t)\mapsto(x,-t)$.
\end{lemma}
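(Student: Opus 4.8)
The three isomorphisms are all formal consequences of the results already assembled about the convolution functors $\ctens$ and $\cihom$, together with the unit lemma $\cor_{\{t=0\}}\ctens K\simeq K$ and the projection-type formulas of Lemma~\ref{lem:cihomrihompi}. The plan is to reduce each statement to an application of one of those two lemmas by rewriting $\opb\pi L\tens(\scbul)$ and $\rihom(\opb\pi L,\scbul)$ as convolutions against $\opb\pi L\tens\cor_{\{t=0\}}$.

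\medskip\noindent
\emph{First isomorphism.} I would start from the unit isomorphism
$\cor_{\{t=0\}}\ctens K\simeq K$ and then apply the first identity of Lemma~\ref{lem:cihomrihompi} with $K_1=\cor_{\{t=0\}}$, $K_2=K$:
\[
\opb\pi L\tens K\simeq \opb\pi L\tens(\cor_{\{t=0\}}\ctens K)\simeq(\opb\pi L\tens\cor_{\{t=0\}})\ctens K.
\]
That is exactly the claim; no further work is needed.

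\medskip\noindent
\emph{Second isomorphism.} Similarly, using the unit isomorphism $\cihom(\cor_{\{t=0\}},K)\simeq K$ and then the second identity of Lemma~\ref{lem:cihomrihompi} with $K_1=\cor_{\{t=0\}}$, $K_2=K$, one gets
\[
\rihom(\opb\pi L,K)\simeq\rihom\bl\opb\pi L,\cihom(\cor_{\{t=0\}},K)\br\simeq\cihom(\opb\pi L\tens\cor_{\{t=0\}},K),
\]
which is the assertion. One should double-check that the version of Lemma~\ref{lem:cihomrihompi} being invoked does produce the first of its two displayed forms, $\cihom(\opb\pi L\tens K_1,K_2)$, with $K_1=\cor_{\{t=0\}}$; this is immediate.

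\medskip\noindent
\emph{Third isomorphism.} This is the one that takes a little genuine computation, and I expect it to be the main (mild) obstacle, since it mixes $\rihom$, $\epb\pi$, the involution $a$, and the convolution $\cihom$, and the involution must be tracked carefully. The idea is to unwind $\cihom(K,\cor_{\{t=0\}}\tens\opb\pi L)$ from its definition,
$\cihom(K_1,K_2)=\roim{q_1}\rihom(\opb q_2 K_1,\epb\mu K_2)$,
taking $K_1=K$ and $K_2=\cor_{\{t=0\}}\tens\opb\pi L$. One then computes $\epb\mu(\cor_{\{t=0\}}\tens\opb\pi L)$: since $\cor_{\{t=0\}}$ is (the extension by zero of) the constant sheaf on the diagonal-type locus $\{t_1+t_2=0\}$ inside $M\times\R_\infty\times\R_\infty$ and $\opb\pi L$ is pulled back from $M$, the object $\epb\mu\cor_{\{t=0\}}$ is supported on $\{t_1+t_2=0\}$ and is, up to the relevant orientation/shift bookkeeping which cancels because $\mu$ is a submersion with one-dimensional fibres, the constant sheaf there; hence $\epb\mu(\cor_{\{t=0\}}\tens\opb\pi L)\simeq\cor_{\{t_1+t_2=0\}}\tens q_1^{-1}\opb\pi L$ (with the obvious identifications). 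Pushing forward by $\roim{q_1}$ along the isomorphism $\{t_1+t_2=0\}\simeq M\times\R_\infty$ given by $(x,t_1,t_2)\mapsto(x,t_1)$ — under which the second-factor coordinate is $t_2=-t_1$, which is precisely the involution $a$ — turns $\rihom(\opb q_2 K,-)$ into $\opb a\rihom(K,\epb\pi L)$ after accounting for $\epb\mu$ being $\epb\pi$ along the fibre. The reference to Lemma~\ref{lem:cihomrihompi} is again available to move $\opb\pi L$ in and out of $\cihom$, so I would organise the argument as: (1) reduce to $L=\cor_M$ using that lemma's last displayed identity, (2) compute $\cihom(K,\cor_{\{t=0\}})$ directly from the definition and the base change / projection formulas of Proposition~\ref{pro:bproj}, identifying the pushforward along $\mu$ restricted to $\{t_1+t_2=0\}$ with the substitution $t\mapsto -t$, and (3) recognise the result as $\opb a\rihom(K,\epb\pi\cor_M)=\opb a\rihom(K,\epb\pi L)$. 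The only delicate point is the orientation/shift in $\epb\mu$ versus $\epb\pi$ along the $1$-dimensional fibre of $\mu$; I would check it on the model case $M=\rmpt$, $K=\cor_{\{t=a\}}$, where both sides can be evaluated by hand using the preceding lemma $\cor_{\{t=a\}}\ctens K\simeq\roim{\mu_a}K\simeq\cihom(\cor_{\{t=-a\}},K)$, and then invoke the stack property and functoriality to conclude in general. \qed
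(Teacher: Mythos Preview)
The paper states this lemma without proof (it is a survey, extracting results from \cite{DK13}), so there is nothing to compare against directly. Your approach to the first two isomorphisms is correct and is the obvious one: both are immediate from the unit isomorphisms $\cor_{\{t=0\}}\ctens K\simeq K\simeq\cihom(\cor_{\{t=0\}},K)$ combined with Lemma~\ref{lem:cihomrihompi}.

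For the third isomorphism your outline is also correct, but two comments. First, the reduction to $L=\cor_M$ via Lemma~\ref{lem:cihomrihompi} is not quite available: that lemma moves $\opb\pi L$ through $\rihom$, not through a plain tensor in the second slot of $\cihom$, and the two sides involve $L$ through $\epb\pi$ versus $\opb\pi$. It is cleaner simply to carry $L$ along. Second, the worry about orientation/shift is misplaced; it is absorbed automatically by base change. Writing $i_0\colon M\to M\times\R_\infty$ for the zero section (a proper morphism of bordered spaces), one has $\cor_{\{t=0\}}\tens\opb\pi L\simeq\roim{i_0}L$. The square
\[
\xymatrix{
M\times\R_\infty \ar[r]^-{j} \ar[d]_-{\pi} & M\times\R_\infty\times\R_\infty \ar[d]^-{\mu} \\
M \ar[r]^-{i_0} & M\times\R_\infty
}
\]
with $j(x,t)=(x,t,-t)$ is Cartesian, so $\epb\mu\roim{i_0}L\simeq\roim{j}\epb\pi L$ by Lemma~\ref{lem:bcart}. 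Since $q_1\circ j=\id$ and $q_2\circ j=a$, one gets
\[
\cihom(K,\cor_{\{t=0\}}\tens\opb\pi L)\simeq\roim{q_1}\roim{j}\rihom(\opb j\opb{q_2}K,\epb\pi L)\simeq\rihom(\opb a K,\epb\pi L),
\]
and the last term equals $\opb a\rihom(K,\epb\pi L)$ because $a$ is an involution with $\pi\circ a=\pi$. No separate model-case check is needed.
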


\begin{lemma}
\label{lem:tenspipi}
For $K_1,K_2\in\BDC(\icor_{M\times\R_\infty})$ there are isomorphisms
\begin{align*}
\reeim\pi(K_1\ctens K_2) &\simeq \reeim\pi K_1 \tens \reeim\pi K_2, \\
\roim\pi\cihom(K_1,K_2) &\simeq \rihom(\reeim\pi K_1,\roim\pi K_2).
\end{align*}
\end{lemma}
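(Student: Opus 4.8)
The plan is to reduce both isomorphisms to known adjunction/base-change formulas for the operations $\reeim{\pi}$, $\roim{\pi}$, $\rihom$ and $\ctens$ together with the projection-type identities already established in Lemma~\ref{lem:cihomrihompi} and Lemma~\ref{lem:cihomKt0}. Recall that $\pi\colon M\times\R_\infty\to M$ factors through $q_1,q_2\colon M\times\R_\infty\times\R_\infty\to M\times\R_\infty$ and $\mu\colon M\times\R_\infty\times\R_\infty\to M\times\R_\infty$ via $\pi\circ\mu=\pi\circ q_1=\pi\circ q_2=:p$, the projection $M\times\R_\infty\times\R_\infty\to M$. So the first thing I would do is unwind the definitions: by Definition~\ref{def:ctens1},
\[
\reeim{\pi}(K_1\ctens K_2)=\reeim{\pi}\reeim{\mu}(\opb{q_1}K_1\tens\opb{q_2}K_2)
\simeq\reeim{p}(\opb{q_1}K_1\tens\opb{q_2}K_2),
\]
using $\reeim{(f\circ g)}\simeq\reeim{f}\circ\reeim{g}$ from Lemma~\ref{lem:badj}. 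Since $p$ is the composition $M\times\R_\infty\times\R_\infty\xrightarrow{q_1}M\times\R_\infty\xrightarrow{\pi}M$, I would push the computation down in two steps, integrating the second $\R_\infty$-factor first.

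For the first formula, I would apply the projection formula of Proposition~\ref{pro:bproj} (the identity $\reeim{f}(\opb{f}G\tens F)\simeq G\tens\reeim{f}F$) to the projection $q_1\colon M\times\R_\infty\times\R_\infty\to M\times\R_\infty$: writing $F=\opb{q_2}K_2$ and noting $\opb{q_1}K_1=\opb{q_1}K_1$ is already an inverse image along $q_1$, we get
\[
\reeim{q_1}(\opb{q_1}K_1\tens\opb{q_2}K_2)\simeq K_1\tens\reeim{q_1}\opb{q_2}K_2.
\]
Now $\reeim{q_1}\opb{q_2}K_2\simeq\opb{\pi}\reeim{\pi}K_2$ by the base change formula (Lemma~\ref{lem:bcart}) applied to the Cartesian square expressing $q_1,q_2$ as a fiber product over $M$. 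Substituting back and applying $\reeim{\pi}$ once more, then using the projection formula again for $\pi$, yields $\reeim{\pi}K_1\tens\reeim{\pi}K_2$. The second formula is dual: I would take the right adjoint $\roim{}$ versions at each step, replacing $\ctens$ by $\cihom$ (Definition~\ref{def:ctens1}), $\reeim{}$ by $\roim{}$, $\opb{}$ by $\epb{}$ and $\tens$ by $\rihom$, and invoking the corresponding adjunction formulas in Proposition~\ref{pro:bproj} (namely $\rihom(G,\roim{f}F)\simeq\roim{f}\rihom(\opb{f}G,F)$ and $\rihom(\reeim{f}F,G)\simeq\roim{f}\rihom(F,\epb{f}G)$) and the base change $\epb{g}\roim{f}\simeq\roim{f'}\epb{g'}$ from Lemma~\ref{lem:bcart}.

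The main obstacle I anticipate is bookkeeping the Cartesian squares and checking that the relevant morphisms of bordered spaces are of the type for which base change holds — in particular, one must verify that the square
\[
\xymatrix@C=6ex{
M\times\R_\infty\times\R_\infty \ar[r]^-{q_2}\ar[d]_-{q_1} & M\times\R_\infty \ar[d]^-{\pi}\\
M\times\R_\infty \ar[r]^-{\pi} & M
}
\]
is Cartesian in the category of bordered spaces (which it is, since products of bordered spaces are computed factorwise by the formula $(M,\bM)\times(N,\bN)\simeq(M\times N,\bM\times\bN)$ recalled in Section~\ref{section:bordered}), so that Lemma~\ref{lem:bcart} applies. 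A secondary point of care is that $\mu$ is a genuine morphism of bordered spaces (its graph closure in $\ol\R\times\ol\R\times\ol\R$ is proper over $\ol\R\times\ol\R$, as is implicit in Notation~\ref{not:Rinfty} and the setup of convolution), so that the associativity $\reeim{\pi}\reeim{\mu}\simeq\reeim{p}$ is legitimate; once these functorial compatibilities are in place the argument is a formal chain of isomorphisms with no hard analysis.
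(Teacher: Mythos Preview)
Your approach is correct, and it is the natural one. The paper states this lemma without proof (it is a survey, and many of the foundational lemmas in \S\ref{section:enhanced} are quoted from \cite{DK13} without argument), so there is no paper proof to compare against; what you have written is exactly the standard Künneth/base-change argument one would expect.

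One small point worth making explicit in your write-up of the second isomorphism: after unwinding $\cihom$ and using $\roim{\pi}\roim{q_1}\simeq\roim{p}\simeq\roim{\pi}\roim{q_2}$ together with the adjunction $\roim{q_2}\rihom(\opb{q_2}K_1,\,\scbul)\simeq\rihom(K_1,\roim{q_2}\,\scbul)$, you are reduced to showing $\roim{q_2}\epb{\mu}K_2\simeq\epb{\pi}\roim{\pi}K_2$. This requires the base-change square
\[
\xymatrix@C=6ex{
M\times\R_\infty\times\R_\infty \ar[r]^-{q_2}\ar[d]_-{\mu} & M\times\R_\infty \ar[d]^-{\pi}\\
M\times\R_\infty \ar[r]^-{\pi} & M
}
\]
to be Cartesian, not merely the $(q_1,q_2)$-square you wrote down. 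It is Cartesian: the shear automorphism $(x,t_1,t_2)\mapsto(x,t_1+t_2,t_2)$ of $M\times\R_\infty\times\R_\infty$ carries $(\mu,q_2)$ to $(q_1,q_2)$, and the latter square you already checked. You allude to this when you say ``take the right adjoint versions at each step'' and note that $\mu$ is a genuine morphism of bordered spaces, but it is worth naming the square explicitly since it is not literally the same one as in the first half.
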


\begin{corollary}
For any $K\in\BDC(\icor_{M\times\R_\infty})$, one has
\begin{align*}
&\reeim\pi(\cor_{t\geq 0} \ctens K) \simeq 0, \\
&\roim\pi\cihom(\cor_{t\geq 0}, K) \simeq 0.
\end{align*}
\end{corollary}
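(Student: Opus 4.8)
The plan is to reduce both vanishings to the single statement $\reeim\pi\cor_{\{t\geq 0\}}\simeq 0$ in $\Derb(\icor_M)$, after which the two isomorphisms of Lemma~\ref{lem:tenspipi} finish the job. Concretely, taking $K_1=\cor_{\{t\geq 0\}}$ and $K_2=K$ in Lemma~\ref{lem:tenspipi} gives
\[
\reeim\pi(\cor_{\{t\geq 0\}}\ctens K)\simeq\reeim\pi\cor_{\{t\geq 0\}}\tens\reeim\pi K,\qquad \roim\pi\cihom(\cor_{\{t\geq 0\}},K)\simeq\rihom(\reeim\pi\cor_{\{t\geq 0\}},\roim\pi K),
\]
so if $\reeim\pi\cor_{\{t\geq 0\}}\simeq 0$ the right-hand sides become $0\tens\reeim\pi K\simeq 0$ and $\rihom(0,\roim\pi K)\simeq 0$, which is exactly the assertion of the corollary.

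For the vanishing I would use the factorization $\pi=\overline\pi\circ j$ from the diagram preceding Notation~\ref{not:fR}, where $j\colon M\times\R_\infty\to M\times\ol\R$ is the morphism of bordered spaces attached to the open embedding $M\times\R\subset M\times\ol\R$ and $\overline\pi\colon M\times\ol\R\to M$ is the projection. By Lemma~\ref{lem:badj}~(iii) we have $\reeim\pi\simeq\reeim{\overline\pi}\circ\reeim{j}$. Now $\cor_{\{t\geq 0\}}\in\BDC(\icor_{M\times\R_\infty})$ is by convention the image under $\opb j$ of the sheaf $\cor_{\{t\geq 0\}}$ on $M\times\ol\R$, so Lemma~\ref{lem:jM}~(ii) gives
\[
\reeim j\,\opb j\,\cor_{\{t\geq 0\}}\simeq\cor_{M\times\R}\tens\cor_{\{t\geq 0\}}\simeq\cor_{\{t\geq 0\}},
\]
the last isomorphism because $\{t\geq 0\}\subset M\times\R$. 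Hence $\reeim\pi\cor_{\{t\geq 0\}}\simeq\reeim{\overline\pi}\cor_{\{t\geq 0\}}$, an ordinary direct image along the map $\overline\pi$, which is proper since $\ol\R$ is compact; thus $\reeim{\overline\pi}\simeq\roim{\overline\pi}$ and it may be computed with usual sheaves.

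Finally I would compute the stalk of $\reeim{\overline\pi}\cor_{\{t\geq 0\}}$ at $x\in M$, which by proper base change is $\rsect(\ol\R;\cor_{[0,+\infty)})$. Writing $[0,+\infty)=[0,+\infty]\setminus\{+\infty\}$ with $[0,+\infty]$ and $\{+\infty\}$ closed in $\ol\R$, the distinguished triangle $\cor_{[0,+\infty)}\to\cor_{[0,+\infty]}\to\cor_{\{+\infty\}}\to[+1]$ yields, after applying $\rsect(\ol\R;\scbul)$, a triangle $\rsect(\ol\R;\cor_{[0,+\infty)})\to\cor\to\cor\to[+1]$ whose middle arrow is the restriction $\rsect([0,+\infty];\cor)\to\rsect(\{+\infty\};\cor)$, an isomorphism; hence $\rsect(\ol\R;\cor_{[0,+\infty)})\simeq 0$ and so $\reeim{\overline\pi}\cor_{\{t\geq 0\}}\simeq 0$. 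I do not expect a real obstacle: the only point that needs care is the bordered-space bookkeeping turning $\reeim\pi$ of the enhanced object $\cor_{\{t\geq 0\}}$ into an honest proper direct image on $M\times\ol\R$, together with the (standard) fact that a half-line, viewed as a locally closed subset with no sections near $+\infty$, has vanishing global cohomology.
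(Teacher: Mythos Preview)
Your proposal is correct and follows exactly the approach the paper intends: the corollary is stated without proof, as an immediate consequence of Lemma~\ref{lem:tenspipi} together with the vanishing $\reeim\pi\cor_{\{t\geq 0\}}\simeq 0$, and you have simply spelled out the details of that vanishing via the factorization $\pi=\overline\pi\circ j$ and the elementary cohomology computation on $\ol\R$.
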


\begin{lemma}\label{lem:piRinfty} 
For $K\in\BDC(\icor_{M\times\R_\infty})$ and $L\in\BDC(\icor_M)$ one has
\begin{align*}
(\opb\pi L) \ctens K & \simeq \opb\pi(L\tens\reeim\pi K), \\
\cihom(\opb\pi L, K) & \simeq \epb\pi\rihom(L,\roim\pi K), \\
\cihom(K, \epb\pi L) & \simeq \epb\pi\rihom(\reeim\pi K, L).
\end{align*}
\end{lemma}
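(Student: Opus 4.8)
The three isomorphisms should all be consequences of the adjunctions and projection formulas already recorded for bordered spaces (Lemma \ref{lem:badj}, Proposition \ref{pro:bproj}) together with the special structure of the convolution functors (Definition \ref{def:ctens1}) and the elementary facts about $\opb\pi$, $\reeim\pi$, $\roim\pi$ and the inner hom $\cihom$ collected in Lemmas \ref{lem:cihomrihompi}--\ref{lem:tenspipi}. The key observation is that an object of the form $\opb\pi L$ is ``constant in the $t$-direction'', so convolving or taking internal hom against it should collapse to an ordinary operation on $M$, with a $\reeim\pi$ or $\roim\pi$ applied to the other argument. I would prove the first isomorphism directly and then derive the other two from it by adjunction.

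First I would establish $(\opb\pi L)\ctens K\simeq\opb\pi(L\tens\reeim\pi K)$. Unwinding the definition, $(\opb\pi L)\ctens K=\reeim\mu(\opb q_1\opb\pi L\tens\opb q_2 K)$. Writing $p\colon M\times\R_\infty\times\R_\infty\to M$ for the projection, one has $\pi\circ q_1=\pi\circ q_2=\pi\circ\mu=p$ (as morphisms of bordered spaces), hence $\opb q_1\opb\pi L\simeq\opb p L$. Then $\reeim\mu(\opb p L\tens\opb q_2 K)\simeq\reeim\mu(\opb\mu\opb\pi L\tens\opb q_2 K)\simeq\opb\pi L\tens\reeim\mu\opb q_2 K$ by the projection formula (Proposition \ref{pro:bproj}). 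Finally $\reeim\mu\opb q_2 K\simeq\opb\pi\reeim\pi K$ by the base change formula (Lemma \ref{lem:bcart}) applied to the Cartesian square with vertices $M\times\R_\infty\times\R_\infty$, $M\times\R_\infty$ (twice, via $q_2$ and via $\mu$), $M$; that is, $\pi\circ\mu=\pi\circ q_2$ realizes a base change and one gets $\reeim\mu\opb q_2\simeq\opb\pi\reeim\pi$. Combining, $(\opb\pi L)\ctens K\simeq\opb\pi L\tens\opb\pi\reeim\pi K\simeq\opb\pi(L\tens\reeim\pi K)$, using that $\opb\pi$ commutes with $\tens$.

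The remaining two isomorphisms I would get by adjunction. For the second, $\cihom(\opb\pi L,K)\simeq\epb\pi\rihom(L,\roim\pi K)$: for any test object $N\in\BDC(\icor_{M\times\R_\infty})$ one computes, using \eqref{eq:innerhom}, the first isomorphism just proved, and the adjunctions $(\opb\pi,\roim\pi)$, $(\reeim\pi,\epb\pi)$,
\[
\Hom(N,\cihom(\opb\pi L,K))\simeq\Hom(N\ctens\opb\pi L,K)\simeq\Hom(\opb\pi(\reeim\pi N\tens L),K),
\]
where I have rewritten $N\ctens\opb\pi L\simeq\opb\pi L\ctens N\simeq\opb\pi(L\tens\reeim\pi N)$; continuing, this is $\Hom(\reeim\pi N\tens L,\roim\pi K)\simeq\Hom(\reeim\pi N,\rihom(L,\roim\pi K))\simeq\Hom(N,\epb\pi\rihom(L,\roim\pi K))$, and Yoneda gives the claim. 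The third isomorphism $\cihom(K,\epb\pi L)\simeq\epb\pi\rihom(\reeim\pi K,L)$ is proved the same way: $\Hom(N,\cihom(K,\epb\pi L))\simeq\Hom(N\ctens K,\epb\pi L)\simeq\Hom(\reeim\pi(N\ctens K),L)$, and by Lemma \ref{lem:tenspipi} $\reeim\pi(N\ctens K)\simeq\reeim\pi N\tens\reeim\pi K$, so this equals $\Hom(\reeim\pi N\tens\reeim\pi K,L)\simeq\Hom(\reeim\pi N,\rihom(\reeim\pi K,L))\simeq\Hom(N,\epb\pi\rihom(\reeim\pi K,L))$.

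The main obstacle I anticipate is purely bookkeeping: making sure the base change and projection formulas are applied to genuinely Cartesian diagrams of bordered spaces (the relevant squares involve the three maps $\mu,q_1,q_2$, and one must check the compatibility conditions \eqref{eq:Hbord} so that Lemma \ref{lem:bcart} and Proposition \ref{pro:bproj} apply), and keeping the variance of $\epb{}$ versus $\opb{}$ straight when $\cihom$ is involved. No genuinely new idea beyond the formal manipulation of the six operations on bordered spaces is needed.
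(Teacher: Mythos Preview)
Your approach is correct. The paper states this lemma without proof (it is extracted from \cite{DK13}), so there is nothing to compare against; your derivation via the projection formula, a base-change isomorphism, and then Yoneda for the remaining two is exactly the expected argument.

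One remark on the obstacle you flag: the square
\[
\xymatrix{
M\times\R_\infty\times\R_\infty \ar[r]^-{\mu}\ar[d]_{q_2} & M\times\R_\infty \ar[d]^\pi \\
M\times\R_\infty \ar[r]^-\pi & M
}
\]
is genuinely Cartesian in the category of bordered spaces. The point is that the shear $\phi(x,t_1,t_2)=(x,t_1+t_2,t_2)$, although it does not extend continuously to $M\times\overline\R\times\overline\R$, is still an isomorphism of bordered spaces: the closure $\overline\Gamma_\phi\subset (M\times\overline\R^2)\times(M\times\overline\R^2)$ is automatically proper over the first factor because $\overline\R$ is compact (so the fibers of the projection are closed subsets of $\{x_0\}\times\overline\R^2$). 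Hence $(\mu,q_2)$ and $(q_1,q_2)$ present the same fiber product, and your base-change step $\reeim\mu\opb{q_2}\simeq\opb\pi\reeim\pi$ is justified. With that verified, the rest of your argument goes through without change.
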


\begin{proposition}\label{prop:tenhom}
For $K\in\BDC(\icor_{M\times\R_\infty})$, one has a distinguished triangle 
\eqn
&&\opb{\pi}L\To  \cor_{\{t\geq 0\}} \ctens K
\To \cihom(\cor_{\{t\geq 0\}}, K)  \To[+1\;]
\eneqn
with $L=\roim{\pi}(\cor_{\{t\geq 0\}} \ctens K)\simeq\reeim{\pi}\cihom(\cor_{\{t\geq 0\}}, K)$. 
\end{proposition}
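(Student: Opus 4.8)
The plan is to produce the three morphisms of the triangle from the adjunctions $(\opb\pi,\roim\pi)$ and $(\reeim\pi,\epb\pi)$ (Lemma~\ref{lem:badj}) and then to check, using the formulas of Lemmas~\ref{lem:cihomrihompi}--\ref{lem:piRinfty} and the two Corollaries to Lemma~\ref{lem:tenspipi}, that they fit into a distinguished triangle with third vertex $\cihom(\cor_{\{t\geq 0\}},K)$. Write $P\eqdot\cor_{\{t\geq 0\}}\ctens K$, $Q\eqdot\cihom(\cor_{\{t\geq 0\}},K)$ and set $L\eqdot\roim\pi P$. First I would take as the initial arrow the counit $\opb\pi L=\opb\pi\roim\pi P\to P$ of $(\opb\pi,\roim\pi)$ and complete it to a distinguished triangle $\opb\pi L\to P\to C\to[+1\;]$, $C$ being the cone; the whole point is then to identify $C$ with $Q$.

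For that I would first record the relevant ``projection'' facts, all of which need a genuine (bordered-space) verification. On the convolution side: $\reeim\pi\cor_{\{t\geq 0\}}\simeq 0$ (take $K=\cor_{\{t=0\}}$ in the first Corollary to Lemma~\ref{lem:tenspipi}), whence $\cor_{\{t\geq 0\}}\ctens\opb\pi L'\simeq\opb\pi(L'\tens\reeim\pi\cor_{\{t\geq 0\}})\simeq 0$ for any $L'$ by Lemma~\ref{lem:piRinfty}, and $\cor_{\{t\geq 0\}}$ is idempotent for $\ctens$, i.e.\ $\cor_{\{t\geq 0\}}\ctens\cor_{\{t\geq 0\}}\simeq\cor_{\{t\geq 0\}}$ (a direct computation of $\reeim\mu$ on $M\times\R_\infty\times\R_\infty$). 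Applying $\cor_{\{t\geq 0\}}\ctens(\scbul)$ to the triangle above therefore gives $\cor_{\{t\geq 0\}}\ctens C\simeq\cor_{\{t\geq 0\}}\ctens K=P$. On the $\roim\pi$ side: $\roim\pi\opb\pi\simeq\id$ — which follows from contractibility of the fibre $\ol\R$, i.e.\ from $\roim\pi\cor_{M\times\R}\simeq\cor_M$ together with the formulas of Lemmas~\ref{lem:cihomrihompi} and \ref{lem:cihomKt0} — so by the triangle identity $\roim\pi$ of the counit is an isomorphism and $\roim\pi C\simeq 0$; note that $\roim\pi Q\simeq 0$ as well by the second Corollary to Lemma~\ref{lem:tenspipi}.

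Next I would introduce the natural morphism $\varphi_K\colon P\to Q$ defined as the composite $\cor_{\{t\geq 0\}}\ctens K\to\cor_{\{t=0\}}\ctens K=K\to\cihom(\cor_{\{t\geq 0\}},K)$ induced by the restriction $r\colon\cor_{\{t\geq 0\}}\to\cor_{\{t=0\}}$ (covariantly on the left factor, contravariantly in $\cihom$). Since $\Hom(\opb\pi L,Q)\simeq\Hom(\opb\pi L\ctens\cor_{\{t\geq 0\}},K)=\Hom(0,K)=0$ by \eqref{eq:innerhom} and the vanishing above, the composite $\opb\pi L\to P\xrightarrow{\varphi_K}Q$ is zero, hence $\varphi_K$ factors as $P\to C\xrightarrow{\psi}Q$. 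Because $C,Q\in\ker\roim\pi$, showing $\psi$ is an isomorphism reduces to (i) the faithfulness statement that an object killed by both $\roim\pi$ and $\cor_{\{t\geq 0\}}\ctens(\scbul)$ vanishes, which I would get by running the triangles $\cor_{\{t>0\}}\to\cor_{\{t\geq 0\}}\to[r]\cor_{\{t=0\}}\to[+1\;]$ and $\cor_{\{t<0\}}\to\opb\pi\cor_M\to\cor_{\{t\geq 0\}}\to[+1\;]$ through $\scbul\ctens(-)$ and $\roim\pi$; and (ii) the identity $\cor_{\{t\geq 0\}}\ctens\cihom(\cor_{\{t\geq 0\}},K)\simeq\cor_{\{t\geq 0\}}\ctens K$ (the indsheaf counterpart of Tamarkin's equivalences between the left orthogonal, the quotient and the right orthogonal, which follows formally from idempotency of $\cor_{\{t\geq 0\}}\ctens(\scbul)$ and the adjunction \eqref{eq:innerhom}), together with the check that $\cor_{\{t\geq 0\}}\ctens\varphi_K$ realizes this isomorphism. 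Finally, applying $\reeim\pi$ to the resulting triangle $\opb\pi L\to P\to Q\to[+1\;]$, and using $\reeim\pi P\simeq 0$ and $\reeim\pi\opb\pi L\simeq L\tens\reeim\pi\cor_{M\times\R}\simeq L[-1]$ (projection formula, Proposition~\ref{pro:bproj}, and $\reeim\pi\cor_{M\times\R}\simeq\cor_M[-1]$), yields $\reeim\pi Q\simeq L$, which gives the stated description of $L$.

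The main obstacle is twofold. First, all the ``obvious'' identities used above must be established inside $\Derb(\icor_{M\times\R_\infty})$, i.e.\ modulo indsheaves supported on $M\times(\ol\R\setminus\R)$: the idempotency $\cor_{\{t\geq 0\}}\ctens\cor_{\{t\geq 0\}}\simeq\cor_{\{t\geq 0\}}$, the vanishings $\reeim\pi\cor_{\{t\geq 0\}}\simeq 0$ and $\roim\pi\cor_{\{t\geq 0\}}\simeq 0$, the values $\reeim\pi\cor_{M\times\R}\simeq\cor_M[-1]$, $\roim\pi\opb\pi\simeq\id$, $\epb\pi\simeq\opb\pi[1]$, and the computations of $\cihom(\cor_{\{t>0\}},K)$ and $\cihom(\cor_{\{t<0\}},K)$ are precisely the points where the behaviour of proper versus non-proper direct images at $t=\pm\infty$ enters, and where the naive statements would fail in the non-bordered setting. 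Second, and more structurally, one must upgrade the bare coincidence that $C$ and $Q$ have the same image under $\cor_{\{t\geq 0\}}\ctens(\scbul)$ and under $\roim\pi$ into a genuine isomorphism; tracking the natural transformation $\varphi_K$ through all the reductions, and in particular verifying that its factorization through $C$ is compatible with applying $\cor_{\{t\geq 0\}}\ctens(\scbul)$, is the delicate part, and I expect to need an octahedron relating the triangle $\opb\pi L\to P\to C\to[+1\;]$ to the triangle defining $\varphi_K$.
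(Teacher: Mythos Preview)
The paper does not prove this proposition; it is stated without proof, this section being a survey extracted from \cite{DK13}. So there is no ``paper's own proof'' to compare against. That said, your strategy contains a genuine gap.

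Your vanishing criterion (i) --- that an object $X\in\BDC(\icor_{M\times\R_\infty})$ with $\roim\pi X\simeq 0$ and $\cor_{\{t\geq 0\}}\ctens X\simeq 0$ must vanish --- is \emph{false}. A counterexample is $X=\cor_{\{0<t\leq 1\}}$. One checks $\cor_{\{t\geq 0\}}\ctens X\simeq 0$ either directly (over each $t$ the fibre of $\mu$ restricted to $\{t_1\geq 0,\ 0<t_2\leq 1\}$ is empty or a bounded half-open interval, whose sections over the compact $\ol\R^2$ vanish) or by writing $X$ as the cone of $\cor_{\{t\leq 0\}}\to\cor_{\{t\leq 1\}}$ and using $\cor_{\{t\geq 0\}}\ctens\cor_{\{t\leq a\}}\simeq 0$ for all $a$. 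And $\roim\pi X=\roim{\ol\pi}\cor_{M\times(0,1]}$ (nothing is gained at $\pm\infty$ since the support is bounded), which vanishes because $\rsect(\ol\R;\cor_{(0,1]})\simeq 0$. Yet $X\not\simeq 0$. Conceptually, $X$ lies in ${}^\bot\indc_{t^*\geq 0}\subset\indc_{t^*\leq 0}$, and $\roim\pi$ is simply not conservative there.

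The two triangles you invoke for (i) do not close the argument: tensoring the triangle $\cor_{\{t<0\}}\to\opb\pi\cor_M\to\cor_{\{t\geq 0\}}$ with $X$ brings in $\opb\pi\reeim\pi X$, not $\opb\pi\roim\pi X$, and your hypotheses give no control over $\reeim\pi X$. Without (i), the step from ``$\cor_{\{t\geq 0\}}\ctens\psi$ is an isomorphism and $\roim\pi(\cone\psi)=0$'' to ``$\psi$ is an isomorphism'' collapses. Adding the condition $\cihom(\cor_{\{t\geq 0\}},\cone\psi)=0$ does not help either: checking that $\cihom(\cor_{\{t\geq 0\}},\psi)$ is an isomorphism requires $\cihom(\cor_{\{t\geq 0\}},P)\simeq Q$, which is essentially the proposition itself. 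Your catalogue of auxiliary identities (idempotency of $\cor_{\{t\geq 0\}}$, the vanishings $\reeim\pi P\simeq 0$, $\roim\pi Q\simeq 0$, $\roim\pi\opb\pi\simeq\id$, etc.) is correct and useful, but to pin down the cone of $\varphi_K$ you need a different mechanism --- for instance a direct comparison, after the change of variables $(t_1,t_2)\mapsto(t_1,t_1+t_2)$, between the $\reeim\mu$-expression for $P$ and a $\roim$-type expression for $Q$, identifying their difference with a base-change term of the form $\epb\pi\roim\pi K$ or $\opb\pi\reeim\pi K$.
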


\subsection{Enhanced indsheaves}
\index{enhanced!indsheaves}%
\index{indsheaves!enhanced}%
\begin{definition}\label{def:DerT}
Consider the full  triangulated  subcategories of $\BDC(\icor_{M\times\R_\infty})$
\glossary{$\indc_{t^*\leq 0} $}%
\glossary{$\indc_{t^*\geq 0}$}%
\glossary{$\indc_{t^* = 0} $}%
\eqn
\indc_{t^*\leq 0}  &= &\{K ; \cor_{\{t\geq 0\}} \ctens K  \simeq 0\} \\
                               &=& \{K ; \cihom(\cor_{\{t\geq 0\}}, K)  \simeq 0\}, \\
\indc_{t^*\geq 0}  &=& \{K ; \cor_{\{t\leq 0\}} \ctens K  \simeq 0\} \\
                          &= &\{K ; \cihom(\cor_{\{t\leq 0\}}, K)  \simeq 0\}, \\
\indc_{t^* = 0} &=& \indc_{t^* \leq 0} \cap \indc_{t^* \geq 0}.
\eneqn
Consider also the corresponding quotient categories
\glossary{$\TDCpm(\icor_M)$}%
\glossary{$\TDC(\icor_M)$}%
\begin{align*}
\TDCpm(\icor_M) 
&= \indc_{\pm t^*\geq 0}/\indc_{t^*= 0} , \\
\TDC(\icor_M) 
&= \BDC(\icor_{M\times\R_\infty})/\indc_{t^* = 0} .
\end{align*}
They are triangulated categories. 
One calls $\TDC(\icor_M)$ the triangulated category of {\em enhanced indsheaves}.
One defines similarly the categories $\TDCC(\icor_M)$, $\TDCC^+(\icor_M)$ and $\TDCC^-(\icor_M)$.
\end{definition}

Notice that 
\eqn
 \indc_{t^* = 0}&=&\set{K\in \BDC(\icor_{M\times\R_\infty})}{
\opb{\pi}\roim{\pi}K\isoto K}\\
&=&\set{K\in \BDC(\icor_{M\times\R_\infty})}{K\isoto\epb{\pi}\reeim{\pi}K}\\
&=&\set{K\in \BDC(\icor_{M\times\R_\infty})}{
\text{$K\simeq\opb{\pi}L$ for some $L\in\Derb(\icor_M)$}}.
\eneqn 
Therefore,
\eq\label{eq:TDC}
&&\TDC(\icor_M)\simeq \BDC(\icor_{M\times\R_\infty})/\{K\in \BDC(\icor_{M\times\R_\infty});\opb{\pi}\roim{\pi}K\isoto K\}.
\eneq

\glossary{$\TDC(\cor_{M})$}%
One also defines the category $\TDC(\cor_{M}) $ as
\eq\label{eq:TDCforM}
&&\TDC(\cor_{M}) =\BDC(\cor_{{M}\times\R})/\set{K}{\opb{\pi}\roim{\pi}K\isoto K}.
\eneq
Then $\TDC(\cor_{M})$ is a full subcategory of $\TDC(\icor_{M})$.

\begin{proposition}
There are equivalences of triangulated categories
\begin{align*}
\TDCpm(\icor_M) &\simeq \BDC(\icor_{M\times\R_\infty})/\indc_{\pm t^*\leq 0}, \\
\TDC(\icor_M) &\simeq \TDCp(\icor_M) \dsum \TDCm(\icor_M).
\end{align*}
\end{proposition}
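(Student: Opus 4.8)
The plan is to realize both localizations concretely via the convolution projectors $\Pi_+\eqdot\cor_{\{t\geq 0\}}\ctens(\scbul)$ and $\Pi_-\eqdot\cor_{\{t\leq 0\}}\ctens(\scbul)$ on $\BDC(\icor_{M\times\R_\infty})$. I would first record two facts about the kernel sheaves, both of which genuinely use the bordered structure of $\R_\infty$: (1) $\cor_{\{t\geq 0\}}\ctens\cor_{\{t\leq 0\}}\simeq 0$ (and symmetrically), and (2) $\reeim\pi\cor_{\{t\geq 0\}}\simeq 0\simeq\reeim\pi\cor_{\{t\leq 0\}}$ (the latter is the case $K=\cor_{\{t=0\}}$ of the vanishing $\reeim\pi(\cor_{\{t\geq 0\}}\ctens K)\simeq 0$ recorded above). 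Convolving the sheaf-level Mayer--Vietoris triangle $\opb\pi\cor_M\to\cor_{\{t\geq 0\}}\dsum\cor_{\{t\leq 0\}}\to\cor_{\{t=0\}}\to[+1]$ with $K$, and using that $\cor_{\{t=0\}}$ is the unit for $\ctens$ together with Lemma~\ref{lem:piRinfty} (so $(\opb\pi\cor_M)\ctens K\simeq\opb\pi\reeim\pi K$), I obtain the functorial distinguished triangle $\opb\pi\reeim\pi K\to\Pi_+K\dsum\Pi_-K\to K\to[+1]$, which drives the whole argument. From it I read off at once: by Definition~\ref{def:DerT}, $\ker\Pi_+=\indc_{t^*\leq 0}$ and $\ker\Pi_-=\indc_{t^*\geq 0}$; by (1), $\Pi_-\Pi_+\simeq 0\simeq\Pi_+\Pi_-$, so $\Pi_\pm$ takes values in $\indc_{\pm t^*\geq 0}$; by (2) and Lemma~\ref{lem:piRinfty}, $\Pi_\pm(\opb\pi L)\simeq\opb\pi(L\tens\reeim\pi\cor_{\{t\gtrless 0\}})\simeq 0$, so $\Pi_\pm$ annihilate $\indc_{t^*=0}$; and for $K\in\indc_{t^*\geq 0}$ (whence $\Pi_-K\simeq 0$) the functorial triangle shows that the canonical map $\Pi_+K\to K$ has cone in $\indc_{t^*=0}$.

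For the first equivalence I treat the $+$ case (the $-$ case being symmetric). Since $\Pi_+$ kills $\indc_{t^*\leq 0}$ and lands in $\indc_{t^*\geq 0}$, it factors as a functor $\Phi\colon\BDC(\icor_{M\times\R_\infty})/\indc_{t^*\leq 0}\to\TDCp(\icor_M)$; conversely, the inclusion $\indc_{t^*\geq 0}\hookrightarrow\BDC(\icor_{M\times\R_\infty})$ followed by the quotient kills $\indc_{t^*=0}\subseteq\indc_{t^*\leq 0}$ and so factors as $\Psi\colon\TDCp(\icor_M)\to\BDC(\icor_{M\times\R_\infty})/\indc_{t^*\leq 0}$. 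Then $\Phi\circ\Psi\simeq\mathrm{id}$ by the cone computation above ($\Pi_+K\simeq K$ in $\TDCp(\icor_M)$ for $K\in\indc_{t^*\geq 0}$), and $\Psi\circ\Phi\simeq\mathrm{id}$ because in $\BDC(\icor_{M\times\R_\infty})/\indc_{t^*\leq 0}$ both $\Pi_-K\in\indc_{t^*\leq 0}$ and $\opb\pi\reeim\pi K\in\indc_{t^*=0}$ vanish, so the functorial triangle degenerates to $\Pi_+K\isoto K$. This gives $\TDCpm(\icor_M)\simeq\BDC(\icor_{M\times\R_\infty})/\indc_{\pm t^*\leq 0}$.

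For the direct sum decomposition I work in $\TDC(\icor_M)=\BDC(\icor_{M\times\R_\infty})/\indc_{t^*=0}$ (Definition~\ref{def:DerT}, cf.\ \eqref{eq:TDC}). As $\Pi_\pm$ annihilate $\indc_{t^*=0}$, they induce $\overline{\Pi}_\pm\colon\TDC(\icor_M)\to\TDCpm(\icor_M)$, and the functors $\jmath_\pm\colon\TDCpm(\icor_M)\to\TDC(\icor_M)$ induced by $\indc_{\pm t^*\geq 0}\hookrightarrow\BDC(\icor_{M\times\R_\infty})$ satisfy $\overline{\Pi}_\pm\circ\jmath_\pm\simeq\mathrm{id}$. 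Next I would prove the mutual orthogonality $\Hom[{\TDC(\icor_M)}](\jmath_+K,\jmath_-L)=0$ for $K\in\indc_{t^*\geq 0}$, $L\in\indc_{t^*\leq 0}$ (and symmetrically): a target of $L$ in the Verdier quotient sits in a triangle $L\to T\to C\to[+1]$ with $C\in\indc_{t^*=0}$, and after replacing $K$ by $\Pi_+K$ (which represents the same class in $\TDC(\icor_M)$), commutativity of $\ctens$ and the adjunction \eqref{eq:innerhom} give $\Hom(\Pi_+K,L)\simeq\Hom(K,\cihom(\cor_{\{t\geq 0\}},L))=0$ (Definition~\ref{def:DerT}) and, using $C\isoto\epb\pi\reeim\pi C$, Lemma~\ref{lem:piRinfty} and (2), $\Hom(\Pi_+K,C)\simeq\Hom(K,\epb\pi\rihom(\reeim\pi\cor_{\{t\geq 0\}},\reeim\pi C))=0$; hence $\Hom(\Pi_+K,T)=0$ and the colimit over $T$ vanishes. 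From the orthogonality I then deduce that $\jmath_\pm$ are fully faithful: the localization $\TDC(\icor_M)\to\BDC(\icor_{M\times\R_\infty})/\indc_{t^*\leq 0}$ is fully faithful on $\indc_{t^*\geq 0}$-objects, injectivity on $\Hom$-groups coming from the orthogonality and surjectivity from the fact that $\ker\Pi_+=\indc_{t^*\leq 0}$, which lets one rigidify any roof over $K$ to one with apex $\Pi_+(\scbul)\in\indc_{t^*\geq 0}$ — so $\TDCpm(\icor_M)$ embed as full triangulated subcategories of $\TDC(\icor_M)$. Finally, the functorial triangle read in $\TDC(\icor_M)$, where $\opb\pi\reeim\pi K\simeq 0$, gives $K\simeq\Pi_+K\dsum\Pi_-K$, so every object splits; by the standard criterion (a triangulated category containing two mutually orthogonal full triangulated subcategories that span it is their direct sum), the functor $(\overline{\Pi}_+,\overline{\Pi}_-)$ is an equivalence $\TDC(\icor_M)\isoto\TDCp(\icor_M)\dsum\TDCm(\icor_M)$.

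The step I expect to be the main obstacle is the pair of kernel computations (1) and (2): showing $\cor_{\{t\geq 0\}}\ctens\cor_{\{t\leq 0\}}\simeq 0$ and $\reeim\pi\cor_{\{t\geq 0\}}\simeq 0$ requires handling proper direct images on bordered spaces, where the behaviour at the added points $\pm\infty$ of $\overline\R$ is exactly what makes these vanish — the relevant fibres of $\mu$, respectively of $\pi$, being closed half-lines whose compactly supported cohomology is zero. A secondary delicate point is the bookkeeping in the Verdier quotients in the last step (the orthogonality and the ensuing full faithfulness of $\jmath_\pm$), where one must be attentive that $\cihom(\cor_{\{t\geq 0\}},\scbul)$ sends $\indc_{t^*=0}$ to $0$ and that $\Pi_\pm$ collapses localization along $\indc_{\pm t^*\leq 0}$ exactly; here Lemma~\ref{lem:piRinfty} and the inner-hom identity \eqref{eq:innerhom} do the essential work. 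Everything else — the universal property of Verdier localization and the assembly of $\Phi$, $\Psi$, $\jmath_\pm$, $\overline{\Pi}_\pm$ — is formal.
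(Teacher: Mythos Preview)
Your approach is correct and is essentially the one the paper takes: the paper deduces the statement from Proposition~\ref{pro:tam}, which identifies each quotient with a left orthogonal in $\BDC(\icor_{M\times\R_\infty})$ via the projectors $\cor_{\{\pm t\geq 0\}}\ctens(\scbul)$ and records the splitting ${}^\bot\indc_{t^*=0}={}^\bot\indc_{t^*\geq 0}\dsum{}^\bot\indc_{t^*\leq 0}$; your functorial Mayer--Vietoris triangle, the vanishings (1) and (2), and your identifications of the kernels and images of $\Pi_\pm$ are exactly the content of that proposition, worked out by hand rather than phrased in the semi-orthogonal decomposition language.

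One unnecessary detour: the full faithfulness of $\jmath_\pm\colon\TDCpm(\icor_M)\to\TDC(\icor_M)$ follows immediately from the general fact that for full triangulated subcategories $\shi\subset\shj\subset\sht$ the induced functor $\shj/\shi\to\sht/\shi$ is fully faithful (take $\shi=\indc_{t^*=0}$, $\shj=\indc_{\pm t^*\geq 0}$, $\sht=\BDC(\icor_{M\times\R_\infty})$; any roof $K\leftarrow K''$ with cone in $\shi$ automatically has $K''\in\shj$). So you do not need the orthogonality or roof-rigidification for that particular step, though the orthogonality is of course still required for the final direct-sum assertion.
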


This follows from Proposition~\ref{pro:tam} below.

\begin{remark}
The categories  $\TDCp(\icor_M)$ are the analogue of Tamarkin's construction in the framework of indsheaves.
\end{remark}
\begin{proposition}\label{pro:tam}
One has
\eqn
{}^\bot\indc_{\pm t^*\leq 0} &=& \{K ; \cor_{\{\pm t\geq 0\}} \ctens K  \isoto K\}
= \{K ; \cor_{\{\pm t> 0\}} \ctens K \simeq 0\},\\
\indc_{\pm t^*\leq 0}^\bot 
&=&\{K ; K \isoto \cihom(\cor_{\{\pm t\geq 0\}}, K)\} \\
&=& \{K; \cihom(\cor_{\{\pm t> 0\}}, K) \simeq 0\},\\
{}^\bot\indc_{t^* = 0} 
&=& \{K ; (\cor_{\{t\geq 0\}} \dsum \cor_{\{t\leq 0\}}) \ctens K  \isoto K\}\\
&=& \{K ; \cor_{M\times\R}\ctens K \simeq 0 \} 
= \{K ; \reeim\pi K \simeq 0\}, \\[1ex]
\indc^\bot_{t^* = 0} 
&=& \{K ; \cihom(\cor_{\{t\geq0\}}\oplus\cor_{\{t\leq0\}}, K)  \isoto K\} \\
&=&\{K ; \cihom(\cor_{M\times\R}, K) \simeq 0 \} 
=\{K ; \roim\pi K \simeq 0\},\\
{}^\bot\indc_{t^* = 0}&=& {}^\bot\indc_{t^* \geq 0} \dsum {}^\bot\indc_{t^* \leq 0}, \\
\indc^\bot_{t^* = 0}&= &\indc^\bot_{t^* \geq 0} \dsum \indc^\bot_{t^* \leq 0}.
\eneqn
Moreover, one has the equivalences
\eqn
\TDCpm(\icor_M) &\isoto& {}^\bot\indc_{\pm t^*\leq 0}, \quad
K\mapsto \cor_{\{\pm t\geq 0\}} \ctens K,\\
\TDCpm(\icor_M) &\isoto& \indc_{\pm t^*\leq 0}^\bot, \quad
K\mapsto\cihom(\cor_{\{\pm t\geq 0\}}, K),\\
\TDC(\icor_M) &\isoto& {}^\bot\indc_{t^* = 0}, \quad
K\mapsto (\cor_{\{t\geq0\}}\oplus\cor_{\{t\leq0\}}) \ctens K,\\
\TDC(\icor_M) &\isoto& \indc_{t^* = 0}^\bot, \quad
K\mapsto \cihom(\cor_{\{t\geq0\}}\oplus\cor_{\{t\leq0\}}, K),
\eneqn
where the quasi-inverse functors are given by the quotient functors. 
\end{proposition}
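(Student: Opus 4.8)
The plan is to prove Proposition~\ref{pro:tam} by systematically working out the orthogonality relations, deducing the remaining ones by duality/symmetry, and then reading off the equivalences from the general formalism of orthogonals in a triangulated category together with the results already established in the preceding lemmas and corollaries. The structure will be: first establish the two ``basic'' identities, then reduce everything else to them.

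\textbf{Step 1: The sign-$+$ case.} I would start from the definition $\indc_{t^*\leq0}=\{K;\cor_{\{t\geq0\}}\ctens K\simeq0\}$. For $K'\in{}^\bot\indc_{t^*\leq0}$, I need to show $\cor_{\{t\geq0\}}\ctens K'\isoto K'$ and equivalently $\cor_{\{t>0\}}\ctens K'\simeq0$. The key input is the distinguished triangle of Proposition~\ref{prop:tenhom}, namely $\opb\pi L\to\cor_{\{t\geq0\}}\ctens K\to\cihom(\cor_{\{t\geq0\}},K)\to[+1]$, and the adjunction~\eqref{eq:innerhom}. One computes $\Hom(\cor_{\{t\geq0\}}\ctens K', Z)$ for $Z\in\indc_{t^*\leq0}$, which by~\eqref{eq:innerhom} equals $\Hom(K',\cihom(\cor_{\{t\geq0\}},Z))=\Hom(K',0)=0$ using the second description of $\indc_{t^*\leq0}$; so $\cor_{\{t\geq0\}}\ctens K'\in\indc_{t^*\leq0}$. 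Then the natural morphism $K'\to\cor_{\{t\geq0\}}\ctens K'$ (coming from $\cor_{\{t=0\}}\to\cor_{\{t\geq0\}}$ via $\cor_{\{t=0\}}\ctens K'\simeq K'$) has cone in $\indc_{t^*\leq0}$ (by the triangle $\cor_{\{t>0\}}[-1]\to\cor_{\{t=0\}}\to\cor_{\{t\geq0\}}\to$ convolved with $K'$, noting $\cor_{\{t>0\}}\ctens K'$ is killed since $\cor_{\{t>0\}}=\cor_{\{t\geq 0\}}\ctens\cor_{\{t>0\}}$ up to shift, or more directly that $\cor_{\{t>0\}}\in{}^\bot$-type computations reduce it), hence by the defining property of ${}^\bot\indc_{t^*\leq0}$ it must be an isomorphism. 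The chain of implications ``$\cor_{\{t\geq0\}}\ctens K\isoto K$ $\iff$ $\cor_{\{t>0\}}\ctens K\simeq0$'' follows from the triangle $\cor_{\{t>0\}}\to\cor_{\{t\geq0\}}\to\cor_{\{t=0\}}\to[+1]$ convolved with $K$.

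\textbf{Step 2: Dualize and symmetrize.} The case of $\indc_{t^*\geq0}$ follows by applying the involution $a$ (which exchanges $\{t\geq0\}$ and $\{t\leq0\}$); the descriptions of $\indc_{\pm t^*\leq0}^\bot$ follow from those of ${}^\bot\indc_{\pm t^*\leq0}$ by the adjoint pair $(\ctens,\cihom)$ combined with Lemma~\ref{lem:cihomKt0} (the formula $\opb a\rihom(K,\epb\pi L)\simeq\cihom(K,\cor_{\{t=0\}}\tens\opb\pi L)$ converts a $\ctens$-statement into a $\cihom$-statement). For $\indc_{t^*=0}=\indc_{t^*\leq0}\cap\indc_{t^*\geq0}$: the identity $\cor_{M\times\R}\simeq\cor_{\{t\geq0\}}\ctens\cor_{\{t\leq0\}}$ (a direct computation, or extract from $\cor_{\{t\geq0\}}\dsum\cor_{\{t\leq0\}}$ and the triangle with $\cor_{\{t=0\}}$ and $\cor_{M\times\R}$) together with Lemma~\ref{lem:tenspipi} ($\reeim\pi(K_1\ctens K_2)\simeq\reeim\pi K_1\tens\reeim\pi K_2$) and Lemma~\ref{lem:piRinfty} gives $\{K;\reeim\pi K\simeq0\}={}^\bot\indc_{t^*=0}$ and $\{K;\roim\pi K\simeq0\}=\indc_{t^*=0}^\bot$, after also using that $\indc_{t^*=0}=\{K;\opb\pi\roim\pi K\isoto K\}$ (stated right after Definition~\ref{def:DerT}). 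The orthogonal-decomposition statements ${}^\bot\indc_{t^*=0}={}^\bot\indc_{t^*\geq0}\dsum{}^\bot\indc_{t^*\leq0}$ and its dual come from the semi-orthogonal decomposition induced by convolving with the triangle $\cor_{\{t<0\}}\to\cor_{M\times\R}\to\cor_{\{t\geq0\}}\to[+1]$, i.e.\ every $K$ with $\reeim\pi K\simeq0$ sits in a triangle with $\cor_{\{t\geq0\}}\ctens K\in{}^\bot\indc_{t^*\leq0}$ and $\cor_{\{t<0\}}\ctens K\in{}^\bot\indc_{t^*\geq0}$, and one checks the $\Hom$ between the two pieces vanishes.

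\textbf{Step 3: The equivalences.} Once the orthogonals are identified, the four equivalences $\TDCpm(\icor_M)\isoto{}^\bot\indc_{\pm t^*\leq0}$, etc., are the standard statement that for a triangulated category $\sht$ with subcategory $\shi$, if $\shi$ is both left- and right-admissible then the quotient $\sht/\shi$ is equivalent to both ${}^\bot\shi$ and $\shi^\bot$ via the respective projections, with quasi-inverse the localization functor; here the projectors are realized concretely by $K\mapsto\cor_{\{\pm t\geq0\}}\ctens K$ (resp.\ $K\mapsto\cihom(\cor_{\{\pm t\geq0\}},K)$, resp.\ their versions with $\cor_{\{t\geq0\}}\oplus\cor_{\{t\leq0\}}$), and Step~1 shows these land in the orthogonals and are idempotent up to the subcategory. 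The main obstacle I anticipate is Step~1: carefully verifying that the cone of $K\to\cor_{\{t\geq0\}}\ctens K$ lies in $\indc_{t^*\leq0}$, which amounts to showing $\cor_{\{t>0\}}\ctens K\in\indc_{t^*\leq0}$ for all $K$ — this requires the computation $\cor_{\{t\geq0\}}\ctens\cor_{\{t>0\}}\simeq\cor_{\{t>0\}}$ (shifts aside) plus associativity of $\ctens$, and getting the degenerate pieces on $M\times(\ol\R\setminus\R)$ to vanish (which is automatic since all these sheaves have stalk zero there). The bordered-space bookkeeping for $\reeim\pi$, $\roim\pi$, $\opb\pi$, $\epb\pi$ via Lemmas~\ref{lem:tenspipi} and~\ref{lem:piRinfty} is routine but must be tracked, and the proof of Proposition~\ref{prop:tenhom} itself (which I am allowed to assume) is what makes Step~1 go through.
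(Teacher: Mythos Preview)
The paper states this proposition without proof (it is extracted from \cite{DK13} and \cite{Ta08}, as noted in the introduction to Section~\ref{section:enhanced}), so there is no ``paper's own proof'' to compare against. Your overall strategy---use the idempotence of $\cor_{\{t\geq0\}}\ctens(\scbul)$ to produce a Bousfield-type localization triangle, then read off the orthogonals and the equivalences---is the standard one and is correct in outline.

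That said, Step~1 as written has genuine errors you should fix. First, your Hom computation $\Hom(\cor_{\{t\geq0\}}\ctens K',Z)\simeq\Hom(K',\cihom(\cor_{\{t\geq0\}},Z))=0$ shows $\cor_{\{t\geq0\}}\ctens K'\in{}^\bot\indc_{t^*\leq0}$, not $\in\indc_{t^*\leq0}$ as you wrote. Second, there is no natural map $\cor_{\{t=0\}}\to\cor_{\{t\geq0\}}$; the map goes the other way (quotient by the open subset $\{t>0\}$), so the natural morphism is $\cor_{\{t\geq0\}}\ctens K'\to K'$. Third, the crux is the single identity $\cor_{\{t\geq0\}}\ctens\cor_{\{t\geq0\}}\simeq\cor_{\{t\geq0\}}$: convolving the triangle $\cor_{\{t>0\}}\to\cor_{\{t\geq0\}}\to\cor_{\{t=0\}}\to[+1]$ with $\cor_{\{t\geq0\}}$ then shows $\cor_{\{t\geq0\}}\ctens\cor_{\{t>0\}}\simeq0$, hence $\cor_{\{t>0\}}\ctens K\in\indc_{t^*\leq0}$ for \emph{every} $K$. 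With both facts in hand, the triangle $\cor_{\{t\geq0\}}\ctens K\to K\to\cor_{\{t>0\}}\ctens K[1]\to[+1]$ exhibits the semi-orthogonal decomposition, and Step~3 goes through.

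In Step~2, your claimed identity $\cor_{M\times\R}\simeq\cor_{\{t\geq0\}}\ctens\cor_{\{t\leq0\}}$ is false: by the corollary after Lemma~\ref{lem:tenspipi} one has $\reeim\pi(\cor_{\{t\geq0\}}\ctens\cor_{\{t\leq0\}})\simeq0$, whereas $\reeim\pi\cor_{M\times\R}\simeq\cor_M[-1]$. The correct route is the one you also mention: use directly that $\indc_{t^*=0}$ is the essential image of $\opb\pi$, so that ${}^\bot\indc_{t^*=0}=\{K;\reeim\pi K\simeq0\}$ and $\indc_{t^*=0}^\bot=\{K;\roim\pi K\simeq0\}$ follow from the adjunctions $(\opb\pi,\roim\pi)$ and $(\reeim\pi,\epb\pi)$ together with $\epb\pi\simeq\opb\pi[1]$. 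For the direct-sum decomposition, use the Mayer--Vietoris triangle $\cor_{M\times\R}\to\cor_{\{t\geq0\}}\dsum\cor_{\{t\leq0\}}\to\cor_{\{t=0\}}\to[+1]$ convolved with $K$: for $K\in{}^\bot\indc_{t^*=0}$ the first term vanishes, giving $K\simeq(\cor_{\{t\geq0\}}\ctens K)\dsum(\cor_{\{t\leq0\}}\ctens K)$ with summands in ${}^\bot\indc_{t^*\leq0}$ and ${}^\bot\indc_{t^*\geq0}$ respectively. Your alternative triangle $\cor_{\{t<0\}}\to\cor_{M\times\R}\to\cor_{\{t\geq0\}}$ does not place the pieces where you claim.
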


These categories are illustrated as follows:
$$
\xymatrix@C=7ex{
&& \BDC(\icor_{M\times\R_\infty})\\
&\indc_{t^*\ge 0}\ar[ru]^-{\TDCm(\icor_M)} &&\indc_{t^*\le 0}\ar[lu]_-{\;\TDCp(\icor_M)} \\
{}^\bot\indc_{t^*\leq 0}\ar[ru]|-{\rule[-.7ex]{0ex}{2.2ex}\indc_{t^*= 0}}
&\indc_{t^*\leq 0}^\bot\ar[u]|-{\rule[-.7ex]{0ex}{2.2ex}\indc_{t^*= 0}}
&\indc_{t^*= 0}\ar[lu]|-{\TDCp(\icor_M)}\ar[ru]|-{\;\TDCm(\icor_M)} 
&{}^\bot\indc_{ t^*\ge 0}\ar[u]|-(.45){\rule[-.7ex]{0ex}{2.2ex}\indc_{t^*= 0}}
&\indc_{ t^*\geq 0}^\bot\ar[lu]|-{\rule[-.7ex]{0ex}{2.2ex}\indc_{t^*= 0}}\\
&&0\ar[u]\ar[lu]|-{\TDCp(\icor_M)}\ar[ru]|-{\TDCm(\icor_M)}
\ar[llu]^-{\;\TDCp(\icor_M)\;}\ar[rru]_-{\;\TDCm(\icor_M)}
}
$$
Here, $\xymatrix{A\ar[r]|-C&B}$ or $\xymatrix{A\ar[r]_-C&B}$ means that $C\simeq B/A$.
\begin{definition}\label{def:Tlr}
One introduces the functors
\glossary{$\Tl$}%
\glossary{$\Tr$}%
\eqn
\Tl &=& (\cor_{\{t\geq0\}}\oplus\cor_{\{t\leq0\}})\ctens(\scbul), \quad \TDC(\icor_M) \to {}^\bot\indc_{t^*= 0} \subset \BDC(\icor_{M\times\R_\infty}), \\ 
\Tr &=&\cihom(\cor_{\{t\geq0\}}\oplus\cor_{\{t\leq0\}}, \scbul),\quad \TDC(\icor_M) \to \indc_{t^*= 0}^\bot \subset \BDC(\icor_{M\times\R_\infty}).
\eneqn
\end{definition}
The functors $\Tl$ and $\Tr$ are the left and right adjoint of the quotient functor 
$\BDC(\icor_{M\times\R_\infty}) \to \TDC(\icor_M)$, and the two compositions 
\eqn
\xymatrix@C=7ex{
\TDC(\icor_M)\ar@<-0.5ex>[r]_-{\Tr}\ar@<0.5ex>[r]^-{\Tl}&\BDC(\icor_{M\times\R_\infty})\ar[r]&\TDC(\icor_M)
}\eneqn
are isomorphic to the identity.

\begin{definition}\label{def:fihom}
One defines the hom functor 
\glossary{$\fihom$}%
\glossary{$\fhom$}%
\glossary{$\FHom$}%
\begin{align}
\fihom\colon \TDC(\icor_M)^\op \times \TDC(\icor_M) &\to \Derp(\icor_M)\\
\fihom(K_1,K_2) &= \roim\pi\rihom(\Tl(K_1),\Tr(K_2)),\nn
\end{align}
and one sets
\eq
&&\fhom= \alpha_M\circ\fihom\;\cl\; \TDC(\icor_M)^\op \times \TDC(\icor_M) \to \Derp(\cor_M),\\
&&\FHom(K_1,K_2) = \rsect(M;\fhom(K_1,K_2))\in\BDC(\cor).
\eneq
\end{definition}
Note that 
\begin{align*}
\fihom(K_1,K_2) &\simeq \roim\pi\rihom(\Tl(K_1),\Tl(K_2)) \\
&\simeq \roim\pi\rihom(\Tr(K_1),\Tr(K_2))
\end{align*} 
and
\begin{align*}
\Hom[\TDC(\icor_{M})](K_1,K_2)
&\simeq H^0 \bl\FHom(K_1,K_2)\br.
\end{align*}

\subsection{Operations on enhanced indsheaves}

\index{Grothendieck's six operations!for enhanced indsheaves}%
By Lemma~\ref{lem:piRinfty} the following definition is well posed.

\begin{definition}\label{def:Tctens}
The bifunctors 
\glossary{$\ctens$}%
\glossary{$\cihom$}%
\begin{align*}
\ctens &\colon \TDC(\icor_M) \times \TDC(\icor_M) \to \TDC(\icor_M), \\
\cihom &\colon \TDCC^-(\icor_M)^\op \times \TDCC^+(\icor_M) \to \TDCC^+(\icor_M)
\end{align*}
are those induced by the bifunctors $\ctens$ and $\cihom$ defined on $\BDC(\icor_{M\times\R_\infty})$.
\end{definition}
For any $K\in\TDC(\icor_M)$ there is an isomorphism in $\TDC(\icor_M)$
\[
\cor_{\{t\geq 0\}} \ctens K \isoto \cihom(\cor_{\{t\geq 0\}}, K),
\]
which follows from Proposition~\ref{prop:tenhom}.

The bifunctor $\ctens$ gives   $\TDC(\icor_M)$ a structure of a commutative tensor category  
with $\cor_{\{t= 0\}}$ as a unit object. 
Note that
\eqn
&&\Tl(\cor_{\{t= 0\}})\simeq\cor_{t\ge0}\soplus\cor_{t\le0},\\
&&\Tr(\cor_{\{t= 0\}})\simeq\cor_{t<0}[1]\soplus\cor_{t>0}[1].
\eneqn
Moreover,
$\cihom$ is the inner hom of the tensor category $\TDC(\icor_M)$:
\begin{lemma}
For $K_1,K_2,K_3\in\TDC(\icor_M)$ there is an isomorphism
\[
\Hom[\TDCC^+(\icor_M)](K_1\ctens K_2,K_3) \simeq \Hom[\TDCC^+(\icor_M)](K_1,\cihom(K_2,K_3)).
\]
\end{lemma}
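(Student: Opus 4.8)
The plan is to deduce the asserted adjunction from the already-established adjunction \eqref{eq:innerhom} between $\ctens$ and $\cihom$ on $\icor_{M\times\R_\infty}$, by transporting everything along the quotient functor $Q\colon\BDC(\icor_{M\times\R_\infty})\to\TDC(\icor_M)$ (and its $+$-bounded analogue) together with its right adjoint $\Tr$. Recall from Definition~\ref{def:Tlr} and the discussion following it that $Q$ admits the right adjoint $\Tr$, that $Q\circ\Tr\simeq\id$, and that $Q$ restricts to an equivalence on the orthogonal subcategory $\indc^\bot_{t^*=0}$ with quasi-inverse $\Tr$; and recall from Proposition~\ref{pro:tam} the description $\indc^\bot_{t^*=0}=\{K;\roim{\pi}K\simeq 0\}$. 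The same statements hold in the $\Derp$/$\TDCC^+$ setting, which is the one we actually work in since $\cihom$ of bounded objects lands in $\TDCC^+(\icor_M)$.

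Concretely, I would fix representatives by setting $A_i=\Tr K_i\in\BDC(\icor_{M\times\R_\infty})$, so that $QA_i\simeq K_i$ for $i=1,2,3$, and by Definition~\ref{def:Tctens} $K_1\ctens K_2\simeq Q(A_1\ctens A_2)$ and $\cihom(K_2,K_3)\simeq Q\,\cihom(A_2,A_3)$. For the left-hand side, using $Q\dashv\Tr$, then $\Tr Q A_3\simeq\Tr(Q\Tr K_3)\simeq\Tr K_3=A_3$, and finally \eqref{eq:innerhom} (in its $\Derp$-version, cf.\ Definition~\ref{def:ctens1}), one gets
\[
\Hom[\TDCC^+(\icor_M)](K_1\ctens K_2,K_3)
\simeq\Hom[\Derp(\icor_{M\times\R_\infty})](A_1\ctens A_2,A_3)
\simeq\Hom[\Derp(\icor_{M\times\R_\infty})](A_1,\cihom(A_2,A_3)).
\]
For the right-hand side, again $Q\dashv\Tr$ gives
\[
\Hom[\TDCC^+(\icor_M)](K_1,\cihom(K_2,K_3))
\simeq\Hom[\Derp(\icor_{M\times\R_\infty})](A_1,\Tr Q\,\cihom(A_2,A_3)).
\]
It thus remains to identify $\Tr Q\,\cihom(A_2,A_3)$ with $\cihom(A_2,A_3)$, which holds once $\cihom(A_2,A_3)$ is shown to lie in $\indc^\bot_{t^*=0}$, because $\Tr Q$ is the identity there. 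This last point is where Lemma~\ref{lem:tenspipi} enters: since $A_3=\Tr K_3\in\indc^\bot_{t^*=0}$ we have $\roim{\pi}A_3\simeq 0$, whence
\[
\roim{\pi}\,\cihom(A_2,A_3)\simeq\rihom(\reeim{\pi}A_2,\roim{\pi}A_3)\simeq 0,
\]
so $\cihom(A_2,A_3)\in\indc^\bot_{t^*=0}$ by Proposition~\ref{pro:tam}, and the two sides agree. All the isomorphisms used are natural in $K_1,K_2,K_3$, so the composite isomorphism is as well.

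The computation is essentially formal, and the only substantive point is the choice of representatives: one cannot feed arbitrary lifts $A_i$ of the $K_i$ into $\cihom$ on $\BDC(\icor_{M\times\R_\infty})$ and expect the quotient to be insensitive to the difference, so the proof must pin down $A_3=\Tr K_3$ inside the orthogonal subcategory $\indc^\bot_{t^*=0}$, which is exactly what makes $\cihom(A_2,A_3)$ stay there (via Lemma~\ref{lem:tenspipi}) and makes $\Tr Q$ disappear. The remaining care is purely bookkeeping about boundedness — running the quotient-category $\Hom$-formula, \eqref{eq:innerhom}, and the equivalence $Q|_{\indc^\bot_{t^*=0}}$ in the $\Derp$/$\TDCC^+$ rather than the bounded setting.
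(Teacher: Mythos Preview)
The paper states this lemma without proof, so there is nothing to compare against; your argument is a correct and complete way to supply the details. The key point you identify---that one must take representatives $A_i=\Tr K_i$ so that $A_3\in\indc^\bot_{t^*=0}$, and then use Lemma~\ref{lem:tenspipi} together with the description $\indc^\bot_{t^*=0}=\{K;\roim\pi K\simeq 0\}$ from Proposition~\ref{pro:tam} to conclude $\cihom(A_2,A_3)\in\indc^\bot_{t^*=0}$---is exactly the substantive step, and the rest is the formal adjunction bookkeeping you describe.
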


We have the following orthogonal relations:
\eqn
&&\TDCp(\icor_M)\ctens \TDCm(\icor_M)\simeq0,\\
&& \cihom( \TDCpm(\icor_M), \TDCmp(\icor_M))\simeq0.
\eneqn

\begin{definition}
By Lemma~\ref{lem:piRinfty} one gets functors
\begin{align*}
\opb\pi(\scbul)\tens(\scbul)&\colon \BDC(\icor_M) \times \TDC(\icor_M) \to \TDC(\icor_M), \\
\rihom(\opb\pi(\scbul),\scbul) &\colon \Derm(\icor_M)^\op \times \TDCC^+(\icor_M) \to \TDCC^+(\icor_M).
\end{align*}
\end{definition}

\begin{remark}
The functor $\tens$
\emph{does not} factor through $\TDC(\icor_M)\times\TDC(\icor_M)$,
and the functor $\rihom$\emph{does not} factor through $\TDC(\icor_M)^\op\times\TDC(\icor_M)$.
\end{remark}

Let $f\colon M \to N$ be a continuous map of good topological spaces.
Denote by $\tilde f\colon M\times\R_\infty \to N\times\R_\infty$ 
the associated morphism of bordered spaces.
Then the composition of functors
\glossary{$\reeim{\tilde f}$}%
\glossary{$\roim{\tilde f}$}%
\glossary{$\opb{\tilde f}$}%
\glossary{$\epb{\tilde f}$}%
\begin{align}
\label{eq:oimftilde}
\reeim{\tilde f}, \roim{\tilde f} &\colon \BDC(\icor_{M\times\R_\infty}) \to \BDC(\icor_{N\times\R_\infty}) \to \TDC(\icor_N), \\
\label{eq:opbftilde}
\opb{\tilde f}, \epb{\tilde f} &\colon \BDC(\icor_{N\times\R_\infty}) \to \BDC(\icor_{M\times\R_\infty}) \to \TDC(\icor_M),
\end{align}
factor through $\TDC(\icor_M)$ and $\TDC(\icor_N)$, respectively.

\begin{definition}
\label{def:fT}
One denotes by
\glossary{$\Teeim{f}$}%
\glossary{$\Toim{f}$}%
\glossary{$\Topb{f}$}%
\glossary{$\Tepb{f}$}%
\begin{align*}
\Teeim f, \Toim f &\colon \TDC(\icor_M) \to \TDC(\icor_N), \\
\Topb f, \Tepb f &\colon \TDC(\icor_N) \to \TDC(\icor_M),
\end{align*}
the functors induced by \eqref{eq:oimftilde} and \eqref{eq:opbftilde}, respectively.
\end{definition}

\begin{definition}
For $K\in\TDC(\icor_M)$ and $L\in\TDC(\icor_N)$, 
we define their 
\index{external tensor product!for enhanced indsheaves}%
external tensor product by
\glossary{$\cetens$}%
\[
K \cetens L = \Topb p_1K \ctens \Topb p_2L \in \TDC(\icor_{M\times N}),
\]
where $p_1$ and $p_2$ denote the projections from $M\times N$ to $M$ and $N$, respectively.
\end{definition}

Using Definition~\ref{def:Tlr}, for $F\in\TDC(\icor_M)$ and $G\in\TDC(\icor_N)$ one has
\begin{align*}
\Teeim f F &\simeq \reeim{\tilde f}\Tl(F) \simeq \reeim{\tilde f}\Tr(F), \\
\Toim f F &\simeq \roim{\tilde f}\Tl(F) \simeq \roim{\tilde f}\Tr(F), \\
\Topb f G &\simeq \opb{\tilde f}\Tl(G) \simeq \opb{\tilde f}\Tr(G), \\
\Tepb f G &\simeq \epb{\tilde f}\Tl(G) \simeq \epb{\tilde f}\Tr(G).
\end{align*}

The above operations satisfy analogous properties as 
the  six  operations for sheaves and indsheaves.
\begin{proposition}
\label{pro:Tadj}
Let $f\colon M \to N$ be a continuous map of good topological spaces.
\bnum
\item
The functor $\Teeim{f}$ is left adjoint to $\Tepb{f}$.
\item
The functor $\Topb{f}$ is left adjoint to $\Toim{f}$.
\ee
\end{proposition}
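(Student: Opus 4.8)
The plan is to deduce both adjunctions formally, from the adjunctions already available for the external operations on bordered spaces (Lemma~\ref{lem:badj}) together with the adjunction between the quotient functor $Q_M\colon\BDC(\icor_{M\times\R_\infty})\to\TDC(\icor_M)$ and the functors $\Tl,\Tr$ (which satisfy $Q_M\circ\Tl\simeq\id\simeq Q_M\circ\Tr$). First I would set up notation: write $\tilde f\colon M\times\R_\infty\to N\times\R_\infty$ for the morphism of bordered spaces associated with $f$, and recall that, by construction of $\Teeim f,\Toim f,\Topb f,\Tepb f$ in \eqref{eq:oimftilde}, \eqref{eq:opbftilde} and Definition~\ref{def:fT}, there are natural isomorphisms $\Teeim f\circ Q_M\simeq Q_N\circ\reeim{\tilde f}$, $\Tepb f\circ Q_N\simeq Q_M\circ\epb{\tilde f}$, $\Topb f\circ Q_N\simeq Q_M\circ\opb{\tilde f}$ and $\Toim f\circ Q_M\simeq Q_N\circ\roim{\tilde f}$; equivalently, $Q_N\circ\reeim{\tilde f}$, $Q_M\circ\opb{\tilde f}$ and their $\roim{}$/$\epb{}$ partners annihilate the respective kernels $\indc_{t^* = 0}$ of $Q_M$ and $Q_N$. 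Evaluating these on $\Tl$- and $\Tr$-sections yields, for $K\in\TDC(\icor_M)$ and $L\in\TDC(\icor_N)$, the representatives $\Teeim f K\simeq Q_N\reeim{\tilde f}\Tl K$, $\Tepb f L\simeq Q_M\epb{\tilde f}\Tr L$, $\Topb f L\simeq Q_M\opb{\tilde f}\Tl L$ and $\Toim f K\simeq Q_N\roim{\tilde f}\Tr K$.

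With this in hand, (i) and (ii) become bookkeeping. For (i) I would chain the three adjunctions
\begin{align*}
\Hom[\TDC(\icor_N)](\Teeim f K,L)
&\simeq\Hom[\BDC(\icor_{N\times\R_\infty})](\reeim{\tilde f}\Tl K,\Tr L)\\
&\simeq\Hom[\BDC(\icor_{M\times\R_\infty})](\Tl K,\epb{\tilde f}\Tr L)\\
&\simeq\Hom[\TDC(\icor_M)](K,Q_M\epb{\tilde f}\Tr L)
\simeq\Hom[\TDC(\icor_M)](K,\Tepb f L),
\end{align*}
the first isomorphism because $\Tr$ is right adjoint to $Q_N$ (and $Q_N\reeim{\tilde f}\Tl K\simeq\Teeim f K$), the second being $(\reeim{\tilde f},\epb{\tilde f})$ on bordered spaces, the third because $\Tl$ is left adjoint to $Q_M$; and for (ii), symmetrically,
\begin{align*}
\Hom[\TDC(\icor_M)](\Topb f L,K)
&\simeq\Hom[\BDC(\icor_{M\times\R_\infty})](\opb{\tilde f}\Tl L,\Tr K)\\
&\simeq\Hom[\BDC(\icor_{N\times\R_\infty})](\Tl L,\roim{\tilde f}\Tr K)\\
&\simeq\Hom[\TDC(\icor_N)](L,Q_N\roim{\tilde f}\Tr K)
\simeq\Hom[\TDC(\icor_N)](L,\Toim f K),
\end{align*}
using in turn that $\Tr$ is right adjoint to $Q_M$, the adjunction $(\opb{\tilde f},\roim{\tilde f})$, and that $\Tl$ is left adjoint to $Q_N$. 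All these isomorphisms are functorial in $K$ and $L$, so they assemble into the asserted pairs of adjoint functors, with units and counits read off from those of the constituent adjunctions.

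The step to handle with care — really the only non-formal point — is the legitimacy of inserting $\Tl$ on the source side and $\Tr$ on the target side of each $\Hom$: one must check that $Q_N\reeim{\tilde f}\Tl K$ genuinely represents $\Teeim f K$ (and similarly in the other three cases) and that nothing is lost when passing between $\Tl$- and $\Tr$-lifts. This is exactly where the input ``$Q_N\circ\reeim{\tilde f}$, $Q_M\circ\opb{\tilde f}$, etc., factor through $\TDC$'' from \eqref{eq:oimftilde}--\eqref{eq:opbftilde} is used, together with $Q\circ\Tl\simeq Q\circ\Tr\simeq\id$; the remaining subtlety is purely clerical, namely to apply the bordered-space adjunctions of Lemma~\ref{lem:badj} (which live on $\BDC(\icor_{M\times\R_\infty})$ and $\BDC(\icor_{N\times\R_\infty})$) before, not after, applying the quotient functors.
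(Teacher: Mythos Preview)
Your argument is correct. The paper itself states Proposition~\ref{pro:Tadj} without proof (the results of this section are listed as extracted from~\cite{DK13}), so there is no in-text proof to compare against; your derivation via the adjunctions $(\Tl,Q_M)$ and $(Q_N,\Tr)$ sandwiching the bordered-space adjunction $(\reeim{\tilde f},\epb{\tilde f})$ (resp.\ $(\opb{\tilde f},\roim{\tilde f})$) of Lemma~\ref{lem:badj}, together with the identifications $\Teeim f K\simeq Q_N\reeim{\tilde f}\Tl K$ and $\Tepb f L\simeq Q_M\epb{\tilde f}\Tr L$ recorded after Definition~\ref{def:fT}, is exactly the standard proof and the one intended.
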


\begin{proposition}\label{pro:Tcomp}
Given two continuous maps of good topological spaces $L \to[g] M \to[f] N$, one has
\[
\Teeim{(f\circ g)} \simeq \Teeim{f} \circ \Teeim{g}, \qquad
\Toim{(f\circ g)} \simeq \Toim{f} \circ \Toim{g}
\]
and
\[
\Topb{(f\circ g)} \simeq \Topb{g} \circ \Topb{f}, \qquad
\Tepb{(f\circ g)} \simeq \Tepb{g} \circ \Tepb{f}.
\]
\end{proposition}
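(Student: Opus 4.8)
The plan is to reduce everything to the corresponding composition formulas for the external operations on bordered spaces (Lemma~\ref{lem:badj}~(iii)), using that each enhanced functor is by construction obtained from a bordered-space functor by passage to a Verdier quotient. Concretely, to $f\colon M\to N$ one associates $\tilde f\colon M\times\R_\infty\to N\times\R_\infty$ (induced by $f\times\id_\R$), and $\Teeim f$, $\Toim f$, $\Topb f$, $\Tepb f$ are, respectively, the functors induced on the quotients by $\reeim{\tilde f}$, $\roim{\tilde f}$, $\opb{\tilde f}$, $\epb{\tilde f}$ — this is the factorization property recalled just before Definition~\ref{def:fT}. The first preliminary observation I would record is that $f\mapsto\tilde f$ is compatible with composition, i.e.\ $\widetilde{f\circ g}\simeq\tilde f\circ\tilde g$; this is immediate from the explicit description of $\tilde f$ together with Lemma~\ref{lem:bordcom}. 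The second is that the quotient functor $Q_M\colon\BDC(\icor_{M\times\R_\infty})\to\TDC(\icor_M)$ satisfies $Q_M\circ\Tl_M\simeq\id\simeq Q_M\circ\Tr_M$ (Definition~\ref{def:Tlr} ff.), so that every object of $\TDC(\icor_M)$ has a canonical lift along $\Tl_M$ (or $\Tr_M$).

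The main computation goes as follows. Given $K\in\TDC(\icor_L)$, I would write $\Teeim g K\simeq Q_M(\reeim{\tilde g}\Tl_L K)$, apply $\Teeim f$, and use the factorization $\Teeim f\circ Q_M\simeq Q_N\circ\reeim{\tilde f}$ to obtain $\Teeim f\Teeim g K\simeq Q_N(\reeim{\tilde f}\reeim{\tilde g}\Tl_L K)$. By Lemma~\ref{lem:badj}~(iii) together with $\widetilde{f\circ g}\simeq\tilde f\circ\tilde g$, this equals $Q_N(\reeim{\widetilde{f\circ g}}\Tl_L K)\simeq\Teeim{(f\circ g)}K$. The identical three-line argument, with $\reeim{(\scbul)}$ replaced by $\opb{(\scbul)}$, $\roim{(\scbul)}$ or $\epb{(\scbul)}$ (and the lift taken with $\Tl$ or $\Tr$ in the appropriate source category, and Lemma~\ref{lem:badj}~(iii) used in the relevant variance), proves the remaining three formulas. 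Alternatively, once $\Teeim{(f\circ g)}\simeq\Teeim f\circ\Teeim g$ and $\Topb{(f\circ g)}\simeq\Topb g\circ\Topb f$ are in hand, the other two follow formally by uniqueness of adjoints from Proposition~\ref{pro:Tadj}: $\Tepb{(f\circ g)}$ and $\Tepb g\circ\Tepb f$ are both right adjoint to $\Teeim f\circ\Teeim g$, and dually $\Toim{(f\circ g)}\simeq\Toim f\circ\Toim g$ is right adjoint to $\Topb g\circ\Topb f$.

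I do not expect a serious obstacle here: the statement is essentially formal given the material already in the excerpt. The one point requiring a little care is that, in order to feed $\Teeim g K\in\TDC(\icor_M)$ into $\Teeim f$, one must lift it back to $\BDC(\icor_{M\times\R_\infty})$, and the answer must be independent of this lift — but this is precisely what the factorization of $\reeim{\tilde f}$ (resp.\ $\opb{\tilde f}$, etc.) through $Q_M$ guarantees, and it also shows that taking $\Tl$ or $\Tr$ as the lift yields isomorphic results. The only genuinely tedious (but routine) part, which I would only sketch, is checking that the isomorphisms produced are natural in the objects and compatible with triple compositions, i.e.\ the coherence of the resulting pseudofunctor structure.
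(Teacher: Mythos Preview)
Your proposal is correct. The paper does not give a proof of this proposition at all (it is stated without proof, in keeping with the authors' remark that routine results are sometimes left unproved), so there is nothing to compare against; your reduction to Lemma~\ref{lem:badj}~(iii) via the factorization of the bordered-space functors through the quotient, together with the obvious identity $\widetilde{f\circ g}=\tilde f\circ\tilde g$, is exactly the expected argument.
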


\begin{proposition}
\label{pro:Tproj}
Let $f\colon M \to N$ be a continuous map of good topological spaces.
For $K\in\TDC(\icor_M)$ and $L,L_1,L_2\in\TDC(\icor_N)$, one has isomorphisms
\begin{align*}
\Teeim {f}(\Topb {f} L \ctens K) & \simeq L \ctens \Teeim {f} K, \\
\Topb {f} (L_1\ctens L_2) &\simeq \Topb {f} L_1 \ctens \Topb {f} L_2, \\
\cihom(L,\Toim {f} K) & \simeq \Toim {f} \cihom(\Topb {f} L,K), \\
\cihom(\Teeim {f} K, L) & \simeq \Toim {f} \cihom(K, \Tepb {f} L), \\
\Tepb {f} \cihom(L_1,L_2) & \simeq \cihom(\Topb {f} L_1, \Tepb {f} L_2),
\end{align*}
and a morphism
\[
\Topb f \cihom(L_1,L_2) \to \cihom(\Topb f L_1,\Topb f L_2).
\]
\end{proposition}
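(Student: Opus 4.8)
The plan is to deduce every assertion from the analogous formulas for the usual six operations on indsheaves over the bordered spaces $M\times\R_\infty$ and $N\times\R_\infty$, established in Section~\ref{section:bordered}. Recall from Definition~\ref{def:Tctens} that $\ctens$ and $\cihom$ on $\TDC(\icor_\bullet)$ are induced by the bifunctors $K_1\ctens K_2=\reeim{\mu}(\opb{q_1}K_1\tens\opb{q_2}K_2)$ and $\cihom(K_1,K_2)=\roim{q_1}\rihom(\opb{q_2}K_1,\epb{\mu}K_2)$ on $\BDC(\icor_{\bullet\times\R_\infty})$, and that $\Teeim f,\Toim f,\Topb f,\Tepb f$ are induced from $\reeim{\tilde f},\roim{\tilde f},\opb{\tilde f},\epb{\tilde f}$ along $\tilde f\colon M\times\R_\infty\to N\times\R_\infty$. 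Put $g\eqdot\tilde f\times\id\colon M\times\R_\infty\times\R_\infty\to N\times\R_\infty\times\R_\infty$; then $\mu_N\circ g=\tilde f\circ\mu_M$ and $q_i^N\circ g=\tilde f\circ q_i^M$ for $i=1,2$, and each of the three commutative squares relating $g$ and $\tilde f$ via $\mu$, via $q_1$, and via $q_2$ is Cartesian in the category of bordered spaces. Since all the functors above are, by construction, computed by lifting to $\BDC(\icor_{\bullet\times\R_\infty})$ (through the embeddings $\Tl$ or $\Tr$ of Definition~\ref{def:Tlr}), applying the corresponding ordinary operation, and projecting — and since $\reeim{\tilde f}\Tl\simeq\reeim{\tilde f}\Tr$, $\opb{\tilde f}\Tl\simeq\opb{\tilde f}\Tr$, and so on — it suffices to produce the (iso)morphisms at the level of $\BDC(\icor_{\bullet\times\R_\infty})$ and to check they descend.

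Each of the five isomorphisms is then a short chain of the base change formula (Lemma~\ref{lem:bcart}), the projection and adjunction formulas (Proposition~\ref{pro:bproj}), and the composition identities $\reeim{(uv)}\simeq\reeim u\reeim v$, $\roim{(uv)}\simeq\roim u\roim v$, $\opb{(uv)}\simeq\opb v\opb u$, $\epb{(uv)}\simeq\epb v\epb u$ (Lemma~\ref{lem:badj}). For $\Teeim f(\Topb f L\ctens K)\simeq L\ctens\Teeim f K$ one writes the left side as $\reeim{\tilde f}\reeim{\mu_M}(\opb{q_1^M}\opb{\tilde f}L\tens\opb{q_2^M}K)$, rewrites it as $\reeim{\mu_N}\reeim g(\opb g\opb{q_1^N}L\tens\opb{q_2^M}K)$, applies the projection formula for $g$ to extract $\opb{q_1^N}L$, then uses base change $\reeim g\opb{q_2^M}\simeq\opb{q_2^N}\reeim{\tilde f}$ to arrive at $\reeim{\mu_N}(\opb{q_1^N}L\tens\opb{q_2^N}\reeim{\tilde f}K)=L\ctens\Teeim f K$. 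The remaining four go the same way: $\Topb f(L_1\ctens L_2)$ uses base change for $\opb{\tilde f}\reeim{\mu_N}$ and the commutation of $\opb g$ with $\tens$; $\cihom(L,\Toim f K)$ uses base change $\epb{\mu_N}\roim{\tilde f}\simeq\roim g\epb{\mu_M}$ together with $\rihom(\opb{q_2^N}L,\roim g(-))\simeq\roim g\rihom(\opb g\opb{q_2^N}L,-)$; $\cihom(\Teeim f K,L)$ uses $\opb{q_2^N}\reeim{\tilde f}\simeq\reeim g\opb{q_2^M}$, the formula $\rihom(\reeim g(-),-)\simeq\roim g\rihom(-,\epb g(-))$, and $\epb g\epb{\mu_N}\simeq\epb{\mu_M}\epb{\tilde f}$; and $\Tepb f\cihom(L_1,L_2)$ uses base change for $\epb{\tilde f}\roim{q_1^N}$ and the commutation of $\epb g$ with $\rihom$. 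Finally, the morphism $\Topb f\cihom(L_1,L_2)\to\cihom(\Topb f L_1,\Topb f L_2)$ is obtained by adjunction from the composite $\Topb f\cihom(L_1,L_2)\ctens\Topb f L_1\simeq\Topb f(\cihom(L_1,L_2)\ctens L_1)\to\Topb f L_2$, where the isomorphism is the already-proved commutation of $\Topb f$ with $\ctens$ and the arrow is $\Topb f$ applied to the evaluation morphism.

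I expect the only delicate point to be bookkeeping rather than substance: one must make sure that the natural morphisms constructed in $\BDC(\icor_{\bullet\times\R_\infty})$ are compatible with the localization defining the enhanced categories, so that they genuinely descend to the asserted (iso)morphisms on $\TDC$, and that the outcome does not depend on the chosen lifts (through $\Tl$ or $\Tr$). This follows from the compatibility formulas recorded just after Definition~\ref{def:fT} together with the fact, from Proposition~\ref{pro:tam}, that $\Tl$ and $\Tr$ are respectively left and right adjoint to the quotient functor and have both composites with it isomorphic to the identity. Once this is in place, the proof reduces entirely to the Cartesian-square manipulations indicated above.
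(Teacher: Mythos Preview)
The paper does not give a proof of this proposition; it is stated as part of the survey of enhanced indsheaves, with the understanding that all formulas follow from the corresponding formulas for indsheaves on bordered spaces (Proposition~\ref{pro:bproj} and Lemma~\ref{lem:bcart}). Your proposal carries this out correctly: the three Cartesian squares relating $g=\tilde f\times\id_{\R_\infty}$ to $\tilde f$ via $\mu$, $q_1$, $q_2$ are exactly what is needed, and the chains of base change and projection formulas you describe are the standard ones. The descent step is handled by the fact that the quotient functor commutes with all the operations involved (as recorded after Definition~\ref{def:fT}) and that, by Proposition~\ref{pro:tam}, $\Tl$ and $\Tr$ are sections of it; so your argument is complete.
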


\begin{proposition}
\label{pro:Tcart}
Consider a Cartesian diagram of good topological spaces 
\begin{equation*}
\xymatrix{
M' \ar[r]^{f'} \ar[d]^{g'} & N' \ar[d]^{g} \\
M \ar[r]^{f}\ar@{}[ur]|-\square & N.
}
\end{equation*}
Then there are isomorphisms in the category of functors from
$\TDC(\icor_{M})$ to $\TDC(\icor_{N'})$:
\[
\Topb g\Teeim{f} \simeq \Teeim f' \Topb{g'}, \qquad
\Tepb g\Toim{f} \simeq \Toim f' \Tepb{g'}.
\]
\end{proposition}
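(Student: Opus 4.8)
The plan is to reduce the assertion to the base change formula for indsheaves on bordered spaces, Lemma~\ref{lem:bcart}, applied after multiplying the given Cartesian square by $\R_\infty$, and then to transport the resulting isomorphisms through the localization functors that define the enhanced categories.

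First I would observe that the Cartesian square of good topological spaces induces a Cartesian square of bordered spaces
\[
\xymatrix{
M'\times\R_\infty \ar[r]^{\tilde f'} \ar[d]^{\tilde g'} & N'\times\R_\infty \ar[d]^{\tilde g} \\
M\times\R_\infty \ar[r]^{\tilde f}\ar@{}[ur]|-\square & N\times\R_\infty,
}
\]
where $\tilde f,\tilde g,\tilde f',\tilde g'$ are the morphisms of bordered spaces associated with $f,g,f',g'$. Indeed, products of bordered spaces are computed componentwise, the open parts satisfy $M'\times\R=(M\times\R)\times_{N\times\R}(N'\times\R)$, and the compactifying factor $\ol\R$ is shared by all four corners, so that no new identification is forced along $\ol\R\setminus\R$. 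Applying Lemma~\ref{lem:bcart} to this square then gives natural isomorphisms of functors on $\Derb(\icor_{M\times\R_\infty})$
\[
\opb{\tilde g}\,\reeim{\tilde f}\;\simeq\;\reeim{\tilde f'}\,\opb{\tilde g'},
\qquad
\epb{\tilde g}\,\roim{\tilde f}\;\simeq\;\roim{\tilde f'}\,\epb{\tilde g'}.
\]

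Next I would recall, from Definition~\ref{def:fT} and the discussion around \eqref{eq:oimftilde}--\eqref{eq:opbftilde}, that the enhanced operations $\Teeim f,\Toim f,\Topb g,\Tepb g$ are by construction induced from $\reeim{\tilde f},\roim{\tilde f},\opb{\tilde g},\epb{\tilde g}$: writing $Q_X\colon\Derb(\icor_{X\times\R_\infty})\to\TDC(\icor_X)$ for the quotient functor, with $X$ one of $M,N,M',N'$, one has $\Teeim f\circ Q_M\simeq Q_N\circ\reeim{\tilde f}$, $\Topb g\circ Q_N\simeq Q_{N'}\circ\opb{\tilde g}$, and similarly for $\Toim f,\Tepb g$ and for the primed maps. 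Composing these identifications with the two base change isomorphisms above yields
\[
(\Topb g\,\Teeim f)\circ Q_M
\;\simeq\;Q_{N'}\circ\opb{\tilde g}\,\reeim{\tilde f}
\;\simeq\;Q_{N'}\circ\reeim{\tilde f'}\,\opb{\tilde g'}
\;\simeq\;(\Teeim{f'}\,\Topb{g'})\circ Q_M
\]
and, in the same way, $(\Tepb g\,\Toim f)\circ Q_M\simeq(\Toim{f'}\,\Tepb{g'})\circ Q_M$. Since $Q_M$ is a localization functor, these isomorphisms descend to the claimed isomorphisms of functors on $\TDC(\icor_M)$. (Equivalently, once the first isomorphism is established for every Cartesian square, the second follows by passing to adjoints in the first applied to the transposed square, using Proposition~\ref{pro:Tadj}.)

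I expect the only genuinely delicate point to be the verification that the square of bordered spaces displayed above is Cartesian in the precise sense required by Lemma~\ref{lem:bcart} --- that the closures of the graphs of $\tilde f'$ and $\tilde g'$ inside the product compactifications are the expected ones, and that the fibre product on the open parts is compatible with those bordered structures. Because the $\R_\infty$ factor is left untouched throughout, this reduces to the corresponding (essentially trivial) statement for the original square of topological spaces together with the fact that the functor $(\scbul)\times\R_\infty$ preserves Cartesian squares of bordered spaces; everything after that is formal manipulation with localization functors and presents no difficulty.
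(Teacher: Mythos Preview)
The paper does not supply a proof of this proposition: Section~\ref{section:enhanced} is a survey extracted from~\cite{DK13}, and Proposition~\ref{pro:Tcart} is stated among the formal properties of the enhanced six operations without argument. Your approach---reducing to Lemma~\ref{lem:bcart} for the square multiplied by $\R_\infty$, and then descending through the quotient functors $Q_X\colon\Derb(\icor_{X\times\R_\infty})\to\TDC(\icor_X)$---is the natural one and is correct; it is exactly how such a statement is derived from the bordered-space formalism. The point you flag as delicate, that the square of bordered spaces is Cartesian, is indeed routine: the open parts form a Cartesian square because $M'\times\R=(M\times\R)\times_{N\times\R}(N'\times\R)$, and the compactifying $\overline\R$-factor is common to all four corners and untouched by the maps, so the graph closures behave as expected. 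The descent step is justified by the universal property of Verdier localization: a natural isomorphism between $(\Topb g\,\Teeim f)\circ Q_M$ and $(\Teeim{f'}\,\Topb{g'})\circ Q_M$ determines a unique natural isomorphism between the functors on $\TDC(\icor_M)$ themselves.
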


\begin{lemma}
\label{homepb}
For $f\colon M\to N$ a morphism of good topological spaces, $K\in\TDC(\icor_M)$ and $L\in\TDC(\icor_N)$, one has
\begin{align*}
\roim f \fhom(K,\Tepb f L) &\simeq \fhom(\Teeim f K, L), \\
\roim f \fhom(\Topb f L, K) &\simeq \fhom(L, \Toim f K).
\end{align*}
\end{lemma}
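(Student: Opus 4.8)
The plan is to reduce everything to the indsheaf–valued hom $\fihom$ and then unwind the definitions into a formal chain of the commutation formulas for the six operations on bordered spaces. Since $\fhom=\alpha\circ\fihom$ (Definition~\ref{def:fihom}) and $\alpha$ commutes with $\roim f$ (Table~\eqref{tabular}), it is enough to prove
\[
\roim f\,\fihom(K,\Tepb f L)\simeq\fihom(\Teeim f K,L),\qquad
\roim f\,\fihom(\Topb f L,K)\simeq\fihom(L,\Toim f K)
\]
in $\Derp(\icor_N)$. The two statements are parallel, so I would carry out the first in detail and indicate the changes for the second. Write $\tilde f\colon M\times\R_\infty\to N\times\R_\infty$ for the morphism of bordered spaces induced by $f$, and $\pi_M,\pi_N$ for the projections to $M,N$, so that $\pi_N\circ\tilde f=f\circ\pi_M$.

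First I would record the representatives: $\Teeim f K$ is represented in $\BDC(\icor_{N\times\R_\infty})$ by $\reeim{\tilde f}\Tl(K)$, $\Tepb f L$ by $\epb{\tilde f}\Tr(L)$, $\Topb f L$ by $\opb{\tilde f}\Tl(L)$ and $\Toim f K$ by $\roim{\tilde f}\Tr(K)$ (discussion after Definition~\ref{def:fT}), and that these are again of the form $\Tl(\scbul)$ or $\Tr(\scbul)$ as appropriate; this amounts to checking that $\reeim{\tilde f}$ and $\opb{\tilde f}$ preserve ${}^\bot\indc_{t^*=0}=\{\reeim\pi(\scbul)\simeq0\}$ and that $\roim{\tilde f}$ and $\epb{\tilde f}$ preserve $\indc_{t^*=0}^\bot=\{\roim\pi(\scbul)\simeq0\}$, which follows from $\reeim{\pi_N}\reeim{\tilde f}\simeq\reeim f\reeim{\pi_M}$, $\roim{\pi_N}\roim{\tilde f}\simeq\roim f\roim{\pi_M}$ (Lemma~\ref{lem:badj}) and the base–change formulas of Lemma~\ref{lem:bcart} applied to the Cartesian square formed by $\tilde f,f,\pi_M,\pi_N$. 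Granting this, by Definition~\ref{def:fihom} one has $\fihom(\Teeim f K,L)\simeq\roim{\pi_N}\rihom(\reeim{\tilde f}\Tl K,\Tr L)$; applying the adjunction formula $\rihom(\reeim{\tilde f}(\scbul),\scbul)\simeq\roim{\tilde f}\rihom(\scbul,\epb{\tilde f}(\scbul))$ of Proposition~\ref{pro:bproj}, then $\roim{\pi_N}\roim{\tilde f}\simeq\roim f\roim{\pi_M}$, and finally $\epb{\tilde f}\Tr L\simeq\Tr\Tepb f L$, gives $\fihom(\Teeim f K,L)\simeq\roim f\,\roim{\pi_M}\rihom(\Tl K,\Tr\Tepb f L)=\roim f\,\fihom(K,\Tepb f L)$. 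For the second formula I would run the same computation starting from $\fihom(\Topb f L,K)\simeq\roim{\pi_M}\rihom(\opb{\tilde f}\Tl L,\Tr K)$, applying instead $\roim{\tilde f}\rihom(\opb{\tilde f}(\scbul),\scbul)\simeq\rihom(\scbul,\roim{\tilde f}(\scbul))$ (Proposition~\ref{pro:bproj}) together with the same factorization of the direct image.

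The step I expect to be the main obstacle is precisely the bookkeeping hidden in the second paragraph: one must make sure the natural representatives of $\Teeim f K$, $\Tepb f L$, $\Topb f L$, $\Toim f K$ coming from the $\tilde f$–operations are exactly the objects $\Tl(\scbul)$, $\Tr(\scbul)$ that enter the defining formula of $\fihom$, i.e.\ that these operations respect the orthogonal subcategories $\indc_{t^*\le0}$, $\indc_{t^*\ge0}$, ${}^\bot\indc_{t^*=0}$ and $\indc_{t^*=0}^\bot$. Once this compatibility is established, both identities follow from the formal chain of commutation formulas above, with no further analytic input.
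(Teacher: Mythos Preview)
The paper does not give a proof of this lemma: it is stated without argument, as one of several formal properties of the six operations on $\TDC(\icor_M)$ imported from~\cite{DK13}. Your plan is exactly the standard reduction one would expect and it is correct. Unwinding $\fihom$ via Definition~\ref{def:fihom}, using the adjunction formulas of Proposition~\ref{pro:bproj} for $\tilde f$, the factorization $\roim{\pi_N}\roim{\tilde f}\simeq\roim f\roim{\pi_M}$, and the compatibility of $\reeim{\tilde f},\roim{\tilde f},\opb{\tilde f},\epb{\tilde f}$ with the projectors $\Tl,\Tr$ gives both isomorphisms; the final passage to $\fhom$ via $\alpha$ and Table~\eqref{tabular} is unproblematic. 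The only point worth making explicit, which you correctly flag, is that the square with sides $\tilde f,\pi_M,\pi_N,f$ is Cartesian in the category of bordered spaces, so that Lemma~\ref{lem:bcart} applies and yields the needed preservation of ${}^\bot\indc_{t^*=0}$ and $\indc_{t^*=0}^\bot$; once this is checked the rest is purely formal.
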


\begin{remark}
Let $f\colon M\to N$ be a morphism of good topological spaces and $L_1,L_2\in\TDC(\icor_N)$.
Since $\alpha$ and $\epb f$ do not commute in general, the isomorphism $\epb f\fhom(L_1,L_2) \simeq \fhom(\Topb f L_1,\Tepb f L_2)$ does not hold in general.
\end{remark}

\subsection{Stable objects}
\index{stable object}%
The notion of stable object which will be introduced below  is related to the notion of torsion object from \cite{Ta08} (see also \cite[\S5]{GS12}).

\begin{notation}\label{not:gg}
Consider the indsheaves on $M\times\bR$
\glossary{$\cor_{\{t\gg0\}}$}%
\glossary{$\cor_{\{t<\ast\}} $}%
\[
\cor_{\{t\gg0\}} \eqdot \inddlim[a\rightarrow+\infty] \cor_{\{t\geq a\}},\qquad
\cor_{\{t<\ast\}}  \eqdot \inddlim[a\rightarrow+\infty] \cor_{\{t< a\}}.
\]
We regard them as objects of $\BDC(\icor_{M\times\R_\infty})$.
\end{notation}
There is a distinguished triangle in $\Derb(\icor_{M\times\R_\infty})$
\eqn
&&\cor_{M\times\R}\to \cor_{\{t\gg0\}} \to \cor_{\{t<\ast\}} \,[1]\to[+1]
\eneqn
and there are isomorphisms in $\BDC(\icor_{M\times\R_\infty})$
\eqn
&&\cor_{\{t\geq -a\}} \ctens \cor_{\{t\gg0\}} \isoto \cor_{\{t\gg0\}} \isoto \cor_{\{t\geq a\}} \ctens \cor_{\{t\gg0\}} \qquad (a\in\R_{\geq 0}). 
\eneqn

\begin{notation}\label{not:Tam}
Denote by $\cor^\Tam_M$ 
\glossary{$\cor^\Tam_M$}%
the object of $\TDC(\icor_M)$ associated with $\cor_{\{t\gg0\}}\in\BDC(\icor_{M\times\R_\infty})$.
More generally, for $F\in\BDC(\cor_M)$, set
\glossary{$F^\Tam$}%
\[
F^\Tam \eqdot \cor^\Tam_M \tens \opb\pi F \in \TDC(\icor_M).
\]
\end{notation}

Note that 
\[
\Tl(\cor^\Tam_M) \simeq \cor_{\{t\gg0\}},
\quad
\Tr(\cor^\Tam_M) \simeq \cor_{\{t<\ast\}}[1].
\]

\begin{proposition}\label{pro:equivTam}
Let $K\in\TDCp(\icor_M)$  \lp equivalently, $K\in\TDC(\icor_M)$ and $K\simeq\field_{\{t\geq 0\}} \ctens K$\rp.
Then the following conditions are equivalent.
\bna
\item 
$\field_{\{t\geq 0\}} \ctens K \isoto \field_{\{t\geq a\}} \ctens K$ for any $a\geq 0$,
\item 
$\cihom(\field_{\{t\geq a\}}, K) \isoto \cihom(\field_{\{t\geq 0\}}, K)$ for any $a\geq 0$,
\item
$\field_{\{t\geq 0\}} \ctens K \isoto \field_M^\Tam \ctens K$,
\item 
$\cihom(\field_M^\Tam, K) \isoto \cihom(\field_{\{t\geq 0\}}, K)$,
\item 
$K \simeq \field_M^\Tam \ctens L$ for some $L\in\TDC(\icor_M)$,
\item
$K \simeq \cihom(\field_M^\Tam, L)$ for some $L\in\TDC(\icor_M)$.
\ee
\end{proposition}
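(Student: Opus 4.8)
The plan is to prove the equivalence of the six conditions by establishing a cycle of implications, using the adjunction $(\Tl,\Tr)$ and the basic convolution identities recalled in the previous subsections. First I would dispose of the easy symmetries: (a) $\Leftrightarrow$ (b) and (c) $\Leftrightarrow$ (d) follow by applying $\cihom(\scbul,K)$ or $\scbul\ctens K$ together with the isomorphism $\cor_{\{t\ge0\}}\ctens K\isoto\cihom(\cor_{\{t\ge0\}},K)$ valid for $K\in\TDCp(\icor_M)$ (the displayed isomorphism just after Definition~\ref{def:Tctens}, itself a consequence of Proposition~\ref{prop:tenhom}), after noting that for $a\ge0$ one has $\cor_{\{t\ge a\}}\simeq\cor_{\{t=a\}}\ctens\cor_{\{t\ge0\}}$ and $\cor_{\{t=a\}}$ is invertible for $\ctens$ with inverse $\cor_{\{t=-a\}}$.

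Next, for (a) $\Rightarrow$ (c): since $K\simeq\cor_{\{t\ge0\}}\ctens K$, write $\cor_{\{t\gg0\}}=\sinddlim[a\to+\infty]\cor_{\{t\ge a\}}$ and use that $\ctens$ commutes with filtrant inductive limits to get $\cor_M^\Tam\ctens K\simeq\sinddlim[a]\bl\cor_{\{t\ge a\}}\ctens K\br$. Condition (a) says each transition morphism in this system is an isomorphism, so the colimit is isomorphic to $\cor_{\{t\ge0\}}\ctens K\simeq K$; one must check this colimit computation is legitimate in $\TDC(\icor_M)$, which is where I expect to spend real care (filtrant colimits in the quotient category and the interplay with $\Tl$). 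The implication (c) $\Rightarrow$ (e) is trivial (take $L=K$), and (e) $\Rightarrow$ (a) is the heart of the matter: given $K\simeq\cor_M^\Tam\ctens L$, one computes $\cor_{\{t\ge a\}}\ctens K\simeq\cor_{\{t\ge a\}}\ctens\cor_{\{t\gg0\}}\ctens L$ in $\TDC$, and the recalled isomorphism $\cor_{\{t\ge a\}}\ctens\cor_{\{t\gg0\}}\isoto\cor_{\{t\gg0\}}$ (valid for $a\in\R_{\ge0}$, stated in the lines following Notation~\ref{not:gg}) gives $\cor_{\{t\ge a\}}\ctens K\simeq\cor_M^\Tam\ctens L\simeq K$ compatibly, yielding (a). This closes the cycle (a) $\Leftrightarrow$ (c) $\Leftrightarrow$ (e), hence also (b) and (d).

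Finally (e) $\Leftrightarrow$ (f): apply $\Tl$ and $\Tr$ and use $\Tl(\cor_M^\Tam)\simeq\cor_{\{t\gg0\}}$, $\Tr(\cor_M^\Tam)\simeq\cor_{\{t<\ast\}}[1]$ together with the distinguished triangle $\cor_{M\times\R}\to\cor_{\{t\gg0\}}\to\cor_{\{t<\ast\}}[1]\to[+1]$; since $\cor_{M\times\R}$ represents $0$ in $\TDC$, the objects $\cor_{\{t\gg0\}}$ and $\cor_{\{t<\ast\}}[1]$ become isomorphic in $\TDC(\icor_M)$, so $\cor_M^\Tam\ctens L\simeq\cihom(\cor_M^\Tam,L')$ for suitable $L,L'$; alternatively one uses directly that $\cihom(\cor_M^\Tam,L)\simeq\cor_M^\Tam\ctens L'$ via the duality/adjunction formulas of Lemma~\ref{lem:piRinfty} and Proposition~\ref{pro:Tadj}.

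The main obstacle I anticipate is the rigorous handling of the filtrant inductive limit $\cor_M^\Tam=\sinddlim[a\to+\infty]\cor_{\{t\ge a\}}$ inside the quotient category $\TDC(\icor_M)$ in the implication (a) $\Rightarrow$ (c): one needs that $\ctens$ (hence the relevant functors after passing to $\Tl$) commutes with such colimits and that an inductive limit of isomorphisms in the relevant system is again an isomorphism at the level of the quotient. Everything else is a formal consequence of the convolution identities, the adjunction $(\Tl,\Tr)$, and the structure of $\TDC(\icor_M)$ already set up above.
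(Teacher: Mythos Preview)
The paper states this proposition without proof (it is taken from \cite{DK13}), so there is no argument to compare against directly. Your proposed cycle of implications is the natural one and is essentially correct; the identities you invoke ($\cor_{\{t\ge a\}}\simeq\cor_{\{t=a\}}\ctens\cor_{\{t\ge0\}}$, the invertibility of $\cor_{\{t=a\}}$, the isomorphism $\cor_{\{t\ge a\}}\ctens\cor_{\{t\gg0\}}\isoto\cor_{\{t\gg0\}}$, and Proposition~\ref{prop:tenhom}) are exactly the tools one needs, and you have correctly identified the only delicate point, namely the compatibility of $\ctens$ with the filtrant colimit defining $\cor_{\{t\gg0\}}$ when passing to $\TDC(\icor_M)$.

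One small correction: your argument for (e) $\Leftrightarrow$ (f) is more roundabout than necessary. You do not need $\Tl$, $\Tr$, or the distinguished triangle; it suffices to observe that $\cor_M^\Tam\ctens\cor_M^\Tam\simeq\cor_M^\Tam$ in $\TDC(\icor_M)$ (immediate from $\cor_{\{t\ge a\}}\ctens\cor_{\{t\gg0\}}\simeq\cor_{\{t\gg0\}}$ and a colimit), and that $\cihom(\cor_M^\Tam,\cor_M^\Tam)\simeq\cor_M^\Tam$. Then (e) $\Rightarrow$ (f) follows by taking $L'=\cor_M^\Tam\ctens L$ and using $\cihom(\cor_M^\Tam,\cor_M^\Tam\ctens L)\simeq\cor_M^\Tam\ctens L$ (an instance of Lemma~\ref{lem:kTamtens} in the direction it is actually provable, or more simply of the already established (c) $\Leftrightarrow$ (d)); the converse is symmetric. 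Alternatively, once you have (a)--(e) equivalent, (f) $\Rightarrow$ (d) is immediate from $\cihom(\cor_M^\Tam,\cihom(\cor_M^\Tam,L))\simeq\cihom(\cor_M^\Tam\ctens\cor_M^\Tam,L)\simeq\cihom(\cor_M^\Tam,L)$, and (d) $\Rightarrow$ (f) is trivial.
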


\begin{definition}
A {\em stable} object is an object of $\TDCp(\icor_M)$ that satisfies the equivalent conditions in Proposition~\ref{pro:equivTam}.
\end{definition}

\begin{lemma}\label{lem:kTamtens}
For $F\in\BDC(\cor_{M\times\R_\infty})$ and $K\in\TDC(\icor_M)$, there is an isomorphism in $\TDC(\icor_M)$
\[
\cor_M^\Tam \ctens \cihom(F,K)
\isoto
\cihom(F,\cor_M^\Tam \ctens K).
\]
\end{lemma}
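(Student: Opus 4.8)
\textbf{Proof plan for Lemma~\ref{lem:kTamtens}.}

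The plan is to reduce the claimed isomorphism to a statement about genuine indsheaves on $M\times\R_\infty$ via the adjunction functors $\Tl$ and $\Tr$ from Definition~\ref{def:Tlr}, and then to unwind the definition of $\cor_M^\Tam$ using the filtrant inductive limit $\cor_{\{t\gg0\}}=\inddlim[a\to+\infty]\cor_{\{t\geq a\}}$ from Notation~\ref{not:gg}. First I would observe that since $\ctens$ is the tensor product of the category $\TDC(\icor_M)$ and $\cihom$ is its inner hom (the identity $\Hom(K_1\ctens K_2,K_3)\simeq\Hom(K_1,\cihom(K_2,K_3))$ holds after Proposition~\ref{pro:tam}), there is always a canonical evaluation morphism $\cor_M^\Tam\ctens\cihom(F,K)\to\cihom(F,\cor_M^\Tam\ctens K)$, obtained by adjunction from the composite $F\ctens\bl\cor_M^\Tam\ctens\cihom(F,K)\br\simeq\cor_M^\Tam\ctens\bl F\ctens\cihom(F,K)\br\to\cor_M^\Tam\ctens K$, where the last arrow is $\cor_M^\Tam\ctens(\text{ev})$. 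So the content of the lemma is that \emph{this canonical morphism is an isomorphism}; note that $F$ here lives in $\BDC(\cor_{M\times\R_\infty})$, i.e.\ it is an ordinary (bordered) sheaf, not merely an enhanced object, which is what makes $\cihom(F,\scbul)$ well-behaved.

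The key step is to replace $\cor_M^\Tam\ctens(\scbul)$ by the concrete formula. Using Notation~\ref{not:Tam} and Lemma~\ref{lem:piRinfty}/\ref{lem:cihomKt0}, tensoring by $\cor_M^\Tam$ amounts — after applying $\Tl$ — to convolving with $\cor_{\{t\gg0\}}=\inddlim[a]\cor_{\{t\geq a\}}$, and $\ctens$ commutes with small filtrant inductive limits in each argument (it is defined by $\reeim\mu$ of a tensor product, and both $\reeim\mu$ and $\tens$ commute with such limits on indsheaves). Hence I would reduce to showing
\[
\inddlim[a]\;\cor_{\{t\geq a\}}\ctens\cihom(F,K)
\;\isoto\;
\cihom\bl F,\inddlim[a]\;\cor_{\{t\geq a\}}\ctens K\br,
\]
and for this the crucial point is that $\cihom(F,\scbul)=\roim{q_1}\rihom(\opb{q_2}F,\epb\mu(\scbul))$ commutes with the filtrant inductive limit $\inddlim[a]$ on the right when $F$ is a sheaf (not an indsheaf): this is exactly the fact, recalled in the excerpt, that $\ihom(F,\inddlim[j]G_j)\simeq\inddlim[j]\ihom(F,G_j)$ for $F\in\md[\cor_M]$, promoted to the derived bordered setting, together with the fact that $\epb\mu$ and $\roim{q_1}$ commute with filtrant inductive limits (the former after taking cohomology, as noted in Section~\ref{section:indshv}; the latter because the relevant maps are proper on the supports in play, or by reducing to the orthogonal realization). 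Granting that commutation, both sides become $\inddlim[a]\bl\cor_{\{t\geq a\}}\ctens\cihom(F,K)\br$ versus $\inddlim[a]\cihom(F,\cor_{\{t\geq a\}}\ctens K)$, so it suffices to prove the \emph{non-limit} version, $\cor_{\{t\geq a\}}\ctens\cihom(F,K)\isoto\cihom(F,\cor_{\{t\geq a\}}\ctens K)$; but $\cor_{\{t\geq a\}}\ctens(\scbul)$ is, up to the translation $\mu_a$ and passing through the triangle of Proposition~\ref{prop:tenhom}, built from $\cor_{\{t=a\}}\ctens(\scbul)\simeq\roim{\mu_a}(\scbul)$, and translations trivially commute with $\cihom(F,\scbul)$ after translating $F$ accordingly. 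Assembling these reductions gives the result.

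The main obstacle I anticipate is the commutation of $\cihom(F,\scbul)$ with the filtrant colimit $\inddlim[a]$: the functor $\roim{q_1}$ appearing in its definition does \emph{not} commute with arbitrary filtrant inductive limits of indsheaves, so one must exploit the specific geometry — the map $q_1\colon M\times\R_\infty\times\R_\infty\to M\times\R_\infty$ and the fact that $\cor_{\{t\geq a\}}$ has controlled (subanalytic, conic-at-infinity) support — to argue that on the relevant full subcategory the limit does pass through, e.g.\ by checking the isomorphism after applying $\roim\pi\rihom(\cor_U,\scbul)$ for $U$ relatively compact subanalytic and invoking Proposition~\ref{pro:eqvIrcDbsa}, or by using the distinguished triangle $\cor_{M\times\R}\to\cor_{\{t\gg0\}}\to\cor_{\{t<\ast\}}[1]$ to trade $\inddlim[a]\cor_{\{t\geq a\}}$ for the genuine sheaf $\cor_{M\times\R}$ plus a term killed in $\TDC(\icor_M)$ (since $\cor_{\{t<\ast\}}\in\indc_{t^*\leq 0}$, roughly), thereby bypassing the colimit altogether. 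Either route is routine once set up; it is only the bookkeeping of which functors commute with which limits — and ensuring $F$ stays a sheaf throughout — that requires care.
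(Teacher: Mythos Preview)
The paper is a survey and states this lemma without proof, so there is no ``paper's own proof'' to compare against; the result is extracted from~\cite{DK13}. That said, let me assess your outline on its own terms.

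Your identification of the canonical morphism and the overall strategy of reducing to the constituent sheaves $\cor_{\{t\geq a\}}$ is sound, and the non-limit version
\[
\cor_{\{t\geq a\}}\ctens\cihom(F,K)\;\isoto\;\cihom(F,\cor_{\{t\geq a\}}\ctens K)
\]
does hold in $\TDC(\icor_M)$. However, your justification of it is garbled: $\cor_{\{t\geq a\}}\ctens(\scbul)$ is \emph{not} ``built from translations $\cor_{\{t=a\}}\ctens(\scbul)$''; rather, the point is that in $\TDC(\icor_M)$ one has $\cor_{\{t\geq a\}}\ctens L\simeq\cihom(\cor_{\{t\geq a\}},L)$ (this is the isomorphism recorded just after Definition~\ref{def:Tctens}, coming from Proposition~\ref{prop:tenhom}), and then both sides of the non-limit statement unwind to $\cihom(\cor_{\{t\geq a\}}\ctens F,K)$ via the tensor--hom adjunction. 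So that step is fine once stated correctly.

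The genuine gap is exactly where you flag it: the passage to the inductive limit. You want $\cihom(F,\scbul)=\roim{q_1}\rihom(\opb{q_2}F,\epb\mu(\scbul))$ to commute with $\inddlim[a]$, and you correctly note that $\roim{q_1}$ does not commute with filtrant inductive limits in general. Your two proposed workarounds are not arguments as written. Invoking Proposition~\ref{pro:eqvIrcDbsa} is a detection principle for isomorphisms in $\Derb_\Irc$, not a tool for commuting functors with colimits, and it is unclear why the objects here lie in $\Derb_\Irc$. For the distinguished-triangle route, your claim that $\cor_{\{t<\ast\}}\in\indc_{t^*\leq 0}$ is not established (and one has $\Tr(\cor_M^\Tam)\simeq\cor_{\{t<\ast\}}[1]$, so $\cor_{\{t<\ast\}}$ is certainly not zero in $\TDC$); the triangle $\cor_{M\times\R}\to\cor_{\{t\gg0\}}\to\cor_{\{t<\ast\}}[1]$ only trades one ind-object for another. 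A cleaner route is to stay entirely inside $\TDC(\icor_M)$ and exploit Proposition~\ref{pro:equivTam}: show directly that $\cihom(F,\cor_M^\Tam\ctens K)$ is stable (via characterisation~(b) there and the adjunction $\cihom(\cor_{\{t\geq a\}},\cihom(F,L))\simeq\cihom(F,\cihom(\cor_{\{t\geq a\}},L))$), so that $\cor_M^\Tam\ctens(\scbul)$ acts as the identity on it, and then compare with the left-hand side without ever invoking a colimit inside $\cihom$.
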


\begin{corollary}
\label{cor:picorT}
For $K\in\TDC(\icor_M)$ and $F\in\BDC(\cor_M)$, we have
\[
\cor_M^\Tam \ctens \rihom(\opb\pi F, K) \simeq 
\rihom(\opb\pi F, \cor_M^\Tam \ctens K).
\]
\end{corollary}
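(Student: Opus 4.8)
The plan is to derive Corollary~\ref{cor:picorT} as a direct specialization of Lemma~\ref{lem:kTamtens}. Given $F\in\BDC(\cor_M)$, the inverse image $\opb\pi F$ is an object of $\BDC(\cor_{M\times\R_\infty})$, and for that object one has the general identity $\rihom(\opb\pi F,K)\simeq\cihom(\opb\pi F\tens\cor_{\{t=0\}},K)$ from Lemma~\ref{lem:cihomKt0}. So the first step is to rewrite both sides of the asserted isomorphism using this identity: the left-hand side $\cor_M^\Tam\ctens\rihom(\opb\pi F,K)$ becomes $\cor_M^\Tam\ctens\cihom(\opb\pi F\tens\cor_{\{t=0\}},K)$, and the right-hand side $\rihom(\opb\pi F,\cor_M^\Tam\ctens K)$ becomes $\cihom(\opb\pi F\tens\cor_{\{t=0\}},\cor_M^\Tam\ctens K)$.

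Next I would apply Lemma~\ref{lem:kTamtens} with $F$ there replaced by $\opb\pi F\tens\cor_{\{t=0\}}\in\BDC(\cor_{M\times\R_\infty})$ (note this is an honest object of $\BDC(\cor_{M\times\R_\infty})$, since $\opb\pi F$ is a sheaf pulled back from $M$ and $\cor_{\{t=0\}}$ is a sheaf on $M\times\ol\R$ with stalks vanishing off $M\times\R$, so their tensor product lies in the required category via Notation~\ref{not:fR}). The lemma then yields
\[
\cor_M^\Tam\ctens\cihom(\opb\pi F\tens\cor_{\{t=0\}},K)\isoto\cihom(\opb\pi F\tens\cor_{\{t=0\}},\cor_M^\Tam\ctens K),
\]
which is exactly the identification of the two rewritten sides. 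Composing the three isomorphisms (rewrite LHS, apply the lemma, rewrite back RHS) gives the claim. One should just check that these isomorphisms are the canonical ones, so that the composite is the natural map, but this is routine since all the maps involved are the structural adjunction/projection morphisms.

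There is essentially no serious obstacle here: the statement is a corollary precisely because the substantive content is already in Lemma~\ref{lem:kTamtens}. The only point requiring a moment of care is making sure that $\opb\pi F\tens\cor_{\{t=0\}}$ is a legitimate input for Lemma~\ref{lem:kTamtens} — i.e.\ that it is an object of $\BDC(\cor_{M\times\R_\infty})$ and not merely of $\BDC(\icor_{M\times\R_\infty})$ — and that the translation via Lemma~\ref{lem:cihomKt0} between $\rihom(\opb\pi F,\scbul)$ and $\cihom(\opb\pi F\tens\cor_{\{t=0\}},\scbul)$ is functorial in the second variable, so that it commutes with applying $\cor_M^\Tam\ctens(\scbul)$ to $K$. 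Both are immediate from the cited lemmas. Alternatively, one could give a slightly more self-contained argument by unwinding $\cor_M^\Tam\ctens(\scbul)$ as $\inddlim[a\to+\infty]\bl\cor_{\{t\geq a\}}\ctens(\scbul)\br$ and using that $\opb\pi F\tens(\scbul)$ commutes with filtrant inductive limits and with $\ctens$ against $\cor_{\{t\geq a\}}$ by Lemma~\ref{lem:cihomrihompi}; but the proof via Lemma~\ref{lem:kTamtens} is shorter and is the intended one.
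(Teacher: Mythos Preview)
Your proof is correct and is exactly the intended derivation: the paper states the corollary without proof immediately after Lemma~\ref{lem:kTamtens}, and the passage from $\rihom(\opb\pi F,\scbul)$ to $\cihom(\opb\pi F\tens\cor_{\{t=0\}},\scbul)$ via Lemma~\ref{lem:cihomKt0} followed by an application of Lemma~\ref{lem:kTamtens} is precisely how one is meant to read off the corollary. Your check that $\opb\pi F\tens\cor_{\{t=0\}}$ lies in $\BDC(\cor_{M\times\R_\infty})$ (needed for the hypothesis of Lemma~\ref{lem:kTamtens}) is the only point worth noting, and you handled it.
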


\begin{proposition}\label{pro:stableops}
Let $f\colon M\to N$ be a continuous map of good topological spaces.
Then the functors $\Teeim f$, $\Topb f$ and $\Tepb f$ send stable objects to stable objects.
More precisely, we have{\rm:} 
\bnum
\item
For $K\in\TDC(\icor_M)$ one has
\[
\Teeim f(\cor_M^\Tam \ctens K) \simeq \cor_N^\Tam \ctens \Teeim f K.
\]
\item
For $L\in\TDC(\icor_N)$ one has
\begin{align*}
\Topb f(\cor_N^\Tam \ctens L) &\simeq \cor_M^\Tam \ctens \Topb f L, \\
\Tepb f(\cor_N^\Tam \ctens L) &\simeq \cor_M^\Tam \ctens \Tepb f L.
\end{align*}
\ee
\end{proposition}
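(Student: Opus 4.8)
The statement to prove is Proposition~\ref{pro:stableops}: the functors $\Teeim f$, $\Topb f$, $\Tepb f$ preserve stable objects, with the sharper isomorphisms $\Teeim f(\cor_M^\Tam \ctens K) \simeq \cor_N^\Tam \ctens \Teeim f K$, $\Topb f(\cor_N^\Tam \ctens L) \simeq \cor_M^\Tam \ctens \Topb f L$, and $\Tepb f(\cor_N^\Tam \ctens L) \simeq \cor_M^\Tam \ctens \Tepb f L$.

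\medskip\noi
\textbf{Overall strategy.} By Proposition~\ref{pro:equivTam}, condition (e), an object of $\TDCp(\icor_M)$ is stable if and only if it is isomorphic to $\cor_M^\Tam \ctens L$ for some $L$. So it suffices to prove the three displayed isomorphisms; stability of the image follows immediately since the right-hand sides are visibly of the form $\cor_N^\Tam \ctens (\scbul)$ or $\cor_M^\Tam \ctens (\scbul)$. Thus the whole proposition reduces to three compatibility statements between the tensoring-by-$\cor^\Tam$ operation and the operations $\Teeim f$, $\Topb f$, $\Tepb f$.

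\medskip\noi
\textbf{Key steps.} First I would handle $\Topb f$. Since $\cor_N^\Tam \ctens L = (\cor_N^\Tam) \ctens L$ and $\cor_N^\Tam = \cor_N^\Tam \tens \opb\pi \cor_N$, and since $\Topb f$ commutes with $\ctens$ by Proposition~\ref{pro:Tproj} (the relation $\Topb f(L_1 \ctens L_2) \simeq \Topb f L_1 \ctens \Topb f L_2$), the claim reduces to $\Topb f \cor_N^\Tam \simeq \cor_M^\Tam$. Now $\Topb f$ is induced by the pullback $\opb{\tilde f}$ along $\tilde f\colon M\times\R_\infty \to N\times\R_\infty$, which is a morphism of bordered spaces over the identity on $\R_\infty$ composed with a projection; one computes $\opb{\tilde f}\cor_{\{t\geq a\}} \simeq \cor_{\{t\geq a\}}$ on $M\times\R_\infty$ for each $a$, and since $\opb{\tilde f}$ commutes with the filtrant inductive limit $\inddlim[a\to+\infty]$ defining $\cor_{\{t\gg0\}}$ (Notation~\ref{not:gg}), we get $\opb{\tilde f}\cor_{\{t\gg0\}} \simeq \cor_{\{t\gg0\}}$, hence $\Topb f\cor_N^\Tam \simeq \cor_M^\Tam$. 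For $\Tepb f$: one can either argue similarly, using that $\epb{\tilde f}$ on the relevant constant sheaves behaves like $\opb{\tilde f}$ (for the projection factor $\epb{} \simeq \opb{}[\,\text{shift}\,]$ in a way that is absorbed, and for a locally closed factor one checks directly on $\cor_{\{t\geq a\}}$), or — cleanly — use Lemma~\ref{lem:kTamtens}: take $F = \opb{\tilde f}$-related constant sheaves and combine with Corollary~\ref{cor:picorT}, noting $\cor_N^\Tam \ctens \Tepb f L \simeq \Tepb f(\cor_N^\Tam \ctens L)$ should follow from $\Tepb f \cihom(L_1,L_2) \simeq \cihom(\Topb f L_1, \Tepb f L_2)$ in Proposition~\ref{pro:Tproj}, applied with $L_1 = \cor_N^\Tam$ (here one uses that $\cor_M^\Tam \ctens K \simeq \cihom(\cor_M^\Tam, K')$ for suitable $K'$ via condition (f) of Proposition~\ref{pro:equivTam}, or directly the isomorphism $\cor_M^\Tam \ctens \cihom(F,K) \isoto \cihom(F, \cor_M^\Tam\ctens K)$ of Lemma~\ref{lem:kTamtens}). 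For $\Teeim f$: use the projection formula $\Teeim f(\Topb f L \ctens K) \simeq L \ctens \Teeim f K$ of Proposition~\ref{pro:Tproj} with $L = \cor_N^\Tam$, together with the already-proven $\Topb f \cor_N^\Tam \simeq \cor_M^\Tam$; this gives $\Teeim f(\cor_M^\Tam \ctens K) \simeq \Teeim f(\Topb f \cor_N^\Tam \ctens K) \simeq \cor_N^\Tam \ctens \Teeim f K$ at once.

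\medskip\noi
\textbf{Expected main obstacle.} The routine-looking but genuinely delicate point is the behavior of $\epb{\tilde f}$ (hence of $\Tepb f$) on the constant sheaves $\cor_{\{t\geq a\}}$ and their inductive limit $\cor_{\{t\gg0\}}$: one must check that $\epb{\tilde f}$ commutes with the filtrant $\inddlim[a\to+\infty]$ and that the extraordinary pullback does not introduce a shift or twist that would break the clean isomorphism $\Tepb f \cor_N^\Tam \simeq \cor_M^\Tam \ctens \Tepb f(\scbul)$ rather than $\Tepb f(\cor_N^\Tam \ctens L) \simeq \cor_M^\Tam \ctens \Tepb f L$. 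The way I would circumvent the direct computation is to rely entirely on Lemma~\ref{lem:kTamtens} and the inner-hom form of the projection formula (Proposition~\ref{pro:Tproj}), writing $\Tepb f$ purely in terms of $\cihom$ and $\Topb f$ and never computing $\epb{\tilde f}$ on a constant sheaf explicitly; this makes the $\Tepb f$ case a formal consequence of the $\Topb f$ case and Lemma~\ref{lem:kTamtens}. All other steps are formal manipulations with the already-established operation formulas.
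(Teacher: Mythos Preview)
The paper gives no proof of this proposition (it is a survey, citing \cite{DK13}), so your proposal must be judged on its own merits. Your treatment of $\Topb f$ and $\Teeim f$ is correct and is the natural argument: $\Topb f$ commutes with $\ctens$ by Proposition~\ref{pro:Tproj}, and $\Topb f\cor_N^\Tam\simeq\cor_M^\Tam$ follows from $\opb{\tilde f}\cor_{\{t\ge a\}}\simeq\cor_{\{t\ge a\}}$ together with commutation of $\opb{\tilde f}$ with filtrant $\inddlim$; then the projection formula in Proposition~\ref{pro:Tproj} with $L=\cor_N^\Tam$ gives the $\Teeim f$ case at once.

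There is a genuine gap in the $\Tepb f$ case. Your preferred route, via Lemma~\ref{lem:kTamtens} and the formula $\Tepb f\cihom(L_1,L_2)\simeq\cihom(\Topb fL_1,\Tepb fL_2)$, only yields preservation of stability, not the sharp identity. Indeed, writing $\cor_N^\Tam\ctens L\simeq\cihom(\cor_N^\Tam,\cor_N^\Tam\ctens L)$ (valid because this object is stable) and applying $\Tepb f$ gives $\Tepb f(\cor_N^\Tam\ctens L)\simeq\cihom(\cor_M^\Tam,\Tepb f(\cor_N^\Tam\ctens L))$, which says the image is stable but does not identify it with $\cor_M^\Tam\ctens\Tepb fL$ for \emph{arbitrary} $L$. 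To close the gap this way you would need $\cihom(\cor_N^\Tam,L)\simeq\cor_N^\Tam\ctens L$ for all $L$, which is false in general (the left-hand side is a projective limit of translates, the right-hand side an inductive one). Nor can you invoke Lemma~\ref{lem:kTamtens} with $F=\cor_N^\Tam$: that lemma requires $F\in\BDC(\cor_{M\times\R_\infty})$, and $\cor_{\{t\gg0\}}$ is a genuine indsheaf.

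The approach you flagged and then set aside is the one that actually works. One has $\cor_{\{t\gg0\}}\ctens K\simeq\inddlim_a(\cor_{\{t\ge a\}}\ctens K)\simeq\inddlim_a\roim{\mu_a}K$, and $\epb{\tilde f}$ commutes with the translation $\roim{\mu_a}$ by the evident Cartesian square (since $\tilde f=f\times\id_{\R_\infty}$). The remaining point is commutation of $\epb{\tilde f}$ with the filtrant $\inddlim_a$, and this is precisely the fact recorded in the paper right after table~\eqref{tabular}: $\epb f$ commutes with filtrant inductive limits after taking cohomology. Granting this (and its routine extension to the bordered setting), one obtains $\epb{\tilde f}(\cor_{\{t\gg0\}}\ctens K)\simeq\cor_{\{t\gg0\}}\ctens\epb{\tilde f}K$ in $\Derb(\icor_{M\times\R_\infty})$, and passing to $\TDC$ gives the sharp formula $\Tepb f(\cor_N^\Tam\ctens L)\simeq\cor_M^\Tam\ctens\Tepb fL$. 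So do not circumvent the $\inddlim$ issue; confront it, using the tool the paper already provides.
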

\begin{definition}\label{def:fcteepsilon}
One defines the functors 
\eq
&&e_M,\epsilon_M\cl \BDC(\icor_M) \to \TDC(\icor_M),\label{eq:eMepsilonM0}\\
\glossary{$e_M$}%
\glossary{$\epsilon_M$}%
&&\hs{10ex}e_M(F) = \cor_M^\Tam\tens\opb\pi F, 
\quad\epsilon_M(F) = \cor_{\{t\geq0\}}\tens\opb\pi F.\nn
\eneq
\end{definition}
Note that
\eqn
&&e_M(F)\simeq\cor_M^\Tam\ctens\epsilon_M(F) .
\eneqn
\begin{proposition}\label{pro:embed}
The functors $e_M$ and $\epsilon_M$  are  fully faithful.
\end{proposition}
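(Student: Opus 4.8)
We must show that the functors
\[
e_M,\epsilon_M\cl \BDC(\icor_M) \to \TDC(\icor_M), \qquad
e_M(F) = \cor_M^\Tam\tens\opb\pi F,\quad \epsilon_M(F) = \cor_{\{t\geq0\}}\tens\opb\pi F
\]
are fully faithful.

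**Plan.** The cleanest approach is to compute $\fihom$ (equivalently, $\RHom$ in $\TDC(\icor_M)$) of two objects in the image and identify it with $\RHom$ in $\BDC(\icor_M)$. First I would treat $\epsilon_M$. Since $\epsilon_M(F) = \cor_{\{t\ge 0\}}\tens\opb\pi F = (\cor_{\{t\ge0\}}\tens\opb\pi\cor_M)\ctens(\opb\pi F)$ by Lemma~\ref{lem:cihomKt0}, and $\cor_{\{t\ge0\}}$ is already ``${}^\bot\indc_{t^*\le 0}$-like'' on the plus side, I would use Proposition~\ref{pro:tam}: for $K\in\TDCp(\icor_M)$ the object $\cor_{\{t\ge0\}}\ctens K$ is its own image under $\Tl$ (up to the $\TDCm$-summand), and $\fihom(K_1,K_2)\simeq\roim\pi\rihom(\Tl K_1,\Tr K_2)$. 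Concretely I would compute, for $F_1,F_2\in\BDC(\icor_M)$,
\[
\fihom(\epsilon_M F_1,\epsilon_M F_2)\simeq\roim\pi\rihom\bl\cor_{\{t\ge0\}}\tens\opb\pi F_1,\;\Tr(\cor_{\{t\ge0\}}\tens\opb\pi F_2)\br,
\]
and using Lemma~\ref{lem:cihomrihompi}, Lemma~\ref{lem:piRinfty} and the identity $\cor_{\{t\ge0\}}\ctens\cor_{\{t\ge0\}}\simeq\cor_{\{t\ge0\}}$ (a translation/convolution computation on $M\times\ol\R$), reduce this to $\rihom(F_1,F_2)$. Then applying $H^0\rsect(M;\alpha_M(\scbul))$ gives $\Hom[{\TDC(\icor_M)}](\epsilon_M F_1,\epsilon_M F_2)\simeq\Hom[{\BDC(\icor_M)}](F_1,F_2)$, which is full faithfulness of $\epsilon_M$.

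**The $e_M$ case.** For $e_M$, note $e_M(F)\simeq\cor_M^\Tam\ctens\epsilon_M(F)$ as stated right before the proposition, so $e_M F$ is a stable object (Proposition~\ref{pro:equivTam}). I would then run the same computation, now with $\Tl(\cor_M^\Tam)\simeq\cor_{\{t\gg0\}}$ and $\Tr(\cor_M^\Tam)\simeq\cor_{\{t<*\}}[1]$ (stated after Notation~\ref{not:Tam}), using Corollary~\ref{cor:picorT} to move $\opb\pi F_1,\opb\pi F_2$ past $\cor_M^\Tam$ and past $\cihom$, together with Lemma~\ref{lem:kTamtens}. The essential point is the isomorphism $\cor_{\{t\ge -a\}}\ctens\cor_{\{t\gg0\}}\isoto\cor_{\{t\gg0\}}$ for $a\ge0$ (recorded in Notation~\ref{not:gg}), which shows that the ``$\Tam$-twist'' is idempotent under $\ctens$; combined with $\roim\pi\rihom(\cor_{\{t\gg0\}},\cor_{\{t<*\}}[1])$ — a direct computation giving $\cor_M$ back — we again arrive at $\fihom(e_M F_1,e_M F_2)\simeq\rihom(F_1,F_2)$ and hence full faithfulness. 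Alternatively, once $\epsilon_M$ is known fully faithful, one can factor $e_M$ through the (fully faithful on stable objects) functor $\cor_M^\Tam\ctens(\scbul)$ restricted to the image of $\epsilon_M$, but the direct $\fihom$ computation is more self-contained.

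**Main obstacle.** The technical heart is the pair of convolution identities on $M\times\R_\infty$: computing $\roim\pi\rihom$ against $\cor_{\{t\ge0\}}$ or $\cor_{\{t\gg0\}}$ and seeing the answer collapse to $\rihom(F_1,F_2)$ in $\Derb(\icor_M)$. This is where one must be careful about the bordered-space compactification $\ol\R$ and the distinction between $\reeim{}$ and $\roim{}$; the needed vanishing ($\reeim\pi(\cor_{t\ge0}\ctens K)\simeq0$, $\roim\pi\cihom(\cor_{t\ge0},K)\simeq0$ from the corollary after Lemma~\ref{lem:tenspipi}) and the adjunction $\Hom(K_1\ctens K_2,K_3)\simeq\Hom(K_1,\cihom(K_2,K_3))$ together with Proposition~\ref{prop:tenhom} do all the work, but assembling them without sign/shift errors is the part requiring genuine attention rather than formal manipulation. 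Everything else — moving $\opb\pi$ around, using $\alpha_M$ — is routine given the lemmas already established in this section.
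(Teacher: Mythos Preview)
The paper states this proposition without proof; it is imported from \cite{DK13}, so there is no argument here to compare against. Your plan is correct in outline and follows the expected route: reduce full faithfulness to the computation
\[
\fihom(\epsilon_M F_1,\epsilon_M F_2)\simeq\rihom(F_1,F_2),\qquad
\fihom(e_M F_1,e_M F_2)\simeq\rihom(F_1,F_2),
\]
then take $H^0\rsect(M;\alpha_M(\scbul))$.

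For $\epsilon_M$ your sketch is essentially complete. One small simplification: you do not need to wrestle with $\Tl$ on both summands. Since $\reeim\pi\cor_{\{t\geq0\}}\simeq0$ (the corollary after Lemma~\ref{lem:tenspipi}, with $K=\cor_{\{t=0\}}$), the object $\epsilon_M F$ already lies in ${}^\bot\indc_{t^*=0}\subset\BDC(\icor_{M\times\R_\infty})$, so $\Tl(\epsilon_M F)\simeq\epsilon_M F$ and
\[
\fihom(\epsilon_M F_1,\epsilon_M F_2)\simeq\roim\pi\rihom\bl\cor_{\{t\geq0\}}\tens\opb\pi F_1,\ \cor_{\{t\geq0\}}\tens\opb\pi F_2\br.
\]
Pulling $\opb\pi F_1$ out by adjunction and using that $\{t\geq0\}$ is closed in $M\times\R$ with contractible fibers over $M$, this collapses to $\rihom(F_1,F_2)$ as you say.

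For $e_M$ your plan is right but the phrase ``a direct computation giving $\cor_M$ back'' for $\roim\pi\rihom(\cor_{\{t\gg0\}},\cor_{\{t<\ast\}}[1])$ hides the only genuinely nonformal step. Because $\roim\pi$ does \emph{not} commute with filtered colimits of indsheaves, you must first pass the ind-limit defining $\cor_{\{t\gg0\}}$ through $\rihom$ in the first slot (where it becomes a projective limit, which is harmless), compute $\rihom(\cor_{\{t\geq a\}},\cor_{\{t<b\}})$ for $b>a$, and only then take the remaining $\inddlim_b$ and $\prolim_a$. The intermediate answer stabilizes in $a$, which is what makes the argument go through. Your acknowledgement that ``assembling them without sign/shift errors is the part requiring genuine attention'' is exactly on target; nothing in your proposal is wrong, but this is where the actual proof lives.
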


\begin{definition}
We define the duality functor
\index{duality!for enhanced indsheaves}%
\glossary{$\Edual_M$}%
\[
\Edual_M\colon \TDC(\icor_M)\to  \TDC(\icor_M)^\op, \qquad K \mapsto \cihom(K,\omega^\enh_M),
\]
where $\omega^\enh_M \eqdot \cor^\enh_M \tens \opb\pi\omega_M$.
\end{definition}
The functor $\Edual_M$ is related to the usual duality functor for sheaves by the formula:
\eq
&&\Edual_M(\cor_M^\enh\ctens F)\simeq\cor_M^\enh\ctens\opb{a}\RD_{M\times\R}F
\quad\text{in $\TDC(\icor_M)$,}
\label{eq:dualtwo}
\eneq
where $F\in\Derb(\cor_{M\times\R_\infty})$ and $a$ is the involution of $M\times\R$ given by $(x,t)\mapsto (x,-t)$.

\subsection{Constructible enhanced indsheaves}\label{subsection:cons}
In this subsection, we assume that  $M$ is a subanalytic space.
Recall the natural morphism of bordered spaces 
\[
j_M\colon M\times\R_\infty \to M\times\ol\R,
\]
and  the category $\BDC(\cor_{M\times\R_\infty})$ in Notation~\ref{not:fR}.

\begin{definition}
\glossary{$\BDC_\Rc(\cor_{M\times\R_\infty})$}%
One denotes by $\BDC_\Rc(\cor_{M\times\R_\infty})$ the full subcategory of $\BDC(\cor_{M\times\R_\infty})$ whose objects $F$ are such that $\reim{j_{M}}F$ is $\R$-constructible. 

We regard $\BDC_\Rc(\field_{M\times\R_\infty})$ as a full subcategory of $\BDC(\icor_{M\times\R_\infty})$.
\end{definition}

\begin{definition}\label{def:TRc}
One says that an object $K\in\TDC(\icor_M)$ is \emph{$\R$-constructible} 
\index{R@$\R$-constructible!enhanced indsheaves}%
if for any relatively compact subanalytic open subset $U\subset M$ there exists an isomorphism
\[
\text{$\opb\pi\cor_U \tens K \simeq \cor_M^\enh \ctens F$ for some $F\in \BDC_\Rc(\cor_{M\times\R_\infty})$.}
\]
\glossary{$\TDCrc(\icor_M)$}%
One denotes by $\TDCrc(\icor_M)$ the full subcategory of $\TDC(\icor_M)$ consisting of $\R$-constructible objects.
\end{definition}

Clearly, $\R$-constructible objects of $\TDC(\icor_M)$ are stable. One proves that:
\begin{theorem}\label{th:const}\hfill
\bnum
\item
The category $\TDC_\Rc(\icor_M)$ is triangulated.
\item
The property for  $K\in\TDC(\icor_M)$ of being $\R$-constructible is a local property over $M$.
\item
The functors $\ctens$ and $\cihom$ when restricted to $\R$-constructible objects give 
$\R$-constructible objects.
\item 
For $K\in \TDC_\Rc(\icor_M)$,  $\Edual_MK\in  \TDC_\Rc(\icor_M)$ and $\Edual_M\circ\Edual_MK\simeq K$.
\item \label{item:Dihom}
For $K_1,K_2\in \TDC_\Rc(\icor_M)$, $\Edual_M\bl\cihom(K_1,K_2)\br\simeq K_1\ctens\Edual_MK_2$.
\item \label{item:Dfhom}
For $K_1,K_2\in \TDC_\Rc(\icor_M)$, $\fihom(K_1,K_2)\simeq 
\fihom(\Edual_MK_2,\Edual_MK_1)$ and $\fhom(K_1,K_2)\simeq 
\fhom(\Edual_MK_2,\Edual_MK_1)$. 
\item
Let $f\cl M\to N$ be a morphism of subanalytic spaces.
\bna
\item If $G\in \TDC_\Rc(\icor_N)$, then $\Topb{f}G$ and $\Tepb{f}G$
belong to $\TDC_\Rc(\icor_M)$.
\item For $K\in \TDC_\Rc(\icor_M)$, we have
$\Teeim{f}K\in  \TDC_\Rc(\icor_N)$ if
$\Tsupp(K)\seteq\bpi_M\bl\Supp(\roim{j_M}\Tr K)\br$ is proper over $N$.
Here $j_M\cl M\times\fR\to M\times\bR$ is the inclusion and
$\bpi_M\cl M\times\bR\to M$ is the projection. 
\ee 
\enum 
\end{theorem}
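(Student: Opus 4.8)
The plan is to establish each of the seven items in turn, reducing everything to the corresponding statements for $\R$-constructible sheaves on $M\times\R_\infty$ via the fully faithful embedding $\cor_M^\enh\ctens(\scbul)$ and the description of $\TDC_\Rc$ given in Definition~\ref{def:TRc}. First I would observe that, by the very definition, membership in $\TDC_\Rc(\icor_M)$ is detected after tensoring with $\opb\pi\cor_U$ for $U$ ranging over relatively compact subanalytic opens; since the conditions to be checked (being a triangle, commuting with $\ctens$, $\cihom$, $\Edual$, etc.) are all local on $M$, items (ii) and (i) follow once we know that the subcategory $\BDC_\Rc(\cor_{M\times\R_\infty})$ of $\BDC(\cor_{M\times\R_\infty})$ is triangulated and stable under the relevant operations. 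The latter is classical: $\reim{j_M}F$ being $\R$-constructible is a triangulated condition (cone of a morphism of $\R$-constructible complexes is $\R$-constructible, by~\cite[Ch.~VIII]{KS90}), and $j_M$ being an open embedding with subanalytic image means the six operations on $M\times\ol\R$ restrict well. So (i) and (ii) are essentially formal once the sheaf-level facts are in place.

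Next I would treat (iii): for $K_i\simeq\cor_M^\enh\ctens F_i$ with $F_i\in\BDC_\Rc(\cor_{M\times\R_\infty})$, one has $K_1\ctens K_2\simeq\cor_M^\enh\ctens(F_1\ctens F_2)$ and $\cihom(K_1,K_2)\simeq\cor_M^\enh\ctens\cihom(F_1,F_2)$ by Lemma~\ref{lem:kTamtens} and the projection formulas of Proposition~\ref{pro:Tproj}, so it suffices that $\ctens$ and $\cihom$ at the sheaf level preserve $\R$-constructibility. The convolution $F_1\ctens F_2=\reeim\mu(\opb q_1F_1\tens\opb q_2F_2)$ preserves $\R$-constructibility because $\mu$ is a subanalytic map and $\reeim\mu$, $\opb q_i$, $\tens$ all preserve the $\R$-constructible condition on $M\times\R_\infty$ (one must be a little careful that $\mu$ is not proper, but the bordered-space formalism of Section~\ref{section:bordered}, specifically the properness built into morphisms of bordered spaces for the $\ol\R$-factor, handles this—this is exactly why one works with $\R_\infty$ rather than $\R$). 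For (iv) I would use the duality formula~\eqref{eq:dualtwo}, $\Edual_M(\cor_M^\enh\ctens F)\simeq\cor_M^\enh\ctens\opb a\RD_{M\times\R}F$, together with the fact that $\RD_{M\times\R}$ (equivalently $\RD$ on $M\times\ol\R$ after $\reim{j_M}$) preserves $\R$-constructibility and is involutive—this gives both $\Edual_MK\in\TDC_\Rc$ and $\Edual_M\Edual_M K\simeq K$ at once. Items (v) and (vi) are then consequences of (iv) combined with Theorem~\ref{th:const}(iii) and the internal-hom adjunction: $\Edual_M\cihom(K_1,K_2)\simeq K_1\ctens\Edual_M K_2$ follows formally from $\cihom(K_1,K_2)\simeq\Edual_M(K_1\ctens\Edual_M K_2)$, which one proves by writing everything through $\cor_M^\enh\ctens(\scbul)$ and invoking the corresponding identity $\RD(F_1\tens F_2)\simeq\rhom(F_1,\RD F_2)$ for $\R$-constructible sheaves; and the statements about $\fihom$ and $\fhom$ in (vi) come from $\fihom(K_1,K_2)=\roim\pi\rihom(\Tl K_1,\Tr K_2)$ together with (v) and the symmetry $\rihom(A,\RD B)\simeq\rihom(B,\RD A)$ after applying $\Edual$.

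Finally, for (vii): stability under $\Topb f$ and $\Tepb f$ follows from Proposition~\ref{pro:stableops} (these commute with $\cor_{(\scbul)}^\enh\ctens$) together with the fact that $\opb{f}$ and $\epb{f}$ preserve $\R$-constructibility for a morphism of real analytic (subanalytic) manifolds; one only needs to check compatibility with the local test on relatively compact subanalytic opens of $M$, which is routine since $\opb f U'$ can be covered by finitely many such opens. For the pushforward statement, given $K\in\TDC_\Rc(\icor_M)$ with $\Tsupp(K)$ proper over $N$, I would write $\Teeim f K\simeq\reeim{\tilde f}\Tr(K)$ and use that $\roim{j_M}\Tr K$ is $\R$-constructible on $M\times\ol\R$ with support $\Supp(\roim{j_M}\Tr K)$ proper over $N$ along $\bpi_M$; then $\reim{\bar f\times\id}$ of an $\R$-constructible sheaf proper over the base is $\R$-constructible by the classical stability result recalled in Subsection~\ref{subsec:shv}, and one transports this back to the enhanced category. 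The main obstacle I anticipate is the bookkeeping around properness in the bordered-space setting—making sure that ``$\Tsupp(K)$ proper over $N$'' is exactly the condition that lets $\reeim{\tilde f}$ preserve $\R$-constructibility, and that the various $\reeim{}$ versus $\roim{}$ discrepancies (which are genuine for indsheaves, cf.\ the table~\eqref{tabular}) do not spoil the argument. Everything else reduces, via $\cor_M^\enh\ctens(\scbul)$ and the orthogonal decomposition of Proposition~\ref{pro:tam}, to the well-established theory of $\R$-constructible sheaves.
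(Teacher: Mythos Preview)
The paper does not actually prove this theorem: it is stated without proof, as part of the survey material extracted from~\cite{DK13} (see the introduction, where Section~\ref{section:enhanced} is described as ``also extracted from~\cite{DK13}''). So there is no proof in the present paper to compare your proposal against.

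That said, your outline is broadly the right strategy and tracks the approach one finds in~\cite{DK13}: reduce everything, via the local model $K\simeq\cor_M^\enh\ctens F$ with $F\in\BDC_\Rc(\cor_{M\times\R_\infty})$, to the classical theory of $\R$-constructible sheaves on $M\times\ol\R$, using the compatibility of $\cor_M^\enh\ctens(\scbul)$ with the operations (Lemma~\ref{lem:kTamtens}, Proposition~\ref{pro:stableops}, formula~\eqref{eq:dualtwo}). The points where one has to be careful are exactly the ones you flag: the bordered-space properness needed for $\ctens$ and $\Teeim{f}$ to preserve constructibility, and the bookkeeping that the local representative $F$ can be chosen compatibly (item~(ii) requires a gluing argument, not just the pointwise definition). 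Your sketch of~(vi) via ``the symmetry $\rihom(A,\RD B)\simeq\rihom(B,\RD A)$'' is a bit imprecise---what one actually uses is~(v) together with the involutivity in~(iv) and the definition of $\fihom$---but the idea is sound.
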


Another link between classical $\R$-constructible sheaves and enhanced $\R$-constructible indsheaves is given by the following result, which is new:

\begin{theorem}\label{th:newconstruct}
For $F$, $G\in \TDC_\Rc(\icor_M)$,
the object $\fhom(F,G)$ belongs to $\Derb_{\Rc}(\cor_M)$.
\end{theorem}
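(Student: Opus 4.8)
The plan is to reduce the statement, via the internal‑hom adjunction for $\ctens$, to the single case $\fhom(\cor_M^\enh,K)$ with $K\in\TDCrc(\icor_M)$, and then to realize that object as a proper direct image along $M\times\bR\to M$ of an $\R$‑constructible sheaf, exploiting that $\bR$ is compact.

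First I would carry out the reductions. Since $\fhom=\alpha_M\circ\fihom$ and $\fihom(K_1,K_2)\simeq\roim\pi\rihom(\Tl(K_1),\Tl(K_2))$, one checks from the inner‑hom adjunction for $\ctens$ together with $\Tl(K_1\ctens K_2)\simeq\Tl(K_1)\ctens K_2$ and $\cihom(\Tl(K_2),K_3)\simeq\Tr\cihom(K_2,K_3)$ that $\fihom(K_1\ctens K_2,K_3)\simeq\fihom(K_1,\cihom(K_2,K_3))$. Every $\R$‑constructible enhanced indsheaf is stable, hence $F\simeq\cor_M^\enh\ctens F$, so
$$\fhom(F,G)\simeq\fhom(\cor_M^\enh,\cihom(F,G)),$$
and $\cihom(F,G)\in\TDCrc(\icor_M)$ by Theorem~\ref{th:const}~(iii). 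Thus it suffices to prove $\fhom(\cor_M^\enh,K)\in\Derb_\Rc(\cor_M)$ for every $K\in\TDCrc(\icor_M)$. Both the $\R$‑constructibility of $K$ and that of a complex on $M$ are local on $M$ (Theorem~\ref{th:const}~(ii)), and $\roim\pi\rihom(\Tl(\scbul),\Tl(\scbul))$ commutes with restriction to open subsets of $M$; so we may assume $M$ is a relatively compact subanalytic space and $K\simeq\cor_M^\enh\ctens F$ with $F\in\Derb_\Rc(\cor_{M\times\R_\infty})$, i.e.\ $\reim{j_M}F\in\Derb_\Rc(\cor_{M\times\bR})$.

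Next comes the model computation. For the stable object $K=\cor_M^\enh\ctens F$ one has $\Tl(K)\simeq\cor_{\{t\gg 0\}}\ctens F$ and $\Tl(\cor_M^\enh)\simeq\cor_{\{t\gg 0\}}$ (as recalled after Notation~\ref{not:Tam}), whence
$$\fhom(\cor_M^\enh,K)\simeq\alpha_M\,\roim\pi\,\rihom\bl\cor_{\{t\gg 0\}},\ \cor_{\{t\gg 0\}}\ctens F\br.$$
Factoring $\pi\colon M\times\R_\infty\to M$ as $\bpi_M\circ j_M$ with $\bpi_M\colon M\times\bR\to M$, and using that $\bpi_M$ is proper (so $\reim{\bpi_M}$ preserves $\R$‑constructibility) while $\alpha$ commutes with proper direct images, we get $\alpha_M\roim\pi\simeq\reim{\bpi_M}\circ\alpha_{M\times\bR}\circ\roim{j_M}$. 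Hence it is enough to show that $\alpha_{M\times\bR}\roim{j_M}\rihom(\cor_{\{t\gg 0\}},\cor_{\{t\gg 0\}}\ctens F)$ lies in $\Derb_\Rc(\cor_{M\times\bR})$.

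The heart of the proof, and the step I expect to be the main obstacle, is the analysis of this object. Writing $\cor_{\{t\gg 0\}}=\inddlim_a\cor_{\{t\geq a\}}$ and $\cor_{\{t\gg 0\}}\ctens F=\inddlim_b(\cor_{\{t\geq b\}}\ctens F)$, each $\cor_{\{t\geq b\}}\ctens F$ has $\R$‑constructible image under $\reim{j_M}$ (constructibility of $\ctens$, Theorem~\ref{th:const}~(iii), applied to the genuine sheaves $\cor_{\{t\geq b\}}$ and $F$), and since $\rihom(\cor_{\{t\geq a\}},\scbul)$ commutes with filtrant inductive limits ($\cor_{\{t\geq a\}}$ being $\R$‑constructible and supported on a closed subset of $M\times\R$), one obtains
$$\rihom(\cor_{\{t\gg 0\}},\cor_{\{t\gg 0\}}\ctens F)\simeq\prolim_a\ \inddlim_b\ \rhom\bl\cor_{\{t\geq a\}},\ \cor_{\{t\geq b\}}\ctens F\br.$$
The crux is to prove that this pro–ind system is essentially constant with bounded $\R$‑constructible value: one uses the subanalytic geometry of $\reim{j_M}F$ near the boundary $t=+\infty$ to see that, locally on $M\times\bR$ and for $a,b$ large, the transition morphisms are isomorphisms, so that the double limit reduces to a single bounded $\R$‑constructible complex on $M\times\bR$. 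This stabilization — controlling both limits by the constructible/subanalytic structure of $F$ near $t=+\infty$ — is the main technical difficulty; it is of the same nature as the finiteness statements underlying Theorem~\ref{th:const}~(iii),(vi) for $\cihom$ and $\fihom$ of $\R$‑constructible enhanced indsheaves, and the argument can be organized so as to rest on those. Granting it, $\fhom(\cor_M^\enh,K)$ is the proper direct image along $\bpi_M$ of an $\R$‑constructible sheaf on $M\times\bR$, hence belongs to $\Derb_\Rc(\cor_M)$, and the theorem follows.
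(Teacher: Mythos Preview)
Your reduction is correct and matches the paper's: using $\fhom(F,G)\simeq\fhom(\cor_{\{t\ge0\}},\cihom(F,G))$ together with the fact that $\cihom$ preserves $\R$-constructibility, one is reduced to showing $\fhom(\cor_{\{t\ge0\}},\cor_M^\enh\ctens F)\in\Derb_\Rc(\cor_M)$ for $F\in\Derb_\Rc(\cor_{M\times\R_\infty})$. (The paper uses $\cor_{\{t\ge0\}}$ rather than $\cor_M^\enh$, but since the target is stable this is immaterial.)

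The gap is exactly at what you call the ``crux''. You set up a pro--ind system and assert that it stabilizes locally by subanalytic geometry near $t=+\infty$, but you do not prove this, and appealing to Theorem~\ref{th:const}~(iii),(vi) is circular in spirit: those statements concern $\cihom$ and $\Edual$, not the mixed $\prolim\inddlim$ you have written down, and they do not by themselves control the interaction with $\alpha$ and $\roim{j_M}$. A direct stabilization argument along the lines you sketch would be delicate and is not how the result is established.

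The paper bypasses any stabilization analysis via an explicit closed formula. The key identity (valid for arbitrary $K\in\Derb(\icor_{M\times\R_\infty})$) is
\[
\roim{\pi}\rihom(\cor_{\{t\ge0\}},\cor_{\{t\gg0\}}\ctens K)\ \simeq\ \roim{\pi}\bigl(\cor_{\{t>*\}}\tens K\bigr),
\]
where $\cor_{\{t>*\}}=\inddlim_{a\to-\infty}\cor_{\{t>a\}}$. The proof uses the distinguished triangle of Proposition~\ref{prop:tenhom} to trade $\cor_{\{t\ge a\}}\ctens K$ for $\cihom(\cor_{\{t\ge -a\}},K)$ (the $\opb\pi L$ term dies under $\roim\pi\rihom(\cor_{\{t\ge0\}},\,\cdot\,)$ since $\reeim\pi\cor_{\{t\ge0\}}\simeq0$), and then a sandwich argument between $\cor_{\{t>a\}}\tens K$ and $\rihom(\cor_{\{t\ge a\}},K)$. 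Applying $\alpha_M$ and specializing to $K=F\in\Derb(\cor_{M\times\R_\infty})$ gives
\[
\fhom(\cor_{\{t\ge0\}},\cor_M^\enh\ctens F)\ \simeq\ \roim{\bpi}\bigl(\cor_{M\times(\bR\setminus\{-\infty\})}\tens\roim{j_M}F\bigr),
\]
which is manifestly the proper direct image along $\bpi\colon M\times\bR\to M$ of an $\R$-constructible sheaf. This replaces your unproved stabilization by a one-line algebraic identity; the substance lies in the use of Proposition~\ref{prop:tenhom}, which you do not invoke.
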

\begin{proof}
Since
$$\fhom(F,G)\simeq\fhom(\cor_{\{t\ge0\}},\cihom(F,G)),$$
it is enough to show that
$$\fhom(\cor_{\{t\ge0\}},F)\in\Derb_{\Rc}(\cor_M)$$
for any $F\in \TDC_\Rc(\icor_M)$.
Now it follows from Corollary~\ref{cor:fhom} below 
which is a consequence of the following proposition.
\end{proof}
\Prop
For $K\in \Derb(\icor_{M\times\fR})$, we have
\eq
\roim{\pi}\rihom(\cor_{\{t\ge0\}},\cor_{\{t\gg0\}}\ctens K)
\simeq\roim{\pi}\bl \cor_{\{t>*\}}\tens K\br.
\eneq
Here 
$\cor_{\{t>*\}}\seteq\inddlim[{a\to-\infty}]\cor_{\{ t>a\}}\in\II[\cor_{M\times\fR}]$.
\glossary{$\cor_{\{t>*\}}$}%
\enprop

\Proof
We shall first show that
\eq
&&\ba{l}
H^n\roim{\pi}\rihom(\cor_{\{t\ge0\}},\cor_{\{t\gg0\}}\ctens K)\\[1ex]
\hs{20ex}\simeq
\inddlim[{a\to-\infty}]H^n\roim{\pi}\rihom(\cor_{\{t\ge a\}}, K)\\[1.5ex]
\hs{35ex}\simeq
\inddlim[{a\to-\infty}]H^n\roim{\pi}\bl \cor_{\{t>a\}}\tens K\br.
\ea\label{eq:twoiso}
\eneq

The first isomorphism in \eqref{eq:twoiso} follows from 
\eqn
&&H^n\roim{\pi}\rihom(\cor_{\{t\ge0\}},\cor_{\{t\gg0\}}\ctens  K)\\
&&\simeq
\inddlim[{a\to+\infty}]H^n\roim{\pi}\rihom\bl\cor_{\{t\ge0\}},
\cor_{\{t\ge a\}}\ctens  K\br\\
&&\underset{\mathrm{(a)}}{\simeq}
\inddlim[{a\to+\infty}]H^n\roim{\pi}\rihom\bl\cor_{\{t\ge0\}},
\cihom(\cor_{\{t\ge -a\}},  K)\br\\
&&\simeq
\inddlim[{a\to+\infty}]H^n\roim{\pi}\rihom(\cor_{\{t\ge -a\}}, K).
\eneqn
Here, isomorphism (a) follows from Proposition~\ref{prop:tenhom}
and $\reeim{\pi}(\cor_{\{t\ge0\}})\simeq0$.

Let us next show the second isomorphism in \eqref{eq:twoiso}.
There is a sequence of morphisms
\eqn
&&\inddlim[{a\to-\infty}]H^n\roim{\pi}\bl\cor_{\{t>a\}}\tens  K\br
\To[f_1]\inddlim[{a\to-\infty}]H^n\roim{\pi}\rihom(\cor_{\{t\ge a\}}, K)\\
&&\hs{15ex}\To[f_2]\inddlim[{a\to-\infty}]H^n\roim{\pi}\bl\cor_{\{t>a-1\}}\tens  K\br\\
&&\hs{30ex}\To[f_3]\inddlim[{a\to-\infty}]
H^n\roim{\pi}\rihom(\cor_{\{t\ge a-1\}}, K).
\eneqn
Since $f_2f_1$ and $f_3f_2$ are isomorphisms,
$f_1$ is an isomorphism.

\noi
Thus we have proved \eqref{eq:twoiso}. 

\smallskip
Then \eqref{eq:twoiso} implies that
\eqn
&&H^n\roim{\pi}\rihom(\cor_{\{t\ge0\}},\cor_{\{t\gg0\}}\ctens \opb{j_M}L)
\simeq H^n\roim{\pi}\bl \cor_{\{t>*\}}\tens \opb{j_M}L\br\simeq0
\eneqn
for any $n\in\Z\setminus \{0\}$ and 
any quasi-injective $L\in \II[\cor_{M\times\bR}]$.
Here $j_M\colon M\times\R_\infty \to M\times\bR$
is the natural morphism bordered spaces.
Therefore,
the functor $\roim{\pi}
\rihom(\cor_{\{t\ge0\}},\cor_{\{t\gg0\}}\ctens \opb{j_M}\scbul)$ 
is isomorphic to the right derived functor of
$H^0\roim{\pi}\rihom(\cor_{\{t\ge0\}},\cor_{\{t\gg0\}}\ctens \opb{j_M}\scbul)$.
Similarly,
$\roim{\pi}\bl\cor_{\{t>*\}}\tens \opb{j_M}\scbul\br$ is isomorphic to 
the right derived functor
of $H^0\roim{\pi}\bl\cor_{\{t>*\}}\tens \opb{j_M}\scbul\br$.
Since $H^0\roim{\pi}\rihom(\cor_{\{t\ge0\}},\cor_{\{t\gg0\}}\ctens \opb{j_M}\scbul)$
and $H^0\roim{\pi}\bl \cor_{\{t>*\}}\tens \opb{j_M}\scbul\br$ are isomorphic 
by \eqref{eq:twoiso}, 
we obtain the desired result.
\QED

\Cor\label{cor:fhom} For any $F\in\Derb(\cor_{M\times\fR})$, we have
an isomorphism in $\BDC(\cor_M)$:
\eq
\fhom(\cor_{\{t\ge0\}},\cor^\Tam_M\ctens F)
\simeq\roim{\bpi}\bl \cor_{M\times(\bR\setminus\{-\infty\})}\tens
\roim{j_M}F\br,
\eneq
where $\bpi\cl M\times\bR\to M$ is the projection
and  $j_M\colon M\times\R\to M\times\bR$
is the inclusion.
\encor
\Proof
We have 
\begin{align*}
\roim{\pi}\bl \cor_{\{t>*\}}\tens F\br
&\simeq
\roim{\bpi}\roim{j_M}\bl \cor_{\{t>*\}}\tens F\br\\
&\simeq \roim{\bpi}\bl \cor_{\{+\infty\ge t>*\}}\tens \roim{j_M}F\br,
\end{align*}
where $\cor_{\{+\infty\ge t>*\}}\seteq\inddlim[{a\to-\infty}]\cor_{\{+\infty\ge t>
a\}}\simeq \roim{j_M} \cor_{\{t>*\}}
\in \II[\cor_{M\times\bR}]$. 
Hence we have 
\begin{align*}
\fhom(\cor_{\{t\ge0\}},\cor^\Tam_M\ctens F)
&\simeq\alpha_M\fihom(\cor_{\{t\ge0\}},\cor^\Tam_M\ctens F)\\
&\simeq \alpha_M\roim{\bpi}\bl \cor_{\{+\infty\ge t>*\}}\tens \roim{j_M}F\br\\
&\simeq
\roim{\bpi} \alpha_{M\times\bR}\bl\cor_{\{+\infty\ge t>*\}}\tens \roim{j_M}F\br\\
&\simeq\roim{\bpi} \bl \cor_{M\times(\bR\setminus\{-\infty\})}\tens \roim{j_M}F\br.
\end{align*}
\QED

\subsection{Enhanced indsheaves with ring action}
Let $\A$ be a sheaf of $\cor$-algebras on $M$.
For $\dagger=\hs{2ex}, \mathrm{b}, +, -$, we define
$$\Der[\dagger](\II[\pi^{-1}\A])\seteq
\Der[\dagger](\II[\bpi^{-1}\A])/\Der[\dagger](\II[(\bpi^{-1}\A)\vert_{M\times(\bR\setminus\R)}]),$$
where $\bpi\cl M\times\bR\to M$ is the projection.
Then we set
$$\TDCC^\dagger(\JA)
=\Der[\dagger](\II[\pi^{-1}\A])/\set{K\in\Der[\dagger](\II[\pi^{-1}\A])}{
\text{$K\simeq\pi^{-1}L$ for some $L\in\Der[\dagger](\JA)$}}.$$
We call objects of $\TDC(\JA)$ enhanced indsheaves with $\A$-action. 

We can define also the functors
\glossary{$\ctens[\beta\A]$}%
\glossary{$\cihom[\beta\A]$}%
\eqn
&&\ctens[\beta\A]:\TDC(\JA^\rop)\times \TDC(\JA)\to \TDCC^-(\icor_M),\\
&&\cihom[\beta\A]:\TDC(\JA)^\rop\times \TDC(\JA)\to \TDCC^+(\icor_M),
\eneqn
which satisfy similar properties to $\ctens$ and $\cihom$.

Similarly we can define
\eqn
\ltens[\A]:\TDC(\JA^\rop)\times \Derb(\A)\to \TDCC^-(\icor_M),\\
\rhom[\A]:\Derb(\A)^\rop \times\TDC(\JA)\to \TDCC^-(\icor_M).
\eneqn
If $X$ is a complex manifold and $\A=\D_X$, we can define
\glossary{$\Dtens$}%
\eqn
\Dtens:\TDC(\JD_X^\rop)\times \Derb(\D_X)\to \TDCC^-(\JD_X).
\eneqn

\section{Holonomic D-modules}\label{section:hol}

\subsection{Exponential D-modules}\label{subsection:expDmod}
\index{exponential D-modules}%
Let $X$ be a complex analytic manifold,  $Y\subset X$ a complex analytic hypersurface and set $U=X\setminus Y$.
For $\varphi\in\OO(*Y)$, one sets
\glossary{$\D_X \ex^\varphi$}%
\glossary{$\she^\varphi_{U\vert X}$}%
\begin{align*}
\D_X \ex^\varphi &= \D_X/\set{P}{P\ex^\varphi=0 \text{ on } U}, \\
\she^\varphi_{U|X}&=\D_X \ex^\varphi(*Y).
\end{align*}
Hence $\D_X \ex^\varphi$ is a $\D_X$-submodule of $\she^\varphi_{U|X}$, and $\D_X \ex^\varphi$ as well as
 $\she^\varphi_{U|X}$ are holonomic $\D_X$-modules.
Note that  $\she^\varphi_{U|X}$ is isomorphic to $\OO(*Y)$ as an $\OO$-module,
and the connection $\OO(*Y)\to\Omega^1_X\tens[\OO]\OO(*Y)$ is given by
$u\mapsto du+u d\vphi$. 

For $c\in\R$, set for short
\[
\{\Re \varphi < c\} \seteq \{x\in U;\Re \varphi(x) < c\} \subset X.
\]

\begin{notation}\label{not:<?}
One sets
\glossary{$\C_{\{\Re \varphi <\ast\}}$}%
\glossary{$E^\vphi_{{U\vert X}}$}%
\begin{align*}
\C_{\{\Re \varphi <\ast\}} &\eqdot \inddlim[c\rightarrow+\infty]\C_{\{\Re \varphi < c\}} \in \iC_X, \\
E^\varphi_{U|X} &\eqdot \rihom(\C_U,\C_{\{\Re \varphi <\ast\}}) \in \BDC(\iC_X).
\end{align*}
\end{notation}

For example, denoting by $z \in \C \subset \PP$ the affine coordinate of the complex projective line, one has 
\begin{equation}
\label{eq:HjEt}
H^j E^z_{\C|\PP} \simeq
\begin{cases}
\C_{\{\Re z <\ast\}} &\text{for }j=0, \\
\C_{\{\infty\}} &\text{for }j=1, \\
0 &\text{otherwise}.
\end{cases}
\end{equation}

The next result (see~\cite[Prop.~6.2.2]{DK13}) generalizes~\cite[Proposition~7.3]{KS03} in which the case 
$X=\C$ and $\varphi(z)=1/z$ was treated 
(and recalled in \S\,\ref{subsection:exairreg}).

\begin{proposition}\label{pro:Solphi}
Let $Y\subset X$ be a closed complex analytic hypersurface and set $U=X\setminus Y$.
For $\varphi\in\OO(*Y)$, there is an isomorphism in $\BDC(\iC_X)$
\[
\drt_X(\she^{-\varphi}_{U|X}) \simeq E^\varphi_{U|X}[d_X].
\]
\end{proposition}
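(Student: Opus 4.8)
The plan is to compute $\drt_X(\she^{-\varphi}_{U|X})$ directly from the definitions, using the localization results of Section~\ref{section:tempered} together with the real blow-up and the explicit structure of $\she^{-\varphi}_{U|X}$ as $\OO(*Y)$ twisted by the connection $u\mapsto du - u\,d\varphi$. First I would unwind $\drt_X(\she^{-\varphi}_{U|X}) = \Ovt_X\ltens[\D_X]\she^{-\varphi}_{U|X}$; since $\she^{-\varphi}_{U|X}$ is isomorphic to $\OO(*Y)$ as an $\OO$-module (with the modified differential), this reduces to computing a de Rham-type complex with coefficients in $\Ot(*Y)$, shifted by $d_X$, where the differential is twisted by $e^{-\varphi}$. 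Concretely, multiplication by the (formal) symbol $e^{-\varphi}$ conjugates the twisted connection into the usual de~Rham differential, so that
\[
\Ovt_X\ltens[\D_X]\she^{-\varphi}_{U|X}\simeq \bl\Ovt_X(*Y)\ltens[\D_X]\OO(*Y)\br\text{``twisted by }e^{-\varphi}\text{''}.
\]
By Lemma~\ref{le:GrotDR2}, the untwisted version $\Omega_X\ltens[\D_X]\Ot(*Y)$ is $\rhom(\C_{X\setminus Y},\C_X)[d_X]$; the point is to track what the $e^{-\varphi}$ twist does to the indsheaf $\Ot(*Y)\simeq\rihom(\C_U,\Ot)$ on the level of growth conditions.

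The heart of the argument is the identification, for an open subanalytic $V\subset U$ relatively compact in $X$, of when $e^{-\varphi}\cdot f$ is tempered along $Y$ for $f$ tempered: this holds (in a neighbourhood of a point of $Y$) exactly when $\Re\varphi$ is bounded below on $V$, i.e. $V\subset\{\Re\varphi<c\}$ for some $c$. This is the analogue of the computation recalled in \S\,\ref{subsection:exairreg} for $\varphi=1/z$, and in the general case it follows from the Hironaka/Łojasiewicz structure theory of subanalytic sets (reduce to $Y$ a normal crossing divisor via the real blow-up $\varpi\colon\twX\to X$, use that $\varphi$ has at worst pole-type growth, and apply Lemma~\ref{th:Loj1}). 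Consequently, after applying $\drt_X$ and using $\drt_X(\OO(*Y))\simeq\rhom(\C_U,\C_X)[d_X]$ from Lemma~\ref{le:GrotDR2}, the $e^{-\varphi}$ twist replaces $\Ot$ by the indsheaf $\inddlim[c\to+\infty]\C_{\{\Re\varphi<c\}}=\C_{\{\Re\varphi<\ast\}}$, yielding
\[
\drt_X(\she^{-\varphi}_{U|X})\simeq \rihom(\C_U,\C_{\{\Re\varphi<\ast\}})[d_X]=E^\varphi_{U|X}[d_X].
\]
More precisely, I would carry this out on the real blow-up: using the sheaf $\Ot[\twX]\simeq\epb{\varpi}\Ot(*Y)$ of \eqref{eq:sheafOttwX} and the projection formula $\roim{\varpi}\Ot[\twX]\simeq\Ot(*Y)$ of \eqref{eq:Otimage}, one reduces the $e^{-\varphi}$-twisted computation to a local statement on $\twX$ where $\varphi$ becomes (after blow-up) a function whose real part is a monomial-type expression, and the temperedness criterion becomes the elementary condition above.

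The main obstacle is the precise growth estimate: proving that, locally near $Y$, a holomorphic function $f$ on $V$ has $e^{-\varphi}f$ tempered along $Y$ if and only if $f$ is tempered and $\Re\varphi$ is bounded below on $V$. One direction is easy (if $\Re\varphi\ge -C$ then $|e^{-\varphi}|\le e^{C}$, and the product of a tempered function with a bounded holomorphic function is tempered — one must check derivatives, which is routine via Cauchy estimates on subanalytic sets); the converse — that unboundedness of $-\Re\varphi$ forces exponential blow-up that no polynomial factor can absorb — requires the Łojasiewicz inequality to control $\dist(x,X\setminus V)$ against the relevant monomial, and a careful argument that no cancellation occurs. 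Once this criterion is in hand, the rest is bookkeeping: identifying $\inddlim[c]\C_{\{\Re\varphi<c\}}$ with $\C_{\{\Re\varphi<\ast\}}$, commuting $\rihom(\C_U,-)$ past the inductive limit (legitimate since $\C_U\in\Derb_\Rc$ and $\rihom$ commutes with filtrant $\sinddlim$ against $\R$-constructible sheaves), and matching shifts using $d_{X^c}=d_X$ in the Dolbeault presentation of $\Ot$. I would also remark that for $\varphi\in\OO(*Y)$ not everywhere singular the statement is compatible with restriction to open sets and with the case $\varphi=0$, where it recovers Lemma~\ref{le:GrotDR2}, providing a useful sanity check.
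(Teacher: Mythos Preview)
The paper does not prove this proposition; it merely cites \cite[Prop.~6.2.2]{DK13} and remarks that it generalises the one-dimensional computation of \cite{KS03} recalled in \S\ref{subsection:exairreg}. So there is no in-paper argument to compare against. Your strategy --- compute the twisted tempered de Rham complex directly and identify the growth condition governing the exponential factor --- is the natural one and is essentially what underlies the cited proof.

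That said, there is a genuine sign muddle in your key step. You write that ``$e^{-\varphi}\cdot f$ is tempered \ldots\ exactly when $\Re\varphi$ is bounded below on $V$, i.e.\ $V\subset\{\Re\varphi<c\}$'': these two conditions are opposite to one another. The correct computation is this: the tempered de Rham complex of $\she^{-\varphi}_{U|X}$ has differential $d - d\varphi\wedge$, so its flat sections are the $f$ with $df = f\,d\varphi$, i.e.\ $f = c\,e^{\varphi}$ (not $e^{-\varphi}$). Such an $f$ is a section of $\Ot(*Y)$ over a relatively compact subanalytic $V\subset U$ precisely when $e^{\Re\varphi}$ has polynomial growth there, i.e.\ when $\Re\varphi$ is bounded \emph{above} on $V$, i.e.\ $V\subset\{\Re\varphi<c\}$ for some $c$. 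Your final formula is right, but the route to it has the exponent reversed. (Your conjugation remark is likewise off by a sign: one has $e^{-\varphi}\circ(d-d\varphi\wedge)\circ e^{\varphi}=d$, so multiplication by $e^{\varphi}$ carries $d$-closed forms to $(d-d\varphi)$-closed ones, not the other way round.)

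Beyond the sign issue, the outline is sound but incomplete: you only really treat $H^{-d_X}$. To obtain the full isomorphism one must show that the twisted de Rham complex of $\Ot(*Y)$ is concentrated in a single degree (after $\rihom(\C_U,-)$). This is where the real blow-up genuinely earns its keep: on $\twX$ the pullback $(\she^{-\varphi})^\tA$ is locally isomorphic to $\At$ (multiplication by $e^{\varphi}$ being invertible in $\At$ on a suitable sector), and the computation reduces to $\drt_\twX(\OO^\tA)\simeq\C_\twX[d_X]$ exactly as in Proposition~\ref{pro:normalfLA} and the proof of Theorem~\ref{thm:regularity}. Your sketch gestures at this reduction but does not carry it through; the vanishing of the other cohomology groups is not a bookkeeping matter and needs this local trivialisation.
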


\subsection{Enhanced tempered holomorphic functions}\label{subsection:enhhol}
\index{enhanced!tempered holomorphic functions}%
Consider first a real  analytic manifold $M$ and the natural morphism of bordered spaces
\[
j\colon M\times\R_\infty \to M\times\PR .
\]
Let $t$ be a coordinate of $\fR$.
\begin{definition}\label{def:DbT}
One sets $\Dbt_{M\times\R_\infty}\eqdot\epb{j}\Dbt_{M\times\PR}$ and 
one denotes by  
\glossary{$\DbT_M$}%
$\DbT_M\in\Derb(\iC_{M\times\R_\infty})$  the complex, concentrated in degree $-1$ and $0$:
\eq\label{eq:DbTM}
&&\DbT_M\eqdot \Dbt_{M\times\R_\infty}\to[\partial_t-1]\Dbt_{M\times\R_\infty}.
\eneq
\end{definition}
Note that $H^k(\DbT_M) = 0$ for $k\neq-1$.

\begin{proposition}\label{pro:DbTstable}
There are isomorphisms in $\BDC(\iC_{M\times\R_\infty})$
\begin{align*}
\DbT_M &\isoto \cihom(\C_{\{t\geq0\}}, \DbT_M) \\
&\isofrom \cihom(\C_{\{t\geq a\}}, \DbT_M) \quad \text{for any $a\geq 0$}.
\end{align*}
\end{proposition}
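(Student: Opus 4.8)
The statement to prove is that for $\DbT_M$ as in Definition~\ref{def:DbT}, one has natural isomorphisms
$\DbT_M \isoto \cihom(\C_{\{t\geq0\}}, \DbT_M) \isofrom \cihom(\C_{\{t\geq a\}}, \DbT_M)$ for $a\geq 0$
in $\BDC(\iC_{M\times\R_\infty})$.
The guiding idea is that $\DbT_M$ is, by construction, the complex of solutions of the operator $\partial_t - 1$ acting on tempered distributions on $M\times\R_\infty$, and that such solutions ``absorb'' translation in the $t$-variable from the right: the exponential factor $\e^t$ grows, so only a solution that already propagates in the direction $t\to+\infty$ can be tempered. So the plan is to reduce everything to a computation about $\Dbt_{M\times\R_\infty}$ together with the action of $\partial_t-1$.

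\emph{Step 1: reformulate the $\cihom$ in terms of convolution.}
First I would use Lemma~\ref{lem:cihomKt0} and Proposition~\ref{prop:tenhom} (or Lemma~\ref{lem:jM} and the definition of $\cihom$) to rewrite $\cihom(\C_{\{t\geq a\}},\DbT_M)$. Since $\DbT_M$ is concentrated in degrees $-1,0$ with $\partial_t-1$ invertible ``up to the solution complex'', the key reduction is to the two statements
\[
\C_{\{t\geq a\}}\ctens\DbT_M \isoto \DbT_M \quad\text{and}\quad \DbT_M\isoto\cihom(\C_{\{t\geq0\}},\DbT_M),
\]
the second following from the first together with Proposition~\ref{prop:tenhom}, which identifies $\C_{\{t\geq0\}}\ctens K$ and $\cihom(\C_{\{t\geq0\}},K)$ once the stabilization $\C_{\{t\geq a\}}\ctens K\isoto K$ holds. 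So the crux becomes: $\C_{\{t\geq a\}}\ctens \DbT_M\isoto\DbT_M$ for all $a\geq 0$.

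\emph{Step 2: the convolution computation on tempered distributions.}
By definition $\C_{\{t\geq a\}}\ctens \DbT_M$ is the mapping cone of $\partial_t-1$ acting on $\C_{\{t\geq a\}}\ctens\Dbt_{M\times\R_\infty}$, so it suffices to understand the complex
\[
\C_{\{t\geq a\}}\ctens\Dbt_{M\times\R_\infty} \to[\partial_t-1] \C_{\{t\geq a\}}\ctens\Dbt_{M\times\R_\infty}.
\]
Unwinding $\ctens$ via $\mu,q_1,q_2$, the convolution $\C_{\{t\geq a\}}\ctens\Dbt_{M\times\R_\infty}$ is given, on a test open set $M'\times I$, by integrating a tempered distribution in a shifted variable against the characteristic function of $\{s\geq a\}$; the point is to show that the operator $\partial_t-1$ becomes an isomorphism on the \emph{difference} $\bl\C_{\{t\geq a\}}\ctens\Dbt\br / \Dbt$, i.e.\ on the part supported ``where the shift matters''. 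Concretely, one solves the ODE $(\partial_t-1)u = v$ by $u(t)=\e^t\int_c^t \e^{-s}v(s)\,ds$; I would check that, with $v$ tempered and the lower limit chosen using the $\{t\geq a\}$ cutoff, the resulting $u$ is again tempered and depends on $a$ only up to the solution complex. This is the analytic heart: one needs the Łojasiewicz-type estimates (Lemma~\ref{th:Loj1}) controlling tempered distributions under this integral operator, so that the exponential weight is tamed on relatively compact subanalytic sets. The expected conclusion is an exact sequence of the shape
\[
0\to\C_{\{t\geq a\}}\ctens\Dbt_{M\times\R_\infty}\to[\partial_t-1]\C_{\{t\geq a\}}\ctens\Dbt_{M\times\R_\infty}\to \text{(something independent of $a$)}\to 0,
\]
and the ``something'' is exactly $\DbT_M$ after a degree shift; this simultaneously gives $H^k(\DbT_M)=0$ for $k\neq -1$ (already noted) and the stabilization in $a$.

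\emph{Step 3: assemble.}
Granting Step 2, the isomorphism $\C_{\{t\geq a\}}\ctens\DbT_M\isoto\DbT_M$ holds for every $a\geq 0$; feeding this into Proposition~\ref{prop:tenhom} (applied to $K=\DbT_M$) identifies $\C_{\{t\geq0\}}\ctens\DbT_M$ with $\cihom(\C_{\{t\geq0\}},\DbT_M)$ and both with $\DbT_M$, while the transition maps $\cihom(\C_{\{t\geq a\}},\DbT_M)\to\cihom(\C_{\{t\geq 0\}},\DbT_M)$ induced by $\C_{\{t\geq0\}}\to\C_{\{t\geq a\}}$ become isomorphisms by adjunction. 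Finally, all isomorphisms constructed are the canonical unit/counit maps, hence natural.

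\textbf{Main obstacle.}
The delicate point is Step 2: showing that the ODE-inversion operator $v\mapsto \e^t\int_c^t\e^{-s}v(s)\,ds$ preserves \emph{temperedness} of distributions on relatively compact subanalytic open subsets of $M\times\R$, with uniform control, and that changing the base point $c$ (equivalently, the cutoff parameter $a$) alters the result only by a genuine solution of $(\partial_t-1)u=0$, i.e.\ a multiple of $\e^t$, which is precisely what is killed when passing to the mapping cone. This requires combining the subanalytic geometry of $\{t\geq a\}$ inside $M\times\PR$ with the growth estimates for tempered distributions (Lemma~\ref{th:Loj1} and its consequences), much as in the proof sketch of Theorem~\ref{th:eimDbt}. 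Everything else — the bordered-space manipulations, the passage between $\ctens$ and $\cihom$, naturality — is formal once this analytic lemma is in hand.
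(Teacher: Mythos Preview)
The paper does not supply a proof of this proposition (it is extracted from~\cite{DK13}), so there is no paper-proof to compare against. Your analytic core in Step~2 is the right idea: the point is that inverting $\partial_t-1$ via $u(t)=-\e^t\int_t^{+\infty}\e^{-s}v(s)\,ds$ preserves temperedness. However, your logical reduction has a genuine gap. You propose to show $\C_{\{t\geq a\}}\ctens\DbT_M\simeq\DbT_M$ in $\BDC(\iC_{M\times\R_\infty})$ and then invoke Proposition~\ref{prop:tenhom} to pass to the $\cihom$ statement. But Proposition~\ref{prop:tenhom} only gives a distinguished triangle
\[
\opb\pi L\To\C_{\{t\geq0\}}\ctens K\To\cihom(\C_{\{t\geq0\}},K)\To[+1]
\]
with $L=\roim\pi(\C_{\{t\geq0\}}\ctens K)$. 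The term $\opb\pi L$ dies in the quotient $\TDC(\iC_M)$, but the statement you want lives in $\BDC(\iC_{M\times\R_\infty})$, so you must show $L=0$, i.e.\ $\roim\pi\DbT_M=0$. That is a separate (though similar) ODE computation, now with temperedness required at $t=\pm\infty$, and you have not addressed it. In other words, you are proving $\DbT_M\in{}^\bot\indc_{t^*\leq0}$ while the proposition asserts $\DbT_M\in\indc_{t^*\leq0}^\bot$; these are different subcategories of $\BDC(\iC_{M\times\R_\infty})$. (A minor point: your exact sequence in Step~2 has $\partial_t-1$ injective, but since $H^k(\DbT_M)=0$ for $k\neq-1$ the map is \emph{surjective}; the ``something independent of $a$'' should be the kernel, not the cokernel.)

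A cleaner bookkeeping avoids the detour through $\ctens$: the short exact sequences $0\to\C_{\{t>0\}}\to\C_{\{t\geq0\}}\to\C_{\{t=0\}}\to0$ and $0\to\C_{\{0\leq t<a\}}\to\C_{\{t\geq0\}}\to\C_{\{t\geq a\}}\to0$ reduce the two claimed isomorphisms directly to the vanishing of $\cihom(\C_I,\DbT_M)$ for $I=(0,\infty)$ and $I=[0,a)$, i.e.\ to $\partial_t-1$ being an isomorphism on $\cihom(\C_I,\Dbt_{M\times\R_\infty})$. This is exactly the ODE computation you describe, and it lands you on the $\cihom$ side from the start without the extra $\roim\pi$ verification.
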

Moreover, denoting by $\iota\cl M\times\R\to M\times\R_\infty$  the natural morphism, one has
the isomorphism 
$\opb{\iota}\DbT_M\simeq\opb{\iota}\opb{\pi}\Dbt_M\,[1]$ and therefore:
\eq\label{eq:dbTdbt}
&&\fihom(\C_{\{t=0\}},\DbT_M)\simeq\Dbt_M.
\eneq

Now let $X$ be again a complex manifold. 

\begin{definition}\label{def:OEn}
\glossary{$\OEn_X$}%
\glossary{$\OvE_X$}%
One sets
\eqn
&&\OEn_X=\rhom[\opb{\pi}\D_{X^c}](\opb{\pi}\OO[X^c],\DbT_{X_\R}),
\\
&&\OvE_X= \opb{\pi}\Omega_X \tens[{\opb{\pi}\OO}]\OEn_X.
\eneqn
 We regard them as objects of $\TDC(\JD_X)$ and  $\TDC(\JD_X^\rop)$,
respectively. 
One  calls $\OEn_X$ the 
{\em enhanced indsheaf of  tempered holomorphic functions}.
\end{definition}

\begin{remark}
When $X=\rmpt$, we have 
$\DbT_{X}\simeq \C_{\{t<*\}}[1]$
(see Notation~\ref{not:gg}) as an object of $\Derb(\iC_\fR)$ and 
$\OEn_X\simeq\C_X^\enh$ as an object of
$\TDC(\iC_X)$.
\end{remark}
Applying Proposition~\ref{pro:DbTstable}, we get

\begin{proposition}\label{pro:OEstable}
There are isomorphisms in $\TDC(\JD_X)$
\eqn
\OEn_X &\isoto& \cihom(\C_{\{t\geq 0\}}, \OEn_X) \\
&\isofrom& \cihom(\C_{\{t\geq a\}}, \OEn_X) \quad
\text{for any $a\geq 0$}.
\eneqn
In particular, $\OEn_X$ is a stable object in $\TDC(\JD_X)$.
\end{proposition}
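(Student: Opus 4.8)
The statement to be proved is Proposition~\ref{pro:OEstable}: the enhanced indsheaf $\OEn_X$ of tempered holomorphic functions is a stable object, witnessed by the isomorphisms
\[
\OEn_X \isoto \cihom(\C_{\{t\geq 0\}}, \OEn_X) \isofrom \cihom(\C_{\{t\geq a\}}, \OEn_X)
\]
in $\TDC(\JD_X)$ for $a\geq0$. The strategy is to deduce this directly from Proposition~\ref{pro:DbTstable}, which asserts exactly the analogous pair of isomorphisms for $\DbT_{M}$ with $M=X_\R$, namely $\DbT_{X_\R} \isoto \cihom(\C_{\{t\geq0\}},\DbT_{X_\R}) \isofrom \cihom(\C_{\{t\geq a\}},\DbT_{X_\R})$ in $\BDC(\iC_{X_\R\times\R_\infty})$. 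The point is that $\OEn_X$ is obtained from $\DbT_{X_\R}$ by applying the functor $\rhom[\opb{\pi}\D_{X^c}](\opb{\pi}\OO[X^c],\scbul)$ (Definition~\ref{def:OEn}), so it suffices to know that this functor commutes with $\cihom(\C_{\{t\geq c\}},\scbul)$ for $c\in\R_{\geq0}$.

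First I would record the commutation. The functor $\cihom(\C_{\{t\geq c\}},\scbul)$ is built, via Definition~\ref{def:ctens1}, from $\roim{q_1}$, $\rihom(\opb{q_2}\C_{\{t\geq c\}},\scbul)$ and $\epb\mu$ on the product $M\times\R_\infty\times\R_\infty$; none of these involves the $\D_{X^c}$-module structure, which lives only in the $X^c$-directions and is transverse to the extra $\R_\infty$-factors. Concretely, $\rhom[\opb{\pi}\D_{X^c}](\opb{\pi}\OO[X^c],\scbul)$ is computed by a Koszul-type (Spencer/de Rham) resolution of $\opb{\pi}\OO[X^c]$ over $\opb{\pi}\D_{X^c}$ whose terms are $\opb{\pi}(\D_{X^c}\tens[\OO[X^c]]\bigwedge^{\scbul}\Theta_{X^c})$, so $\rhom[\opb{\pi}\D_{X^c}](\opb{\pi}\OO[X^c],\scbul)$ becomes a finite complex in which each term is of the form $\rihom(\opb{\pi}F,\scbul)$ for an $\R$-constructible (indeed locally constant, finite rank) sheaf $F$ pulled back from $M$ via $\pi$. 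By Lemma~\ref{lem:cihomrihompi} (the middle isomorphism there, $\rihom(\opb\pi L,\cihom(K_1,K_2))\simeq\cihom(K_1,\rihom(\opb\pi L,K_2))$), each such $\rihom(\opb{\pi}F,\scbul)$ commutes with $\cihom(\C_{\{t\geq c\}},\scbul)$; passing to the total complex of the resolution, so does $\rhom[\opb{\pi}\D_{X^c}](\opb{\pi}\OO[X^c],\scbul)$. Hence
\[
\cihom(\C_{\{t\geq c\}},\OEn_X)\simeq\rhom[\opb{\pi}\D_{X^c}](\opb{\pi}\OO[X^c],\cihom(\C_{\{t\geq c\}},\DbT_{X_\R}))
\]
for $c\in\{0\}\cup\R_{>0}$, compatibly with the natural restriction morphisms as $c$ varies.

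Then I would simply apply $\rhom[\opb{\pi}\D_{X^c}](\opb{\pi}\OO[X^c],\scbul)$ to the isomorphisms of Proposition~\ref{pro:DbTstable}. This yields
\[
\OEn_X\isoto\cihom(\C_{\{t\geq0\}},\OEn_X)\isofrom\cihom(\C_{\{t\geq a\}},\OEn_X)
\]
in $\TDCC(\JD_X)$ (one must pass to the enhanced category: the objects and the functor all descend, since the quotient by $\{K\simeq\opb\pi L\}$ is compatible with the operations). Stability is then condition (b) of Proposition~\ref{pro:equivTam} — more precisely its $\D_X$-equivariant version — with the identification $\OEn_X\simeq\cihom(\C_{\{t\geq0\}},\OEn_X)$ showing $\OEn_X\in\TDCp(\JD_X)$ to begin with, which is the hypothesis of that proposition; alternatively one notes $\OEn_X\isoto\cihom(\C^{\enh}_X,\OEn_X)$ follows formally from the $a$-independence together with $\C^{\enh}_M\simeq\cor^\Tam_M$ being the $\inddlim$ of $\C_{\{t\geq a\}}$.

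\textbf{Main obstacle.} The routine-looking but genuinely load-bearing step is the commutation of $\rhom[\opb{\pi}\D_{X^c}](\opb{\pi}\OO[X^c],\scbul)$ with $\cihom(\C_{\{t\geq c\}},\scbul)$ — i.e. checking that the $\D_{X^c}$-linear structure really is inert with respect to the $t$-variable convolution machinery, and that reducing to a finite de Rham/Spencer resolution is legitimate at the level of derived categories of enhanced indsheaves (boundedness, and the fact that each resolution term contributes a $\rihom(\opb\pi F,\scbul)$ to which Lemma~\ref{lem:cihomrihompi} applies). Everything else is formal once this is in place; in the survey style of the paper one would likely just cite the corresponding statement in~\cite{DK13} and indicate that it follows from Proposition~\ref{pro:DbTstable} by applying $\rhom[\opb{\pi}\D_{X^c}](\opb{\pi}\OO[X^c],\scbul)$.
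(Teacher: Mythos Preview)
Your proposal is correct and matches the paper's approach exactly: the paper simply states ``Applying Proposition~\ref{pro:DbTstable}, we get'' before Proposition~\ref{pro:OEstable}, leaving implicit precisely the commutation of $\rhom[\opb{\pi}\D_{X^c}](\opb{\pi}\OO[X^c],\scbul)$ with $\cihom(\C_{\{t\geq c\}},\scbul)$ that you spell out via the Spencer resolution and Lemma~\ref{lem:cihomrihompi}.
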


As a consequence of Proposition~\ref{pro:OEstable} and Proposition~\ref{pro:equivTam}, we get the following result.

\begin{corollary}
\label{cor:CTamOT}
There are isomorphisms in $\TDC(\JD_X)$
\eqn
&&\OEn_X \simeq \cihom(\C_X^\enh, \OEn_X)\simeq \C_X^\enh \ctens \OEn_X.
\eneqn
\end{corollary}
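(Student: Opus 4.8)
The plan is to read the three isomorphisms off from Proposition~\ref{pro:OEstable} and Proposition~\ref{pro:equivTam} directly. Indeed, Proposition~\ref{pro:OEstable} asserts exactly that $\OEn_X$ is a stable object, so — after forgetting the $\D_X$-action and regarding $\OEn_X$ as an object of $\TDC(\iC_X)$ lying in $\TDCp(\iC_X)$ — it satisfies all the equivalent conditions (a)--(f) of Proposition~\ref{pro:equivTam} with $K=\OEn_X$; here $\C_X^\enh$ is the Tamarkin object, i.e.\ the object denoted $\C_X^\Tam$ in those statements.

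First I would invoke condition~(d), namely $\cihom(\C_X^\enh,\OEn_X)\isoto\cihom(\C_{\{t\geq 0\}},\OEn_X)$, together with the isomorphism $\OEn_X\isoto\cihom(\C_{\{t\geq 0\}},\OEn_X)$ of Proposition~\ref{pro:OEstable}, to get $\OEn_X\simeq\cihom(\C_X^\enh,\OEn_X)$. Next I would invoke condition~(c), namely $\C_{\{t\geq 0\}}\ctens\OEn_X\isoto\C_X^\enh\ctens\OEn_X$, together with the general isomorphism $\C_{\{t\geq 0\}}\ctens K\isoto\cihom(\C_{\{t\geq 0\}},K)$ (a consequence of Proposition~\ref{prop:tenhom}) applied to $K=\OEn_X$ and once more Proposition~\ref{pro:OEstable}, to get $\C_X^\enh\ctens\OEn_X\simeq\C_{\{t\geq 0\}}\ctens\OEn_X\simeq\cihom(\C_{\{t\geq 0\}},\OEn_X)\simeq\OEn_X$. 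Concatenating the two chains yields $\OEn_X\simeq\cihom(\C_X^\enh,\OEn_X)\simeq\C_X^\enh\ctens\OEn_X$ in $\TDC(\iC_X)$.

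It then remains to promote these isomorphisms from $\TDC(\iC_X)$ to $\TDC(\JD_X)$. For this I would note that $\C_X^\enh\ctens(\scbul)$, $\cihom(\C_X^\enh,\scbul)$, and likewise the versions with $\C_{\{t\geq 0\}}$, are convolution, resp.\ internal hom, taken along the auxiliary $\R$-variable; they therefore commute with the $\D_X$-action along $X$, define endofunctors of $\TDC(\JD_X)$ lifting the corresponding endofunctors of $\TDC(\iC_X)$, and the morphisms produced above are the associated $\D_X$-linear natural transformations. Since a morphism in $\TDC(\JD_X)$ that becomes invertible in $\TDC(\iC_X)$ is invertible, this finishes the argument. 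The only point requiring genuine care — and hence the step I expect to be the main obstacle — is precisely this $\D_X$-equivariance, i.e.\ checking that the isomorphisms furnished by Propositions~\ref{pro:equivTam} and~\ref{prop:tenhom} have $\D_X$-linear incarnations; everything else is a formal chaining of the quoted statements.
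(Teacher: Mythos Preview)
Your proposal is correct and follows the same route as the paper, which simply records the corollary as a direct consequence of Proposition~\ref{pro:OEstable} and Proposition~\ref{pro:equivTam} without further comment. Your unpacking via conditions~(c) and~(d) is exactly the intended reading, and your care about the $\D_X$-linearity is reasonable though largely handled by the fact that Proposition~\ref{pro:OEstable} is already stated in $\TDC(\JD_X)$, so the relevant morphisms are $\D_X$-linear from the start and one only checks they are isomorphisms after forgetting the action.
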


Then, using the isomorphisms
\eqn
\fihom(\C_X^\enh,\OEn_X)&\simeq&\fihom(\C_X^\enh, \cihom(\C_X^\enh, \OEn_X))\\
&\simeq&\fihom(\C_X^\enh\ctens \C_X^\enh, \OEn_X)\\
&\simeq&\fihom(\C_{\{t=0\}}\ctens \C_X^\enh, \OEn_X)\\
&\simeq&\fihom(\C_{\{t=0\}}, \OEn_X)
\eneqn
and~\eqref{eq:dbTdbt}, one gets the isomorphism in $\Derb(\JD_{X})$:
\eq\label{eq:oEot}
&&\fihom(\C_X^\enh,\OEn_X)\simeq\Ot.
\eneq

\subsection{Enhanced de Rham and Sol functors}\label{subsection:Edrsol}
\index{de Rham functor!enhanced}%
\index{solution functor!enhanced}%
For $\shm\in\BDC(\D_X)$, set
\glossary{$\drE_X$}%
\glossary{$\solE_X$}%
\eqn
\drE_X(\shm) &\seteq& \OvE_X \ltens[\D_X] \shm, \\
\solE_X(\shm) &\seteq& \rhom[\D_X](\shm,\OEn_X).
\eneqn
We get functors
\begin{align*}
\drE_X &\colon \BDC(\D_X) \to \TDC(\iC_X), \\
\solE_X &\colon \BDC(\D_X)^\op \to \TDC(\iC_X).
\end{align*}
Note that
\[
\solE_X(\shm) 
\simeq \drE_X(\Ddual_X\shm)[-d_X]\quad\text{for $\shm\in\Derb_\coh(\D_X)$.}
\]

By \eqref{eq:oEot}, we have for any $\shm\in\Derb(\D_X)$
\eq
\ba{l}
\drt_X\shm\simeq \fihom(\C_X^\enh,\drE_X\shm),\\[1ex]
\dr_X\shm\simeq \fhom(\C_X^\enh,\drE_X\shm).
\ea\label{eq:DRo}
\eneq

By using Proposition~\ref{pro:Solphi}, one can calculate explicitly $\drE_X(\shm)$ when $\shm$ is an exponential D-module. 

\begin{proposition}\label{pro:Solphi2}
Let $Y\subset X$ be a closed complex analytic hypersurface, and set $U=X\setminus Y$.
For $\varphi\in\OO(*Y)$, there are isomorphisms 
\eqn
\drE_X(\she^\varphi_{U|X})&\simeq&\rihom(\opb{\pi}\C_U,\inddlim[c\to\infty]\C_{\{t\geq\Re\varphi+c\}})\\
&\simeq&\C_X^\enh\ctens\rihom(\opb{\pi}\C_U,\C_{\{t=\Re\varphi\}}).
\eneqn
\end{proposition}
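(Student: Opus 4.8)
The plan is to deduce the statement from Proposition~\ref{pro:Solphi}, using the description of $\OEn_X$ as a total complex built from $\DbT_{X_\R}=[\Dbt_{X_\R\times\R_\infty}\to[\partial_t-1]\Dbt_{X_\R\times\R_\infty}]$, so that the extra variable $t$ enters only as an inert subanalytic parameter. I would first reduce the two displayed isomorphisms to one another: granting the first, the second follows from the Tamarkin formalism. Since translation in $t$ is realised by $(\scbul)\ctens\C_{\{t=a\}}$ and convolution of half-lines gives $\C_{\{t\ge a\}}\ctens\C_{\{t\ge b\}}\simeq\C_{\{t\ge a+b\}}$, one has $\inddlim[c\to\infty]\C_{\{t\ge\Re\varphi+c\}}\simeq\C_X^\enh\ctens\C_{\{t=\Re\varphi\}}$ in $\TDC(\iC_X)$, and then Corollary~\ref{cor:picorT} moves $\C_X^\enh\ctens(\scbul)$ past $\rihom(\opb\pi\C_U,\scbul)$ to pass from the first isomorphism to the second. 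So it suffices to establish the first.

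For the first isomorphism: $\she^\varphi_{U|X}=\D_X\ex^\varphi(*Y)$ is, as an $\OO$-module, $\OO(*Y)$ equipped with the connection $u\mapsto du+u\,d\varphi$, so computing $\drE_X(\she^\varphi_{U|X})=\OvE_X\ltens[\D_X]\she^\varphi_{U|X}$ with a Spencer-type resolution presents it as the $(d+d\varphi\wedge)$-twisted de Rham complex with coefficients in $\OEn_X(*Y)\eqdot\OEn_X\Dtens\OO(*Y)$ (in the normalisation of $\drE_X$, i.e.\ shifted by $[d_X]$). Two ingredients are then needed. The first is an enhanced analogue of Lemma~\ref{le:GrotDR1}, namely $\OEn_X(*Y)\simeq\rihom(\opb\pi\C_U,\OEn_X)$; this is proved exactly as there, by passing to the Dolbeault resolution and reducing to $\Dbt_{X_\R\times\R_\infty}[1/f]\simeq\rihom(\C_{U\times\R},\Dbt_{X_\R\times\R_\infty})$ for a local equation $f$ of $Y$, which follows from Łojasiewicz's inequality (Lemma~\ref{th:Loj1}) applied on $X_\R\times\PR$. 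The second ingredient is an integrating-factor step: over $U$ the function $e^{-\varphi}$ is holomorphic and invertible, and although neither $e^{-\varphi}$ nor $e^{\varphi}$ is tempered along $Y$, multiplying a section by $e^{-\varphi}$ trivialises the $d\varphi$-twist at the cost of shifting the temperedness thresholds. Unwinding the definition of $\OEn_X$ as the total complex of $[\Dbt_{X_\R\times\R_\infty}\to[\partial_t-1]\Dbt_{X_\R\times\R_\infty}]$ along the antiholomorphic directions, and feeding in Proposition~\ref{pro:Solphi} — whose proof bears on the $X$-directions only, $t$ being an inert subanalytic parameter — converts the barrier $\{\Re\varphi<c\}$ on $X$ into the barrier $\{t\ge\Re\varphi+c\}$ on $X\times\R$, the residual $e^t$-ambiguity being absorbed into $\inddlim[c\to\infty]$ just as the kernel of $\partial_t-1$ produces $\C_{\{t<\ast\}}$ in the case $X=\rmpt$.

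The main obstacle is the growth bookkeeping in this second ingredient: one must verify, at the level of tempered distributions on $X_\R\times\R_\infty$ and compatibly with all the $(*Y)$-localisations, that multiplication by $e^{-\varphi}$ induces isomorphisms between the relevant spaces of tempered sections — i.e.\ that $f\,e^{-\varphi}$ is tempered near $Y\cup(\ol\R\setminus\R)$ precisely on the regions obtained from the domains of $f$ by the substitution $t\mapsto t+\Re\varphi$. Exactly this is packaged in Proposition~\ref{pro:Solphi} (and, underneath it, in Łojasiewicz's inequality), so the real work is to organise the reduction so that Proposition~\ref{pro:Solphi} applies verbatim with the parameter $t$ adjoined, rather than to redo analytic estimates. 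A secondary point is shift-bookkeeping: I would cross-check the normalisation through $\solE_X(\she^\varphi_{U|X})\simeq\drE_X(\Ddual_X\she^\varphi_{U|X})[-d_X]$ together with $\Ddual_X\she^\varphi_{U|X}\simeq\she^{-\varphi}_{U|X}$, so that no spurious $[d_X]$ survives in the final answer.
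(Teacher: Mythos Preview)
Your reduction of the second isomorphism to the first (via $\C_{\{t\ge a\}}\ctens\C_{\{t=\Re\varphi\}}\simeq\C_{\{t\ge\Re\varphi+a\}}$ and Corollary~\ref{cor:picorT}) is fine, as is the enhanced analogue of Lemma~\ref{le:GrotDR1}. The gap is in your main step. You write that Proposition~\ref{pro:Solphi} ``bears on the $X$-directions only, $t$ being an inert subanalytic parameter'' and that this ``converts the barrier $\{\Re\varphi<c\}$ on $X$ into the barrier $\{t\ge\Re\varphi+c\}$ on $X\times\R$''. But if $t$ is truly inert, applying Proposition~\ref{pro:Solphi} fibrewise yields a barrier \emph{independent of $t$}; the mixing of $t$ and $\Re\varphi$ cannot arise this way. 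Worse, once you have replaced $\Ovt_X^{(t)}\ltens[\D_X]\she^\varphi$ by the constructible indsheaf $E^{-\varphi}$ (now with a dummy $\R$-factor), the operator $\partial_t-1$ no longer acts: it is an operator on $\Dbt$, not on arbitrary indsheaves, so the two reductions do not commute. Your integrating-factor intuition is correct --- the barrier does come from temperedness of $e^{t-\varphi}$ --- but the argument as written does not implement it. (A minor point: $\Ddual_X\she^\varphi_{U|X}\not\simeq\she^{-\varphi}_{U|X}$; only $(\Ddual_X\she^{-\varphi}_{U|X})(*Y)\simeq\she^\varphi_{U|X}$.)

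The paper gives no proof beyond ``use Proposition~\ref{pro:Solphi}'', but the argument it has in mind (from \cite{DK13}) is cleaner and avoids this trap: one complexifies $t$ to $\tau\in\C\subset\BBP^1(\C)$ and recognises that the complex $\DbT_{X_\R}=[\Dbt\to[\partial_t-1]\Dbt]$ arises by restricting to $X\times\R_\infty$ the tempered solution complex of the holomorphic $\D$-module $\she^\tau_{\C|\BBP}$ on the extra $\BBP^1$-factor. Then $\drE_X(\she^\varphi_{U|X})$ is obtained from $\drt_{X\times\BBP}$ of $\Dopb{p}\she^\varphi_{U|X}\Dtens\she^\tau_{\C|\BBP}\simeq\she^{\varphi+\tau}_{U\times\C|X\times\BBP}$, and Proposition~\ref{pro:Solphi} applied on $X\times\BBP^1(\C)$ to the single meromorphic function $\varphi+\tau$ produces directly a barrier $\{\Re(\varphi+\tau)<c\}$ whose restriction to $\tau=t\in\R$ is the mixed condition you want. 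The point is that $\partial_t-1$ is not an afterthought but a piece of a holomorphic $\D$-module, so the two exponentials $e^\varphi$ and $e^\tau$ must be merged \emph{before} invoking Proposition~\ref{pro:Solphi}, not after.
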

The next results are easy consequences of Theorem~\ref{thm:ifunct0}, Corollary~\ref{cor:ifunct1},
Corollary~\ref{cor:ifunct2} and Corollary~\ref{cor:ifunct2b}.

\begin{theorem}\label{thm:Tfunct}
Let $f\colon X\to Y$ be a morphism of complex manifolds.
\bnum
\item
There is an isomorphism in $\TDC(\II[\opb f\D_Y])$
\[
\Tepb f \OEn_Y[d_Y] \simeq \D_{Y\from X} \ltens[\D_X] \OEn_X [d_X].
\]
\item
For any $\shn\in\BDC(\D_Y)$ there is an isomorphism in $\TDC(\iC_X)$
\eq
\drE_X(\Dopb f \shn)[d_X] \simeq \Tepb f \drE_Y(\shn)[d_Y].
\label{eq:invdr}
\eneq
\item
Let $\shm\in\BDC_\good(\D_X)$, and assume that $\Supp(\shm)$ is proper over $Y$. 
Then, there are isomorphisms in $\TDC(\iC_Y)$ 
\eqn
&&\drE_Y(\Doim f\shm) \simeq  \Toim f\drE_X(\shm),\\
&&\Doim f(\OEn_X\Dtens\shm) \simeq  \OEn_Y\Dtens \Doim{f}\shm.
\eneqn
\ee
\end{theorem}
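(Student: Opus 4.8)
\textbf{Plan of proof for Theorem~\ref{thm:Tfunct}.}
The plan is to deduce all three parts from the tempered statements already proved in Section~\ref{section:tempered}, by the same mechanism used to define $\OEn_X$: adding the extra variable $t$, applying the results for $\Ot$ on $X\times\C$ (or $M\times\R$) relative to the operator $\partial_t-1$, and then passing to the quotient category $\TDC$. Throughout I write $\wt f\cl X\times\R_\infty\to Y\times\R_\infty$ for the morphism of bordered spaces induced by $f\times\id_\R$, and I recall that $\DbT_{X_\R}$ is by definition the two-term complex $\Dbt_{X\times\R_\infty}\xrightarrow{\partial_t-1}\Dbt_{X\times\R_\infty}$, so that $\OEn_X=\rhom[\opb\pi\D_{X^c}](\opb\pi\OO[X^c],\DbT_{X_\R})$.

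\emph{Step 1 (part (i)).} First I would establish the tempered analogue of the adjunction formula at the level of $X\times\R$. Applying Theorem~\ref{thm:ifunct0}, more precisely its equivalent form~\eqref{eq:funct1abis}, to the morphism $f\times\id_\R\cl X\times\R\to Y\times\R$ gives an isomorphism
\[
\shd_{(Y\times\R)\from(X\times\R)}\ltens[\shd_{X\times\R}]\Ot[X\times\R]\,[d_X+1]\isoto \epb{(f\times\id)}\Ot[Y\times\R]\,[d_Y+1]
\]
in $\Derb$ of the appropriate transfer-module category. Since $\shd_{(Y\times\R)\from(X\times\R)}\simeq\shd_{Y\from X}\etens\shd_\R$ and the transfer bimodule factor over $\R$ is the identity, this reads $\shd_{Y\from X}\ltens[\shd_X]\Ot[X\times\R]\,[d_X]\isoto\epb{(f\times\id)}\Ot[Y\times\R]\,[d_Y]$. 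Now I would apply $\rhom[\opb\pi\D_{X^c}](\opb\pi\OO[X^c],\scbul)$ after first tensoring with the Koszul-type complex for $\partial_t-1$; concretely, the two-term complex defining $\DbT$ is compatible with $\epb{(f\times\id)}$ because $\partial_t-1$ acts on the $\R$-factor only, and $\epb{\wt f}$ on $\TDC$ is by definition induced by $\epb{(f\times\id)}$ followed by the quotient. Passing to the quotient category $\TDC$ and recalling $\Tepb f$ is the induced functor (Definition~\ref{def:fT}), this yields the isomorphism $\Tepb f\OEn_Y[d_Y]\simeq\shd_{Y\from X}\ltens[\shd_X]\OEn_X[d_X]$ of part (i). The one point requiring care is the commutation of $\epb{(f\times\id)}$ with the functor $\rhom[\opb\pi\D_{X^c}](\opb\pi\OO[X^c],\scbul)$ — this is the conjugate-variable analogue of Theorem~\ref{thm:ifunct0} and follows from the projection formulas for indsheaves (Theorem~\ref{th:projIB}) together with $\Dopb f\OO[Y]\simeq\OO$.

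\emph{Step 2 (part (ii)).} Given (i), the inverse-image formula for $\drE$ is formal: for $\shn\in\Derb(\D_Y)$ one computes
\[
\drE_X(\Dopb f\shn)[d_X]
=\OvE_X\ltens[\D_X](\D_{X\to Y}\ltens[\opb f\D_Y]\opb f\shn)[d_X]
\simeq(\OvE_X\ltens[\D_X]\D_{X\to Y})\ltens[\opb f\D_Y]\opb f\shn\,[d_X],
\]
and $\OvE_X\ltens[\D_X]\D_{X\to Y}[d_X]\simeq\epb{\wt f}\OvE_Y[d_Y]$ is the right-module form of (i) (obtained by applying $\Omega\tens\scbul$, i.e.\ the equivalence $\mop$, exactly as in the passage from~\eqref{eq:funct1a} to~\eqref{eq:funct1abis}). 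Then $\epb{\wt f}\OvE_Y\ltens[\opb f\D_Y]\opb f\shn\simeq\Tepb f(\OvE_Y\ltens[\D_Y]\shn)=\Tepb f\drE_Y(\shn)$ by the projection formula for $\Tepb f$ (the enhanced analogue being covered by Proposition~\ref{pro:Tproj} applied after the ring action is taken into account, or directly by Theorem~\ref{th:betaaihom} in the bordered-space setting). This gives~\eqref{eq:invdr}.

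\emph{Step 3 (part (iii)).} For the direct-image statements I would reduce, exactly as in the proof of Corollary~\ref{cor:ifunct2}, to the case $\shm=\D_X\tens[\OO]\shf$ with $\shf\in\Derb_\coh(\OO)$ such that $f$ is proper on $\Supp(\shf)$. The enhanced Grauert-type isomorphism $\Teeim f(\OEn_X\ltens[\OO]\shf)\simeq\OEn_Y\ltens[\OO]\reim f\shf$ then follows from the tempered Grauert theorem (Theorem~\ref{th:tGrauert}) applied on $X\times\R$: one has $\reeim{(f\times\id)}(\Ot[X\times\R]\ltens[\OO]\shf)\simeq\Ot[Y\times\R]\ltens[\OO]\reim f\shf$, this is compatible with the operator $\partial_t-1$ on the $\R$-factor (hence with the two-term complex defining $\DbT$, since $\reeim{(f\times\id)}$ commutes with $\partial_t-1$), and $\Teeim f$ on $\TDC$ is the induced functor. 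Given this, the isomorphism $\drE_Y(\Doim f\shm)\simeq\Toim f\drE_X(\shm)$ is obtained by the same chain of manipulations as in Corollary~\ref{cor:ifunct2} (note $\Teeim f\simeq\Toim f$ when $f$ is proper on the relevant support, by the bordered-space analogue of properness, Proposition after Lemma~\ref{lem:proper}), and the second isomorphism $\Doim f(\OEn_X\Dtens\shm)\simeq\OEn_Y\Dtens\Doim f\shm$ follows from it together with the D-module projection formula~\eqref{eq:DDprojform}, repeating verbatim the argument of Corollary~\ref{cor:ifunct2b} with $\Ot$ replaced by $\OEn$ and $\roim f$, $\reeim f$ replaced by $\Toim f$, $\Teeim f$.

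\emph{Expected main obstacle.} The routine D-module bookkeeping (transfer bimodules, projection formulas) transcribes without difficulty. The genuine point is \emph{compatibility of the six operations on $\TDC(\icor)$ with the defining complex $\DbT$} — i.e.\ verifying that $\epb{(f\times\id)}$, $\reeim{(f\times\id)}$ and the ring-action tensor/hom functors commute with the map $\partial_t-1$ acting on the extra $\R$-variable, so that they descend to functors on the complexes $\DbT_{X_\R}$ and, after quotienting, to $\OEn_X$. This is where one must use that $\partial_t-1$ involves only the $\R$-direction (so it is $\opb\pi\D_{X^c}$-linear and $f$-functorial for $f$ a map between the base manifolds), and where the bordered-space formalism of Section~\ref{section:bordered} — in particular that the operations on $\Derb(\icor_{(M,\bM)})$ are defined via the graph and are compatible with $\opb{j_M}$ — is essential. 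Once this compatibility is in place, everything else is a mechanical transfer of Theorems~\ref{thm:ifunct0}, \ref{th:tGrauert} and Corollaries~\ref{cor:ifunct1}, \ref{cor:ifunct2}, \ref{cor:ifunct2b} through the quotient functor $\Derb(\icor_{X\times\R_\infty})\to\TDC(\icor_X)$.
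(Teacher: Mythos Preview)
Your proposal is correct and follows the same approach the paper indicates: the paper does not give a detailed proof but simply states that the theorem is an easy consequence of Theorem~\ref{thm:ifunct0}, Corollary~\ref{cor:ifunct1}, Corollary~\ref{cor:ifunct2} and Corollary~\ref{cor:ifunct2b}, and your Steps~1--3 spell out precisely this reduction by applying those tempered results over the extra $\R$-variable and passing to the quotient category $\TDC$. Your identification of the main technical point (compatibility of the six operations with the $\partial_t-1$ complex defining $\DbT$) is apt, and the paper tacitly treats this as routine.
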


\subsection{Ordinary linear differential equations and Stokes phenomena}
\label{sec:ordinary}

Let us recall the local theory of ordinary linear differential equations.
Let $X\subset \C$ be open with $0\in X$
and let $\shm$ be a holonomic $\shd_X$-module
such that $\SSupp(\shm)\subset \{0\}$ and
$\shm\simeq\shm(*\{0\})$. Then $\shm$ is a locally free $\OO(*\{0\})$-module
of finite rank.
Let us take a system of generators
$(u_1,\ldots, u_r)$ of $\shm$ as an $\OO(*\{0\})$-module
on a neighborhood of $0$.
Then, setting $\abu$ the column vector consisting of these generators, we have
\eqn
&&\dfrac{d}{dz}\abu=\bA(z)\abu
\eneqn
for an $(r\times r)$-matrix $\bA(z)$ whose components are in $\OO(*\{0\})$.
Then for any $\D_X$-module $\shl$ such that $\shl\simeq\shl(*\{0\})$,
we have
\eq
&&\hom[\D_X](\shm,\shl)=\set{\au\in\shl^r}{\text{$\au$ satisfies
equation \eqref{eq:ord} below}}
\label{eq:solM}
\eneq
where we associate  to $\au$ the morphism from $\shm$ to $\shl$
defined by $\abu\mapsto\au$. 
Here
\eq\dfrac{d}{dz}\au=\bA(z)\au \label{eq:ord}.
\eneq
Now we have the following results on the solutions of the
ordinary linear differential equation \eqref{eq:ord}.
\bnum
\item
there exist linearly independent
$r$ formal (column) solutions $\hbu_j$ $(j=1,\ldots r)$ of \eqref{eq:ord}
with the form 
\eqn
\hbu_j=\ex^{\vphi_j(z)}z^{\la_j}\sum_{k=0}^{r-1}\va_{j,k}(z)(\log z)^k,
\eneqn
where $m\in\Z_{>0}$, $\vphi_j(z)\in z^{-1/m}\C[z^{-1/m}]$, $\la_j\in\C$, and
$$\text{$\va_{j,k}(z)=\sum\limits_{n\in m^{-1}\Z_{\ge0}}  \va_{j,k,n} z^n\in\C[[z^{1/m}]]^r$
with  $\va_{j,k,n}\in\C^r$,}$$
\item 
for any $\theta_0\in \R$ and each $j=1,\dots,r$, there exist an angular neighborhood
$$D_{\theta_0}=\set{z=r\ex^{i\,\theta}}{\text{$|\theta-\theta_0|<\vep$
and $0<r<\delta$}}$$
for sufficiently small $\eps,\delta>0$
and holomorphic (column) solution $\bu_j\in\OO(D_{\theta_0})^r$
of \eqref{eq:ord} defined on $D_{\theta_0}$
such that
$$\bu_j\sim \hbu_j,$$
in the sense that, for any $N>0$, there exists $C>0$ such that
\eq
&&|\bu_j(z)- \hbu_j^{N}(z)|\le C|\ex^{\vphi_j(z)}z^{\la_j+N}|
= C\ex^{\Re(\vphi_j(z))}|z^{\la_j+N}|,\label{eq:asymptotic}
\eneq
where $\hbu_j^{N}(z)$ is the finite partial sum
$$\hbu_j^{N}(z)=
\ex^{\vphi_j(z)}z^{\la_j}\sum_{k=0}^{r-1}
\hs{1ex}\sum\limits_{\substack{n\in m^{-1}\Z_{\ge0},\\n\le N}}\hs{1ex} \va_{j,k,n}  z^n(\log z)^k.$$
Here we choose branches of $z^{1/m}$ and $\log z$ on $D_{\theta_0}$.
\ee

Note that a holomorphic solution $\bu_j$ is not uniquely determined
by the formal solution $\hbu_j$.
In fact, $\bu_j+\sum_{k}c_k\bu_k$ also satisfies
the same estimate \eqref{eq:asymptotic}
whenever $$\text{$\Re(\vphi_k(z))<\Re(\vphi_j(z))$ on $D_{\theta_0}$
if $c_k\not=0$.}$$

\medskip
We can interpret these results as follows.
Let $\vpi\cl\twX\to X$ be the real blow up of $X$ along $\{0\}$
defined 
in \S\;\ref{subsection:realblowup}.
Then $\ex^{-\vphi_j(z)}\bu_j$  gives a section of
$(\At)^r$ on a neighborhood of $\e^{i\,\theta_0}\in\vpi^{-1}(0)$.
Define the $\D_{\twX}^\tA$-module
$$\shl_j\seteq\D_{\twX}^\tA\e^{\vphi_j(z)}
=\DA_\twX/\DA_\twX\bl\frac{d}{dz}-{\vphi'_j}(z)\br.$$
Here we take a branch of $\vphi_j$ on a domain and $\shl_j$
is defined on such a domain.

Then $(\ex^{-\vphi_j(z)}\bu_j)\ex^{\vphi_j}\in(\shl_j)^r$
is a solution of
equation \eqref{eq:ord}, and hence \eqref{eq:solM}
defines a morphism of $\DA_\twX$-modules
$$\shm^\tA\to \shl_j.$$
Collecting such a morphism for all $j$,
we obtain an isomorphism defined on a neighborhood of $\e^{i\,\theta_0}\in\vpi^{-1}(0)$:
\eq
\shm^\tA\isoto \soplus_{j=1}^r\shl_j.
\label{eq:twlocal}
\eneq
Note that
$$\oim{\vpi}\shm^\tA\simeq\shm.$$
However, these isomorphisms \eqref{eq:twlocal}
are not globally  defined.
That is, $\shm^\tA$ is only locally isomorphic to $\soplus_{j=1}^r\shl_j$.
We have
\eq\hom[\DA_\twX](\shl_j,\shl_{j'})\vert_{\vpi^{-1}(0)}
\simeq \C_{U_{j,j'}}\subset \C_{\vpi^{-1}(0)},\label{eq:Ljj}
\eneq
where
\eq
&&U_{j,j'}=\set{p\in\vpi^{-1}(0)}%
{\parbox{40ex}{$\Re(\vphi_j(z))\le\Re(\vphi_{j'}(z))$
on $U\cap \twX^{>0}$ for a neighborhood $U$ of $p$}}.\label{cond:temp}
\eneq 
Indeed, any morphism 
$f\in \hom[\DA_\twX](\shl_j,\shl_{j'})\vert_{\vpi^{-1}(0)}$
should have the form $\e^{\vphi_j}\mapsto \e^{\vphi_j(z)-\vphi_{j'}(z)}\,
\e^{\vphi_{j'}}$ up to a constant multiple, 
and hence $f$ is well-defined if and only if
$\e^{\vphi_j(z)-\vphi_{j'}(z)}$ is tempered. The last condition is equivalent to
the condition in \eqref{cond:temp}. 

Hence the isomorphism class of a $\DA_\twX\vert_{\vpi^{-1}(0)}$-module $\shl$
locally isomorphic to $\soplus_{j=1}^r\shl_j\vert_{\vpi^{-1}(0)}$
is determined
by a topological data,
\index{Stokes matrices}%
the so-called  Stokes matrices.

Assuming that $m=1$ for the sake of simplicity, let us explain them more precisely. 
Let  $\shl$ be a $\DA_\twX\vert_{\vpi^{-1}(0)}$-module
locally isomorphic to $\soplus_{j=1}^r\shl_j\vert_{\vpi^{-1}(0)}$.
We identify $\vpi^{-1}(0)$ with $\R/2\pi\Z$ by $\R/2\pi\Z\ni\theta
\mapsto \ex^{i\,\theta}\in \vpi^{-1}(0)$.
Let us take $\{\theta_1,\ldots,\theta_s\}$
such that $s\ge2$,
$\theta_0<\theta_1<\cdots<\theta_{s-1}<\theta_{s}$
and
$$\vpi^{-1}(0)\bigcap\bigcup_{\substack{
1\le j,\,j'\le r,\\\vphi_j\not=\vphi_{j'}}}\ol{\set{z\in\twX^{>0}}%
{\Re\vphi_j(z)=\Re \vphi_{j'}(z)}}\subset\{\theta_1,\ldots,\theta_s\}.$$
Here we set $\theta_{k+ls}=\theta_k+2\pi l$ for $1\le k\le s$ and $l\in\Z$. 
Set $V_k=\set{\theta}{\theta_{k-1}<\theta<\theta_{k+1}}$
and $W_k=\set{\theta}{\theta_{k}<\theta<\theta_{k+1}}=V_k\cap V_{k+1}$.
Then we have
$\vpi^{-1}(0)=\bigcup_{1\le k\le s}V_k$.  Note that
for any $j,j'\in\{1,\ldots, r\}$ and $k\in\{1,\ldots,s\}$, we have
either
$W_k\cap U_{j,j'}=\emptyset$ or $W_k\subset U_{j,j'}$. In particular,
\eqref{eq:Ljj} implies that
$\hom[\DA_\twX](\shl_j,\shl_{j'})\vert_{W_k}$ is a constant sheaf. 

Hence, any isomorphism
$\shl\isoto\soplus_{j=1}^r\shl_j\vert_{\vpi^{-1}(0)}$ defined on a neighborhood of $\theta_k$
can be extended to an isomorphism defined on $V_k$,
and we have an isomorphism 
$$\psi_k\cl\shl\vert_{V_k}\isoto\soplus_{j=1}^r\shl_j\vert_{V_k}.$$
Let us set
$$\xi_k=\psi_{k+1}\circ\psi_k^{-1}\cl \soplus_{j=1}^r\shl_j\vert_{W_k}\isoto
\soplus_{j=1}^r\shl_j\vert_{W_{k}}.$$
Then $\shl$ is obtained by patching
$\soplus_{j=1}^r\shl_j\vert_{V_k}$ by the $\xi_k$'s.
Each isomorphism $\xi_k$ is given by the matrix
$S_k=(s_{k;i',i})_{1\le i,i'\le r}\in\GL_r(\C)$.
Here
$s_{k;i',i}\in\C$ is given by the  morphism
$$\xymatrix
{\shl_i\vert_{W_k}\;\ar@{>->}[r]&\soplus_{j=1}^r\shl_j\vert_{W_k}\ar[r]^\sim_{\xi_k}&
\soplus_{j=1}^r\shl_j\vert_{W_k}\;\ar@{->>}[r]&\shl_{i'}\vert_{W_k}}$$
through
\eqn
&&\sect\bl W_k;\hom[\DA_\twX](\shl_i,\shl_{i'})\vert_{\vpi^{-1}(0)}\br
\simeq \sect\bl W_k;\C_{U_{i,i'}}\br
\subset\sect\bl W_k; \C_{\vpi^{-1}(0)}\br\simeq\C
\eneqn
due to \eqref{eq:Ljj}.
Hence, we have
$s_{k;i',i}=0$ if $W_k\not\subset U_{i,i'}$.

The matrices $\{S_k\}_{1\le k\le s}$ are called the {\em Stokes matrices}.
Conversely, for a given family of matrices $\{S_k\}_{1\le k\le s}$,
we can find a $\DA_\twX\vert_{\vpi^{-1}(0)}$-module $\shl$
locally isomorphic to $\soplus_{j=1}^r\shl_j\vert_{\vpi^{-1}(0)}$
by patching $\soplus_{j=1}^r\shl_j\vert_{V_k}$ by $\{S_k\}_{1\le k\le s}$.
Then $\shm$ is recovered from $\shl$ by $\shm\simeq\oim{\vpi}\shl$.

\subsection{Normal form}\label{subsection:normalform}

\index{normal form!}%
The results in \S\;\ref{sec:ordinary} 
are generalized to higher dimensions
by T.\ Mochizuki (\cite{Mo09,Mo11}) and K.\ S.\ Kedlaya (\cite{Ke10,Ke11}).
In this subsection, we collect some of their results 
that we shall need. 

Let $X$ be a complex manifold and $D\subset X$ a normal crossing divisor. 
We shall use the notations introduced in \S\,\ref{subsection:realblowup} : in particular 
the real blow up $\vpi\cl\twX\to X$ and 
the notation $\shm^\tA$ of~\eqref{eq:MA}.

\begin{definition}\label{def:normal}
We say that a holonomic $\D_X$-module $\shm$ has \emph{a normal form} along $D$ if\\
${\rm (i)}$ $\shm\simeq \shm(*D)$,\\
${\rm (ii)}$  $\SSupp(\shm)\subset D$,\\
${\rm (iii)}$  for any $x\in\vpi^{-1}(D)\subset\twX$, there exist an open neighborhood $U\subset X$ of $\varpi(x)$ and finitely many $\varphi_i\in\sect(U;\sho_X(*D))$
such that
\eqn
&&(\shm^\tA)|_V \simeq
\left.\left( \bigoplus_i (\she_{U\setminus D|U}^{\varphi_i})^\tA \right)\right|_V
\eneqn
for some open neighborhood $V$ of $x$ with $V\subset\opb\varpi(U)$.
\end{definition}

A ramification 
\index{ramification}%
of $X$ along $D$ on a neighborhood $U$ of $x\in D$ is a finite map
\[
p \colon X' \to U
\]
of the form $p(z') = (z_1^{\prime\, m_1},\dots,z_r^{\prime\, m_r},z'_{r+1},\dots,z'_n)$ for some $(m_1,\dots,m_r)\in(\Z_{>0})^r$.
Here $(z'_1,\dots,z'_n)$ is a local coordinate system on $X'$, $(z_1,\dots,z_n)$ a local coordinate system on $X$
such that   $D=\{z_1\cdots z_r=0\}$.

\begin{definition}
\label{def:quasi-normal}
\index{normal form!quasi-}%
\index{quasi-normal form}%
We say that a holonomic $\D_X$-module $\shm$ has \emph{a quasi-normal form} along $D$ if it satisfies (i) and (ii) in Definition~\ref{def:normal}, and if for any $x\in D$ there exists a ramification $p\colon X'\to U$ on a neighborhood $U$ of $x$ such that $\Dopb p (\shm|_U)$ has a normal form along $\opb p (D\cap U)$.
\end{definition}

\begin{remark}
In the above definition, 
$\Dopb p(\shm|_U)$ as well as $\Doim p\Dopb p(\shm|_U)$ is concentrated in degree zero and
$\shm|_U$ is a direct summand of $\Doim p\Dopb p (\shm|_U)$.
\end{remark}

The next result is an essential tool in the study of holonomic D-module 
and is easily deduced from the 
fundamental work of  Mochizuki~\cite{Mo09,Mo11} 
(see also Sabbah~\cite{Sa00} for preliminary results  and see
Kedlaya~\cite{Ke10,Ke11} for the analytic case).

\begin{theorem}%[see \cite{Maj84,Sab00,Moc09,Moc11,Ked10,Ked11}]
\label{thm:normal}
Let $X$ be a complex manifold, $\shm$ a holonomic $\D_X$-module and $x\in X$.
Then there exist an open neighborhood $U$ of $x$, a closed analytic hypersurface $Y\subset U$, a complex manifold $X'$ and a projective morphism $f\colon X'\to U$ such that
\bnum
\item $\SSupp(\shm)\cap U\subset Y$,
\item $D\eqdot\opb f(Y)$ is a 
normal crossing divisor  of $X'$,
\item $f$ induces an isomorphism $X'\setminus D \to U \setminus Y$,
\item $(\Dopb f \shm)(*D)$ has a quasi-normal form along $D$.
\ee
\end{theorem}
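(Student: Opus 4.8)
The plan is to deduce Theorem~\ref{thm:normal} directly from the structure theory of Mochizuki and Kedlaya, the only genuinely new input being bookkeeping to arrange the geometry (projectivity of $f$, normal crossings, isomorphism away from the discriminant). First I would reduce to a local problem: pick a relatively compact open neighborhood of $x$ and recall that the singular support (in the sense of $\SSupp$) of a holonomic $\D_X$-module is a closed analytic subset of dimension $<d_X$ near any point where $\shm$ fails to be $\sho$-coherent, together with a hypersurface carrying the generic rank jumps. The first real step is to choose $Y$: take $Y\subset U$ to be a closed analytic hypersurface containing the image of $\chv(\shm)$ under $\pi_X$ outside the zero section (equivalently, the support of the "irregularity/singularity locus"), so that $\shm|_{U\setminus Y}$ is a meromorphic connection, indeed an integrable connection on $U\setminus Y$. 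This uses only that holonomic modules are generically integrable connections away from a hypersurface.

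Next I would invoke resolution of singularities (Hironaka) to produce a projective morphism $f\colon X'\to U$, with $X'$ smooth, such that $D\eqdot\opb f(Y)$ is a normal crossing divisor and $f$ restricts to an isomorphism $X'\setminus D\isoto U\setminus Y$; this is standard and gives (i), (ii), (iii) at once after possibly shrinking $U$. The content of (iv) is then exactly the statement that the pullback $(\Dopb f\shm)(*D)$ — which is the meromorphic connection on $U\setminus Y$ transported to $X'$ and extended meromorphically along $D$ — acquires a good formal/asymptotic structure after a further ramification along $D$. Here I would cite the Mochizuki--Kedlaya theorem on the existence of a "good meromorphic structure" / "good lattice" for meromorphic flat bundles: after a suitable blow-up and ramification the connection becomes, formally at each point of the crossing, a direct sum of rank-one meromorphic connections $\she^{\varphi_i}_{U'\setminus D|U'}$ twisted by regular pieces, which is precisely Definition~\ref{def:normal}(iii) at the level of $(\scbul)^\tA$, using that the real blow-up $\At$ detects exactly the tempered (hence asymptotically controlled) behaviour. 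Since resolution and ramification can be absorbed into $f$ (composing the projective resolution with a further projective modification, and recording the ramification as part of the quasi-normal form in the sense of Definition~\ref{def:quasi-normal}), we obtain (iv).

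A few points require care. One must make sure the regular part appearing in Mochizuki's decomposition — the factors $z^{\la}(\log z)^k$ — are allowed: they are, since a regular meromorphic connection on a neighborhood of a normal crossing has regular normal form in the sense of Definition~\ref{def:regnormal}, and such modules are visibly of the shape $\she^{0}$ twisted by the regular normal form, which Definition~\ref{def:normal} permits (one takes $\varphi_i$ ranging over the exponential factors, each tensored with a regular piece; the ramification in Definition~\ref{def:quasi-normal} handles the fractional powers $z^{1/m}$). Also, one should check the compatibility of $(*D)$-localization with $\Dopb f$ and with passage to $\shm^\tA$, but this is exactly the content of the formulas $\shm^\tA\eqdot\DA_\twX\ltens[\opb\varpi\D_X]\opb\varpi\shm$ together with $\Ot[\twX]\simeq\epb\varpi\Ot(*D)$ recalled in \S\ref{subsection:realblowup}, so no new argument is needed.

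The main obstacle is that the heart of the theorem is not proved here but imported: the serious work is entirely in \cite{Mo09,Mo11} (algebraic case) and \cite{Ke10,Ke11} (analytic case), with \cite{Sa00} as a precursor. So the "proof" is really an organizational argument showing that their results, phrased in terms of good meromorphic lattices and Stokes structures, imply the clean statement (i)--(iv) after a Hironaka resolution; the one place where one must be attentive is ensuring that \emph{a single} projective $f$ can be chosen achieving simultaneously the normal crossing condition, the isomorphism over $U\setminus Y$, and the quasi-normal form — which is possible because all the required modifications (resolution, blow-ups to separate exponential factors) are projective and can be composed.
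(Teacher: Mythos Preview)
Your proposal is correct and matches the paper's approach: the paper does not prove Theorem~\ref{thm:normal} at all, merely stating that it ``is easily deduced from the fundamental work of Mochizuki~\cite{Mo09,Mo11}'' and Kedlaya~\cite{Ke10,Ke11}, and your outline is precisely how one carries out that deduction (choose $Y$, resolve by Hironaka, then invoke the good formal structure theorem, absorbing the further projective modifications into $f$). One small clarification worth making explicit: Definition~\ref{def:normal} literally allows only direct sums of $(\she^{\varphi_i}_{U\setminus D|U})^\tA$, with no regular twist, so the regular factors $R_i$ in the Mochizuki--Kedlaya decomposition $\bigoplus_i \she^{\varphi_i}\otimes R_i$ must be dealt with; the point is that by Proposition~\ref{pro:normalfLA} any module with regular normal form satisfies $\shm^\tA\simeq\At$ locally on $\twX$, so the regular factors become locally free on the real blow up and are absorbed into the multiplicities of the $\varphi_i$.
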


Remark that, under assumption (iii), $(\Dopb f \shm)(*D)$ is concentrated in degree zero.

Using Theorem~\ref{thm:normal}, one easily deduces the next lemma.

\begin{lemma}\label{lem:redux}
Let $P_X(\shm)$ be a statement concerning a complex manifold $X$ and a holonomic object $\shm\in\BDC_\hol(\D_X)$. Consider the following conditions.
\bna
\item
Let $X=\Union\nolimits_{i\in I}U_i$ be an open covering. Then $P_X(\shm)$ is true if and only if $P_{U_i}(\shm|_{U_i})$ is true for any $i\in I$.
\item
If $P_X(\shm)$ is true, then $P_X(\shm[n])$ is true for any $n\in\Z$.
\item
Let $\shm'\to\shm\to\shm''\To[+1]$ be a distinguished triangle in $\BDC_\hol(\D_X)$. If $P_X(\shm')$ and $P_X(\shm'')$ are true, then $P_X(\shm)$ is true.
\item
Let $\shm$ and $\shm'$ be holonomic $\D_X$-modules. 
If $P_X(\shm\dsum\shm')$ is true, then $P_X(\shm)$ is true.
\item Let $f\colon X\to Y$ be a projective morphism and $\shm$ a good holonomic $\D_X$-module. If $P_X(\shm)$ is true, then $P_Y(\Doim f\shm)$ is true.
\item If $\shm$ is a holonomic $\D_X$-module with a normal form along a normal crossing divisor of $X$, then $P_X(\shm)$ is true.
\ee
If conditions {\rm (a)--(f)} are satisfied, 
then $P_X(\shm)$ is true for any complex manifold $X$ and any $\shm\in\BDC_\hol(\D_X)$.
\end{lemma}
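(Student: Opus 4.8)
The statement is a reduction principle in the spirit of Lemma~\ref{lem:reduxreg}, now for \emph{all} holonomic modules rather than only the regular ones, and the proof should follow the same two-step pattern as the sketch given for Lemma~\ref{lem:reduxreg}. The plan is to argue by induction on the dimension of $\Supp(\shm)$, using Theorem~\ref{thm:normal} as the replacement for the classical resolution argument used in the regular case.

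\medskip
First I would reduce to the case where $\shm$ is a single holonomic $\D_X$-module (rather than a complex). By condition (a) the statement is local, so we may shrink $X$; by condition (b) we may shift; and by condition (c), applied to the truncation triangles $\tau_{\le n}\shm\to\shm\to\tau_{\ge n+1}\shm\To[+1]$, an easy induction on the amplitude of $\shm\in\BDC_\hol(\D_X)$ reduces $P_X(\shm)$ for complexes to $P_X(\shm)$ for modules concentrated in degree $0$. So from now on $\shm$ is a holonomic $\D_X$-module, and we induct on $m=\dim\Supp(\shm)$; the case $m=-\infty$ (i.e.\ $\shm=0$) is trivial, being a direct summand of a module satisfying the conclusion and hence covered by (d), or simply by (c) with the zero triangle.

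\medskip
For the inductive step, fix $x\in X$ and apply Theorem~\ref{thm:normal}: we obtain a neighborhood $U$ of $x$, a hypersurface $Y\subset U$ containing $\SSupp(\shm)\cap U$, a manifold $X'$, and a projective morphism $f\colon X'\to U$ such that $D\eqdot\opb f(Y)$ is a normal crossing divisor, $f$ restricts to an isomorphism $X'\setminus D\isoto U\setminus Y$, and $\shm'\eqdot(\Dopb f\shm)(*D)$ has a quasi-normal form along $D$. By condition (a) it suffices to prove $P_U(\shm|_U)$, so we replace $X$ by $U$. Next one must pass from "quasi-normal form" to "normal form": by Definition~\ref{def:quasi-normal} there is, locally near each point of $D$, a ramification $p\colon X''\to X'$ with $\Dopb p\shm'$ of normal form, and $\shm'$ is a direct summand of $\Doim p\Dopb p\shm'$ (see the Remark after Definition~\ref{def:quasi-normal}); hence $P_{X''}(\Dopb p\shm')$ holds by (f), $P_{X'}(\Doim p\Dopb p\shm')$ holds by (e) (ramifications being projective — indeed finite), and then $P_{X'}(\shm')$ holds by (d), after covering $X'$ and using (a). Therefore $P_{X'}(\shm')$ is established. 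Applying (e) to the projective morphism $f$ gives $P_X(\Doim f\shm')$. Finally, since $f$ is an isomorphism away from $Y$ and $\shm'=(\Dopb f\shm)(*D)$, the adjunction/comparison triangle
\[
\shm\To\Doim f\shm'\To\shn\To[+1]
\]
has third term $\shn$ supported on $Y$, hence $\dim\Supp(\shn)<m$ (because $\SSupp(\shm)\cap U\subset Y$ forces $\shm$ itself to be $\OO$-coherent, hence smooth, generically along the $m$-dimensional components of its support, so those components disappear after taking the cone with $\Doim f\shm'$, which also restricts to $\shm$ there); thus $P_X(\shn)$ holds by the induction hypothesis, and then (c) and (b) yield $P_X(\shm)$.

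\medskip
The main obstacle is the last paragraph: producing the distinguished triangle $\shm\to\Doim f\shm'\to\shn\To[+1]$ with $\shn$ of strictly smaller support dimension, and checking that the natural morphism $\shm\to\Doim f(\Dopb f\shm)(*D)$ restricts to an isomorphism over $X\setminus Y$. This is exactly the step that in the regular case was handled in part (ii) of the sketch of Lemma~\ref{lem:reduxreg} via $(\Dopb g\shm)(*D)\simeq((H^{m-d_X}\Dopb g\shm)(*D))[d_X-m]$ and the triangle $\shm\to\Doim g((\Dopb g\shm)(*D))[m-d_X]\to\shn\To[+1]$; the same mechanism applies here, using that $f$ is an isomorphism outside $D$ so that $H^j(\Dopb f\shm)(*D)$ vanishes for $j\ne 0$ near the locus where $f$ is étale, and that $\Doim f$ of a module supported on $D$ is supported on $Y$. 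The bookkeeping with the shifts $[d_X-d_{X'}]$ and the degree in which $(\Dopb f\shm)(*D)$ lives is routine but must be done carefully, exactly as in the proof of Lemma~\ref{lem:reduxreg}; condition (d), kept by analogy as noted in the Remark after Lemma~\ref{lem:reduxreg}, is what makes the descent through the ramification $p$ legitimate.
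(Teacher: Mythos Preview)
Your ramification step (passing from quasi-normal form to normal form via (f), (e), (d), (a)) is exactly what the paper proves, and the overall shape of the argument is right. But there is a real gap in the inductive step.

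The problem occurs when $m = \dim\Supp(\shm) < d_X$. In that case every point of $\Supp(\shm)$ already lies in $\SSupp(\shm)$: an $\OO$-coherent $\D_X$-module is locally free, so its support is open in $X$, and if $\Supp(\shm)$ has empty interior then $\Supp(\shm)\setminus\SSupp(\shm)=\emptyset$. Since Theorem~\ref{thm:normal}\,(i) gives $\SSupp(\shm)\subset Y$, we get $\Supp(\shm)\subset Y$, hence $\Supp(\Dopb f\shm)\subset f^{-1}(Y)=D$, and therefore $\shm'=(\Dopb f\shm)(*D)=0$ (a holonomic module supported in $D$ on which a local equation of $D$ acts invertibly must vanish). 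Your triangle degenerates to $\shm\to 0\to\shn$, i.e.\ $\shn\simeq\shm[1]$, and the induction stalls. Your parenthetical claim that the $m$-dimensional components of $\Supp(\shm)$ lie generically outside $Y$ is exactly what fails: those components have no interior in $X$, so they sit entirely inside $\SSupp(\shm)\subset Y$. (Relatedly, the ``shifts $[d_X-d_{X'}]$'' you mention do not arise here: $f$ in Theorem~\ref{thm:normal} is birational, so $d_{X'}=d_X$; the shift $[d_X-m]$ in the regular proof comes from a different construction.)

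What the paper means by ``similar to the regular case'' is that step~(ii) of Lemma~\ref{lem:reduxreg} should be run unchanged: take a projective $g\colon W\to X$ with $W$ smooth of dimension $m$ resolving $Z=\Supp(\shm)$, so that $\shp\eqdot H^{m-d_X}(\Dopb g\shm)$ is a holonomic $\D_W$-module with full support on $W$, and only then invoke Theorem~\ref{thm:normal}, now on $W$. Because $\Supp(\shp)=W$, the hypersurface produced by the theorem has dimension $m-1<m$, and from there your argument (ramification step to get $P_{W'}$, push forward by (e), take the cone, apply the induction hypothesis) goes through. In short: Mochizuki--Kedlaya must be applied on a resolution of the support, not on the ambient $X$.
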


\Proof[Sketch of the proof]
The proof is similar to the regular case (Lemma~\ref{lem:reduxreg}).

We shall only prove here
that $P_X(\shm)$ is true for any holonomic $\D_X$-module $\shm$
which has a quasi-normal form along a normal crossing divisor $D$.

Let $p\cl X'\to U$ be as in Definition~\ref{def:quasi-normal}.
Then $\Dopb p (\shm|_U)$ has a normal form along $\opb p (D\cap U)$.
Hence $P_{X'}\bl\Dopb p (\shm|_U)\br$  is true by hypothesis (f).
Hence
$P_U\bl\Doim p\Dopb p (\shm|_U)\br$ is true by hypothesis (e).
We have a chain of morphisms 
$$\shm\vert_U\to \Doim p\Dopb p (\shm|_U)\to\shm\vert_U,$$
whose composition is equal to $m\id_\shm$ where
$m$ is the number of the generic fiber of $p$.
Hence $\shm\vert_U$ is a direct summand of $\Doim p\Dopb p (\shm|_U)$.
Then, hypothesis (d) implies that $P_U(\shm\vert_U)$ is true.
\QED

\subsection{Enhanced de Rham functor on the real blow up}

By Lemma~\ref{lem:redux}, many statements on holonomic D-modules can be 
reduced to the normal form case.
In order to investigate this case, we shall introduce the enhanced de Rham functor
on the real blow up.

Let $D$ be a normal crossing divisor of a complex manifold $X$ and
let $\vpi\cl \twX\to X$ be the real blow up of $X$ along $D$
as in \S\;\ref{subsection:realblowup}.
Let $j\cl\twX\times \fR\to\twX^\tot\times \PR$ be the canonical morphism of bordered spaces.
Similarly to \eqref{eq:sheafDbtwX},
we set
\eqn
&&\Dbt_{\twX\times\fR}\seteq\opb{j}
\ihom(\C_{\twX^{>0}\times\R},\Dbt_{\twX^\tot\times\PR}).
\eneqn
Then as in  in Definition~\ref{def:DbT} 
one denotes by  $\DbT_\twX\in\Derb(\iC_{\twX\times\R_\infty})$  the complex, concentrated in degree $-1$ and $0$:
\eq\label{eq:DbTtwX}
&&\DbT_\twX\eqdot \Dbt_{\twX\times\R_\infty}\To[\partial_t-1]\Dbt_{\twX\times\R_\infty},
\eneq
and finally  as in Definition~\ref{def:OEn}, one sets
\eqn
&&\OEn_\twX=\rhom[\opb{\pi}\D_{X^c}](\opb{\pi}\OO[X^c],\DbT_{\twX})
,\\
&&\OvE_\twX= \opb{\tilde\pi}\Omega_X \tens[{\opb{\tilde\pi}\OO}]\OEn_\twX,
\eneqn
where $\tilde\pi\cl\twX\times\fR\to X$ is the canonical morphism. 
We regard them as objects of $\TDC(\JD_\twX)$ and $\TDC(\JD_\twX^\rop)$, 
respectively. 
Then
\eq
&&\OEn_{\twX}\simeq \Tepb \vpi\OEn_X(*D)
\quad\text{in $\TDC(\II[\vpi^{-1}\D_X])$,}\label{eqOEywX1}\\
&&\Toim{\vpi}\OEn_{\twX}\simeq 
\OEn_X(*D)\quad\text{in $\TDC(\JD_X)$,}\label{eqOEywX2}
\eneq
where 
\eqn
&&\OEn_X(*D)\seteq\OEn_X\Dtens\OO(*D)\simeq
\rihom(\pi^{-1}\C_{X\setminus D},\OEn_X).
\eneqn

Then, for $\shn\in\Derb(\D_{\twX}^\tA)$, we define the enhanced de Rham functor on $\twX$ by
\eqn
\drE_{\twX}(\shn)&=&\OvE_{\twX} \ltens[{\D_{\twX}^\tA}]\shn,\\
\solE_{\twX}(\shn)&=&\rhom[\D_{\twX}^\tA](\shn,\OEn_{\twX}).
\glossary{$\drE_{\twX}$}%
\glossary{$\solE_{\twX}$}%
\eneqn
Then \eqref{eqOEywX1} and \eqref{eqOEywX2}
imply that
\eq
&&\drE_{\twX}(\shm^\tA)\simeq \Tepb \vpi\drE_{X}(\shm(*D))\quad\text{in 
$\TDC(\iC_\twX)$,}\\
&&\Toim{\vpi}\drE_{\twX}(\shm^\tA)\simeq \drE_{X}(\shm(*D))
\quad\text{in $\TDC(\iC_X)$.}
\eneq
for any $\shm\in\Derb(\D_{X})$.

\subsection{De Rham functor: constructibility and duality}\label{subsec:dual}

\begin{theorem}\label{th:newconstruct2}
Let $\shm\in\Derb_\hol(\D_X)$. Then 
$\drE_X(\shm)$ and $\solE_X(\shm)$ belong to $\TDC_\Rc(\iC_X)$.
\end{theorem}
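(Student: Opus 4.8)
The statement asserts that for $\shm\in\Derb_\hol(\D_X)$, both $\drE_X(\shm)$ and $\solE_X(\shm)$ lie in $\TDC_\Rc(\iC_X)$. Since $\solE_X(\shm)\simeq\drE_X(\Ddual_X\shm)[-d_X]$ for $\shm\in\Derb_\coh(\D_X)$ and $\Ddual_X$ preserves $\Derb_\hol(\D_X)$, it suffices to treat $\drE_X$. The plan is to apply Lemma~\ref{lem:redux}: let $P_X(\shm)$ be the statement ``$\drE_X(\shm)\in\TDC_\Rc(\iC_X)$'', and verify conditions (a)--(f).

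\textbf{Conditions (a)--(e).} Condition (b) is immediate since $\drE_X(\shm[n])\simeq\drE_X(\shm)[n]$, and $\TDC_\Rc(\iC_X)$ is triangulated (Theorem~\ref{th:const}(i)). Condition (c) follows because $\drE_X=\OvE_X\ltens[\D_X](\scbul)$ is triangulated, so a distinguished triangle $\shm'\to\shm\to\shm''\to[+1]$ yields a distinguished triangle on de Rham complexes, and $\TDC_\Rc$ is closed under taking the third vertex. Condition (d) follows from $\drE_X(\shm\dsum\shm')\simeq\drE_X(\shm)\dsum\drE_X(\shm')$ and the fact that a direct summand of an $\R$-constructible enhanced indsheaf is $\R$-constructible (this uses that $\TDC_\Rc$ is triangulated and closed under direct summands, equivalently that it is a thick subcategory; I would cite or adapt Theorem~\ref{th:const}). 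Condition (a), locality over $X$, is exactly Theorem~\ref{th:const}(ii) together with the compatibility of $\drE_X$ with restriction to open subsets. Condition (e), stability under projective direct images of good holonomic modules, follows from Theorem~\ref{thm:Tfunct}(iii): $\drE_Y(\Doim f\shm)\simeq\Toim f\drE_X(\shm)$; then invoke Theorem~\ref{th:const}(vii)(b), noting that for a projective (hence proper) morphism $f$ the properness condition on $\Tsupp$ is automatic once $\Supp(\shm)$ is proper over $Y$ (which holds since $\shm$ is good and $f$ projective).

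\textbf{Condition (f): the main obstacle.} The heart of the argument is to show $P_X(\shm)$ when $\shm$ has a normal form along a normal crossing divisor $D$. First I would reduce to the real blow-up: by the displayed isomorphism in \S\,\ref{subsec:dual} (``Enhanced de Rham functor on the real blow up''), $\Toim{\vpi}\drE_{\twX}(\shm^\tA)\simeq\drE_X(\shm(*D))\simeq\drE_X(\shm)$ (using $\shm\simeq\shm(*D)$ from Definition~\ref{def:normal}(i)). Since $\vpi$ is proper, $\Toim{\vpi}$ preserves $\R$-constructibility by Theorem~\ref{th:const}(vii)(b), so it is enough to prove $\drE_{\twX}(\shm^\tA)\in\TDC_\Rc(\iC_\twX)$. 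This is a local question on $\twX$, and by Definition~\ref{def:normal}(iii) we may assume $\shm^\tA\simeq\bigoplus_i(\she^{\varphi_i}_{U\setminus D\mid U})^\tA$, so by condition (c)/(d)-type reasoning we reduce to a single exponential module $\she^\varphi_{U\setminus D\mid U}$. Now Proposition~\ref{pro:Solphi2} computes $\drE_X(\she^\varphi_{U\mid X})\simeq\C_X^\enh\ctens\rihom(\opb\pi\C_U,\C_{\{t=\Re\varphi\}})$ (and the analogous statement on $\twX$, where after pulling back $\varphi$ becomes well-behaved on the real blow up). The set $\{(x,t):t=\Re\varphi(x)\}$, pulled back to $\twX\times\R$, is subanalytic (this is where the real blow-up is essential: $\Re\varphi$ extends continuously and subanalytically to $\twX$), so $\rihom(\opb\pi\C_U,\C_{\{t=\Re\varphi\}})$ lies in $\BDC_\Rc(\C_{\twX\times\R_\infty})$, whence its $\C^\enh$-convolution is $\R$-constructible by Definition~\ref{def:TRc}. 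The delicate point I expect to be the main obstacle is verifying carefully that $\C_{\{t=\Re\varphi\}}$ on $\twX^\tot\times\PR$ is genuinely $\R$-constructible after the blow-up, i.e.\ that the meromorphic $\varphi$ gives rise to a subanalytic function on $\twX$ — this is a consequence of the structure of normal forms but requires a small local computation with the coordinates $z_i^{1/m}$ and the bound $\Re\varphi$ near $\opb\vpi(D)$. Once (a)--(f) are checked, Lemma~\ref{lem:redux} yields $P_X(\shm)$ for all holonomic $\shm$, completing the proof.
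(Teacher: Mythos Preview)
Your proof follows essentially the same approach as the paper: apply Lemma~\ref{lem:redux}, reduce to the normal-form case, pass to the real blow-up $\twX$, use that $\shm^\tA$ is locally a direct sum of exponential modules, compute $\drE_\twX$ of each via Proposition~\ref{pro:Solphi2} (through the identification $\drE_\twX\bl(\she^{\varphi}_{U\setminus D|U})^\tA\br\simeq\Tepb\vpi\drE_X(\she^{\varphi}_{U\setminus D|U})$), and push forward by $\Toim\vpi$. One small correction: in your condition~(f) you write that ``$\Re\varphi$ extends continuously and subanalytically to $\twX$'', but this is false---$\varphi$ has poles along $D$, so $\Re\varphi$ blows up on $\opb\vpi(D)$; what is true (and what you actually need) is that the \emph{closure} of the graph $\{t=\Re\varphi\}$ in $\twX\times\ol\R$ is subanalytic, which follows from a local computation writing $\varphi$ as a Laurent polynomial in the blow-up coordinates.
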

\Proof[Sketch of the proof]
Using Lemma~\ref{lem:redux}, 
one reduces the proof to the case where $\shm$ has a normal form along a normal crossing divisor $D$.
Let $\vpi\cl \twX\to X$ be the real blow up along $D$.

Then, $\shm^\tA$ is locally isomorphic to a direct sum of
$(\she_{U\setminus D|U}^{\varphi})^\tA$ with
$\varphi\in\sect(U;\sho_X(*D))$.
Since Proposition~\ref{pro:Solphi2} implies that
$\drE_{\twX}\bl(\she_{U\setminus D|U}^{\varphi})^\tA\br\simeq 
\Tepb \vpi\drE_{X}(\she_{U\setminus D|U}^{\varphi})$
is $\R$-constructible, $\drE_{\twX}(\shm^\tA)$ is $\R$-constructible.
Hence $\drE_{X}(\shm)\simeq \Toim{\vpi}\drE_{\twX}(\shm^\tA)$
is $\R$-constructible.
\QED

\begin{corollary}\label{cor:newconstruct}
For any $\shm\in\Derb_\hol(\D_X)$ and $F\in\Derb_\Rc(\C_X)$,
the object $\rhom[\shd_X]\bl\shm,\rhom[\iC_X](F,\Ot)\br$ belongs to $\Derb_\Rc(\C_X)$.
\end{corollary}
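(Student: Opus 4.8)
The plan is to reduce everything to Theorem~\ref{th:newconstruct2} and Theorem~\ref{th:newconstruct}, using the enhanced-to-tempered comparison~\eqref{eq:oEot}. First I would rewrite the object in question in terms of the enhanced solution functor. By~\eqref{eq:oEot} we have $\Ot[X]\simeq\fihom(\C_X^\enh,\OEn_X)$, so
\eqn
&&\rhom[\iC_X](F,\Ot[X])\simeq\rhom[\iC_X]\bl F,\fihom(\C_X^\enh,\OEn_X)\br.
\eneqn
The key observation is that $\rhom[\iC_X](F,\fihom(K_1,K_2))$ can be expressed as $\fihom(\opb{\pi}F\tens K_1,K_2)$, or equivalently, that the functor $F\mapsto \rhom[\iC_X](F,\fihom(\C_X^\enh,\OEn_X))$ on $\Derb_\Rc(\C_X)$ is computed by $\fihom(E,\OEn_X)$ where $E=\rihom(\pi^{-1}F,\C_X^\enh)\in\TDC_\Rc(\iC_X)$ (note $\C_X^\enh\ctens(\scbul)$ preserves $\R$-constructibility by Theorem~\ref{th:const}(iii), and $F\in\Derb_\Rc(\C_X)$ gives $\pi^{-1}F$ with $\R$-constructible image, so $E$ is indeed an $\R$-constructible enhanced indsheaf). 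Thus
\eqn
&&\rhom[\iC_X](F,\Ot[X])\simeq\fihom(E,\OEn_X)\simeq\fihom\bl E,\fihom(\C_X^\enh,\OEn_X)\br,
\eneqn
which is again $\fihom$ of an $\R$-constructible enhanced indsheaf into $\OEn_X$. (Here one must carefully match the adjunctions relating $\rihom$ on $\iC_X$, the bordered/enhanced $\cihom$, and $\fihom$; this is routine manipulation with Proposition~\ref{pro:bproj}, Lemma~\ref{lem:cihomrihompi} and Definition~\ref{def:fihom}, but it is where the bookkeeping lives.)

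Next I would bring in the D-module $\shm$. Applying $\rhom[\shd_X](\shm,\scbul)$ and using that $\drE_X$ and $\solE_X$ are defined via $\OEn_X$, we get by associativity of $\rhom$ that
\eqn
&&\rhom[\shd_X]\bl\shm,\rhom[\iC_X](F,\Ot[X])\br\simeq\rhom[\shd_X]\bl\shm,\fihom(E,\OEn_X)\br.
\eneqn
The right-hand side should be identified with $\fihom\bl E,\solE_X(\shm)\br$, using that $\fihom$ commutes with the $\D_X$-linear $\rhom$ in the second variable (this follows from the analogue of Theorem~\ref{th:betaaihom}, or more directly from writing $\solE_X(\shm)=\rhom[\shd_X](\shm,\OEn_X)$ and commuting the two $\rhom$'s — the subtle point being that $\fihom$ involves $\roim{\pi}$, $\Tl$, $\Tr$, so one should instead argue at the level of $\fihom(E,\rhom[\shd_X](\shm,\OEn_X))$). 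Granting this, we arrive at $\fihom\bl E,\solE_X(\shm)\br$.

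Now the endgame. By Theorem~\ref{th:newconstruct2}, $\solE_X(\shm)\in\TDC_\Rc(\iC_X)$ since $\shm\in\Derb_\hol(\D_X)$, and we have already noted $E\in\TDC_\Rc(\iC_X)$. Then Theorem~\ref{th:newconstruct}, applied to the pair $(E,\solE_X(\shm))$ of $\R$-constructible enhanced indsheaves, gives $\fhom(E,\solE_X(\shm))\in\Derb_\Rc(\C_X)$; and since $\fihom=\alpha_X\circ\fhom$ is not quite the statement we want — actually $\fhom$ is the $\C_X$-valued sheafy hom and $\fihom$ its indsheaf refinement — I would formulate the conclusion in terms of $\fhom$ and observe that $\rhom[\shd_X]\bl\shm,\rhom[\iC_X](F,\Ot[X])\br$, being a complex of sheaves on $X$ (it lands in $\Derb(\C_X)$ after applying $\alpha_X$, which is harmless here since all objects in sight are already ``honest'' once we hit $\fhom$), coincides with $\fhom(E,\solE_X(\shm))$. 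The main obstacle will be the second paragraph: cleanly establishing the identity $\rhom[\shd_X]\bl\shm,\fihom(E,\OEn_X)\br\simeq\fihom\bl E,\solE_X(\shm)\br$ requires commuting the $\D_X$-linear $\rhom$ past the $\roim{\pi}$, $\Tl$, $\Tr$ hidden in $\fihom$, and one must check that the $\D_X$-action is compatible throughout — this is the kind of compatibility that, while not deep, demands care, and is the natural place where a sign, a shift, or an unjustified interchange could slip in.
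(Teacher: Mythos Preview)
Your approach is essentially the paper's: express the object as $\fhom$ of two $\R$-constructible enhanced indsheaves and invoke Theorems~\ref{th:newconstruct2} and~\ref{th:newconstruct}, using the comparison~\eqref{eq:oEot}. Two corrections are worth making.

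First, your two descriptions of $E$ are not equivalent. The adjunction you want is
\[
\rihom\bl F,\fihom(K_1,K_2)\br\simeq\fihom(\opb{\pi}F\tens K_1,K_2),
\]
so the correct choice is $E=\opb{\pi}F\tens\C_X^\enh$ (your first formula), not $\rihom(\opb{\pi}F,\C_X^\enh)$. With this $E$ one checks directly that $E\in\TDCrc(\iC_X)$, since $E\simeq\C_X^\enh\ctens(\opb{\pi}F\tens\C_{\{t=0\}})$ and $\opb{\pi}F\tens\C_{\{t=0\}}\in\BDC_\Rc(\C_{X\times\R_\infty})$.

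Second, the paper sidesteps the commutation you flag as delicate by reversing the order of the two adjunctions. Rather than first passing to $E$ and then trying to push $\rhom[\shd_X](\shm,\scbul)$ through $\fihom(E,\scbul)$, the paper first observes
\[
\fihom(\C_X^\enh,\solE_X(\shm))\simeq\rhom[\shd_X](\shm,\fihom(\C_X^\enh,\OEn_X))\simeq\rhom[\shd_X](\shm,\Ot),
\]
which only requires commuting $\rhom[\shd_X](\shm,\scbul)$ past $\roim{\pi}$ and an $\rihom$ with $\iC$-linear first argument---both routine. Only then does it apply the $\opb{\pi}F\tens$ adjunction, obtaining
\[
\fhom(\opb{\pi}F\tens\C_X^\enh,\solE_X(\shm))\simeq\rhom[\iC_X]\bl F,\fihom(\C_X^\enh,\solE_X(\shm))\br\simeq\rhom[\shd_X]\bl\shm,\rhom[\iC_X](F,\Ot)\br.
\]
This ordering makes the bookkeeping you worried about essentially disappear.
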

\begin{proof}
First note that
$$\rhom[\shd_X]\bl\shm,\rhom[\iC_X](F,\Ot)\br
\simeq \rhom[\iC_X]\bl F, \rhom[\shd_X](\shm,\Ot)\br.$$
By Theorems~\ref{th:newconstruct} and~\ref{th:newconstruct2},  
  $\fhom\bl\opb{\pi}F\tens\C_X^E,\solE_X(\shm)\br$ belongs to $\Derb_\Rc(\C_X)$.
Since $\fihom(\C^\enh_X,\OEn_X)\simeq\Ot$ by~\eqref{eq:oEot}, we get
\eqn
\fihom(\C^\enh_X,\solE_X(\shm))&\simeq&\rhom[\shd_X](\shm,\Ot),
\eneqn
and
\begin{align*}
\fhom\bl\opb{\pi}F\tens\C_X^E,\solE_X(\shm)\br
&\simeq \rhom[\iC_X]\bl F,\fihom(\C_X^E,\solE_X(\shm)\br\\
&\simeq \rhom[\shd_X]\bl\shm,\rhom[\iC_X](F,\Ot)\br.
\end{align*}
\end{proof}

\Lemma\label{lem:DEprod}
Let $X_1$ and $X_2$ be a pair of complex manifolds.
Let $\shm_j\in \Derb_\hol(\D_{X_j})$ $(j=1,2)$.
Then we have a canonical isomorphism
\eq
&&\drE_{X_1}(\shm_1)\cetens\drE_{X_2}(\shm_2)\isoto\drE_{X_1\times X_2}(\shm_1\Detens\shm_2).
\eneq
\enlemma
\begin{proof}[Sketch of the proof.]
Using Lemma~\ref{lem:redux}, one reduces the proof to the case where 
$\shm_1$ and $\shm_2$ are exponential D-modules. 
In this case, the result follows from Proposition~\ref{pro:Solphi2}.
\end{proof}

By using the functorial properties of the enhanced de Rham functor 
proved above, we can show that the enhanced de Rham functor 
commutes with duality.

\begin{theorem}\label{th:holconst}
Let $\shm\in\Derb_\hol(\D_X)$. 
Then, we have the isomorphism
\eqn
&&\drE_X(\Ddual_X\shm)\simeq\Edual_X\drE_X(\shm).
\eneqn
\end{theorem}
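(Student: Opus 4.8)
The statement $\drE_X(\Ddual_X\shm)\simeq\Edual_X\drE_X(\shm)$ for $\shm\in\Derb_\hol(\D_X)$ should be proved by the reduction method of Lemma~\ref{lem:redux}, letting $P_X(\shm)$ be the assertion that the canonical morphism $\drE_X(\Ddual_X\shm)\to\Edual_X\drE_X(\shm)$ is an isomorphism in $\TDC(\iC_X)$. The first task is to construct this canonical morphism functorially. The natural way is via a pairing: using $\solE_X(\shm)\simeq\drE_X(\Ddual_X\shm)[-d_X]$ (valid for coherent $\shm$, in particular holonomic $\shm$) together with the multiplication $\OEn_X\ctens[\beta\OO]\OEn_X\to\OEn_X$ (coming from the ring structure, as in Corollary~\ref{cor:CTamOT}), one produces a morphism $\drE_X(\shm)\ctens\solE_X(\shm)\to\OvE_X\ltens[\D_X]\D_X\to\OvE_X[\,\cdots]$, hence after adjunction a morphism $\solE_X(\shm)\to\cihom(\drE_X(\shm),\OvE_X[\cdots])$. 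Composing with the comparison between $\OvE_X$ and $\omega^\enh_X$ (which for holonomic $\shm$ should give the dualizing object up to shift, paralleling $\drt_X(\OO)\simeq\C_X[d_X]$) yields the desired morphism $\drE_X(\Ddual_X\shm)\to\Edual_X\drE_X(\shm)$. I would set this up once and for all, checking functoriality, before verifying the hypotheses of Lemma~\ref{lem:redux}.

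\textbf{Checking (a)--(e).} Conditions (a) (local nature), (b) (shift), (c) (triangles), (d) (direct summands) are formal: both sides are triangulated functors compatible with restriction, shifts and distinguished triangles, and a morphism of exact triangles that is an isomorphism on two vertices is an isomorphism on the third; direct summands are handled since an isomorphism on $\shm\dsum\shm'$ restricts to one on each summand (both functors being additive). Condition (e) — stability under projective direct image — is where the real functorial input enters. One uses Theorem~\ref{thm:Tfunct}(iii), namely $\drE_Y(\Doim f\shm)\simeq\Toim f\drE_X(\shm)$ for good $\shm$ with $\Supp(\shm)$ proper over $Y$, together with the commutation of duality for D-modules under proper direct image, $\Ddual_Y(\Doim f\shm)\simeq\Doim f\Ddual_X\shm$ (Theorem~\ref{th:oimopbDdual}(i)), and the commutation of the enhanced duality functor $\Edual$ with $\Toim f$ (under the properness hypothesis). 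The last ingredient follows from Proposition~\ref{pro:Tproj} (the formula $\cihom(\Teeim f K,L)\simeq\Toim f\cihom(K,\Tepb f L)$) applied to $L=\omega^\enh_Y$, combined with $\Tepb f\omega^\enh_Y\simeq\omega^\enh_X[\cdots]$ (from Theorem~\ref{thm:Tfunct}(i) applied to dualizing objects), noting $\reeim f\simeq\roim f$ for $f$ proper on the relevant support. Chaining these isomorphisms with $P_X(\shm)$ gives $P_Y(\Doim f\shm)$.

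\textbf{Checking (f): the normal form case.} Here $\shm$ has a normal form along a normal crossing divisor $D$, so $\shm^\tA$ is locally a direct sum of $(\she^{\varphi_i}_{U\setminus D|U})^\tA$ over $\twX=\opb\varpi(D)$ via the real blow up. Using Lemma~\ref{lem:DEprod} one reduces (after possibly passing to products and using (d)) to exponential D-modules $\she^\varphi_{U|X}$ themselves. For these, Proposition~\ref{pro:Solphi2} computes $\drE_X(\she^\varphi_{U|X})$ explicitly as $\C_X^\enh\ctens\rihom(\opb\pi\C_U,\C_{\{t=\Re\varphi\}})$, and a parallel computation gives $\solE_X(\she^\varphi_{U|X})\simeq\drE_X(\she^{-\varphi}_{U|X})[-d_X]$ (note $\Ddual_X\she^\varphi_{U|X}$ is essentially $\she^{-\varphi}_{U|X}$ up to twist by $\OO(*Y)$ and shift, because $\she^\varphi_{U|X}\simeq\OO(*Y)$ as $\OO$-module with connection $d+d\varphi$). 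Then one checks by hand, using formula~\eqref{eq:dualtwo} $\Edual_M(\cor_M^\enh\ctens F)\simeq\cor_M^\enh\ctens\opb a\RD_{M\times\R}F$ and the fact that $\opb a\RD_{M\times\R}\C_{\{t=\Re\varphi\}}$ matches $\rihom$ against $\C_{\{t=\Re\varphi\}}$ on the other side, that the canonical morphism is an isomorphism; this is a concrete $\R$-constructible sheaf computation on $\twX\times\R_\infty$, made legitimate by Theorem~\ref{th:newconstruct2} and Theorem~\ref{th:const}(iv). I expect \emph{this step (f), and in particular the careful identification of the canonical morphism with the explicit one built from Proposition~\ref{pro:Solphi2}}, to be the main obstacle — not because it is deep, but because it requires matching a categorically-defined adjunction morphism with a hands-on sheaf computation on the real blow up, and tracking all the shifts ($d_X$, $d_Y$) and twists by $\OO(*D)$ correctly; everything else is a formal diagram chase powered by the results already established in Sections~\ref{section:enhanced} and~\ref{section:hol}.
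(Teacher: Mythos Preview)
Your approach via Lemma~\ref{lem:redux} is sound and will work, but it differs markedly from the paper's route. The paper does \emph{not} apply Lemma~\ref{lem:redux} directly to the duality statement. Instead it uses a categorical criterion for dualizability: in any monoidal setting, $F$ and $G$ are dual precisely when there exist an evaluation $\eps\colon F\tens G\to\one$ and a coevaluation $\eta\colon\one\to G\tens F$ satisfying the zigzag (triangle) identities. The paper writes down the D-module version of this criterion --- $\shm\Detens\Ddual_X\shm\to\shb_{\Delta_X}[d_X]$ and $\shb_{\Delta_X}[-d_X]\to\Ddual_X\shm\Detens\shm$ on $X\times X$ --- and then simply applies the functor $\drE$. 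Because $\drE_{X\times X}(\shb_{\Delta_X}[\pm d_X])\simeq\omega^\enh_{\Delta_X}$ resp.\ $\C^\enh_{\Delta_X}$, and because $\drE$ is compatible with external tensor products by Lemma~\ref{lem:DEprod}, the zigzag identities transport verbatim to the enhanced-indsheaf side, forcing $\drE_X(\Ddual_X\shm)\simeq\Edual_X\drE_X(\shm)$.

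What each approach buys: the paper's argument is short and conceptual, and entirely avoids constructing a comparison morphism by hand (your vaguest step --- ``composing with the comparison between $\OvE_X$ and $\omega^\enh_X$'' --- is precisely what the categorical trick sidesteps). The reduction to normal forms is still present, but it is hidden inside Lemma~\ref{lem:DEprod}, where it is much easier to check. Your approach, by contrast, is more hands-on: you must build the morphism explicitly, verify that $\Edual$ commutes with proper pushforward (your step (e), which is correct but needs care with the identification $\Tepb f\omega^\enh_Y\simeq\omega^\enh_X$), and then match the abstract morphism with the concrete one for exponentials in step (f). All of this is doable, but it is more bookkeeping for the same result.
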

Note that
 $\drE_X(\Ddual_X\shm)\simeq\solE_X(\shm)\,[d_X]$.

\medskip
\noi
{\it Idea of the proof.}

\smallskip
Let $\sht$ be a monoidal category  with $\one$ as a unit object. 
Recall that a pair of objects $X$ and $Y$ are dual if and only if 
there exist morphisms
\eqn
X\tens Y&\To[\;\eps\;]&\one,\\
\one&\To[\;\eta\;]&Y\tens X
\eneqn
such that 
the composition
$$X\To[X\tens\;\eta] X\tens Y\tens X\To[\eps\;\tens\, X] X$$
is equal to $\id_X$ and
$$Y\To[\eta\;\tens\, Y] Y\tens X\tens Y\To[Y\tens\;\eps]Y$$
is equal to $\id_Y$.

This criterion of duality  has many variations.

\medskip
\begin{description}
 \item[{Sheaf case:}] Let $M$ be a real analytic manifold, and
let $F$, $G\in \Derb_\Rc(\cor_M)$.  Denote by $\Delta_M$ 
\glossary{$\Delta_M$}%
the diagonal subset of $M\times M$.

Now $F$ and $G$ are dual to each other, i.e., $G\simeq\RD_MF$,
if and only if there exist morphisms
\eqn
F\etens G&\To[\ \eps\ ]&\omega_{\Delta_M},\\
\cor_{\Delta_M}&\To[\ \eta\ ]&G\etens F
\eneqn
such that 
the composition
$$F\etens \cor_{\Delta_M}\To[F\etens\;\eta] F\etens G\etens F\To[\eps\;\etens\, F] \omega_{\Delta_M}\etens F$$
is equal to $\id_F$ via isomorphism \eqref{eq:FG1} below  and
$$\cor_{\Delta_M}\etens G\To[\eta\;\etens\, G] G\etens F\etens G\To[G\etens\;\eps]G\etens  \omega_{\Delta_M}$$
is equal to $\id_G$ via isomorphism \eqref{eq:FG2} below.

\item[{Enhanced indsheaf case:}]Let $F$ and $G\in \TDC_\Rc(\icor_M)$.
They are dual to each other, i.e., $G\simeq\Edual_M F$,
if and only if there exist morphisms
\eq&&
\ba{rcl}F\cetens G&\To[\ \eps\ ]&\omega^\enh_{\Delta_M},\\[1ex]
\cor^\enh_{\Delta_M}&\To[\ \eta\ ]&G\cetens F
\ea
\label{mor:Edual}
\eneq
such that 
the composition
\eq
&&F\cetens \cor^\enh_{\Delta_M}\To[F\cetens\;\eta] F\cetens G\cetens F
\To[\eps\;\cetens\, F] \omega^\enh_{\Delta_M}\cetens F
\label{comp:Edual1}\eneq
is equal to $\id_F$ via the enhanced version of isomorphism \eqref{eq:FG1}
 below  and
\eq
&&\cor_{^\enh\Delta_M}\cetens G\To[\eta\;\cetens\, G] 
G\cetens F\cetens G\To[G\cetens\;\eps]G\cetens  \omega^\enh_{\Delta_M}
\label{comp:Edual2}\eneq
is equal to $\id_G$ via the enhanced version of isomorphism \eqref{eq:FG2}
below. 

\item[{Holonomic  D-module case:}]
Let $X$ be a complex manifold and
let $\delta\cl X\into X\times X$ be the diagonal embedding.
\glossary{$\shb_{\Delta_X}$}%
We set $\shb_{\Delta_X}\seteq\Doim \delta\OO$.

Let $\shm$, $\shn\in \Derb_\hol(\D_X)$.
They are dual to each other, i.e., $\shn\simeq\Ddual_X \shm$,
if and only if there exist morphisms
\eq
&&\ba{rcl}
\shm\Detens \shn&\To[\ \eps\ ]&\shb_{\Delta_X}[d_X],\\[1ex]
\shb_{\Delta_X}[-d_X]&\To[\ \eta\ ]&\shn\Detens \shm
\ea\label{mor:Ddual}
\eneq
such that the composition
\eq &&\hs{3ex}\shm\Detens \shb_{\Delta_X}[-d_X]\To[\shm\Detens\;\eta] 
\shm\Detens \shn\Detens \shm
\To[\eps\;\Detens\, \shm] \shb_{\Delta_X}[d_X]\Detens \shm
\label{comp:Ddula1}
\eneq
is equal to $\id_\shm$ via isomorphism \eqref{eq:DFG1} below and
\eq&&\hs{3ex}\shb_{\Delta_X}[-d_X]\Detens \shn\To[\eta\;\Detens\, \shn] 
\shn\Detens \shm\Detens \shm\To[\shn\Detens\;\eps]
\shn\Detens\shb_{\Delta_X}[d_X]
\label{comp:Ddula2}  \eneq
is equal to $\id_\shn$ via isomorphism \eqref{eq:DFG2}  below. 
\end{description}

Now we shall prove Theorem~\ref{th:holconst}.
Set  $\shn=\Ddual_X \shm$.
Then we have morphisms as in \eqref{mor:Ddual} which satisfy
the  conditions
that the compositions \eqref{comp:Ddula1} and \eqref{comp:Ddula2}
are equal to  $\id_\shm$ and $\id_\shn$, respectively.
Now we shall apply the functor
$\drE$. 
Then we obtain morphisms as in \eqref{mor:Edual}
with $M=X_\R$, $\cor=\C$, $F=\drE_X(\shm)$ and $G=\drE_X(\shn)$.
Note that we have
$$\drE_{X\times X}(\shb_{\Delta_X}[-d_X])\simeq\C^\enh_{\Delta_X},\quad
\drE_{X\times X}(\shb_{\Delta_X}[d_X])\simeq\omega^\enh_{\Delta_X}.
$$
By applying the functor $\drE_{X\times X\times X}$, the morphisms  in
\eqref{comp:Ddula1} and \eqref{comp:Ddula2}
are sent to \eqref{comp:Edual1} and \eqref{comp:Edual2}.
Hence the compositions \eqref{comp:Edual1} and \eqref{comp:Edual2}
are equal to  $\id_F$ and $\id_G$, respectively.
Thus we conclude that $G\simeq\Edual_X F$.
\hfill\qedsymbol

\bigskip
Here is the lemma that we used in the course of the proof of 
Theorem~\ref{th:holconst}.
\Lemma  Let $M$ be a real manifold and  let $F,G \in\Derb(\cor_M)$.
Then we have the isomorphisms
\eq
&&\Hom[\Derb(\cor_{M\times M\times M})]
(F\etens \cor_{\Delta_M},\;\omega_{\Delta_M}\etens G)
\simeq \Hom[\Derb(\cor_M)](F,G),\label{eq:FG1}\\
&&\Hom[\Derb(\cor_{M\times M\times M})]
 (\cor_{\Delta_M}\etens F,\; G\etens\omega_{\Delta_M})
\simeq \Hom[\Derb(\cor_M)](F,G),\label{eq:FG2}
\eneq
where $\Delta_M\subset M\times M$ is the diagonal subset.
\enlemma
\Proof
Define the maps $p_{i_1,\ldots, i_n}$ by
$p_{i_1,\ldots, i_n}(x_1,\ldots,x_m)=(x_{i_1},\ldots,x_{i_n})$.
Then we have a commutative diagram
$$\xymatrix@C=8ex{
M\ar[d]^-{\delta}\ar[r]^-{\delta}\ar@{}[dr]|(.5){\square}&M\times M\ar[d]^-{p_{1,2,2}}\\
M\times M\ar[r]_-{p_{1,1,2}}\ar[d]^-{p_2}&M\times M\times M\\
M.}
$$
In the sequel, we write  for short $\Hom$ instead of $\Hom[\cor_N]$  with
$N=M, M\times M\times M$.
Then we have
\begin{align*}
\Hom(F\etens \cor_{\Delta_M},\;\omega_{\Delta_M}\etens G)
&\simeq \Hom(\reim{p_{1,2,2}}(F\etens \cor_M),\roim{p_{1,1,2}}\epb{p_2}G)\\
&\simeq \Hom(F\etens \cor_M,\epb{p_{1,2,2}}\roim{p_{1,1,2}}\epb{p_2}G)\\
&\simeq \Hom(F\etens \cor_M,\roim{\delta}\epb{\delta}\epb{p_2}G)\\
&\simeq \Hom(\opb{\delta}(F\etens \cor_M),\epb{\delta}\epb{p_2}G)\\
&\simeq \Hom(F,G).
\end{align*}
\QED
Similarly, we have the following D-module version.
Here again, we write  for short $\Hom$ instead of $\Hom[\D_Y]$ with
$Y=X$, $X\times X\times X$. 

\Lemma Let $X$ be a complex manifold and 
let $\shm,\shn \in\Derb_\hol(\D_X)$.
Then we have the isomorphisms
\eq
&&\hs{0ex}\Hom
(\shm\Detens \shb_{\Delta_X}[-d_X],\;\shb_{\Delta_X}[d_X]\Detens \shn)
\simeq \Hom(\shm,\shn),\label{eq:DFG1}\\
&&\hs{0ex}\Hom
 (\shb_{\Delta_M}[-d_X]\Detens \shm,\; \shn\Detens\shb_{\Delta_X}[d_X])
\simeq \Hom(\shm,\shn).\label{eq:DFG2}
\eneq
\enlemma

As  applications of Theorem~\ref {th:holconst}, we obtain 
the following corollaries.

\Prop
Let $f\cl X\to Y$ be a morphism of complex manifolds.
Then, for any $\shn\in\Derb_\hol(\D_Y)$, 
\eq
\solE_X(\Dopb{f}\shn)\simeq\Topb{f}\solE_Y(\shn).\label{eq:inversesol}
\eneq
\enprop
\Proof
We have 
\eqn
\solE_X(\Dopb{f}\shn)&\simeq&
\Edual_X\drE_X(\Dopb{f}\shn)[-d_X]\\
&\simeq&\Edual_X\Tepb{f}\drE_Y(\shn)[-d_Y]\\
&\simeq&\Topb{f}\Edual_X\drE_Y(\shn)[-d_Y]\\
&\simeq&\Topb{f}\solE_Y(\shn).
\eneqn
\QED

\Cor Let $X$ be a complex manifold and $\shm,\shn\in \Derb_\hol(\D_X)$.
Then we have the isomorphisms
\eq
&&\drE_X(\shm\Dtens\shn)\simeq
\Tepb{\delta}\bl\drE_X(\shm)\cetens\drE_X(\shn)\br[d_X],\\
&&\solE_X(\shm\Dtens\shn)\simeq\solE_X(\shm)\ctens\solE_X(\shn),
\eneq
where $\delta\cl X\to X\times X$ is the diagonal embedding.
\encor
\Proof
Since $\shm\Dtens\shn\simeq\Dopb{\delta}(\shm\Detens\shn)$,
it is enough to apply \eqref{eq:invdr} and \eqref{eq:inversesol}.
\QED

\Cor \label{cor: *D}For a closed hypersurface $Y$ of a complex manifold $X$ and
$\shm\in\Derb_\hol(\OO)$, we have
$$\solE_X\bl\shm(*Y)\br\simeq \pi^{-1}\C_{X\setminus Y}\tens \solE_X(\shm).$$
\encor
\Proof
It follows from Theorem~\ref{th:holconst} and isomorphisms
$$\drE_X(\shm(*Y))\simeq \rihom\bl\pi^{-1}\C_{X\setminus Y},\drE_X(\shm)\br$$
and
$$\Edual_X\bl\rihom(\pi^{-1}\C_{X\setminus Y},\drE_X(\shm)\br
\simeq\pi^{-1}\C_{X\setminus Y}\tens \Edual_X\drE_X(\shm)$$
(see Theorem~\ref{th:const} \eqref{item:Dihom}).
\QED
\Cor\label{cor:solE} For a closed hypersurface $Y$ of a complex manifold $X$ and
$\vphi\in\OO(*Y)$, we have
$$\solE_X(\she_{X\setminus Y|X}^{\varphi})\simeq 
\C_X^\enh\ctens \C_{\setp{t=-\Re\vphi}}.$$
\encor
This follows from Proposition~\ref{pro:Solphi2}, 
\eqref{eq:dualtwo} and Theorem ~\ref{th:holconst},
because one has  $\she_{X\setminus Y|X}^{\varphi}\simeq
\Bigl(\Ddual_X\she_{X\setminus Y|X}^{-\varphi}\Bigr)(*Y)$.

\subsection{Enhanced Riemann-Hilbert correspondence}\label{subsection:irregholRH}
The following theorem is the main theorem.
\index{Riemann-Hilbert correspondence!generalized}%
\begin{theorem}[{Generalized Riemann-Hilbert correspondence}]\label{th:irrRH1}
There exists a canonical isomorphism functorial with respect to $\shm\in\Derb_\hol(\D_X)${\rm:}
\eq\label{eq:bjorkmorf}
&&\shm\Dtens\OEn_X\isoto\cihom(\solE_X(\shm),\OEn_X)\mbox{ in }\TDC(\iD_X).
\eneq
\end{theorem}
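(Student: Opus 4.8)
The plan is to deduce the isomorphism from Lemma~\ref{lem:redux}, in exact parallel with the derivation of Theorem~\ref{thm:ifunct4} from Lemma~\ref{lem:reduxreg} in the regular case. First I would construct the morphism in~\eqref{eq:bjorkmorf} by adjunction, exactly as in~\eqref{eq:RHmor}: starting from the pairing
\[
(\shm\Dtens\OEn_X)\ctens\solE_X(\shm)\to\OEn_X\ctens[\beta\OO]\OEn_X\to\OEn_X,
\]
built from the evaluation morphism and the product of the $\beta\OO$-algebra $\OEn_X$, and using that $\cihom$ is the inner hom of the tensor category $(\TDC(\iC_X),\ctens)$, one obtains a canonical, functorial morphism $\shm\Dtens\OEn_X\to\cihom(\solE_X(\shm),\OEn_X)$. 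Denote by $P_X(\shm)$ the assertion that this morphism is an isomorphism. Conditions~(a)--(d) of Lemma~\ref{lem:redux} (locality, shift, distinguished triangles, direct summands) hold at once, all the functors involved being triangulated, local on $X$, and additive.

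For condition~(e), let $f\colon X\to Y$ be projective and $\shm$ a good holonomic $\D_X$-module with $P_X(\shm)$ true; I would reproduce the computation performed for~\eqref{eq:RHiso1}(e). By Theorem~\ref{thm:Tfunct} one has $\drE_Y(\Doim f\shm)\simeq\Toim f\,\drE_X(\shm)$ and $\Doim f(\OEn_X\Dtens\shm)\simeq\OEn_Y\Dtens\Doim f\shm$; combining the first with Theorem~\ref{th:holconst} and the relation $\solE_Z(\shn)\simeq\drE_Z(\Ddual_Z\shn)[-d_Z]$ gives $\solE_Y(\Doim f\shm)\simeq\Teeim f\,\solE_X(\shm)[d_X-d_Y]$. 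Then the enhanced projection formulas of Proposition~\ref{pro:Tproj}, the adjunction $(\Teeim f,\Tepb f)$ of Proposition~\ref{pro:Tadj}, the inverse-image formula $\Tepb f\OEn_Y[d_Y]\simeq\D_{Y\from X}\ltens[\D_X]\OEn_X[d_X]$ of Theorem~\ref{thm:Tfunct}(i), and the enhanced analogue of Theorem~\ref{th:betaaihom} (applicable because $\solE_X(\shm)$ is $\R$-constructible when $\shm$ is holonomic) yield $\cihom(\solE_Y(\Doim f\shm),\OEn_Y)\simeq\Doim f\,\cihom(\solE_X(\shm),\OEn_X)$; feeding in $P_X(\shm)$ together with the two displayed isomorphisms gives $P_Y(\Doim f\shm)$.

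Condition~(f) is the core. Assume $\shm$ has a normal form along a normal crossing divisor $D$, and let $\vpi\colon\twX\to X$ be the real blow up along $D$. Since $\shm\simeq\shm(*D)$, the object $\solE_X(\shm)$ is concentrated away from $D$ (Corollary~\ref{cor: *D}), hence $\cihom(\solE_X(\shm),\OEn_X)\simeq\cihom(\solE_X(\shm),\OEn_X(*D))$; using $\Toim\vpi\OEn_\twX\simeq\OEn_X(*D)$ and $\OEn_\twX\simeq\Tepb\vpi\OEn_X(*D)$ from~\eqref{eqOEywX2} and~\eqref{eqOEywX1}, the enhanced projection formula (Proposition~\ref{pro:Tproj}), and the enhanced analogues of Lemmas~\ref{le:eimebpvarpi} and~\ref{lem:twXX}, the statement $P_X(\shm)$ reduces to the isomorphism on $\twX$
\[
\OEn_\twX\ltens[\DA_\twX]\shm^\tA\to\cihom(\solE_\twX(\shm^\tA),\OEn_\twX)
\]
(the enhanced analogue of~\eqref{eq:twXB}). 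This is local on $\twX$, and by Definition~\ref{def:normal} $\shm^\tA$ is locally a finite direct sum of modules $(\she^{\varphi}_{U\setminus D|U})^\tA$; since both sides are additive I may assume $\shm^\tA=(\she^{\varphi}_{U\setminus D|U})^\tA$. For this case I would compute both sides explicitly: $\solE_\twX((\she^{\varphi}_{U\setminus D|U})^\tA)\simeq\Tepb\vpi\solE_X(\she^{\varphi}_{U\setminus D|U})\simeq\Tepb\vpi(\C_X^\enh\ctens\C_{\{t=-\Re\varphi\}})$ by Corollary~\ref{cor:solE}, while $\shm^\tA\Dtens\OEn_\twX$ is a translate of $\OEn_\twX$ by $\Re\varphi$ in the $t$-direction, since $\she^{\varphi}_{U|X}$ is $\OO(*D)$ equipped with the connection $u\mapsto du+u\,d\varphi$ (cf.\ Proposition~\ref{pro:Solphi2}); the required isomorphism then follows from the stability of $\OEn_\twX$ (Proposition~\ref{pro:OEstable}, Corollary~\ref{cor:CTamOT}) and an explicit convolution computation of the type recorded in~\eqref{eq:HjEt}.

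The step I expect to be the main obstacle is condition~(f): making the reduction to $\twX$ rigorous needs the enhanced versions of Lemmas~\ref{le:eimebpvarpi} and~\ref{lem:twXX}, a careful commutation of $\cihom$ and of $\Dtens$ with $\Toim\vpi$, and the exponential computation, elementary in spirit, requires bookkeeping of the interplay between $\ctens$, $\cihom$, stable objects and the $\Re\varphi$-twist. By contrast, the fact that $\shm^\tA$ is only \emph{locally} a sum of exponential modules (Stokes phenomena) causes no extra difficulty: the morphism in~\eqref{eq:bjorkmorf} is canonical and $\ctens$, $\cihom$, $\ltens$ are local operations, so an isomorphism checked over a covering glues automatically.
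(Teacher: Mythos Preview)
Your proposal is correct and follows essentially the same route as the paper: construct the morphism by adjunction from the pairing $(\shm\Dtens\OEn_X)\ctens\solE_X(\shm)\to\OEn_X$, then verify it is an isomorphism via Lemma~\ref{lem:redux}, reducing condition~(f) to the real blow up and then to a single exponential module. The only tactical difference is in the endgame: you propose computing both sides of~\eqref{eq:twXsol} directly on $\twX$, whereas the paper observes that~\eqref{eq:twXsol} is $\Tepb\vpi$ applied to~\eqref{eq:bjorkmorf} and so returns to $X$, where Corollary~\ref{cor:solE} gives $\solE_X(\she^\vphi_{X\setminus D|X})\simeq\C_X^\enh\ctens\C_{\{t=-\Re\vphi\}}$ and the check is immediate; the paper also leaves condition~(e) implicit, while you spell it out (the commutation you need is~\cite[Lem.~3.12]{KS14}, invoked later in the proof of Theorem~\ref{th:opforenhD}).
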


The proof is parallel with the one of Theorem~\ref{thm:ifunct4}
by reducing  the problem to the case where
$\shm$ is an exponential $\mathrm D$-module.
However, in this case, we 
can treat $\drE_{X}(\shm)$ by Proposition~\ref{pro:Solphi2},
but not $\solE_X(\shm)$.
In order to calculate it, we need the commutativity of the enhanced de Rham functor and the duality functor (see Theorem~\ref{th:holconst} 
and its consequence Corollary~\ref{cor: *D}).
\begin{proof}[Sketch of the proof of Theorem \ref{th:irrRH1}]
First we shall construct a morphism \eqref{eq:bjorkmorf}.
We have a canonical morphism 
$$\OEn_X\ctens[\beta\OO]\OEn_X\To\OEn_X.$$
Hence we have 
\eqn
&&(\shm\Dtens\OEn_X)\ctens\solE_X(\shm)
\To\OEn_X\ctens[\beta\OO]\OEn_X\To\OEn_X
\eneqn
which induces a morphism
\eq
\shm\Dtens\OEn_X\To\cihom(\solE_X(\shm),\OEn_X).
\label{eq:eRH}
\eneq

In order to see that it is an isomorphism, 
we shall apply Lemma~\ref{lem:redux}, 
where $P_X(\shm)$ is the statement that \eqref{eq:eRH}
is an isomorphism. 

We shall only check property (f) of this lemma.
 Hence, we assume that 
$\shm$ has a normal form along a normal crossing divisor $D$.
Then we have
  $\solE_X(\shm)\simeq\opb{\pi}\C_{X\setminus D}\tens \solE_X(\shm)$
by Corollary~\ref{cor: *D}, which implies that
$$\cihom(\solE_X(\shm),\OEn_X)\simeq\cihom\bl\solE_X(\shm),\OEn_X(*D)\br.$$
Let $\vpi\cl\twX\to X$ be the real blow up of $X$ along $D$.
Then we have
$$\shm\Dtens\OEn_X\simeq\Toim{\vpi}(\shm^\tA\ltens[\At]\OEn_\twX)$$
and 
$$\cihom\bl\solE_X(\shm),\OEn_X(*D)\br\simeq
\Toim{\vpi}\cihom(\solE_\twX(\shm^\tA),\OEn_\twX).$$
Hence it is enough to show that
\eq
\shm^\tA\ltens[\At]\OEn_\twX\To\cihom(\solE_\twX(\shm^\tA),\OEn_\twX)
\label{eq:twXsol}
\eneq
is an isomorphism.

Since the question is local and $\shm^\tA$ is locally isomorphic to
a direct sum of exponential D-modules $\bl\she_{X\setminus D|X}^{\varphi}\br^\tA$
with $\vphi\in\OO(*D)$,
we may assume that
$\shm^\tA=\bl\she_{X\setminus D|X}^{\varphi}\br^\tA$.
Since \eqref{eq:twXsol} is the image of \eqref{eq:eRH}
by the functor $\Tepb \vpi$,
it is enough to show that \eqref{eq:eRH} is an isomorphism
when $\shm=\she_{X\setminus D|X}^{\varphi}$.

In this case, Corollary~\ref{cor:solE} implies that
$$\solE_X(\shm)\simeq 
\C_X^\enh\ctens\C_{\{t=-\Re\varphi\}},$$
and we can easily see that \eqref{eq:eRH} is an isomorphism.
\end{proof}

\begin{corollary}\label{cor:irregRH1}
There exists a canonical isomorphism functorial with respect to $\shm\in\Derb_\hol(\D_X)${\rm:}
\eq\label{eq:bjorkmorf1}
&&\shm\Dtens\Ot\isoto\fihom(\solE_X(\shm),\OEn_X)\mbox{ in }\Derb(\iD_X).
\eneq
\end{corollary}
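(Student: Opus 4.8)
The plan is to deduce~\eqref{eq:bjorkmorf1} from the enhanced Riemann--Hilbert correspondence of Theorem~\ref{th:irrRH1} by applying the de-enhancing functor $\fihom(\C_X^\enh,\scbul)$ to the isomorphism~\eqref{eq:bjorkmorf}. Since~\eqref{eq:bjorkmorf} is functorial in $\shm\in\Derb_\hol(\D_X)$, everything reduces to identifying, functorially in $\shm$, the two sides of
\eqn
&&\fihom\bl\C_X^\enh,\shm\Dtens\OEn_X\br\isoto\fihom\bl\C_X^\enh,\cihom(\solE_X(\shm),\OEn_X)\br
\eneqn
with $\shm\Dtens\Ot$ on the left and with $\fihom(\solE_X(\shm),\OEn_X)$ on the right; the functoriality in $\shm$ of~\eqref{eq:bjorkmorf1} is then inherited from that of~\eqref{eq:bjorkmorf} together with the naturality of these identifications.

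For the right-hand side I would use the inner-hom adjunction $\fihom(K_1\ctens K_2,K_3)\simeq\fihom(K_1,\cihom(K_2,K_3))$ --- the same identity that appears in the chain of isomorphisms establishing~\eqref{eq:oEot}, and which follows from $\roim\pi\rihom(K_1\ctens K_2,K_3)\simeq\roim\pi\rihom(K_1,\cihom(K_2,K_3))$ and the compatibilities of $\Tl,\Tr$ with $\ctens,\cihom$. Applying it twice, together with the commutativity of $\ctens$ and the isomorphism $\cihom(\C_X^\enh,\OEn_X)\simeq\OEn_X$ of Corollary~\ref{cor:CTamOT}, gives
\eqn
\fihom\bl\C_X^\enh,\cihom(\solE_X(\shm),\OEn_X)\br
&\simeq&\fihom\bl\C_X^\enh\ctens\solE_X(\shm),\OEn_X\br\\
&\simeq&\fihom\bl\solE_X(\shm),\cihom(\C_X^\enh,\OEn_X)\br\\
&\simeq&\fihom\bl\solE_X(\shm),\OEn_X\br.
\eneqn
(Note that the $\R$-constructibility of $\solE_X(\shm)$ is not needed here.)

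For the left-hand side, observe first that $\shm\Dtens\OEn_X$ is a stable object of $\TDC(\JD_X)$: indeed $\OEn_X$ is stable by Proposition~\ref{pro:OEstable}, and the operation $\shm\Dtens(\scbul)$ preserves stability because it is built from $\opb\pi$-type sheaves and the $\D_X$-tensor product, both of which commute with $\C_X^\enh\ctens(\scbul)$ (cf. Lemma~\ref{lem:cihomrihompi}, Lemma~\ref{lem:kTamtens} and Corollary~\ref{cor:CTamOT}). Hence, exactly as in the chain preceding~\eqref{eq:oEot}, $\fihom(\C_X^\enh,\scbul)$ may be replaced by $\fihom(\C_{\{t=0\}},\scbul)$ on $\shm\Dtens\OEn_X$, so that one is left to prove
\eqn
&&\fihom\bl\C_{\{t=0\}},\shm\Dtens\OEn_X\br\simeq\shm\Dtens\fihom\bl\C_{\{t=0\}},\OEn_X\br\simeq\shm\Dtens\Ot,
\eneqn
the last isomorphism being~\eqref{eq:oEot} (equivalently, \eqref{eq:dbTdbt}). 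The required commutation of $\fihom(\C_{\{t=0\}},\scbul)=\roim\pi\rihom(\Tl\C_{\{t=0\}},\Tr(\scbul))$ with $\shm\Dtens(\scbul)$ is entirely analogous to formula~\eqref{eq:DRo} (which says precisely that $\fihom(\C_X^\enh,\scbul)$ carries $\drE_X$ to $\drt_X$): the functor $\Tr(\scbul)$ commutes with $\shm\Dtens(\scbul)$ since the latter involves only the $\OO$-structure while $\Tr$ has constant coefficients along $\fR$, and $\rihom(\Tl\C_{\{t=0\}},\scbul)$ commutes with $\scbul\ltens[\D_X]\shm$ by Theorem~\ref{th:betaaihom}.

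The step I expect to be the main obstacle is precisely this last commutation through $\roim\pi$: the functor $\fihom(\C_{\{t=0\}},\scbul)$ is not a purely formal operation, since $\Tr(\scbul)$ produces an ind-object (a filtrant colimit of sheaves of the form $\C_{\{t<a\}}$) over which $\roim\pi$ must be pushed, and $\roim\pi$ does not commute with filtrant inductive limits in general. The way to handle this is to argue at the level of cohomology objects exactly as in the Proposition preceding Corollary~\ref{cor:fhom} (see~\eqref{eq:twoiso}): move the $\D_X$-tensor inside each finite stage by Theorem~\ref{th:betaaihom}, reduce to quasi-injective representatives in order to interchange $\roim\pi$ with the colimit, and then pass to the limit. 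Once this $\solE$-version of~\eqref{eq:DRo} is in place, combining it with the right-hand side computation above yields~\eqref{eq:bjorkmorf1}.
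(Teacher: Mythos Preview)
Your approach is the same as the paper's: apply $\fihom(\C_X^\enh,\scbul)$ to~\eqref{eq:bjorkmorf} and identify each side. Your treatment of the right-hand side matches the paper's exactly (the paper writes the chain as $\fihom(\C^\enh_X,\cihom(\solE_X(\shm),\OEn_X))\simeq\fihom(\solE_X(\shm),\cihom(\C^\enh_X,\OEn_X))\simeq\fihom(\solE_X(\shm),\OEn_X)$, omitting your intermediate step but using the same adjunction and Corollary~\ref{cor:CTamOT}).

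Where you diverge is on the left-hand side, and here you are making the argument harder than it needs to be. The paper simply asserts
\[
\fihom(\C^\enh_X,\shm\Dtens\OEn_X)\simeq\shm\Dtens\fihom(\C^\enh_X,\OEn_X)\simeq\shm\Dtens\Ot,
\]
appealing only to~\eqref{eq:oEot}. The commutation of $\shm\Dtens(\scbul)$ with $\fihom(\C_X^\enh,\scbul)$ is taken as a basic compatibility of the formalism---it is exactly the mechanism already used without comment to pass from~\eqref{eq:oEot} to~\eqref{eq:DRo}. There is no need to introduce stability, pass through $\fihom(\C_{\{t=0\}},\scbul)$, or worry about $\roim\pi$ against filtrant colimits: the operation $\shm\Dtens(\scbul)$ is $\opb\pi\D_X$-linear in the $X$-direction and does nothing in the $\R_\infty$-direction, so it commutes with the constituents of $\fihom(\C_X^\enh,\scbul)$ by the enhanced analogue of Theorem~\ref{th:betaaihom}. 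The ``main obstacle'' you flag does not arise.
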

\begin{proof}
Let us apply the functor $\fihom(\C^\enh_X,\scbul)$ to the isomorphism~\eqref{eq:bjorkmorf}.
Since $\fihom(\C^\enh_X,\OEn_X)\simeq\Ot$ by~\eqref{eq:oEot}, we get
\eqn
\fihom(\C^\enh_X,\shm\Dtens\OEn_X)&\simeq&\shm\Dtens\Ot.
\eneqn
On the other-hand, we have
\eqn
&&\fihom(\C^\enh_X,\cihom(\solE_X(\shm),\OEn_X))\\
&&\hspace{10.5ex}\simeq\fihom(\solE_X(\shm),\cihom(\C^\enh_X,\OEn_X))\\
&&\hspace{10.5ex}\simeq\fihom(\solE_X(\shm),\OEn_X).
\eneqn
\end{proof}
\index{Riemann-Hilbert correspondence!enhanced}%
\begin{corollary}[{Enhanced Riemann-Hilbert correspondence}]\label{cor:irregRH2}
There exists a canonical isomorphism functorial with respect to 
$\shm\in\Derb_\hol(\D_X)${\rm:}
\eq\label{eq:bjorkmorf2}
&&\shm\isoto\fhom(\solE_X(\shm),\OEn_X)\ \text{in $\Derb(\D_X)$.}
\eneq
\end{corollary}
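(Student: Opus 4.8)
The plan is to deduce Corollary~\ref{cor:irregRH2} directly from Corollary~\ref{cor:irregRH1} by applying a suitable section-type functor, in exact parallel with the way Corollary~\ref{cor:RegRH3} was obtained from Theorem~\ref{thm:ifunct4} in the regular case. Recall that isomorphism~\eqref{eq:bjorkmorf1} reads
\[
\shm\Dtens\Ot\isoto\fihom(\solE_X(\shm),\OEn_X)\quad\text{in }\Derb(\iD_X),
\]
and that $\fhom=\alpha_X\circ\fihom$ by Definition~\ref{def:fihom}. So the natural first step is to apply $\alpha_X$ to~\eqref{eq:bjorkmorf1}, which immediately turns the right-hand side into $\fhom(\solE_X(\shm),\OEn_X)$.

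The remaining work is to identify the left-hand side: I would show $\alpha_X(\shm\Dtens\Ot)\simeq\shm$ in $\Derb(\D_X)$. First I would use that $\alpha_X$ is compatible with the $\D_X$-action (via the quasi-commutative diagram~\eqref{diag:mdAIA}) so that $\alpha_X(\shm\Dtens\Ot)\simeq\shm\Dtens\alpha_X(\Ot)$. Then I would invoke the basic property $\alpha_X\Ot\simeq\OO$, which holds because $\alpha_X\iota_X\simeq\id$ and because there is a canonical morphism $\Ot\to\OO$ whose image under $\alpha_X$ is an isomorphism; equivalently, $\alpha_X$ applied to the Dolbeault complex with coefficients in $\Dbt_{X_\R}$ recovers the Dolbeault complex with coefficients in $\Db_{X_\R}$, which computes $\OO$. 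Hence $\alpha_X(\shm\Dtens\Ot)\simeq\shm\Dtens\OO\simeq\shm$, and composing the isomorphisms gives~\eqref{eq:bjorkmorf2}. Functoriality in $\shm$ is inherited from the functoriality already asserted in Corollary~\ref{cor:irregRH1} together with functoriality of $\alpha_X$.

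There is no serious obstacle here: the statement is essentially a formal corollary, the substance having been carried by Theorem~\ref{th:irrRH1} and its consequence Corollary~\ref{cor:irregRH1}. The one point requiring a little care is the identification $\alpha_X\Ot\simeq\OO$ as objects of $\Derb(\D_X)$ (not merely of $\Derb(\C_X)$), i.e.\ checking that the isomorphism respects the $\D_X$-module structure; this follows from the fact that the $\D_X$-action on $\Ot$ is induced from the $\D_X$-action on $\Dbt_{X_\R}$ via $\eim{\rho_{X_\R}}\D_{X_\R}$, and $\alpha_X$ intertwines these actions. An alternative, equally short route would be to apply instead the functor $\fhom(\C_X^\enh,\scbul)$ to the enhanced Riemann--Hilbert isomorphism~\eqref{eq:bjorkmorf} directly, using $\fihom(\C_X^\enh,\OEn_X)\simeq\Ot$ from~\eqref{eq:oEot} to pass from $\OEn_X$ to $\Ot$ and then $\alpha_X$ to pass from $\Ot$ to $\OO$; this is exactly the computation done in the proof of Corollary~\ref{cor:irregRH1}, so reusing it keeps the argument self-contained.
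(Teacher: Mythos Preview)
Your approach is correct and matches the paper's proof exactly: the paper's entire argument is the single line ``Apply the functor $\alpha_X$ to~\eqref{eq:bjorkmorf1}.'' Your elaboration on why $\alpha_X(\shm\Dtens\Ot)\simeq\shm$ (via $\alpha_X\Ot\simeq\OO$) is the justification the paper leaves implicit.
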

\begin{proof}
Apply the functor $\alpha_X$ to~\eqref{eq:bjorkmorf1}.
\end{proof}

By Corollary \ref{cor:irregRH2},
we can show the following full faithfulness of the enhanced de Rham
functor.
\Th\label{th; emb}
For $\shm,\shn\in \Derb_\hol(\D_X)$,
one has an isomorphism
\eqn
&&\rhom[\D_X](\shm,\shn)\isoto \fhom(\drE_{X}\shm,\drE_{X}\shn).
\eneqn
In particular, the functor
\eqn
\drE_{X}:\Derb_\hol(\D_X)\To\TDC_\Rc(\iC_X)
\eneqn
is fully faithful.
\enth
\Proof
By Theorem~\ref{th:holconst} and Theorem~\ref{th:const} \eqref{item:Dfhom},
we have
$$\fhom(\drE_{X}\shm,\drE_{X}\shn)
\simeq\fhom(\solE_X\shn,\solE_X\shm).$$
Now, we have
\eqn
\fhom(\solE_X\shn,\solE_X\shm)
&\simeq&\fhom\bl\solE_X\shn,\rhom[\D_X](\shm, \OEn_X)\br\\
&\simeq&\rhom[\D_X]\bl\shm,\fhom\bl\solE_X\shn,\OEn_X)\br\\
&\simeq&\rhom[\D_X](\shm,\shn).
\eneqn
Here the last isomorphism follows from Corollary~\ref{cor:irregRH2}.
\QED

\begin{remark}Corollary~\ref{cor:irregRH2} and Theorem~\ref{th; emb} 
due to~\cite[Th.~9.6.1, Th.~9.7.1]{DK13} are a natural formulation of 
the Riemann-Hilbert correspondence for irregular D-modules. 
Theorem~\ref{th:irrRH1} due to~\cite[Th.~4.5]{KS14} 
is a generalization to the irregular case of Theorem~\ref{thm:ifunct4} which is itself a generalization/reformulation 
of a theorem of J-E.~Bj\"ork (\cite{Bj93}). 
\end{remark}

\section{Integral transforms}\label{section:IT}

\subsection{ Integral transforms with irregular kernels}\label{subsection:irregkern}
\index{integral transforms!}%
\begin{theorem}\label{th:opforenhD}
Let $X$ be a complex manifold and let $\shl\in\Derb_\hol(\D_X)$ and 
 $\shm\in\Derb(\D_{X})$.  There is a natural isomorphism
\[
\drE_X(\shl\Dtens\shm) \simeq \cihom(\solE_X(\shl), \drE_X(\shm)).
\]
\end{theorem}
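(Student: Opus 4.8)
The strategy is to combine the enhanced Riemann--Hilbert isomorphism of Theorem~\ref{th:irrRH1} with the commutation of $\drE$ with duality (Theorem~\ref{th:holconst}) and with the projection formula for $\Dtens$. First I would rewrite both sides using the relation $\drE_X(\shn)\simeq\OvE_X\ltens[\D_X]\shn$ together with the D-module projection formula~\eqref{eq:DDprojform}: for $\shl\in\Derb_\hol(\D_X)$ and $\shm\in\Derb(\D_X)$ one has
\[
\drE_X(\shl\Dtens\shm)\simeq(\OvE_X\Dtens\shl)\ltens[\D_X]\shm,
\]
by the same computation as in the proof of Corollary~\ref{cor:ifunct4}, so that everything is reduced to an identity involving $\OEn_X\Dtens\shl$. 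Applying Theorem~\ref{th:irrRH1} to $\shl$ gives $\shl\Dtens\OEn_X\simeq\cihom(\solE_X(\shl),\OEn_X)$ in $\TDC(\iD_X)$, hence $\OvE_X\Dtens\shl\simeq\cihom(\solE_X(\shl),\OvE_X)$ after twisting by $\Omega_X$ (using the equivalence $\shm\mapsto\shm^\mop=\Omega_X\ltens[\OO]\shm$, exactly as at the end of the proof of Corollary~\ref{cor:ifunct2b}).

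Substituting, the left-hand side becomes $\cihom(\solE_X(\shl),\OvE_X)\ltens[\D_X]\shm$, and the task is to move $\ltens[\D_X]\shm$ inside $\cihom$. This is the analogue, in the enhanced indsheaf setting, of Theorem~\ref{th:betaaihom} (the isomorphism $\rihom(F,\shk)\ltens[\A]\shl\isoto\rihom(F,\shk\ltens[\A]\shl)$). The key point is that $\solE_X(\shl)=\rhom[\D_X](\shl,\OEn_X)$ is an $\R$-constructible enhanced indsheaf by Theorem~\ref{th:newconstruct2}, so one can hope for an isomorphism
\[
\cihom(\solE_X(\shl),\OvE_X)\ltens[\D_X]\shm\isoto\cihom\bl\solE_X(\shl),\OvE_X\ltens[\D_X]\shm\br=\cihom(\solE_X(\shl),\drE_X(\shm))
\]
of the ``$\cihom$ commutes with $\ltens$'' type, valid because the first argument is constructible. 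I would establish the needed morphism by adjunction from the evaluation pairing $\solE_X(\shl)\ctens\cihom(\solE_X(\shl),\OvE_X)\to\OvE_X$, tensor with $\shm$ over $\D_X$, and then check it is an isomorphism by the reduction Lemma~\ref{lem:redux}: properties (a)--(e) are formal (localization, shift, triangles, direct summands are clear; projective direct images use Proposition~\ref{pro:Tproj}, Theorem~\ref{thm:Tfunct}(iii) and the commutation of $\Toim{f}$ with the operations), and for property (f) one reduces to $\shl=\she^\varphi_{X\setminus D|X}$ with normal form along a normal crossing divisor $D$, where by Corollary~\ref{cor:solE} we have $\solE_X(\shl)\simeq\C_X^\enh\ctens\C_{\{t=-\Re\varphi\}}$, an object for which $\cihom(\solE_X(\shl),\scbul)$ is explicitly computable and the desired commutation with $\ltens[\D_X]\shm$ is immediate.

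Assembling these steps yields $\drE_X(\shl\Dtens\shm)\simeq\cihom(\solE_X(\shl),\drE_X(\shm))$, functorially in both arguments. I expect the main obstacle to be the justification of the ``$\cihom$ commutes with $\ltens[\D_X]\shm$'' step for $\R$-constructible first argument: unlike in the classical indsheaf case (Theorem~\ref{th:betaaihom}), one is working in the quotient category $\TDC(\icor_X)$, where $\tens$ and $\rihom$ do not descend freely, so some care is needed to lift the computation to $\Derb(\icor_{X\times\R_\infty})$ using the $\Tl,\Tr$ functors (Definition~\ref{def:Tlr}) and the stability of $\solE_X(\shl)$ and $\OEn_X$ (Proposition~\ref{pro:OEstable}) before passing back to the quotient. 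Once that technical point is in place via the normal-form reduction, the rest is a formal chain of the isomorphisms already available in the paper.
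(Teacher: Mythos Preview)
Your overall architecture is exactly the paper's: start from the enhanced Riemann--Hilbert isomorphism $\shl\Dtens\OEn_X\simeq\cihom(\solE_X(\shl),\OEn_X)$ of Theorem~\ref{th:irrRH1}, apply $\shm^\mop\ltens[\D_X]\scbul$ (equivalently, twist by $\Omega_X$ and take $\ltens[\D_X]\shm$), and identify both sides. The left-hand side is handled by the projection identity $(\shm\Dtens\shl)^\mop\ltens[\D_X]\OEn_X\simeq\shm^\mop\ltens[\D_X](\shl\Dtens\OEn_X)$, exactly as you say.

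The only substantive point is the commutation
\[
\shm^\mop\ltens[\D_X]\cihom(\solE_X(\shl),\OEn_X)\;\simeq\;\cihom(\solE_X(\shl),\shm^\mop\ltens[\D_X]\OEn_X),
\]
which you correctly flag as the crux. The paper does not prove this by reduction to normal form; it quotes \cite[Lem.~3.12]{KS14}, which is a \emph{general} enhanced analogue of Theorem~\ref{th:betaaihom}: for $\shm\in\Derb(\A)$ and $K$ in the enhanced category one has $\shm\ltens[\A]\cihom(F,K)\simeq\cihom(F,\shm\ltens[\A]K)$, with no holonomicity needed on either side. Your proposed route via Lemma~\ref{lem:redux} would also work (your $P_X(\shl)$ is well-posed and properties (a)--(f) can be checked), but it is heavier than necessary and makes the statement look as if it depended on the structure theory of $\shl$, whereas in fact the commutation is a purely formal fact about $\cihom$ and $\ltens[\A]$. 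In short: same proof, but for isomorphism~(a) the paper invokes a general lemma rather than running the normal-form reduction a second time.
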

\begin{proof}
By Theorem~\ref{th:irrRH1}, we have an isomorphism  in $\TDC(\iD_X)$:
\eq\label{eq:bjorkmorfbis}
&&\shl\Dtens\OEn_X\isoto\cihom(\solE_X(\shl),\OEn_X).
\eneq
Let us apply $\shm^\mop\ltens[\shd_X]\scbul$ to both sides of \eqref{eq:bjorkmorfbis}. We have
\eqn
\shm^\mop\ltens[\shd_X](\shl\Dtens\OEn_X)&\simeq&(\shm\Dtens\shl)^\mop\ltens[\shd_X]\OEn_X\\
&\simeq&\drE_X(\shm\Dtens\shl),
\eneqn
and
\eqn
\shm^\mop\ltens[\shd_X]\cihom(\solE_X(\shl),\OEn_X)
&\underset{{\rm(a)}}{\simeq}&\cihom(\solE_X(\shl),\shm^\mop\ltens[\shd_X]\OEn_X)\\
&\simeq&\cihom(\solE_X(\shl),\drE_X(\shm)).
\eneqn
(We do not give the proof of  isomorphism (a) and refer to
\cite[Lem.~3.12]{KS14}.)
\end{proof}
Consider morphisms of complex manifolds
\eqn
&&\ba{c}\xymatrix@C=6ex@R=4ex{
&S\ar[ld]_-f\ar[rd]^-g&\\
{X}&&{Y}.
}\ea\eneqn

\begin{notation}
(i) For $\shm\in\Derb_\qgood(\D_{X})$ and $\shl\in\Derb_\qgood(\D_S)$ recall that 
one sets
\eqn
&&\shm\Dconv\shl\eqdot\Doim{g}(\Dopb{f}\shm\Dtens\shl).
\eneqn
(ii) For $L\in \TDC(\iC_S)$,  $F\in \TDC(\iC_{X})$ and $G\in\TDC(\iC_Y)$ one sets
\glossary{$L\econv $}%
\glossary{$\Phi_L^\Tam$}%
\eq\label{eq:Ccconv}
&&\ba{l} L\econv G\eqdot\Teeim{f}(L\ctens\Topb{g}G),\\
\Phi_L^\Tam(G)=  L\econv G,\quad \Psi_L^\Tam(F)=\Toim{g}\cihom(L,\Tepb{f}F).
\ea\eneq
\end{notation}
Note that we have a pair of adjoint functors
\eq\label{eq:phipsiadj}
&&\xymatrix{
\Phi_L^\Tam: \TDC(\iC_Y)\ar@<0.5ex>[r]&\TDC(\iC_X): \Psi_L^\Tam\,.
\ar@<0.5ex>[l]
}\eneq

\begin{theorem}\label{th:7413}
Let $\shm\in\Derb_\qgood(\D_X)$, 
$\shl\in\Derb_\ghol(\D_{S})\seteq \Derb_\hol(\D_{S})\cap\Derb_\good(\D_{S})$ and let  $L\eqdot\solE_{S}(\shl)$. 
 Assume that $\opb{f}\Supp(\shm)\cap\Supp(\shl)$ is proper over $Y$.
Then there is a natural isomorphism in $\TDC(\iC_Y)${\rm:}
\eq\label{eq:7413}
&&\Psi_L^\Tam\bl\drE_X(\shm)\br\,[d_X-d_S]\simeq\drE_Y(\shm\Dconv\shl).
\eneq
\end{theorem}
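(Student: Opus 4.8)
The strategy is to reduce Theorem~\ref{th:7413} to the three ``enhanced'' projection-type identities already available, exactly parallel to the proof of the regular version Theorem~\ref{th:7412}. Concretely, I would chain together Theorem~\ref{thm:Tfunct} (inverse and direct image of the enhanced de Rham functor), Theorem~\ref{th:opforenhD} (the enhanced Riemann--Hilbert applied to $\shl\Dtens\scbul$), and the defining adjunction~\eqref{eq:phipsiadj}. Start from the right-hand side and expand the $\Dconv$ using its definition~\eqref{eq:Ccconv}:
\eqn
\drE_Y(\shm\Dconv\shl)
&=&\drE_Y\bl\Doim{g}(\Dopb{f}\shm\Dtens\shl)\br.
\eneqn
Since $\opb{f}\Supp(\shm)\cap\Supp(\shl)$ is proper over $Y$ and $\shl\in\Derb_\good(\D_S)$, while $\Dopb f\shm\in\Derb_\qgood(\D_S)$ by Theorem~\ref{th:oimopbDdual}(ii), the object $\Dopb{f}\shm\Dtens\shl$ is good with support proper over $Y$; hence Theorem~\ref{thm:Tfunct}(iii) gives
$\drE_Y\bl\Doim{g}(\Dopb{f}\shm\Dtens\shl)\br\simeq\Toim{g}\,\drE_S(\Dopb{f}\shm\Dtens\shl)$.

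\textbf{Key steps.}
Next, apply Theorem~\ref{th:opforenhD} on $S$ with $\shl$ holonomic and good and $\Dopb f\shm\in\Derb(\D_S)$ in the role of the second module:
\eqn
\drE_S(\shl\Dtens\Dopb f\shm)\simeq\cihom\bl\solE_S(\shl),\drE_S(\Dopb f\shm)\br
= \cihom\bl L,\drE_S(\Dopb f\shm)\br.
\eneqn
Then use Theorem~\ref{thm:Tfunct}(ii), namely $\drE_S(\Dopb f\shm)[d_S]\simeq\Tepb f\,\drE_X(\shm)[d_X]$, to substitute $\drE_S(\Dopb f\shm)\simeq\Tepb f\,\drE_X(\shm)[d_X-d_S]$. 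Putting these together,
\eqn
\drE_Y(\shm\Dconv\shl)
&\simeq&\Toim{g}\,\cihom\bl L,\Tepb f\,\drE_X(\shm)[d_X-d_S]\br\\
&=&\Toim{g}\,\cihom\bl L,\Tepb f\,\drE_X(\shm)\br[d_X-d_S]
= \Psi_L^\Tam\bl\drE_X(\shm)\br[d_X-d_S],
\eneqn
where the last equality is just the definition of $\Psi_L^\Tam$ in~\eqref{eq:Ccconv}. Reading the chain backwards gives the asserted isomorphism~\eqref{eq:7413}, and functoriality is inherited from that of each functor involved.

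\textbf{Main obstacle.}
The delicate point is the compatibility of the quasi-good/good/properness hypotheses along the way. One must check that $\Dopb f\shm$ is quasi-good so that $\Dopb f\shm\Dtens\shl$ is a legitimate object to which the good-proper direct image statement of Theorem~\ref{thm:Tfunct}(iii) applies, and that $\Supp(\Dopb f\shm\Dtens\shl)\subset \opb f\Supp(\shm)\cap\Supp(\shl)$ really is proper over $Y$ via $g$; this is where the hypothesis on $\opb{f}\Supp(\shm)\cap\Supp(\shl)$ is used. A secondary technical worry is that Theorem~\ref{th:opforenhD} as stated requires $\shl\in\Derb_\hol(\D_X)$ and the second argument in $\Derb(\D_X)$ on the same manifold, so one applies it on $S$ with $\shl$ (holonomic, good) and $\Dopb f\shm$; the good hypothesis on $\shl$ is what lets us later invoke Theorem~\ref{thm:Tfunct}(iii) for $g$. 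Once these bookkeeping checks are in place, the proof is a formal concatenation of the cited isomorphisms, exactly mirroring the regular case.
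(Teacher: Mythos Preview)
Your proposal is correct and follows essentially the same approach as the paper: the paper's proof simply says ``The proof goes as in the regular case (Theorem~\ref{th:7412}) by using Theorems~\ref{thm:Tfunct} and~\ref{th:opforenhD},'' and your chain of isomorphisms is precisely the enhanced analogue of the displayed computation in the proof of Theorem~\ref{th:7412}. Your additional remarks on the quasi-good/good/properness bookkeeping are accurate and make explicit what the paper leaves implicit.
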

\begin{proof}
The proof goes as in the regular case (Theorem~\ref{th:7412}) by using Theorems~\ref{thm:Tfunct} and~\ref{th:opforenhD}.
\end{proof}

\begin{corollary}\label{cor:7413}
In the situation of {\rm Theorem~\ref{th:7413}}, let $G\in \TDC(\iC_Y)$. Then there is a natural isomorphism in $\Derb(\C)$
\eqn
&&\FHom(L\econv G,\OvE_{X}\ltens[\D_{X}]\shm)\,[d_X-d_S]\\
&&\hspace{30ex}\simeq\FHom(G,\OvE_{Y}\ltens[\D_{Y}](\shm\Dconv \shl)).
\eneqn
\end{corollary}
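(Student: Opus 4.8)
The plan is to deduce Corollary~\ref{cor:7413} from Theorem~\ref{th:7413} by a purely formal adjunction argument, exactly as Corollary~\ref{cor:7412} was deduced from Theorem~\ref{th:7412} in the regular case. First I would apply the functor $\FHom(G,\scbul)$ to both sides of the isomorphism~\eqref{eq:7413} in Theorem~\ref{th:7413}. On the right-hand side this immediately yields $\FHom(G,\drE_Y(\shm\Dconv\shl))$, and since $\drE_Y(\shm\Dconv\shl)=\OvE_Y\ltens[\D_Y](\shm\Dconv\shl)$ by the definition of the enhanced de Rham functor in \S\ref{subsection:Edrsol}, this is the right-hand side of the claimed isomorphism. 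So the content is entirely on the left-hand side: one must identify
\[
\FHom\bl G,\Psi_L^\Tam(\drE_X(\shm))\br\simeq\FHom\bl L\econv G,\drE_X(\shm)\br.
\]

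The key step is the adjunction $(\Phi_L^\Tam,\Psi_L^\Tam)$ recorded in~\eqref{eq:phipsiadj}, together with the fact that $\FHom(\scbul,\scbul)=\rsect(M;\fhom(\scbul,\scbul))$ computes $\Hom$ in $\TDC(\iC_M)$ (more precisely its $0$-th cohomology, and the shifted versions give all cohomology degrees). Since $\Phi_L^\Tam(G)=L\econv G$, the adjunction gives $\fihom(G,\Psi_L^\Tam(F))\simeq\fihom(\Phi_L^\Tam(G),F)=\fihom(L\econv G,F)$ as objects of $\Derp(\iC_Y)$ and $\Derp(\iC_X)$ respectively — but to get an honest isomorphism of $\FHom$-complexes one should instead argue at the level of the $\fihom$/$\fhom$ bifunctors using Lemma~\ref{homepb}, which states $\roim f\fhom(K,\Tepb f L)\simeq\fhom(\Teeim f K,L)$ and $\roim f\fhom(\Topb f L,K)\simeq\fhom(L,\Toim f K)$. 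Concretely: unfolding $\Psi_L^\Tam(F)=\Toim{g}\cihom(L,\Tepb{f}F)$, I would use the second formula of Lemma~\ref{homepb} (for $g$) to move $\Toim g$ outside, then the inner-hom adjunction $\fhom(G',\cihom(L,K))\simeq\fhom(G'\ctens L,K)$ for $\ctens$ (valid since $\cihom$ is the inner hom of the tensor category $\TDC(\iC)$), and finally the first formula of Lemma~\ref{homepb} (for $f$) to absorb $\Tepb f$ into $\Teeim f$, producing $\fhom(\Teeim f(\Topb g G\ctens L),F)=\fhom(L\econv G,F)$ after taking global sections on $Y$. Chaining these and applying $\rsect(Y;\scbul)$, with the shifts bookkept, gives the stated isomorphism in $\Derb(\C)$.

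The main obstacle, such as it is, is bookkeeping rather than conceptual: one must check that the degree shift $[d_X-d_S]$ passes correctly through $\FHom$ and through the various adjunction isomorphisms (it is applied on the left in~\eqref{eq:7413}, so it reappears as $[d_X-d_S]$ on $\FHom(L\econv G,\drE_X(\shm))$), and one must be careful that the functors $\Topb g$, $\Tepb f$, $\Teeim f$, $\Toim g$ are composed in the order dictated by the definitions~\eqref{eq:Ccconv} of $\econv$ and $\Psi_L^\Tam$, invoking Proposition~\ref{pro:Tadj} and Proposition~\ref{pro:Tproj} as needed to justify each manipulation. Since $\shl$ is good holonomic and $\opb f\Supp(\shm)\cap\Supp(\shl)$ is proper over $Y$, there are no finiteness or support issues beyond those already handled in Theorem~\ref{th:7413}, so no new hypotheses are required. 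In short, the proof is: apply $\FHom(G,\scbul)$ to~\eqref{eq:7413}, then transport $G$ across the $(\Phi_L^\Tam,\Psi_L^\Tam)$ adjunction via Lemma~\ref{homepb} and the inner-hom property of $\ctens$.
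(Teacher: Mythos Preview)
Your proposal is correct and follows essentially the same approach as the paper: the paper's proof reads in full ``This follows from Theorem~\ref{th:7413} and the adjunction~\eqref{eq:phipsiadj}.'' Your elaboration via Lemma~\ref{homepb} and the inner-hom property of $\ctens$ is a legitimate way to make the adjunction precise at the level of $\FHom$-complexes rather than just $H^0$, but the underlying strategy is identical.
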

\begin{proof}
This follows from
Theorem~\ref{th:7413} and the adjunction~\eqref{eq:phipsiadj}.
\end{proof}
Note that Corollary~\ref{cor:7413} is a generalization  of
\cite[Th.7.4.12]{KS01} to not necessarily regular  holonomic  D-modules. 

\subsection{Enhanced Fourier-Sato transform}
\index{enhanced!Fourier-Sato transform|(}%
\index{Fourier-Sato transform!enhanced|(}%
The results in \S\,\ref{section:enhanced} extend to the case where $M$ is replaced with a 
bordered space $\fM$. 
Thus $\pi$ denotes the projection $\fM\times\fR\to\fM$ and $t$ the coordinate of $\R$. 
One defines $\TDC(\icor_\fM)$ as the quotient triangulated category
$\BDC(\icor_{\fM\times\fR})/\set{K}{\opb\pi \roim\pi K\isoto K}$. 

One defines the functors 
\glossary{$e_\fM$}%
\glossary{$\epsilon_\fM$}%
\eq
&&e_\fM,\epsilon_\fM\cl \BDC(\icor_\fM) \to \TDC(\icor_\fM),\label{eq:eMepsilonM}\\
&&\hs{10ex}e_\fM(F) = \cor_\fM^\Tam\tens\opb\pi F, 
\quad\epsilon_\fM(F) = \cor_{\{t\geq0\}}\tens\opb\pi F.\nn
\eneq
Note that $e_\fM(F)\simeq\cor_\fM^\Tam\ctens\epsilon_\fM(F)$.

Then Proposition~\ref{pro:embed} extends to  bordered spaces. 
\begin{proposition}\label{pro:embedL}
The functors $e_\fM$ and $\epsilon_\fM$  are  fully faithful.
\end{proposition}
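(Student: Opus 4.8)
This is the bordered-space counterpart of Proposition~\ref{pro:embed}, and the plan is to run the same argument after checking that the part of the machinery of \S\ref{section:enhanced} it uses carries over. Concretely, I would first record the bordered versions of the constructions entering that proof: the convolution bifunctors $\ctens$ and $\cihom$ on $\BDC(\icor_{\fM\times\fR})$, the objects $\cor_{\{t\geq0\}}$, $\cor_{\{t\leq0\}}$ and $\cor_\fM^\Tam$, the functors $\Tl$, $\Tr$, $\fihom$, $\fhom$, $\FHom$ (Definition~\ref{def:Tlr}), the orthogonal decompositions of Proposition~\ref{pro:tam}, the exchange identities of Lemmas~\ref{lem:cihomrihompi}--\ref{lem:piRinfty}, the absorption properties of $\cor_\fM^\Tam$ (Lemma~\ref{lem:kTamtens}, Corollary~\ref{cor:picorT}), and the adjunction, projection and base-change formulas for the bordered projection $\pi\colon\fM\times\fR\to\fM$ and the involution $a$. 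Each of these is a formal consequence of the six operations for indsheaves on bordered spaces set up in \S\ref{section:bordered}, so this step is bookkeeping; it is also, as explained below, where the real work lies.

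Granting this, I would argue exactly as for Proposition~\ref{pro:embed}. Since $\Hom[\TDC(\icor_\fM)](K_1,K_2)\simeq H^0\FHom(K_1,K_2)$ and $\FHom(\scbul,\scbul)=\rsect(\fM;\alpha_\fM\fihom(\scbul,\scbul))$, it suffices to prove, functorially in $F,G\in\BDC(\icor_\fM)$, that the canonical morphisms $\rihom(F,G)\to\fihom(\epsilon_\fM F,\epsilon_\fM G)$ and $\rihom(F,G)\to\fihom(e_\fM F,e_\fM G)$ are isomorphisms in $\BDC(\icor_\fM)$. For $\epsilon_\fM$: the vanishing $\reeim\pi\cor_{\{t\geq0\}}\simeq0$ gives, via the projection formula, $\reeim\pi\epsilon_\fM F\simeq0$, so $\epsilon_\fM F$ lies in ${}^\bot\indc_{t^*=0}$ and $\Tl\epsilon_\fM F\simeq\cor_{\{t\geq0\}}\tens\opb\pi F$; feeding this into $\fihom(\epsilon_\fM F,\epsilon_\fM G)=\roim\pi\rihom(\Tl\epsilon_\fM F,\Tr\epsilon_\fM G)$ and using the $\tens$--$\rihom$ and $(\opb\pi,\roim\pi)$ adjunctions reduces the claim to the identity
\[
\roim\pi\rihom(\cor_{\{t\geq0\}},\Tr\epsilon_\fM G)\simeq G
\]
in $\BDC(\icor_\fM)$. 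This I would obtain by expanding $\Tr\epsilon_\fM G=\cihom(\cor_{\{t\geq0\}}\oplus\cor_{\{t\leq0\}},\cor_{\{t\geq0\}}\tens\opb\pi G)$ with the $(\ctens,\cihom)$-adjunction and Lemmas~\ref{lem:cihomrihompi}--\ref{lem:piRinfty}, together with the convolution relations $\cor_{\{t\geq0\}}\ctens\cor_{\{t\geq0\}}\simeq\cor_{\{t\geq0\}}$, $\cor_{\{t\geq0\}}\ctens\cor_{\{t\leq0\}}\simeq0$, and the elementary computations of $\roim\pi$ and $\reeim\pi$ of the sheaves $\cor_{\{t\geq a\}}$, $\cor_{\{t\leq a\}}$ (which take place over $\fR$ alone and hence do not see whether the base is bordered). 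For $e_\fM$ I would bootstrap via $e_\fM F\simeq\cor_\fM^\Tam\ctens\epsilon_\fM F$, using the absorption properties of $\cor_\fM^\Tam$ and its commutation with $\cihom(F,\scbul)$ and with $\rihom(\opb\pi F,\scbul)$ (Lemma~\ref{lem:kTamtens}, Corollary~\ref{cor:picorT}).

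An alternative, conceptually cleaner route is to descend from Proposition~\ref{pro:embed} for the good space $\bM$: Proposition~\ref{pro:bord} yields a fully faithful embedding $\BDC(\icor_\fM)\hookrightarrow\BDC(\icor_\bM)$, and its enhanced analogue (the same orthogonality argument applied on $\bM\times\R_\infty$) realizes $\TDC(\icor_\fM)$ as a full right-orthogonal subcategory of $\TDC(\icor_\bM)$; one then checks that $\epsilon_\fM$ and $e_\fM$ are intertwined with $\epsilon_\bM$ and $e_\bM$ through these embeddings, whence full faithfulness descends. In either approach the genuine obstacle is not the Tamarkin-type manipulation, which is word for word that of the non-bordered case, but the bookkeeping of the first paragraph: checking that the six operations for bordered indsheaves — in particular $\roim\pi$, $\reeim\pi$, $\opb\pi$, $\epb\pi$ along $\pi\colon\fM\times\fR\to\fM$ — satisfy all the adjunction and projection formulas invoked. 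Once that is in place, the proof of Proposition~\ref{pro:embed} applies verbatim.
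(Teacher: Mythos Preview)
Your approach is correct and considerably more detailed than what the paper does: the paper gives no proof at all, simply prefacing the proposition with the blanket assertion that ``the results in \S\ref{section:enhanced} extend to the case where $M$ is replaced with a bordered space $\fM$'' and then stating that Proposition~\ref{pro:embed} extends to bordered spaces. (Proposition~\ref{pro:embed} itself is also stated without proof, the details being deferred to~\cite{DK13}.) Your two routes --- either redoing the Tamarkin-type computation with the bordered six-functor formalism of \S\ref{section:bordered}, or descending from the case of $\bM$ via the full embeddings of Proposition~\ref{pro:bord} --- are exactly the ways one would substantiate the paper's assertion, and your identification of the real content (checking that the adjunction and projection formulas for $\pi\colon\fM\times\fR\to\fM$ go through) is accurate.
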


Let $\VV$ be a finite-dimensional real  vector space, $\VVd$ its dual. 
Recall that the Fourier-Sato transform is an equivalence of categories 
between conic sheaves on $\VV$ and conic sheaves on $\VV^*$.
 References are made to~\cite{KS90}. 
In~\cite{Ta08}, D.~Tamarkin has extended the Fourier-Sato transform 
to no more conic (usual) sheaves, by adding an extra variable. 
Here we generalize this last transform to enhanced ind-sheaves on 
the bordered space $\fV$.

We set $n=\dim\VV$ and we denote by $\ori_\VV$ the orientation $\cor$-module of $\VV$, {\em i.e.,} 
$\ori_\VV=H^n_c(\VV;\cor_\VV)$.
In this subsection, the base field $\cor$ is an arbitrary field. 
We have a canonical isomorphism $\ori_\VV\simeq\ori_{\VVd}$. 
We denote by $\Delta_\VV$ the diagonal of $\VV\times\VV$. 

We consider the bordered space 
\glossary{$\bV$}%
$\fV=(\VV,\bV)$ where $\bV$ is the projective compactification of 
$\VV$, that is
\eqn
&&\bV=\bl(\VV\oplus\R)\setminus\{0\}\br/\R^\times.
\eneqn

We introduce the kernels 
\glossary{$L_\VV$}%
\glossary{$L^a_\VV$}%
\glossary{$L^a_\VVd$}%
\begin{align}\label{eq:eFT1}
&&\ba{c}
L_\VV\eqdot\cor_{\{t=\langle x,y\rangle\}}\in\TDC(\icor_{\fV\times\fVd}),\\[1ex]
L^a_\VV\eqdot\cor_{\{t=-\langle x,y\rangle\}}\in\TDC(\icor_{\fV\times\fVd}),\\[1ex]
L^a_\VVd\eqdot\cor_{\{t=-\langle x,y\rangle\}}\in\TDC(\icor_{\fVd\times\fV}).
\ea\end{align}
Here, $x$ and $y$ denote points of $\VV$ and $\VVd$, respectively.

\begin{lemma}\label{le:compkernels}
One has isomorphisms in $\TDC(\icor_{\fV\times\fV})$
\eq\label{eq:eFT3}
&&\ba{c}
L_\VV\econv L^a_\VVd\isoto\cor_{\Delta_\VV\times\{t=0\}}\tens\ori_\VV\,[-n],\\
L^a_\VVd\econv L_\VV\isoto\cor_{\Delta_{\VVd}\times\{t=0\}}\tens\ori_\VV\,[-n].
\ea\eneq
\end{lemma}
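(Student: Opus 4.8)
The statement is Lemma~\ref{le:compkernels}, asserting that the enhanced Fourier-Sato kernels $L_\VV$ and $L^a_\VVd$ are mutually quasi-inverse (up to the orientation twist and the shift $[-n]$), via the two convolution identities in~\eqref{eq:eFT3}. By symmetry (exchanging the roles of $\VV$ and $\VVd$, which are in perfect duality), it suffices to prove the first isomorphism; the second follows by the same argument applied to $\VVd$. The strategy is to unwind the definition of $\econv$ in~\eqref{eq:Ccconv}: writing $p_{12},p_{23},p_{13}$ for the projections from $\fV\times\fVd\times\fV$, and letting $q\colon \fV\times\fVd\times\fV\to\fV\times\fV$ be $p_{13}$ together with the composition of the sum map on the extra $\R_\infty$-factors, one has
\[
L_\VV\econv L^a_\VVd \simeq \Teeim{p_{13}}\bl L_\VV\cetens[13] \ctens \Topb{p}L^a_\VVd\br,
\]
which after translating back to $M\times\R_\infty$ with $M=\VV\times\VVd\times\VV$ becomes a pushforward along $\VV\times\VVd\times\VV\to\VV\times\VV$ of an object built from $\cor_{\{t_1=\langle x,y\rangle\}}$ and $\cor_{\{t_2=-\langle x',y\rangle\}}$ convolved in the $t$-variable, i.e.\ $\cor_{\{t = \langle x,y\rangle - \langle x',y\rangle\}} = \cor_{\{t = \langle x-x',y\rangle\}}$ on $\VV\times\VV\times\VVd\times\R_\infty$.

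\textbf{Key steps.} First I would reduce to computing, on $\VV\times\VV$, the enhanced pushforward of $\cor_{\{t=\langle x-x',y\rangle\}}$ along the projection $p\colon\VV\times\VV\times\VVd\to\VV\times\VV$ that forgets $y$. Changing coordinates by $u = x-x'$, this is the pushforward along $\VV\times\VV\times\VVd\to\VV\times\VV$ of $\cor_{\{t=\langle u,y\rangle\}}$, so the whole computation is local in the fibre direction and factors through the model computation on a single $\VV$: namely one must compute $\reeim{\tilde p}\,\cor_{\{(y,t)\,:\,t=\langle u,y\rangle\}}$ for $\tilde p\colon \VV_u\times\VVd_y\times\R_\infty\to\VV_u\times\R_\infty$ (the projection forgetting $y$), as an object of $\TDC(\icor_{\VV})$. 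For fixed $u\neq0$ the locus $\{t=\langle u,y\rangle\}$ in $\VVd_y\times\R$ is an affine hyperplane, so its compactly-supported cohomology along the $y$-fibre is $\cor[\text{-}(n-1)]\otimes\ori$ concentrated appropriately; for $u=0$ the locus is $\{t=0\}\times\VVd$, whose fibrewise $\reeim{}$ picks up $\cor_{\{t=0\}}$ but only after passing to the quotient $\TDC$ (where constant-in-$t$ objects are killed, which is exactly why the extra variable is needed). Carefully tracking this — using that in $\TDC(\icor_{\VV})$ one kills $\opb\pi\roim\pi K\isoto K$, and using the base-change and projection formulas of Propositions~\ref{pro:Tcart} and~\ref{pro:Tproj} — gives $\cor_{\Delta_\VV\times\{t=0\}}\tens\ori_\VV[-n]$ after restoring the $u\leftrightarrow(x,x')$ change of variables (the shift being $[-n]$ rather than $[-(n-1)]$ because of the extra integration in $t$, or equivalently from the standard Fourier-Sato normalization). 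The cleanest route is probably to first establish the non-conic, sheaf-level statement on $\VV\times\R$ (this is Tamarkin's computation, recalled via~\cite{Ta08,GS12}) and then deduce the enhanced-indsheaf version by applying $e_{\VV\times\VVd}$, using Proposition~\ref{pro:embedL} (full faithfulness of $e$) and the compatibility of $e$ with $\opb{(\cdot)}$, $\reeim{(\cdot)}$ and $\ctens$ proved in Proposition~\ref{pro:stableops} and the surrounding results.

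\textbf{Main obstacle.} The delicate point is the behaviour at the zero section $u=0$ of $\VV$: the fibrewise convolution $\cor_{\{t=\langle u,y\rangle\}}$ integrated over $y\in\VVd$ produces, set-theoretically, a contribution supported on all of $\{u=0\}\times\R$ that is \emph{constant} in the $t$-direction, and one must argue that this contribution vanishes in $\TDC(\icor_\VV)$ while the ``generic'' contribution assembles correctly into $\cor_{\Delta_\VV\times\{t=0\}}$ \emph{with the right shift and orientation twist}. Getting the shifts and the orientation $\cor$-module exactly right (and consistent between the two identities in~\eqref{eq:eFT3}) is where the bookkeeping is most error-prone; I expect this to be the bulk of the work, and it is handled most safely by reducing to Tamarkin's already-established non-conic Fourier-Sato inversion and transporting it through the fully faithful embedding $e_\fM$.
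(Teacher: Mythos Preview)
The paper does not give a proof of this lemma; it is stated and then immediately used, with the subsequent Theorem~\ref{th:fourier} carrying the citation~\cite{Ta08}. So the implicit ``proof'' in the paper is precisely your final suggestion: invoke Tamarkin's kernel computation for the non-conic Fourier--Sato transform and observe that it carries over to the present setting. Your unwinding of $\econv$ and the reduction to integrating $\cor_{\{t=\langle u,y\rangle\}}$ over $y\in\VVd$ is exactly the right computation, and your identification of the locus $u=0$ as the delicate point is accurate: for $u\neq0$ the fibrewise pushforward is constant in $t$ and hence dies in $\TDC$, while over $u=0$ one picks up $\cor_{\{t=0\}}\tens\ori_\VV[-n]$.

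Two small corrections. First, your accounting for the shift is off: the $[-n]$ comes entirely from $\rsect_c(\VVd;\cor)\simeq\ori_\VVd[-n]$ (integration over the $n$-dimensional fibre $\VVd$ at $u=0$), not from any ``extra integration in $t$''. Second, the route via $e_\fM$ and Proposition~\ref{pro:embedL} does not apply as you state it: the kernels $L_\VV=\cor_{\{t=\langle x,y\rangle\}}$ are \emph{not} of the form $e_\fM(F)=\cor_\fM^\enh\tens\opb\pi F$ for any $F$ on the base --- they genuinely depend on $t$. What you want instead is the observation that $L_\VV$, $L^a_\VVd$ and their $\econv$-composition are images of ordinary sheaves on $M\times\R$ under the natural fully faithful functor $\Derb(\cor_{M\times\R_\infty})\to\Derb(\icor_{M\times\R_\infty})\to\TDC(\icor_\fM)$ (cf.\ the category $\TDC(\cor_M)$ of~\eqref{eq:TDCforM}); Tamarkin's computation takes place entirely at this sheaf level, and one then passes to the quotient. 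With that adjustment your plan is sound and matches what the paper defers to.
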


Now we introduce the enhanced Fourier-Sato functors 
\glossary{$\EF[\VV]$}%
\glossary{$\EFa[\VVd]$}%
\begin{align}\label{eq:fourier}
&&\ba{c}
\EF[\VV]\cl \TDC(\icor_{\fV})\to\TDC(\icor_{\fVd}),
\quad \EF[\VV](F)=F\econv L_\VV,\\
\EFa[\VVd]\cl \TDC(\icor_{\fVd})\to\TDC(\icor_{\fV}),\quad \EFa[\VVd](F)=F\econv L^a_\VVd. 
\ea
\end{align}
Applying Lemma~\ref{le:compkernels}, we obtain:

\begin{theorem}[{See~\cite{Ta08}}]\label{th:fourier}
The functors $\EF[\VV]$ and $\EFa[\VVd]\tens\ori_\VV\,[n]$ are equivalences of categories, inverse to each other. In other words, one has the isomorphisms, functorial with respect to $F\in \TDC(\icor_{\fV})$
and $G\in \TDC(\icor_{\fVd})${\rm:}
\eqn
&&\EFa[\VVd]\circ\EF[\VV](F)\simeq F\tens\ori_\VV\,[-n],\\
&&\EF[\VV]\circ\EFa[\VVd](G)\simeq G\tens\ori_\VV\,[-n].
\eneqn
\end{theorem}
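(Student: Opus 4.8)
The plan is to run the standard kernel-composition (``integral transform'') calculus in the category of enhanced indsheaves on bordered spaces, with Lemma~\ref{le:compkernels} supplying the only nontrivial input. First I would set up the formalism of composition of kernels: for bordered spaces $\fM_1,\fM_2,\fM_3$ and $K\in\TDC(\icor_{\fM_1\times\fM_2})$, $K'\in\TDC(\icor_{\fM_2\times\fM_3})$, put
\[
K\econv K' \eqdot \Teeim{p_{13}}\bl\Topb{p_{12}}K\ctens\Topb{p_{23}}K'\br\in\TDC(\icor_{\fM_1\times\fM_3}),
\]
where $p_{ij}$ is the projection from $\fM_1\times\fM_2\times\fM_3$ onto the $(i,j)$-factor. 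Regarding an object $F\in\TDC(\icor_\fV)$ as a kernel on $\{\rmpt\}\times\fV$, the functors of \eqref{eq:fourier} are exactly $\EF[\VV](F)=F\econv L_\VV$ and $\EFa[\VVd](G)=G\econv L^a_\VVd$. Using the base change isomorphisms (Proposition~\ref{pro:Tcart}), the projection formula (Proposition~\ref{pro:Tproj}), the functoriality of $\Teeim{\scdot}$ and $\Topb{\scdot}$ (Proposition~\ref{pro:Tcomp}), together with the associativity and commutativity of $\ctens$, I would establish the associativity of this composition, $(K\econv K')\econv K''\simeq K\econv(K'\econv K'')$, and the fact that the diagonal kernel $\cor_{\Delta_\VV\times\{t=0\}}$ is a two-sided unit; this last point holds because $\cor_{\{t=0\}}$ is the unit of $\ctens$ and $\cor_{\Delta_\VV}$ is the constant sheaf on the graph of $\id_\VV$, so that $K\econv\cor_{\Delta_\VV\times\{t=0\}}\simeq K$ and $\cor_{\Delta_\VV\times\{t=0\}}\econv K\simeq K$.

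Granting this, the theorem is immediate. By associativity one has $\EFa[\VVd]\circ\EF[\VV](F)\simeq F\econv(L_\VV\econv L^a_\VVd)$ and $\EF[\VV]\circ\EFa[\VVd](G)\simeq G\econv(L^a_\VVd\econv L_\VV)$. Substituting the two identities of Lemma~\ref{le:compkernels} and using that convolution with $\cor_{\Delta_\VV\times\{t=0\}}$ (resp.\ $\cor_{\Delta_{\VVd}\times\{t=0\}}$) is the identity, while the twist $\tens\ori_\VV\,[-n]$ passes through the convolution, gives
\[
\EFa[\VVd]\circ\EF[\VV](F)\simeq F\tens\ori_\VV\,[-n],
\qquad
\EF[\VV]\circ\EFa[\VVd](G)\simeq G\tens\ori_\VV\,[-n],
\]
functorially in $F\in\TDC(\icor_\fV)$ and $G\in\TDC(\icor_\fVd)$. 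Since $\ori_\VV^{\otimes2}\simeq\cor$, the functor $G\mapsto\EFa[\VVd](G)\tens\ori_\VV\,[n]$ is then a quasi-inverse of $\EF[\VV]$, which is the assertion.

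The technical heart, and the main obstacle, is the verification of associativity and of the unit property in the bordered setting. The projections such as $p_{13}\colon\fV\times\fVd\times\fV\to\fV\times\fV$ are far from proper — their graph closures in $\bV\times\bVd\times\bV$ do not remain over the base — so one cannot freely identify $\Teeim{\scdot}$ with $\Toim{\scdot}$, and the base change and projection formulas must be applied with the boundary at infinity carefully tracked; alternatively, one argues that after the $\ctens$ with $L_\VV$ or $L^a_\VVd$ the relevant objects have support controlled enough for the properness hypotheses of Definition~\ref{def:proper} to hold where they are invoked. Once this compactification bookkeeping is settled, the remaining steps are formal consequences of Propositions~\ref{pro:Tproj}, \ref{pro:Tcart}, \ref{pro:Tcomp}, and the proof reduces entirely to Lemma~\ref{le:compkernels}.
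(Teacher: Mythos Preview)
Your approach is correct and is exactly what the paper has in mind: the text simply writes ``Applying Lemma~\ref{le:compkernels}, we obtain'' and leaves the kernel-calculus formalism implicit, so you have spelled out what the paper omits.

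One remark: your final paragraph overstates the difficulty. The associativity of $\econv$ and the unit property of $\cor_{\Delta_\VV\times\{t=0\}}$ are purely formal consequences of the projection formula (Proposition~\ref{pro:Tproj}) and base change (Proposition~\ref{pro:Tcart}), both of which hold for $\Teeim{\scdot}$ and $\Topb{\scdot}$ with \emph{no properness hypothesis} on the morphisms involved. You never need to identify $\Teeim{\scdot}$ with $\Toim{\scdot}$, nor do you need any support control on the kernels; the whole argument runs in the $(\Teeim{\scdot},\Topb{\scdot},\ctens)$ formalism without touching $\Toim{\scdot}$. So there is no ``compactification bookkeeping'' to do, and the reduction to Lemma~\ref{le:compkernels} is genuinely formal.
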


\begin{corollary}\label{cor:fourieradjoint}
The two functors $\EF[\VV](\scbul)$ and  $\Psi^\Tam_{L^a_\VV}(\scbul)\tens\ori_\VV\,[-n]$ are isomorphic.
\end{corollary}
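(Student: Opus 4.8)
The statement to prove is Corollary~\ref{cor:fourieradjoint}: that $\EF[\VV](\scbul)$ and $\Psi^\Tam_{L^a_\VV}(\scbul)\tens\ori_\VV\,[-n]$ are isomorphic functors.

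The plan is to derive this from the adjunction between $\Phi^\Tam_L$ and $\Psi^\Tam_L$ recorded in \eqref{eq:phipsiadj} (in the form of~\eqref{eq:Ccconv}) together with the invertibility statement of Theorem~\ref{th:fourier}. First I would unwind definitions: by~\eqref{eq:fourier}, $\EF[\VV](F) = F\econv L_\VV = \Phi^\Tam_{L_\VV}(F)$ in the notation of~\eqref{eq:Ccconv} (with the roles of the two spaces $X=\VVd$, $Y=\VV$, $S=\VV\times\VVd$, and $f,g$ the two projections), so $\EF[\VV]$ has a right adjoint, namely $\Psi^\Tam_{L_\VV}$. On the other hand, $\EFa[\VVd](G) = \Phi^\Tam_{L^a_\VVd}(G)$, and by Theorem~\ref{th:fourier} the functor $\EFa[\VVd](\scbul)\tens\ori_\VV[n]$ is a two-sided inverse (hence in particular a right adjoint) of $\EF[\VV]$. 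Since right adjoints are unique up to canonical isomorphism, $\Psi^\Tam_{L_\VV}(\scbul) \simeq \EFa[\VVd](\scbul)\tens\ori_\VV[n]$.

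The remaining step is purely bookkeeping on the kernels: one must identify $\Psi^\Tam_{L^a_\VV}$ with the functor obtained from $L_\VV$ up to the orientation twist and shift. Concretely, $L^a_\VV = \cor_{\{t=-\langle x,y\rangle\}}\in\TDC(\icor_{\fV\times\fVd})$ while $L_\VV = \cor_{\{t=\langle x,y\rangle\}}$; applying the involution $a\colon t\mapsto -t$ and exchanging factors relates the two, and $\Psi^\Tam$ built from $L^a_\VV$ differs from $\Phi^\Tam$ built from $L^a_\VVd$ exactly by the adjunction bookkeeping already performed via Lemma~\ref{le:compkernels}. So I would trace through: $\Psi^\Tam_{L^a_\VV}(F) = \Toim{g}\cihom(L^a_\VV,\Tepb{f}F)$, rewrite $\cihom$ against a locally closed hypersurface kernel as a $\ctens$ against the "opposite" kernel (using that $\cihom(\cor_{\{t=\phi\}},\scbul)\simeq \cor_{\{t=-\phi\}}\ctens\scbul$ up to the orientation/duality data supplied by the computation of $L_\VV\econv L^a_\VVd$ in~\eqref{eq:eFT3}), and conclude $\Psi^\Tam_{L^a_\VV}(F)\tens\ori_\VV[-n] \simeq \Phi^\Tam_{L_\VV}(F) = \EF[\VV](F)$.

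The main obstacle I anticipate is not the abstract adjunction — that is immediate — but getting the orientation module $\ori_\VV$ and the degree shift $[-n]$ to land correctly, since these arise from the integration along the fibers of $\mu$ in the convolution (the $[-n]$) and from the choice of orientation on $\VV$ (identified with that on $\VVd$). The cleanest route is to avoid recomputing and instead invoke Lemma~\ref{le:compkernels} and Theorem~\ref{th:fourier} as black boxes, so that the shift and twist are inherited rather than re-derived; the only genuinely new verification is the compatibility $\cihom(L^a_\VV,\scbul)\simeq L_\VV\conv(\scbul)$ up to $\ori_\VV[-n]$, which follows from the projection formulas in Proposition~\ref{pro:Tproj} applied to the graph of $\langle x,y\rangle$ and the base-change Proposition~\ref{pro:Tcart}.
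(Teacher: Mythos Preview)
Your central mechanism---uniqueness of right adjoints together with the equivalence of Theorem~\ref{th:fourier}---is exactly the intended (and only) argument, since the paper states the corollary without proof immediately after Theorem~\ref{th:fourier} and the adjunction~\eqref{eq:phipsiadj}. But you have applied it to the wrong kernel and then tried to repair things by a direct computation that does not close.

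The clean route is one line shorter than yours. With the convention $X=\fV$, $Y=\fVd$, $S=\fV\times\fVd$, and $f,g$ the two projections, the factor-swap $\fV\times\fVd\isoto\fVd\times\fV$ carries $L^a_\VV$ to $L^a_\VVd$, so
\[
\Phi^\Tam_{L^a_\VV}\ \simeq\ \EFa[\VVd].
\]
Hence $\Psi^\Tam_{L^a_\VV}$ is right adjoint to $\EFa[\VVd]$. By Theorem~\ref{th:fourier}, $\EF[\VV]\tens\ori_\VV[n]$ is a two-sided inverse---hence also a right adjoint---of $\EFa[\VVd]$. Uniqueness of right adjoints gives $\Psi^\Tam_{L^a_\VV}\simeq\EF[\VV]\tens\ori_\VV[n]$, which is the claim.

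Your detour first establishes $\Psi^\Tam_{L_\VV}\simeq\EFa[\VVd]\tens\ori_\VV[n]$, which is a true but different statement (it identifies the right adjoint of $\EF[\VV]$, not $\EF[\VV]$ itself), and then attempts to bridge to $\Psi^\Tam_{L^a_\VV}$ by the rewrite $\cihom(\cor_{\{t=-\phi\}},\,\scbul)\simeq\cor_{\{t=\phi\}}\ctens\scbul$. That rewrite is fine, but it leaves you with
\[
\Toim{g}\bigl(L_\VV\ctens\Tepb{f}F\bigr)
\quad\text{versus}\quad
\Teeim{g}\bigl(L_\VV\ctens\Topb{f}F\bigr),
\]
and the passage from $(\Toim{},\Tepb{})$ to $(\Teeim{},\Topb{})$ is \emph{not} formal here: the projection $g$ is not proper on the support of $L_\VV$. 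Making that step rigorous amounts to reproving Lemma~\ref{le:compkernels}. So drop the detour and run the adjunction argument directly on the pair $(\Phi^\Tam_{L^a_\VV},\Psi^\Tam_{L^a_\VV})$.
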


\begin{corollary}\label{cor:fourier}
There is an isomorphism functorial in $F_1,F_2\in\TDC(\icor_\fV)${\rm:}
\eq\label{eq:isoFourier}
&&\FHom(F_1,F_2)\simeq\FHom(\EF[\VV](F_1),\EF[\VV](F_2)).
\eneq
\end{corollary}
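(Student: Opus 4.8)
The plan is to obtain \eqref{eq:isoFourier} formally from three inputs already at hand: the adjunction \eqref{eq:phipsiadj} between $\Phi^\Tam_L$ and $\Psi^\Tam_L$, the identification of $\EF[\VV]$ with a twist of $\Psi^\Tam_{L^a_\VV}$ in Corollary~\ref{cor:fourieradjoint}, and the inversion formula of Theorem~\ref{th:fourier}. The first step I would take is to record the $\BDC(\cor)$-level form of \eqref{eq:phipsiadj}: for a kernel $L$ on a correspondence $X \xleftarrow{f} S \xrightarrow{g} Y$ and $F\in\TDC(\icor_X)$, $G\in\TDC(\icor_Y)$, there is an isomorphism
\[
\FHom(\Phi_L^\Tam G,\,F)\simeq\FHom(G,\,\Psi_L^\Tam F),
\]
functorial in $F$ and $G$. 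This is exactly the mechanism used (tacitly) in the proof of Corollary~\ref{cor:7413}. It holds either because $\cor$ is a field, so any object of $\BDC(\cor)$ is recovered, functorially, from its cohomology $H^j\FHom(K_1,K_2)\simeq\Hom[\BDC(\icor_M)](K_1,K_2[j])$ together with the fact that $\Phi_L^\Tam$ and $\Psi_L^\Tam$ commute with shifts; or, more robustly, by chasing the definitions of $\Phi_L^\Tam$, $\Psi_L^\Tam$ and $\fhom$ through Lemma~\ref{homepb}, the inner-hom identity $\fihom(K_1\ctens K_2,K_3)\simeq\fihom(K_1,\cihom(K_2,K_3))$, and the transitivity of $\rsect$ along the two legs $f$ and $g$.

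Granting this, the computation is short. By Corollary~\ref{cor:fourieradjoint}, $\EF[\VV](\scbul)\simeq\Psi^\Tam_{L^a_\VV}(\scbul)\tens\ori_\VV\,[-n]$. Since $\ori_\VV\tens\ori_\VV\simeq\cor_\fV$, tensoring with $\ori_\VV\,[n]$ is an auto-equivalence with inverse $\ori_\VV\,[-n]$; hence, by \eqref{eq:phipsiadj}, the left adjoint of $\Psi^\Tam_{L^a_\VV}(\scbul)\tens\ori_\VV\,[-n]$ is the functor $F\mapsto\Phi_{L^a_\VV}^\Tam(F\tens\ori_\VV\,[n])$. On the other hand $\EF[\VV]$ is an equivalence whose quasi-inverse $\EFa[\VVd](\scbul)\tens\ori_\VV\,[n]$ (Theorem~\ref{th:fourier}) is simultaneously its left adjoint, so by uniqueness of adjoints $\Phi_{L^a_\VV}^\Tam(\scbul\tens\ori_\VV\,[n])\simeq\EFa[\VVd](\scbul)\tens\ori_\VV\,[n]$. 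Then, for $F_1,F_2\in\TDC(\icor_\fV)$, using the auto-equivalence $\tens\ori_\VV\,[n]$, then the displayed adjunction, then the identification just obtained:
\[
\FHom(\EF[\VV](F_1),\EF[\VV](F_2))\simeq\FHom\bl\EF[\VV](F_1)\tens\ori_\VV\,[n],\ \Psi^\Tam_{L^a_\VV}(F_2)\br\simeq\FHom\bl\Phi_{L^a_\VV}^\Tam(\EF[\VV](F_1)\tens\ori_\VV\,[n]),\ F_2\br,
\]
and the inner object equals $\EFa[\VVd](\EF[\VV](F_1))\tens\ori_\VV\,[n]\simeq F_1$ by Theorem~\ref{th:fourier} (which gives $\EFa[\VVd]\circ\EF[\VV](F_1)\simeq F_1\tens\ori_\VV\,[-n]$) together with $\ori_\VV\tens\ori_\VV\simeq\cor_\fV$. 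Combining, $\FHom(\EF[\VV](F_1),\EF[\VV](F_2))\simeq\FHom(F_1,F_2)$, functorially, which is \eqref{eq:isoFourier}.

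The only non-formal step is the first paragraph — upgrading the plain categorical adjunction \eqref{eq:phipsiadj} to a functorial isomorphism in $\BDC(\cor)$ — and I would isolate this once as a general lemma on the kernel functors $\Phi_L^\Tam$, $\Psi_L^\Tam$, invoking it both here and in Corollary~\ref{cor:7413}. The remaining point requiring attention is purely bookkeeping: keeping track of the orientation module $\ori_\VV$ and the shift $[n]$, and keeping straight the two incarnations $L^a_\VV\in\TDC(\icor_{\fV\times\fVd})$ and $L^a_\VVd\in\TDC(\icor_{\fVd\times\fV})$ of the kernel under the flip of factors, so that the quasi-inverse of $\EF[\VV]$ and the left adjoint of $\Psi^\Tam_{L^a_\VV}(\scbul)\tens\ori_\VV\,[-n]$ are matched on the nose. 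Alternatively, and even more directly, one can simply note that $\EF[\VV]$ is an equivalence of triangulated categories by Theorem~\ref{th:fourier} and that over the field $\cor$ the object $\FHom(K_1,K_2)$ is determined by the family $\Hom[\BDC(\icor)](K_1,K_2[j])$, on which an equivalence commuting with shifts acts by isomorphisms; Corollary~\ref{cor:fourieradjoint} is then used only to make the resulting isomorphism functorial at the level of complexes.
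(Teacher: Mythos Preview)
Your argument is correct and matches what the paper intends: the corollary is stated without proof, as an immediate consequence of the fact that $\EF[\VV]$ is an equivalence (Theorem~\ref{th:fourier}) together with its description as a $\Psi^\Tam$-functor (Corollary~\ref{cor:fourieradjoint}), which is precisely the mechanism you unwind. Your alternative one-line justification at the end --- an equivalence of triangulated categories induces isomorphisms on all $\Hom(F_1,F_2[j])$, and over a field these determine $\FHom$ --- is in fact the most direct route and already suffices.
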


Recall that one denotes 
by $\Derb_{\R^+}(\cor_\VV)$ the full subcategory of $\Derb(\cor_\VV)$ consisting of conic sheaves (see \cite{KS90}).
Here conic sheaves mean sheaves on $\VV$
constant on any half line $\R_{>0} v$ ($v\in\VV\setminus\{0\}$). 
 We shall denote here by $\FS[\VV](F)$ the Fourier-Sato transform of 
$F\in\Derb_{\R^+}(\cor_\VV)$, which was denoted by $F^\wedge$ in loc.\ cit. The functor  
$\FS[\VV]\cl\Derb_{\R^+}(\cor_\VV)\to\Derb_{\R^+}(\cor_\VVd)$ is an equivalence of categories. 

Recall that one identifies the sheaf $\cor_{\{t\geq0\}}$ with its image in $\TDC(\icor_{\VV\times\R_\infty})$
and that the functor
\eqn
&&\epsilon_\fV\cl\BDC(\cor_\VV) \into \TDC(\icor_\fV),\quad \epsilon_\fV(F)= \cor_{\{t\geq0\}}\tens\opb\pi F
\eneqn 
is a fully faithful embedding by Proposition~\ref{pro:embedL}.

Consider the diagram of categories and functors
\eq\label{diag:fourierefourier}
&&\ba{c}\xymatrix{
\Derb_{\R^+}(\cor_\VV)\ar[rr]^-{\FS[\VV]}\ar[d]_-{\epsilon_\fV}
&&\Derb_{\R^+}(\cor_\VVd)\ar[d]_-{\epsilon_\fVd}\\
\TDC(\icor_\fV)\ar[rr]^-{\EF[\VV]}&&\TDC(\icor_\fVd).
}\ea
\eneq
\begin{theorem}\label{th:fourierefourier}
Diagram~\eqref{diag:fourierefourier} is quasi-commutative.
\end{theorem}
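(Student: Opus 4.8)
The goal is to show that the Fourier--Sato transform $\FS[\VV]$ for conic sheaves is compatible, through the fully faithful embeddings $\epsilon_\fV$ and $\epsilon_\fVd$, with the enhanced Fourier--Sato transform $\EF[\VV]$. The natural strategy is to unwind both sides to explicit convolution integrals over the relevant product spaces and to match them by a change-of-variables / Fubini argument, exactly as one proves the compatibility of Tamarkin's non-conic Fourier transform with the classical one. First I would write, for $F\in\Derb_{\R^+}(\cor_\VV)$,
\[
\EF[\VV](\epsilon_\fV(F)) = \epsilon_\fV(F)\econv L_\VV
= \Teeim{p_2}\bl\Topb{p_1}\epsilon_\fV(F)\ctens \cor_{\{t=\langle x,y\rangle\}}\br,
\]
where $p_1,p_2$ are the projections from $\VV\times\VVd$, and similarly spell out $\epsilon_\fVd(\FS[\VV](F))$ using the integral kernel description of $\FS[\VV]$ from \cite{KS90}, namely $\FS[\VV](F)\simeq\reim{q_2}(\opb{q_1}F\tens\cor_{\{\langle x,y\rangle\le 0\}})$ (up to the standard conventions on the closed/open half-space, which I will fix once and for all).

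Second, I would reduce to a generating family. Since $\Derb_{\R^+}(\cor_\VV)$ is generated by constant sheaves $\cor_\gamma$ on closed convex cones $\gamma$ (or, dually, on their polars), and since all functors in sight are triangulated and commute with the relevant colimits, it suffices to verify the quasi-commutativity of Diagram~\eqref{diag:fourierefourier} on objects of the form $\cor_\gamma$. For such objects both $\FS[\VV](\cor_\gamma)\simeq\cor_{\gamma^{\circ a}}[-n]$ (the polar cone, with the appropriate shift and orientation twist) and the enhanced convolution $\epsilon_\fV(\cor_\gamma)\econv L_\VV$ can be computed completely explicitly: the convolution of $\cor_{\{t\ge0\}}\tens\opb\pi\cor_\gamma$ with $\cor_{\{t=\langle x,y\rangle\}}$ over the $t$-variable produces $\cor_{\{t\ge \langle x,y\rangle\}}$ integrated over $x\in\gamma$, and an elementary computation of $\int_{x\in\gamma}\langle x,y\rangle\,$ shows this collapses to $\cor_{\{t\ge0\}}\tens\opb\pi\cor_{\gamma^{\circ a}}$ up to the same shift and orientation module. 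This is the computational heart; it is the enhanced analogue of the classical statement $\FS[\VV](\cor_\gamma)\simeq\cor_{\Int(\gamma^{\circ a})}$ and should be done carefully but is routine once the half-space conventions are pinned down.

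Third, to upgrade the object-level identification to a genuine isomorphism of functors one has two options: either check naturality directly on morphisms between the generators $\cor_\gamma$, or — more cleanly — exhibit a morphism of functors $\epsilon_\fVd\circ\FS[\VV]\to\EF[\VV]\circ\epsilon_\fV$ at the level of kernels (the image of the Fourier--Sato kernel $\cor_{\{\langle x,y\rangle\le0\}}$ under $\epsilon$, versus $L_\VV$, are related by $\epsilon_{\fV\times\fVd}(\cor_{\{\langle x,y\rangle\le0\}})\simeq \cor_{\{t\ge0,\ \langle x,y\rangle\le 0\}}$, and one compares this with $\cor_{\{t\ge\langle x,y\rangle\}}=\cor_{\{t=0\}}\ctens\cor_{\{t\ge\langle x,y\rangle\}}$ after convolution) and then show the resulting natural transformation is an isomorphism on the generators by step two. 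I expect the main obstacle to be precisely this passage from kernels: one must be attentive that $\epsilon$ does \emph{not} commute with $\ctens$ in general (only $e_\fM$ does), so the comparison of kernels has to be routed through the stable/Tamarkin objects and the idempotent $\cor_{\{t\ge0\}}\ctens(\scbul)$, and the conic hypothesis on $F$ is what makes the relevant objects behave well under this. The orientation twist $\ori_\VV\simeq\ori_{\VVd}$ and the degree shift $[-n]$ must be tracked consistently between Theorem~\ref{th:fourier} and the classical normalization in \cite{KS90}; getting these bookkeeping constants to agree is the only place where a sign or shift error could creep in, so I would fix conventions at the outset and check them against the known one-dimensional case $\VV=\R$.
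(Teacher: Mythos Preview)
The paper itself gives no proof of this theorem: it is a survey, and the result is stated at the end of \S8.2 with the reader referred implicitly to \cite{Ta08}, \cite{GS12}, and \cite{KS14} from which this section is extracted. So there is no ``paper's own proof'' to compare against; I can only assess your plan on its merits and point you toward the argument used in those references.

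Your outline is coherent, but the reduction step is where it would break. The claim that $\Derb_{\R^+}(\cor_\VV)$ is ``generated by constant sheaves $\cor_\gamma$ on closed convex cones'' is not correct as stated: conic sheaves correspond to sheaves on the sphere $(\VV\setminus\{0\})/\R_{>0}$ together with data at the origin, and arbitrary conic locally closed subsets are certainly not resolved by convex cones in any way that would let a d\'evissage argument go through. You could try to salvage this by using all open conic subsets, but then the computation of $\FS[\VV](\cor_U)$ is no longer the clean polar-cone formula, and your ``routine'' step~2 becomes the whole problem again. The kernel-level comparison you sketch in step~3 is closer to the mark, but as you yourself note, $\epsilon$ does not commute with $\ctens$, and you do not explain how the conic hypothesis repairs this.

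The argument in the references avoids generators entirely and uses the conicity of $F$ directly. After writing
\[
\EF[\VV](\epsilon_\fV(F))\;\simeq\;\Teeim{p_2}\bigl(\opb{(p_1\times\id)}(\opb\pi F)\tens\cor_{\{t\ge\langle x,y\rangle\}}\bigr),
\]
one exploits the scaling action: since $F$ is conic, the substitution $x\mapsto \lambda x$ leaves $\opb\pi F$ unchanged, and one can deform the kernel $\cor_{\{t\ge\langle x,y\rangle\}}$ along the $\R_{>0}$-action on the $(x,t)$-variables. Concretely one splits according to the sign of $t$ and uses that, for $t>0$, the slice $\{x:\langle x,y\rangle\le t\}$ is carried by $x\mapsto t^{-1}x$ to $\{\langle x,y\rangle\le 1\}$, and similarly for $t<0$; this reduces the pushforward over $x$ to the Fourier--Sato integrals with the half-space kernels $\cor_{\{\langle x,y\rangle\le 0\}}$ and $\cor_{\{\langle x,y\rangle\ge 0\}}$, whose difference is precisely what is killed when passing to $\TDC$. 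The bookkeeping with $\ori_\VV$ and the shift is exactly as you anticipate and is fixed by checking $\VV=\R$. I would recommend abandoning the generator reduction and carrying out this direct scaling argument instead; see \cite[\S5]{GS12} for a detailed model.
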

\index{enhanced!Fourier-Sato transform|)}%
\index{Fourier-Sato transform!enhanced|)}%

\subsection{Laplace transform}
\index{Laplace!transform}%
In the sequel, we take $\C$ as the base field $\cor$.
Recall the $\D_X$-module $\she^{\varphi}_{U|X}$ and Notation~\ref{not:<?}. 
We saw in Proposition~\ref{pro:Solphi} that
\eq
&&\solE_X(\she^{\varphi}_{U|X})\simeq\C_X^\Tam\ctens\C_{\{t=-\Re\vphi\}}.\label{eq:laplaceEphiB}
\eneq
We shall apply this result in the following situation.

Let $\WW$ be a complex finite-dimensional vector space of complex dimension $d_\WW$, $\WW^*$ its dual. 
Since $\WW$ is a complex vector space, we shall identify $\ori_\WW$ with $\C$. 
We denote here by 
$\bW$ the projective compactification of $\WW$, we set $\BBH=\bW\setminus\WW$,
and similarly with $\bWd$ and $\BBHd$. We also introduce the bordered spaces
\eqn
&&\fW=(\WW,\bW),\quad \fWd=(\WWd,\bWd).
\eneqn
We set for short
\eqn
&&X=\bW\times\bWd,\, U=\WW\times\WWd,\, Y=X\setminus U.
\eneqn
We shall consider the function 
\eqn
&&\phi\cl\WW\times\WWd\to\C,\quad \phi(x,y)=\langle x,y\rangle.
\eneqn

We introduce the Laplace kernel
\index{Laplace!kernel}%
\eq\label{eq:Laplaceker}
\shl&\eqdot&\she^{\langle x,y\rangle}_{U|X}.
\eneq
Recall  
that the kernel of the enhanced Fourier transform   
with respect to the underlying real vector spaces 
of $\WW$ and $\WWd$ is given by 
\eqn
&&L_{\WW}\eqdot \C_{\{t=\Re\langle x,y\rangle\}}\in\TDC(\iC_{\fW\times\fWd}).
\eneqn
Also recall that we set $L_{\WW}^a\eqdot \C_{\{t=-\Re\langle x,y\rangle\}}$.
\begin{lemma}\label{le:laplaceD2}
One has the isomorphism in $\TDC(\iC_X)$
\eq\label{eq:Laplaceiso21}
&&\solE_{X}(\shl)\simeq \C^\Tam_X\ctens \Teeim{j}L^a_{\WW},
\eneq
where $j\cl \fW\times\fWd\to X$ is the inclusion.
\end{lemma}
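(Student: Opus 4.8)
The plan is to compute $\solE_X(\shl)$ directly from the definition $\shl=\she^{\langle x,y\rangle}_{U\vert X}$ and the formula for the enhanced solutions of an exponential D-module, then transport the resulting object along the open inclusion $j\colon\fW\times\fWd\to X$. First I would invoke Corollary~\ref{cor:solE}, applied with $\varphi=\langle x,y\rangle\in\OO(*Y)$ (note that $\phi$ is holomorphic on $U=\WW\times\WWd$ and $Y=X\setminus U$ is a normal crossing divisor, being the union of the two hyperplanes at infinity), which gives
\[
\solE_X(\shl)\simeq\C^\enh_X\ctens\C_{\{t=-\Re\langle x,y\rangle\}}\quad\text{in }\TDC(\iC_X).
\]
The right-hand side involves the sheaf $\C_{\{t=-\Re\langle x,y\rangle\}}$, which lives on $X\times\R_\infty$ but whose stalk vanishes outside $U$ (indeed $\langle x,y\rangle$ is only defined, hence the equation $t=-\Re\langle x,y\rangle$ only imposes a condition, over $U$). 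Thus $\C_{\{t=-\Re\langle x,y\rangle\}}$ is naturally $\reeim{j}$ (equivalently $\roim{\tilde j}$, using that it is supported away from $X\setminus U$) of the corresponding sheaf $L^a_\WW=\C_{\{t=-\Re\langle x,y\rangle\}}$ on $\fW\times\fWd\times\R_\infty$.

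The second step is therefore the bookkeeping identity $\C_{\{t=-\Re\langle x,y\rangle\}}\simeq\Teeim{j}L^a_\WW$ in $\TDC(\iC_X)$, where on the right $L^a_\WW$ is regarded as an object of $\TDC(\iC_{\fW\times\fWd})$ via Notation~\ref{not:fR}. This is essentially the observation that $\Teeim{j}$ applied to a sheaf supported (as a sheaf on the product with $\R_\infty$) inside $U\times\R_\infty$ just recovers that sheaf viewed on $X\times\R_\infty$; it follows from the description of $\Teeim{j}$ as induced by $\reeim{\tilde j}$ together with the fact that $\reeim{\tilde j}$ is the extension by zero for locally closed embeddings (Definition~\ref{def:fbordered} and the remark following it, plus $\reeim{\tilde j}\opb{\tilde j}F\simeq\C_{U\times\R_\infty}\tens F$ from Lemma~\ref{lem:jM}). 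Combining, $\solE_X(\shl)\simeq\C^\enh_X\ctens\Teeim{j}L^a_\WW$, which is exactly~\eqref{eq:Laplaceiso21}.

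I expect the main (mild) obstacle to be making the identification $\C_{\{t=-\Re\langle x,y\rangle\}}\simeq\Teeim{j}L^a_\WW$ precise, i.e. checking carefully that the closure of $\{t=-\Re\langle x,y\rangle\}$ in $X\times\overline\R$ does not pick up extra contributions over $Y\times\overline\R$ that would survive in $\TDC$. Since the stalk of $L^a_\WW$ already vanishes on $(\fW\times\fWd)\times(\overline\R\setminus\R)$ and $\langle x,y\rangle$ blows up at $Y$, any such boundary contribution is killed either by the bordered-space quotient or by the quotient defining $\TDC$; alternatively one can argue locally, reducing to the one-variable exponential case $\she^{1/z}$ treated in \S\ref{subsection:exairreg} and Proposition~\ref{pro:Solphi}, and use Lemma~\ref{le:sacompactif} to see that the construction is insensitive to the choice of compactification. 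The rest is a routine application of Corollary~\ref{cor:solE} and the stability of $\C^\enh_X\ctens(\scbul)$ under the relevant operations (Lemma~\ref{lem:kTamtens}, Proposition~\ref{pro:stableops}).
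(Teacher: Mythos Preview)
Your proposal is correct and follows essentially the same approach as the paper, which simply invokes isomorphism~\eqref{eq:laplaceEphiB} (i.e., Corollary~\ref{cor:solE} with $\varphi=\langle x,y\rangle$) and leaves the identification $\C_{\{t=-\Re\langle x,y\rangle\}}\simeq\Teeim{j}L^a_\WW$ implicit. Your extra discussion of possible boundary contributions is harmless but unnecessary: $\C_{\{t=-\Re\langle x,y\rangle\}}$ is by definition the extension by zero from the locally closed set $\{(x,y,t)\in U\times\R: t=-\Re\langle x,y\rangle\}$, hence already supported in $U\times\R$, so $\Teeim{j}$ is tautologically the identity on it.
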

\begin{proof}
This follows immediately from  isomorphism~\eqref{eq:laplaceEphiB}.
\end{proof}
In the sequel, we denote by 
\glossary{$\Drm_\WW$}%
$\Drm_\WW$ the Weyl algebra
\index{Weyl algebra}%
$\sect\bl\bW; \D_\bW(*\BBH)\br$ associated with 
$\WW$. We also 
use the $(\Drm_{\WW\times\WWd},\Drm_{\WWd})$-bimodule $\Drm_{\WW\times\WWd\to\WWd}$ similar to the bimodule $\D_{X\to Y}$ in  the theory of 
 D-modules, and finally  we denote by 
$\OOO_\WW$ the ring of polynomials on $\WW$.

The next result is well-known and goes back  to~\cite{KL85} or before.
\begin{lemma}\label{le:laplaceD1}
There is a natural isomorphism 
\eq\label{eq:Laplaceiso1}
&&\D_\bW(*\BBH)\Dconv\shl\simeq \D_\bWd(*\BBHd)\tens\det\WWd.
\eneq
Here, $\det\WWd=\bigwedge^n\WWd$. 
\end{lemma}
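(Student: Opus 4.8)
The plan is to verify the isomorphism \eqref{eq:Laplaceiso1} by a direct computation with the Weyl algebra, working on the affine charts and then checking compatibility with the compactification. First I would unwind the definition of $\Dconv$: by \eqref{eq:Dcconv}, with $f,g$ the two projections from $\bW\times\bWd$ to $\bW$ and $\bWd$ respectively (composed through the correspondence $S=\bW\times\bWd\times\bWd$, or more directly viewing $X=\bW\times\bWd$ itself as the correspondence space over $\bW$ and $\bWd$), the left-hand side is $\Doim{g}(\Dopb{f}\D_\bW(*\BBH)\Dtens\shl)$. Since $\D_\bW(*\BBH)$ corresponds under global sections to the Weyl algebra $\Drm_\WW$ and $\shl=\she^{\langle x,y\rangle}_{U|X}$ is $\OO_X(*Y)$ with the twisted connection $u\mapsto du+u\,d\langle x,y\rangle$, the tensor product $\Dopb{f}\D_\bW(*\BBH)\Dtens\shl$ is, in coordinates $x=(x_1,\dots,x_{d_\WW})$ on $\WW$ and $y=(y_1,\dots,y_{d_\WW})$ on $\WWd$, the quotient of $\D_{\WW\times\WWd}(*Y)$ by the relations coming from the generators of $\shl$ together with the $\bW$-directions; concretely one gets $\partial_{x_i}\mapsto -y_i$ after the exponential twist, so that the relative de Rham complex in the $x$-variables computes the direct image.

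The key computation is then the following: pushing forward along $g$ (integration in the fibre variable $x$), the relative de Rham complex of $\she^{\langle x,y\rangle}_{U|X}$ along the projection $\WW\times\WWd\to\WWd$ is quasi-isomorphic, after twisting, to the Koszul complex on $\OO_{\WWd}[x]$ given by the operators $\partial_{x_i}+y_i$ (equivalently, after conjugating by $e^{\langle x,y\rangle}$, the plain $\partial_{x_i}$ acting on polynomials in $x$), which is acyclic except in top degree where its cohomology is $\OO_{\WWd}\cdot e^{-\langle x,y\rangle}\otimes\det\WW^*$ — this is exactly where the factor $\det\WWd=\bigwedge^{d_\WW}\WWd$ appears, from the top exterior power of the conormal/relative differentials. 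On this top cohomology the residual action of $\partial_{y_j}$ and $y_j$ (the $\bWd$-directions, which survive the direct image) is computed directly: $y_j$ acts by multiplication and $\partial_{y_j}$ picks up an extra $x_j$ from differentiating $e^{\langle x,y\rangle}$, but $x_j$ is identified with $-\partial_{x_j}$ by the Weyl-algebra relation, and under integration by parts in $x$ this becomes the standard action. The upshot is precisely $\Drm_\WWd(*\BBHd)\tens\det\WWd$ as a $(\Drm_\WW,\Drm_\WWd)$-bimodule — the left $\Drm_\WW$-action being transported through the Fourier automorphism $x_i\leftrightarrow\partial_{y_i}$, $\partial_{x_i}\leftrightarrow -y_i$ — which is the content of \eqref{eq:Laplaceiso1}. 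I would carry this out first on the affine chart $\WW\times\WWd\subset X$, then note that $\D_\bW(*\BBH)$ and $\D_\bWd(*\BBHd)$ are by definition the sheaves whose global sections are the respective Weyl algebras and which are supported (as non-$\OO$-coherent modules) over the hyperplanes at infinity in a controlled way, so that the affine computation globalizes: the $(*\BBH)$ and $(*\BBHd)$ localizations are exactly what make the direct image of the localized module along the compactified projection reduce to the affine one.

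The main obstacle I anticipate is the bookkeeping of the three twists that interact here: the holomorphic-dual structure of $\WW$ (so that $\ori_\WW$, a priori a real orientation module of the $2d_\WW$-dimensional underlying real space, is identified with $\C$ via the complex structure), the shift/determinant factor $\det\WWd$ coming from $\Omega_{\WW}^{\mathrm{top}}$ in the relative de Rham complex, and the sign/transpose conventions in the Fourier automorphism of the Weyl algebra (which side carries $x\mapsto\partial_y$ versus $x\mapsto-\partial_y$). Getting these normalizations consistent with the conventions already fixed in the text — in particular with the transfer bimodule $\Drm_{\WW\times\WWd\to\WWd}$ and with the sign in the connection $u\mapsto du+u\,d\vphi$ defining $\she^\vphi_{U|X}$ — is the delicate point; the underlying homological algebra (acyclicity of the twisted Koszul/de Rham complex on polynomials, a classical fact going back to \cite{KL85}) is routine. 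A clean way to organize the proof is to reduce immediately to $\WW=\C$ by the external-product compatibility $\she^{\langle x,y\rangle}_{U|X}$ for a product decomposition $\WW=\C^{d_\WW}$ and Lemma~\ref{lem:DEprod}-type multiplicativity of $\Dconv$, so that one only needs the one-variable statement $\D_{\BBP^1(\C)}(*\infty)\Dconv\she^{xy}_{\cdot}\simeq\D_{\BBP^1(\C)}(*\infty)$, and then take determinants; but even there the $\det$-factor must be inserted by hand when reassembling the product.
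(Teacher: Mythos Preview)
Your approach is correct and follows the same overall strategy as the paper: reduce to a Weyl-algebra computation and identify the push-forward with $\Drm_\WWd$ via the Fourier automorphism. The execution differs, however. Where you propose to compute the relative de~Rham (Koszul) complex in the $x$-variables explicitly and then argue that the affine computation globalizes to the compactification, the paper invokes GAGA at the outset to replace everything by algebraic objects ($\D_\bW(*\BBH)\rightsquigarrow\Drm_\WW$, $\shl\rightsquigarrow\Drm_{\WW\times\WWd}\e^{\langle x,y\rangle}$), then rewrites $\Dconv$ entirely in terms of transfer bimodules. The determinant factor drops out of the formal identity $\Drm_{\WWd\from\WW\times\WWd}\ltens[{\OOO_{\WW\times\WWd}}]\Drm_{\WW\times\WWd\to\WW}\simeq\Drm_{\WW\times\WWd}\tens\det\WWd$, and the final step is the single observation that the ring map $\Drm_\WWd\to\Drm_{\WW\times\WWd}\e^{\langle x,y\rangle}$ is an isomorphism. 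This bypasses the explicit Koszul cohomology computation and all the sign bookkeeping you anticipate; your reduction to the one-variable case via external products is unnecessary once one works formally with the bimodules.
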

\begin{proof}
Using the GAGA principle, we may replace 
$\D_\bW(*\BBH)$ with $\Drm_\WW$, $\D_\bWd(*\BBH)$ with $\Drm_\WWd$, $\shl$ with $\Drm_{\WW\times\WWd} \e^{\langle x,y\rangle}$
and thus 
$\D_\bW(*\BBH)\Dconv\shl$ with 
\eq\label{eq:KatzL}
&&\Drm_{\WWd\from \WW\times\WWd}\ltens[\Drm_{\WW\times\WWd}](\Drm_{\WW\times\WWd} \e^{\langle x,y\rangle}
\ltens[{\OOO_{\WW\times\WWd}}]\Drm_{\WW\times\WWd\to\WWd}).
\eneq
This last object is isomorphic to 
\eqn
\bl\Drm_{\WWd\from\WW\times\WWd}\ltens[{\OOO_{\WW\times\WWd}}]\Drm_{\WW\times\WWd\to\WW}  \br 
\ltens[\D_{\WW\times\WWd}]\Drm_{\WW\times\WWd} \e^{\langle x,y\rangle}.
\eneqn
Since
\eqn
&&\Drm_{\WWd\from\WW\times\WWd}\ltens[{\OOO_{\WW\times\WWd}}]\Drm_{\WW\times\WWd\to\WW} 
\simeq\Drm_{\WW\times\WWd}\tens\det\WWd,
\eneqn
the module~\eqref{eq:KatzL} is isomorphic to $\Drm_{\WW\times\WWd}\e^{\langle x,y\rangle}\tens\det\WWd$.
Finally, one remarks that the natural morphism $\Drm_{\WWd}\to \Drm_{\WW\times\WWd} \e^{\langle x,y\rangle}$ is an isomorphism.
\end{proof}

In the sequel, we shall identify  $\Drm_\WW$ and $\Drm_\WWd$  
 by the correspondence $x_i\leftrightarrow-\partial_{y_i}$, 
$\partial_{x_i}\leftrightarrow y_i$. (Of course, this does not depend on the choice of linear coordinates on $\WW$ and the dual coordinates on $\WWd$.)

\begin{theorem}\label{th:laplace}
 We have an isomorphism in $\Derb((\iDrm_\WW)_\fWd)$
\eq\label{eq:Laplaceiso22}
&&\EF[\WW](\OEn_\fW)\simeq\OEn_\fWd\tens\det\WW\,[-d_\WW].
\eneq
\end{theorem}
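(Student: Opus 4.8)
The plan is to combine the enhanced Riemann--Hilbert correspondence for the Laplace kernel (Theorem~\ref{th:irrRH1}, in the form of Corollary~\ref{cor:irregRH1}) with the commutation of the enhanced de Rham functor with integral transforms (Theorem~\ref{th:opforenhD}/Theorem~\ref{th:7413}) and the identification of the enhanced solution complex of the Laplace kernel with the enhanced Fourier--Sato kernel $L^a_\WW$ provided by Lemma~\ref{le:laplaceD2}. Concretely, I would apply Theorem~\ref{th:7413} with $S=U=\WW\times\WWd$ sitting over $X=\fW$ and $Y=\fWd$ via the two projections $f\cl S\to\fW$, $g\cl S\to\fWd$, taking $\shm=\sho_\fW$ (more precisely $\Drm_\WW$, i.e.\ $\D_\bW(*\BBH)$) and $\shl$ the Laplace kernel $\she^{\langle x,y\rangle}_{U|X}$, which is holonomic and good. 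The left-hand side of~\eqref{eq:7413} then becomes, by Corollary~\ref{cor:fourieradjoint} and Lemma~\ref{le:laplaceD2}, the enhanced Fourier--Sato transform $\EF[\WW]$ applied to $\drE_\fW(\sho_\fW)\simeq\OEn_\fW$ (up to the shift $[-n]$ and the orientation module, which for a complex vector space is trivial); the right-hand side becomes $\drE_\fWd$ of the D-module convolution $\Drm_\WW\Dconv\shl$, which by Lemma~\ref{le:laplaceD1} is $\Drm_\WWd\tens\det\WWd[0]$ — but one must be careful with which side the determinant twist and the shift by $d_\WW=d_\WWd$ land on.

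\textbf{Key steps, in order.} First I would set up the correspondence morphisms: the isomorphism $\drE_\fW(\sho_\fW)\simeq\OEn_\fW$ (which holds since $\Ovt\ltens[\D]\sho\simeq\OO[X][d_X]$ in the non-enhanced case, and its enhanced analogue), and similarly $\drE_\fWd(\sho_\fWd)\simeq\OEn_\fWd$. Second, I would invoke Theorem~\ref{th:7413} with $L=\solE_S(\shl)$, noting that the properness hypothesis $\opb f\Supp(\shm)\cap\Supp(\shl)$ proper over $\fWd$ is automatic here because $\shl$ is supported on all of $U$ and the relevant compactification makes the closure of the graph proper — this is exactly the point handled in the bordered-space formalism of Section~\ref{section:bordered}. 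Third, I would rewrite $\Psi^\Tam_L\bl\drE_\fW(\sho_\fW)\br$ using the adjunction~\eqref{eq:phipsiadj} and Corollary~\ref{cor:fourieradjoint}, which says $\EF[\WW](\scbul)\simeq\Psi^\Tam_{L^a_\WW}(\scbul)\tens\ori_\WW[-n]$; combined with Lemma~\ref{le:laplaceD2}'s identification $\solE_X(\shl)\simeq\C^\Tam_X\ctens\Teeim j L^a_\WW$ this identifies the transform on the left of~\eqref{eq:7413} with the enhanced Fourier--Sato transform of $\OEn_\fW$. Fourth, on the right of~\eqref{eq:7413} I would apply Lemma~\ref{le:laplaceD1}: $\Drm_\WW\Dconv\shl\simeq\Drm_\WWd\tens\det\WWd$, hence $\drE_\fWd(\Drm_\WW\Dconv\shl)\simeq\OEn_\fWd\tens\det\WWd$; matching the $[d_X-d_S]$ shift appearing in~\eqref{eq:7413} (with $d_X=2d_\WW$, $d_S=2d_\WW$ in the real picture, or rather bookkeeping the complex dimensions $d_\fW$ and $d_U=2d_\WW$) against the stated $[-d_\WW]$ and the twist $\det\WW$ versus $\det\WWd$ — using the canonical pairing $\det\WW\tens\det\WWd\simeq\C$ — finishes the identification.

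\textbf{Main obstacle.} The genuine mathematical content is bundled into the three inputs (Theorem~\ref{th:7413}, Lemma~\ref{le:laplaceD2}, Lemma~\ref{le:laplaceD1}), so the remaining difficulty is essentially bookkeeping: getting the degree shifts, the $\det$-twists, and the $\Drm_\WW\leftrightarrow\Drm_\WWd$ identification (via $x_i\leftrightarrow-\partial_{y_i}$, $\partial_{x_i}\leftrightarrow y_i$) all consistent, and making sure the bordered-space properness condition really is satisfied so that Theorem~\ref{th:7413} applies verbatim. A secondary technical point is that the isomorphism denoted (a) in the proof of Theorem~\ref{th:opforenhD} — commuting $\shm^\mop\ltens[\shd_X]\scbul$ past $\cihom$ — is needed (cited from \cite[Lem.~3.12]{KS14}) when one unwinds $\drE$ of the convolution in terms of $\OEn$; I would cite it rather than reprove it. Thus the proof is short: it is the assembly of Theorem~\ref{th:7413}, Corollary~\ref{cor:fourieradjoint}, Lemma~\ref{le:laplaceD2} and Lemma~\ref{le:laplaceD1}, with the shift/twist reconciliation being the only place requiring care.
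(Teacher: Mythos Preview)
Your approach is essentially the one the paper uses: apply Theorem~\ref{th:7413} with $\shm=\D_\bW(*\BBH)$ and the Laplace kernel $\shl$, identify $\solE(\shl)$ with $\C^\Tam\ctens L^a_\WW$ via Lemma~\ref{le:laplaceD2}, convert $\Psi^\Tam_{L^a_\WW}$ into $\EF[\WW]$ via Corollary~\ref{cor:fourieradjoint}, and compute the right-hand side by Lemma~\ref{le:laplaceD1}. One correction: your ``first key step'' $\drE_\fW(\sho_\fW)\simeq\OEn_\fW$ is not right (one has $\drE_X(\OO)\simeq\C_X^\enh[d_X]$, not $\OEn_X$); the paper takes $\shm=\D_\bW(*\BBH)$, for which $\drE_\fW(\shm)\simeq\OvE_\fW$, and only at the very end converts $\OvE_\fW\simeq\OEn_\fW\tens\det\WWd$ and $\OvE_\fWd\simeq\OEn_\fWd\tens\det\WW$ --- this is exactly the $\det$-bookkeeping you flagged, and once you use the correct $\shm$ the twists fall out cleanly without needing the pairing $\det\WW\tens\det\WWd\simeq\C$.
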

\begin{proof}
Set $K=\solE_{\fW\times\fWd}(\shl)$. By Theorem~\ref{th:7413}, we have
\eqn
&&\Psi_K^\Tam(\drE_\fW(\shm))\,[-d_\WW]\simeq\drE_\fWd(\shm\Dconv\shl).
\eneqn
for any $\shm\in \Derb_\qgood(\D_X)$ such that 
$\shm\simeq\shm(*\BBH)$. 
By Lemma~\ref{le:laplaceD2}, $K=\C^\Tam_{\fW\times\fWd}\ctens L_{\WW}^a$, 
and by Corollary~\ref{cor:fourieradjoint}, the functor $\EF[\WW]$ is isomorphic to the functor 
$\Psi^\Tam_{L^a_\WW}\,[-2d_\WW]$. 
Since 
\eqn
&&\cihom\bl\C^\Tam_{\fW},\drE_\fW(\shm)\br\simeq \drE_\fW(\shm),
\eneqn
we have 
\eqn
&&\Psi_K\bl\drE_\fW(\shm)\br[-2d_\WW]
\simeq \Psi_{L^a_\WW}\bl \drE_\fW(\shm)\br[-2d_\WW]
\simeq \EF[\WW]\bl\drE_\fW(\shm)\br.
\eneqn 
Therefore, we obtain 
\eqn
&&\EF[\WW]\bl\drE_\fW(\shm)\br\simeq \drE_\fWd(\shm\Dconv\shl)\,[-d_\WW].
\eneqn
 Now  choose $\shm= \D_\bW(*\BBH)$ and  apply Lemma~\ref{le:laplaceD1}.
Since $\drE_\fW(\shm)\simeq\OvE_\fW$ and 
$\drE_\fWd(\shm\Dconv\shl)\simeq\OvE_\fWd\tens\det\WWd$, we obtain 
\eqn
&&\EF[\WW](\OvE_\fW)\simeq\OvE_\fWd\tens\det\WWd\,[-d_\WW].
\eneqn
Hence, it is enough to remark that
\eqn
&&\OvE_\fW\simeq\OEn_\fW\tens\det\WWd\mbox{ and } \OvE_\fWd\simeq\OEn_\fWd\tens\det\WW.
\eneqn
\end{proof}

\begin{remark}
\bnum
\item Symbolically,  isomorphism~\eqref{eq:Laplaceiso22} 
is given by
$$\OEn_\fWd\tens \det\WW\ni \phi(y)\tens dy
\longmapsto \int \e^{\lan x,y\ran}\phi(y)dy\in \EF[\WW](\OEn_\fW).$$
\item
The identification of $\Drm_\WW$ and $\Drm_\WWd$ 
is given by:
\begin{align*}
\hs{-4ex}\Drm_\WW\ni P(x,\partial_x)
\leftrightarrow Q(y,\partial_y)\in\Drm_\WWd
&\Longleftrightarrow
P(x,\partial_x)\e^{\lan x,y\ran}=Q^*(y,\partial_y)\e^{\lan x,y\ran}\\[1ex]
&\Longleftrightarrow P^*(x,\partial_x)\e^{-\lan x,y\ran}=Q(y,\partial_y)\e^{-\lan x,y\ran}.
\end{align*}
Here $Q^*(y,\partial_y)$ denotes the formal adjoint operator
of $Q(y,\partial_y)\in\Drm_\WWd$.
\enum
\end{remark}
Applying Corollary~\ref{cor:fourier}, we get:

\begin{corollary}\label{cor:laplace}
Isomorphism~\eqref{eq:Laplaceiso1}  together with the enhanced Fourier-Sato isomorphism induce an isomorphism in $\Derb(\Drm_\WW)$, 
functorial in $F\in\TDC(\iC_{\fW})${\rm:}
\eq\label{eq:Laplaceiso23} 
&&\FHom(F,\OEn_\fW)\simeq \FHom\bl\EF[\WW](F),\OEn_\fWd\br\tens\det\WW\,[-d_\WW].
\eneq
\end{corollary}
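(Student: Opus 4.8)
The plan is to deduce Corollary~\ref{cor:laplace} by combining the Laplace isomorphism for the enhanced indsheaf of tempered holomorphic functions (Theorem~\ref{th:laplace}) with the general invariance of $\FHom$ under the enhanced Fourier--Sato transform (Corollary~\ref{cor:fourier}). First I would recall that, by Corollary~\ref{cor:fourier} applied to $F_1 = F$ and $F_2 = \OEn_\fW$ regarded as an object of $\TDC(\iC_\fW)$ (forgetting the $\Drm_\WW$-action momentarily), there is an isomorphism in $\Derb(\C)$
\eqn
&&\FHom(F,\OEn_\fW)\simeq\FHom\bl\EF[\WW](F),\EF[\WW](\OEn_\fW)\br.
\eneqn
This isomorphism is functorial in $F\in\TDC(\iC_\fW)$; moreover, since $\EF[\WW]$ is defined by an integral transform with a kernel carrying no $\Drm$-action on the target, the right-hand side inherits the $\Drm_\WWd$-module structure coming from the $\Drm_\WW$-action on $\OEn_\fW$ transported through $\EF[\WW]$, so the isomorphism is in fact one in $\Derb(\Drm_\WWd)$ once we make the identification $\Drm_\WW\simeq\Drm_\WWd$ fixed in the excerpt.

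Next I would substitute the explicit computation of $\EF[\WW](\OEn_\fW)$ provided by Theorem~\ref{th:laplace}, namely
\eqn
&&\EF[\WW](\OEn_\fW)\simeq\OEn_\fWd\tens\det\WW\,[-d_\WW]\mbox{ in }\Derb((\iDrm_\WW)_\fWd).
\eneqn
Plugging this into the right-hand side of the displayed isomorphism above, and using that $\FHom(K_1,K_2\tens V[m])\simeq \FHom(K_1,K_2)\tens V[m]$ for a finite-dimensional $\C$-vector space $V$ and $m\in\Z$ (which is immediate from the definition $\FHom(K_1,K_2)=\rsect(M;\alpha_M\fihom(K_1,K_2))$ and the fact that tensoring by $V[m]$ commutes with $\rihom$, $\roim\pi$, $\alpha$ and global sections when $V$ is finite-dimensional), one obtains
\eqn
&&\FHom(F,\OEn_\fW)\simeq \FHom\bl\EF[\WW](F),\OEn_\fWd\br\tens\det\WW\,[-d_\WW],
\eneq
which is exactly~\eqref{eq:Laplaceiso23}. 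The functoriality in $F$ is preserved throughout, since every step (Corollary~\ref{cor:fourier}, Theorem~\ref{th:laplace}, the shift/twist manipulation) is functorial.

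The only genuinely delicate point is bookkeeping of the $\Drm_\WW$-action, i.e.\ checking that the abstract isomorphism of Corollary~\ref{cor:fourier} is compatible with the module structures so that~\eqref{eq:Laplaceiso23} lives in $\Derb(\Drm_\WW)$ rather than merely $\Derb(\C)$. This is where one must invoke that the Laplace isomorphism~\eqref{eq:Laplaceiso1} at the level of $\D$-modules and the enhanced Fourier--Sato isomorphism are compatible: concretely, the $\D_\bW(*\BBH)$-action on $\OEn_\fW$ is carried by $\EF[\WW]$ to the $\D_\bWd(*\BBHd)$-action on $\OEn_\fWd\tens\det\WW$ via the identification $x_i\leftrightarrow-\partial_{y_i}$, $\partial_{x_i}\leftrightarrow y_i$, and this is precisely the content of the proof of Theorem~\ref{th:laplace} (which itself relies on Theorem~\ref{th:7413} and Lemma~\ref{le:laplaceD1}). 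So the argument is essentially an assembly of Corollary~\ref{cor:fourier} and Theorem~\ref{th:laplace}; I would state it as such, with the observation about the $\det\WW$-twist and the degree shift passing through $\FHom$, and with a pointer to Theorem~\ref{th:laplace} for the $\Drm_\WW$-linearity.
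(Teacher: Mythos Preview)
Your proposal is correct and matches the paper's approach exactly: the paper's proof is the single line ``Applying Corollary~\ref{cor:fourier}, we get'', which implicitly uses Theorem~\ref{th:laplace} to identify $\EF[\WW](\OEn_\fW)$. You have simply unpacked this one-line argument, including the routine twist/shift manipulation and the $\Drm_\WW$-linearity bookkeeping, which the paper leaves implicit.
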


As a consequence of Corollary~\ref{cor:laplace}, we recover the main result of~\cite{KS97}: 
\begin{corollary}\label{cor:laplace2}
Isomorphism~\eqref{eq:Laplaceiso1}  together with the  Fourier-Sato isomorphism  induces  an isomorphism in $\Derb(\Drm_\WW)$, 
functorial in  $G\in\Derb_{\R^+}(\C_{\WW})${\rm:} 
\eq\label{eq:Laplaceiso3} 
&&\RHom(G,\Ot[\fW])\simeq \RHom\bl\FS_\WW(G),\Ot[\fWd]\br\tens\det\WW\,[-d_\WW].
\eneq
Here, letting
$a_\fW\cl \fW\to \rmpt$ be the projection,
$$\RHom(G,\Ot[{\fW}])\seteq\alpha_\rmpt\roim{a_\fV}\rihom[\iC_\fW](G,\Ot[{\fW}])
\in\Derb(\C).$$ 
\end{corollary}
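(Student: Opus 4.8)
The final statement to be proved is Corollary~\ref{cor:laplace2}, which recovers the main result of~\cite{KS97} as a special case of the enhanced-setting Corollary~\ref{cor:laplace}.

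The plan is to deduce Corollary~\ref{cor:laplace2} from Corollary~\ref{cor:laplace} by passing from the enhanced framework to the conic one via the functor $\epsilon_\fW$ and Theorem~\ref{th:fourierefourier}. First I would take $G\in\Derb_{\R^+}(\C_\WW)$ and set $F=\epsilon_\fW(G)=\C_{\{t\geq0\}}\tens\opb\pi G\in\TDC(\iC_\fW)$. Applying Corollary~\ref{cor:laplace} to this $F$ gives an isomorphism in $\Derb(\Drm_\WW)$
\[
\FHom(\epsilon_\fW(G),\OEn_\fW)\simeq\FHom\bl\EF[\WW](\epsilon_\fW(G)),\OEn_\fWd\br\tens\det\WW\,[-d_\WW].
\]
By Theorem~\ref{th:fourierefourier}, the diagram~\eqref{diag:fourierefourier} is quasi-commutative, so $\EF[\WW](\epsilon_\fW(G))\simeq\epsilon_\fWd(\FS[\WW](G))$. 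Thus the right-hand side becomes $\FHom\bl\epsilon_\fWd(\FS[\WW](G)),\OEn_\fWd\br\tens\det\WW\,[-d_\WW]$. It therefore remains to identify, for any conic $G$, the complex $\FHom(\epsilon_\fW(G),\OEn_\fW)$ with $\RHom(G,\Ot[\fW])$ in a way compatible with the $\Drm_\WW$-module structures, and similarly on $\fWd$.

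The key identification is the adjunction-type formula for $\fihom$: using $\fhom(K_1,K_2)\simeq\roim\pi\rihom(\Tl K_1,\Tr K_2)$ together with $\Tl(\epsilon_\fW(G))\simeq\Tl(\C_{\{t\geq0\}}\tens\opb\pi G)$ and Lemma~\ref{lem:cihomKt0} (which gives $\opb\pi L\tens K\simeq(\opb\pi L\tens\C_{\{t=0\}})\ctens K$), one reduces $\fihom(\epsilon_\fW(G),\OEn_\fW)$ to $\rihom(\opb\pi G,\fihom(\C_{\{t=0\}},\OEn_\fW))$. Now $\fihom(\C_{\{t=0\}},\OEn_\fW)\simeq\fihom(\C_\fW^\enh,\OEn_\fW)\simeq\Ot[\fW]$ by the computation~\eqref{eq:oEot} together with $\C_X^\enh\ctens\C_X^\enh\simeq\C_{\{t=0\}}\ctens\C_X^\enh$. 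Hence $\fihom(\epsilon_\fW(G),\OEn_\fW)\simeq\rihom[\iC_\fW](G,\Ot[\fW])$, and applying $\alpha_\rmpt\roim{a_\fW}$ yields $\FHom(\epsilon_\fW(G),\OEn_\fW)\simeq\RHom(G,\Ot[\fW])$ as defined in the statement, with the $\Drm_\WW$-action coming from the $\D$-module structure on $\Ot[\fW]$; the same argument applies verbatim on $\fWd$. Combining the three isomorphisms gives~\eqref{eq:Laplaceiso3}, and functoriality in $G$ is inherited from functoriality of each step.

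The main obstacle I anticipate is checking that all these isomorphisms are compatible with the $\Drm_\WW$-module (equivalently $\Drm_\WWd$-module) structures under the identification $x_i\leftrightarrow-\partial_{y_i}$, $\partial_{x_i}\leftrightarrow y_i$ — that is, verifying that the equivalence $\EF[\WW]$ on the conic side matches the classical Fourier--Sato transform's effect on the Weyl-algebra action, which is precisely the content tying Theorem~\ref{th:laplace} (the identification $\EF[\WW](\OEn_\fW)\simeq\OEn_\fWd\tens\det\WW[-d_\WW]$ in $\Derb((\iDrm_\WW)_\fWd)$) to Lemma~\ref{le:laplaceD1}. Since Corollary~\ref{cor:laplace} is already stated as an isomorphism in $\Derb(\Drm_\WW)$, this compatibility is packaged there; the remaining work is the purely formal but slightly delicate bookkeeping of the $\epsilon$-functor computations above, where one must be careful that $\alpha$ and $\epb{}$ (resp.\ $\roim\pi$) do not commute in general, so one works with $\fihom$ throughout and only applies $\alpha$ at the very end.
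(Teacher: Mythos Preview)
Your proposal is correct and follows essentially the same route as the paper: set $F=\epsilon_\fW(G)$ in Corollary~\ref{cor:laplace}, invoke Theorem~\ref{th:fourierefourier} to identify $\EF[\WW](\epsilon_\fW(G))\simeq\epsilon_\fWd(\FS[\WW](G))$, and then reduce $\FHom(\epsilon_\fW(G),\OEn_\fW)$ to $\RHom(G,\Ot[\fW])$ via $\fihom(\epsilon_\fW(G),\OEn_\fW)\simeq\rihom(G,\Ot[\fW])$ and~\eqref{eq:oEot}. The paper's proof is terser in that last step (it states the identification directly rather than unwinding it through Lemma~\ref{lem:cihomKt0}), and your remark about the $\Drm_\WW$-compatibility being already packaged in Corollary~\ref{cor:laplace} is exactly right.
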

\begin{proof} 
By Theorem~\ref{th:fourierefourier}, we have 
 $\EF[\WW](\epsilon_\fW(G))\simeq\epsilon_\fWd\FS[\WW](G)$, where $\epsilon_\fW$ is given in~\eqref{eq:eMepsilonM}.
Applying  isomorphism~\eqref{eq:Laplaceiso23}  with  $F=\epsilon_\fW(G)$, we obtain
\eqn
&&\FHom(\epsilon_\fW(G),\OEn_\fW)\simeq \FHom\bl\epsilon_\fWd\FS[\WW](G),\OEn_\fWd\br\tens\det\WW\,[-d_\WW].
\eneqn
We have 
\eqn
\FHom(\epsilon_\fW(G),\OEn_\fW)&\simeq&\alpha_\rmpt\roim{a_\fW}
\fihom(\epsilon_\fW(G),\OEn_\fW)\\
&\simeq&\alpha_\rmpt\roim{a_\fW}\rihom(G,\Ot[\fW])\\
&\simeq&\RHom(G,\Ot[\fW]),
\eneqn
and similarly 
$
\FHom(\epsilon_\fWd\FS[\WW](G),\OEn_\fWd)\simeq\RHom(\FS[\WW](G),\Ot[\fWd])$. 
\end{proof}

\providecommand{\bysame}{\leavevmode\hbox to3em{\hrulefill}\thinspace}

\newpage
\markboth{List of Notations}{List of Notations}
\renewcommand{\indexname}{List of Notations}
\addcontentsline{toc}{section}{\numberline{}List of Notations}
\begin{theindex}

  \item $(M,\bM)$, 56
  \item $E^\vphi_{{U\vert X}}$, 78
  \item $F^\Tam$, 71
  \item $L\conv$, 54
  \item $L\econv $, 98
  \item $L^a_\VV$, 99
  \item $L^a_\VVd$, 99
  \item $L_\VV$, 99
  \item $\At$, 47
  \item $\BDC_\Rc(\cor_{M\times\R_\infty})$, 73
  \item $\BDC_\coh(\D_X)$, 11
  \item $\BDC_\good(\D_X)$, 11
  \item $\BDC_\hol(\D_X)$, 11
  \item $\BDC_\qgood(\D_X)$, 11
  \item $\C_{\{\Re \varphi <\ast\}}$, 78
  \item $\Cinf$, 29
  \item $\Cinfom$, 29
  \item $\Cinft[M]$, 30
  \item $\Cinft[\Msa]$, 30
  \item $\Cinfw[M]$, 30
  \item $\Cinfw[\Msa]$, 30
  \item $\DA_\twX$, 47
  \item $\DFN$, 35
  \item $\D_X \ex^\varphi$, 77
  \item $\D_X$, 10
  \item $\D_{M\to N}$, 31
  \item $\D_{X\from Y}$, 12
  \item $\D_{X\to Y}$, 12
  \item $\DbT_M$, 78
  \item $\Db_M$, 29
  \item $\Dbt_M$, 30
  \item $\Dbt_{\Msa}$, 30
  \item $\Dbtv_M$, 31
  \item $\Dconv$, 54
  \item $\Ddual$, 11
  \item $\Deim{f}$, 12
  \item $\Delta_M$, 90
  \item $\Derb(\cor_M)$, 7
  \item $\Derb(\icor_{(M,\bM)})$, 57
  \item $\Derb_\Irc(\icor_M)$, 27
  \item $\Derb_\Rc(\cor_M)$, 9
  \item $\Derb_{\reghol}(\D_X)$, 44
  \item $\Detens$, 12
  \item $\Doim{f}$, 12
  \item $\Dopb{f}$, 12
  \item $\Drm_\WW$, 101
  \item $\Dtens$, 11, 77
  \item $\EF[\VV]$, 99
  \item $\EFa[\VVd]$, 99
  \item $\Edual_M$, 73
  \item $\FHom$, 67
  \item $\FN$, 35
  \item $\II[\A]$, 21
  \item $\II[\cor]$, 16
  \item $\II[\cor_M]$, 17
  \item $\IIrc[\cor_M]$, 25
  \item $\Msa$, 24
  \item $\OEn_X$, 79
  \item $\OO(*S)$, 42
  \item $\Omega_X$, 10
  \item $\Omega_{X/Y}$, 10
  \item $\Op_M$, 24
  \item $\Op_\Msa$, 24
  \item $\Ot[X]$, 34
  \item $\Ot[\twX]$, 47
  \item $\OvE_X$, 79
  \item $\Ovt_X$, 34
  \item $\Ovt_\twX$, 48
  \item $\Ow[X]$, 34
  \item $\Oww[X]$, 34
  \item $\Phi_L$, 54
  \item $\Phi_L^\Tam$, 98
  \item $\Psi_L$, 54
  \item $\RD'_M$, 8
  \item $\RD_M$, 8
  \item $\R_\infty$, 62
  \item $\SI[\cor_M]$, 17
  \item $\SSupp(\shm)$, 44
  \item $\Supp(F)$, 7
  \item $\TDC(\cor_{M})$, 66
  \item $\TDC(\icor_M)$, 65
  \item $\TDCpm(\icor_M)$, 65
  \item $\TDCrc(\icor_M)$, 73
  \item $\Teeim{f}$, 69
  \item $\Tepb{f}$, 69
  \item $\Theta_M$, 31
  \item $\Theta_X$, 10
  \item $\Tl$, 67
  \item $\Toim{f}$, 69
  \item $\Topb{f}$, 69
  \item $\Tr$, 67
  \item $\alpha_M$, 17
  \item $\bV$, 99
  \item $\beta_M$, 17
  \item $\cetens$, 69
  \item $\chv(\shm)$, 10
  \item $\cihom$, 63, 68
  \item $\cihom[\beta\A]$, 77
  \item $\cor^\Tam_M$, 71
  \item $\cor_{\{t<\ast\}} $, 71
  \item $\cor_{\{t>*\}}$, 75
  \item $\cor_{\{t\gg0\}}$, 71
  \item $\ctens$, 63, 68
  \item $\ctens[\beta\A]$, 77
  \item $\dim M$, 31
  \item $\drE_X$, 80
  \item $\drE_{\twX}$, 88
  \item $\dr_X$, 13
  \item $\drt_X$, 38
  \item $\epb{\tilde f}$, 69
  \item $\epb{f}$, 8, 20, 58
  \item $\epsilon_M$, 73
  \item $\epsilon_\fM$, 98
  \item $\etens$, 8
  \item $\fhom$, 67
  \item $\fihom$, 67
  \item $\ihom$, 17
  \item $\indc_{t^* = 0} $, 65
  \item $\indc_{t^*\geq 0}$, 65
  \item $\indc_{t^*\leq 0} $, 65
  \item $\indcc$, 15
  \item $\iota_M$, 18
  \item $\md[\D^\rop_X]$, 10
  \item $\md[\D_X]$, 10
  \item $\md[\cor]$, 16
  \item $\md[\cor_M]$, 7
  \item $\mdc[\D_X]$, 10
  \item $\mdcp[\cor_M]$, 17
  \item $\mdf[\cor]$, 16
  \item $\mdrh[\D_X]$, 44
  \item $\mu(t_1,t_2)$, 62
  \item $\ol\R$, 62
  \item $\omega_M$, 8
  \item $\opb{\tilde f}$, 69
  \item $\opb{f}$, 8, 20, 58
  \item $\ori_M$, 31
  \item $\rA$, 31
  \item $\rc[\cor_M]$, 25
  \item $\rcc[\cor_M]$, 25
  \item $\reeim{\tilde f}$, 69
  \item $\reeim{f}$, 20, 58
  \item $\reim{f}$, 8
  \item $\rho_M$, 25
  \item $\rhom$, 8
  \item $\rihom$, 20, 58
  \item $\roim{\tilde f}$, 69
  \item $\roim{f}$, 8, 20, 58
  \item $\shb_M$, 29
  \item $\shb_{\Delta_X}$, 91
  \item $\shc^\wedge$, 15
  \item $\she^\varphi_{U\vert X}$, 77
  \item $\shm^\tA$, 48
  \item $\shv_M$, 31
  \item $\sinddlim$, 15
  \item $\solE_X$, 80
  \item $\solE_{\twX}$, 88
  \item $\sol_X$, 13
  \item $\solt_X$, 38
  \item $\tens$, 8, 17, 20, 58
  \item $\thom$, 31
  \item $\twX$, 47
  \item $\twX^\tot$, 47
  \item $\twX^{>0}$, 47
  \item $\varpi$, 47
  \item $a_M\cl M\to\rmpt$, 8
  \item $d_X$, 10
  \item $e_M$, 73
  \item $e_\fM$, 98

\end{theindex}

\newpage
\markboth{Index}{Index}
\renewcommand{\indexname}{Index}
\addcontentsline{toc}{section}{\numberline{}Index}
\begin{theindex}

  \item base change formula
    \subitem for D-modules, \hyperpage{14}
    \subitem for indsheaves, \hyperpage{23}
    \subitem for sheaves, \hyperpage{9}
  \item bordered spaces, \hyperpage{56}
    \subitem proper morphism of, \hyperpage{60}

  \indexspace

  \item D-module, \hyperpage{10}
    \subitem good, \hyperpage{11}
    \subitem holonomic, \hyperpage{10}
    \subitem quasi-good, \hyperpage{10}
    \subitem regular holonomic, \hyperpage{44}
  \item de Rham functor, \hyperpage{13}
    \subitem enhanced, \hyperpage{80}
    \subitem tempered, \hyperpage{38}
  \item duality
    \subitem for D-modules, \hyperpage{11}
    \subitem for enhanced indsheaves, \hyperpage{73}
  \item dualizing complex, \hyperpage{8}

  \indexspace

  \item enhanced
    \subitem Fourier-Sato transform, \hyperpage{98--100}
    \subitem indsheaves, \hyperpage{65}
    \subitem tempered holomorphic functions, \hyperpage{78}
  \item exponential D-modules, \hyperpage{77}
  \item external tensor product
    \subitem for D-modules, \hyperpage{12}
    \subitem for enhanced indsheaves, \hyperpage{69}
    \subitem for sheaves, \hyperpage{8}

  \indexspace

  \item Fourier-Sato transform
    \subitem enhanced, \hyperpage{98--100}

  \indexspace

  \item good
    \subitem D-module, \hyperpage{11}
    \subitem topological space, \hyperpage{7}
  \item Grothendieck's six operations
    \subitem for enhanced indsheaves, \hyperpage{68}
    \subitem for indsheaves, \hyperpage{19}
    \subitem for sheaves, \hyperpage{8}

  \indexspace

  \item holonomic D-module, \hyperpage{10}

  \indexspace

  \item ind-object, \hyperpage{15}
  \item indsheaves, \hyperpage{17}
    \subitem enhanced, \hyperpage{65}
    \subitem of tempered $\mathrm{C}^\infty$-functions, \hyperpage{30}
    \subitem of tempered distributions, \hyperpage{30}
    \subitem of tempered holomorphic functions, \hyperpage{34}
    \subitem of Whitney functions, \hyperpage{30}
    \subitem of Whitney holomorphic functions, \hyperpage{34}
  \item integral transforms, \hyperpage{97}
    \subitem regular, \hyperpage{54}

  \indexspace

  \item Laplace
    \subitem kernel, \hyperpage{101}
    \subitem transform, \hyperpage{100}

  \indexspace

  \item non-characteristic, \hyperpage{13}
  \item normal form, \hyperpage{85}
    \subitem quasi-, \hyperpage{85}
    \subitem regular, \hyperpage{44}

  \indexspace

  \item polynomial growth, \hyperpage{29}
  \item projection formula
    \subitem for D-modules, \hyperpage{13}
    \subitem for indsheaves, \hyperpage{23}
    \subitem for sheaves, \hyperpage{9}
  \item proper morphism
    \subitem of bordered spaces, \hyperpage{60}

  \indexspace

  \item quasi-good D-module, \hyperpage{10}
  \item quasi-normal form, \hyperpage{85}

  \indexspace

  \item $\R$-constructible
    \subitem enhanced indsheaves, \hyperpage{73}
    \subitem sheaves, \hyperpage{9}
  \item ramification, \hyperpage{85}
  \item real blow up, \hyperpage{46}
  \item regular holonomic D-module, \hyperpage{44}
  \item Riemann-Hilbert correspondence
    \subitem enhanced, \hyperpage{96}
    \subitem generalized, \hyperpage{94}
    \subitem generalized regular, \hyperpage{50}
    \subitem regular, \hyperpage{53}

  \indexspace

  \item solution functor, \hyperpage{13}
    \subitem enhanced, \hyperpage{80}
    \subitem tempered, \hyperpage{38}
  \item stable object, \hyperpage{71}
  \item Stokes matrices, \hyperpage{83}
  \item subanalytic
    \subitem indsheaf, \hyperpage{25}
    \subitem site, \hyperpage{24}
    \subitem space, \hyperpage{24}
    \subitem subset, \hyperpage{9}

  \indexspace

  \item tempered
    \subitem $\mathrm{C}^\infty$-functions, \hyperpage{29}
    \subitem distributions, \hyperpage{29}
    \subitem holomorphic functions, \hyperpage{34}
  \item tempered Grauert theorem, \hyperpage{40}
  \item transfer bimodule, \hyperpage{12}
  \item transversal Cartesian diagram, \hyperpage{14}

  \indexspace

  \item Weyl algebra, \hyperpage{101}
  \item Whitney functions, \hyperpage{30}

\end{theindex}

%    \printindex  

\markboth{}{}
\noindent
\scalebox{.85}{\parbox[t]{60ex}
{\noindent
Masaki Kashiwara\\
Research Institute for Mathematical Sciences, Kyoto University \\
Kyoto, 606--8502, Japan\\
e-mail: masaki@kurims.kyoto-u.ac.jp

\medskip\noindent
Pierre Schapira\\
Institut de Math{\'e}matiques,
Universit{\'e} Pierre et Marie Curie\\
and Mathematics Research Unit,
University of Luxembourg\\
4 Place Jussieu, 7505 Paris\\
e-mail: schapira@math.jussieu.fr\\
http://www.math.jussieu.fr/\textasciitilde schapira/
}}

% emacs HolSurv.LMS.glo &
% query - replace (J\(Bglossaryentry (J\(Bindexentry
% makeindex HolSurv.LMS.glo -o HolSurv.LMS.gls
% emacs HolSurv.LMS.gls

\end{document}